\newcommand{\bq}{\begin{equation}}
\newcommand{\eq}{\end{equation}}
\newcommand{\bqa}{\begin{eqnarray*}}
\newcommand{\eqa}{\end{eqnarray*}}
\theoremstyle{plain}
\newtheorem{theo}{Theorem}[section]
\newtheorem{prop}[theo]{Proposition}
\newtheorem{lemm}[theo]{Lemma}
\newtheorem{coro}[theo]{Corollary}
\newtheorem{defi}[theo]{Definition}
\theoremstyle{definition}
\newtheorem{rema}[theo]{Remark} 
\newtheorem{nota}[theo]{Notation}
\crefname{lemm}{Lemma}{Lemmas}
\crefname{prop}{Proposition}{Propositions}
\DeclareMathOperator{\supp}{supp}
\DeclareMathOperator{\di}{div}
\DeclareSymbolFont{pletters}{OT1}{cmr}{m}{sl}
\DeclareMathSymbol{s}{\mathalpha}{pletters}{`s}
\def\tt{\theta}
\def\eps{\varepsilon}
\def\na{\nabla}
\def\les{\lesssim}
\def\mez{\frac{1}{2}}
\def\tdm{\frac{3}{2}}
\def\Rr{\mathbb{R}}
\def\Nn{\mathbb{N}}
\def\Zz{\mathbb{Z}}
\def\cG{\mathcal{G}}
\def\cS{\mathcal{S}}
\def\cA{\mathcal{A}}
\def\cB{\mathcal{B}}
\def\cF{\mathcal{F}}
\def\cT{\mathcal{T}}
\def\L1{\mathcal{L}^{(1)}}
\def\L2{\mathcal{L}^{(2)}}
\def\L3{\mathcal{L}^{(3)}}
\def\cU{\mathcal{U}}
\def\ld{\lambda}
\def\p{\partial}
\def\d{\mathrm{d}}
\def\na{\nabla}
\def\wsc{\overset{\ast}{\rightharpoonup}}
\def\ol{\overline}
\def\T{\mathbb{T}}
\def\fa{\mathfrak{a}}
\def\ff{\mathfrak{F}}
\def\fd{\mathfrak{d}}
\def\tM{\mathtt{M}}
\def\vr{\varrho}
\numberwithin{equation}{section}
\title{Well-Posedness of the Free Boundary Incompressible Porous Media Equation}
\date{\today}
\author{Micka\"el Latocca}
\address{
Laboratoire de Mathématiques et de Modélisation d'\'Evry (LaMME)\\
Université d'\'Evry\\
23 Bd François Mitterrand, 91000 Évry-Courcouronnes, France
}
\email[M. Latocca]{mickael.latocca@univ-evry.fr}
\author{Huy Q. Nguyen}
\address{
Department of Mathematics\\
University of Maryland\\
College Park, MD 20742, USA
}
\email[H. Nguyen]{hnguye90@umd.edu}
\def\l@subsection{\@tocline{2}{0pt}{2.5pc}{5pc}{}}
\def\@tocline#1#2#3#4#5#6#7{\relax
  \ifnum #1>\c@tocdepth 
  \else
    \par \addpenalty\@secpenalty\addvspace{#2}%
    \begingroup \hyphenpenalty\@M
    \@ifempty{#4}{%
      \@tempdima\csname r@tocindent\number#1\endcsname\relax
    }{%
      \@tempdima#4\relax
    }%
    \parindent\z@ \leftskip#3\relax \advance\leftskip\@tempdima\relax
    \rightskip\@pnumwidth plus4em \parfillskip-\@pnumwidth
    #5\leavevmode\hskip-\@tempdima
      \ifcase #1
       \or\or \hskip 1em \or \hskip 2em \else \hskip 3em \fi%
      #6\nobreak\relax
    \dotfill\hbox to\@pnumwidth{\@tocpagenum{#7}}\par
    \nobreak
    \endgroup
  \fi}
\renewcommand{\@@and}{\&}
\begin{document}
\newcommand{\huy}[1]{{\color{orange} \textbf{H:} #1}}
\newcommand{\mickael}[1]{{\color{teal} \textbf{M:} #1}}

\begin{abstract}
We consider the free boundary incompressible porous media equation  which describes  the dynamics of  a density  transported by a Darcy flow  in the field of gravity, with a free boundary between the fluid region and the dry region above it. For any  stratified density state, we identify a stability condition for the initial free boundary. Under this condition, we prove that small localized perturbations  of the stratified density  lead to  unique local-in-time solutions in Sobolev spaces. 

\noindent Our proof involves  analytic ingredients that are of independent interest, including   tame fractional Sobolev estimates for operators that map the Dirichlet boundary function and the forcing function of Poisson's equation to its solution in  domains of  Sobolev regularity. 
\end{abstract}



\maketitle

\tableofcontents 

\section{Introduction}
The incompressible prorus media (IPM) equation describes the evolution of a density carried by the flow of a viscous incompressible flow governed by Darcy's law in the field of gravity: 
\bq\label{IPM}
\p_t\rho +u\cdot \na_{x, y} \rho=0,\quad  u+\na_{x, y} p =-\rho e_y,\quad \di u=0,
\eq 
where gravity points downward in the $y$ direction. Here $\rho$ is the density, $u$ is the fluid velocity field,  $p$ is the fluid  pressure, and for the sake of simplicity we have normalized the dynamic viscosity, the 
permeability of the medium, and the gravitational constant in Darcy's law to unity. 

The IPM equation \eqref{IPM} is an active scalar, where  the velocity $u$ has the same level of regularity as the scalar $\rho$. This can be seen most easily in $\Rr^2$, in which case the Biot--Savart law reads   
\bq\label{u:IPM}
u=\na^\perp  (-\Delta)^{-1}\p_x\rho,\quad \na^\perp=(-\p_y, \p_x).
\eq
Thus, IPM is more singular than the 2D Euler equations written in the vorticity formulation, and has the same level of  regularity as the SQG equation. However, compared to 2D Euler and SQG, the  Biot--Savart law of IPM is {\it anisotropic} due to the presence of gravity. 

There have been many recent results on the IPM equation posed on  fixed domains such as $\Rr^2$, $\T^2$, and the channel $\T\times (-1, 1)$. These include local well-posedness \cite{CordobaGancedoRafael, ConstantinVicolWu}, lack of uniqueness of weak solutions \cite{MR2796131, MR3014484}, and  small scale formations \cite{MR4527834}. Any stratified density $\rho(x, y)=\rho(y)$ is a steady state with $u=0$.  Stability and instability results for stratified states were obtained in \cite{MR3665666, MR3916979, MR4527834}. 

On the other hand, the (one-phase)  Muskat problem concerns the  {\it free boundary} dynamics of IPM with {\it constant density} $\rho>0$. Since the density is constant, the Muskat problem can be recast solely in terms of the free boundary, resulting in a {\it quasilinear parabolic} equation.  Local well-posedness has been studied extensively, e.g. in  \cite{MR1384001,  MR2128613, MR2318314, MR2753607,  MR3415681, MR3861893,  NP, MR4541917}.  Stability of the flat free boundary was proven in \cite{MR2070208, MR3595492, MR3869383,  MR3899970, MR4387237, MR4348695}. Large-data global well-posedness of Lipschitz solutions was proven in \cite{MR4655356, DGN3D, AlazardKoch}. We also refer to \cite{MR4690615, MR4797733, N-GC} for the existence and stability of traveling waves when an external pressure is applied to the free boundary.   

Our goal in the present paper is to initiate the analysis of the {\it free boundary incompressible prorous media equation}.  The fluid region lies below the dry region in the porous medium, with a free boundary separating the two regions. This can be viewed as the (one-phase) Muskat problem for inhomogeneous fluids with the density being the fluid's density. We aim to  establish local well-posedness for this  {\it quasilinear  parabolic-hyperbolic} system of the density and the free boundary. Due to the coupling, the equation for the free boundary is not always parabolic. Therefore, one of  the key tasks is to identify the class of initial data  satisfying  suitable stability conditions that ensure the existence of local solutions. 

\subsection{Setting of the problem} The free boundary is assumed to be  the graph of an unknown function $f(x, t)$: 
\bq
    \Sigma_{f(t)}=\{(x, f(x, t)): x\in \Rr^d\}.
\eq
The  fluid domain in the prorous medium can be of  infinite or  finite depth: 
\begin{equation}
    \label{eq.domain} 
    \Omega_{f(t)}=\{(x, y)\in \Rr^d\times \Rr: y<f(x, t)\},
\end{equation}
or 
\bq\label{eq.domain.finite}
    \Omega_{f(t)}=\{(x, y)\in \Rr^d\times \Rr: b(x)<y<f(x, t)\},
\eq
where $b$ defines the fixed bottom $\Sigma_b=\{(x,b(x)), x\in\mathbb{R}^d\}$. This case is depicted in \cref{fig.1}.
\begin{figure}
    \includegraphics[width=.6\textwidth]{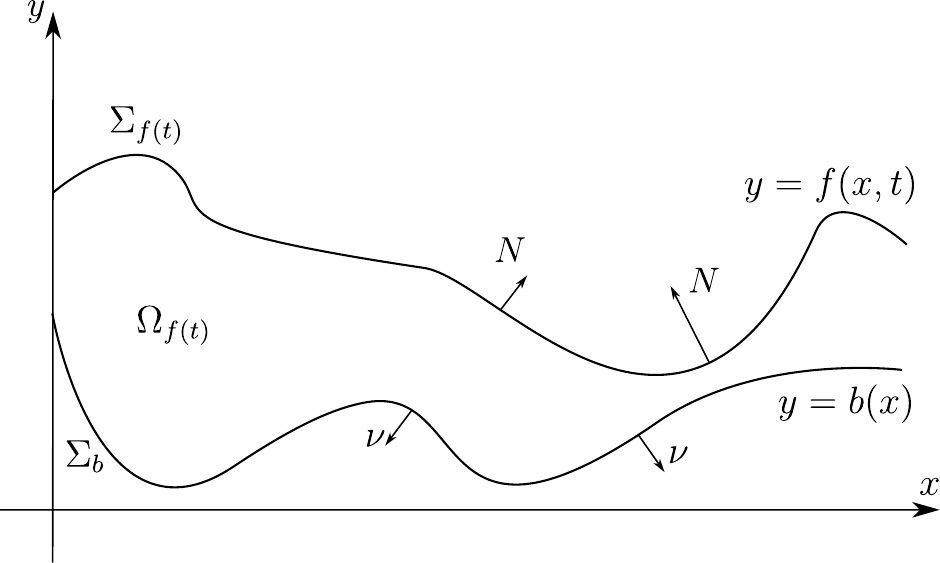}
    \caption{The domain $\Omega_{f(t)}$ in the finite-depth case.\label{fig.1}}
\end{figure}
We recall that the transported density and the fluid obey the equations 
\begin{equation}
    \label[type]{eq.transport-density}
    \p_t \rho + u \cdot \nabla_{x,y}\rho = 0\quad\text{in } \Omega_{f(t)},
\end{equation}
\bq\label{Darcy}
    u(x, y, t)+\na_{x, y}p(x, y, t)=-\rho(x, y, t)e_y, \quad \di_{x, y} u(x, y, t)=0\quad\text{in } \Omega_{f(t)},
\eq
where $e_y=(0,1) \in \Rr^d\times \Rr$. 
 
Assuming that the pressure in the dry region above the fluid is constant,  we can normalize it  to zero, so that  the dynamic boundary condition reads
\begin{equation}
    \label{eq.f0}
    p(x, f(x, t), t)=0.
\end{equation}
The free boundary evolves according to the kinematic boundary condition 
\begin{equation}
    \label{eq.kin} 
    \p_t f(x, t)=u(x, f(x, t), t)\cdot N(x, t),
\end{equation}
where $N(x, t)=(-\na_x f(x, t), 1)$ is the unnormalized exterior normal to the boundary. 

We assume that the fluid  is quiescent  at infinity in the infinite-depth case and does not penetrates the rigid bottom in the finite-depth case:
\bq\label{bc:bottom}
 \begin{cases}
     \lim_{(x,y) \to \infty} u(x,y,t) = 0 \quad\text{for~infinite~depth},\\
    u(x, b(x), t) \cdot \nu = 0  \quad\text{for~finite~depth},
    \end{cases}
\eq
where $\nu=\frac{1}{\sqrt{1+|\na b|^2}}(\na b, -1):=(\nu_x, \nu_y)$ is the normalized exterior normal to $\Sigma_b$. 

We shall refer to the system   \eqref{eq.transport-density}-\eqref{bc:bottom} as the free-boundary IPM equation. Since any stratified state $\rho=\rho(y)$ is a steady state of IPM, we will consider the following ansatz for $\rho$: 
\bq\label{rhoansatz}
    \rho(x, y, t)=\gamma(y)+g(x, y, t),\quad \lim_{(x, y)\to \infty} g(x, y, t)=0,
\eq
where $\gamma$ is any fixed function of $y\in \Rr$. This will allow us to work with the localized part $g$ of $\rho$.  To this end, let $\Gamma$ be the antiderivative of $\gamma$ that vanishes at $0$. We define the modified pressure 
\bq
q(x, y, t):=p(x, y, t)+\Gamma(y),
\eq
so that Darcy's law becomes 
\bq\label{Darcy:q}
u+\na_{x, y} q=-g e_y,\quad \di u=0
\eq
and $q$ solves
\begin{align}
&\Delta_{x, y}q=-\p_yg\quad\text{in } \Omega_{f(t)},\\ 
& q\vert_{\Sigma_{f(t)}}=\Gamma(f(x, t)),\\
&\begin{cases}\label{bc:q}
\lim_{(x, y)\to \infty} \na_{x, y}q(x, y, t)=0 \quad\text{for~infinite~depth},\\
    \p_\nu q\vert_{y=b(x)}=-\nu_yg\vert_{y=b(x)} \quad\text{for~finite~depth}. 
    \end{cases}
\end{align}
We note that \eqref{bc:q} follows from  \eqref{bc:bottom} and \eqref{Darcy:q}. 
 
In order to express $\na_{x, y} q$ in terms of $f$ and $g$, we introduce the operators 
\bq\label{def:GSop}
\cG[f]h= \na_{x, y} \phi^{(1)},\quad \cS[f]k=\na_{x, y} \phi^{(2)}
\eq
for $h: \Rr^d\to \Rr$ and $k: \Omega_{f}\to \Rr$, where $\phi^{(1)}$ and $\phi^{(2)}$ are solutions to the problems
\begin{align}
\label{eq.phi1}
&\Delta_{x, y} \phi^{(1)}=0\quad\text{in } \Omega_{f},\quad \phi^{(1)}\vert_{\Sigma_f}=h,\\ \label{bc:phi1}
&\begin{cases}
\lim_{(x, y)\to \infty} \na_{x, y}   \phi^{(1)}= 0&\text{for infinite depth},\\
\p_\nu  \phi^{(1)}\vert_{y=b(x)}=0&\text{for finite depth,}
\end{cases}
\end{align}
and 
\begin{align}
\label{eq.phi2}
&\Delta_{x, y}  \phi^{(2)}=-\p_y k\quad\text{in } \Omega_{f},\quad \phi^{(2)}\vert_{\Sigma_f}=0\\ \label{bc:phi2}
&\begin{cases}
\lim_{(x, y)\to \infty} \na_{x, y}   \phi^{(2)}= 0&\text{for infinite depth,}\\
 \p_\nu \phi^{(2)}\vert_{y=b(x)}=-\nu_yk\vert_{y=b(x)}&\text{for finite depth}.
\end{cases}
\end{align}
We note that 
\bq
(N\cdot \cG[f]h)\vert_{\Sigma_f}=G[f]h
\eq
is the usual {\it Dirichlet-Neumann operator} (see e.g. \cite{ABZ, NP}). Moreover, since $q=\phi^{(1)}+\phi^{(2)}$ with $h=\Gamma(f)$ and $k=g$, we obtain 
\bq
\na_{x, y} q=\cG[f]\Gamma(f)+\cS[f]g,
\eq
and hence
\bq\label{eq:u}
u=-\cG[f]\Gamma(f)-\cS[f]g-ge_y.
\eq
Therefore,  the  free surface $f$ and the localized part $g$ of the density are solutions of the coupled system 
\begin{align}
    &\p_tf=u\cdot N\vert_{\Sigma_f}\equiv -G[f]\Gamma(f)-(N\cdot\cS[f]g+g)\vert_{\Sigma_{f(t)}}\quad \text{in } \Rr^d,\label{eq:f}\\ 
    &\p_t g+u\cdot\na_{x, y} g+\gamma'(y)u_y=0 \quad\quad\hspace{1.47 cm} \text{in } \Omega_{f(t)}, \label{eq:g} 
\end{align}
where the velocity  $u=(u_x, u_y)$ is given in terms of $f$ and $g$ via  \eqref{eq:u}. 

The one-phase Muskat problem is a special case of the preceding system with $\gamma$  constant and $g=0$. 

\subsection{Main result}
\begin{nota}
\[
J=
\begin{cases}
(-\infty, 0)\quad\text{ in the infinite depth case},\\ 
(-1, 0)\quad\text{ in the finite depth case}.
\end{cases}
\]
\end{nota}
The stratified state $\gamma(y)$ will be taken in the following class. 
\begin{defi}\label{defi:gamma}
Let $\mathfrak{G}$ be the class of $C^\infty$ functions $\gamma: \Rr\to \Rr$ such that for all $a>0$ and $\sigma \ge 0$, there exists a function $C_\gamma: \Rr_+\to \Rr_+$ such that 
\bq\label{cont:gamma}
\| \gamma'(h(x, z)+az)w\|_{ H^\sigma(\Rr^d\times J)}\le C_\gamma(\| h\|_{(L^\infty\cap  H^\sigma)(\Rr^d\times J)})\| w\|_{(L^{\infty}\cap H^\sigma)(\Rr^d\times J)}
\eq
 and 
 \begin{multline}
    \label{cont:diff}
\|\left(\gamma'(h_1(x,z)+az)-\gamma'(h_2(x,z)+az)\right)w\|_{H^\sigma(\Rr^d\times J)}\\
\le C_{\gamma}(\|(h_1, h_2)\|_{(L^\infty \cap  H^\sigma)(\Rr^d\times J)})\|(h_1-h_2)w\|_{(L^{\infty}\cap H^\sigma)(\Rr^d\times J)}.
\end{multline}
\end{defi}
\begin{rema}\label{rema:gamma} We show in  \cref{prop:gamma} that  if $\gamma'  \in C^{\lceil \sigma \rceil+1}_b(\Rr)$  then both  \eqref{cont:gamma} and \eqref{cont:diff} hold. 
\end{rema}
The stability condition will be given in terms of the positivity of the functional 
\bq
\cT(f):=\gamma(f)-G[f]\Gamma(f).
\eq
Our main result can be informally stated as follows: if  the initial free boundary $f_0$ satisfies $\inf_{x\in \Rr^d}\cT(f_0)>0$ and the initial localized part $g_0$ of the initial density is sufficiently small, then the system  \eqref{eq:f}-\eqref{eq:g} -\eqref{eq:u} is locally well-posed. More pricisely,
\begin{theo}\label{thm.main} 
Let $d\geqslant 1$ and $\gamma \in \mathfrak{G}$, defined in \cref{defi:gamma}. Let $s> \frac{3}{2}+\frac{d}{2} $ and $(f_0,g_0) \in H^s(\mathbb{R}^d)\times H^s(\Omega_{f_0})$, where $\Omega_{f_0}$ is defined by either  \eqref{eq.domain} (infinite depth) or \eqref{eq.domain.finite} (finite depth).  Assume that 
\begin{equation}
    \label{eq.RT-initial}
   \inf_{x\in \Rr^d} \cT(f_0)(x)\ge 2\mathfrak{a}>0. 
\end{equation} 
In the finite-depth case we also assume that $b(x)=-H+b_0(x)$ with $H>0$ and $b_0 \in H^{s+\frac{1}{2}}(\mathbb{R}^d)$ such that 
\begin{equation}
    \label{eq.boundary-separation-initial}
   \inf_{x\in \Rr^d}(f_0(x)-b(x))\geqslant 2\mathfrak{d} >0.
\end{equation}
The the following assertions hold.
\begin{enumerate}[(i)]
    \item (Existence and uniqueness) For all $R>0$,  there exist $\varepsilon(R)>0$ and $T=T(R)>0$ such that if $\|f_0\|_{H^s} \le R$ and  $\|g_0\|_{H^s} \le \varepsilon (R)$ then the system  \eqref{eq:f}-\eqref{eq:g}-\eqref{eq:u} has a unique solution  
    \bq
   (f, g)\in \left(C([0, T],H^s(\mathbb{R}^d)) \cap L^2([0, T],H^{s+\frac{1}{2}}(\Rr^d))\right) \times L^{\infty}([0, T],H^s(\Omega_{f(\cdot)})
    \eq
   that satisfies 
       \[
   \forall t\in [0, T],\quad  \inf_{x\in \Rr^d} \cT(f(t))(x) \geqslant \mathfrak{a} 
    \]
    and in the finite depth case,
    \[
     \forall t\in [0, T],\quad  \inf_{x\in \Rr^d}  f(t,x)-b(x) \geqslant \mathfrak{d}.
    \]   
   Moreover, $\| f\|_{C([0, T]; H^s)}\le 2R$ and there exists $L(R)>0$ such that $\| f\|_{L^2([0, T]; H^{s+\mez})}\le L(R)$.
    \item (Continuity of the flow) Let $\ff_f: \Rr^d\times J\to \Omega_f$ be the flattening map defined by \eqref{def:diffeo} and set  $\tilde{g}(t)=g(t) \circ \ff_{f(t)}$, $\tilde{g}_0=g_0 \circ \ff_{f_0}$. Then $\tilde{g}\in C([0, T]; H^s(\Rr^d\times J))$ and the solution map  $(f_0, \tilde{g}_0)\mapsto (f, \tilde{g})$
    \begin{multline*}
    B_{H^s}(0, R) \times B_{H^s}(0, \varepsilon(R)) \longrightarrow \Big(C([0, T(R)]; H^{s'}) \cap L^2([0, T(R)]; H^{s'+\mez})\Big) \times C([0, T(R)]; H^{s'})
    \end{multline*}
    is continuous  for any $s'<s$. Moreover, there exists $\tM=\tM(s, d)>0$ such that 
    \bq\label{est:tildeg:mainthm}
    \| \tilde{g}\|_{C([0, T]; H^s(\Rr^d\times J))}\le 2\tM \eps(R).
    \eq
\end{enumerate}
\end{theo}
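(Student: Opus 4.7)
The plan is to flatten the fluid region via the diffeomorphism $\ff_{f}:\Rr^d\times J\to \Om_{f}$ and work with $(f,\tilde g)$ where $\tilde g(t,x,z)=g(t,\ff_{f(t)}(x,z))$. Under this change of variables, $\cG[f]\Gamma(f)$ and $\cS[f]g$ become gradients of solutions to Poisson problems on the fixed strip/half-space $\Rr^d\times J$ with coefficients depending on $f$, for which I would invoke the tame $H^s$-bounds advertised in the abstract of the form $\lA \cG[f]h\rA_{H^{\sigma}}\les C(\| f\|_{H^s})\| h\|_{H^{\sigma+\mez}}$ and $\lA \cS[f]k\rA_{H^\sigma}\les C(\| f\|_{H^s})\| k\|_{H^\sigma}$, together with the paralinearization of $G[f]\Gamma(f)$ in the spirit of \cite{NP}. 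The equation \eqref{eq:f} then takes the schematic form $\p_t f+T_{\lambda(f)}f=\cR(f,\tilde g)$, where $\lambda(f)$ is a first-order symbol whose real part is bounded below by $\cT(f)|\xi|$, and $\cR$ is a source of order zero whose contribution from $\tilde g$ is linear and multiplied by a $C(\| f\|_{H^s})$ factor. The Rayleigh--Taylor assumption \eqref{eq.RT-initial} thus turns \eqref{eq:f} into a \emph{parabolic} equation with dissipation of order $\mez$.

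\textbf{Energy estimates and bootstrap.} For $f$ I would apply $\lg D\rg^s$ to the paralinearized equation, test against $\lg D\rg^s f$, symmetrize the paraproduct $T_{\lambda(f)}$ and use standard commutator estimates to obtain
\[
\frac{d}{dt}\| f\|_{H^s}^2+\fa\| f\|_{H^{s+\mez}}^2\leq C(\| f\|_{H^s})\bigl(1+\| \tilde g\|_{H^s}^2\bigr).
\]
For $\tilde g$, the equation \eqref{eq:g} pulled back to the strip becomes a transport equation
\[
\p_t\tilde g+\tilde V\cdot\na_{x,z}\tilde g+\gamma'(\ff_f)\tilde u_y=0,
\]
with $\tilde V$ built from $u\circ \ff_f$ and $\p_t\ff_f$. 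Using \eqref{cont:gamma} to control the $\gamma'$ factor and the tame estimates on $\cG[f],\cS[f]$, a Kato--Ponce style $H^s$ estimate yields
\[
\frac{d}{dt}\|\tilde g\|_{H^s}^2\leq C(\| f\|_{H^s},\|\tilde g\|_{H^s})\bigl(1+\| f\|_{H^{s+\mez}}\bigr)\|\tilde g\|_{H^s}.
\]
Here the half-derivative gain on $f$ is spent in time-integrated form against the transport coefficient. I would then run the bootstrap ansatz $\| f\|_{L^\infty_tH^s}\le 2R$, $\|\tilde g\|_{L^\infty_tH^s}\le 2\tM\eps(R)$, $\inf \cT(f)\ge \fa$: choosing $\eps(R)$ small enough that the forcing from $\tilde g$ cannot overwhelm the parabolic gain for $f$, and $T=T(R)$ small enough that the transport growth for $\tilde g$ is mild, closes the bootstrap.

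\textbf{Existence, uniqueness, continuity.} Smooth solutions would be constructed by a parabolic regularization (e.g.\ adding $-\ka\Delta f$ to \eqref{eq:f} and mollifying the transport velocity in \eqref{eq:g}), producing approximations $(f_\ka,\tilde g_\ka)$ satisfying the above estimates uniformly in $\ka$. Passing to the limit uses weak-$\ast$ compactness in $L^\infty_t H^s$ for both components plus Aubin--Lions strong compactness for $\tilde g_\ka$ in $C_tH^{s-1}$ (since $\p_t\tilde g_\ka$ is bounded in a lower-regularity space). Uniqueness would be obtained by writing the difference $(f_1-f_2,\tilde g_1-\tilde g_2)$ at regularity $s-1$, using \eqref{cont:diff} to handle the difference of nonlinear $\gamma'$-terms and a Gronwall inequality exploiting the residual parabolic dissipation at level $s-\mez$. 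Finally, continuity of the solution map in $H^{s'}$ for $s'<s$ follows from a Bona--Smith argument: regularize the initial data at scale $\ka$, compare two close solutions at regularity $s-1$ using the uniqueness estimate, and interpolate with the uniform $H^s$ bound.

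\textbf{Main obstacle.} The principal difficulty is the \emph{coupling}: the source $(N\cdot\cS[f]g+g)\vert_{\Sigma_f}$ in \eqref{eq:f} must be shown, at the paradifferential level, not to affect the positive symbol $\lambda(f)$, and its subprincipal contribution must be controllable by $C(\| f\|_{H^s})\|\tilde g\|_{H^s}$ with \emph{no loss} of derivatives in $f$; simultaneously, the transport coefficient of $\tilde g$ involves $\na f$, which is only in $L^2_tH^{s-\mez}$, so the $H^s$ energy estimate for $\tilde g$ must be designed to consume exactly the half-derivative gain produced by the $f$-equation. This is precisely where the tame $H^s$ bounds on $\cG[f]$ and $\cS[f]$ on Sobolev-regular domains are indispensable.
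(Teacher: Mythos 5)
Your overall architecture (flattening via $\ff_f$, the tame estimates for $\cG[f]$ and $\cS[f]$, paralinearization of $G[f]\Gamma(f)$ so that \eqref{eq:f} becomes parabolic of order $\mez$ under $\cT(f)\ge\fa>0$, transport estimates for $\tilde g$ on the fixed strip, smallness of $g_0$ to close the $f$-estimate, and contraction at regularity $s-1$) matches the paper's a priori analysis. Two remarks on that part: your displayed $f$-energy inequality hides the crucial quadratic term $C(\|f\|_{H^s})\|f\|_{H^{s+\mez}}^2\|\tilde g\|_{H^s}$ coming from the trace estimate \eqref{esttrace:cS} applied to $N\cdot\cS[f]g\vert_{\Sigma_f}$ at level $H^{s-\mez}$; this term can only be absorbed into the dissipation under the smallness condition \eqref{cd:MN:1}, so the correct statement is not that the source is ``without loss of derivatives in $f$'' but that the loss is linear in $\|f\|_{H^{s+\mez}}$ and comes with a small prefactor. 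You do invoke smallness later, so this is a presentational issue rather than a gap.

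The genuine gap is in your construction of solutions. You propose adding $-\ka\Delta f$ to \eqref{eq:f} and mollifying the transport velocity in \eqref{eq:g}. This is exactly the step the paper identifies as inapplicable: the transport equation for $g$ is posed in the moving domain $\Omega_{f(t)}$ (equivalently, for $\tilde g$ on $\Rr^d\times J$ with velocity $\ol u$), and the well-posedness of that transport problem without boundary data rests on the velocity being tangent to the boundary. On the flattened domain, $\ol u_z\vert_{z=0}=\frac{1}{\p_z\varrho}\big(u_y-u_x\cdot\na f-\p_t f\big)\vert_{\Sigma_f}$ vanishes precisely because $f$ satisfies the exact kinematic condition \eqref{eq.kin}. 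Once you add $-\ka\Delta f$ (or mollify the velocity), $\p_t f\ne u\cdot N\vert_{\Sigma_f}$, the vertical component of the transport field no longer vanishes at $z=0$, and the regularized $\tilde g_\ka$-equation has characteristics entering through the free boundary with no boundary condition available; the hypotheses of \cref{theo:transport} (and the basic $L^2$ energy identity for the transport equation) fail. So the approximating system you propose is not solvable as stated, and the uniform-in-$\ka$ bounds cannot even be formulated. The paper circumvents this with an iterative scheme: $\tilde g_n$ solves the \emph{linear} transport equation \eqref{eq.gn} whose velocity $\ol u_{n-1}$, defined in \eqref{def:olum} by mixing step-$n$ and step-$(n-1)$ quantities, is tangent to $\Rr^d\times\p J$ by construction because $f_n$ solves the nonlinear equation \eqref{eq.fn} with the lagged forcing $R_{n-1}$; this simultaneously decouples the $f_n$- and $\tilde g_n$-equations. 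The regularization $\nu|D|^{1+\mu}$ is then applied only to the scalar $f_n$-equation \eqref{eq.fn-nu} on $\Rr^d$ (where no boundary issue arises), removed at fixed $n$, and the scheme is closed with the uniform bounds and the contraction estimates. If you want to keep a regularization-based construction, you must either regularize in a way that preserves the tangency of the transport field (which your proposal does not do) or adopt a lagged/decoupled scheme of this type.
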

Several remarks about \cref{thm.main} are in order. 

(i) \underline{Positivity of the density}. \cref{thm.main}  is a well-posedness statement for the system \eqref{eq:f}-\eqref{eq:g}-\eqref{eq:u}, where the sign of the total density $\rho(x, y)=\gamma(y)+g(x, y)$  need not be positive. On the other hand, it is an immediate consequence of \eqref{est:tildeg:mainthm} that $\rho$ is positive on $[0, T]$ if $\inf_{\Rr}\gamma>0$ and $\eps(R)$ is sufficiently small. Moreover, since $\gamma$ is only evaluated at the height $y$ of points in the fluid domains, it suffices to have $\gamma$ defined on $(-\infty, 2AR]$  in the infinite-depth case and on $(-\| b\|_{L^\infty}, 2AR]$ in the finite-depth case, where $A$ is the norm of the embedding $H^s(\Rr^d)\hookrightarrow L^\infty(\Rr^d)$. In particular, in view of \cref{rema:gamma}, $\gamma$ can be any  $C^\infty(\Rr)$ function in the finite-depth case.

(ii) \underline{ The stability condition \eqref{eq.RT-initial}}. When  $\gamma$ is constant, we have  $\cT(f)=\gamma\big(1-G[f]f\big)$. It was proven in \cite[Proposition 4.3]{NP} that $1-G[f]f\ge c_f>0$ for all $f\in H^s(\Rr^d)$ with $s>1+\frac{d}{2}$. Thus \eqref{eq.RT-initial}  is automatically satisfied when $\gamma$ is constant.  When $\gamma$ is non-constant and  $\inf_{\Rr}\gamma>0$,   \eqref{eq.RT-initial}  is satisfied at least in the following two instances: 
\begin{itemize}
\item  $\| f_0\|_{H^s}$ is sufficiently small compared to $\inf_{\Rr}\gamma$. This sufficient condition depends only on $f_0$ and is a consequence of the estimate $\| G[f]\Gamma(f)\|_{H^{s-1}}\le C(\| f\|_{H^s})\| f\|_{H^s}$, where $C=C(s,  d, \gamma, \fd)$. 
\item  $\gamma'\ge 0$. See  \cref{lemm.initial-taylor}. We note  that for the IPM equation \eqref{IPM} in $\Rr^2$,  linearization around $\rho=\gamma(y)$ using the Biot-Savart law \eqref{u:IPM} gives 
\[
\p_t\rho_{lin}+\gamma'(y)u_2=0,\quad u_2=(-\Delta_{x, y})^{-1}\p_x^2\rho_{lin}.
\]
Since $(-\Delta_{x, y})^{-1}\p_x^2$ is a negative operator, we expect that $\gamma(y)$ is linearly stable when $\gamma'\le -c<0$.  In fact, it was proven in \cite{MR3665666} that  the linear stratified state $\gamma(y)=-cy$ is nonlinearly stable for the IPM equation in $\Rr^2$. The sufficient condition $\gamma'\ge 0$ for \eqref{eq.RT-initial} thus appears to destabilize the density in large time. Nevertheless, since $ (-\Delta_{x, y})^{-1}\p_x^2$ is of  order zero, the condition $\gamma'\ge 0$  would not affect  the short time existence of the density. 
\end{itemize}
(iii) \underline{The smallness condition on  $g_0$}. We shall establish the following paralinearization 
\[
G[f]\Gamma(f)=T_{m} f-T_V\cdot \na f+ l.o.t.
\]
where $m(x, \xi)$ is a first-order symbol, and $V=V(x)$; both $m$ and $V$ depend on $f$.  Under the stability condition $\cT(f)\ge \fa>0$, $m$ is elliptic and  satisfies  $m(x, \xi)\ge \fa |\xi|$. The $f$-equation  \eqref{eq:f} then becomes
\[
\p_tf= -T_{m} f+T_V\cdot \na f-(N\cdot\cS[f]g+g)\vert_{\Sigma_{f(t)}}+l.o.t.
\]
When performing the $H^s$ energy estimate, the parabolic term $-T_m$ yields a gain of $\mez$ derivative, i.e. $f\in L^2_t H^{s+\mez}$, while the transport term $T_V\cdot \na f$ does not induce  any loss of derivative. As a result, all other terms on the right-hand side can be put in $L^2_t H^{s-\mez}$. In particular, since $N$ contains $\na f$  and $\cS[f]g$ is at  linear in $g$, we have 
\[
\| N\cdot (\cS[f]g)\vert_{\Sigma_f}\|_{L^2_t H^{s-\mez}}\le C(\| f\|_{H^s})\| f\|_{L^2_t H^{s+\mez}}\| g\|_{H^s}.
\]
In fact, the estimate \eqref{eq.estimateS1} below shows that the $f$ in $\|(\cS[f]g)\vert_{\Sigma_f}\|_{H^{s-\mez}}$ also contributes to the $\| f\|_{H^{s+\mez}}$ norm on the right-hand side of the preceding estimate. Therefore, the smallness of   $g$ is required in order to close  the energy estimate for $f$  in $L^\infty_t H^s\cap L^2_t H^{s+\mez}$. 

(iv) \underline{The regularity of $f$ and $g$}. In \cref{sec.heuristics} we provide a heuristics assuming the regularity $f\in H^s$ and $g\in H^r$ and explain how $r=s >\frac{d}{2}+\frac{3}{2}$ is a natural condition as far as the energy method is concerned. Here $s>\frac{d}{2}+\tdm=1+\frac{d+1}{2}$ is the smallest Sobolev index needed to ensure that the velocity field $u$ is Lipschitz. On the other hand, since $f\in L^2([0, T];  H^{s+\mez})\subset L^2([0, T];  C^2_b)$, the free boundary has bounded curvature for a.e. $t\in [0, T]$. 

\begin{rema} Given the quasilinear nature of the problem, we expect a stronger statement than (\textit{ii}) to holds, namely that the flow is continuous as a map 
\[
 B_{H^s}(0, R) \times B_{H^s}(0, \varepsilon(R)) \longrightarrow \Big(C([0, T(R)]; H^{s}) \cap L^2([0, T(R)]; H^{s+\mez})\Big) \times C([0, T(R)]; H^{s})
    \]
but not uniformly continuous.  The continuity would follow from an argument based on the strategy used in \cite{N16} or \cite{ABITZ24}. However, we do not pursue this in order to keep the paper of reasonable length.  
\end{rema}
\begin{rema}
Our proof of Theorem \ref{thm.main} can be modified to obtain the same result for  the periodic setting, namely $f$, $b: \T^d\to \Rr$.
\end{rema}
\subsection{Heuristics}\label{sec.heuristics}
Let us consider some initial data $(f_0, g_0)\in H^s_x(\mathbb{R}^d)\cap H^r_{x, y}(\Omega_{f_0})$. Since the graphs of $f_0$ and $b$ constitute the boundary of $\Omega_{f_0}$, we readily require the compatibility condition $r-\mez\le s$, and that  at least $b\in H^{r-\mez}$. 

Assume that $(f, g)$ is a solution of \eqref{eq:f}, \eqref{eq:g} on $[0, T]$. Since \eqref{eq:g} is a transport equation, we expect that $g\in L_T^{\infty} H_{x,y}^r(\Omega_f)$. 
On the other hand, as explained above we expect the parabolicity of the $f$-equation \eqref{eq:f}, provided the stability condition $\inf_{\Rr^d}\cT(f)\ge \fa >0$.  Therefore, our goal would be to close an \textit{a priori} estimate for 
\[
    \| f\|_{ L_T^\infty H_x^s\cap L_T^2 H_x^{s+\mez}}\quad\text{and}\quad \| g\|_{L_T^{\infty} H_{x,y}^r(\Omega_f)}.
\]
From $f \in L_T^2 H_x^{s+\mez}$, for almost all $t\in [0,T]$ we have $f(t)\in H^{s+\mez}$, we expect that $\cS[f]g\in L_T^2 H_{x,y}^r$ provided $s+\mez\geqslant r+\mez$ and $b\in H^{r+\mez}$. Indeed, this follows from a heuristic elliptic regularity for \eqref{eq.phi2} when we expect $\phi^{(2)}\in H^{r+1}_{x, y}(\Omega_f)$ for $k=g\in H^r_{x, y}(\Omega_f)$. On the other hand, from the elliptic regularity for \eqref{eq.phi1}, we expect  $\cG[f]\Gamma(f)\in L_T^\infty H_{x,y}^{s-\mez}\cap L_T^2 H_{x,y}^s$. Because $s\geqslant r$, it follows from \eqref{eq:u} that 
 \[
    u=-\cG[f]\Gamma(f)-\cS[f]g-ge_y\in L^2_T H^r_{x, y}. 
\]
Then the transport equation  \eqref{eq:g}  propagates the $H^r$ regularity of $g$ provided that  $r>1+\frac{d+1}{2}$. To summarize we have imposed that
\bq\label{cd:4}
    s\geqslant r>1+\frac{d+1}{2}.
\eq
Next, closing the $L_T^\infty H_x^s\cap L_T^2 H_x^{s+\mez}$ regularity for \eqref{eq:f} requires  the forcing term $(N\cdot \cS[f]g + g)\vert_{\Sigma_{f(t)}}$ to be in $L^2_TH^{s-\mez}_x$. Since $g \in L^{\infty}_TH^{r}_{x,y}$, we have $g\vert_{\Sigma_{f}} \in L^2_TH^{r-\mez}_x$, thereby demanding $r \geqslant s$. Combining this with  \eqref{cd:4} yields
\bq
s=r>\tdm+\frac{d}{2}
\eq 
and $b_0\in H^{s+\mez}_x$.  This explains heuristically our  regularity conditions in  \cref{thm.main}. All the above regularity claims will be justified. 

\subsection{Method of proof}
 Our work builds in part on the founding paradifferential methods of \cite{ABZ} and can be viewed as an extension of \cite{NP} for the Muskat problem. In particular, we rely  on the paradifferential decomposition of the Dirichlet-Neumann operator  obtained in  \cite{ABZ, NP}. However, since the density is non-constant,   new difficulties  arise, most of which stem from the fact that the dynamics cannot be entirely recast on the free  boundary as in the Muskat problem. 
 
 (i) \underline{A priori estimates}. The main operators to be analyzed in  \eqref{eq:f}-\eqref{eq:g} -\eqref{eq:u} are the linear operators $\cG[f]$ and $\cS[f]$ defined in \eqref{def:GSop}, which are nonlinear and nonlocal with respect to the free boundary $f$. We note that for the Muskat problem \cite{NP} only the normal trace $N\cdot \cG[f]\vert_{\Sigma_f}= G[f]$ appears in the equation. Here the propagation of  the $H^s_{x, y}(\Omega_f)$ regularity of  $g$  requires the full $H^s_{x, y}(\Omega_f)$ regularity of $u$,  and hence of  $\cG[f[\Gamma(f)$ and $\cS[f]g$. For the purpose of establishing closed  a priori estimates, we are thus led to proving Sobolev estimates for  $\cG[f]h$ and $\cS[f]k$ in the entire fluid domain $\Omega_f$. As a consequence of \cref{prop.GSest} below,  we have 
 \begin{equation}
    \label{Gest:intro}
    \|\cG[f]h\|_{H^r(\Omega_f)} \le C(\|f\|_{H^s(\Rr^d)})\left(\|h\|_{H^{r+\mez}(\Rr^d)}+\| f \|_{H^{r+\mez}(\Rr^d)}\| h\|_{H^s(\Rr^d)}\right),
\end{equation}
\begin{equation}
    \label{Sest:intro}
    \|\cS[f]k\|_{H^{r}(\Omega_f)} \le C(\|f\|_{H^s(\Rr^d)})\left(\|k\|_{H^r(\Omega_f)}+\| f \|_{H^{r+\mez}(\Rr^d)}\| k\|_{H^s(\Omega_f)}\right),
\end{equation}
provided $s>\tdm+\frac{d}{2}$, $r\ge s-\mez$ and $\lfloor r\rfloor \le s$.  Keeping $\|f\|_{H^s}$ and $\| h\|_{H^s}$ as the low norms, we observe that both \eqref{Gest:intro} and \eqref{Sest:intro} are linear with respect to the highest norms $\| f\|_{H^{r+\mez}}$ and $\| h\|_{H^{r+\mez}}$.  These estimates will be applied to $h=\Gamma(f)$ with  $r=s-\mez$ and $r=s$. The case $r=s$ (satisfying the condition  $\lfloor r\rfloor \le s$) then produces a linear factor of the dissipation norm $\| f\|_{H^{s+\mez}}$, which is crucial in closing the $L^\infty_t H^s_x\cap L^2_t H^{s+\mez}_x$ energy estimate for the first-order parabolic equation \eqref{eq:f}.  

The estimates  for $\cG[f]$ and $\cS[f]$ in  \cref{prop.GSest} are consequences of sharp elliptic estimates obtained in \cref{prop.GSest-v} for the  problems \eqref{eq.phi1} - \eqref{bc:phi2}. These estimates are {\it tame} with respect to the Sobolev regularity of the domain and are of  independent interest. For the proof of these estimates we establish in \cref{lemm.FaaDiBruno} and \cref{lemm.diffeoSobolev} results on compositions of Sobolev functions  with Sobolev diffeomorphisms on domains. 

(ii) \underline{Contraction estimates}. Since $g$ is defined on the $f$-dependent domain $\Omega_f$, two solutions $(f_1, g_1)$ and $(f_2, g_2)$ cannot be directly compared. By using  diffeomorphisms $\ff_{f_i}$ to pull the $\Omega_{f_i}$ back  to a fixed domain with flat boundary, we then can compare $\tilde{g}_1:=g_1\circ \ff_{f_1}$ and $\tilde{g}_2:=g_2\circ \ff_{f_2}$. This requires contraction estimates for $\na(\phi^{(1)}\circ \ff_f)$  and $\na(\phi^{(2)}\circ \ff_f)$ with respect to $f$, where $\phi^{(j)}$  are solutions of \eqref{eq.phi1} - \eqref{bc:phi2}. These will be proven in \cref{prop.diffEstimateSobolev}. 

(iii) \underline{Construction of solutions}. Since the Muskat problem is recast in terms of $f$ defined on $\Rr^d$, the construction of solutions follows easily  from a priori and contraction estimates via the vanishing viscosity limit $\eps \Delta f$. Here  $g$ satisfies the transport equation \eqref{eq:g} posed in the domain $\Omega_f$. Hence,  a similar regularization for $g$ is inapplicable  due to the lack of boundary conditions. We  will devise an iterative scheme to construct approximate bona fide solutions $(f_n, \tilde{g}_n)$, where $f_n$ solves a nonlinear  Muskat-type equation and  $\tilde{g}_n$ solves a linear transport equation. The transport velocity of the $\tilde{g}_n$-equation is carefully designed so that on one hand it is  tangent to the boundary so as to facilitate  the existence of $\tilde{g}_n$, and on the other hand it decouples the equations for $f_n$ and $\tilde{g}_n$.  This  scheme is intricate, and we refer to the introduction of \cref{sec.lwp} for an exposition. 

\subsection{Organization of the paper}
 In section \cref{sec.prelim} we recall results on the Dirichlet-Neumann operator obtained  in \cite{ABZ, NP}, results on parabolic and transport estimates, and state results on compositions of Sobolev functions,  the proof of which is postponed to \cref{sec.compositionSobolev}. Then, in \cref{sec.DN} we state and prove our main  Sobolev estimates for the operators $\cG[f]$ and $\cS[f]$. Equipped with these estimates, we derive a priori bounds for  solutions in  \cref{sec.apriori}.  \cref{sec.contraction} is devoted to the proof of contraction estimates for $\cG[f]$, $\cS[f]$, and then for solutions.  In \cref{sec.lwp} we devise an iterative scheme to construct  a unique solution, proving  \cref{thm.main}. \cref{sec.paraDiff} provides a quick reminder on  paradifferential tools used in this paper.  \cref{sec.extension} and \cref{sec.interpolation} recall results about extension operators for minimally smooth domains   and some classical interpolation facts. Finally, \cref{appendixE} provides  sufficient conditions for the class  $\mathfrak{G}$ of density stratification profiles considered in \cref{thm.main}.

\subsection*{Acknowledgements}

Part of this work has been completed while the first author was a Novikov postdoc at the University of Maryland. The work of HQN was partially supported by NSF grant DMS-22.

\section{Preliminaries}\label{sec.prelim}

\subsection{Notation}\label{sec.notation}

\begin{itemize}
\item $\Nn=\{0, 1, 2, \dots\}$.
\item The ball of radius $r$ centered at $x$ in a normed space $X$ is denoted by $B_X(x, r)$. If $X$ is the Euclidean space, we simply write $B(x, r)$. 
\item $A \lesssim B$ means  $A \le CB$ for some positive constant  $C$.  When we want to stress the dependence of the constant on a parameter $X$, we write $A \le C(X)B$ as much as possible. When $X\ge 0$ and $C$ is locally bounded,  upon replacing $C(X)$ with $\sup_{0\le Y\le X}C(Y)$, we can assume that $C$ is increasing. 
\item For $s\in \mathbb{R}$ we write $\lfloor s \rfloor$ for the greatest integer smaller or equal to $s$, and $\lceil s \rceil$ for the smallest integer greater than $s$.
\item The Jacobian matrix $\na f$ of $f : \mathbb{R}^N \to \mathbb{R}^N$ is $(\na f)_{ij} = \p_jf_i$, so that $\na (f \circ g)=(\na f \circ g )\na g$ for $g: \Rr^N\to \Rr^N$.
\item We refer to \cref{sec.paraDiff} for the definitions  of  the paradifferential symbol classes $\Gamma^m_\rho$, the  symbol semi-norm  $M^m_{\rho}(a)$, and the paradifferential operator $T_a$. 
\end{itemize}
Next, we fix notation for function spaces. 
\begin{itemize}
\item  We sometimes use the shorthand $L^p_TZ$ for the Bochner space $L^p([0,T],Z)$. \item For $k\in \Nn$,  $C^k_b$ denotes the space of  functions with continuous and bounded derivatives up to order  of $k$.
\item Zygmund spaces are denoted by $C^s_*(\Rr^N)$.
\item  
Let $\Omega\subset \Rr^N$ is an open set, and $p\in [1, \infty]$. For $m\in \Nn$, let $W^{m, p}(\Omega)$ be the usual Sobolev space
\[
W^{m, p}(\Omega)=\left\{u\in L^p(\Omega):~\p^\alpha u\in L^p(\Omega)\quad\forall |\alpha|\le m\right\},
\]
endowed  with the norm 
\[
\| u\|_{W^{m, p}(\Omega)}:=\sum_{|\alpha|\le m}\| \p^\alpha u\|_{L^p(\Omega)}.
\]
For $s=(1-\tt)m_1+\tt m_2$ with $m_j\in \Nn$ and $\tt\in (0, 1)$, we define 
\bq\label{def:Wsp}
W^{s, p}(\Omega):=\Big(W^{m_1, p}(\Omega), W^{m_2, p}(\Omega)\Big)_{s, p}.
\eq
We denote $H^s(\Omega)=W^{s, 2}(\Omega)$. 

If $\Omega$ is a {\it minimally smooth domain}, then we have the the following equivalent norm for $W^{s, p}(\Omega)$ when $s=m+\mu$ with $m\in \Nn$ and $\mu\in (0, 1)$:   
\bq \label{eq.equivalent-norm-sobolev}
\begin{aligned}
& \|u\|_{W^{s ,p}(\Omega)} = \|u\|_{W^{m ,p}(\Omega)} +\sum_{|\alpha|=m}|\p^\alpha u|_{W^{\mu, p}(\Omega)},\\
&|v|_{W^{\mu, p}(\Omega)}:=
 \begin{cases} 
&\left(\iint_{\Omega \times \Omega} \frac{|v(x)-v(y)|^p}{|x-y|^{p\mu+N}} \,\mathrm{d}x\mathrm{d}y\right)^\frac{1}{p}\quad\text{if~} p<\infty,\\
&\text{essup}_{x, y\in \Omega, x\ne y}\frac{|v(x)-v(y)}{|x-y|^\mu}\quad\text{if~} p=\infty.
\end{cases}
\end{aligned}
\eq
We refer to \cref{lem.minimally-smooth-norm} for the justification of this norm equivalence on minimally smooth domains, whose definition is given in \cref{defi:mmdomain}.

When $s\notin \Nn$, $W^{s, p}(\Rr^N)$ coincides with the Besov space $B^s_{p, p}(\Rr^N)$. See \cite[Theorem 2.36]{BCD}. On the other hand, for all $s\ge 0$, $W^{s, 2}(\Rr^N)$ coincides with the Sobolev space 
\bq\label{def:Hs}
H^s(\Rr^N):=\left\{u\in L^2(\Rr^N): \int_{\Rr^N}(1+|\xi|^2)^s|\hat{u}(\xi)|^2<\infty \right\}
\eq
endowed with the equivalent norm 
\[
\| u\|_{H^s(\Rr^N)}:=\left(\int_{\Rr^N}(1+|\xi|^2)^s|\hat{u}(\xi)|^2\right)^\mez.
\]
See \cite[Proposition 1.59]{BCD}.  The definition \eqref{def:Hs} extends to $H^s(\Rr^N)$ for all $s\in \Rr$.
\item For the time dependent fluid domain $\Omega_{f(t)}$, if $g(\cdot, t)$ is defined on $\Omega_{f(t)}$ for a.e. $t\in [0, T]$, we say that $g\in L^\infty([0, T]; W^{s, p}(\Omega_{f(\cdot)}))$ if the form 
\[
\| g\|_{L^\infty([0, T]; W^{s, p}(\Omega_{f(\cdot)}))}:=\text{essup}_{t\in [0, T]} \| g(\cdot, t)\|_{W^{s, p}(\Omega_{f(t)})}
\]
if finite.

\end{itemize}
\subsection{The Dirichlet-Neumann operator}

The Dirichlet-Neumann operator being essential in the analysis of \eqref{eq:f}, we will need the following results on the  boundedness, paralinearization, and   contraction estimates.
\begin{theo}[\cite{ABZ,NP}]\label{theo:DN1}
Let $d\geqslant 1$, $s>1+\frac{d}{2}$ and $\sigma \in [\mez, s]$.  
\begin{enumerate}[(i)]
    \item \cite[Theorem 3.12]{ABZ} If $f\in H^s(\Rr^d)$ and $h\in H^\sigma(\Rr^d)$, then $G[f]h\in H^{\sigma-1}(\Rr^d)$ and 
    \bq\label{est:DN:ABZ000}
    \| G[f]h\|_{H^{\sigma-1}}\leq \mathcal{F}(\| f\|_{H^s})\| h\|_{H^\sigma}
    \eq
    where $\mathcal{F}:\Rr_+\to\Rr_+$ is non-decreasing and depends only on $(d, s, \sigma)$ in the infinite depth case and also on $(\mathfrak{d}, \| \na b\|_{L^\infty})$ in the finite depth case.
    \item \cite[Theorem 3.18]{NP} Let $\delta\in (0, \mez]$ satisfy $\delta<s-\frac{d}{2}-1$. If $f\in H^{s+\mez-\delta}(\Rr^d)$ and $h\in H^\sigma(\Rr^d)$, then we have
    \bq\label{paralin:DN}
       G[f]h= T_{\lambda[f]}(h-T_Bf) - T_V\cdot \nabla f+ R[f]h
    \eq
    where
    \bq\label{def:ld}
    \lambda[f] (x,\xi)=\sqrt{(1+|\nabla f (x)|^2)|\xi|^2 - (\nabla f (x)\cdot \xi)^2},
    \eq
     \bq\label{def:BV}
     V = \nabla h - B\nabla f,\quad B=\frac{\nabla f \cdot \nabla h + G[f]h}{1+|\nabla f|^2},
     \eq
     and the remainder operator $R[f]$ satisfies 
     \bq\label{est:RDN}
     \| R[f]h\|_{H^{\sigma-\mez}}\leq \mathcal{F}(\| f\|_{H^s})(1+\| f\|_{H^{s+\mez-\delta}})\| h\|_{H^\sigma},
     \eq
     where $\mathcal{F}:\Rr_+\to\Rr_+$ is non-decreasing and depends only on $(d,s,\sigma, \delta)$ in the infinite depth case and also on $(\mathfrak{d}, \| \na b\|_{L^\infty})$ in the finite depth case.
\end{enumerate}
\end{theo}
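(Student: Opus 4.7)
\textbf{Proof plan for \cref{theo:DN1}.} The overall strategy is the now-standard paradifferential analysis of the Dirichlet--Neumann operator, first carried out in this generality in \cite{ABZ} and adapted to low regularity and the finite-depth geometry in \cite{NP}. The first step is to flatten the fluid domain via a regularizing diffeomorphism $\ff_f:\Rr^d\times J\to\Omega_f$ of the type $\ff_f(x,z)=(x,z+\eta(x,z))$ where $\eta$ is a smoothing extension of $f$ (a Poisson-like regularization so that $\|\nabla\eta\|_{L^\infty}$ is controlled by $\|f\|_{C^1_*}$ while $\eta$ retains one extra derivative vertically). Writing $v(x,z)=\phi^{(1)}(\ff_f(x,z))$, the Dirichlet problem \eqref{eq.phi1}--\eqref{bc:phi1} transforms into a uniformly elliptic equation
\begin{equation*}
\alpha\partial_z^2 v+\beta\cdot\nabla_x\partial_z v+\gamma\Delta_x v=F_0,\qquad v\vert_{z=0}=h,
\end{equation*}
with coefficients $(\alpha,\beta,\gamma)$ that are rational functions of $\nabla_{x,z}\eta$, plus the appropriate condition at the bottom ($v\to 0$ at $-\infty$ for infinite depth, or a transmuted Neumann condition at $z=-1$ for finite depth, where the latter uses the $H^{s+\mez}$ control of $b$).

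\emph{Step 1: boundedness (part (i)).} By variational methods one gets $\nabla_{x,z}v\in L^2$, and then a bootstrap on the elliptic equation in the flattened strip yields $\nabla_{x,z}v\in H^{\sigma-1}(\Rr^d\times J)$. Classical nonlinear estimates of the form $\|uw\|_{H^{\sigma-1}}\lesssim \|u\|_{H^{s-1}}\|w\|_{H^{\sigma-1}}$ valid when $s>1+d/2$ let us absorb the coefficients into a factor $\mathcal{F}(\|f\|_{H^s})$ multiplying $\|h\|_{H^\sigma}$. Trace theory on the strip then gives the trace estimate for $\partial_z v\vert_{z=0}$ and $\nabla_x v\vert_{z=0}$, which combined with the chain rule identities relating $G[f]h$ to these traces produces \eqref{est:DN:ABZ000}.

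\emph{Step 2: symbolic factorization (part (ii)).} The heart of the matter is the Bony paralinearization of the elliptic operator and its symbolic factorization. After replacing products by paraproducts plus acceptable remainders, the second-order elliptic operator becomes, modulo a $\tq$-smoothing operator,
\begin{equation*}
T_\alpha\partial_z^2+T_\beta\cdot\nabla_x\partial_z+T_\gamma\Delta_x\,=\,T_\alpha(\partial_z-T_a)(\partial_z+T_A)+R,
\end{equation*}
where $a$ and $A$ are first-order symbols with $\RE A(x,\xi)\gtrsim|\xi|$ and $\RE a(x,\xi)\lesssim -|\xi|$, and whose principal parts are determined by solving the Riccati-type symbolic equation at leading and subleading orders. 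The key algebraic identity is that the principal symbol of $A$ evaluated at $z=0$ equals $\lambda[f]$ defined in \eqref{def:ld}. Because $(\partial_z+T_A)$ is a parabolic evolution backward in $z$, one may solve the forward equation starting from $z=-\infty$ (or the bottom), obtaining a remainder controlled by $\mathcal{F}(\|f\|_{H^s})\|h\|_{H^\sigma}$ times $(1+\|f\|_{H^{s+\mez-\delta}})$; the gain of a half-derivative in the remainder $R[f]h$ is exactly the $\tq$-regularity of the symbols, which is available as soon as $\delta<s-d/2-1$.

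\emph{Step 3: the good unknown and the transport term.} The affine term $-T_Bf$ appearing inside $T_{\lambda[f]}$ comes from Alinhac's ``good unknown'' $h-T_Bf$: paralinearizing the Dirichlet condition $v\vert_{z=0}=h$ after the diffeomorphism produces a boundary discrepancy of the form $T_{(\partial_z v)\vert_{z=0}}f$, and since $B=(\partial_z v)\vert_{z=0}$ in the identifications above, this is precisely what must be subtracted to obtain a clean paradifferential equation. The transport-type remainder $-T_V\cdot\nabla f$ is then the contribution of the horizontal part of the gradient pulled back through the change of variables, with $V=\nabla h-B\nabla f$ identified by direct computation. The main obstacle throughout is the bookkeeping of remainders at regularity $H^{s+\mez-\delta}$ rather than $H^s$: every paraproduct and every symbol estimate must be applied with the sharper tame bounds of \cref{sec.paraDiff}, and one has to verify that the symbolic composition error in the factorization is no worse than $H^{\sigma-\mez}$, which forces the restriction $\delta\le \mez$ and $\delta<s-d/2-1$ stated in the theorem.
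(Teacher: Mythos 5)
The paper does not actually prove \cref{theo:DN1}: it is imported verbatim from \cite[Theorem 3.12]{ABZ} and \cite[Theorem 3.18]{NP}, and your outline correctly reproduces the strategy of those references (flattening by a regularizing diffeomorphism, paralinearization of the elliptic equation, factorization into forward and backward parabolic paradifferential operators, and Alinhac's good unknown), which is also exactly the framework this paper reuses in \cref{sec.DN}. Apart from minor normalizations — in the flattened equation of the paper the coefficient $\gamma$ multiplies $\partial_z v$ rather than $\Delta_x v$, and the principal symbol of $A$ at $z=0$ equals $\lambda[f]$ only up to the positive factor $\partial_z\varrho/(1+|\nabla f|^2)$, which is absorbed when converting the trace of $\partial_z v$ into the normal trace defining $G[f]h$ — your plan is essentially the same approach as the cited proofs.
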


We note that by Cauchy-Schwartz's inequality, $\ld[f]$ is an elliptic symbol: for all $(x, \xi)\in \Rr^d\times \Rr^d$ there holds 
\bq\label{elliptic:ld}
\ld[f](x, \xi)\ge |\xi| .
\eq
To obtain contraction estimates we will also use the following results. 
\begin{theo}\cite{NP}\label{thm.NPdiff} Let $d\geq 1$, $s>1+\frac{d}{2}$ and $\delta \in (0,\frac{1}{2}]$ be such that $\delta < s-1-\frac{d}{2}$. Let $h\in H^s(\Rr^d)$ and also $f_1, f_2 \in H^s(\mathbb{R}^d)$.
\begin{enumerate}[(i)]
\item \cite[Theorem 3.24]{NP} For any $\sigma \in [\mez +\delta,s]$, we have  
\begin{equation}
    \label{eq.diffGf}
    G[f_1]h-G[f_2]h=-T_{\lambda[f_2]B_2}(f_1-f_2)-T_{V_2}\cdot \nabla(f_1-f_2) + R_\sharp[f_1, f_2]h
\end{equation}
where
\begin{equation}
    \label{eq.RsharpEst}
    \|R_\sharp[f_1, f_2]h\|_{H^{\sigma -1}} \le \mathcal{F}(\|(f_1, f_2)\|_{H^{s}})\|f_1-f_2\|_{H^{\sigma - \delta}}\|h\|_{H^s}
\end{equation}
and $\mathcal{F} : \mathbb{R}_+\to\mathbb{R}_+$ is a non-decreasing function depending only on $(d, s, \sigma, \delta)$ in the infinite depth case and also on $(\mathfrak{d}, \| \na b\|_{L^\infty})$ in the finite depth case.
\item \cite[Corollary 3.25]{NP} For any $\sigma\in [\mez, s]$, we have
\bq
    \label{eq.diffGfi}
    \| G[f_1]h-G[f_2]h\|_{H^{\sigma-1}}\le C(\| (f_1, f_2)\|_{H^s})\| f_1-f_2\|_{H^\sigma}\| h\|_{H^s},
\eq
where $\mathcal{F} : \mathbb{R}_+\to\mathbb{R}_+$ is a non-decreasing function depending only on $(d, s, \sigma)$ in the infinite depth case and also on $(\mathfrak{d}, \| \na b\|_{L^\infty})$ in the finite depth case.
\end{enumerate}
\end{theo}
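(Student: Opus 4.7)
My plan is to establish (ii) first and then deduce (i) from it combined with the paralinearization identity \eqref{paralin:DN}. The coefficients $B_i$ and $V_i$ appearing in the paralinearization are themselves defined in terms of $G[f_i]h$, so a quantitative control of $G[f_1]h - G[f_2]h$ with respect to $f_1-f_2$ is the natural building block for (i).

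For (ii), I would straighten both fluid domains via diffeomorphisms $\ff_{f_i}\colon \Rr^d\times J\to \Omega_{f_i}$. Setting $\phi_i$ to be the harmonic extension of $h$ in $\Omega_{f_i}$ and $\wt\phi_i:=\phi_i\circ \ff_{f_i}$, each $\wt\phi_i$ solves a second-order elliptic equation $\mathcal{L}_{f_i}\wt\phi_i=0$ on the fixed slab $\Rr^d\times J$ with Dirichlet data $h$ on $\{z=0\}$, the coefficient matrix of $\mathcal{L}_{f_i}$ depending smoothly on $\nabla f_i$ (and on $\nabla b$ in the finite depth case). The difference $w:=\wt\phi_1-\wt\phi_2$ then solves $\mathcal{L}_{f_2}w=(\mathcal{L}_{f_2}-\mathcal{L}_{f_1})\wt\phi_1$ with zero Dirichlet data, and the right-hand side is bilinear, involving $\nabla(f_1-f_2)$ and $\nabla\wt\phi_1$. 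Applying tame elliptic regularity for $\mathcal{L}_{f_2}$ on the reference slab in the $H^\sigma$-scale, using the boundedness of $\wt\phi_1$ provided by \eqref{est:DN:ABZ000}-type bounds, and taking the trace of the normal component of $\nabla\wt\phi_i|_{z=0}$ to recover $G[f_i]h$, would yield \eqref{eq.diffGfi} with a constant depending non-decreasingly on $\|(f_1,f_2)\|_{H^s}$.

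For (i), I would apply the paralinearization \eqref{paralin:DN} to both $f_1$ and $f_2$, subtract, and extract the principal part. Using the algebraic decomposition
\begin{equation*}
T_{B_2}f_2 - T_{B_1}f_1 = -T_{B_2}(f_1-f_2) - T_{B_1-B_2}f_1,
\end{equation*}
the leading contribution reads $-T_{\lambda[f_2]}T_{B_2}(f_1-f_2)-T_{V_2}\cdot\nabla(f_1-f_2)$, and the standard symbolic calculus (with gain depending on the Zygmund regularity of $B_2$ and $\lambda[f_2]$) rewrites the first summand as $-T_{\lambda[f_2]B_2}(f_1-f_2)$ up to a remainder of order $\sigma-1$. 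All remaining contributions, namely
\begin{equation*}
T_{\lambda[f_1]-\lambda[f_2]}(h-T_{B_1}f_1),\quad T_{\lambda[f_2]}T_{B_1-B_2}f_1,\quad T_{V_1-V_2}\cdot\nabla f_1,\quad R[f_1]h - R[f_2]h,
\end{equation*}
are to be absorbed into $R_\sharp[f_1,f_2]h$. The factors $\lambda[f_1]-\lambda[f_2]$, $B_1-B_2$, $V_1-V_2$ are estimated through part (ii) applied at the regularity index $\sigma-\delta$, combined with product and composition rules in the paradifferential classes $\Gamma^m_\rho$; the restriction $\delta<s-1-\frac{d}{2}$ is precisely what ensures that the relevant Zygmund norms embed correctly to apply these multiplier laws.

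The main obstacle will be the contraction bound for the remainder: $R[f_1]h-R[f_2]h$ must be controlled in $H^{\sigma-1}$ by $\|f_1-f_2\|_{H^{\sigma-\delta}}\|h\|_{H^s}$. Because the proof of \eqref{est:RDN} proceeds through successive Bony paralinearizations of $G[f]h$ together with commutator and composition estimates, this requires redoing each such step in a differentiated manner, systematically gaining $\delta$ derivatives on the $f$-difference. Concretely, every term in the original paralinearization that is nonlinear in $f$ must be replaced by its first variation in $f$ plus a higher-order error, with the variation carrying the quantitative dependence on $f_1-f_2$. This bookkeeping, more than any single estimate, is the technical heart of the argument and the reason for the auxiliary parameter $\delta$.
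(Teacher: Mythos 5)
First, a point of context: the paper does not prove this theorem at all — it is imported verbatim from \cite{NP} (Theorem 3.24 and Corollary 3.25 there) — so your proposal has to be measured against the argument in that reference. Your idea of proving (ii) first by a direct difference-of-elliptic-problems argument and only then deducing (i) reverses the logic of \cite{NP}, where (ii) is a corollary of the paralinearized formula (i). The (ii)-first route is in principle viable — it is essentially how the present paper obtains its own contraction estimates in \cref{prop.diffEstimateSobolev}, by putting the coefficient differences in the high slot of the $X^\sigma/Y^\sigma$ product laws (\cref{lemm.estXandY}, \cref{lemm.estABC}) and $\nabla_{x,z}(\phi_1\circ\ff_{f_1})$ in $X^{s-1}$ — but your sketch omits the ingredients that make it close: the base-case variational contraction bound for the difference in $X^{-\mez}$ (the analogue of \cref{lemm.FirstStepIteration}, i.e. \cite[Lemma 3.27]{NP}), to which the bootstrap of \cref{prop.ABZ} must be anchored, and at the endpoint $\sigma=\mez$ the fact that the normal trace only makes sense through the variational formulation, not through ``elliptic regularity plus trace.''

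The genuine gap is in (i). Subtracting the two instances of \eqref{paralin:DN} and absorbing $T_{\lambda[f_1]-\lambda[f_2]}(h-T_{B_1}f_1)$, $T_{\lambda[f_2]}T_{B_1-B_2}f_1$ and $T_{V_1-V_2}\cdot\nabla f_1$ into $R_\sharp$ is fine (and your appeal to the negative-Zygmund calculus is indeed why $\delta<s-1-\frac d2$ enters). But the remaining term $R[f_1]h-R[f_2]h$ cannot be bounded by $\|f_1-f_2\|_{H^{\sigma-\delta}}\|h\|_{H^s}$ from the statement \eqref{est:RDN}: that estimate carries no information whatsoever about how $R[f]h$ depends on $f$, and no blackbox manipulation of it yields a contraction in $f_1-f_2$. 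You acknowledge this and propose to ``redo each step in a differentiated manner,'' but that re-derivation \emph{is} the theorem: in \cite{NP} the proof of Theorem 3.24 never subtracts the two paralinearization formulas; it paralinearizes the difference directly, working with the elliptic system satisfied by the difference of the straightened potentials (a good-unknown argument for the difference) and propagating $X^{\sigma-\delta}$-type bounds so that the $\delta$-gain is built in from the start, with (ii) then falling out by applying \cref{theo:sc} and \eqref{boundpara} to the principal terms. As written, your argument for (i) reduces the statement to its hardest component and leaves that component unproved.
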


\subsection{Elliptic, parabolic, and transport estimates}

We define the following homogeneous Sobolev spaces 
\bq
\dot H^\sigma(\Rr^d)= \{f\in \mathcal{S}'(\Rr^d)\cap L^2_{\text{loc}}(\Rr^d): |D|^\sigma f\in L^2(\Rr^d)\}/~\Rr,
\eq
\bq\label{def:dotHs}
  H^{1,\sigma}(\Rr^d)=\{f\in \mathcal{S}'(\Rr^d)\cap L^2_{\text{loc}}(\Rr^d): \na f\in H^{\sigma-1}(\Rr^d)\}/~\Rr,
\eq
which are endowed with their natural norms. 

We start with variational estimates for the elliptic problems \eqref{eq.phi1} and \eqref{eq.phi2}.
\begin{lemm}\label{lemm.variationalEstimate} Assume that $f\in W^{1, \infty}(\Rr^d)$ and $b\in W^{1, \infty}(\Rr^d)$. 
\begin{enumerate}[(i)]
\item Set $W= \dot H^\mez(\Rr^d)$ in the infinite depth and $W= H^{1, \mez}(\Rr^d)$ in the finite depth. If $h\in W$, then there exists a unique variational solution $\phi^{(1)} \in \dot H^1(\Omega_{f})$ to \eqref{eq.phi1}-\eqref{bc:phi1} such that 
\bq\label{vari:phi1}
    \| \na_{x, y}\phi^{(1)}\|_{L^2(\Omega_f)}\le \mathcal{F}(\| \na f\|_{L^\infty})\| h\|_W,
\eq 
where $\mathcal{F} : \Rr_+ \to \Rr_+$ is non-decreasing, and depends on $d$, and also on $(\mathfrak{d},\| \na b\|_{L^\infty})$ in the finite depth case. Moreover, in the finite-depth case, if $h\in H^\mez(\Rr^d)$ then 
\bq\label{vari:phi1:Poincare}
    \| \phi^{(1)}\|_{H^1(\Omega_f)}\le \mathcal{F}(\| \na f\|_{L^\infty})\| h\|_{H^\mez(\Rr^d)}. 
\eq
\item If $k\in L^2(\Omega_f)$, then there exists a unique variational solution $\phi^{(2)} \in \dot H^1(\Omega_f)$ to \eqref{eq.phi2}-\eqref{bc:phi2} such that 
\bq\label{vari:phi2}
    \| \na_{x, y}\phi^{(2)}\|_{L^2(\Omega_f)}\le \| k\|_{L^2(\Omega_f)}.
\eq
\end{enumerate}
Moreover, in the finite-depth case there holds 
\bq\label{vari:phi2:Poincare}
\| \phi^{(2)}\|_{L^2(\Omega_f)}\le \| f-b\|_{L^\infty(\Rr^d)}\| \p_y \phi^{(2)}\|_{L^2(\Omega_f)}. 
\eq
\end{lemm}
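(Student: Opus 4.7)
I will treat both items via the classical Riesz representation scheme on the Hilbert space $\mathcal{H}_0 := \{u \in \dot H^1(\Omega_f) : u|_{\Sigma_f} = 0\}$, endowed with the gradient inner product $(u,v) \mapsto \int_{\Omega_f} \na u \cdot \na v \, \d x\d y$. That this is a Hilbert space (the gradient semi-norm being a genuine norm on $\mathcal{H}_0$) follows from a trace inequality $\| u|_{\Sigma_f}\|_{\dot H^{\mez}} \les \mathcal{F}(\| \na f\|_{L^\infty}) \| \na u\|_{L^2}$, obtained by transporting the analogous inequality on the flat strip $\Rr^d \times J$ via a bi-Lipschitz flattening diffeomorphism $\Phi_f: \Rr^d \times J \to \Omega_f$ sending $\{z=0\}$ to $\Sigma_f$ (and $\{z=-1\}$ to $\Sigma_b$ in the finite-depth case); the Jacobians of $\Phi_f^{\pm 1}$ are controlled by $\| \na f\|_{L^\infty}$ and, in the finite-depth case, also by $\mathfrak{d}$ and $\| \na b\|_{L^\infty}$.

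For (i), I plan to reduce to homogeneous Dirichlet data by setting $\underline{h} := \tilde h \circ \Phi_f^{-1}$, where $\tilde h$ is the Poisson extension of $h$ on $\Rr^d \times J$: in the infinite-depth case I take $\tilde h(x,z) = e^{z|D_x|}h(x)$, for which $\|\na_{x,z} \tilde h\|_{L^2(\Rr^d \times (-\infty,0))} \sim \| h\|_{\dot H^{\mez}}$; in the finite-depth case I take the Poisson extension on the strip with Neumann condition at $z=-1$, giving $\| \tilde h\|_{H^1(\Rr^d \times (-1,0))} \les \| h\|_{H^{\mez}(\Rr^d)}$. A change of variables then yields
\begin{equation*}
\underline{h}|_{\Sigma_f} = h, \quad \| \na_{x,y} \underline{h}\|_{L^2(\Omega_f)} \le \mathcal{F}(\| \na f\|_{L^\infty}) \| h\|_W.
\end{equation*}
Writing $\psi := \phi^{(1)} - \underline{h}$, the problem becomes: find $\psi \in \mathcal{H}_0$ with $\int \na \psi \cdot \na \varphi = -\int \na \underline{h} \cdot \na \varphi$ for all $\varphi \in \mathcal{H}_0$. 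The right-hand side is a bounded linear functional of norm at most $\| \na \underline{h}\|_{L^2}$, so Riesz representation gives a unique $\psi$ with $\| \na \psi\|_{L^2} \le \| \na \underline{h}\|_{L^2}$, whence \eqref{vari:phi1}.

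For (ii), testing $\Delta \phi^{(2)} = -\p_y k$ against $\varphi \in \mathcal{H}_0$ and integrating by parts, the $\Sigma_f$ boundary contributions vanish since $\varphi|_{\Sigma_f}=0$. In the finite-depth case, the Neumann datum $\p_\nu \phi^{(2)} = -\nu_y k$ on $\Sigma_b$ produces a boundary term $-\int_{\Sigma_b} \nu_y k \varphi$ from Green's formula, which is exactly cancelled by the boundary term $-\int_{\Sigma_b} \nu_y k \varphi$ arising from integrating $\int_{\Omega_f} \p_y k \cdot \varphi$ by parts. What remains is
\begin{equation*}
\int_{\Omega_f} \na \phi^{(2)} \cdot \na \varphi \, \d x\d y = -\int_{\Omega_f} k \, \p_y \varphi \, \d x\d y \quad \forall \varphi \in \mathcal{H}_0,
\end{equation*}
with right-hand side bounded by $\| k\|_{L^2}\| \na \varphi\|_{L^2}$. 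Riesz yields $\phi^{(2)} \in \mathcal{H}_0$ with $\| \na \phi^{(2)}\|_{L^2} \le \| k\|_{L^2}$, proving \eqref{vari:phi2}. Estimates \eqref{vari:phi1:Poincare} and \eqref{vari:phi2:Poincare} in the finite-depth case then follow from the one-dimensional fundamental theorem of calculus: since $\phi^{(2)}|_{\Sigma_f}=0$, writing $\phi^{(2)}(x,y) = -\int_y^{f(x)} \p_y \phi^{(2)}(x,y') \, \d y'$ and applying Cauchy--Schwarz in $y'$ followed by integration in $y$ yields \eqref{vari:phi2:Poincare}; \eqref{vari:phi1:Poincare} combines this bound applied to $\psi = \phi^{(1)}-\underline{h}$ with the $H^1(\Omega_f)$-bound on the extension $\underline{h}$ available when $h \in H^{\mez}(\Rr^d)$.

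The main obstacle is the lifting construction in the opening step: producing $\underline{h}$ with the tame dependence $\mathcal{F}(\| \na f\|_{L^\infty})$ and the correct identification of the homogeneous trace space $W$ in each geometry. Since $\Phi_f$ is only bi-Lipschitz, one must verify carefully that composition preserves the relevant (homogeneous) gradient norms with a constant depending only on $\| \na f\|_{L^\infty}$ (and, in the finite-depth case, on $\mathfrak{d}$ and $\| \na b\|_{L^\infty}$). Once this is in hand, the remainder of the argument is standard Riesz representation together with a vertical Poincaré inequality.
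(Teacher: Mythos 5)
Your treatment of part (ii) coincides with the paper's own proof: the paper integrates by parts against $\phi^{(2)}$ itself, using the Neumann condition at the bottom and $\phi^{(2)}\vert_{\Sigma_f}=0$, with the bottom boundary terms cancelling exactly as in your Green's-formula computation, and arrives at $\|\na_{x,y}\phi^{(2)}\|_{L^2}^2=-\int_{\Omega_f} k\,\p_y\phi^{(2)}$, which is precisely your weak formulation tested with $\varphi=\phi^{(2)}$; the estimate \eqref{vari:phi2:Poincare} is likewise obtained there as the vertical Poincar\'e inequality. For part (i) the paper gives no argument but cites \cite[Propositions 3.4 and 3.6]{NP}, so your lifting-plus-Riesz scheme is a reconstruction of that cited result; in the infinite-depth case your construction (Poisson extension $e^{z|D_x|}h$ composed with the shift $(x,z)\mapsto(x,z+f(x))$, which has unit Jacobian and gradient controlled by $1+\|\na f\|_{L^\infty}$) is the standard one and works as described.

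There is, however, one concrete gap in the finite-depth case of (i). There the space is $W=H^{1,\mez}(\Rr^d)$, whose norm is $\|\na h\|_{H^{-\mez}}$ (modulo constants), strictly weaker than $\|h\|_{H^{\mez}}$. Your lifting is only estimated by $\|\tilde h\|_{H^1(\Rr^d\times(-1,0))}\les \|h\|_{H^{\mez}(\Rr^d)}$, and this chain yields the weaker inequality \eqref{eq.X12-est} recorded in the remark after the lemma, not \eqref{vari:phi1} with the $W$-norm: low frequencies of $h$ enter $\|h\|_{H^{\mez}}$ at full strength but are nearly invisible in $\|h\|_{W}$, so the asserted conclusion $\|\na_{x,y}\underline{h}\|_{L^2(\Omega_f)}\le \mathcal{F}(\|\na f\|_{L^\infty})\|h\|_W$ does not follow from the bound you state. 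To close this you need a gradient-only estimate $\|\na_{x,z}\tilde h\|_{L^2(\Rr^d\times(-1,0))}\les \|\na h\|_{H^{-\mez}}$ for your extension; this does hold for the harmonic extension in the flat strip with homogeneous Neumann condition at $z=-1$ (at low frequencies that extension is nearly independent of $z$, so its vertical derivative is of size $|\xi|^2$ and its horizontal gradient of size $|\xi|$, matching $\|\na h\|_{H^{-\mez}}$), but it must be verified, and one should also track that the subsequent change of variables and the Poincar\'e step behind \eqref{vari:phi1:Poincare} keep the constants within the dependence allowed in the statement. This is exactly the portion of the lemma that the paper outsources to \cite{NP}.
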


\begin{proof} 
Part (i) was proven in \cite[Propositions 3.4 and 3.6]{NP}. As for (ii), existence and uniqueness follows from a similar argument, so we only focus on proving \eqref{vari:phi2}. We only provide computations assuming smoothness of $\phi_2$. Let us integrate by parts as follows, using the boundary conditions for $\phi^{(2)}$, so that we find
\begin{align*}
    \int_{\Omega_f}\phi^{(2)}(-\Delta_{x, y}\phi^{(2)})&=\int_{\Omega_f}|\na_{x, y}\phi^{(2)}|^2-\int_{\Rr^d} (\phi^{(2)}\p_\nu\phi^{(2)})(x, b(x))\sqrt{1+|\na b(x)|^2}dx\\
    &=\int_{\Omega_f}|\na_{x, y}\phi^{(2)}|^2-\int_{\Rr^d} (\phi^{(2)}k)(x, b(x))dx.
\end{align*}
On the other hand, using Fubini's theorem we find 
\begin{align*}
    \int_{\Omega_f}\phi^{(2)}\p_y k&=\int_{\Rr^d} \int_{b(x)}^{f(x)}(\phi^{(2)}\p_y k)(x, y)dydx\\
    &=-\int_{\Rr^d} \int_{b(x)}^{f(x)}(\p_y\phi^{(2)}k)(x, y)dydx-\int_{\Rr^d}(\phi^{(2)}k)(x, b(x))dx,
\end{align*}
where we have used $\phi^{(2)}(x,f(x))=0$.
Using $-\Delta_{x, y}\phi^{(2)}=\p_yk$ and the preceding formulas, we deduce 
\[
    \|\na_{x, y}\phi^{(2)}\|_{L^2(\Omega_f)}^2=-\int_{\Omega_f} \p_y\phi^{(2)}k\le \| \p_y\phi^{(2)}\|_{L^2(\Omega_f)}\| k\|_{L^2(\Omega_f)}
\]
which in turn implies \eqref{vari:phi2}. Finally, since $\phi^{(2)}=0$ on $\{y=f(x)\}$, \eqref{vari:phi2:Poincare} is the Poincar\'e inequality for the finite-depth domain $\Omega_f$.
\end{proof}

\begin{rema} Note that as $H^{\mez} \hookrightarrow W$, (\textit{i}) implies 
\begin{equation}
    \label{eq.X12-est}
    \|\nabla_{x,y}\phi_1\|_{L^2(\Omega_f)} \le C(\|\na f\|_{L^{\infty}})\|h\|_{H^{\mez}(\Rr^d)}. 
\end{equation}
\end{rema}

Estimates for first-oder parabolic equations are stated in the following spaces: 
\bq\label{def:XY}
    \begin{aligned}
    &X^{\sigma}(I) = C_z^0(I,H^{\sigma}(\mathbb{R}^d) \cap L^2_z(I, H^{\sigma + \frac{1}{2}}(\Rr^d)),\\ 
    &Y^{\sigma}(I)= L^{1}_z(I,H^{\sigma}(\mathbb{R}^d)) + L^2_z(I, H^{\sigma - \frac{1}{2}}(\Rr^d)),
    \end{aligned}
\eq
endowed with their natural norms. 

\begin{prop}[Proposition 2.18 in \cite{ABZ} and Proposition~3.2 in \cite{WZ17}]\label{prop.pseudoParabolic} Let $0 < \rho < 1$, $\sigma \in \mathbb{R}$, $I=[z_0,z_1] \subset \mathbb{R}$ and $p \in \Gamma^1_{\rho}(I \times \mathbb{R}^d)$ such that there exists $c>0$ such that for all $(z,x,\xi) \in I \times \mathbb{R}^d \times \mathbb{R}^d$ there holds $\operatorname{Re} p (z,x,\xi) \geqslant c |\xi|$. Then for any $f \in Y^\sigma(I)$ and any $w_0 \in H^\sigma$ there exists a unique solution $w \in X^{\sigma}(I)$ to 
\bq\label{eq:para-parabolic}
\begin{array}{rcll}
    \partial_z w + T_pw &=& f &\text{in } I \times \mathbb{R}^d \\ 
    w(0,x)&=&w_0(x) &\text{for } x\in\mathbb{R}^d.
\end{array}
\eq
Moreover, $w$ obeys the bound
\bq\label{parabolicest}
    \|w\|_{{X}^\sigma(I)} \le \cF(\mathcal{M}^1_{\rho}(p)) \left( \|f\|_{{Y}^\sigma(I)} + \|w_0\|_{H^\sigma} + \|w\|_{L^2(I,H^\sigma)}\right),
\eq
where $\cF : \Rr_+ \to \Rr_+$ is a non-decreasing function depending only on $(d,\sigma, \rho)$.  
\end{prop}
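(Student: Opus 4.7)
The plan is to prove this via a standard energy method in paradifferential calculus, with the gain of $\mez$ derivative in $L^2_z$ coming from a Gårding-type inequality for $T_p$. The approach is to establish the a priori estimate first and then handle existence by a parabolic regularization.

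For the a priori estimate, I would apply $\langle D\rangle^\sigma$ to the equation and set $W := \langle D\rangle^\sigma w$, so that
\[
\partial_z W + T_p W = \langle D\rangle^\sigma f - [\langle D\rangle^\sigma, T_p] w.
\]
Testing against $W$ in $L^2(\Rr^d)$ and taking real parts gives
\[
\tfrac{1}{2}\tfrac{d}{dz}\|W\|_{L^2}^2 + \RE\langle T_p W, W\rangle_{L^2} = \RE\langle \langle D\rangle^\sigma f, W\rangle_{L^2} - \RE\langle [\langle D\rangle^\sigma, T_p]w, W\rangle_{L^2}.
\]
The key step is a Gårding-type inequality arising from the ellipticity $\RE p \geq c|\xi|$:
\[
\RE\langle T_p W, W\rangle_{L^2} \geq \tfrac{c}{2}\||D|^{\mez}W\|_{L^2}^2 - C(\cM^1_\rho(p))\|W\|_{L^2}^2,
\]
which is the source of the $L^2_z H^{\sigma+\mez}$ parabolic gain. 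The commutator $[\langle D\rangle^\sigma, T_p]$ is of order $\sigma$ by symbolic calculus (since $p\in \Gamma^1_\rho$ with $\rho>0$), so its contribution is $\lesssim \|w\|_{H^\sigma}\|W\|_{L^2}$. For the source, I would split $f = f_1+f_2$ with $f_1\in L^1_z H^\sigma$ and $f_2\in L^2_z H^{\sigma-\mez}$, bounding the two pieces by $\|f_1\|_{H^\sigma}\|W\|_{L^2}$ and $\|f_2\|_{H^{\sigma-\mez}}\||D|^{\mez}W\|_{L^2}$ respectively. Integrating in $z$ over $I$ and absorbing $\||D|^{\mez}W\|_{L^2}^2$ into the parabolic term via Young's inequality yields the bound \eqref{parabolicest}.

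For existence, I would regularize by adding $-\eps \Delta w$ to the left-hand side and solve $\partial_z w_\eps + T_p w_\eps - \eps\Delta w_\eps = f$ with $w_\eps(z_0) = w_0$. The regularized problem is genuinely parabolic and admits a solution in, say, $C^0_z H^\sigma \cap L^2_z H^{\sigma+1}$ by a semigroup or Galerkin argument. The energy estimate above remains uniform in $\eps$ because the new contribution $\eps\langle -\Delta W, W\rangle = \eps\|\nabla W\|_{L^2}^2$ has the favorable sign, so a weak-$\ast$ compactness argument produces a limit $w \in X^\sigma(I)$ solving \eqref{eq:para-parabolic}. Uniqueness is immediate from the a priori estimate applied to the difference of two solutions with the same data.

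The main obstacle is the quantitative Gårding-type inequality: it must be sharp enough to yield a constant of the form $\cF(\cM^1_\rho(p))$ and must hold under the minimal regularity $\rho<1$ of the symbol, which is exactly where the positive-order ellipticity $\RE p \geq c|\xi|$ enters as a smoothing mechanism. The other delicate point is the split treatment of the two summands in $Y^\sigma$: the pairing of $L^2_z H^{\sigma-\mez}$ with $L^2_z H^{\sigma+\mez}$ dictates that the parabolic gain is precisely $\mez$ derivative in $L^2_z$, matching the structure of $X^\sigma$. Once those ingredients are in hand, the rest reduces to a routine energy argument.
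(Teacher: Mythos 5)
The paper does not give its own proof of this proposition; it is quoted from \cite{ABZ} (Proposition 2.18) and \cite{WZ17} (Proposition 3.2), with \cref{rema:parabolicI} singling out the $|I|$-independence of $\cF$ as the only delicate point (supplied by \cite{WZ17}). Your overall strategy — $H^\sigma$ energy estimate via a G\aa rding inequality, keeping $\|w\|_{L^2(I,H^\sigma)}$ on the right-hand side rather than invoking Gr\"onwall, then existence by $-\eps\Delta$ regularization — is the standard one and does, as structured, yield an $|I|$-independent constant.

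There is, however, a genuine error in the commutator step. You assert that $[\langle D\rangle^\sigma, T_p]$ is of order $\sigma$ ``since $p\in\Gamma^1_\rho$ with $\rho>0$.'' This is false for $\rho<1$: by \cref{theo:sc}~(ii), $T_{\langle\xi\rangle^\sigma}T_p-T_{\langle\xi\rangle^\sigma p}$ and $T_pT_{\langle\xi\rangle^\sigma}-T_{p\langle\xi\rangle^\sigma}$ are each of order $\sigma+1-\rho$, so the commutator is of order $\sigma+1-\rho>\sigma$. To get order $\sigma$ one would need $\rho\ge 1$, i.e.\ a Lipschitz-in-$x$ symbol, which the proposition does not grant. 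The correct treatment is to distribute the extra $1-\rho$ derivatives between the two factors in the pairing, e.g.
\[
\big|\langle [\langle D\rangle^\sigma,T_p]w,\,W\rangle\big|\lesssim \|w\|_{H^{\sigma+\mez}}\,\|w\|_{H^{\sigma+\mez-\rho}},
\]
interpolate $\|w\|_{H^{\sigma+\mez-\rho}}$ between $\|w\|_{H^\sigma}$ and $\|w\|_{H^{\sigma+\mez}}$, and absorb the $\|w\|_{H^{\sigma+\mez}}$ contributions by Young into the parabolic gain. This is exactly where $\rho>0$ is used essentially; for $\rho=0$ the absorption fails. A second, more minor, gap is in uniqueness: the a priori estimate has $\|w\|_{L^2(I,H^\sigma)}$ on the right, so with $f=0$, $w_0=0$ it only yields $\|w\|_{X^\sigma}\le C\|w\|_{L^2(I,H^\sigma)}$, which is not immediately $w=0$. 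One should either run the estimate on sufficiently short subintervals (so the constant in front of $\|w\|_{L^2_zH^\sigma}$ is $<1$) or use a Gr\"onwall-type argument, which is harmless for uniqueness since no $|I|$-independence is needed there.
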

\begin{rema}\label{rema:parabolicI}
In order to obtain results for the infinite-depth case, a crucial point for us is that the constant $\cF$ in \eqref{parabolicest} does not depend on the interval $I$. In \cite[Proposition 2.18]{ABZ}, the constant $\cF$ depends on $|I|$, and this dependence was dropped in \cite[Proposition 3.2]{WZ17}, in which the authors prove \eqref{parabolicest} in the stronger setting of Chermin-Lerner spaces.
\end{rema} 

For the proof of \cref{thm.main}, we will reduce \eqref{eq:f} to an equation of the form
\begin{equation}
    \label{eq.parabolic}
    \partial_t f + T_af + T_U\cdot \nabla f = F,
\end{equation}
where $U=U(x, t)$, and $a(x, \xi, t)$ are time-dependent paradifferential symbols. 
Energy estimates for \eqref{eq.parabolic} are based on the following lemma.

\begin{lemm}\label{lemm:parabolicest} Assume that for some  $\delta \in (0, 1)$ and $\mathfrak{a}>0$, we have  $U\in L^\infty([0, T], W^{\delta, \infty}(\mathbb{R}^d))$ and that $a(t, x, \xi)\in L^\infty([0, T], \Gamma^1_\delta)$ is such that for all $x,\xi \in \Rr^d$ there holds $\operatorname{Re}a(x,\xi) \geqslant \mathfrak{a} |\xi| >0$. Assume that $F\in L^2([0, T],H^{s-\mez}(\Rr^d))$ for some $s\in \Rr$ and assume that $f\in C([0, T], H^s(\Rr^d)) \cap L^2([0, T], H^{s+\mez}(\Rr^d))$ is a solution of \eqref{eq.parabolic}. 
Then there exists function $C: \Rr_+^3\to\Rr_+$ depending only on $(\delta, d)$, non-decreasing in each variable and such that for all $t\in(0,T)$ there holds  
\bq\label{parabolic:diff:est}
\begin{aligned}
\mez\frac{d}{dt}\| f(t)\|_{H^s}^2&\le -\frac{1}{C(\|U\|_{L^\infty_TW^{\delta,\infty}}, \| a\|_{L^\infty_TM^1_\delta}, \mathfrak{a})}\| f(t)\|_{H^{s+\mez}}^2\\
&\qquad+C(\|U\|_{L^\infty_TW^{\delta,\infty}}, \| a\|_{L^\infty_TM^1_\delta}, \mathfrak{a})\|f(t)\|_{H^s}^2+\| F(t)\|_{H^{s-\mez}}\| f(t)\|_{H^{s+\mez}}.
\end{aligned}
\eq
\end{lemm}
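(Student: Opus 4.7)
The strategy is a direct $H^s$ energy estimate, performed at the paradifferential level. I would start by differentiating $\tfrac{1}{2}\|f(t)\|_{H^s}^2$ in time and substituting \eqref{eq.parabolic} to get
\begin{equation*}
\tfrac{1}{2}\tfrac{d}{dt}\|f\|_{H^s}^2 = -\mathrm{Re}\langle T_a f, f\rangle_{H^s} - \mathrm{Re}\langle T_U\cdot\nabla f, f\rangle_{H^s} + \mathrm{Re}\langle F, f\rangle_{H^s}.
\end{equation*}
To work at the $L^2$ level, I would introduce $u := \langle D\rangle^s f$ and rewrite each inner product as an $L^2$ pairing, modulo commutators $[\langle D\rangle^s, T_a]f$ and $[\langle D\rangle^s, T_U\cdot\nabla]f$. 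By the paradifferential symbolic-calculus rules (available because $a\in \Gamma^1_\delta$ and $U\in W^{\delta,\infty}$), both commutators have order $s+1-\delta$, so their pairing against $u$ is bounded by $C(\|a\|_{M^1_\delta},\|U\|_{W^{\delta,\infty}})\|f\|_{H^{s+(1-\delta)/2}}^2$.

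The dissipative term is the heart of the matter. Using the ellipticity assumption $\mathrm{Re}\, a(x,\xi)\geq \mathfrak{a}|\xi|$ and the sharp Gårding inequality for paradifferential operators of order $1$ with $\delta>0$ regularity (which is exactly the ingredient underlying \cref{prop.pseudoParabolic}), I would derive
\begin{equation*}
\mathrm{Re}\langle T_a u, u\rangle_{L^2} \;\geq\; \tfrac{1}{C(\|a\|_{M^1_\delta},\mathfrak{a})}\|u\|_{H^{1/2}}^2 \;-\; C(\|a\|_{M^1_\delta},\mathfrak{a})\|u\|_{L^2}^2,
\end{equation*}
which translates to the dissipation $\|f\|_{H^{s+1/2}}^2$ with the correct sign, up to an $\|f\|_{H^s}^2$ remainder.

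For the transport term, the key observation is that since $U$ is real, the principal symbol $iU(x)\cdot\xi$ of $T_U\cdot\nabla$ is purely imaginary. Computing the adjoint via the paradifferential adjoint formula with limited regularity $\delta\in(0,1)$ yields
\begin{equation*}
(T_U\cdot\nabla)^* + T_U\cdot\nabla = R, \qquad \|R\|_{H^\tau\to H^{\tau-(1-\delta)}}\leq C(\|U\|_{W^{\delta,\infty}}),
\end{equation*}
so that the skew-adjoint leading part drops out of $\mathrm{Re}\langle T_U\cdot\nabla u,u\rangle_{L^2}$, leaving
\begin{equation*}
|\mathrm{Re}\langle T_U\cdot\nabla u,u\rangle_{L^2}|\leq C(\|U\|_{W^{\delta,\infty}})\|u\|_{H^{(1-\delta)/2}}^2=C(\|U\|_{W^{\delta,\infty}})\|f\|_{H^{s+(1-\delta)/2}}^2.
\end{equation*}
The forcing term is controlled by Cauchy--Schwarz in dual spaces, $|\langle F, f\rangle_{H^s}| \leq \|F\|_{H^{s-1/2}}\|f\|_{H^{s+1/2}}$.

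Summing these bounds, all lower-order contributions of the form $\|f\|_{H^{s+(1-\delta)/2}}^2$ can be absorbed into the dissipation $\|f\|_{H^{s+1/2}}^2$ by the interpolation
\begin{equation*}
\|f\|_{H^{s+(1-\delta)/2}}^2 \leq \eta\|f\|_{H^{s+1/2}}^2 + C_\eta \|f\|_{H^s}^2, \qquad \eta>0 \text{ arbitrary},
\end{equation*}
which yields \eqref{parabolic:diff:est} after choosing $\eta$ small depending on $C(\|U\|_{W^{\delta,\infty}},\|a\|_{M^1_\delta},\mathfrak{a})$. The main obstacle is handling the commutators and the skew-symmetry of the transport term with only $\delta$-regularity on the coefficients: the paradifferential adjoint and composition formulas must be applied in their tame, low-regularity forms so as to produce gains of exactly $\delta$ derivatives, which after interpolation is sufficient to avoid spoiling the dissipation. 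Everything else is then a bookkeeping exercise assembling the constants in the form stated.
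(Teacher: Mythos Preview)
Your proposal is correct and follows essentially the same route as the paper's proof: commute the equation with $\langle D\rangle^s$, use paradifferential symbolic calculus to control the commutators with a $\delta$-derivative gain, exploit ellipticity (G\aa rding) for the dissipation and skew-adjointness of the transport term, then interpolate and absorb. The paper presents this more tersely, writing the intermediate inequality as $\|f_s\|_{H^{1/2}}\|f_s\|_{H^{1/2-\delta}}$ rather than $\|f\|_{H^{s+(1-\delta)/2}}^2$ and deferring the detailed derivation to \cite[Section~4.1.3]{NP}, but the substance is the same.
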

\begin{proof}  Set  $f_s=\langle D_x\rangle^sf$ and $F_s=\langle D_x\rangle^sF$. By commuting \eqref{eq.parabolic} with $\langle D_x\rangle^s$, using symbolic calculus for paradifferential operators given in \cref{theo:sc}, and performing an $L^2$ estimate, one can show that 
\bq\label{L2est:paraboliceq:1}
    \mez\frac{d}{dt}\| f_s\|_{L^2}^2\le -\frac{1}{C}\| f_s\|_{H^\mez}^2+C\|f_s\|_{H^\mez}\| f_s\|_{H^{\mez-\delta}}+\| f_s\|_{H^\mez}\| F_s\|_{H^{-\mez}},\quad 0<t<T,
\eq
where $C=C(\|U\|_{L^\infty_TW^{\delta,\infty}}, \| a\|_{L^\infty_TM^1_\delta}, \mathfrak{a})$. A detailed proof of \eqref{L2est:paraboliceq:1} can be found in \cite[Section 4.1.3]{NP}. By interpolation, there is $\theta = \theta(\delta) \in (0,1)$ such that 
\[
    \|f_s\|_{H^\mez}\| f_s\|_{H^{\mez-\delta}}\les \|f_s\|^{1+\tt}_{H^\mez}\| f_s\|_{L^2}^{1-\tt}.
\]
Thus applying Young's inequality, we obtain  (with a different constant $C$)
\[
    \mez\frac{d}{dt}\| f_s\|_{L^2}^2\le -\frac{1}{C}\| f_s\|_{H^\mez}^2+C\|f_s\|_{L^2}^2+\| F_s\|_{H^{-\mez}}\| f_s\|_{H^{\mez}},\quad 0<t<T.
\]
This yields \eqref{parabolic:diff:est}. 
\end{proof}
The next theorem concerns well-posedness and regularity estimates for linear transport equations in domains.
\begin{theo}\label{theo:transport}
Let $\cU\subset \Rr^N$ be a minimaly smooth domain with constants $(\iota, K, M)$. Let $\sigma \ge 1$, $h_0\in H^\sigma(\cU)$, $F\in L^1([0, T]; H^\sigma(\cU))$, and $v\in L^p([0, T]; L^2(\cU))$ for some $p>1$, and
\begin{align}
&\na_x v\in L^1([0, T]; H^\frac{N}{2}(\cU) \cap L^\infty(\cU))\quad\text{if } \sigma<1+\frac{N}{2},\\
&\na_x v\in L^1([0, T]; H^{\sigma-1}(\cU))\quad\text{if } \sigma>1+\frac{N}{2}.
\end{align}
Assume that $v\cdot n=0$ on $\p\cU$, where $n$ is the outward unit normal vector to $\p\cU$. Then the transport equation
\bq\label{eq:transport:h}
\p_t h+v\cdot \na h=F\quad\text{in } \cU\times (0, T),\quad h\vert_{t=0}=h_0
\eq
has a solution $h\in C([0, T]; H^\sigma(\cU))$ which satisfies 
\bq\label{transportest}
\| h\|_{C([0, T]; H^\sigma(\cU))}\le \tM\left(\| h_0\|_{H^\sigma(\cU)}+ \| F\|_{L^1([0, T]; H^\sigma(\cU))}\right)\exp(\tM V_T)),
\eq
where $\tM=\tM(\sigma, N, \iota, K, M)$, and 
\bq\label{def:VT}
V_T=\begin{cases}
\| \na_x v\|_{L^1([0, T]; H^\frac{N}{2}(\cU) \cap L^\infty(\cU))}\quad\text{if } \sigma<1+\frac{N}{2}, \\
\| \na_x v\|_{L^1([0, T]; H^{\sigma-1}(\cU))}\quad\text{if } \sigma>1+\frac{N}{2}.
\end{cases}
\eq
Moreover, $h$ is the unique solution in $C([0, T]; H^1(\cU))$.  
\end{theo}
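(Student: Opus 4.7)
The strategy is to reduce to the classical transport theory on $\Rr^N$ by means of an extension operator for minimally smooth domains, and to handle uniqueness by a standard $L^2$ energy estimate exploiting the tangency condition $v\cdot n=0$. First I would invoke the extension operator $E : H^\tau(\cU) \to H^\tau(\Rr^N)$ provided by \cref{sec.extension}, whose operator norm depends only on $(\iota, K, M)$, and set $\tilde v = Ev$, $\tilde h_0 = Eh_0$, $\tilde F(t)=EF(t)$. These lifts inherit the Bochner integrability in $t$ and the spatial norms; in particular, $\|\na\tilde v\|_{L^1_T(H^{N/2}\cap L^\infty)} \lesssim V_T$ in the subcritical regime $\sigma<1+\frac{N}{2}$ (where the $L^\infty$ control of $\na v$ is part of the hypothesis), and $\|\na \tilde v\|_{L^1_T H^{\sigma-1}}\lesssim V_T$ together with $\|\na\tilde v\|_{L^1_T L^\infty}\lesssim V_T$ (via the Sobolev embedding $H^{\sigma-1}\hookrightarrow L^\infty$) in the supercritical regime $\sigma>1+\frac{N}{2}$.

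Second, the full-space problem $\p_t\tilde h+\tilde v\cdot\na\tilde h=\tilde F$ with datum $\tilde h_0$ admits a unique solution $\tilde h\in C([0,T];H^\sigma(\Rr^N))$ satisfying
\[
\|\tilde h\|_{C([0,T];H^\sigma(\Rr^N))}\le \tM\bigl(\|\tilde h_0\|_{H^\sigma}+\|\tilde F\|_{L^1_T H^\sigma}\bigr)\exp(\tM V_T)
\]
by the classical transport theory (e.g.\ \cite[Chapter 3]{BCD}), established by Friedrichs mollification of the velocity and data together with a Moser-type commutator estimate of the form
\[
\|[\langle D\rangle^\sigma,\tilde v\cdot\na]\tilde h\|_{L^2}\lesssim \|\na\tilde v\|_{L^\infty}\|\tilde h\|_{H^\sigma}+\|\na\tilde v\|_{H^{\sigma-1}}\|\na\tilde h\|_{L^\infty}.
\]
The two cases in the definition of $V_T$ correspond precisely to whether the top-order term is absorbed directly by $L^\infty$ control on $\na\tilde v$ (subcritical) or requires the tame splitting together with $H^{\sigma-1}\hookrightarrow L^\infty$ (supercritical). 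Setting $h:=\tilde h\vert_\cU$ and using $\tilde v\vert_\cU=v$ produces a solution of \eqref{eq:transport:h} in $\cU$, and \eqref{transportest} follows from the boundedness (with constant $1$) of the restriction $H^\sigma(\Rr^N)\to H^\sigma(\cU)$.

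Third, uniqueness in $C([0,T];H^1(\cU))$ is a standard $L^2$ energy estimate. For two solutions $h_1,h_2$ with identical data, the difference $w:=h_1-h_2$ solves $\p_t w+v\cdot\na w=0$; the tangency $v\cdot n=0$ on $\p\cU$ kills the boundary term in the integration by parts and yields
\[
\mez\frac{d}{dt}\|w\|_{L^2(\cU)}^2 = \mez\int_{\cU}(\di v)\,w^2.
\]
Gronwall's inequality applied with $\|\di v\|_{L^1_T L^\infty}\lesssim V_T$ and $w|_{t=0}=0$ then forces $w\equiv 0$ on $[0,T]$.

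The main obstacle I anticipate is bookkeeping rather than conceptual: one must verify carefully that the Stein-type extension in \cref{sec.extension} produces a velocity field whose mixed Bochner norms are controlled by $V_T$ in both regimes of $\sigma$, and that the constants arising in the commutator estimate on $\Rr^N$ transfer cleanly through the restriction to $\cU$ without losing the quantitative dependence on $(\iota, K, M)$. Once this translation is carried out, both the bound \eqref{transportest} and uniqueness reduce to well-known full-space arguments.
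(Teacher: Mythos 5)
Your proposal is correct and follows essentially the same route as the paper: extend $(v, F, h_0)$ to $\Rr^N$ with the Stein-type extension operator for minimally smooth domains, solve the whole-space transport problem via the classical theory in \cite{BCD}, restrict back to $\cU$, and prove uniqueness by an $L^2$ energy estimate using the tangency $v\cdot n=0$ and Gr\"onwall's lemma. The only cosmetic difference is that you spell out the commutator estimate underlying the full-space result, which the paper simply cites.
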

\begin{proof}
Let $\mathfrak{C}$ be the Sobolev extension operator for $\cU$, as given in \cref{th.extension}. Let $\tilde{v}=\mathfrak{C}v$, $\tilde{F}=\mathfrak{C}F$,  $\tilde{h}_0=\mathfrak{C}h_0$, and $k$ be the solution of the transport problem 
\bq\label{transporteq:extension}
\p_t k+\tilde{v}\cdot\na k=\tilde{F},\quad k\vert_{t=0}=\tilde{h}_0,\quad (x, t)\in \Rr^N\times (0, T).
\eq
By virtue of \cite[Theorem 3.19]{BCD} on the solvability of transport equations on $\Rr^N$, \eqref{transporteq:extension} has a unique solution $k\in C([0, T]; H^{\sigma}(\Rr^N))$ which satisfies 
\bq\label{transportest:wholespace}
\| k\|_{L^\infty([0, T]; H^\sigma(\Rr^N))}\le \left(\| \tilde{h}_0\|_{H^\sigma(\Rr^N)}+ \| \tilde F\|_{L^1([0, T]; H^\sigma(\Rr^N))}\right)\exp(\tM\tilde{V}_T)),
\eq
where $\tM$ depends only on $(\sigma, N)$, and  $\tilde{V}_T$ is defined as in \eqref{def:VT} but for $\tilde{v}$ in place of $v$ and $\Rr^N$ in place of $\cU$. Then, $h:=k\vert_{\cU}$ is a solution of \eqref{eq:transport:h}. Moreover, it follows from \eqref{transportest:wholespace} and the continuity of $\mathfrak{C}$ that $h$ satisfies \eqref{transportest} for a new $\tM=\tM(\sigma, N, \iota, K, M)$. Note that $\sigma \ge 0$ suffices for the above proof. 

To prove the uniqueness, assume that $h_1$ and $h_2$ are solutions of \eqref{eq:transport:h} in $C([0, T]; H^1(\cU))$. Then, $f:=h_1-h_2$ satisfies 
\[
    \p_tf+v\cdot \na f=0\quad\text{in } \cU\times (0, T),\quad f\vert_{t=0}=0.
\]
Since $v$ is tangent to $\p \mathcal{U}$ and $f\in C([0, T]; H^1(\cU))$, we can justify the following identity obtained using integrations by parts:
\[
    \mez \frac{d}{dt}\int_{\cU}|f|^2dx=-\mez \int_{\cU }\di v |f|^2dx.
\]
Since $\na v\in L^1([0, T]; L^\infty(\cU))$, Gr\"onwall's lemma implies that $f=0$, and thus $h_1=h_2$.
\end{proof}
\subsection{Compositions and inverses of Sobolev diffeomorphisms}
The following propositions concerning diffeomorphisms in $\Rr^N$ with Sobolev regularity will be applied frequently in order to convert regularity estimates from the fluid domain $\Omega_f$ to its flattened version and vice-versa. These independent results will be proven in \cref{sec.compositionSobolev}. 
\begin{prop}\label{lemm.FaaDiBruno} Let $N \geqslant 2$, $\sigma \geqslant 0$, and $\cU\subset \Rr^N$ be a minimally smooth domain. Assume that $\varphi = \operatorname{id} + \psi: \cU\to \Omega$ is a Lipschitz diffeomorphism such that $|\det \na \varphi|\ge c_0>0$ and $\psi \in H^{\sigma}(\mathcal{U})$. Then for any $F \in H^{\sigma}(\Omega,\mathbb{R})$  with $\na F\in L^\infty(\cU)$,  there holds   
\begin{equation}
    \label{eq.compositionEstimate}
    \|F \circ \varphi\|_{H^{\sigma}(\mathcal{U})} \le C\left(\|\nabla \varphi\|_{L^{\infty}(\mathcal{U})}\right) \left(\|F\|_{H^{\sigma}(\Omega)} + \|\nabla F \|_{L^{\infty}(\Omega)}\|\psi\|_{H^{\sigma}(\mathcal{U})}\right),
\end{equation}
where $C : \Rr_+ \to \Rr_+$ is non-decreasing and depends only on $(N,\sigma, c_0, \iota, K,M)$.
\end{prop}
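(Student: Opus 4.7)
The plan is to prove the estimate in stages, starting from $\sigma=0$ and building up to integer and then fractional $\sigma$. For $\sigma=0$, the change of variables $y=\varphi(x)$ combined with the bound $|\det\nabla\varphi|\geq c_0$ immediately yields $\|F\circ\varphi\|_{L^2(\mathcal U)}\leq c_0^{-1/2}\|F\|_{L^2(\Omega)}$. For integer $\sigma=m\geq 1$, I would apply Faà di Bruno's formula to expand $\partial^\alpha(F\circ\varphi)$ for $|\alpha|\leq m$ as a linear combination of terms of the form $(\partial^\beta F\circ\varphi)\prod_j \partial^{\gamma_j}\varphi$, where $\{\gamma_j\}$ is a partition of $\alpha$ into $|\beta|$ parts of size $\geq 1$. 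Two extremal cases drive the bound. When $|\beta|=|\alpha|$, only first-order derivatives of $\varphi$ appear, and the product sits in $L^\infty$ with norm $\lesssim\|\nabla\varphi\|_{L^\infty}^{|\alpha|}$; combined with the $\sigma=0$ step applied to $\partial^\beta F\circ\varphi$, this yields the $\|F\|_{H^m(\Omega)}$ contribution. When $|\beta|=1$ and $|\gamma_1|=|\alpha|$, the term is $(\nabla F\circ\varphi)\cdot\partial^\alpha\psi$, and a direct $L^\infty\cdot L^2$ bound delivers $\|\nabla F\|_{L^\infty(\Omega)}\|\psi\|_{H^m(\mathcal U)}$---this is the mechanism that allows the factor $\|\nabla F\|_{L^\infty}$ to enter and makes the estimate valid for $F$ of limited regularity. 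The remaining mixed terms are controlled by standard tame Moser product inequalities, distributing derivatives between factors via Sobolev embedding and Gagliardo--Nirenberg interpolation.

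For non-integer $\sigma=m+\mu$ with $\mu\in(0,1)$, I would invoke the equivalent Gagliardo-type norm \eqref{eq.equivalent-norm-sobolev} valid on minimally smooth domains, thereby reducing the problem to bounding the seminorm $|\partial^\alpha(F\circ\varphi)|_{W^{\mu,2}(\mathcal U)}$ for $|\alpha|=m$. Combining Faà di Bruno with the fractional Leibniz inequality
\[
|uv|_{W^{\mu,2}(\mathcal U)}\lesssim \|u\|_{L^\infty}|v|_{W^{\mu,2}(\mathcal U)}+|u|_{W^{\mu,2}(\mathcal U)}\|v\|_{L^\infty},
\]
I would reduce matters to the seminorms of individual factors: those of $\partial^{\gamma_j}\psi$ are absorbed into $\|\psi\|_{H^s(\mathcal U)}$, while those of $\partial^\beta F\circ\varphi$ are controlled by the Gagliardo seminorm of $\partial^\beta F$ on $\Omega$ using the bi-Lipschitz property of $\varphi$ (the bound $|\det\nabla\varphi|\geq c_0$ together with $\nabla\varphi\in L^\infty$ gives Lipschitz control of $\varphi^{-1}$). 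The extremal top-derivative term again supplies the $\|\nabla F\|_{L^\infty}\|\psi\|_{H^s}$ contribution, exactly as in the integer case. An alternative would be to extend both $\psi$ and $F$ to $\mathbb R^N$ by the Stein total extension operator, which simultaneously preserves the full scale of $W^{s,2}$ norms and $W^{1,\infty}$, then invoke a whole-space paracomposition estimate and restrict back to $\mathcal U$; this trades the explicit Gagliardo computations for bookkeeping on the extension.

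The main obstacle is precisely the fractional step: interpolating the integer tame bound is not automatic because composition is nonlinear in $\psi$, so one must either handle the Gagliardo seminorm directly while carefully distributing derivatives to avoid a super-linear dependence on $\|\psi\|_{H^\sigma}$, or reduce to $\mathbb R^N$ while keeping the $L^\infty$ bound on $\nabla F$ intact (which typically requires first subtracting an affine part from $F$ since only $\nabla F\in L^\infty$ is assumed, not $F$ itself). Tracking the dependence of the final constant on the minimally smooth parameters $(\iota,K,M)$ requires consistent use of the equivalent norm \eqref{eq.equivalent-norm-sobolev} and of uniform bounds on the extension operator provided in Appendix~\ref{sec.extension}.
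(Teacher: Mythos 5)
Your overall strategy matches the paper's: Faà di Bruno for the integer case, the Gagliardo-type equivalent norm \eqref{eq.equivalent-norm-sobolev} for the fractional case, and Gagliardo--Nirenberg interpolation to distribute derivatives. The identification of the two "extremal cases" that produce, respectively, the $\|F\|_{H^\sigma}$ and the $\|\nabla F\|_{L^\infty}\|\psi\|_{H^\sigma}$ contributions is exactly what drives the paper's choice of exponents. However, there are two concrete issues in your fractional step.

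First, the fractional Leibniz inequality you state, $|uv|_{W^{\mu,2}}\lesssim\|u\|_{L^\infty}|v|_{W^{\mu,2}}+|u|_{W^{\mu,2}}\|v\|_{L^\infty}$, is too weak for the mixed terms. In a Faà di Bruno term $(\partial^\beta F\circ\varphi)\prod_j(\partial^j\psi)^{\beta_j}$ with several factors of order $j\ge 2$, those factors need not lie in $L^\infty(\mathcal{U})$ when $\sigma$ is not large (they are only in $L^{q_j}$ for suitable finite $q_j$ after Gagliardo--Nirenberg), and $\partial^\beta F$ for $|\beta|\ge 2$ is also not assumed to be in $L^\infty$ since only $\nabla F\in L^\infty$ is given. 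Iterating your $L^\infty$-based Leibniz rule would therefore force factors into $L^\infty$ that cannot be placed there. The paper instead invokes the more general product estimate of \cref{thm.sobolevProducts}, $\|uv\|_{W^{\mu,p}}\lesssim\|u\|_{L^{p_1}}\|v\|_{W^{\mu,p_2}}+\|v\|_{L^{q_1}}\|u\|_{W^{\mu,q_2}}$ with Hölder exponents in $(1,\infty]$, so that each factor can be landed in its own intermediate $L^{p_j}$ or $W^{\mu,p_j}$ space; the exponents are then balanced by \cref{thm.gagliardoNirenberg} so that the total interpolation weight on $\|\psi\|_{H^\sigma}$ and on $\|F\|_{H^\sigma}$ is exactly $1$, and Young's inequality closes.

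Second, for the unbounded domains $\mathcal{U}$ actually used here (a half-space or an infinite strip), $\partial\varphi=I_N+\partial\psi$ does not belong to any $L^p(\mathcal{U})$ with $p<\infty$, so you cannot put first-order derivatives of $\varphi$ into finite Lebesgue exponents or interpolate them. The paper handles this by explicitly substituting $\partial\varphi=I_N+\partial\psi$ and expanding the product in the fractional step (and, in the integer step, by assigning $q_1=\infty$ to all order-one factors). Your sketch does not address this; without it, the Gagliardo--Nirenberg step applied to $\partial\varphi$ breaks down. Once these two fixes are incorporated, your outline becomes the paper's proof.
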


\begin{prop}\label{lemm.diffeoSobolev} Let $N \geqslant 2, \sigma \geqslant 1$ and $\Omega$, $\mathcal{U} \subset \mathbb{R}^N$ be minimally smooth domains. Let $\varphi = \operatorname{id} + \psi: \mathcal{U} \to \Omega$ be a Lipschitz diffeomorphism such that $|\det \na \varphi|\ge c_0>0$ and $\psi \in H^{\sigma}(\mathcal{U})$. Then there holds 
\begin{equation}
    \label{eq.diffoSobolev}
    \|\varphi^{-1} - \operatorname{id}\|_{H^{\sigma}(\Omega)} \le C(\|\nabla \psi\|_{L^{\infty}(\mathcal{U})})
    \|\psi\|_{H^{\sigma}(\mathcal{U})}, 
\end{equation}
where $C : \Rr_+ \to \Rr_+$ is non-decreasing and depends only on $(N,\sigma, c_0, \iota, K, M)$.
\end{prop}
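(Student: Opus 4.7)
Set $\eta := \varphi^{-1} - \mathrm{id}$, so the target is a Sobolev bound on $\eta:\Omega\to\Rr^N$. Two identities drive everything: the composition identity
\[
\eta \circ \varphi = -\psi \quad \text{on } \mathcal{U},
\]
and the pointwise inversion formula
\[
\na \eta(y) = G(\na \psi)(\varphi^{-1}(y)), \qquad G(X) := (I+X)^{-1} - I,
\]
where $G$ is smooth on the closed set $\{X : \det(I+X) \ge c_0\}$, satisfies $G(0) = 0$, and hence $|G(X)| \le C(\|\na \psi\|_{L^\infty})\,|X|$ on the range of $\na \psi$. I will prove the bound at integer $\sigma$ by induction and then obtain the result at fractional $\sigma \ge 1$ by real interpolation via \eqref{def:Wsp}. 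To legitimize the pointwise differentiation that follows I first reduce to smooth $\psi$ by approximation, keeping $\|\na \psi_n\|_{L^\infty}$ bounded and $|\det \na \varphi_n| \ge c_0/2$ uniformly, so the estimate passes to the limit.

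\textbf{Base case $\sigma = 1$.} Changing variables $y = \varphi(x)$ and using the two-sided Jacobian bound $c_0 \le |\det \na \varphi(x)| \le C(\|\na \psi\|_{L^\infty})$ yields
\[
\|\eta\|_{L^2(\Omega)}^2 = \int_\mathcal{U} |\psi|^2 \,|\det \na \varphi|\, dx \le C(\|\na \psi\|_{L^\infty})\,\|\psi\|_{L^2(\mathcal{U})}^2,
\]
\[
\|\na \eta\|_{L^2(\Omega)}^2 = \int_\mathcal{U} |G(\na \psi)|^2 \,|\det \na \varphi|\, dx \le C(\|\na \psi\|_{L^\infty})\,\|\na \psi\|_{L^2(\mathcal{U})}^2,
\]
which gives the claim for $\sigma = 1$.

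\textbf{Inductive step $k \to k+1$.} Assume the bound at order $k \ge 1$. Since $\Omega$ is minimally smooth, the equivalent norm \eqref{eq.equivalent-norm-sobolev} reduces the problem to controlling $\|\na^{k+1}\eta\|_{L^2(\Omega)}$. Applying Faà di Bruno's formula to $\eta \circ \varphi = -\psi$ at order $k+1$ yields
\[
(\na^{k+1}\eta)(\varphi(x))\cdot(\na \varphi(x))^{\otimes(k+1)} = -\na^{k+1}\psi(x) - \sum_{j=1}^{k} (\na^j \eta)(\varphi(x))\, P_{j,k+1}\bigl(\na \varphi,\dots,\na^{k+2-j}\varphi\bigr)(x),
\]
where the $P_{j,k+1}$ are universal polynomials. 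Since $|\det \na \varphi| \ge c_0$, the tensor $(\na \varphi)^{\otimes(k+1)}$ is invertible pointwise with inverse bounded by $C(\|\na \psi\|_{L^\infty})$. Solving for $(\na^{k+1}\eta)\circ\varphi$, taking squared $L^2(\mathcal{U})$ norms, and changing variables back to $\Omega$ gives
\[
\|\na^{k+1}\eta\|_{L^2(\Omega)}^2 \le C(\|\na \psi\|_{L^\infty})\Bigl( \|\na^{k+1}\psi\|_{L^2(\mathcal{U})}^2 + \sum_{j=1}^{k} \|((\na^j \eta)\circ \varphi)\,P_{j,k+1}\|_{L^2(\mathcal{U})}^2 \Bigr).
\]
Each cross term is controlled by Hölder, Gagliardo-Nirenberg interpolation, the induction hypothesis $\|\eta\|_{H^k(\Omega)} \le C(\|\na \psi\|_{L^\infty})\|\psi\|_{H^k(\mathcal{U})}$, and the pointwise bound $\|\na \eta\|_{L^\infty} \le C(\|\na \psi\|_{L^\infty})$ from the $G$-identity; the $P_{j,k+1}$ are Moser-type products of $\na^\alpha \psi$ with $|\alpha| \le k+1$ estimated on $\mathcal{U}$, while the two-sided Jacobian bound lets us transfer $L^p$ norms between $\mathcal{U}$ and $\Omega$ when rewriting $(\na^j \eta)\circ \varphi$. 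This closes the induction at order $k+1$ with a bound linear in $\|\psi\|_{H^{k+1}}$.

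\textbf{Fractional $\sigma$ and the main obstacle.} For non-integer $\sigma \in (k,k+1)$, real interpolation between the bounds at $\sigma = k$ and $\sigma = k+1$, applied to the nonlinear solution map $\psi \mapsto \eta$ restricted to the sublevel set $\{\|\na \psi\|_{L^\infty} \le R\}$ (on which it is locally Lipschitz in $W^{1,\infty}$), yields the statement together with \eqref{def:Wsp}. The main obstacle is the \emph{tameness} of the bound: in each cross term of the Faà di Bruno expansion, the highest-order factor $\na^\alpha \psi$ must appear exactly once, which forces the Gagliardo-Nirenberg interpolation exponents on $\Omega$ and $\mathcal{U}$ to be chosen so that intermediate-order norms of $\eta$ are balanced against intermediate-order norms of $\psi$ through the induction hypothesis, without ever multiplying two top-order factors. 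The boundary regularity of $\Omega$ and $\mathcal{U}$ enters precisely through the validity of the equivalent norm \eqref{eq.equivalent-norm-sobolev}, the Gagliardo-Nirenberg inequality, and the extension operator from \cref{sec.extension} used to reduce fractional estimates to $\Rr^N$.
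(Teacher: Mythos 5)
There are two genuine gaps. First, in your inductive step the cross terms cannot be closed with only the ingredients you list. For $2\le j\le k$ the term $\|((\na^{j}\eta)\circ\varphi)\,P_{j,k+1}\|_{L^{2}(\mathcal{U})}$ contains $k$ ``extra'' derivatives distributed over the factors, and a Gagliardo--Nirenberg/H\"older count shows the exponents only balance if the $\eta$-factor is interpolated against $\|\eta\|_{H^{k+1}(\Omega)}$: interpolating $\na^{j}\eta$ between $\|\na\eta\|_{L^{\infty}}$ and $\|\eta\|_{H^{k}}$ (the induction hypothesis) while putting the $\psi$-factors against $\|\psi\|_{H^{k+1}}$ gives H\"older exponents summing to $\frac{j-1}{2(k-1)}+\frac{k+1-j}{2k}>\frac12$ for $j\ge 2$, so the product does not land in $L^{2}$. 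The only way to make the exponents match is to let the unknown top norm $\|\eta\|_{H^{k+1}(\Omega)}$ appear with a fractional power and absorb it by Young's inequality, which in turn requires an a priori qualitative proof that $\|\eta\|_{H^{k+1}(\Omega)}<\infty$; your smoothing of $\psi$ addresses pointwise differentiability but not this finiteness (and it also moves the image domain $\varphi_n(\mathcal{U})$). The paper sidesteps the circularity entirely: iterating the inversion identity $\na\varphi^{-1}=(\na\varphi\circ\varphi^{-1})^{-1}$ produces the closed formula \eqref{eq.iterativeFormula}--\eqref{eq.iterativeFormula-ter}, in which \emph{no} derivative of $\varphi^{-1}-\operatorname{id}$ appears on the right-hand side — only derivatives of $\psi$ composed with $\varphi^{-1}$ and bounded matrix factors — so H\"older plus Gagliardo--Nirenberg give the bound linearly in $\|\psi\|_{H^{\sigma}}$ with no absorption and no induction on the unknown.

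Second, the fractional case cannot be obtained by ``real interpolation of the solution map $\psi\mapsto\eta$'': the interpolation lemma you would invoke (\cref{lemm.interpolation}) is for \emph{linear} operators, local Lipschitz continuity in $W^{1,\infty}$ does not yield interpolation of $H^{k}$/$H^{k+1}$ bounds for a nonlinear map, and the map is not even defined between fixed spaces since the target domain $\Omega=\varphi(\mathcal{U})$ varies with $\psi$. The paper instead estimates the $W^{\mu,2}$ seminorm of $\partial^{n}(\varphi^{-1}-\operatorname{id})$ directly from the same formula, using the equivalent norm \eqref{eq.equivalent-norm-sobolev}, fractional product rules, and the auxiliary composition and matrix-inverse estimates \eqref{eq.psi-sob} and \eqref{eq.Wo-finite-assump}; the only interpolation used there is complex interpolation of the \emph{linear} composition operator $G\mapsto G\circ\varphi^{-1}$ between $L^{q}$ and $W^{1,q}$. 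To repair your argument you would either have to reproduce these fractional estimates or restructure the integer step as in the paper so that the fractional seminorm can be attacked termwise.
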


\subsection{Elliptic regularity in Sobolev domains}
In the finite-depth case, we will utilize the following  Sobolev regularity result for the Neumann problem. The case of integer regularity can be found in \cite[Corollary 3.8]{ChengShkoller}. Here we provide a self-contained proof for non-integer regularity.  
\begin{prop}\label{thm.elliptic-CS} 
Let $\Omega \subset \mathbb{R}^N$ be $C^{\infty}$ bounded domain, $N \ge 2$. We denote by $\nu$  the unit exterior normal to $\p\Omega$. Let $s>\frac{N}{2}$, $A=(A^{jk})_{1 \le j,k \le N} \in \Rr^{N\times N}$, and $a=(a^{jk})_{1 \le j,k \le N}$ a matrix with $H^s(\Omega)$ coefficients. We assume that $A$ is positive definite, i.e. there exists $\lambda >0$ such that 
\[ 
    A^{jk}\xi_j\xi_k \geqslant \lambda |\xi|^2\quad\forall \xi \in \Rr^N.
\] 
Then there exists $\varepsilon_0 = \varepsilon_0(\lambda,\sup_{1\le i,j\le N}|A^{ij}|, s, \Omega)>0$ such that for  $\| a\|_{L^\infty(\Omega)}\le \varepsilon_0$ the following holds. 
For all $(F,g) \in H^{s-1}(\Omega) \times H^{s-\mez}(\p\Omega)$ satisfying  $\int_{\Omega} F + \int_{\p\Omega} g = 0$, the elliptic problem
\begin{equation}
    \label{eq.elliptic-eq}
        \begin{cases}
        -\operatorname{div}((A+a)\nabla\psi)=F \quad \text{in~ } \Omega \\
        ((A+a)\na \psi)\cdot \nu = g \quad \text{on~ } \p\Omega,
        \end{cases}       
\end{equation}
has a unique solution with mean zero which satisfies 
\bq\label{est:elliptic:Aa}
    \|\psi\|_{H^{s+1}(\Omega)} \le C(\lambda, \sup_{1\le i,j\le N}|A^{ij}|, \|a\|_{H^s}, s, \Omega)\left(\|F\|_{H^{s-1}(\Omega)} + \|g\|_{H^{s-\mez}(\p\Omega)}\right). 
\eq  
\end{prop}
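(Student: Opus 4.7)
The plan is to treat $a$ as a small perturbation of the constant-coefficient operator $-\operatorname{div}(A\nabla\cdot)$. I would rewrite \eqref{eq.elliptic-eq} as the constant-coefficient Neumann problem
\[
  -\operatorname{div}(A\nabla\psi) = F + \operatorname{div}(a\nabla\psi),\qquad A\nabla\psi\cdot\nu\vert_{\p\Omega} = g - a\nabla\psi\cdot\nu,
\]
and use the $L^\infty$-smallness of $a$ at two levels: first to obtain coercivity of the full bilinear form in $H^1$, and then to absorb the top-order perturbation $a\nabla\psi$ in the higher elliptic estimate. The proof divides into three steps: existence in $H^1$ via Lax--Milgram, an a priori $H^{s+1}$ bound, and passage to the limit via approximation by smooth coefficients.

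For the $H^1$ step, I would work on $\dot H^1(\Omega)=\{u\in H^1(\Omega):\int_\Omega u=0\}$, endowed with the Poincaré--Wirtinger equivalent norm $\|\nabla u\|_{L^2}$. The bilinear form $B(u,v)=\int_\Omega(A+a)\nabla u\cdot\nabla v$ is continuous and satisfies $B(u,u)\ge (\lambda - \|a\|_{L^\infty})\|\nabla u\|_{L^2}^2$, hence is coercive as soon as $\|a\|_{L^\infty}\le \lambda/2$ (a first condition on $\varepsilon_0$). The compatibility condition $\int F + \int g = 0$ makes the linear form $v\mapsto \int Fv + \int_{\p\Omega} gv$ well-defined on $\dot H^1$. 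Lax--Milgram then produces a unique mean-zero $H^1$ solution with $\|\psi\|_{H^1}\le C(\lambda,\Omega)(\|F\|_{L^2}+\|g\|_{L^2(\p\Omega)})$, which, since $s>N/2\ge 1$, is controlled by $\|F\|_{H^{s-1}} + \|g\|_{H^{s-\mez}(\p\Omega)}$.

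For the a priori estimate, assume $\psi\in H^{s+1}$. Standard elliptic regularity for the constant-coefficient smooth Neumann problem on the $C^\infty$ domain $\Omega$ yields $\|\psi\|_{H^{s+1}}\le C_A\bigl(\|F\|_{H^{s-1}} + \|g\|_{H^{s-\mez}(\p\Omega)} + \|a\nabla\psi\|_{H^s}\bigr)$ with $C_A=C_A(\lambda,\sup|A^{ij}|,s,\Omega)$. The crux is to control $\|a\nabla\psi\|_{H^s}$ by the tame Moser product estimate
\[
\|a\nabla\psi\|_{H^s}\le C_M\bigl(\|a\|_{L^\infty}\|\nabla\psi\|_{H^s} + \|a\|_{H^s}\|\nabla\psi\|_{L^\infty}\bigr).
\]
Picking any $s'\in (N/2,s)$ (possible since $s>N/2$), the interpolation $\|\nabla\psi\|_{L^\infty}\le C\|\nabla\psi\|_{H^{s'}}\le C\|\nabla\psi\|_{H^s}^{s'/s}\|\nabla\psi\|_{L^2}^{1-s'/s}$ combined with Young's inequality gives, for any $\delta>0$,
\[
C_M\|a\|_{H^s}\|\nabla\psi\|_{L^\infty}\le \delta\|\nabla\psi\|_{H^s} + C(\delta,\|a\|_{H^s})\|\nabla\psi\|_{L^2}.
\]
Choosing $\varepsilon_0$ and $\delta$ so that $C_A(C_M\varepsilon_0 + \delta)\le \tfrac12$ absorbs the top-order contributions on the left, and inserting the $H^1$-bound from the previous step yields \eqref{est:elliptic:Aa}.

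To make the a priori argument rigorous, I would approximate $a$ by smooth $a_\varepsilon$ obtained via Sobolev extension to $\Rr^N$ followed by mollification, so that $\|a_\varepsilon\|_{L^\infty}\le (1+o(1))\|a\|_{L^\infty}$ and $a_\varepsilon\to a$ in $H^s(\Omega)$; similarly smooth the data $F,g$. Classical elliptic theory provides solutions $\psi_\varepsilon\in C^\infty(\overline\Omega)$ to which the a priori estimate applies with $\varepsilon$-uniform constants; weak compactness in $H^{s+1}$ together with $H^1$-uniqueness identifies the limit as the desired $\psi$. The main obstacle throughout is to keep $\varepsilon_0$ depending only on $(\lambda,\sup|A^{ij}|,s,\Omega)$ rather than on $\|a\|_{H^s}$. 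This is exactly what the tame structure of the Moser estimate delivers: the full $\|\nabla\psi\|_{H^s}$ factor appears only next to $\|a\|_{L^\infty}$, while the $\|a\|_{H^s}$ factor is paired with $\|\nabla\psi\|_{L^\infty}$, which via interpolation costs only a small share of $\|\nabla\psi\|_{H^s}$ plus a lower-order term absorbed by the $H^1$-bound.
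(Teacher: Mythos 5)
Your proposal is correct and follows essentially the same route as the paper: constant-coefficient elliptic regularity plus the tame product estimate $\|a\nabla\psi\|_{H^s}\lesssim\|a\|_{L^\infty}\|\nabla\psi\|_{H^s}+\|a\|_{H^s}\|\nabla\psi\|_{L^\infty}$, absorption of the top-order term via $L^\infty$-smallness and interpolation of $\|\nabla\psi\|_{L^\infty}$, and approximation by smooth coefficients with a compactness/uniqueness passage to the limit. The only points where the paper is more careful are the fractional-$s$ constant-coefficient Neumann estimate, which it justifies by real interpolation between integer levels (handling the mean-zero compatibility subspaces), and the correction of the approximate data by the constant $m_n$ so that the regularized Neumann problems satisfy the compatibility condition.
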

    
\begin{proof} 
1. Let $\tilde{F}\in H^{k-1}(\Omega)$ and $\tilde{g}\in H^{k-\mez}(\p\Omega)$ with integer $k\ge 1$ such that  $\int_{\Omega} \tilde{F} + \int_{\p\Omega}\tilde{g} = 0$. Since $A$ is a positive  constant matrix, by the classical regularity for the Neumann problem, the unique mean-zero weak solution to  
\bq\label{eq.elliptic-eq:int}
   \begin{cases}
        -\operatorname{div}(A\nabla\psi)=\tilde{F} \quad \text{in ~} \Omega \\
        (A\na \psi)\cdot \nu = \tilde{g} \quad \text{on ~} \p\Omega
\end{cases}
\eq
satisfies
\[ 
    \|\psi\|_{H^{k+1}(\Omega)} \le C(\lambda, \sup_{1\le i,j \le N}|A^{ij}|, k, \Omega)\left(\|\tilde{F}\|_{H^{k-1}(\Omega)}+\|\tilde{g}\|_{H^{k-\frac{1}{2}}(\p\Omega)}\right).
\] 
Then linear interpolation yields
\begin{equation}\label{eq.Hs-elliptic-bound}
    \|\psi\|_{H^{s+1}(\Omega)} \le C(\lambda, \sup_{1\le i,j \le N}|A^{ij}|, s, \Omega)\left(\|\tilde{F}\|_{H^{s-1}(\Omega)}+\|\tilde{g}\|_{H^{s-\mez}(\p\Omega)}\right),\quad s\ge 1.
\end{equation}
Let us detail this interpolation procedure. Consider  $s+1=(1-\theta)k+\theta (k+1)$ for $\tt\in (0, 1)$ and $\Nn \ni k\ge 2$.  Then the real interpolation space $(H^k(\Omega),H^{k+1}(\Omega))_{\theta, 2}$ is $H^{s+1}(\Omega)$, and simiarly $(H^{k-2}(\Omega) \times H^{k-\frac{3}{2}}(\p\Omega), H^{k-1}(\Omega) \times H^{k-\mez}(\p\Omega))_{\theta, 2} = H^{s-1}(\Omega) \times H^{s-\mez}(\p\Omega)$.
In order to view \eqref{eq.Hs-elliptic-bound} as a continuity bound of the linear operator $\Psi : (\tilde{F},\tilde{g}) \mapsto \psi$ between Sobolev spaces, note that since a necessary and sufficient condition for the existence of $\psi$ is that $\int_{\Omega} \tilde{F} + \int_{\p\Omega} \tilde{g} = 0$, and that uniqueness for $\psi$ holds provided we impose $\int_{\Omega} \psi = 0$. We obtain that the operator $\Psi$ is a well-defined continuous operator 
\[
    (H^{\ell}(\Omega) \times H^{\ell+\mez}(\p\Omega)) \cap \mathcal{E} \to H^{\ell+2}(\Omega) \cap \mathcal{F},
\] 
for $\ell \in\{k-2,k-1\}$, where we have introduced the subspace $\mathcal{E}=\ker(T_1-\operatorname{id})$ with $T_1(\tilde{F},\tilde{g})=(\tilde{F} - |\Omega|^{-1}(\int_{\Omega} \tilde{F} + \int_{\p\Omega} \tilde{g}),\tilde{g})$ and similarly $\cF=\ker(T_2-\operatorname{id})$ with $T_2(\tilde{F})=\tilde{F}-|\Omega|^{-1}\int_{\Omega}\tilde{F}$. Note that $T_j^2=T_j$. It follows that $\Psi$ is continuous 
\[
    \left((H^{k-2}(\Omega) \times H^{k-\frac{3}{2}}(\p\Omega)) \cap \mathcal{E},(H^{k-1}(\Omega) \times H^{k-\mez}(\p\Omega)) \cap \mathcal{E} \right)_{\theta, 2} \to \left(H^{k}(\Omega) \cap \mathcal{F},H^{k+1}\cap \mathcal{F}\right)_{\theta, 2},  
\]
and \eqref{eq.Hs-elliptic-bound} follows from statements of the form $(X_0 \cap \mathcal{F}, X_1 \cap \mathcal{F})_{\theta, 2} = (X_0,X_1)_{\theta, 2} \cap \mathcal{F}$ which are stated in \cref{lemm.interpol-def}. 

2. Suppose that $s>\frac{N}{2}$ and $(F,g) \in H^{s-1}(\Omega) \times H^{s-\mez}(\p\Omega)$ satisfy   $\int_{\Omega} F + \int_{\p\Omega} g = 0$. We consider  $\| a\|_{L^\infty(\Omega)}\le \varepsilon_1$  small enough so that $A+a\ge \frac{\ld}{2}I_N$. By the density of $\mathcal{C}^{\infty}(\bar{\Omega})$ in $H^{s}(\Omega)$, we can choose sequences $\{a_n\}_{n\geq 1}$, $\{F_n\}_{n\geq 1}$ and $\{g_n\}_{n\geq 1}$ of smooth functions such that $a_n \to a$ in $H^s(\Omega)$, $F_n \to F$ in $H^{s-1}(\Omega)$ and $g_n \to g$ in $H^{s-\mez}(\p\Omega)$. Set $m_n= \frac{1}{|\p \Omega|}\left(\int_{\Omega}F_n + \int_{\p\Omega}g_n\right)$. 

For each $n$, let $\psi_n$ be the smooth solution to the Neumann  problem 
   \bq\label{eq:psin}
   \begin{cases}
        -\operatorname{div}((A+a_n)\nabla\psi)=F_n \quad \text{in ~} \Omega \\
        ((A+a_n)\na \psi)\cdot \nu = g_n-m_n \quad \text{on ~} \p\Omega.
\end{cases}
\eq
Clearly $\psi_n= \Psi(\tilde{F}_n, \tilde{g}_n)$ with 
\[
\tilde{F}_n=F_n+\operatorname{div}(a_n\nabla \psi_n),\quad \tilde{g}_n=g_n-m_n-a_n\nabla\psi_n\cdot \nu.
\]
Hence, the estimate \eqref{eq.Hs-elliptic-bound} for $\Psi$ gives
\[
\| \psi_n\|_{H^{s+1}(\Omega)}\le C(\lambda, \sup_{1\le i,j \le N}|A^{ij}|, s, \Omega)\left(\|\tilde{F}_n\|_{H^{s-1}(\Omega)}+\|\tilde{g}_n\|_{H^{s-\mez}(\p\Omega)}\right).
\]
Since $s>\frac{N}{2}$ and $\nu$ is smooth, invoking the trace inequality yields
\bq\label{est:psin:Hs}
\begin{aligned}
   \| \psi_n\|_{H^{s+1}(\Omega)} &\le  C\Big(m_n + \|g_n\|_{H^{s-\mez}(\p\Omega)} +\| F_n\|_{H^{s-1}(\Omega)}+ \| a_n\na \psi_n\|_{H^s(\Omega)} \Big) \\ 
    & \le C\Big(m_n + \|g_n\|_{H^{s-\mez}(\p\Omega)} +\| F_n\|_{H^{s-1}(\Omega)}+ \|a\|_{L^{\infty}(\Omega)}\|\na \psi_n\|_{H^s(\Omega)} + \|a\|_{H^s(\Omega)}\|\na \psi_n\|_{L^{\infty},(\Omega)}\Big)
\end{aligned}
\eq
where $C=C(\lambda, \sup_{1\le i,j \le N}|A^{ij}|, s, \Omega)$. Since $s>\frac{N}{2}$, for some $\delta >0$ small enough there holds 
\begin{align}
    \|\na \psi_n\|_{L^{\infty}(\Omega)} &\les  \|\na \psi_n\|_{H^{s-\delta}(\Omega)} \le \eta \|\na \psi_n\|_{H^s(\Omega)} + C_\eta\|\na \psi_n\|_{L^2(\Omega)},\label{eq.2-gn}
\end{align}
for all $\eta>0$, where $C_\eta$ depends only on $(\eta, \delta, s, d)$.  It follows from \eqref{est:psin:Hs} and \eqref{eq.2-gn} that if $\| a\|_{L^\infty(\Omega)}\le \eps_0:=\min(\eps_1, \frac{1}{2C})$ then 
\[
  \|\psi_n\|_{H^{s+1}(\Omega)} \le C\Big( m_n +\| F_n\|_{H^{s-1}(\Omega)}+ \|g_n\|_{H^{s-\mez}(\p\Omega)} +\|\na \psi_n\|_{L^2(\Omega)}\Big),
  \]
  where $C=C(\lambda, \sup_{1\le i,j \le N}|A^{ij}|, s, \Omega, \|a\|_{H^{s}(\Omega)})$ can be larger than the previous $C$ if needed. Combining this with the variational estimate
  \[
  \| \psi_n\|_{H^1(\Omega)}\le C(\Omega)(\|F_n\|_{L^2(\Omega)}+\| g_n-m_n\|_{H^{-\mez}(\p \Omega)}),   \]
  we obtain 
  \bq\label{uniform:Hspsin}
   \|\psi_n\|_{H^{s+1}(\Omega)} \le C\Big(\|F_n\|_{H^{s-1}(\Omega)}+ \|g_n\|_{H^{s-\mez}(\p\Omega)}\Big).
\eq
Thus the sequence $\{\psi_n\}$ is bounded in $H^{s+1}(\Omega)$. Since $\Omega$ is bounded, upon extracting a subsequence we can assume that $\psi_n$ converges to some  $\psi_*$  weakly in $H^{s+1}(\Omega)$ and strongly  in $H^s(\Omega)$. The strong  convergence in $H^s$ implies that $m_n\to 0$ and allows us to pass to the limit in the weak formulation of \eqref{eq:psin}, proving that $\psi_*$ is a weak solution of \eqref{eq.elliptic-eq}. By uniqueness of weak solutions, we conclude that $\psi=\psi_*\in H^{s+1}(\Omega)$. Finally, the estimate \eqref{est:elliptic:Aa} is obtained by passing to the limit in \eqref{uniform:Hspsin} and using the lower semicontinuity of the weak convergence in $H^{s+1}(\Omega)$.   
\end{proof}

\section{Bounds on the operators \texorpdfstring{$\mathcal{G}$}{G} and \texorpdfstring{$\mathcal{S}$}{S}}\label{sec.DN}

In this section we establish sharp Sobolev estimates for  the operators $\mathcal{S}[f]h$ and $\mathcal{G}[f]h$. The estimates are linear with respect to the highest order derivatives of the boundary function $f$ and the inputs $h$ and $k$. In the finite-depth case, we always assume that 
\bq\label{sepcond}
    \inf_{x\in \Rr^d}\left(f(x)-b(x)\right)\ge \fd>0,
\eq
where
\bq
b(x)=-H+b_0(x),\quad H>0.
\eq
This section is devoted to the following independent result.
\begin{theo}\label{prop.GSest}
Let $d\geqslant 1$ and $s,r \in \Rr$ such that $s>1+\frac{d}{2}$, $r\ge s-\frac{1}{2}  $ and $\lfloor r \rfloor \leq s$.
The following estimates hold provided their right-hand sides are finite:
\begin{equation}
    \label{eq.estimateG1}
    \|\cG[f]h\|_{H^r(\Omega_f)} \le C(\|f\|_{H^s})\left(\|h\|_{H^{r+\mez}}+\| f \|_{H^{r+\mez}}\| h\|_{H^s}\right),
\end{equation}
\begin{equation}
    \label{eq.estimateS1}
    \|\cS[f]k\|_{H^{r}(\Omega_f)} \le C(\|f\|_{H^s})\left(\|k\|_{H^r}+\| f \|_{H^{r+\mez}}\left(\| k\|_{H^{s-\mez}}+\| \na k\|_{L^\infty}\right)\right),
\end{equation}
where $C : \Rr_+ \to \Rr_+$ is a non-decreasing function, depending only on $(d,s,r)$ and in the finite-depth case also on $\mathfrak{d}$ and $\| b_0\|_{H^{s}}$. In the latter case $\| f \|_{H^{r+\mez}}$ should also be replaced with $\| f \|_{H^{r+\mez}} + \| b_0 \|_{H^{r+\mez}}$.
\end{theo}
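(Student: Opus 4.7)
The plan is to reduce the estimates on the $f$-dependent domain $\Om_f$ to sharp Sobolev estimates on the fixed flat strip $\Rr^d \times J$ via the flattening diffeomorphism $\ff_f$, apply the tame elliptic estimates that will be established in \cref{prop.GSest-v}, and then pull back to $\Om_f$ using the composition result of \cref{lemm.FaaDiBruno}. Concretely, for $j=1,2$ set $\tilde{\phi}^{(j)} = \phi^{(j)} \circ \ff_f$; by the chain rule $\tilde{\phi}^{(j)}$ solves a divergence-form elliptic problem on $\Rr^d\times J$ whose coefficient matrix is determined by $\na \ff_f$ and therefore depends nonlinearly on $f$, with Dirichlet data at $z=0$ equal to $h$ for $j=1$ and $0$ for $j=2$, and right-hand side obtained by pulling back $-\p_y k$ and multiplying by the Jacobian factor. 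The boundary conditions at infinity (infinite depth) or at $\{z=-1\}$ (finite depth) transform analogously, and in the finite-depth case the bottom contribution brings in the extra term $\| b_0\|_{H^{r+\mez}}$ via $\na \ff_f$.

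Granting the tame estimates of \cref{prop.GSest-v}, we then obtain bounds of the form
\[
\|\na \tilde{\phi}^{(1)}\|_{H^r(\Rr^d\times J)} \le C(\|f\|_{H^s})\left(\|h\|_{H^{r+\mez}} + \|f\|_{H^{r+\mez}}\|h\|_{H^s}\right)
\]
and the corresponding estimate for $\tilde{\phi}^{(2)}$ in terms of $\|k\|_{H^r(\Om_f)}$, $\|k\|_{H^{s-\mez}(\Om_f)}$ and $\|\na k\|_{L^\infty(\Om_f)}$. The characteristic tame structure --- namely linear dependence on the top-order norm $\|f\|_{H^{r+\mez}}$ multiplied only by low-order norms of the data --- is exactly what \cref{prop.GSest-v} is designed to produce and is the technical heart of the argument.

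To conclude, we observe that $(\na_{x,y}\phi^{(j)}) \circ \ff_f = (\na \ff_f)^{-\top}\na \tilde{\phi}^{(j)}$, and apply \cref{lemm.FaaDiBruno} with $\varphi = \ff_f$ to convert the $H^r(\Om_f)$ estimate for $\cG[f]h$ or $\cS[f]k$ into an $H^r(\Rr^d\times J)$ estimate for the pulled-back quantity plus a remainder of the form $\|\na(\cG[f]h)\|_{L^\infty(\Om_f)}\|\ff_f-\operatorname{id}\|_{H^r(\Rr^d\times J)}$. The $L^\infty$ norm of $\na \ff_f$ and of $(\na\ff_f)^{-\top}$ is absorbed into $C(\|f\|_{H^s})$ by Sobolev embedding, while the $H^{r+\mez}$ norm of $\ff_f-\operatorname{id}$ is controlled by $\|f\|_{H^{r+\mez}}$ (and $\|b_0\|_{H^{r+\mez}}$ in the finite-depth case) by construction of the flattening map. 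Combining these with tame product estimates for $(\na \ff_f)^{-\top}\na \tilde{\phi}^{(j)}$ yields \eqref{eq.estimateG1} and \eqref{eq.estimateS1}.

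The main obstacle is maintaining tameness throughout the pull-back: the product $(\na\ff_f)^{-\top}\na\tilde{\phi}^{(j)}$ and the composition remainder each threaten to produce a quadratic expression in the top-order norms $\|f\|_{H^{r+\mez}}$ and $\|h\|_{H^{r+\mez}}$. This is where the hypothesis $\lfloor r \rfloor \le s$ becomes crucial: it ensures that the Moser-type product inequality $\|FG\|_{H^r} \lesssim \|F\|_{L^\infty}\|G\|_{H^r} + \|F\|_{H^r}\|G\|_{L^\infty}$ applies with the correct factor in each slot, so that the $L^\infty$ control of $\na f$ afforded by $H^s \hookrightarrow W^{1,\infty}$ absorbs the coefficient into $C(\|f\|_{H^s})$ while the high-regularity factor carries the $\|f\|_{H^{r+\mez}}$ bookkeeping. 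The same care is needed when transporting the $L^\infty$ norm $\|\na k\|_{L^\infty}$ appearing in the source of \eqref{eq.phi2}: it must be tracked separately from the Sobolev norm of $k$, since a generic product estimate would otherwise replace it by a stronger quantity and break the statement of \eqref{eq.estimateS1}.
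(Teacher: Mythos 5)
Your overall architecture is the paper's: flatten via $\ff_f$, invoke the tame estimate of \cref{prop.GSest-v} on $\Rr^d\times J$, and transfer back with the composition results. The gap is in the transfer step. First, \cref{lemm.FaaDiBruno} applied with $\varphi=\ff_f$ bounds the \emph{pullback} $\|(\cG[f]h)\circ\ff_f\|_{H^r(\Rr^d\times J)}$ in terms of $\|\cG[f]h\|_{H^r(\Omega_f)}$ plus the remainder you wrote, i.e.\ it goes in the wrong direction: you need to bound the norm on $\Omega_f$ by flat-strip quantities, which forces you to compose with $\ff_f^{-1}$ and hence to control $\|\ff_f^{-1}-\operatorname{id}\|_{H^{r+1}(\Omega_f)}$ — this is exactly what \cref{lemm.diffeoSobolev} is for, and your proposal never uses it. Second, and more seriously, even after fixing the direction, pulling back the \emph{vector field} $\na_{x,y}\phi^{(j)}$ (your $(\na\ff_f)^{-\top}\na\tilde\phi^{(j)}$) produces a composition remainder involving the $L^\infty$ norm of the gradient of that field, i.e.\ second derivatives of the potential in $L^\infty$. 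At the stated regularity $s>1+\frac d2$ one only has $\na v\in X^{s-1}\hookrightarrow L^\infty$, not $\na^2 v\in L^\infty$, and in any case such a term does not reduce to the low norms $\|h\|_{H^s}$, $\|k\|_{H^{s-\mez}}+\|\na k\|_{L^\infty}$ appearing on the right of \eqref{eq.estimateG1}--\eqref{eq.estimateS1}, so the tame structure would be destroyed.

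The paper avoids both problems by estimating the scalar potential rather than its gradient: $\|\cG[f]h\|_{H^r(\Omega_f)}\le\|\phi^{(1)}\|_{H^{r+1}(\Omega_f)}$ with $\phi^{(1)}=v\circ\ff_f^{-1}$, so \cref{lemm.FaaDiBruno} yields the remainder $\|\na v\|_{L^\infty}\|\ff_f^{-1}-m\operatorname{id}\|_{H^{r+1}}$, where $\|\ff_f^{-1}-m\operatorname{id}\|_{H^{r+1}}\lesssim\|f\|_{H^{r+\mez}}+a$ by \cref{lemm.diffeoSobolev} and \cref{lemm:varrho}, and $\|\na v\|_{L^\infty}\lesssim\|\na v\|_{X^{s-1}}$ is bounded by $\|\zeta\|_{H^s}$, respectively $\|k\|_{H^{s-\mez}}+\|\na k\|_{L^\infty}$, through \cref{prop.ABZvariant} (with $\sigma=s-1$), \cref{lemm.variational-v} and \cref{lemm.F0est}; combining this with \cref{prop.GSest-v} gives the theorem. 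A minor additional point: the hypothesis $\lfloor r\rfloor\le s$ is not needed for the Moser product step you describe in the transfer; it is used inside the induction proving \cref{prop.GSest-v}. To repair your argument, replace your concluding step by the potential-level estimate just described.
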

Using \cref{prop.GSest} and the embedding $H^s(\Omega_f)\hookrightarrow W^{1, \infty}(\Omega_f)$ when $s>\tdm+\frac{d}{2}$, we obtain the following estimates for the fluid velocity.
\begin{coro}
    For $s>\tdm+\frac{d}{2}$ and $u$ given by \eqref{eq:u}, we have 
    \begin{align}\label{estu:Hs}
   & \| u\|_{H^s}\le C(\| f\|_{H^s})\left(\| f\|_{H^{s+\mez}}+(1+\| f\|_{H^{s+\mez}})\| g\|_{H^s}\right),\\ \label{estu:Hs-mez}
    &\| u\|_{H^{s-\mez}}\le C(\| f\|_{H^s})\left(\| f\|_{H^s}+\| g\|_{H^s} \right).
    \end{align}
\end{coro}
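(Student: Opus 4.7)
The plan is to apply \cref{prop.GSest} term-by-term to the decomposition \eqref{eq:u}, namely $u=-\cG[f]\Gamma(f)-\cS[f]g-ge_y$, and then sum via the triangle inequality. The regularity hypothesis $s>\tdm+\frac{d}{2}$ comfortably allows $r\in\{s-\mez,s\}$ in \cref{prop.GSest}, since $\lfloor s\rfloor\le s$ and $s-\mez\ge s-\mez$. For \eqref{estu:Hs} I would take $r=s$ in both \eqref{eq.estimateG1} and \eqref{eq.estimateS1}; for \eqref{estu:Hs-mez} I would take $r=s-\mez$.

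For the term $\cG[f]\Gamma(f)$, the output of \eqref{eq.estimateG1} involves $\|\Gamma(f)\|_{H^{r+\mez}}$ and $\|\Gamma(f)\|_{H^s}$. Since $\Gamma$ is $C^\infty$ with $\Gamma(0)=0$, a classical composition (Moser) estimate, or equivalently the hypothesis $\gamma\in\mathfrak G$ invoked via \cref{rema:gamma}, gives
\[
\|\Gamma(f)\|_{H^\sigma(\Rr^d)}\le C(\|f\|_{H^s})\|f\|_{H^\sigma}\quad\text{for }\sigma\in\{s-\mez,s,s+\mez\}.
\]
Plugging this into \eqref{eq.estimateG1} with $r=s$ yields $\|\cG[f]\Gamma(f)\|_{H^s(\Omega_f)}\le C(\|f\|_{H^s})\|f\|_{H^{s+\mez}}$, and with $r=s-\mez$ yields $\|\cG[f]\Gamma(f)\|_{H^{s-\mez}(\Omega_f)}\le C(\|f\|_{H^s})\|f\|_{H^s}$.

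For the term $\cS[f]g$, the estimate \eqref{eq.estimateS1} produces the factor $\|\na g\|_{L^\infty(\Omega_f)}$, which I would bound through the Sobolev embedding $H^s(\Omega_f)\hookrightarrow W^{1,\infty}(\Omega_f)$ valid since $s>\tdm+\frac{d}{2}$ (recall $\Omega_f\subset\Rr^{d+1}$ is minimally smooth with parameters controlled by $\|f\|_{H^s}$, so the embedding constant depends only on $\|f\|_{H^s}$). This gives $\|\na g\|_{L^\infty(\Omega_f)}\le C(\|f\|_{H^s})\|g\|_{H^s(\Omega_f)}$, so that \eqref{eq.estimateS1} with $r=s$ becomes
\[
\|\cS[f]g\|_{H^s(\Omega_f)}\le C(\|f\|_{H^s})(1+\|f\|_{H^{s+\mez}})\|g\|_{H^s},
\]
while with $r=s-\mez$ the factor $\|f\|_{H^s}$ is absorbed into $C(\|f\|_{H^s})$, producing $\|\cS[f]g\|_{H^{s-\mez}(\Omega_f)}\le C(\|f\|_{H^s})\|g\|_{H^s}$. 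The final term $ge_y$ is handled trivially by $\|ge_y\|_{H^\sigma(\Omega_f)}=\|g\|_{H^\sigma(\Omega_f)}$ for any $\sigma\ge 0$.

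Summing the three contributions via the triangle inequality yields \eqref{estu:Hs} and \eqref{estu:Hs-mez}. There is no real obstacle here: the proof is a bookkeeping exercise once \cref{prop.GSest} is in hand. The only subtle point is verifying that the Sobolev embedding constant on $\Omega_f$ depends only on $\|f\|_{H^s}$, which follows from \cref{th.extension} since the minimally-smooth parameters of $\Omega_f$ are controlled by $\|f\|_{H^s}$.
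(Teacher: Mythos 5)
Your proposal is correct and follows the same route the paper takes: the paper derives the corollary in one line by applying \cref{prop.GSest} with $r=s$ and $r=s-\mez$ to the three terms of \eqref{eq:u} and using the embedding $H^s(\Omega_f)\hookrightarrow W^{1,\infty}(\Omega_f)$ for $s>\tdm+\frac{d}{2}$, exactly as you do. One tiny remark: the bound $\|\Gamma(f)\|_{H^\sigma}\le C(\|f\|_{H^s})\|f\|_{H^\sigma}$ comes from the Moser-type composition estimate of \cref{est:nonl} (since $\Gamma$ is smooth with $\Gamma(0)=0$), not from the $\gamma\in\mathfrak{G}$ hypothesis, which concerns multiplication by $\gamma'(h+az)$ on the strip; your invocation of the former is the right one.
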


In order to prove \cref{prop.GSest}, we follow the classical strategy of straightening the free boundary.

\subsection{Straightening of the free boundary}
Let
\bq
J=\begin{cases} (-\infty, 0)&\text{ for infinite depth},\\
(-1, 0)&\text{ for finite depth}.
\end{cases}
\eq
We straighten  the fluid domain $\Omega_f$ using the change of variables
\bq\label{def:diffeo}
\ff_f: \Rr^d\times J\to\Omega_f,\quad \ff_f(x, z)=(x, \varrho(x, z)), 
\eq
where 
\bq\label{def:vr}
    \varrho(x, z)=
    \begin{cases}
        z+e^{\delta z\langle D_x\rangle}f(x)&\text{ for infinite depth},\\
        (z+1)e^{\delta  z\langle D_x\rangle}f(x)-ze^{-\delta (z+1)\langle D_x\rangle}b_0(x)+zH&\text{ for finite depth}
    \end{cases}
\eq
with $\delta>0$ satisfying the smallness condition \eqref{cd:delta:ff} in the following lemma. We note that $\rho(x, 0)=f(x)$. 
\begin{lemm}\label{lemm:varrho}
\begin{itemize}
\item[(i)] Let $s>1+\frac{d}{2}$, and $f,b_0 \in H^s(\Rr^d)$. There exists a constant $K>0$ depending only on $(d, s)$ such that for 
\bq\label{cd:delta:ff}
\delta \le 
\begin{cases}
(K\|f\|_{H^s}+1)^{-1}&\text{ for infinite depth},\\
\fd\big(K\|f\|_{H^s}+K\|b_0\|_{H^s}+1\big)^{-1}&\text{ for finite depth},
\end{cases}
\eq
 $\ff_f$ is a Lipschitz diffeomorphism, and 
\bq\label{lowerbound:dzvr}
\begin{cases}
\p_z\vr\ge \mez & \quad \text{ for infinite depth},\\
\p_z\varrho \geqslant \frac{\mathfrak{d}}{2} &\quad\text{ for finite depth}.
\end{cases}
\eq
\item[(ii)] For any $\sigma \in \Rr$, there exists $C=C(d, \sigma, \delta)>0$ such that
\begin{align}\label{navarrho:X}
\|(\p_z\varrho-1, \na_x\varrho)\|_{X^\sigma(J)}\le C\| f\|_{H^{\sigma+1}}&\quad\text{for infinite depth},\\ \label{navarrho:X:fd}
\|(\p_z\varrho-H, \na_x\varrho)\|_{X^\sigma(J)}\le C\| f\|_{H^{\sigma+1}}+C\| b_0\|_{H^{\sigma+1}}&\quad\text{for finite depth}.
\end{align}
For any $\sigma \in \Rr$ and $j\ge 2$, there exists $C=C(d, \sigma, j,  \delta)>0$ such that
\begin{align}\label{najvarrho:X}
\|\na_{x, z}^j\varrho \|_{X^\sigma(J)}\le C\| f\|_{H^{\sigma+j+1}}&\quad\text{for infinite depth},\\ \label{najvarrho:X:fd}
\|\na_{x, z}^j\varrho\|_{X^\sigma(J)}\le C\| f\|_{H^{\sigma+j+1}}+C\| b_0\|_{H^{\sigma+1}}&\quad\text{for finite depth}.
\end{align}
\item[(iii)] For any $\sigma \ge 0$, there exists $C=C(d, \sigma, \delta)>0$ such that
\begin{align}\label{varrho:H}
\|\varrho-z\|_{H^\sigma(\Rr^d\times J)}\le C\| f\|_{H^{\sigma-\mez}}&\quad\text{for infinite depth},\\ \label{varrho:H:fd}
\|\varrho-Hz\|_{H^\sigma(\Rr^d\times J)}\le C\| f\|_{H^{\sigma-\mez}}+C\| b_0\|_{H^{\sigma-\mez}}&\quad\text{for finite depth}.
\end{align}
\end{itemize}
\end{lemm}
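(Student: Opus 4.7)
The plan is to handle the three parts in order, exploiting the explicit structure of $\varrho$ as a (sum of) Poisson-type extension(s) by the kernel $e^{\delta z\langle D_x\rangle}$ and using Plancherel throughout. I will focus the discussion on the infinite-depth case, where $\varrho-z=e^{\delta z\langle D_x\rangle}f$; the finite-depth case differs only by the presence of a second Poisson extension acting on $b_0$, which is handled symmetrically using the bound $e^{-\delta(z+1)\langle\xi\rangle}\le 1$ for $z\in(-1,0)$ and the integral identity $\int_{-1}^{0}e^{-2\delta(z+1)\langle\xi\rangle}\,dz\le (2\delta\langle\xi\rangle)^{-1}$.

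For (i), I will first differentiate to get $\partial_z\varrho-1=\delta\langle D_x\rangle e^{\delta z\langle D_x\rangle}f$ and establish the pointwise smallness estimate
\[
|\partial_z\varrho(x,z)-1|\le C(d,s)\,\delta\,\|f\|_{H^s}\qquad(x\in\Rr^d,\ z\le 0)
\]
via the Sobolev embedding $H^{s'}\hookrightarrow L^\infty$ for some $s'\in(d/2,s-1)$ (which exists because $s>1+d/2$), combined with Plancherel and the uniform bound $\delta\langle\xi\rangle e^{\delta z\langle\xi\rangle}\le\delta\langle\xi\rangle$ for $z\le 0$. Choosing $K=2C(d,s)$ in \eqref{cd:delta:ff} then forces $\partial_z\varrho\ge\tfrac12$. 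In the finite-depth case, the principal part of $\partial_z\varrho$ at $\delta=0$ is $f-b_0+H=f-b$, and I will write $\partial_z\varrho-(f-b)$ as a sum of contributions of the form $(e^{\delta z\langle D_x\rangle}-1)f$, $(e^{-\delta(z+1)\langle D_x\rangle}-1)b_0$ and the two $\delta\langle D_x\rangle$-corrections, each of which is pointwise bounded by $C\delta(\|f\|_{H^s}+\|b_0\|_{H^s})$ by the same Plancherel/Sobolev argument, so that $\partial_z\varrho\ge\mathfrak{d}/2$ once $\delta$ satisfies \eqref{cd:delta:ff}. Once $\partial_z\varrho>0$ is established, bijectivity of $\ff_f$ is immediate because $\ff_f$ is the identity in $x$ and fiberwise $z\mapsto\varrho(x,z)$ is a strictly increasing $C^1$ bijection onto the vertical fiber of $\Omega_f$ over $x$; the Lipschitz property is a by-product of the $L^\infty$ bounds on $\nabla_{x,z}\varrho$ obtained along the way.

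For (ii) and (iii), the central ingredient is the identity
\[
\int_{-\infty}^{0}e^{2\delta z\langle\xi\rangle}\,dz=\frac{1}{2\delta\langle\xi\rangle},
\]
which shows that $e^{\delta z\langle D_x\rangle}$ gains exactly half a derivative as a map $H^\sigma(\Rr^d)\to L^2_zH^{\sigma+\mez}_x$ while remaining bounded from $H^\sigma$ to $C^0_zH^\sigma_x$ (continuity in $z$ follows from dominated convergence on the Fourier side). The first-order bounds in \eqref{navarrho:X} then follow by direct Plancherel inspection of $\nabla_x\varrho=e^{\delta z\langle D_x\rangle}\nabla_xf$ and $\partial_z\varrho-1=\delta\langle D_x\rangle e^{\delta z\langle D_x\rangle}f$; for $j\ge 2$ each extra $\partial_z$ produces a multiplicative $\delta\langle D_x\rangle$, so the worst term $\delta^j\langle D_x\rangle^j e^{\delta z\langle D_x\rangle}f$ has $X^\sigma$ norm $\lesssim_\delta\|f\|_{H^{\sigma+j}}\le\|f\|_{H^{\sigma+j+1}}$. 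In the finite-depth case, the polynomial prefactors $(z+1)$ and $z$ are handled by Leibniz and contribute only lower-order terms. For (iii), I compute the $H^\sigma(\Rr^d\times J)$ norm of $\varrho-z=e^{\delta z\langle D_x\rangle}f$ directly by Plancherel in $x$ followed by integration in $z$: for integer $\sigma=k$, summing over $|\alpha|\le k$ gives, with $a+b=|\alpha|$,
\[
\sum_{a+b\le k}\int_{\Rr^d}\delta^{2a}\langle\xi\rangle^{2a}|\xi|^{2b}\frac{|\hat f(\xi)|^2}{2\delta\langle\xi\rangle}\,d\xi\;\le\; C\|f\|_{H^{k-\mez}}^2,
\]
and non-integer $\sigma\ge 0$ follows from real interpolation between two consecutive integers in the definition \eqref{def:Wsp}, using the linearity of $f\mapsto e^{\delta z\langle D_x\rangle}f$.

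The only genuinely delicate step is the lower bound on $\partial_z\varrho$ in the finite-depth case: the principal contribution $f-b$ must be isolated cleanly from the $O(\delta)$ Poisson corrections acting on both $f$ and $b_0$, and it is precisely this decomposition that dictates the form of the smallness condition \eqref{cd:delta:ff} on $\delta$. All other steps reduce to Plancherel combined with the integral identity above.
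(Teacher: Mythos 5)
Your proposal is correct and follows essentially the same route as the paper: the lower bound on $\p_z\varrho$ via Sobolev embedding into $L^\infty$ combined with $e^{\delta z\langle D_x\rangle}$ being a contraction for $z\le 0$, the mean-value decomposition that isolates $f-b$ in the finite-depth case, and the Plancherel-plus-$\int_J e^{2\delta z\langle\xi\rangle}\,dz=(2\delta\langle\xi\rangle)^{-1}$ computation giving the half-derivative gain for (ii) and (iii), with interpolation to pass from integer to non-integer $\sigma$. The one small detail left implicit in your treatment of (i) is the surjectivity of the fiber map $z\mapsto\varrho(x,z)$ onto $(-\infty,f(x))$ (resp.\ $(b(x),f(x))$), which uses $\varrho(x,0)=f(x)$ together with $\varrho(x,z)\to-\infty$ as $z\to-\infty$ (resp.\ $\varrho(x,-1)=b(x)$); the paper records this explicitly.
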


\begin{proof}
(i) In the infinite-depth case, we have
\bq\label{dz:varrho}
    \partial_z \varrho(x,z)=1 + \delta e^{\delta z\langle D_x \rangle}\langle D_x \rangle f (x).
\eq
For some constant $C=C(d, s)>0$ independent of $\delta \in (0, 1)$ we have 
\[
\| e^{\delta z\langle D_x \rangle}\langle D_x \rangle f \|_{L^\infty(J; H^{s-1})}+\| e^{\delta z\langle D_x \rangle}\na f \|_{L^\infty(J; H^{s-1})}\le C\| f\|_{H^s}.
\]
Then the Sobolev embedding $H^{s-1}(\Rr^d)\subset L^\infty(\Rr^d)$ yields 
\[
\|e^{\delta z\langle D_x \rangle}\langle D_x \rangle f \|_{L^\infty(\Rr^d\times J)}\le \frac{K}{2} \| f\|_{H^s},
\]
 where $K=K(d, s)$. Therefore, if $\delta \le (K\|f\|_{H^s}+1)^{-1}$ then $\| \p_z\varrho-1\|_{L^\infty(\Rr^d\times J)}\le \mez$ and thus $\p_z\varrho \ge \mez$.

 In the finite depth case, we have
\bq\label{dz:varrho:fd}
\begin{aligned}
    \p_z\varrho(x, z)&=\delta(z+1)e^{\delta  z\langle D_x \rangle}\langle D_x \rangle f (x)+\delta ze^{-\delta (z+1)\langle D_x \rangle}\langle D_x \rangle b_0(x)\\
    &\qquad+e^{\delta z\langle D_x\rangle} f (x)-e^{-\delta (z+1)\langle D_x\rangle}b_0(x)+H\\
    &=\delta(z+1)e^{\delta  z\langle D_x \rangle}\langle D_x \rangle f (x) +\delta ze^{-\delta (z+1)\langle D_x\rangle}\langle D_x \rangle b_0(x) +\left(e^{\delta z\langle D_x \rangle} f (x)- f (x)\right) \\
    &\qquad -\left(e^{-\delta (z+1)\langle D_x \rangle}b_0(x)-b_0(x)\right)+f(x)-b_0(x)+H.
\end{aligned}
\eq
Using the mean-value theorem, we obtain 
\[
\begin{aligned}
    \p_z\varrho(x, z)&=\delta(z+1)e^{\delta  z\langle D_x\rangle}\langle D_x \rangle f (x)+\delta ze^{-\delta (z+1)\langle \nabla_x \rangle}\langle D_x \rangle b_0(x) +\delta z\int_0^1e^{ \delta \tau z\langle D_x\rangle}\langle D_x \rangle f (x)d\tau \\
    &\qquad+\delta(z+1)\int_0^1e^{-\delta \tau (z+1)\langle D_x \rangle}\langle D_x \rangle b_0(x)d\tau+f(x)-b(x).
\end{aligned}
\]
Since $f-b\ge \mathfrak{d}$, for $\delta \in (0, 1)$  it follows that 
\[
\p_z\varrho\ge \mathfrak{d}-\delta \frac{K}{2}\big(\| f\|_{H^s}+\| b_0\|_{H^s}\big), \quad K=K(d, s).
\]
Choosing $\delta\le \fd\big(K\|f\|_{H^s}+K\|b_0\|_{H^s}+1\big)^{-1}$, we obtain \eqref{lowerbound:dzvr}. It follows that $\ff_f$ is a Lipschitz local diffeomorphism. Moreover, \eqref{lowerbound:dzvr} shows that $\ff_f$ is one to one, therefore a global diffeomorphism on its image. The claim follows as $\rho(x,z) \underset{z \to -\infty}{\longrightarrow} -\infty$ in the infinite depth case, and $\rho(x,-1)=b(x)$ in the finite depth case.  

(ii) It suffices to use \eqref{dz:varrho}, the first equality in \eqref{dz:varrho:fd}, and the estimate 
\bq
\|e^{\delta z\langle D_x\rangle}h\|_{X^\sigma(J)}\le C(d, \sigma, \delta)\| h\|_{H^\sigma},\quad \sigma \in \Rr.
\eq

(iii) The estimates \eqref{varrho:H} and \eqref{varrho:H:fd} are a consequence of 
\bq\label{Poisson:Sobolev}
\|e^{\delta z\langle D_x\rangle}h\|_{H^\sigma(\Rr^d\times J)}\le C(d, \sigma, \delta)\| h\|_{H^{\sigma-\mez}(\Rr^d)},\quad \sigma \ge 0.
\eq
Clearly  \eqref{Poisson:Sobolev} holds for $\sigma=0$, and by interpolation it suffices to prove it  for  $\sigma\in \Nn$. For any $0\le j\le \sigma$, we have $\p_z^j\na_x^{\sigma-j}e^{\delta z\langle D_x\rangle}h=\delta^je^{\delta z\langle D_x\rangle}\langle D_x\rangle^j\na_x^{\sigma-j}h$, whence 
\[
    \| \p_z^j\na_x^{\sigma-j}e^{\delta z\langle D_x\rangle}h\|_{L^2(\Rr^d\times J)}\les \| \langle D_x\rangle^j\na_x^{\sigma-j}h\|_{H^{-\mez}}\les \| h\|_{H^{\sigma-\mez}}.\qedhere
\]
\end{proof}
We have the following chain rule 
\bq\label{chainrule:ff}
(\na_{x, y} g)\circ \ff_f=\na_{x, z}(g\circ \ff_f)(\na \ff_f)^{-1}= \na_{x, z}(g\circ \ff_f)\begin{bmatrix} 
I_d &  0_{d\times 1}\\
-\frac{\na_x\varrho}{\p_z\varrho} &\frac{1}{\p_z\varrho}
\end{bmatrix},
\eq
where $\na_x\varrho$ is a row matrix. Consequently 
\[
\p_z\varrho(x, z)(\Delta_{x, y}g)\circ \ff_f(x, z)=\di_{x, z} (\cA \na v)(x, z)
\]
with 
\bq
\cA=\begin{bmatrix} 
\p_z\varrho I_d & -\na_x\varrho^T\\
-\na_x\varrho & \frac{1+|\na_x\varrho|^2|}{\p_z\varrho}
\end{bmatrix}.
\eq
Let $\phi$ be either the solution $\phi^{(1)}$ to \eqref{eq.phi1} or $\phi^{(2)}$ to  \eqref{eq.phi2}, and set 
\bq\label{def:v:phij}
v(x, z)=(\phi\circ \ff_f)(x, z).
\eq
 Then $v$ satisfies 
\bq\label{eqphi:div}
\di_{x, z} (\cA \na v)=\begin{cases}
0\quad&\text{ for } \eqref{eq.phi1},\\
-\p_z\varrho(\p_yk)\circ \ff_f\quad &\text{ for } \eqref{eq.phi2}.
\end{cases}
\eq
For sufficiently smooth solutions, we can expand \eqref{eqphi:div} and deduce  that $v$ satisfies 
\begin{equation}
\begin{cases}\label{eq:v}
        (\partial_z^2 + \alpha \Delta_x +\beta \cdot \nabla \partial_z - \gamma \partial_z)v  = F_0,\quad (x,z)\in  \mathbb{R}^d\times I, \\
        v(x, 0)  =  \zeta(x),\quad x\in\mathbb{R}^d\,, 
\end{cases}
\end{equation}
where 
\[
    \alpha = \frac{(\partial_z \varrho)^2}{1+|\nabla _x \varrho|^2}, \quad \beta = -2 \frac{\partial_z \varrho \nabla_x \varrho}{1+|\nabla _x \varrho|^2},\quad 
    \gamma = \frac{1}{\partial_z \varrho} \left(\partial_z^2 \varrho + \alpha \Delta_x \varrho + \beta\cdot\nabla_x\partial_z\varrho\right).
\]
and
\bq
\label{eq.datum}
(\zeta, F_0)=
\begin{cases}
(h, 0)&\text{ for } \eqref{eq.phi1},\\
\left(0, -\frac{(\partial_z \varrho)^2}{1+|\nabla _x \varrho|^2}(\p_y k)\circ \ff_ f\right)&\text{ for } \eqref{eq.phi2}.
\end{cases}
\eq
Estimates for the coefficients $(\alpha, \beta, \gamma)$ in $X^\sigma$ are given in the next lemma.
\begin{lemm}\label{lemm.coefsEstimInfinite} Let $s>\frac{d}{2} + 1$ and $\sigma \geqslant 0$. In the infinite-depth case there holds
\begin{align}
    \label{eq.coefsEstimInfinite}
    &\|\alpha - 1\|_{X^{\sigma}(J)} + \|\beta\|_{X^{\sigma}(J)} \le C(\| f \|_{H^{s}})\|  f \|_{H^{\sigma+1}},\\
    \label{eq.coefsEstimInfinite-gamma}
    &\|\gamma\|_{X^{\sigma-\mez}(J)} \le C(\| f \|_{H^{s}})\|  f \|_{H^{\sigma+\tdm}},
\end{align}
and in the finite-depth case there holds
\begin{align}
    \label{eq.coefsEstimFinite}
   & \|\alpha - H^2\|_{X^{\sigma}(J)} + \|\beta\|_{X^{\sigma}(J)} \le C(\| f \|_{H^{s}})\left(\|  f \|_{H^{\sigma+1}}+\| b_0\|_{H^{\sigma+1}}\right),\\
    \label{eq.coefsEstimFinite-gamma}
   & \|\gamma\|_{X^{\sigma -\mez}(J)} \le C(\| f \|_{H^{s}})\left(\|  f \|_{H^{\sigma+\tdm}}+\| b_0\|_{H^{\sigma+\tdm}}\right),
\end{align}
where $C : \Rr_+ \to \Rr_+$ is a non-decreasing function, only depending on $(d,s,\sigma)$, and also $\mathfrak{d}$ and $\|b_0\|_{H^{s}}$ in the finite-depth case. 
\end{lemm}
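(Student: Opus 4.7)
The plan is to reduce each estimate to a combination of the derivative bounds for $\varrho$ from \cref{lemm:varrho} with tame product and composition inequalities in the space $X^\sigma(J)$. Set $u := (\partial_z\varrho - a_0, \nabla_x\varrho)$ where $a_0 = 1$ in the infinite-depth case and $a_0 = H$ in the finite-depth case. By \cref{lemm:varrho}(ii) and the Sobolev embedding $H^{s-1}(\Rr^d) \hookrightarrow L^\infty(\Rr^d)$ (valid since $s > 1 + d/2$),
\[
    \|u\|_{L^\infty_{z,x}} \le C(\|f\|_{H^s}), \qquad \|u\|_{X^\sigma(J)} \le C\|f\|_{H^{\sigma+1}},
\]
with the obvious $b_0$-modifications in the finite-depth case. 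Thus $u$ is a bounded input of any smooth composition, while its $X^\sigma$-norm carries the desired high regularity $H^{\sigma+1}$ of $f$.

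The second ingredient consists of two tame inequalities in $X^\sigma(J)$, obtained by applying the corresponding standard estimates on $\Rr^d$ at each fixed $z \in J$ and integrating. First, for $\sigma \ge 0$ and $v,w$ sufficiently regular,
\[
    \|vw\|_{X^\sigma(J)} \le C\bigl(\|v\|_{C_z^0 L^\infty_x}\|w\|_{X^\sigma(J)} + \|w\|_{C_z^0 L^\infty_x}\|v\|_{X^\sigma(J)}\bigr),
\]
which combines the Kato--Ponce tame product in $H^\sigma$ and $H^{\sigma+1/2}$ with Cauchy--Schwarz in $z$ to handle the anisotropic $L^2_z H^{\sigma+1/2}$ component. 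Second, for smooth $G : \Rr^k \to \Rr$ with $G(0) = 0$ and $v \in X^\sigma(J) \cap L^\infty$,
\[
    \|G(v)\|_{X^\sigma(J)} \le C\bigl(\|v\|_{C_z^0 L^\infty_x}\bigr)\,\|v\|_{X^\sigma(J)}.
\]

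With these in hand, the estimates for $\alpha$ and $\beta$ are essentially immediate. Setting $G_0(w) := (1 + |w|^2)^{-1}$, one writes
\[
    \alpha - a_0^2 = \bigl((\partial_z\varrho)^2 - a_0^2\bigr)\,G_0(\nabla_x\varrho) + a_0^2\bigl(G_0(\nabla_x\varrho) - 1\bigr), \qquad \beta = -2\,\partial_z\varrho\,\nabla_x\varrho\,G_0(\nabla_x\varrho),
\]
in which every factor is either a smooth function of $u$ vanishing at $u = 0$ or a linear combination of components of $u$. The composition estimate controls $G_0(\nabla_x\varrho) - 1$ and $(\partial_z\varrho)^2 - a_0^2$ in $X^\sigma$ by $C(\|f\|_{H^s})\|f\|_{H^{\sigma+1}}$, and the tame product then closes \eqref{eq.coefsEstimInfinite}--\eqref{eq.coefsEstimFinite}.

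For $\gamma$, the composition estimate applies to the bounded factor $(\partial_z\varrho)^{-1}$ thanks to the lower bound \eqref{lowerbound:dzvr}, while each of $\partial_z^2\varrho$, $\Delta_x\varrho$, $\nabla_x\partial_z\varrho$ is of the form $\delta^a e^{\delta z\langle D_x\rangle}(\cdot)f$ plus its $b_0$-analogue. A direct Plancherel computation of the $L^2_z$-norm of $e^{\delta z\langle\xi\rangle}$ (as in the derivation of \eqref{Poisson:Sobolev}) yields
\[
    \|\nabla_{x,z}^2\varrho\|_{X^{\sigma-1/2}(J)} \le C\|f\|_{H^{\sigma+3/2}}\quad (+\,C\|b_0\|_{H^{\sigma+3/2}}),
\]
after which the tame product -- with the bounded factors $(\partial_z\varrho)^{-1}$, $\alpha$, $\beta$ placed in $C_z^0 L^\infty_x$ with norm controlled by $C(\|f\|_{H^s})$ -- yields \eqref{eq.coefsEstimInfinite-gamma}--\eqref{eq.coefsEstimFinite-gamma}. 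The main technical point, though routine, is to extract the optimal $H^{\sigma+3/2}$ (rather than $H^{\sigma+2}$) regularity of $f$ in the bound on $\nabla_{x,z}^2\varrho$: the crucial half-derivative gain comes solely from the $L^2_z$-integration of the Poisson-type semigroup $e^{\delta z\langle D_x\rangle}$, and the same mechanism is what transports the fixed-$z$ tame product inequality up to the anisotropic $L^2_z H^{\sigma+1/2}$ component uniformly in the depth of the slab $J$.
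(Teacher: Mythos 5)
Your treatment of $\alpha$ and $\beta$ is essentially the paper's argument (write them as smooth functions of $\tilde\nabla\varrho=(\nabla_x\varrho,\partial_z\varrho-a_0)$ vanishing at the origin, apply the composition estimate slice-wise in $X^\sigma$, and invoke \cref{lemm:varrho}), and your sharpened bound $\|\nabla_{x,z}^2\varrho\|_{X^{\sigma-\mez}}\les \|f\|_{H^{\sigma+\tdm}}$ via the $L^2_z$ half-gain of $e^{\delta z\langle D_x\rangle}$ is correct and is exactly what the paper uses.

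The gap is in the $\gamma$ estimate, where you replace the paper's Bony decomposition by an $L^\infty$-based tame product in $X^{\sigma-\mez}(J)$. Two problems arise. First, the target index is $\sigma-\mez$, which is \emph{negative} for $\sigma\in[0,\mez)$ (a range included in the statement and genuinely used later, e.g.\ in \cref{lemm:F1} when $d=1$, where $\gamma$ is needed in $L^\infty_zH^{-1+\eps+\frac d2}$). Multiplication by a merely bounded function is not continuous on $H^{\tau}$ for $\tau<0$, so the inequality $\|vw\|_{X^{\sigma-\mez}}\les \|v\|_{C^0_zL^\infty_x}\|w\|_{X^{\sigma-\mez}}+\dots$ you invoke simply is not available for the $C^0_z H^{\sigma-\mez}$ component; your stated product rule is only claimed for index $\ge 0$ and does not cover this application. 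Second, even when $\sigma-\mez>0$, the Kato--Ponce tame product for $c\,\partial^2\varrho$ (with $c\in\{\frac{\alpha}{\p_z\varrho}-1,\frac{\beta}{\p_z\varrho},\frac{1}{\p_z\varrho}-1\}$) necessarily contains the symmetric term $\|c\|_{X^{\sigma-\mez}}\|\nabla_{x,z}^2\varrho\|_{C^0_zL^\infty_x}$; controlling $\nabla_{x,z}^2\varrho$ in $L^\infty_{x,z}$ by $C(\|f\|_{H^s})$ would require $s>2+\frac d2$, strictly stronger than the hypothesis $s>1+\frac d2$ (and a one-sided bound $\|c\,w\|_{H^\tau}\les\|c\|_{L^\infty}\|w\|_{H^\tau}$ with no second term is false for $\tau>0$). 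The paper avoids both obstructions by paralinearizing: the term $T_cw$ is handled with $c\in L^\infty$ via \eqref{pp:Linfty} (valid at negative index), while $T_wc+R(w,c)$ is estimated through \eqref{boundpara} and \eqref{Bony1} with the coefficient measured in $H^{s-1}$ (low norm) or $X^{\sigma+\mez}$ (high norm, carrying $\|f\|_{H^{\sigma+\tdm}}$) and $\nabla_{x,z}^2\varrho$ only in $L^\infty_zH^{s-2}$, using the condition $(s-1)+(\sigma-\mez)>0$. To repair your proof you would need to replace the $L^\infty$ tame product by such a paraproduct argument (or restrict to $\sigma\ge\mez$ and $s>2+\frac d2$, which does not give the stated lemma).
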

\begin{proof}
We will only provide the proof for the infinite-depth case. We have $\alpha - 1 = G(\tilde{\na}\varrho)$, where  $\tilde{\na}\varrho = (\na_x \varrho, \p_z\varrho -1)$ and  $G$ is some smooth function satisfying $G(0)=0$. Then \cref{est:nonl} implies
\begin{equation}
    \label{eq.coefs-estim1}
    \|\alpha - 1\|_{X^{\sigma}(J)} \le C(\|\tilde{\na} \varrho\|_{L^{\infty}(J\times \Rr^d)})\|\tilde{\na}\varrho\|_{X^{\sigma}(J)},
\end{equation}
and it suffices to use \eqref{navarrho:X} and the Sobolev embedding $H^{s-1}(\Rr^d)\hookrightarrow L^\infty(\Rr^d)$ for  $s > 1+ \frac{d}{2}$. The estimate for $\beta$ can be proven along the same lines. 

As for $\gamma$, we use  \eqref{boundpara} and \eqref{Bony1} to have
\begin{align*}
    \| \gamma \|_{X^{\sigma-\mez}(J)}\les &\Big(\| \frac{\alpha}{\p_z\varrho}-1\|_{L^\infty(J; H^{s-1})}+1+\|\frac{\beta}{\p_z\varrho}\|_{L^\infty(J; H^{s-1})}\Big)\| \na_{x, z}^2\varrho\|_{X^{\sigma-\mez}(J)} \\ 
    &+ \Big(\| \frac{\alpha}{\p_z\varrho}-1\|_{X^{\sigma + \mez}(J)}+\| \frac{\beta}{\p_z\varrho}\|_{X^{\sigma + \mez}(J)}\Big)\| \na_{x, z}^2\varrho\|_{L^{\infty}(J;H^{s-2})},
\end{align*}
where we have used that $(s-1)+(\sigma-\mez)\ge s-\tdm>0$. Then we conclude using the estimates 
\[
\| \frac{\alpha}{\p_z\varrho}-1\|_{L^\infty(J; H^{s-1})}+\| \frac{\beta}{\p_z\varrho}\|_{L^\infty(J; H^{s-1})}\le C(\| f\|_{H^s}), \quad \| \na_{x, z}^2\varrho\|_{X^{\sigma-\mez}(J)}\les \| f\|_{H^{\sigma+\tdm}},
\]
\[
\| \frac{\alpha}{\p_z\varrho}-1\|_{X^{\sigma + \mez}(J)}+\| \frac{\beta}{\p_z\varrho}\|_{X^{\sigma + \mez}(J)}\le C(\| f\|_{H^s})\|f\|_{H^{\sigma +\tdm}},
\]
which follow from\cref{est:nonl} and  \cref{lemm:varrho} (ii). 
\end{proof}
Next, we provide estimates for $z$-derivatives of   $(\alpha, \beta, \gamma)$.
\begin{lemm}\label{lem.lowSobolEstCoeffs}  Let $s>1+\frac{d}{2}$. 

(i) Consider  $j\in \Nn$ and $\sigma \ge 0$. In the infinite-depth case, we have 
\begin{equation}
  \label{eq.pzEstim1b}
     \|\partial_z^j(\alpha-1)\|_{ H^{\sigma}(\Rr^d\times J)}+  \|\partial_z^j\beta\|_{ H^{\sigma }(\Rr^d\times J)} \le C(\|f\|_{H^s})\|f\|_{H^{\sigma +j+\frac{1}{2}}},
    \end{equation}
    \begin{equation}
    \label{eq.pzEstim1c}
      \|\partial_z^j\gamma\|_{ H^{\sigma}(\Rr^d\times J)} \le C(\|f\|_{H^s})\|f\|_{H^{\sigma+j +\frac{3}{2}}},
\end{equation}
and if moreover $j\le s$ then 
\begin{equation}
   \label{eq.pzEstim1}
    \|\partial_z^j(\alpha-1)\|_{L^{\infty}(J,H^{\sigma})} + \|\partial_z^j\beta\|_{L^{\infty}(J, H^{\sigma}) } \le C(\|f\|_{H^s})\|f\|_{H^{\sigma +j+1}},
\end{equation}
Here  $C : \Rr_+ \to \Rr_+$ is non-decreasing and depends only on $(d,s,\sigma,  j)$. In the finite-depth case, $C$ also depends on $\mathfrak{d}$ and $\|b\|_{H^{s}}$, $\alpha - 1$ should be replaced with $\alpha - H^2$, and the $\|f\|_{H^{\sigma +\nu}}$ norms on the right-hand sides should be replaced with $\|f\|_{H^{\sigma +\nu}}+\|b_0\|_{H^{\sigma +\nu}}$.

(ii) If $j\in \Nn$ and $\sigma\in \Rr$ satisfy 
\bq\label{dzgamma:L2-cond}
 \sigma+1\ge 0, \quad \sigma+s-1>0,  \quad\text{and~}  j\leq s-1, 
\eq
 then for $q\in\{2, \infty\}$ we have 
\bq\label{dzgamma:L2}
\| \p_z^j\gamma\|_{L^q(J;  H^\sigma)}\le C(\| f\|_{H^s})(\| f\|_{H^{\sigma+j+2-\frac{1}{q}}}+a), 
\eq
where $a=0$ in the infinite-depth case and $a=\| b_0\|_{H^{\sigma+j+2-\frac{1}{q}}}$ in the finite-depth case. 
\end{lemm}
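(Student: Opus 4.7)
My approach reduces the four bounds to Moser-type composition and product estimates in Sobolev spaces on $\Rr^d\times J$, combined with the explicit regularity bounds of \cref{lemm:varrho}. I focus on the infinite-depth case, the finite-depth case requiring only the obvious modifications involving $b_0$. The starting observation is that $\alpha - 1 = G_1(\tilde\nabla\varrho)$ and $\beta = G_2(\tilde\nabla\varrho)$ for smooth $G_i$ with $G_i(0)=0$, where $\tilde\nabla\varrho = (\nabla_x\varrho,\partial_z\varrho -1)$, while $\gamma$ factorizes as $\mathcal{G}(\tilde\nabla\varrho)\cdot M$ for a smooth $\mathcal{G}$ (legitimate thanks to $\partial_z\varrho \ge 1/2$ from \cref{lemm:varrho}(i)) and $M := \partial_z^2\varrho + \alpha\,\Delta_x\varrho + \beta\cdot\nabla_x\partial_z\varrho$, a linear combination of second-order derivatives of $\varrho$.

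For \eqref{eq.pzEstim1b}, the elementary inequality $\|\partial_z^ju\|_{H^\sigma(\Rr^d\times J)} \le \|u\|_{H^{\sigma+j}(\Rr^d\times J)}$ (immediate at integer $\sigma$, general case by interpolation) reduces the claim to an $H^{\sigma + j}$ bound on $G_i(\tilde\nabla\varrho)$, which the Moser composition estimate yields as
\begin{equation*}
\|G_i(\tilde\nabla\varrho)\|_{H^{\sigma+j}(\Rr^d\times J)} \le C(\|\tilde\nabla\varrho\|_{L^\infty})\|\tilde\nabla\varrho\|_{H^{\sigma+j}(\Rr^d\times J)} \le C(\|f\|_{H^s})\|f\|_{H^{\sigma+j+1/2}},
\end{equation*}
using the fiberwise Sobolev embedding $H^{s-1}(\Rr^d)\hookrightarrow L^\infty(\Rr^d)$ to control $\|\tilde\nabla\varrho\|_{L^\infty}$ and \cref{lemm:varrho}(iii) applied to $\tilde\nabla\varrho=\nabla_{x,z}(\varrho-z)$ at regularity $\sigma+j+1$. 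For \eqref{eq.pzEstim1c}, I estimate $\|\mathcal{G}(\tilde\nabla\varrho)\cdot M\|_{H^{\sigma+j}}$ by a product inequality. When $\sigma + j \le s - 1/2$, the multiplicative Sobolev estimate $\|AB\|_{H^{\sigma+j}}\lesssim \|A\|_{H^{s_1}}\|B\|_{H^{\sigma+j}}$ with $s_1 \in (\max(\sigma+j,(d+1)/2), s-1/2)$ (non-empty thanks to $s > 1+d/2$) applies, combined with $\|\mathcal{G}(\tilde\nabla\varrho)\|_{H^{s_1}}\le C(\|f\|_{H^s})\|f\|_{H^{s_1+1/2}}\le C(\|f\|_{H^s})\|f\|_{H^s}$ and the bound $\|M\|_{H^{\sigma+j}}\lesssim C(\|f\|_{H^s})\|f\|_{H^{\sigma+j+3/2}}$ already produced by \cref{lemm:varrho}(iii) and \eqref{eq.pzEstim1b}. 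When $\sigma + j > s - 1/2$, I instead use the tame Moser product $\|AB\|_{H^{\sigma+j}} \lesssim \|A\|_{L^\infty}\|B\|_{H^{\sigma+j}} + \|A\|_{H^{\sigma+j}}\|B\|_{L^\infty}$ combined with the Gagliardo--Nirenberg interpolation $\|f\|_{H^{\sigma+j+1/2}}\|f\|_{H^{s+1}}\le \|f\|_{H^s}\|f\|_{H^{\sigma+j+3/2}}$, which is valid exactly in this regime and closes the remaining term.

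The $L^\infty(J; H^\sigma_x)$ bound \eqref{eq.pzEstim1} follows from the trace-type embedding $H^{\sigma+1/2}(\Rr^d\times J) \hookrightarrow L^\infty(J; H^\sigma(\Rr^d))$ (valid for $\sigma \ge 0$) applied to the $H^{\sigma+1/2}(\Rr^d\times J)$-version of \eqref{eq.pzEstim1b}, producing the exponent $\sigma + j + 1$. For part (ii), the case $q = 2$ is an immediate consequence of \eqref{eq.pzEstim1c} via $H^\sigma(\Rr^d\times J) \hookrightarrow L^2(J; H^\sigma(\Rr^d))$; the case $q=\infty$ follows from the same trace embedding applied to $\gamma$, yielding $\|\partial_z^j\gamma\|_{L^\infty(J;H^\sigma_x)}\le \|\gamma\|_{H^{\sigma+j+1/2}(\Rr^d\times J)}\le C(\|f\|_{H^s})\|f\|_{H^{\sigma+j+2}}$, and the conditions in \eqref{dzgamma:L2-cond}, notably $\sigma + s - 1 > 0$ and $j \le s - 1$, are what ensure that the auxiliary indices chosen in the intermediate Moser/multiplicative steps stay admissible. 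The main subtle point is in \eqref{eq.pzEstim1c}: a naive product splitting produces the factor $\|\nabla_{x,z}^2\varrho\|_{L^\infty}$, controlled only by $\|f\|_{H^{s+1}}$, which must be absorbed into $C(\|f\|_{H^s})\|f\|_{H^{\sigma+j+3/2}}$. The split into the two subcases above and the Gagliardo--Nirenberg interpolation are precisely what allows this absorption, both relying crucially on the strict inequality $s > 1 + d/2$.
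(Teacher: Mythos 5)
Your handling of part (i) is essentially sound. For \eqref{eq.pzEstim1b} you follow the same route as the paper (absorb $\p_z^j$ into $H^{\sigma+j}(\Rr^d\times J)$, Moser composition, then \cref{lemm:varrho}(iii)), and for \eqref{eq.pzEstim1c} your two-case argument (Sobolev product rule on $\Rr^d\times J$ when $\sigma+j\le s-\mez$; tame product plus the log-convexity inequality $\|f\|_{H^{\sigma+j+\mez}}\|f\|_{H^{s+1}}\le \|f\|_{H^s}\|f\|_{H^{\sigma+j+\tdm}}$ when $\sigma+j\ge s-\mez$) is a legitimate alternative to the paper's single H\"older split, which puts $\Delta_x\varrho$ in $L^p$ with $H^{s-\tdm}_{x,z}\hookrightarrow L^p$ and the coefficient in $W^{\sigma+j,p'}\supset H^{\sigma+j+\frac{d+1}{p}}$, $\frac{d+1}{p}<1$. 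Two cosmetic repairs are needed: in infinite depth $\frac{1}{\p_z\varrho}$ and $\frac{\alpha}{\p_z\varrho}$ do not vanish at infinity, so you must split off the constants (e.g. $\gamma=M+(\frac{1}{\p_z\varrho}-1)M$, $\alpha\Delta_x\varrho=\Delta_x\varrho+(\alpha-1)\Delta_x\varrho$) before writing $\|\mathcal{G}(\tilde\nabla\varrho)\|_{H^{s_1}}\les\|f\|_{H^{s_1+\mez}}$, and the boundary case $\sigma+j=s-\mez$ must be assigned to your second case since your interval for $s_1$ is then empty. Also note that bounding $\|M\|_{H^{\sigma+j}}$ is not ``already produced'' by \cref{lemm:varrho}(iii) and \eqref{eq.pzEstim1b}: the products $\alpha\Delta_x\varrho$, $\beta\cdot\na_x\p_z\varrho$ inside $M$ require exactly the same two-case product estimate, so the factorization $\gamma=\mathcal{G}\cdot M$ buys nothing and one should treat each term directly, as the paper does.

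There are, however, two genuine gaps. First, for \eqref{eq.pzEstim1} you invoke $H^{\sigma+\mez}(\Rr^d\times J)\hookrightarrow L^\infty(J;H^\sigma(\Rr^d))$ ``valid for $\sigma\ge 0$'': this is the endpoint trace embedding and it is false at $\sigma=0$, a value included in the statement and actually used later (e.g. $\|\p_z^j(\alpha-m^2)\|_{L^\infty_zH^{s-1-j}}$ with $j=s-1$). This is repairable (an Agmon-type inequality in $z$, interpolating $\|\p_z^j(\alpha-1)\|_{L^2}$ and $\|\p_z^j(\alpha-1)\|_{H^1}$, or the paper's induction on $j$ using \eqref{najvarrho:X} and Bony's decomposition), but as written the step fails. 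Second, and more seriously, your proof of part (ii) only reaches $\sigma\ge 0$ (and $\sigma>0$ for $q=\infty$), whereas the hypotheses \eqref{dzgamma:L2-cond} deliberately allow $\sigma$ negative, down to $\max\{-1,1-s\}$, and the paper later applies \eqref{dzgamma:L2} with $(j,\sigma)=(\ell,s-\ell-\tdm)$ where $\sigma$ can be as low as $-\mez$. Your reduction relies on \eqref{eq.pzEstim1c} (only established for $\sigma\ge 0$) and on the embeddings $H^\sigma(\Rr^d\times J)\hookrightarrow L^2(J;H^\sigma(\Rr^d))$ and $H^{\sigma+\mez}\hookrightarrow L^\infty(J;H^\sigma)$, and the first of these reverses for $\sigma<0$ (compare the Fourier weights $(1+|\xi|^2)^\sigma$ and $(1+|\xi|^2+\eta^2)^\sigma$). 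Tellingly, the conditions $\sigma+1\ge 0$, $\sigma+s-1>0$, $j\le s-1$ never actually intervene in your argument. The paper's proof of (ii) expands $\p_z^j\gamma$ by Leibniz and estimates each product fiberwise in $x$ via Bony's decomposition (\eqref{boundpara}, \eqref{Bony1}) in $L^q_zH^\sigma_x$, where negative Sobolev indices on $\Rr^d$ are admissible and those three conditions are precisely what legitimizes the paraproduct and remainder bounds; some argument of that type is needed to cover the negative range, so part (ii) as you prove it does not establish the stated result.
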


\begin{rema}
Estimates \eqref{eq.coefsEstimInfinite-gamma} and \eqref{eq.coefsEstimFinite-gamma} are special cases of \eqref{dzgamma:L2-cond}.
\end{rema}

\begin{proof} We will only consider  the infinite-depth case. 

\quad (i)  Since $\alpha-1$ and $\beta$ have the same regularity, we will only provide the proof for $\alpha-1$. 

\quad 1. We write as in the proof of \cref{lemm.coefsEstimInfinite} that $\alpha - 1 = G(\tilde{\na}\varrho)$, where   $\tilde{\na}\varrho = (\na_x\varrho, \p_z\varrho - 1)$ and $G$ is a smooth function satisfying $G(0)=0$.  Since $\sigma\ge 0$,  \cref{est:nonl} implies   
\[
    \|\partial_z^j(\alpha-1)\|_{H_{x,z}^{\sigma }} \le \|G(\tilde{\na}\varrho)\|_{H_{x,z}^{\sigma+j}} \le C(\|\tilde{\na}\varrho\|_{L^{\infty}_{x,z}})\|\tilde{\na}\varrho\|_{H_{x,z}^{\sigma+j}}.    
\]
Then we use  \eqref{navarrho:X} together with the Sobolev embedding $H^{s-1}(\Rr^d)\hookrightarrow L^\infty(\Rr^d)$,  and \eqref{varrho:H} to deduce 
\[
\|\tilde{\na}\varrho \|_{L^{\infty}_{x,z}}\les \| f\|_{H^s},\quad \|\tilde{\na}\varrho\|_{H_{x,z}^{\sigma}}\les \| f\|_{H^{\sigma+j+\mez}}.
\] 
It follows that 
 \[
  \|\partial_z^j(\alpha-1)\|_{H_{x,z}^{\sigma }}\le C(\| f\|_{H^s}) \| f\|_{H^{\sigma+j+\mez}}
  \]
as claimed in \eqref{eq.pzEstim1b}.

\quad 2.   We turn to the proof of \eqref{eq.pzEstim1c}. From the definition of $\gamma$, it suffices to estimate the typical term $\p_z^j\big((\frac{\alpha}{\p_z\varrho}-1)\Delta_x\varrho\big)$. The product rule in \cref{thm.sobolevProducts}  gives
\begin{align*}
\| \p_z^j\big(\frac{\alpha}{\p_z\varrho}-1)\Delta_x\varrho\big)\|_{H^{\sigma}_{x, z}}&\le \| (\frac{\alpha}{\p_z\varrho}-1)\Delta_x\varrho\|_{H^{\sigma+j}_{x, z}}\\
&\les \| (\frac{\alpha}{\p_z\varrho}-1)\|_{L^\infty}\|\Delta_x\varrho\|_{H^{\sigma+j}_{x, z}}+ \| \Delta_x\varrho\|_{L^p}\|\frac{\alpha}{\p_z\varrho}-1\|_{W^{\sigma+j, p'}_{x, z}},
\end{align*}
where $\frac{1}{p}+\frac{1}{p'}=\mez$ and $p'<\infty$. Combining the estimates \eqref{navarrho:X}, \eqref{eq.coefsEstimInfinite}, and \eqref{varrho:H}, we deduce 
\[
\| (\frac{\alpha}{\p_z\varrho}-1)\|_{L^\infty}\le  \| (\frac{\alpha}{\p_z\varrho}-1)\|_{L^\infty_z H^{s-1}}\le C(\| f\|_{H^s}),\quad \|\Delta_x\varrho\|_{H^{\sigma+j}_{x, z}}\les \| f\|_{H^{\sigma+j+\tdm}}.
\]
We choose 
\[
\begin{cases}
\frac{1}{p}=\mez-\frac{s-\tdm}{d+1} \quad\text{if~} s-\tdm<\frac{d+1}{2},\\
p=\infty \quad\text{if~} s-\tdm>\frac{d+1}{2},\\
2\ll p<\infty \quad\text{if~}s-\tdm=\frac{d+1}{2}.
\end{cases}
\]
Then, since $s>1+\frac{d}{2}\ge \tdm>0$, we have $H^{s-\tdm}_{x, z}\hookrightarrow L^p_{x, z}$,  $H^{\sigma+j+\frac{d+1}{p}}_{x, z}\hookrightarrow W^{\sigma+j, p'}_{x, z}$, and $\frac{d+1}{p}<1$ in all cases. Hence, invoking \eqref{varrho:H} and \eqref{eq.coefsEstimInfinite} yields 
\[
 \| \Delta_x\varrho\|_{L^p}\|\frac{\alpha}{\p_z\varrho}-1\|_{W^{\sigma+j, p'}_{x, z}} \les \| \Delta_x\varrho\|_{H^{s-\tdm}_{x, z}}\|\frac{\alpha}{\p_z\varrho}-1\|_{H^{\sigma+j+1}_{x, z}}\le C(\| f\|_{H^s})\| f\|_{H^{\sigma+j+\tdm}}.
\]
This completes the proof of  \eqref{eq.pzEstim1c}.

\quad 3. Assuming now that $j\le s$, we  prove \eqref{eq.pzEstim1}. We prove by induction on $j\geq 0$ that for all smooth functions $G$ satisfying $G(0)=0$ and $\sigma\ge 0$  there holds 
\[
    \|\p_z^jG(\na \varrho)\|_{L^{\infty}_zH^{\sigma }} \le C(\| f\|_{H^s}) \|f\|_{H^{\sigma+j +1}}. 
\]
The base case $j=0$ follows from \cref{est:nonl} (i) and \eqref{navarrho:X}. Assume the assertion holds up to $j-1$, $j\geq 1$. We have
\[
    \p_z^jG(\na \varrho) = \p_z^{j-1}\left(\p_z\na \varrho  G'(\na \varrho)\right) =G'(0) \p_z^j\na \varrho + \sum_{k=0}^{j-1}c_{k,j}\p_z^{k+1}\na \varrho \p_z^{j-1-k} (G'(\na \varrho)-G'(0)),    
\]
Let us write $H=G' - G'(0)$, which is a smooth function satisfying $H(0)=0$. Since $j\ge 1$, \eqref{najvarrho:X} implies 
\[
\| \p_z^j\na \varrho \|_{L^\infty_z H^\sigma}\les \| f\|_{H^{\sigma+j+1}}.
\]
 To estimate $\p_z^{k+1}\na \varrho \p_z^{j-1-k}H(\na \varrho)$ for $k\in\{0,\dots, j-1\}$, we use \eqref{boundpara} and \eqref{Bony1}:
\begin{multline*}
    \|\p_z^{k+1}\na \varrho \p_z^{j-1-k} H(\na \varrho)\|_{L^{\infty}_zH^{\sigma }} \les \|\p_z^{k+1}\na \varrho\|_{L^{\infty}_zH^{s-k-2}} \|\p_z^{j-1-k} H(\na \varrho)\|_{L^{\infty}_zH^{\sigma +k+1}} \\
    + \|\p_z^{k+1}\na \varrho\|_{L^{\infty}_zH^{\sigma+j-1-k}} \|\p_z^{j-1-k} H(\na \varrho)\|_{L^{\infty}_zH^{s -j +k}}.
\end{multline*}
This application is justified because 
\[
    \sigma  \leq \sigma  +k+1, \quad \sigma  \leq \sigma+j-1-k, \quad \text{and } s>1+\frac{d}{2}. 
\]
By  \eqref{najvarrho:X},
\[
\|\p_z^{k+1}\na \varrho\|_{L^{\infty}_zH^{s-k-2}}\les \| f\|_{H^s},\quad \|\p_z^{k+1}\na \varrho\|_{L^{\infty}_zH^{\sigma+j-1-k}}\les \| f\|_{H^{\sigma+j+1}}.
\]
Since $j-1-k\leq j-1$, $\sigma +k+1\geq 0$, and $s-j+k\geq s-j\ge 0$,  the induction hypothesis implies 
\[
\|\p_z^{j-1-k} H(\na \varrho)\|_{L^{\infty}_zH^{\sigma +k+1}}\le C(\| f\|_{H^s})\| f\|_{H^{\sigma+j+1}},\quad  \|\p_z^{j-1-k} H(\na \varrho)\|_{L^{\infty}_zH^{s -j +k}} \le C(\| f\|_{H^s}).
\]
Combining the above estimates yields \eqref{eq.pzEstim1}.

\quad (ii)  For the proof of \eqref{dzgamma:L2}, from the definition of $\gamma$ we have
\begin{multline}\label{eq.splitpj-der}
    \|\p^j \gamma\|_{L^q(J;H^{\sigma})} \leq \|\p^j_z\na_{x,z}^2 \varrho\|_{L^q(J;H^{\sigma})} + \left\|\p_z^j\left((\frac{\alpha}{\p_z\varrho}-1) \Delta_x \varrho\right)\right\|_{L^q(J;H^{\sigma})} \\
    + \left\|\p_z^j\left(\frac{\beta}{\p_z\varrho}\cdot \nabla_x\p_z \varrho\right)\right\|_{L^q(J;H^{\sigma})} + \left\|\p_z^j\left((\frac{1}{\p_z \varrho}-1)\p_z^2 \varrho\right)\right\|_{L^q(J;H^{\sigma})}.
\end{multline}
Consider $q\in\{2, \infty\}$. It follows from \cref{lemm:varrho}  (ii) that   $\|\p_z^j\na_{x,z}^2 \varrho\|_{L^q(J;H^{\sigma})} \lesssim \|f\|_{H^{\sigma + j + 2-\frac{1}{q}}}$. Since the last three terms on the right-hand side of \eqref{eq.splitpj-der} can be estimated along the same lines, we will provide the control of the $\alpha$-term
\begin{equation*}
    \left\|\p_z^{k}(\frac{\alpha}{\p_z \varrho} -1) \p_z^{j-k}\Delta_x\varrho\right\|_{L^q(J;H^{\sigma})}, \quad k\in \{0, \dots, j\}. 
\end{equation*}
To this end, we use Bony's decomposition together with \eqref{boundpara} and \eqref{Bony1}: 
\[
\begin{aligned}
\|\p_z^{k}(\frac{\alpha}{\p_z \varrho} -1) \p_z^{j-k}\Delta_x\varrho\|_{L^q_zH^{\sigma}}&\les \|\p_z^{k}(\frac{\alpha}{\p_z \varrho} -1) \|_{L^\infty_zH^{-k+s-1}}\|  \p_z^{j-k}\Delta_x\varrho\|_{L^q _z H^{\sigma+k}}\\
&\qquad+ \|\p_z^{k}(\frac{\alpha}{\p_z \varrho} -1) \|_{L^q_zH^{\sigma+j-k+1}}\|  \p_z^{j-k}\Delta_x\varrho\|_{L^\infty _z H^{k-j+s-2}}
\end{aligned}
\]
The preceding applications are justified because 
\begin{align*}
&\sigma\le \sigma+k,\quad \sigma\le \sigma+j-k+1,~\text{and~} (-k+s-1)+(\sigma+k)=\sigma+s-1>0
\end{align*}
by means of \eqref{dzgamma:L2-cond}. Since $-k+s-1\ge s-1-j\ge 0$ and $\sigma+j-k+1\ge \sigma +1\ge 0$ by \eqref{dzgamma:L2-cond}, it follows from \eqref{eq.pzEstim1} that 
\[
\|\p_z^{k}(\frac{\alpha}{\p_z \varrho} -1) \|_{L^\infty_zH^{-k+s-1}}\le C(\| f\|_{H^s}),\quad \|\p_z^{k}(\frac{\alpha}{\p_z \varrho} -1) \|_{L^q_zH^{\sigma+j-k+1}}\le C(\| f\|_{H^s})\| f\|_{H^{\sigma+j+2-\frac{1}{q}}}.
\]
On the other hand, by \cref{lemm:varrho} (ii),
\[
\|  \p_z^{j-k}\Delta_x\varrho\|_{L^q_z H^{\sigma+k}}\les \| f\|_{H^{\sigma+j+2-\frac{1}{q}}},\quad \|  \p_z^{j-k}\Delta_x\varrho\|_{L^\infty _z H^{k-j+s-2}}\les \| f\|_{H^s}.
\]
Putting the above estimates together we conclude the proof of \eqref{dzgamma:L2}. 
\end{proof}
Finally, the forcing term $F_0$ in \eqref{eq:v} is controlled by means of  following lemma.
\begin{lemm}\label{lemm.F0est} Let $s>1+\frac{d}{2}$ and  $\sigma \ge 1$. Then we have 
\begin{equation}
    \label{eq.F0estBis}
    \|F_0\|_{H^{\sigma-1}(\Rr^d\times J)} \le C(\|f\|_{H^s}) \left(\|f\|_{H^{\sigma + \frac{1}{2}}}(\|k\|_{H^{s-\mez}}+\|\nabla k\|_{L^{\infty}}) + \|k\|_{H^{\sigma}}\right),
\end{equation}
where $C : \Rr_+\to\Rr_+$ is non-decreasing and depends only on $(d,s,\sigma)$ in the infinite depth and also depends on $\mathfrak{d}$ and $\|b_0\|_{H^{s}}$. Also, occurences of $\|f\|_{H^{a}}$ should be replaced with $\|f\|_{H^a} + \|b_0\|_{H^a}$.
\end{lemm}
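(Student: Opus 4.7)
Only the $\phi^{(2)}$ case is nontrivial (for $\phi^{(1)}$, $F_0\equiv 0$), so assume $F_0 = -\frac{(\p_z\varrho)^2}{1+|\na_x\varrho|^2}(\p_y k)\circ \ff_f$. The key observation is that $F_0$ can be rewritten so that the composed function is $k$ itself, not $\p_y k$. Indeed, setting $\tilde k := k\circ \ff_f$, the chain rule gives $\p_z\tilde k=(\p_y k)(x,\varrho)\,\p_z\varrho$, hence $(\p_y k)\circ \ff_f=\p_z\tilde k/\p_z\varrho$ and
\[
F_0 = -\mu\,\p_z\tilde k,\qquad \mu:=\frac{\p_z\varrho}{1+|\na_x\varrho|^2}.
\]
This reformulation is essential because \cref{lemm.FaaDiBruno} requires the composed function to have $L^\infty$ gradient, an assumption we have for $k$ (via $\|\na k\|_{L^{\infty}}$) but not for $\p_y k$.

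Next I would collect three bounds. Applying \cref{lemm.FaaDiBruno} with $F=k$, $\varphi=\ff_f$, and using \cref{lemm:varrho}(iii) (valid since $\p_z\varrho\ge \mez$ or $\ge \fd/2$ by \cref{lemm:varrho}(i)) gives, for $\sigma\ge 1$,
\[
\|\tilde k\|_{H^\sigma(\Rr^d\times J)}\le C(\|f\|_{H^s})\bigl(\|k\|_{H^\sigma(\Omega_f)}+\|\na k\|_{L^{\infty}(\Omega_f)}\|f\|_{H^{\sigma-\mez}}\bigr),
\]
and the same bound on $\|\p_z\tilde k\|_{H^{\sigma-1}}$. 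Set $c=1$ (infinite depth) or $c=H$ (finite depth), so that $\mu-c=G(\p_z\varrho-1,\na_x\varrho)$ or $G(\p_z\varrho-H,\na_x\varrho)$ for a smooth $G$ with $G(0)=0$. The Moser-type composition estimate used in the proof of \cref{lemm.coefsEstimInfinite}, together with \cref{lemm:varrho}, produces
\[
\|\mu\|_{L^{\infty}(\Rr^d\times J)}\le C(\|f\|_{H^s}),\qquad \|\mu-c\|_{H^{\sigma-1}(\Rr^d\times J)}\le C(\|f\|_{H^s})\|f\|_{H^{\sigma-\mez}}.
\]
Finally, $\|\p_z\tilde k\|_{L^{\infty}}\le \|\na k\|_{L^{\infty}}\|\p_z\varrho\|_{L^{\infty}}\le C(\|f\|_{H^s})\|\na k\|_{L^{\infty}}$, using the Sobolev embedding $H^{s-1}(\Rr^d)\hookrightarrow L^{\infty}(\Rr^d)$ applied to the trace of $\p_z\varrho-1$ (resp.\ $\p_z\varrho-H$).

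To combine, decompose $F_0=-c\,\p_z\tilde k-(\mu-c)\,\p_z\tilde k$ and apply the tame product rule $\|uv\|_{H^{\sigma-1}}\les \|u\|_{L^{\infty}}\|v\|_{H^{\sigma-1}}+\|u\|_{H^{\sigma-1}}\|v\|_{L^{\infty}}$, valid on the minimally smooth domain $\Rr^d\times J$ for $\sigma-1\ge 0$ (transferred from $\Rr^{d+1}$ via the extension operator of \cref{sec.extension}). This yields
\[
\|F_0\|_{H^{\sigma-1}}\le |c|\,\|\p_z\tilde k\|_{H^{\sigma-1}}+\|\mu-c\|_{L^{\infty}}\|\p_z\tilde k\|_{H^{\sigma-1}}+\|\mu-c\|_{H^{\sigma-1}}\|\p_z\tilde k\|_{L^{\infty}},
\]
and inserting the three bounds above, together with the monotonicity $\|f\|_{H^{\sigma-\mez}}\le \|f\|_{H^{\sigma+\mez}}$, delivers \eqref{eq.F0estBis}. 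The finite-depth case is identical after replacing $c=1$ by $c=H$ and adding the $\|b_0\|_{H^{\sigma-\mez}}$ contributions that already appear in \cref{lemm:varrho}(iii).

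The conceptual step is the chain-rule rewriting $F_0=-\mu\,\p_z\tilde k$; after that, the argument is a bookkeeping exercise assembling \cref{lemm.FaaDiBruno}, \cref{lemm:varrho}, a Moser composition bound, and the tame product rule. The mildly delicate point is to preserve the tame structure (low norm of $k$ multiplied by the high norm $\|f\|_{H^{\sigma+\mez}}$, plus high norm of $k$ alone), which is secured by using the $L^{\infty}$–$H^{\sigma-1}$ split in the product rule rather than a cruder Sobolev product estimate.
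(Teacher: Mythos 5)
Your proposal is correct and follows essentially the same route as the paper: rewrite $F_0=-\frac{\p_z\varrho}{1+|\na_x\varrho|^2}\p_z(k\circ\ff_f)$ via the chain rule, split the coefficient as a constant plus a smooth function of $\tilde\na\varrho$ vanishing at $0$, bound $\|k\circ\ff_f\|_{H^\sigma}$ with \cref{lemm.FaaDiBruno} and \cref{lemm:varrho}~(iii), and close with a product estimate. The only deviation is the last step, where the paper invokes the Bony-type rule \eqref{pr:200} (producing the $\|f\|_{H^{\sigma+\mez}}(\|k\|_{H^{s-\mez}}+\|\na k\|_{L^\infty})$ term), while you use the tame $L^\infty$-based rule \eqref{tamepr} together with $\|\p_z(k\circ\ff_f)\|_{L^\infty}\le C(\|f\|_{H^s})\|\na k\|_{L^\infty}$, which is equally valid (after extension from $\Rr^d\times J$) and in fact yields a marginally sharper bound dominated by \eqref{eq.F0estBis}.
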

\begin{proof}
We only consider the infinite-depth case. We have   $F_0=G(\tilde{\nabla} \varrho)\p_z(k\circ \ff_f) + \p_z(k\circ \ff_{f})$, where $G$ is a smooth function of $\tilde{\na} \varrho$ with $G(0)=0$.  \cref{lemm.FaaDiBruno} implies
\bq\label{kfff:Hsigma}
\begin{aligned}
     \|\p_z(k \circ \ff_{f})\|_{H^{\sigma-1}}\le  \|k \circ \ff_{f}\|_{H^{\sigma}} &\le C(\|f\|_{H^s})\left(\| k\|_{H^{\sigma}}+\|\na k\|_{L^\infty}\| \ff_f-\operatorname{id}\|_{H^{\sigma}}\right) \\
    &\le C(\|f\|_{H^s})\left(\| k\|_{H^{\sigma}}+\| \na k\|_{L^\infty}\|f\|_{H^{\sigma -\mez}}\right).
\end{aligned}
\eq
To control the product $G(\tilde{\nabla} \varrho)\p_z(k\circ \ff_f)$, we shall appeal to the following product rule 
\bq\label{pr:200}
\| uv\|_{H^{\sigma-1}_{x, z}}\les \| u\|_{H^\sigma_{x, z}}\| v\|_{H^{s-\tdm}_{x, z}}+ \| u\|_{H^{s-\mez}_{x, z}}\| v\|_{H^{\sigma-1}_{x, z}}
\eq
for $\sigma \ge 1$ and $s>1+\frac{d}{2}$. In the whole space $\Rr^d\times \Rr$, \eqref{pr:200} is a consequence of \eqref{boundpara} and \eqref{Bony1}. For the domain $\Rr^d\times J$, \eqref{pr:200} follows by applying the extension \cref{th.extension}. It follows from \eqref{pr:200} that
\begin{align*}
\| G(\tilde{\nabla} \varrho)\p_z(k\circ \ff_f)\|_{H^{\sigma-1}}&\les  \| G(\tilde{\nabla} \varrho)\|_{H^\sigma}\| \p_z(k\circ \ff_f)\|_{H^{s-\tdm}}+ \| G(\tilde{\nabla} \varrho)\|_{H^{s-\mez}}\| \p_z(k\circ \ff_f)\|_{H^{\sigma-1}}\\
&\les  \| G(\tilde{\nabla} \varrho)\|_{H^\sigma}\| k\circ \ff_f\|_{H^{s-\mez}}+ \| G(\tilde{\nabla} \varrho)\|_{H^{s-\mez}}\| k\circ \ff_f\|_{H^{\sigma}}.
\end{align*}
Then we conclude by using \cref{est:nonl} (i), \cref{lemm.FaaDiBruno}, and \cref{lemm:varrho} (iii). 
\end{proof}

\subsection{Estimates for \texorpdfstring{$\na_{x, z}v$}{nabla v} in \texorpdfstring{$X^\sigma$}{Xsigma}}
We first recall the following result from \cite{ABZ}.
\begin{prop}\cite[Proposition 3.16]{ABZ}\label{prop.ABZ} Let $s > 1+\frac{d}{2}$ and $\sigma \in [-\mez, s-1]$. Fix any $ z_1<z_0$ in $J$. If $f\in H^{\sigma+1}(\Rr^d)$ and $F_0\in Y^\sigma(I)$, then $\na_{x, z}v\in X^\sigma(I)$ and there exists a non-decreasing function $C : \Rr_+ \to \Rr_+$ which depends on $(d,s,\sigma)$ and $z_0-z_1$ such that
\begin{equation}
    \label{eq.ABZ}
    \|\nabla_{x,z}v\|_{X^{\sigma}([z_0, 0])}\le  C(\| f \|_{H^s}) \left(\|\na \zeta\|_{H^{\sigma}}+ \|F_0\|_{Y^{\sigma}([z_1, 0])}+\|\nabla_{x,z}v\|_{X^{-\mez}([z_1, 0])}\right).
\end{equation}
In the infinite depth case, there holds 
\begin{equation}\label{ABZ:infdepthABZ}
    \|\nabla_{x,z}v\|_{X^{\sigma}(J)} \le  C(\| f \|_{H^s}) \left(\|\na\zeta\|_{H^{\sigma }}+ \|F_0\|_{Y^{\sigma}(J)}+\|\nabla_{x,z}v\|_{X^{-\mez}(J)}\right).
\end{equation}
\end{prop}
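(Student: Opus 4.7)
The plan is to carry out the Alazard--Burq--Zuily paralinearization and factorization scheme for the elliptic equation \eqref{eq:v}. First, I would paralinearize by writing
\begin{equation*}
\p_z^2 v + T_\alpha \Delta_x v + T_\beta\cdot \na \p_z v - T_\gamma \p_z v = F_0 + R,
\end{equation*}
where $R$ collects the three Bony remainders $(\alpha - T_\alpha)\Delta_x v$, $(\beta - T_\beta)\cdot\na\p_z v$, and $(\gamma - T_\gamma)\p_z v$. The coefficient bounds from \cref{lemm.coefsEstimInfinite} and \cref{lem.lowSobolEstCoeffs} combined with Bony's paraproduct estimates show that each remainder gains (up to) $\mez$ derivative and thus lies in $Y^\sigma$ provided $-\mez \le \sigma \le s - 1$, with norm controlled by $\cF(\|f\|_{H^s})\|\na_{x,z}v\|_{X^{-\mez}}$.

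Next, I would factorize the resulting paradifferential ODE as
\begin{equation*}
\p_z^2 + T_\alpha \Delta_x + T_\beta\cdot\na\p_z - T_\gamma\p_z = (\p_z - T_a)(\p_z - T_A) + R_0,
\end{equation*}
with the Poisson-type symbols
\begin{equation*}
A(x,\xi) = \tfrac{1}{2}\bigl(-i\beta\cdot\xi + \sqrt{4\alpha|\xi|^2 - (\beta\cdot\xi)^2}\bigr), \qquad a(x,\xi) = \tfrac{1}{2}\bigl(-i\beta\cdot\xi - \sqrt{4\alpha|\xi|^2 - (\beta\cdot\xi)^2}\bigr).
\end{equation*}
Since $\p_z\varrho \ge \mez$ (resp.\ $\ge \fd/2$) and $\na_x\varrho \in L^\infty$ are both controlled by $\|f\|_{H^s}$ via \cref{lemm:varrho}, the symbol $\alpha$ is bounded below by $c(\|f\|_{H^s})^{-1} > 0$; hence $\RE A \ge c|\xi|$ and $\RE a \le -c|\xi|$. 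Paradifferential symbolic calculus (\cref{theo:sc}) then identifies $R_0$ as an operator of order $0$ acting on $v$, which is again absorbed by the low-norm term.

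Setting $w := (\p_z - T_A)v$, the equation reduces to the backward-parabolic problem $\p_z w - T_a w = F_0 + R + R_0 v =: \tilde F$. To handle the absence of data at $z = z_1$, I would localize with a cutoff $\chi(z)$ equal to $1$ on $[z_0, 0]$ and vanishing near $z_1$, and apply \cref{prop.pseudoParabolic} to $\chi w$ on $[z_1, 0]$ with trivial initial datum at $z_1$. The commutator $[\p_z, \chi]w$ is supported in $[z_1, z_0]$ and is dominated by $\|\na_{x,z}v\|_{X^{-\mez}([z_1, 0])}$, which is precisely why that norm appears on the right-hand side of \eqref{eq.ABZ}. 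This yields an $X^\sigma([z_0, 0])$ bound for $w$.

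Finally, I would invoke the forward-parabolic equation $(\p_z - T_A)v = w$ with Dirichlet datum $v|_{z=0} = \zeta$ and apply \cref{prop.pseudoParabolic} a second time, now running in the reversed $z$-direction, to conclude $\na_{x,z}v \in X^\sigma([z_0,0])$ with the desired bound. The infinite-depth variant \eqref{ABZ:infdepthABZ} follows by the same argument without cutoffs, thanks to the $|I|$-independent constants recorded in \cref{rema:parabolicI}, the variational estimates \eqref{vari:phi1}--\eqref{vari:phi2} ensuring that $\na_{x,z}v \in X^{-\mez}(J)$ so that the right-hand side is finite. The main obstacle I anticipate is the bookkeeping of Sobolev indices in the paralinearization and factorization remainders, so that every error term lands cleanly in $Y^\sigma$; this is exactly where the hypotheses $-\mez \le \sigma \le s-1$ and $s > 1 + \tfrac{d}{2}$ interact with the tame coefficient estimates proved in Section~\ref{sec.DN}.
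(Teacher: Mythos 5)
Your proposal follows essentially the same method the paper uses (in its proof of \cref{prop.ABZvariant}, which is the generalization of the cited \cite[Proposition 3.16]{ABZ}): paralinearize the coefficients, factorize the elliptic operator into forward and backward parabolic paradifferential pieces with symbols $a, A$, cut off in $z$ to kill the dependence on data at $z_1$, and apply \cref{prop.pseudoParabolic} twice. The infinite-depth case via letting $z_0\to-\infty$ is also exactly what the paper does, hinging on \cref{rema:parabolicI}.

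Two points of imprecision worth flagging. First, the factorization error $R_0 = T_\alpha\Delta_x - T_a T_A + T_{\p_z A}$ is \emph{not} order $0$; for $s>1+\tfrac d2$ the coefficients are only $C^\eps_*$ in $x$ with $\eps < s-1-\tfrac d2$, so \cref{theo:sc} gives an operator of order $2-\eps$, which produces a gain of only $\eps$ derivative (see \cref{lemm:F2}). This is still good enough, but only if one argues by a bootstrap/induction in $\sigma$ in increments of $\eps$, reaching $X^{-\mez}$ as the base case. Second, the commutator term $\chi'(z)(\p_z-T_A)v$ is not directly bounded by the $X^{-\mez}$ norm: as computed in the paper, it lands in $L^2 H^{\sigma+\mez}$, i.e. the $X^\sigma$ norm on $[z_1,0]$. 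The $X^{-\mez}$ norm appears on the right of \eqref{eq.ABZ} because the $\eps$-increment bootstrap bottoms out at $\sigma=-\mez$, not because the commutator is directly controlled there. With these two corrections your sketch is the standard ABZ argument and is the route the paper takes.
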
 
\begin{rema} The stated estimate \eqref{eq.ABZ} is slight stronger than the one in \cite[Proposition 3.16]{ABZ}  in that it only requires  $\|\na \zeta\|_{H^{\sigma}}$  in place of  $\|\zeta\|_{H^{\sigma+1}}$ and that  the constant $C$ only depends on $z_0-z_1$ in place of $z_0$ and $z_1$. The first improvement was explained in \cite[Remark 3.13]{NP}, while the second improvement follows from \cref{rema:parabolicI}. Thanks to the second improvement, \eqref{ABZ:infdepthABZ} is deduced from \eqref{eq.ABZ} by choosing $z_0=-n$, $z_1=-n-1$ and letting $n\to \infty$.
\end{rema}

The  estimates \eqref{eq.estimateG1} and \eqref{eq.estimateS1} in \cref{prop.GSest} requires in particular that $\na_{x, z} v\in X^{\sigma}$ for  $\sigma \ge s-1$. To this end, we first generalize \cref{prop.ABZ} to $\sigma \ge s-1$. 

\begin{prop}\label{prop.ABZvariant} Let $s >1+ \frac{d}{2}$ and $\sigma \geqslant -\mez$. Fix any $ z_1<z_0$ in $J$. If $f\in H^{\sigma+1}(\Rr^d)$ and $F_0\in Y^\sigma(I)$, then $\na_{x, z}v\in X^\sigma(I)$ and there exists a non-decreasing function $C : \Rr_+ \to \Rr_+$ which depends on $(d,s,\sigma)$ and $z_0-z_1$ such that 
\begin{multline}
    \label{eq.ABZvariant}
    \|\nabla_{x,z}v\|_{X^{\sigma}([z_0, 0])}\le  C(\| f \|_{H^s}) \Big( \|\na\zeta\|_{H^{\sigma}}+\| f \|_{H^{\sigma +1}}\|\na \zeta\|_{H^{s-1}}  \\
    +\|F_0\|_{Y^{\sigma}([z_1, 0])}+\|\nabla_{x,z}v\|_{X^{-\mez}([z_1, 0])}\Big).
\end{multline}
In the infinite depth case, there holds 
\bq\label{estX:infdepth}
    \|\nabla_{x,z}v\|_{X^{\sigma}(J)} \le  C(\| f \|_{H^s}) \left(\|\na\zeta\|_{H^{\sigma}}+ \|f\|_{H^{\sigma +1}}\|\na \zeta\|_{H^{s-1}}+ \|F_0\|_{Y^{\sigma}(J)}+\|\nabla_{x,z}v\|_{X^{-\mez}(J)}\right).
\eq
In the finite-depth case,  $C$ also depends on $\fd$ and $\|b_0\|_{H^{s}}$, and  $\| f \|_{H^{\sigma +1}}$ is replaced with  $\| f \|_{H^{\sigma +1}} + \| b_0 \|_{H^{\sigma +1}}$.
\end{prop}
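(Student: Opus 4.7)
The plan is to extend \cref{prop.ABZ} from $\sigma \in [-\mez, s-1]$ to all $\sigma \geq -\mez$ by induction in half-integer steps, while maintaining the \emph{tame} structure of the estimate: linear dependence on the top-order norms $\|f\|_{H^{\sigma+1}}$ and $\|\na\zeta\|_{H^\sigma}$, with all lower-order factors absorbed into a nondecreasing function of $\|f\|_{H^s}$. For $\sigma \in [-\mez, s-1]$ the claim \eqref{eq.ABZvariant} reduces to \eqref{eq.ABZ}, since $\|f\|_{H^{\sigma+1}}\|\na\zeta\|_{H^{s-1}} \leq C(\|f\|_{H^s})\|\na\zeta\|_{H^\sigma}$ in this range. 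Hence the real work lies in the regime $\sigma > s-1$.

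The analytic backbone is the parabolic factorization of \cite{ABZ}: one constructs symbols $a, A \in L^\infty_z\Gamma^1_{1/2}$ (the two roots of $X^2 - i\beta\cdot\xi X - \alpha|\xi|^2 = 0$) satisfying $\RE a \geq c|\xi|$ and $\RE A \leq -c|\xi|$, and writes, modulo a $0$-order remainder $R$ whose seminorms depend tamely on $(\alpha-1, \beta, \gamma)$,
\[
\p_z^2 + \alpha\Delta_x + \beta\cdot\na\p_z - \gamma\p_z = (\p_z - T_A)(\p_z - T_a) + R.
\]
Setting $w := (\p_z - T_a)v$ decouples the equation for $v$ into
\[
(\p_z - T_A)w = F_0 - Rv,\qquad (\p_z - T_a)v = w,\qquad v|_{z=0}=\zeta,
\]
where $\p_z - T_A$ is backward parabolic on $[z_1, 0]$ and $\p_z - T_a$ forward parabolic on $[z_0, 0]$. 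By \eqref{eq.coefsEstimInfinite}-\eqref{eq.coefsEstimInfinite-gamma}, the seminorms $\mathcal{M}^1_{1/2}(a)$, $\mathcal{M}^1_{1/2}(A)$ are bounded by $C(\|f\|_{H^s})$, so \cref{prop.pseudoParabolic} applied to each equation yields parabolic constants of this form, independently of $|I|$ by \cref{rema:parabolicI}.

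The crux is the tame control of $Rv$ and similar remainders in $Y^\sigma$. Since the symbol of $R$ is built from products $u \cdot V$ with $u \in \{\alpha - 1, \beta, \gamma\}$ and $V$ a derivative of $v$, Bony's decomposition \eqref{boundpara}-\eqref{Bony1} yields, for a typical such product,
\[
\|uV\|_{L^2_z H^{\sigma-\mez}} \lesssim \|u\|_{L^\infty_z H^{s-1}}\|V\|_{L^2_z H^{\sigma-\mez}} + \|u\|_{L^2_z H^\sigma}\|V\|_{L^\infty_z H^{s-\tdm}}.
\]
By \cref{lemm.coefsEstimInfinite} and \cref{lem.lowSobolEstCoeffs}, $\|u\|_{L^\infty_z H^{s-1}} \leq C(\|f\|_{H^s})$ and $\|u\|_{L^2_z H^\sigma} \leq C(\|f\|_{H^s})\|f\|_{H^{\sigma+1}}$. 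The factor $\|V\|_{L^2_z H^{\sigma-\mez}}$ is what the parabolic estimate produces on the left-hand side, while the low factor $\|V\|_{L^\infty_z H^{s-\tdm}}$ is bounded, via \cref{prop.ABZ} applied at $\sigma = s-\tdm$, by $C(\|f\|_{H^s})(\|\na\zeta\|_{H^{s-1}} + \|F_0\|_{Y^{s-\tdm}} + \|\na_{x,z}v\|_{X^{-\mez}})$. This is precisely the mechanism that manufactures the new term $\|f\|_{H^{\sigma+1}}\|\na\zeta\|_{H^{s-1}}$ in \eqref{eq.ABZvariant}.

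Combining the two parabolic bounds, absorbing the lower-level contribution $\|u\|_{L^\infty_z H^{s-1}}\|V\|_{L^2_z H^{\sigma-\mez}}$ by the induction hypothesis (valid at $\sigma - \mez$) together with the seed $\|\na_{x,z}v\|_{X^{-\mez}}$ provided by \cref{lemm.variationalEstimate}, closes \eqref{eq.ABZvariant} on $[z_0, 0]$. The infinite-depth bound \eqref{estX:infdepth} follows by letting $z_1 \to -\infty$ with $z_0 - z_1$ fixed, using the uniformity in $|I|$ from \cref{rema:parabolicI}. The main obstacle will be the bookkeeping of tameness: every paradifferential identity invoked—the factorization itself, the commutators $[\p_z, T_a]$, and the composition $T_A T_a$ handled by \cref{theo:sc}—generates symbols whose high-regularity bounds involve $\|f\|_{H^{\sigma+1}}$, and any crude estimate would produce a superlinear $C(\|f\|_{H^{\sigma+1}})$. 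The consistent remedy is to apply Bony's decomposition at each occurrence, placing the high derivatives on the coefficients (hence on $\na\vr$ and on $f$) and the low-regularity factor in $L^\infty_z H^{s-1}$; \cref{lem.lowSobolEstCoeffs} is tailor-made to feed this scheme.
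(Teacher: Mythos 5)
Your overall architecture is the same as the paper's: factor $\p_z^2+\alpha\Delta_x+\beta\cdot\na_x\p_z-\gamma\p_z$ into a composition of two first-order parabolic paradifferential operators with symbols built from $\alpha,\beta$, run an induction in $\sigma$ seeded by \cref{prop.ABZ} at $\sigma=s-1$, and aim at remainder bounds that pair $\|f\|_{H^{\sigma+1}}$ with low norms of $v$. However, the step you yourself identify as the crux does not close as written. Your displayed bound
\[
\|uV\|_{L^2_zH^{\sigma-\mez}}\les \|u\|_{L^\infty_zH^{s-1}}\|V\|_{L^2_zH^{\sigma-\mez}}+\|u\|_{L^2_zH^{\sigma}}\|V\|_{L^\infty_zH^{s-\tdm}}
\]
is fatal for the second-order pieces $V\in\{\Delta_xv,\ \na_x\p_zv\}$: there $\|V\|_{L^2_zH^{\sigma-\mez}}\les\|\na_xv\|_{L^2_zH^{\sigma+\mez}}\le\|\na_{x,z}v\|_{X^{\sigma}}$ is exactly the quantity being estimated, and it enters the forcing with the non-small constant $C(\|f\|_{H^s})$ times the parabolic constant, so it can neither be absorbed into the left-hand side (no smallness) nor be controlled by the induction hypothesis at $\sigma-\mez$, which only yields $\Delta_xv\in L^2_zH^{\sigma-1}$, half a derivative short. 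The paper never produces such a term: the paraproducts $T_\alpha\Delta_x$ and $T_\beta\cdot\na_x\p_z$ are kept in the principal part, so the only second-order forcing is $(\alpha-T_\alpha)\Delta_xv=T_{\Delta_xv}\alpha+R(\Delta_xv,\alpha)$, estimated via \eqref{pp:C-mu} and \eqref{Bony2} with the high regularity on $\alpha$ (hence on $f$, giving $\|f\|_{H^{\sigma+1}}$) and with $\Delta_xv$ only in $L^2_zC_*^{-\mez+\eps}$, which Sobolev embedding controls by $\|\na_xv\|_{X^{s-1}}$; see \cref{lemm:F1}. Moreover the remainders are estimated with an extra gain of $\eps$ (\cref{lemm:F1} and \cref{lemm:F2} land in $Y^{\sigma+\eps}$ and $L^2_zH^{\sigma-\mez+\eps}$), so the induction step $\sigma\to\sigma+\eps$ only involves $v$-norms at the previous level. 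Your closing remark about "placing the high derivatives on the coefficients" states the right principle, but it contradicts the estimate you actually propose to use.

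Two further genuine issues. First, your induction "in half-integer steps" presupposes a gain of $\mez$ derivative in the remainder estimates, which is not available under the hypothesis $s>1+\frac{d}{2}$ of this proposition: the coefficients only lie in $L^\infty_zH^{s-1}\subset L^\infty_zC_*^{s-1-\frac d2}$ and $\gamma\in L^\infty_zH^{s-2}$, so the paraproduct and symbolic-calculus errors ($T_\alpha\Delta_x-T_{a}T_{A}$, $T_{\p_zA}$) are only of order $2-\eps$ with $\eps<\min\{\mez,\,s-1-\frac d2\}$ (in particular the factorization remainder is not "$0$-order"); this is exactly why the paper advances by the small increment $\eps$. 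Second, you never specify the data for the $w$-equation at $z_1$: without the paper's cutoff $\chi$ (which makes $w=\chi(z)(\p_z-T_A)v$ vanish at $z_1$, at the cost of the commutator $\chi'(z)(\p_z-T_A)v$, handled at one level lower), the trace $w(z_1)$ is not controlled in $H^{\sigma}$ and the forward parabolic estimate from $z_1$ cannot be initiated; this localization is precisely the reason the conclusion is interior in $z$ (norms of $v$ on $[z_0,0]$ against data on $[z_1,0]$) and the constant depends on $z_0-z_1$. The variational bound from \cref{lemm.variationalEstimate} only supplies the $X^{-\mez}$ seed and cannot replace this device.
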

We remark  that \eqref{eq.ABZvariant} only provides estimates away from the bottom, while \eqref{estX:infdepth} is a global-in-depth estimate in the infinite depth case. 
The estimates  \eqref{eq.ABZvariant}  and \eqref{estX:infdepth} are linear with respect to the highest order norm $\| f\|_{H^{\sigma+1}}$ of $f$.   On the other hand, \cref{prop.ABZ} implies that for $\sigma>1+\frac{d}{2}$,  
\bq
   \|\nabla_{x,z}v\|_{X^{\sigma}([z_0, 0])} \le  C(\| f \|_{H^{\sigma+1}}) \left(\|\zeta\|_{H^{\sigma +1}} + \|F_0\|_{Y^{\sigma}([z_1, 0])}+\|\nabla_{x,z}v\|_{X^{-\mez}([z_1, 0])}\right),
\eq
which is nonlinear with respect to $\| f\|_{H^{\sigma+1}}$, therefore would \textit{not} yield a closed a priori estimate for $f \in L^{\infty}_TH^s \cap L^2_TH^{s+\mez}$. 

The proof of \cref{prop.ABZvariant}  is given in the   \cref{sec:ABZvariant}. The idea is to use an induction argument on $\sigma$, where the base case $\sigma=s-1$ is provided by \cref{prop.ABZ}. 

Using \cref{prop.ABZvariant} we deduce the following tame estimates for quantities defined as traces on the free boundary $\Sigma_f$. In particular, the estimate \eqref{est:DN:h} for  the Dirichlet-Neumann operator $G[f]h$ generalizes the known estimate  \eqref{est:DN:ABZ000}. 
\begin{coro}
Let $s>1+\frac{d}{2}$ and $\sigma\ge  s-1$. We have 
\begin{align}\label{esttrace:cG}
&\| \cG[f]h\vert_{\Sigma_f}\|_{H^\sigma}\le C(\|f\|_{H^s})\Big(\| h\|_{H^{\sigma+1}}+(\| f\|_{H^{\sigma+1}}+a)\| h\|_{H^s}\Big),\\ \label{est:DN:h}
&\| G[f]h\|_{H^\sigma}\le C(\|f\|_{H^s})\Big(\| h\|_{H^{\sigma+1}}+(\| f\|_{H^{\sigma+1}}+a)\| h\|_{H^s}\Big),
\end{align}
and
\bq\label{esttrace:cS}
\| \cS[f]k\vert_{\Sigma_f}\|_{H^\sigma}\le C(\| f\|_{H^s}) \left(\| k\circ \ff_f\|_{H^{\sigma+\mez}}+(\| f\|_{H^{\sigma+1}}+a)\| k\circ \ff_f\|_{H^{s}} \right),
\eq
where $a=0$ for infinite depth and $a=\| b_0\|_{H^{\sigma+1}}$ for finite depth. 
\end{coro}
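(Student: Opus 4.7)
The plan is to reduce all three estimates to traces at $z=0$ of $\na_{x,z}v$ in the straightened domain, where $v=\phi\circ\ff_f$ and $\phi$ is either $\phi^{(1)}$ (for $\cG[f]h$) or $\phi^{(2)}$ (for $\cS[f]k$). Since $\ff_f(x,0)=(x,f(x))$, the chain rule \eqref{chainrule:ff} gives $\cG[f]h\vert_{\Sigma_f}(x)=\na_{x,z}v\vert_{z=0}(x)\cdot P(x)$ (and analogously for $\cS[f]k\vert_{\Sigma_f}$), where $P$ is a smooth function of $\na f(x)$ satisfying a tame bound $\|P-\operatorname{Id}\|_{H^\sigma}\le C(\|f\|_{H^s})(\|f\|_{H^{\sigma+1}}+a)$ via \cref{est:nonl}. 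A Bony-type product rule (\eqref{boundpara}--\eqref{Bony1}) then controls $(\na_{x,z}v\vert_{z=0})\cdot P$ in $H^\sigma$ by $\|\na_{x,z}v\vert_{z=0}\|_{H^\sigma}$ plus $(\|f\|_{H^{\sigma+1}}+a)\|\na_{x,z}v\vert_{z=0}\|_{H^{s-1}}$, so the whole question reduces to estimating $\|\na_{x,z}v\vert_{z=0}\|_{H^\sigma}$, for which the embedding $X^\sigma\hookrightarrow C_z(H^\sigma)$ further reduces the task to controlling $\|\na_{x,z}v\|_{X^\sigma}$.

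For \eqref{esttrace:cG} I take $\zeta=h$ and $F_0=0$ in \eqref{eq:v}. \cref{prop.ABZvariant} then delivers
\[
\|\na_{x,z}v\|_{X^\sigma}\le C(\|f\|_{H^s})\bigl(\|\na h\|_{H^\sigma}+(\|f\|_{H^{\sigma+1}}+a)\|\na h\|_{H^{s-1}}+\|\na_{x,z}v\|_{X^{-\mez}}\bigr),
\]
and the low-order piece $\|\na_{x,z}v\|_{X^{-\mez}}$ is handled by the variational estimate \eqref{vari:phi1}: its $L^2(\Rr^d\times J)$ bound embeds into $L^2(J;H^0)$, and combining with the equation $\p_z^2 v=-\alpha\Delta_x v-\beta\cdot\na\p_z v+\gamma\p_z v$ together with the coefficient bounds of \cref{lemm.coefsEstimInfinite} lifts this to the required $C_z(H^{-\mez})$ control. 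This yields \eqref{esttrace:cG}. The Dirichlet--Neumann estimate \eqref{est:DN:h} then follows from $G[f]h=N\cdot\cG[f]h\vert_{\Sigma_f}$ with $N=(-\na f,1)$ via one more tame product estimate, absorbing the factor $\na f$ into the $(\|f\|_{H^{\sigma+1}}+a)\|h\|_{H^s}$ slot.

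For \eqref{esttrace:cS} I take $\zeta=0$, and using $\p_z(k\circ\ff_f)=\p_z\varrho\,(\p_y k)\circ\ff_f$ I rewrite the source as
\[
F_0=-\frac{\p_z\varrho}{1+|\na_x\varrho|^2}\,\p_z(k\circ\ff_f).
\]
Tame product estimates together with the coefficient bounds of \cref{lemm.coefsEstimInfinite} and \cref{lemm:varrho} control $\|F_0\|_{L^2(J;H^{\sigma-\mez})}$, hence $\|F_0\|_{Y^\sigma}$, by $C(\|f\|_{H^s})\bigl(\|k\circ\ff_f\|_{H^{\sigma+\mez}}+(\|f\|_{H^{\sigma+1}}+a)\|k\circ\ff_f\|_{H^s}\bigr)$, and the $X^{-\mez}$ remainder is controlled via \eqref{vari:phi2} (with Poincaré \eqref{vari:phi2:Poincare} in the finite-depth case). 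Applying \cref{prop.ABZvariant}, or \eqref{eq.ABZvariant} on a strip $[z_0,0]$ in the finite-depth case and using continuity at $z=0$ from $X^\sigma([z_0,0])\hookrightarrow C([z_0,0];H^\sigma)$, closes the argument.

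The main technical obstacle throughout is maintaining tameness in $f$: the top-order norm $\|f\|_{H^{\sigma+1}}$ must appear only linearly on the right-hand side, which forces a careful choice of high/low split in each product rule and relies crucially on $s>1+d/2$ to embed the low-regularity factors into $L^\infty$. Once this tameness is secured, the three estimates follow by the uniform scheme above.
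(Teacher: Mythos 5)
Your proposal follows essentially the same route as the paper: straighten the domain, apply \cref{prop.ABZvariant} to $v=\phi^{(j)}\circ\ff_f$ with $(\zeta,F_0)=(h,0)$ resp. $(0,-\tfrac{\p_z\varrho}{1+|\na_x\varrho|^2}\p_z(k\circ\ff_f))$, bound $F_0$ and the trace of $(\na\ff_f)^{-1}$ tamely via \cref{lemm.coefsEstimInfinite}, \cref{lemm:varrho} and \cref{est:nonl}, and finish with the tame product estimate, including the factor $N$ for \eqref{est:DN:h}. The one deviation is your treatment of $\|\na_{x,z}v\|_{X^{-\mez}}$: the paper invokes \cref{lemm.variational-v} (which rests on the divergence-form argument of \cite[Lemma 3.11]{NP}), whereas your proposed lift through the non-divergence equation $\p_z^2v=-\alpha\Delta_xv-\beta\cdot\na\p_zv+\gamma\p_zv$ requires products such as $\gamma\,\p_zv$ and $\alpha\,\Delta_xv$ at the $H^{-1}$ level that are delicate when $s$ is close to $1+\frac d2$; since the needed low-norm bound is exactly \cref{lemm.variational-v}, this is a presentational shortcut rather than a genuine gap.
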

\begin{proof}  1.  For the proof of \eqref{esttrace:cG} we first apply the estimate \eqref{eq.ABZvariant}  with $v=\phi^{(1)}\circ \ff_f$, $\zeta=h$,  and $F_0=0$ and invoke \eqref{vari:phi1} to obtain
\bq\label{tracenav(1)}
\| \na_{x, z}v\vert_{z=0}\|_{H^\sigma}\le C(\|f\|_{H^s})\left(\| h\|_{H^{\sigma+1}}+\| f\|_{H^{\sigma+1}}\| h\|_{H^s}\right).
\eq
On the other hand, we have
\[
 \cG[f]h\vert_{\Sigma_f}=\na_{x, y}\phi^{(1)}\vert_{\Sigma_f}=\na_{x, z}v\vert_{z=0}(\na \ff_f)^{-1}\vert_{z=0}=\p_z\varrho\vert_{z=0}\na_{x, z}v\vert_{z=0}A , 
\]
where $(\na \ff_f)^{-1}$ is given by \eqref{chainrule:ff}.  Since $\sigma \ge s-1$, $H^\sigma(\Rr^d)$ is an algebra. Combining  \cref{est:nonl} (i) with the estimates \eqref{navarrho:X} and \eqref{navarrho:X:fd}, we find
\bq\label{est:naff}
\| (\na \ff_f)^{-1}-\tilde{I}_{d+1}\|_{X^\sigma(J)}\le C(\| f\|_{H^s})(\| f\|_{H^{\sigma+1}}+a),
\eq
where $\tilde{I}_{d+1}=I_{d+1}$ in the infinite-depth case and 
\[
\tilde{I}_{d+1}=\begin{bmatrix}
I_{d} & 0_{d\times 1}\\
0_{1\times d} & \frac{1}{H}
\end{bmatrix}
\]
in the finite-depth case.  Then, using the tame product estimate \eqref{tamepr} together with the embedding $H^{s-1}(\Rr^d)\subset L^\infty(\Rr^d)$,  we deduce from the above estimates that
\bq\label{esttrace:cG:10}
\begin{aligned}
\| \cG[f]h\vert_{\Sigma_f}\|_{H_x^\sigma}&\les \left(\| (\na \ff_f)^{-1}\vert_{z=0}-\tilde{I}_{d+1}\|_{H^{s-1}_x}+1\right)\|\na_{x, z}v\vert_{z=0}\|_{H^\sigma_x}\\
&\qquad +\| (\na \ff_f)^{-1}\vert_{z=0}-\tilde{I}_{d+1}\|_{H^{\sigma}_x}\|\na_{x, z}v\vert_{z=0}\|_{H^{s-1}_x}\\
&\le C(\|f\|_{H^s})\left(\| h\|_{H^{\sigma+1}}+\| f\|_{H^{\sigma+1}}\| h\|_{H^s}\right)+ C(\|f\|_{H^s})(\| f\|_{H^{\sigma+1}}+a)\| h\|_{H^s}\\
&\le C(\|f\|_{H^s})\left(\| h\|_{H^{\sigma+1}}+(\| f\|_{H^{\sigma+1}}+a)\| h\|_{H^s}\right).
\end{aligned}
\eq
This concludes the proof of  \eqref{esttrace:cG}. Then, since $G[f]h=N\cdot \cG[f]h\vert_{\Sigma_f}$, \eqref{est:DN:h} follows from \eqref{esttrace:cG} and the tame product estimate
\[
\| N\cdot \cG[f]h\vert_{\Sigma_f}\|_{H_x^\sigma}\les \| \cG[f]h\vert_{\Sigma_f}\|_{H^\sigma}(\| N-e_y\|_{H^{s-1}}+1)+  \| \cG[f]h\vert_{\Sigma_f}\|_{H^{s-1}}\| N-e_y\|_{H^\sigma}.
\]

\quad 2.  For the proof of \eqref{esttrace:cS}, we apply the estimate \eqref{eq.ABZvariant} with $v=\phi^{(2)}\circ \ff_f$, $\zeta=0$ and invoking \eqref{vari:phi2}  to have
\[
\| \na_{x, z}v\vert_{z=0}\|_{H^\sigma_x}\le   C(\| f \|_{H^s})(\|F_0\|_{Y^\sigma([z_1, 0])}+\|k\|_{L^2(\Omega_f)}),\quad z_1\in J,
\]
where 
\[
  F_0= -\frac{(\partial_z \varrho)^2}{1+|\nabla _x \varrho|^2}(\p_y k)\circ \ff_ f= -\frac{\partial_z \varrho}{1+|\nabla _x \varrho|^2}\p_z (k\circ \ff_ f).
\]

Similarly to \eqref{est:naff} we have 
\[
\left\|\frac{\partial_z \varrho}{1+|\nabla _x \varrho|^2}-m\right\|_{X^\sigma(J)}\le C(\| f\|_{H^s})(\| f\|_{H^{\sigma+1}}+a),
\]
where $m=1$ in the infinite-depth case and $m=H$ in the finite-depth case.  Then, arguing as in \eqref{esttrace:cG:10}, we find
 \begin{align*}
  \| F_0\|_{Y^\sigma([z_1, 0])}&\le   \| F_0\|_{L^2([z_1, 0]; H^{\sigma-\mez})}\\ 
  &\les \left(\left\|\frac{\partial_z \varrho}{1+|\nabla _x \varrho|^2}-m\right\|_{L^\infty_z H^{s-1}_x}+m\right)\| \p_z (k\circ \ff_ f)\|_{L^2_zH^{\sigma-\mez}}\\
  &\qquad + \left\|\frac{\partial_z \varrho}{1+|\nabla _x \varrho|^2}-m\right\|_{L^\infty_z H^{\sigma-\mez}_x}\| \p_z (k\circ \ff_ f)\|_{L^2_zH^{s-1}}\\
  &\le C(\| f\|_{H^s})\left(\| k\circ \ff_f\|_{H^{\sigma+\mez}_{x, z}}+(\| f\|_{H^{\sigma+\mez}}+a)\| k\circ \ff_f\|_{H^s_{x, z}}\right).
  \end{align*}
Noting additionally that 
  \[
  \|k\|_{L^2(\Omega_f)}\le C(\| f\|_{H^s})\| k\circ \ff_f\|_{L^2(\Rr^d\times J)},
  \]
we obtain 
\bq\label{tracenav(2)}
\| \na_{x, z}v\vert_{z=0}\|_{H^\sigma_x}\le C(\| f\|_{H^s})\left(\| k\circ \ff_f\|_{H^{\sigma+\mez}_{x, z}}+(\| f\|_{H^{\sigma+\mez}}+a)\| k\circ \ff_f\|_{H^s_{x, z}}\right).
\eq
Since  $\cS[f]k\vert_{\Sigma_f}=\na_{x, z}v\vert_{z=0}(\na \ff_f)^{-1}\vert_{z=0}$, we deduce from the above estimates that 
\begin{align*}
\| \cS[f]k\vert_{\Sigma_f}\|_{H^\sigma_x}
&\les \left(\| (\na \ff_f)^{-1}\vert_{z=0}-\tilde{I}_{d+1}\|_{H^{s-1}_x}+1\right)\|\na_{x, z}v\vert_{z=0}\|_{H^\sigma_x}\\
&\qquad +\| (\na \ff_f)^{-1}\vert_{z=0}-\tilde{I}_{d+1}\|_{H^{\sigma}_x}\|\na_{x, z}v\vert_{z=0}\|_{H^{s-1}_x}\\
&\le C(\| f\|_{H^s}) \left(\| k\circ \ff_f\|_{H^{\sigma+\mez}_{x, z}}+(\| f\|_{H^{\sigma+1}}+a)\| k\circ \ff_f\|_{H^{s}_{x, z}} \right). 
\end{align*}
The proof of \eqref{esttrace:cS} is complete.
\end{proof}
\begin{coro}\label{coro:traceuN}
For $s>1+\frac{d}{2}$, $u$ given by \eqref{eq:u} and $N=(-\na f, 1)$, the estimates  
\begin{align}\label{est:traceuN:l}
&\| u\cdot N\vert_{\Sigma_f}\|_{H^{s-1}}\le C(\| f\|_{H^s})(\| f\|_{H^s}+\| \tilde g\|_{H^s}),\\ \label{est:traceuN:h}
&\| u\cdot N\vert_{\Sigma_f}\|_{H^{s-\mez}}\le C(\| f\|_{H^s})\left(\| f\|_{H^{s+\mez}}+\| \tilde g\|_{H^s}+\| f\|_{H^{s+\mez}}\| \tilde g\|_{H^s}\right)
\end{align}
hold in both the finite-depth and infinite-depth cases.
\end{coro}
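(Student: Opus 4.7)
The plan is to decompose $u\cdot N|_{\Sigma_f}$ into three pieces using \eqref{eq:u} and $N\cdot e_y=1$:
\[
u\cdot N|_{\Sigma_f}=-N\cdot\cG[f]\Gamma(f)|_{\Sigma_f}-N\cdot\cS[f]g|_{\Sigma_f}-g|_{\Sigma_f}=-G[f]\Gamma(f)-N\cdot\cS[f]g|_{\Sigma_f}-g|_{\Sigma_f},
\]
where the first equality uses the definition $G[f]h=N\cdot\cG[f]h|_{\Sigma_f}$. Each of the three pieces will be estimated individually and the contributions summed. The scalar trace $g|_{\Sigma_f}$ is the easiest: since $\ff_f(x,0)=(x,f(x))$ we have $g|_{\Sigma_f}(x)=\tilde g(x,0)$, and the standard trace inequality on $\Rr^d\times J$ yields $\|g|_{\Sigma_f}\|_{H^{s-\mez}(\Rr^d)}\lesssim \|\tilde g\|_{H^s(\Rr^d\times J)}$, which is even stronger than needed for \eqref{est:traceuN:l}.

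For the Dirichlet--Neumann contribution, I would apply \eqref{est:DN:h} to $h=\Gamma(f)$ with $\sigma=s-1$ for the low estimate and $\sigma=s-\mez$ for the high estimate. This produces
\[
\|G[f]\Gamma(f)\|_{H^\sigma}\le C(\|f\|_{H^s})\bigl(\|\Gamma(f)\|_{H^{\sigma+1}}+(\|f\|_{H^{\sigma+1}}+a)\|\Gamma(f)\|_{H^s}\bigr).
\]
Since $\Gamma$ is smooth with $\Gamma(0)=0$ and $\gamma=\Gamma'\in\mathfrak{G}$, Moser-type composition (the $x$-only specialization of \cref{prop:gamma}) yields $\|\Gamma(f)\|_{H^\tau}\le C(\|f\|_{H^s})\|f\|_{H^\tau}$ for any $\tau\in[0,s+\mez]$. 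Substituting $\tau=s$ and $\tau=\sigma+1$ gives bounds of the shape $C(\|f\|_{H^s})\|f\|_{H^s}$ (for $\sigma=s-1$) and $C(\|f\|_{H^s})\|f\|_{H^{s+\mez}}$ (for $\sigma=s-\mez$), matching the right-hand sides of \eqref{est:traceuN:l} and \eqref{est:traceuN:h}.

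For the vector piece I would apply \eqref{esttrace:cS} with $k=g$, using $k\circ\ff_f=\tilde g$, to obtain
\[
\|\cS[f]g|_{\Sigma_f}\|_{H^\sigma}\le C(\|f\|_{H^s})\bigl(\|\tilde g\|_{H^{\sigma+\mez}}+(\|f\|_{H^{\sigma+1}}+a)\|\tilde g\|_{H^s}\bigr).
\]
With $\sigma=s-1$ this bounds $\|\cS[f]g|_{\Sigma_f}\|_{H^{s-1}}$ by $C(\|f\|_{H^s})\|\tilde g\|_{H^s}$; with $\sigma=s-\mez$ it bounds $\|\cS[f]g|_{\Sigma_f}\|_{H^{s-\mez}}$ by $C(\|f\|_{H^s})(1+\|f\|_{H^{s+\mez}})\|\tilde g\|_{H^s}$. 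To take the $N$-component I would invoke the tame product estimate in $H^\sigma$, valid since $s-1>\frac{d}{2}$, giving
\[
\|N\cdot\cS[f]g|_{\Sigma_f}\|_{H^\sigma}\lesssim \|N\|_{L^\infty}\|\cS[f]g|_{\Sigma_f}\|_{H^\sigma}+\|\cS[f]g|_{\Sigma_f}\|_{L^\infty}\|N-e_y\|_{H^\sigma},
\]
and control $\|\cS[f]g|_{\Sigma_f}\|_{L^\infty}$ by $\|\cS[f]g|_{\Sigma_f}\|_{H^{s-1}}\le C(\|f\|_{H^s})\|\tilde g\|_{H^s}$ via Sobolev embedding. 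Assembling the three pieces then gives \eqref{est:traceuN:l} and \eqref{est:traceuN:h}.

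The only real technical issue is maintaining \emph{linear} dependence on the highest-order norms $\|f\|_{H^{s+\mez}}$ and $\|\tilde g\|_{H^s}$ throughout. This is precisely why we rely on the tame trace bounds \eqref{est:DN:h} and \eqref{esttrace:cS}, the tame Moser estimate for $\Gamma(f)$, and the tame product rule with the low Sobolev exponent $s$ playing the role of the ``low norm'' at every step. There is no new analytic input beyond the tools already in place in this section.
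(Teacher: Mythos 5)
Your proposal is correct and follows essentially the same route as the paper: both rest on the trace estimates \eqref{esttrace:cG}/\eqref{est:DN:h} and \eqref{esttrace:cS} with $\sigma=s-1$ and $\sigma=s-\mez$, the trace inequality for $g\vert_{\Sigma_f}=\tilde g(\cdot,0)$, the tame Moser bound for $\Gamma(f)$, and the tame product rule for the factor $N$. The only (harmless) difference is bookkeeping: you contract with $N$ first, so the $\cG$-piece becomes the scalar $G[f]\Gamma(f)$ and the product rule is needed only for the $\cS$-term, whereas the paper estimates the full vector trace $u\vert_{\Sigma_f}$ and then applies the tame product estimate to $u\cdot N$ as a whole.
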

\begin{proof}
We recall from \eqref{eq:u} that $u=-\cG[f]\Gamma(f)-\cS[f]g-ge_y$. Since $g\vert_{\Sigma_f}=\tilde{g}(\cdot, 0)$,  the trace theorem for $\Rr^d\times J$ yields $\| g\vert_{\Sigma_f}\|_{H^\sigma}\les \| \tilde{g}\|_{H^{\sigma+\mez}}$. 

For the proof of \eqref{est:traceuN:l}, we apply \eqref{esttrace:cG} and \eqref{esttrace:cS} with $\sigma=s-1$ to have
\[
\| u\vert_{\Sigma_f}\|_{H^{s-1}}\le C(\| f\|_{H^s})(\| f\|_{H^s}+\| \tilde g\|_{H^s}).
\]
Since $H^{s-1}(\Rr^d)$ is an algebra and $\| N-e_y\|_{H^{s-1}}\le C(\| f\|_{H^s})$, the preceding estimate yields \eqref{est:traceuN:l}. 

As for the proof of \eqref{est:traceuN:h}, we apply \eqref{esttrace:cG} and \eqref{esttrace:cS} with $\sigma=s-\mez$ to have 
\[
\| u\vert_{\Sigma_f}\|_{H^{s-\mez}}\le C(\| f\|_{H^s})\left(\| f\|_{H^{s+\mez}}+\| \tilde g\|_{H^s}+\| f\|_{H^{s+\mez}}\| \tilde g\|_{H^s}\right).
\]
To conclude we use the preceding inequality, \eqref{est:traceuN:l}, and the tame product estimate 
\[
\| u\cdot N\vert_{\Sigma_f}\|_{H^{s-\mez}}\les \| u\vert_{\Sigma_f}\|_{H^{s-\mez}}(\| N-e_y\|_{H^{s-1}}+1)+  \| u\vert_{\Sigma_f}\|_{H^{s-1}}\| N-e_y\|_{H^{s-\mez}}.\qedhere
\]
\end{proof}

\subsection{Proof of \cref{prop.ABZvariant}}\label{sec:ABZvariant}
We provide the proof in the infinite-depth case,  modifications needed for  the finite-depth case being straightforward. 

We follow the method in~\cite[Section 3]{ABZ}, which consists in paralinearizing the products  $\alpha \Delta_xv$, $\beta \cdot \nabla \partial_zv$ and then factoring the equation~\eqref{eq:v} into the product of a forward and a backward parabolic paradifferential operators. Precisely, we have
\bq
\begin{aligned}
F_0&=(\partial_z^2 + \alpha \Delta_x +\beta \cdot \nabla_x \partial_z - \gamma \partial_z)v \\
&=(\partial_z^2 + T_\alpha \Delta_x +T_\beta \cdot \nabla_x \partial_z)v+F_1
\end{aligned}
\eq
where
\bq
F_1=(\alpha-T_\alpha)\Delta_x v+(\beta-T_\beta)\cdot \na_x \p_zv-\gamma\p_zv.
\eq
Then, we consider the symbols  
\[
    a(z)=a(z; x,\xi):=\frac{1}{2}\left(-i\beta\cdot \xi - \sqrt{4\alpha |\xi|^2 - (\beta\cdot \xi)^2}\right)
\]
and
\[ 
   A(z)= A(z; x,\xi):=\frac{1}{2}\left(-i\beta\cdot \xi + \sqrt{4\alpha |\xi|^2 - (\beta\cdot \xi)^2}\right)\,
\]
which satisfy $aA=-\alpha|\xi|^2$ and $a+A=-i\beta\cdot \xi$. We have
\[
(\partial_z^2 + T_\alpha \Delta_x +T_\beta \cdot \nabla_x \partial_z)v=(\p_z-T_{a(z)})(\p_z-T_{A(z)})v+ F_2,
\]
where
\bq
F_2=(T_{\alpha}\Delta_x-T_{a(z)}T_{A(z)})v + T_{\partial_zA(z)}v.
\eq
It follows that 
\bq
(\p_z-T_{a(z)})(\p_z-T_{A(z)})v=F_0-F_1-F_2.
\eq
In the following, we fix 
\bq
\eps<\min\{ \mez, s-1-\frac{d}{2}\}
\eq
and $\sigma \ge s-1>\frac{d}{2}$.
\begin{lemm}\label{lemm:F1}
 For any interval $J_0\subset J$, we have
 \bq\label{est:F1}
 \| F_1\|_{Y^{\sigma+\eps}(J_0)}\le C(\| f \|_{H^{s}})\left(\|\p_z v\|_{X^{\sigma}(J_0)}+\|  f \|_{H^{\sigma+1+\eps}} \|\na_{x, z} v\|_{X^{s-1}(J_0)}\right).
 \eq
 \end{lemm}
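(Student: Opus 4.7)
The plan is to decompose $F_1$ by Bony's paraproduct formula,
\[
F_1 = T_{\Delta_x v}\alpha + R(\alpha,\Delta_x v) + T_{\na_x\p_z v}\cdot\beta + R(\beta,\na_x\p_z v) - \gamma\p_z v,
\]
and then to place each piece in one of the two components $L^1_z H^{\sigma+\eps}$ or $L^2_z H^{\sigma+\eps-\mez}$ of $Y^{\sigma+\eps}(J_0)$, using standard paradifferential and Bony product estimates (see \cref{sec.paraDiff}) together with the coefficient bounds of \cref{lemm.coefsEstimInfinite} and \cref{lem.lowSobolEstCoeffs}.

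The key point is that those lemmas provide two complementary bounds on the coefficients. A \emph{low-regularity} bound, controlled by $C(\|f\|_{H^s})$ alone, namely $\|\alpha-1,\beta\|_{L^\infty_z H^{s-1}}\lesssim C(\|f\|_{H^s})$ via \eqref{eq.pzEstim1} with $j=0$ (applied with $\sigma$ replaced by $s-1$, and the embedding $H^{s-1}\hookrightarrow L^\infty$), plus $\|\gamma\|_{L^2_z H^{s-\tdm}}\lesssim C(\|f\|_{H^s})$ via \eqref{eq.coefsEstimInfinite-gamma}; and a \emph{high-regularity} one, \emph{linear} in $\|f\|_{H^{\sigma+1+\eps}}$, obtained from \eqref{eq.pzEstim1} with $j=0$ for $\alpha,\beta$ and from \eqref{dzgamma:L2} with $j=0$ and the index $\sigma$ there replaced by $\sigma+\eps$ for $\gamma$. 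The admissibility conditions $\sigma+\eps\ge 0$, $\sigma+\eps+s-1>0$ and $0\le s-1$ in \eqref{dzgamma:L2-cond} hold since $\sigma\ge s-1>1+\tfrac{d}{2}$. Correspondingly, the norms of $\na_{x,z}v$ come in two flavors: a \emph{low} one $\|\na_{x,z}v\|_{X^{s-1}(J_0)}$, which we reserve to pair with the high-regularity coefficient bounds, and a \emph{high} one encoded in $\|\p_z v\|_{X^{\sigma}(J_0)}$, paired with the low-regularity coefficient bounds.

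Each of the five pieces of $F_1$ splits tamely along these two pairings. For the paraproducts $T_{\Delta_x v}\alpha$ and $T_{\na_x\p_z v}\cdot\beta$ I use $\|T_u w\|_{H^\mu}\lesssim \|u\|_{C^{-\rho}_*}\|w\|_{H^{\mu+\rho}}$ combined with the embedding $H^{\sigma-\mez-d/2-\eps_0}\hookrightarrow C^{-\rho}_*$ for an appropriate $\rho>0$ to absorb the $\eps$ gap; the two choices of coefficient bound yield, respectively, the $\|\p_z v\|_{X^\sigma}$-term and the $\|f\|_{H^{\sigma+1+\eps}}\|\na_{x,z}v\|_{X^{s-1}}$-term of \eqref{est:F1}. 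The remainders $R(\alpha,\Delta_x v)$ and $R(\beta,\na_x\p_z v)$ are controlled by $\|R(u,w)\|_{H^{s_1+s_2-d/2}}\lesssim \|u\|_{H^{s_1}}\|w\|_{H^{s_2}}$ (valid for $s_1+s_2>0$); the assumption $\eps<s-1-\tfrac{d}{2}$ is exactly what permits the gain of $\eps$ derivatives. Finally, $\gamma\p_z v$ is expanded again by Bony and estimated directly in $L^2_z H^{\sigma+\eps-\mez}$, with $\p_z v\in L^\infty_z H^\sigma\cap L^\infty_z H^{s-1}$ paired against the two gamma-bounds above.

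The only real obstacle is the bookkeeping: one must ensure that the top-order $f$-norm $\|f\|_{H^{\sigma+1+\eps}}$ is never multiplied by $\|\p_z v\|_{X^\sigma}$, since any such cross-term would break the tame structure and obstruct the induction on $\sigma$ used to prove \cref{prop.ABZvariant}. The small buffer $\eps$ is exactly consumed by the $\tfrac{d}{2}$-losses built into the product and remainder estimates, leaving no room for an extra derivative of $f$ to slip into the high-$v$ term.
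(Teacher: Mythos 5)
Your plan is in essence the paper's own proof: the same Bony splitting of $(\alpha-T_\alpha)\Delta_x v$, $(\beta-T_\beta)\cdot\na_x\p_z v$ and $\gamma\p_z v$, the same coefficient bounds from \cref{lemm.coefsEstimInfinite} and \cref{lem.lowSobolEstCoeffs}, the same paraproduct/remainder estimates \eqref{pp:C-mu}, \eqref{Bony1}--\eqref{Bony2}, and the same tame pairing principle. Two bookkeeping slips should be corrected, though neither changes the strategy. First, the $\|\p_z v\|_{X^{\sigma}}$-term of \eqref{est:F1} cannot be produced by the paraproducts $T_{\Delta_x v}\alpha$ or $T_{\na_x\p_z v}\cdot\beta$: in $T_uw$ the Sobolev regularity falls entirely on $w$, so in these pieces the coefficient must be taken at level about $\sigma+\eps>s-1$ (hence only the bound linear in a high norm of $f$ is available, not $C(\|f\|_{H^s})$ alone), while $v$ enters only through a negative Zygmund norm controlled by $\|\na_{x,z}v\|_{X^{s-1}}$; consequently these pieces contribute only to the second term of \eqref{est:F1}, and the $\|\p_z v\|_{X^{\sigma}}$-term comes solely from $T_{\gamma}\p_z v$, exactly as in the paper. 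Second, invoking \eqref{dzgamma:L2} with $j=0$ at the index $\sigma+\eps$ gives $\|\gamma\|_{L^2_zH^{\sigma+\eps}}\le C(\|f\|_{H^s})\|f\|_{H^{\sigma+\eps+\tdm}}$, which is half a derivative worse than the advertised $\|f\|_{H^{\sigma+1+\eps}}$ and would ruin precisely the tame structure you insist on; since $T_{\p_z v}\gamma+R(\p_z v,\gamma)$ is placed in $L^2_zH^{\sigma+\eps-\mez}$ against $\|\p_z v\|_{L^\infty_{x,z}}\les\|\p_z v\|_{X^{s-1}}$, the norm of $\gamma$ you actually need is $\|\gamma\|_{L^2_zH^{\sigma+\eps-\mez}}$, and \eqref{dzgamma:L2} (equivalently \eqref{eq.coefsEstimInfinite-gamma}) at the index $\sigma+\eps-\mez$ then yields the correct factor $\|f\|_{H^{\sigma+1+\eps}}$. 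With these two indices set straight, your argument coincides with the paper's proof.
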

 \begin{proof}
Using Bony's decomposition we write $(\alpha-T_\alpha)\Delta_x v= T_{\Delta_x v}\alpha + R(\Delta_x v, \alpha)$. Applying \eqref{pp:C-mu}, \eqref{Bony2}, Sobolev's embedding, and \cref{lemm.coefsEstimInfinite}, we obtain
\begin{align*}
    \|T_{\Delta_x v}\alpha + R(\Delta_x v, \alpha)\|_{ L^1(J_0,H^{\sigma + \varepsilon})} & \lesssim  \|\alpha\|_{L^2(J_0,H^{\sigma + \frac{1}{2}})} \|\Delta_x v\|_{L^2(J_0,C_*^{-\frac{1}{2} + \varepsilon})}\\
    & \lesssim  \|\alpha\|_{X^{\sigma}(J_0)}\|\Delta_x v\|_{L^2(J_0,H^{\frac{d}{2}-\frac{1}{2} + \varepsilon})} \\
    & \le C(\| f \|_{H^{s}}) \| f \|_{H^{\sigma+1}}\|\nabla_x v\|_{X^{\frac{d}{2}+\varepsilon}(J_0)}.
\end{align*}
The same argument yields 
\[
\| (\beta-T_\beta)\cdot \na_x \p_zv\|_{\tilde L^1(J_0,H^{\sigma + \varepsilon})}\le    C(\| f \|_{H^{s}}) \|  f \|_{H^{\sigma+1}}\|\p_z v\|_{X^{\frac{d}{2}+\eps}(J_0)}.
\]
Next, we write  $\gamma\p_zv=T_{\gamma}\partial_z v + (T_{\partial_zv}\gamma + R(\partial_zv, \gamma))$ and apply \eqref{pp:C-mu}, \eqref{Bony2}, Sobolev's embedding, and \cref{lemm.coefsEstimInfinite} to have
\begin{align*}
    \|T_{\gamma}\partial_z v\|_{L^2(J_0,H^{\sigma + \varepsilon - \frac{1}{2}})} & \lesssim \|\gamma\|_{L^{\infty}(J_0,C_*^{-1+\varepsilon})}\|\partial_z v\|_{L^{2}(J_0,H^{\sigma + \frac{1}{2}})} \\
    & \lesssim \|\gamma\|_{L^{\infty}(J_0,H^{-1 + \varepsilon + \frac{d}{2}})} \|\partial_z v\|_{X^{\sigma}(J_0)} \\
    & \le C(\| f \|_{H^{s}})\|  f \|_{H^{\frac{d}{2}+1+\varepsilon}} \|\p_z v\|_{X^{\sigma}(J_0)}
\end{align*}
and 
\begin{align*}
    \|T_{\partial_zv}\gamma + R(\partial_zv, \gamma)\|_{L^2(I,H^{\sigma + \varepsilon - \frac{1}{2}})} &\lesssim \ \|\gamma\|_{L^2(J_0,H^{\sigma + \varepsilon - \frac{1}{2}})}\|\partial_z v\|_{L^{\infty}(J_0,L^{\infty})} \\
    & \le C(\| f \|_{H^{s}})\| f \|_{H^{\sigma+1+\varepsilon}}\|\p_z v\|_{X^{s-1}(J_0)}.
\end{align*}
The preceding applications of \cref{lemm.coefsEstimInfinite} are justified because $-1+\eps+\frac{d}{2}>-\mez$ and $\sigma -1 + \varepsilon>\frac{d}{2}-1\ge -\mez$. Finally, noting that $\frac{d}{2}+\eps<s-1$, we conclude the proof.
\end{proof}
It will be important later that in \eqref{est:F1} the highest norm $\| \na f\|_{H^{\sigma+\eps}}$ of $f$ is multiplied by the low norm $\| \na_{x, z}v\|_{X^{s-1}}$ of $v$. 
\begin{lemm}\label{lemm:F2}
 For any interval $J_0\subset J$, we have
 \bq
 \| F_2\|_{ L^2(J_0,  H^{\sigma-\mez+\eps})}\le C(\| f \|_{H^{s}})\|\na_x v\|_{X^{\sigma}(J_0)}.
 \eq
 \end{lemm}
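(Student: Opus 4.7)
Using the algebraic identities $aA = -\alpha|\xi|^2$ and $a + A = -i\beta\cdot\xi$ built into the definitions of $a, A$, we decompose
\[
F_2 \;=\; \big(T_\alpha\Delta_x - T_{aA}\big)v \;-\; \big(T_a T_A - T_{aA}\big)v \;+\; T_{\p_z A}v \;=:\; R_1 v + R_2 v + R_3 v,
\]
and treat each term separately.

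The pieces $R_1$ and $R_2$ are elliptic composition remainders handled by the paradifferential symbolic calculus \cref{theo:sc}. For $R_1$: since the symbol $-|\xi|^2$ of $\Delta_x$ is independent of $x$, the difference $R_1 v = -(T_\alpha T_{|\xi|^2} - T_{\alpha|\xi|^2})v$ is the standard paraproduct/Fourier-multiplier composition remainder, giving an operator of order $2 - \rho$ with $\rho := \min(1, s - 1 - \tfrac{d}{2}) > \eps$; the operator norm is controlled by the Zygmund semi-norm of $\alpha - 1$, which by Sobolev embedding and \cref{lemm.coefsEstimInfinite} is bounded by $C(\|f\|_{H^s})$. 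For $R_2$: since $a, A \in L^\infty_z\Gamma^1_\rho$ with semi-norms bounded by $C(\|f\|_{H^s})$ (again via \cref{lemm.coefsEstimInfinite}), the composition rule yields that $T_a T_A - T_{aA}$ is of order $2 - \rho$ with the same bound. Using $\nabla_x v \in X^\sigma(J_0) \hookrightarrow L^2_z H^{\sigma + \mez}(\Rr^d)$, which implies $v \in L^2_z H^{\sigma + \tdm}$ with $\|v\|_{L^2_z H^{\sigma+\tdm}} \lesssim \|\nabla_x v\|_{X^\sigma(J_0)}$, and the fact that $\eps \le \rho$, we deduce
\[
\|R_1 v + R_2 v\|_{L^2_z H^{\sigma - \mez + \eps}} \le C(\|f\|_{H^s})\,\|v\|_{L^2_z H^{\sigma + \tdm - (\rho - \eps)}} \le C(\|f\|_{H^s})\,\|\nabla_x v\|_{X^\sigma(J_0)}.
\]

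For $R_3 v = T_{\p_z A}v$, the symbol $\p_z A$ is of order $1$ in $\xi$, with coefficients built from $\alpha, \beta, \p_z\alpha, \p_z\beta$. The pointwise-in-$z$ boundedness \eqref{boundpara} applied in the $x$ variable, followed by Cauchy--Schwarz in $z$, yields
\[
\|T_{\p_z A}v\|_{L^2_z H^{\sigma - \mez + \eps}} \lesssim \|M^1_0(\p_z A)\|_{L^2_z}\,\|v\|_{L^\infty_z H^{\sigma + \mez + \eps}}.
\]
The factor $\|M^1_0(\p_z A)\|_{L^2_z}$ is controlled, via Sobolev embedding, by an $L^2_z H^{\sigma'}_x$-norm of $(\p_z\alpha,\p_z\beta)$ with $\sigma' > d/2$; this is bounded by $C(\|f\|_{H^s})$ using the mixed-Sobolev estimate \eqref{eq.pzEstim1b} (and its analog for $\beta$) with $\sigma' + \tdm \le s$. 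For the second factor, we use $\eps \le \mez$ together with $v \in L^\infty_z H^{\sigma + 1}$ (which is implied by $\nabla_x v \in C_z H^\sigma$, and hence by $\nabla_x v \in X^\sigma(J_0)$) to obtain $\|v\|_{L^\infty_z H^{\sigma + \mez + \eps}} \lesssim \|\nabla_x v\|_{X^\sigma(J_0)}$. Combining these yields the desired bound for $R_3$.

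\textbf{Main obstacle.} The control of $R_3$ is the delicate point: one must choose the Sobolev index $\sigma'$ so as to both embed into $L^\infty_x$ (requiring $\sigma' > d/2$) and lie within the range allowed by \cref{lem.lowSobolEstCoeffs}(i)--(ii) without producing a high-order $\|f\|_{H^{\sigma'+\tdm}}$-factor on the right-hand side; the splitting into an $L^2_z$-norm for the $\p_z$-derivatives of the coefficients and an $L^\infty_z$-norm for $v$ is precisely what makes this bookkeeping work. By contrast, $R_1$ and $R_2$ are standard composition remainders, and the role of the choice $\eps < \min\{\mez, s - 1 - \tfrac{d}{2}\}$ is precisely to ensure the gain $\rho - \eps \ge 0$ in the order of those remainders.
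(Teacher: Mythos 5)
Your decomposition of $F_2$ and your handling of the composition remainders $R_1,R_2$ follow the paper's route (symbolic calculus of \cref{theo:sc}, with the symbol regularity of $a,A$ inherited from $\alpha-1,\beta\in L^\infty_zH^{s-1}_x$). The genuine gap is in $R_3=T_{\p_z A}v$. You estimate the operator pointwise in $z$ through $M^1_0(\p_zA(z))$ and then need $\|M^1_0(\p_zA)\|_{L^2_z}\le C(\|f\|_{H^s})$, which by your own bookkeeping forces $\p_z\alpha,\p_z\beta\in L^2_zH^{\sigma'}_x$ with $\sigma'>\frac d2$ \emph{and} $\sigma'+\tdm\le s$ (to apply \eqref{eq.pzEstim1b}). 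These two conditions are incompatible unless $s>\tdm+\frac d2$, whereas the lemma sits inside the proof of \cref{prop.ABZvariant} and must hold for every $s>1+\frac d2$ and $\eps<\min\{\mez,\,s-1-\frac d2\}$. The obstruction is not merely in the citations: $\p_z\alpha,\p_z\beta$ involve two derivatives of $\varrho$, so for generic $f\in H^s$ one only has $\|\p_z\alpha(\cdot,z)\|_{L^\infty_x}\sim |z|^{-(2+\frac d2-s)}$ (up to an arbitrarily small loss) as $z\to 0^-$, which is not square integrable near $z=0$ in the range $1+\frac d2<s\le \tdm+\frac d2$; the quantity $\|M^1_0(\p_zA)\|_{L^2_z(J_0)}$ you rely on is then not controlled by $C(\|f\|_{H^s})$ (and can be infinite). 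The paper's way out, which your argument misses, is to accept that $\p_zA$ only lies in the \emph{negative}-regularity class $\Gamma^1_{\eps-1}$, because $\p_z\alpha,\p_z\beta\in L^\infty_zH^{s-2}_x\subset L^\infty_zC_*^{\eps-1}$, and to invoke \cref{prop:negOp}: this makes $T_{\p_zA}$ of order $2-\eps$ uniformly in $z$, with no pointwise-in-$x$ bound on $\p_z\alpha,\p_z\beta$ ever required, exactly matching the order of the other remainders.

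A secondary but real defect is your repeated claim that $\na_xv\in X^{\sigma}(J_0)$ controls $v$ itself, e.g. $\|v\|_{L^2_zH^{\sigma+\tdm}}\les\|\na_xv\|_{X^{\sigma}(J_0)}$ and $\|v\|_{L^\infty_zH^{\sigma+\mez+\eps}}\les\|\na_xv\|_{X^{\sigma}(J_0)}$. As stated these are false: the gradient does not control the low frequencies of $v$, and in the intended application ($v=\phi\circ\ff_f$ with $\phi$ only in $\dot H^1(\Omega_f)$) the left-hand sides need not even be finite. The paper repairs precisely this point by noting that $T_p=T_p\Psi(D)$ with $\Psi$ vanishing near the origin, so that $v$ may be replaced by $\Psi(D)v$ everywhere in $F_2$, and $\|\Psi(D)v\|_{H^{\tau+1}}\les\|\na_xv\|_{H^{\tau}}$. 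Without this (or an equivalent low-frequency argument) your chain of inequalities does not close, even in the regime $s>\tdm+\frac d2$ where your treatment of $R_3$ would otherwise go through.
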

\begin{proof}
In view of the embedding $H^{s-1}\subset C_*^\eps$ and the fact that 
\[
\| \alpha_1\|_{L^\infty(J, H^{s-1})}+\| \beta \|_{L^\infty(J, H^{s-1})}\le C(\| f \|_{H^{s}}),
\]
we deduce
\bq
\sup_{z\in J} M_{\varepsilon}^1(A(z)) +\sup_{z\in J} M_{\varepsilon}^1(a(z)) \le C(\| f \|_{H^{s}}). 
\eq
Since $a(z)A(z)=-\alpha|\xi|^2$, symbolic calculus in \cref{theo:sc} implies that $T_{\alpha}\Delta_x-T_{a(z)}T_{A(z)}$ is an operator of order $2-\eps$ (uniformly in $z$). Similarly, since $\partial_z \alpha$ and $\partial_z \beta$ belong to $L^{\infty}(J, H^{s-2}) \subset L^{\infty}(J, C_*^{\varepsilon - 1})$, we have $\partial_z A(z) \in \Gamma^1_{\varepsilon - 1}([-1,0]\times \mathbb{R}^d)$, and thus $T_{\partial_zA(z)}$ is also an operator of order $2-\varepsilon$ (uniformly in $z$) by virtue of \cref{prop:negOp}. Therefore there holds  
\[
    \|F_2\|_{L^2(J_0,H^{\sigma - \frac{1}{2} + \varepsilon})} \le C(\| f \|_{H^{s}})\|v\|_{L^{2}(J_0, H^{\sigma + \frac{3}{2}})}\,.
\]
The preceding estimate can be improven to 
\[
   \|F_2\|_{L^2(J_0,H^{\sigma - \frac{1}{2} + \varepsilon})} \le C(\| f \|_{H^{s}}) \|\na_xv\|_{L^{2}(J_0, H^{\sigma + \frac{1}{2}})}\le C(\| f \|_{H^s}) \|\na_xv\|_{X^{\sigma}(J_0)}
\]
because $T_{p}=T_{p}\Psi(D)$ where $\Psi$ vanishes in a small neighborhood of $0$, hence $v$ can be replaced by $\Psi(D)v$ in $F_2$. 
\end{proof}
Next, we fix $s>1+\frac{d}{2}$ and $z_1<z_0$ in $J$, and prove by induction on $\sigma\ge s-1$ that 
\begin{align*}
\mathcal{H}_\sigma:~  &\|\nabla_{x,z}v\|_{X^{\sigma}([z_0, 0])} \le  C(\| f \|_{H^{s}})\left(\|\na\zeta\|_{H^{\sigma}}+\|F_0\|_{Y^\sigma([z_1, 0])}\right)\\
& \qquad + C(\| f \|_{H^{s}})\|  f\|_{H^{\sigma+1}} \left(\|\na \zeta\|_{H^{s-1}} + \|F_0\|_{Y^{s-1}([z_1, 0])}+ \|\nabla_{x,z}v\|_{X^{-\mez}([z_1, 0])}\right)
\end{align*}
with $C$ depending only on $(s, \sigma, z_0-z_1)$.

 The based case $\sigma=s-1$ is proven in \cite[Proposition 3.16]{ABZ}, i.e. we have
\bq\label{baseinduction}
\mathcal{H}_{s-1}:~\|\nabla_{x,z}v\|_{X^{s-1}([z_0, 0])} \le C(\| f \|_{H^{s}}) \left(\|\na \zeta\|_{H^{s-1}} + \|F_0\|_{Y^{s-1}([z_1, 0])}+ \|\nabla_{x,z}v\|_{X^{-\mez}([z_1, 0])}\right)
\eq
for all $z_1<z_0$ in  $J$.

 Assuming $\mathcal{H}_\sigma$, we proceed to prove that $\mathcal{H}_{\sigma+\eps}$ holds true. To this end, we fix  a cutoff satisfying 
\[
\chi(z)=1\quad\text{on } [z_0, \infty)\quad\text{and } \chi(z)=0\quad\text{on } (-\infty, z_1].
\]
Then function $w=\chi(z)(\partial_z - T_A)v$ satisfies
\bq\label{eq:w}
 \partial_z w - T_a w =\tilde F:=\chi(z)(F_0-F_1-F_2)+\chi'(z)(\partial_z - T_A)v,\quad w(z_1)=0.
\eq
Note that 
\begin{align*}
\Re A(z)=\Re(-a(z))&=\mez\sqrt{4\alpha |\xi|^2 - (\beta\cdot \xi)^2}\\
&\ge \mez\sqrt{4\alpha-|\beta|^2}|\xi|=\frac{\p_z\varrho}{1+|\na_x\varrho|^2}\ge \frac{c}{1+C\| \na f\|_{H^{s-1}}},
\end{align*}
where $c=1/2$ for infinite depth and $c=\fd/2$ for finite depth. We apply \cref{prop.pseudoParabolic} to the backward parabolic paradifferential equation $(\p_z-T_A)\na_xv=\na_xw$ on $(z_0, 0)$ with final data $\na_xv\vert_{z=0}=\na \zeta$:
\[
\begin{aligned}
\| \na_xv\|_{X^{\sigma+\eps}([z_0, 0])}&\le C(\| f \|_{H^{s}})\left(\| \na \zeta\|_{H^{\sigma+\eps}}+\|\na_xw\|_{Y^{\sigma+\eps}([z_0, 0])} +\| \na_xv\|_{L^2([z_0, 0], H^{\sigma+\eps})}\right)\\
& \le C(\| f \|_{H^{s}})\left(\| \na \zeta\|_{H^{\sigma+\eps}}+\|w\|_{X^{\sigma+\eps}([z_0, 0])} +\| \na_xv\|_{X^{\sigma+\eps-\mez}([z_0, 0])}\right).
\end{aligned}
\]
Since $\p_zv=T_Av+w$ on $[z_0, 0]$, the preceding estimate implies
\bq\label{est:naxv:0}
\begin{aligned}
\| \na_{x, z}v\|_{X^{\sigma+\eps}([z_0, 0])}
& \le C(\| f \|_{H^{s}})\left(\| \na \zeta\|_{H^{\sigma+\eps}}+\|w\|_{ X^{\sigma+\eps}([z_0, 0])} +\| \na_xv\|_{X^{\sigma+\eps-\mez}([z_0, 0])}\right).
\end{aligned}
\eq
 For the forward parabolic $w$ equation \eqref{eq:w}  with initial data $w(z_1)=0$, the estimate in \cref{prop.pseudoParabolic} yields
\bq\label{est:w:0}
\begin{aligned}
\| w\|_{X^{\sigma+\eps}([z_1, 0])}\le C(\| f \|_{H^{s}})\left( \|\tilde F\|_{{Y}^{\sigma+\eps}([z_1, 0])} + \|w\|_{L^2([z_1, 0], H^{\sigma+\eps})}\right).
\end{aligned}
\eq
By the definition of $w$, we have 
\bq\label{est:wlow}
 \|w\|_{L^2([z_1, 0], H^{\sigma+\eps})}\le  C(\| f \|_{H^{s}})\|\na_{x, z}v\|_{L^2([z_1, 0], H^{\sigma+\eps})}\le C(\| \na f\|_{H^{s-1}})\|\na_{x, z}v\|_{X^{\sigma-\mez+\eps}([z_1, 0])}.
\eq
Since
 \[
 \| \chi'(z)(\partial_z - T_A)v\|_{L^2([z_1, 0], H^{\sigma+\mez})}\le \|\chi'\|_{L^\infty}C(\| f \|_{H^{s}})\| \na_{x, z}v\|_{L^2([z_1, 0], H^{\sigma+\mez})},
 \]
it follows from \Cref{lemm:F1,lemm:F2} that 
\bq\label{est:tildeF-bis}
\begin{aligned}
 \|\tilde F\|_{Y^{\sigma+\eps}([z_1, 0])}&\le C(\| f \|_{H^{s}})\left\{\|  f \|_{H^{\sigma+1+\eps}} \|\na_{x, z} v\|_{X^{s-1}([z_1, 0])}\right.\\
 &\qquad\left.+(1+\|\chi'\|_{L^\infty})\|\na_{x, z} v\|_{X^{\sigma}([z_1, 0])}+\|F_0\|_{Y^\sigma([z_1, 0])}\right\}.
\end{aligned}
\eq
As we have remarked after the proof of \cref{lemm:F1}, the highest norm $\| f\|_{H^{\sigma+1+\eps}}$ of $f$ is multiplied by the low norm $\| \na_{x, z}v\|_{X^{s-1}}$ of $v$. 
Since $\eps<1/2$, a combination of \eqref{est:w:0}, \eqref{est:wlow} and \eqref{est:tildeF-bis} yields 
\[
\begin{aligned}
\| w\|_{ X^{\sigma+\eps}([z_1, 0])}&\le C(\| f \|_{H^{s}})\left\{\|  f \|_{H^{\sigma+1+\eps}} \|\na_{x, z} v\|_{X^{s-1}([z_1, 0])}\right.\\
&\qquad\left.+(1+\|\chi'\|_{L^\infty})\|\na_{x, z} v\|_{X^{\sigma}([z_1, 0])}+\|F_0\|_{Y^{\sigma+\eps}([z_1, 0])}\right\}.
\end{aligned}
\]
Then, it follows from \eqref{est:naxv:0} that 
\[
\begin{aligned}
\| \na_{x, z}v\|_{X^{\sigma+\eps}([z_0, 0])}&\le C(\| f \|_{H^{s}})\left\{\| \na \zeta\|_{H^{\sigma+\eps}}+\| f \|_{H^{\sigma+1+\eps}} \|\na_{x, z} v\|_{X^{s-1}([z_1, 0])}\right.\\
 &\qquad\left.+(1+\|\chi'\|_{L^\infty})\|\na_{x, z} v\|_{X^{\sigma}([z_1, 0])}+\|F_0\|_{Y^{\sigma+\eps}([z_1, 0])}\right\}
\end{aligned}
\]
where $\| \chi'\|_{L^\infty}\les (z_0-z_1)^{-1}$. Finally, using the induction hypothesis $\mathcal{H}_\sigma$ and the base property $\mathcal{H}_{s-1}$, we deduce $\mathcal{H}_{\sigma+\eps}$. This finishes the proof of \eqref{eq.ABZvariant}. In the infinite depth, we choose $z_1=z_0-1$ and then let $z_0\to -\infty$ to obtain \eqref{estX:infdepth}.

\subsection{Proof of \cref{prop.GSest}}
We start with the following $X^{-\mez}$ estimate.
\begin{lemm}\label{lemm.variational-v}
With $(\zeta, F_0)$ given by \eqref{eq.datum} and $s>1+\frac{d}{2}$, we have
\bq\label{est:nav:X-mez}
    \|\na_{x,z} v\|_{X^{-\mez}(J)} \le \begin{cases}
    C(\|f\|_{H^s})\|h\|_{H^{\mez}(\Rr^d)}\quad\text{for~}  \eqref{eq.phi1},\\
    C(\|f\|_{H^s}) \|k\|_{H^1(J\times \mathbb{R}^d)} \quad\text{for~}  \eqref{eq.phi2},
    \end{cases}
\eq
where $C : \Rr_+ \to \Rr_+$ is non-decreasing and depends only on $(d,s)$, and also on $\mathfrak{d}$ and $\|b_0\|_{H^{s}}$ in the finite-depth case. 
\end{lemm}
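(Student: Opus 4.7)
I split the claim $\na_{x,z} v \in X^{-\mez}(J) = C^0_z(J; H^{-\mez}) \cap L^2_z(J; L^2)$ into its two components and handle each separately. The $L^2$ part reduces to the variational bound on $\Omega_f$, while the $C^0_z H^{-\mez}_x$ part comes from the divergence form of the equation combined with a Lions-type interpolation.

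\textbf{$L^2$-part.} Apply \cref{lemm.variationalEstimate} to obtain $\|\na_{x,y}\phi^{(1)}\|_{L^2(\Omega_f)} \le C(\|f\|_{H^s}) \|h\|_{H^\mez}$ (using that $H^\mez \hookrightarrow W$ in both depth cases) and $\|\na_{x,y}\phi^{(2)}\|_{L^2(\Omega_f)} \le \|k\|_{L^2(\Omega_f)}$. The chain rule $\na_{x,z} v = (\na_{x,y}\phi)\circ \ff_f \cdot \na \ff_f$, together with the bounds $\|\na \ff_f\|_{L^\infty} \le C(\|f\|_{H^s})$ and $\p_z \varrho \ge c > 0$ from \cref{lemm:varrho}, followed by the change of variables $y = \varrho(x,z)$, gives
\[
\|\na_{x,z} v\|_{L^2(J \times \Rr^d)} \le C(\|f\|_{H^s}) \|\na_{x,y}\phi\|_{L^2(\Omega_f)}.
\]
In case \eqref{eq.phi2} the same change of variables bounds $\|k\|_{L^2(\Omega_f)} \le C(\|f\|_{H^s})\|k \circ \ff_f\|_{L^2_{x,z}}$, which in turn is controlled by $\|k\|_{H^1(J\times \Rr^d)}$.

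\textbf{$C^0_z H^{-\mez}_x$-part.} I invoke the following standard interpolation: if $w \in L^2_z L^2_x$ and $\p_z w \in L^2_z H^{-1}_x$ on $J$, then $w \in C^0_z(\overline{J}; H^{-\mez}_x)$ with the natural quantitative estimate. For $w = \na_x v$, the control $\p_z \na_x v = \na_x \p_z v \in L^2_z H^{-1}_x$ is immediate from the $L^2$-part. For $w = \p_z v$ I use the divergence form of \eqref{eq:v}, namely $\di_{x,z}(\cA \na_{x,z} v) = g$ with $g = 0$ for \eqref{eq.phi1} and $g = -\p_z \varrho\,(\p_y k)\circ \ff_f \in L^2_{x,z}$ for \eqref{eq.phi2}. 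Since $\cA$ is uniformly bounded in $L^\infty(J \times \Rr^d)$ by $C(\|f\|_{H^s})$, one has $\cA \na v \in L^2$, and rewriting the divergence equation row-wise,
\[
\p_z\!\big((\cA \na v)_z\big) = g - \di_x\!\big((\cA \na v)_x\big) \in L^2_z H^{-1}_x,
\]
so the interpolation yields $(\cA \na v)_z \in C^0_z H^{-\mez}_x$. Solving the resulting algebraic relation for $\p_z v$ in terms of $(\cA \na v)_z$ and $\na_x v$, with coefficients built from $\na_{x,z}\varrho$ which lie in $L^\infty(J; H^{s-1}_x)\hookrightarrow L^\infty$ (by \cref{lemm:varrho} and $s > 1+\frac{d}{2}$) and hence act as multipliers on $H^{-\mez}_x$, gives $\p_z v \in C^0_z H^{-\mez}_x$ with the stated constants.

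\textbf{Main obstacle.} The principal technical issue is that in the infinite-depth case $J = (-\infty, 0)$ is unbounded, and the constants must remain independent of its length. This is precisely why I work with the divergence form $\di(\cA \na v) = g$ rather than the expanded equation \eqref{eq:v}: the coefficients of $\cA$ and the forcing $g$ are controlled uniformly in $z$ via the exponential decay built into $\varrho$ in \cref{lemm:varrho}, so no $z$-integration of the lower-order coefficient $\gamma$, whose available bounds are only $L^2_z$ type (cf.\ \cref{lem.lowSobolEstCoeffs}), is ever invoked. The decay of $\na_{x,z}v$ as $z \to -\infty$ needed to make the continuity statement meaningful on the whole interval is encoded in the $L^2_z L^2_x$ membership established in the first step.
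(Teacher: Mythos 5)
Your proof is correct and follows essentially the same route as the paper: both arguments reduce the claim to the variational $L^2$ bound of \cref{lemm.variationalEstimate} together with an $L^2_zH^{-1}_x$ control of $\di_{x,z}(\cA\na_{x,z} v)$. The only difference is that where the paper imports this passage to the $X^{-\mez}$ norm from \cite[Lemma 3.11]{NP}, you re-derive it directly from the divergence form of the equation, the Lions--Magenes interpolation of \cref{lemm:interpolationLM}, and the $H^{s-1}$-multiplier property of the coefficients, which is sound and indeed keeps the constant independent of the length of $J$.
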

\begin{proof} We provide the proof for the infinite-depth case. We recall from \cite[Lemma 3.11]{NP} that 
\[
    \|\na_{x,z} v\|_{X^{-\mez}(J)} \le C(\|f\|_{H^s})\left(\|\nabla_{x,z}v\|_{L^2_{x,z}(\Rr\times J)}+\|\di_{x, z}(\cA\na v) \|_{L^2(J; H^{-1}(\Rr^d)}\right). 
\]
From the formula 
\[
\nabla_{x,z} v = (\nabla_x \phi \circ \ff_{f}+ \nabla_x \varrho \p_y \phi\circ \ff_{f} , \p_z\varrho\p_y\phi \circ \ff_{f})
\]
 we deduce that  $\|\nabla_{x,z}v\|_{L^2_{x,z}} \le C(\|f\|_{H^s}) \|\na_{x,y}\phi\|_{L^2_{x,y}}$. For \eqref{eq.phi1}, \eqref{est:nav:X-mez} then follows by invoking  \cref{lemm.variationalEstimate} (i). 
As for \eqref{eq.phi2},  we first use  \cref{lemm.variationalEstimate} (ii) to have
\[
\|\nabla_{x,z}v\|_{L^2_{x,z}}\le C(\|f\|_{H^s})\| k\|_{L^2},
\]
and then deduce from the formula \eqref{eqphi:div} that 
\[
\| \di_{x, z}(\cA\na v)\|_{L^2_z H^{-1}_x}\le \| \di_{x, z}(\cA\na v)\|_{L^2_{x, z}}\le C(\|f\|_{H^s})\|\p_yk\|_{L^2_{x, z}}.\qedhere
\] 
\end{proof}
Next, we claim that \cref{prop.GSest} is a consequence of the following result, which will also be useful in the sequel. 
\begin{prop}\label{prop.GSest-v}
Let $d\geqslant 1$, and let $s, r \in \Rr$ such that $s>1+\frac{d}{2}$, $r\ge s-\frac{1}{2}$ and $\lfloor r \rfloor \leq s$. Set $a=0$ in the infinite-depth case and $a=\| b_0\|_{H^{r+\mez}}$ in the finite-depth case. The following estimate holds provided the right-hand side is finite: 
\begin{equation} 
    \label{eq.GS-v}
    \begin{aligned}
    \|\nabla v\|_{H^{r}(\Rr^d\times J)} &\le C(\|f\|_{H^s})\left(\|k\|_{H^r(\Omega_f)}+ \|\zeta\|_{H^{r+\mez}}\right.\\
    &\quad \left. +(\| f \|_{H^{r+\mez}}+a)\left(\| \zeta\|_{H^s}+ \| k\|_{H^{s-\mez}(\Omega_f)}+\| \na k\|_{L^\infty(\Omega_f)}\right)\right),
    \end{aligned}
\end{equation} 
where $C : \Rr_+ \to \Rr_+$ is non-decreasing and depends only on $(d,s,r)$, and also on $\mathfrak{d}$ and $\|b_0\|_{H^{s}}$ in the finite-depth case.
\end{prop}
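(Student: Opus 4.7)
The strategy is to bound $\|\nabla v\|_{H^r(\Rr^d\times J)}$ by splitting it into the horizontal part $\|\nabla v\|_{L^2_z H^r_x}$ and pure vertical parts $\|\p_z^{j+1} v\|_{L^2_z H^{r-j}_x}$ for integers $j \ge 1$. The horizontal part comes from the sharp elliptic estimate \cref{prop.ABZvariant}, while the vertical parts are recovered by using the equation \eqref{eq:v} to trade each pair of $\p_z$ derivatives for two $\nabla_x$ derivatives (plus lower-order terms). Non-integer $r$ is reduced to integer endpoints by interpolation (equivalently, by extending $\nabla v$ to $\Rr^{d+1}$ via \cref{th.extension} and using the Fourier-analytic characterization \eqref{def:Hs}), in such a way that the linear dependence on $\|f\|_{H^{r+\mez}}$ in \eqref{eq.GS-v} is preserved.

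\textbf{Step 1 (horizontal regularity).} Since $r - \mez \ge s - 1$, I would apply \cref{prop.ABZvariant} with $\sigma = r - \mez$ to obtain
\begin{equation*}
    \|\nabla_{x,z} v\|_{X^{r-\mez}(J)} \le C(\|f\|_{H^s})\Big(\|\nabla \zeta\|_{H^{r-\mez}} + \|f\|_{H^{r+\mez}}\|\nabla \zeta\|_{H^{s-1}} + \|F_0\|_{Y^{r-\mez}(J)} + \|\nabla_{x,z} v\|_{X^{-\mez}(J)}\Big).
\end{equation*}
The low-norm $X^{-\mez}$ term is bounded by \cref{lemm.variational-v}, and $\|F_0\|_{Y^{r-\mez}(J)} \le \|F_0\|_{L^2_z H^{r-1}_x}$ is controlled via \cref{lemm.F0est} (applied with $\sigma = r$). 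Since $X^{r-\mez}(J) \hookrightarrow L^2(J; H^r_x)$, this already yields $\|\nabla v\|_{L^2_z H^r_x}$ with the right-hand-side structure demanded by \eqref{eq.GS-v}.

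\textbf{Step 2 (vertical regularity) and main obstacle.} Rewriting \eqref{eq:v} as $\p_z^2 v = F_0 - \alpha\Delta_x v - \beta\cdot\nabla_x\p_z v + \gamma\p_z v$, applying $\p_z^{j-1}$ to both sides, and expanding with the Leibniz rule, I would prove the bound on $\|\p_z^{j+1} v\|_{L^2_z H^{r-j}_x}$ by induction on $j \ge 1$. Each product $\p_z^\ell(\alpha,\beta,\gamma)\cdot\p_z^{j-1-\ell}(\Delta_x v,\ \nabla_x\p_z v,\ \p_z v)$ is handled through Bony's decomposition together with the coefficient estimates \eqref{eq.pzEstim1b}--\eqref{dzgamma:L2} of \cref{lem.lowSobolEstCoeffs} and the forcing bound \eqref{eq.F0estBis} of \cref{lemm.F0est}; the hypothesis $\lfloor r \rfloor \le s$ is precisely what keeps the order of $\p_z$ applied to coefficients within the range where those estimates remain valid. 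The hard part is the tame paradifferential bookkeeping: each product must be decomposed so that the factor carrying the high norm $\|f\|_{H^{r+\mez}}$ is always paired with a low norm of the companion factor, preserving linear dependence on $\|f\|_{H^{r+\mez}}$ throughout the induction. This linearity, structurally enabled by the sharp \cref{prop.ABZvariant} rather than the classical bound \cref{prop.ABZ}, is what will allow \eqref{eq.GS-v} to close the parabolic energy estimate for $f$ in $L^\infty_t H^s \cap L^2_t H^{s+\mez}$ later in the paper.
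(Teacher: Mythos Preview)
Your high-level plan matches the paper: Step~1 via \cref{prop.ABZvariant} at $\sigma=r-\tfrac12$ is exactly how the paper obtains the horizontal regularity (its case $\ell=0$), and Step~2---trade $\p_z^2$ for spatial derivatives through \eqref{eq:v}, induct on the number of $z$-derivatives, expand with Leibniz, and feed in the coefficient bounds of \cref{lem.lowSobolEstCoeffs} and \cref{lemm.F0est}---is precisely the mechanism the paper uses for the remaining cases.

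The genuine gap is your treatment of non-integer $r$. The decomposition $\|\nabla v\|_{H^r_{x,z}}\sim\|\nabla v\|_{L^2_zH^r_x}+\sum_{j\ge1}\|\p_z^{j+1}v\|_{L^2_zH^{r-j}_x}$ over \emph{integer} $j$ is simply not a norm equivalence when $r\notin\Nn$, and ``reducing to integer endpoints by interpolation'' fails for two concrete reasons. First, the upper endpoint $\lceil r\rceil$ need not satisfy the hypothesis $\lfloor\lceil r\rceil\rfloor\le s$ (take $s=2.4$, $r=2.3$), so the integer estimate there is unavailable. Second, even when both endpoints are valid, interpolating produces the factor $\|f\|_{H^{\lfloor r\rfloor+\mez}}^{1-\theta}\|f\|_{H^{\lceil r\rceil+\mez}}^{\theta}$, which by log-convexity dominates rather than is dominated by $\|f\|_{H^{r+\mez}}$; the tame linear dependence on the highest norm of $f$---the whole point of \eqref{eq.GS-v}---is destroyed. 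The paper instead writes $r=k+\mu$ with $k=\lfloor r\rfloor$, $\mu\in[0,1)$, and proves by induction on $\ell\in\{0,\dots,k+1\}$ that $\|\p_z^\ell\nabla_x^{k+1-\ell}v\|_{H^\mu_{x,z}}\lesssim\Xi$, carrying the fractional part $\mu$ through every step. The key tool you are missing is the anisotropic estimate \cref{lemm.interpol},
\[
\|\psi\|_{H^\mu(\Rr^d\times J)}\lesssim\|\psi\|_{L^2_zH^\mu_x}+\|\p_z\psi\|_{L^2_zH^{\mu-1}_x},
\]
which converts each $H^\mu_{x,z}$ norm into mixed $L^2_zH^\sigma_x$ norms that \emph{can} be fed back into the induction. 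Two further points the paper handles and your outline omits: the induction generates low-order remainders $A_\ell=\sum_{j\le\ell}\|\p_z^jv\|_{L^2}$ and a term $\|\nabla_{x,z}v\|_{H^{r-1}}$ that must be absorbed at the end by interpolation against $\|\nabla_{x,z}v\|_{H^r}$; and in the finite-depth case \cref{prop.ABZvariant} only estimates $\nabla v$ away from $z=-1$, so a separate near-bottom elliptic argument (\cref{thm.elliptic-CS} on small cylinders, patched together) is required.
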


\begin{proof}[Proof of \cref{prop.GSest}] Let $(m,a) = (1,0)$ for infinite-depth and $(m,a)=(H,\|b\|_{H^{r+\mez}})$ in finite-depth. Again, we only detail the infinite-depth case. In the following, implicit constants only depend on on $ \|f\|_{H^s}$ (and also $\|b_0\|_{H^{s+\mez}}$ in the finite-depth case).

Let $\phi\in \{\phi^{(1)}, \phi^{(2)}\}$ so that $v=\phi\circ\ff_f: \Rr^d \times J \to \Rr$ satisfies \eqref{eq:v} where $(\zeta, F_0)$ is given by \eqref{eq.datum}. An application of \cref{lemm.FaaDiBruno} followed by an application of \cref{lemm.diffeoSobolev} yield
\begin{align*}
    \|\nabla \phi\|_{H^r(\Omega_f)} \le \|\phi\|_{H^{r+1}(\Omega_f)} & \lesssim \|v\|_{H^{r+1}} + \|\nabla v\|_{L^{\infty}}\|\ff_f^{-1} -m\operatorname{id}\|_{H^{r+1}} \\
    &\lesssim \|\nabla v\|_{H^r}  + \|\nabla v\|_{L^{\infty}}(\|f\|_{H^{r+\mez}}+a) + \|\na v\|_{X^{-\mez}(J)},      
\end{align*}
where in the last line we have used $ \|v\|_{H^{r+1}} \lesssim  \|\na v\|_{H^{r}} + \|v\|_{L^2} \lesssim  \|\na v\|_{H^{r}} + \|\na v\|_{X^{-\mez}(J)}$.

Since $s-1>\frac{d}{2}$, Sobolev's embedding and \cref{prop.ABZvariant} applied with $\sigma = s-1$ yield 
\begin{align*}
    \|\nabla v\|_{L^{\infty}}\les \| \na v\|_{X^{s-1}} \lesssim \|\zeta\|_{H^s} + \|F_0\|_{Y^{s-1}(\Rr^d \times J)} + \|\nabla_{x,z}v\|_{X^{-\frac{1}{2}}(J)}.
\end{align*}
\cref{lemm.variational-v} allows us to estimate $\|\nabla v\|_{X^{-\mez}(\Rr^d\times J)}$, so that
\bq\label{nav:Xs-1}
    \|\nabla v\|_{L^{\infty}(J)}\lesssim \|\zeta\|_{H^s} + \|F_0\|_{Y^{s-1}(J)}\,.
\eq 
We recall that from \eqref{eq.datum}, if $\phi=\phi^{(2)}$ then $F_0= \frac{\partial_z \varrho}{1 + |\nabla_x \varrho|^2} \partial_z(k\circ \ff_f)$. Applying \cref{lemm.F0est}  with $\sigma := s-\mez$ gives
\[
    \|F_0\|_{Y^{s-1}(\Rr^d \times J)} \lesssim \|F_0\|_{L^2(J,H^{s-\frac{3}{2}})} \lesssim \| k\|_{H^{s-\mez}}+\| \na k\|_{L^\infty}\| f\|_{H^{s-1}}, 
\]
from which it follows that 
\bq\label{nav:X:s-mez}
    \|\nabla v\|_{L^{\infty}(J)}\les \|\zeta\|_{H^s} + \| k\|_{H^{s-\mez}}+\| \na k\|_{L^\infty}.
\eq
Combining the above estimates, we find
\begin{align}
    &\| \na \phi^{(1)}\|_{H^r(\Omega_f)}\les   \|\nabla v\|_{H^r}  +\| \zeta\|_{H^s}(\| f\|_{H^{r+\mez}}+a), \label{eq.phi1-Sobolev}\\
    &\| \na \phi^{(2)}\|_{H^r(\Omega_f)}\les   \|\nabla v\|_{H^r}  +\left(\| k\|_{H^{s-\mez}}+\| \na k\|_{L^\infty}\right)(\| f\|_{H^{r+\mez}}+a). \label{eq.phi2-Sobolev}
\end{align}
Therefore, \cref{prop.GSest} follows from \cref{prop.GSest-v}.
\end{proof} 

The remainder of this section is devoted to the proof of \cref{prop.GSest-v}. To this end, we will assume that $\na_{x, z} v\in H^r(\Rr^d\times J)$ and  derive \eqref{eq.GS-v}. This a priori estimate can be turned into a regularity result by an approximation argument analogously to the one used in the proof of \cref{thm.elliptic-CS}.

\subsubsection{The infinite-depth case}\label{subsection:Soboleregvinf} We aim to prove   \eqref{eq.GS-v} for $\na_{x, z}v$ in $H^r(I\times \Rr^d)$,  $I=(-z_0,0)\subset J$, with the constant $C(\| f\|_{H^s})$  independent of $z_0\in J$.  In the infinite-depth case, this will yield \eqref{eq.GS-v} for $J=(-\infty, 0)$ by letting $z_0\to -\infty$.  For the finite-depth case, we will establish  near-bottom estimates to complete the proof of  \eqref{eq.GS-v}. 

Let $r\ge s-\mez$ such that $\lfloor r\rfloor \leq s$. We write $r=k+\mu$ with $k=\lfloor r\rfloor \le s$ and $\mu\in [0,1)$. We are going to prove using induction on $\ell \in \{0, \dots, k+1\}$ that 
\begin{align}
    \tag{P${}_{\ell}$}
    \label{eq.Pl}
    \|\partial^{\ell}_z\nabla^{k+1-\ell}_x v\|_{H^{\mu}_{x,z}}  &\les  \|k\|_{H^r}+ \|\zeta\|_{H^{r+\mez}}+(\| f \|_{H^{r+\mez}}+a)\left(\| \zeta\|_{H^s}+ \| k\|_{H^{s-\mez}}+\| \na k\|_{L^\infty}\right) =: \Xi.
\end{align}
The desired estimate \eqref{eq.GS-v} for  $\|\nabla_{x,z}v\|_{H^r(I\times \mathbb{R})}$ then follows \eqref{eq.Pl} and \cref{lemm.variational-v}. When $r=s-\mez$, \eqref{eq.GS-v} reads
\bq\label{eq.GS-v:base}
 \|\na_{x, z}v\|_{H^{s-\mez}}  \lesssim  \| \zeta\|_{H^s}+ \| k\|_{H^{s-\mez}}+\| \na k\|_{L^\infty}. 
\eq
Since the method that we will present for the proof of \eqref{eq.Pl} also yields \eqref{eq.GS-v:base}, we will assume \eqref{eq.GS-v:base} and use it to deduce \eqref{eq.Pl}.
The implicit constants in  \eqref{eq.Pl} and \eqref{eq.GS-v:base} are of the form $C(\|f\|_{H^s})$, which will be neglected in the following estimates. 
\medskip 

\underline{Case $\ell =0$.} We use an interpolation argument  in~\cite[Proposition 4.1]{WZZZ} which is  recalled and proven in \cref{lemm.interpol}. This gives 
\[
    \|\nabla_x^{k+1} v\|_{H_{z,x}^{\mu}} \lesssim \|\na_x^{k+1} v\|_{L^2_zH_x^{\mu}} + \|\partial_z \nabla_x^{k+1}v\|_{L^2H_x^{\mu-1}} \lesssim \|\na_{x,z} v\|_{L^2_zH_x^{k+\mu}}\le \|\na_{x,z} v\|_{X^{k+\mu-\mez}},
\]
where $k+\mu-\mez=r-\mez$. We apply \cref{prop.ABZvariant} with $\sigma = r-\mez\geqslant s-1$  to obtain 
\begin{align}\label{nav:Xr-mez}
 \|\na_{x,z} v\|_{X^{r-\mez}}&\les   \|\zeta\|_{H^{r+\mez}}+\| f \|_{H^{r+\mez}}\| \zeta\|_{H^s}+ \|F_0\|_{Y^{r-\mez}} + \|\nabla_{x,z}v\|_{X^{-\mez}}.
\end{align}
By \cref{lemm.variational-v},
\[
\|\nabla_{x,z}v\|_{X^{-\mez}}\les \| \zeta\|_{H^\mez}+\| k\|_{H^1}.
\]
On the other hand,  \cref{lemm.F0est} applied with  $\sigma =r\ge s-\mez>1$ gives
   \bq\label{est:F0:L2x}
\begin{aligned}
\|F_0\|_{Y^{r-\mez}}& \le \|F_0\|_{L^2_zH_x^{r-1}}  \les \|f\|_{H^{r + \frac{1}{2}}}(\|k\|_{H^{s-\mez}}+\|\nabla k\|_{L^{\infty}}) + \|k\|_{H^r}. 
\end{aligned}
     \eq
Inserting the above estimates into \eqref{nav:Xr-mez} yields
\bq\label{nav:Xr-mez:1}
\|\na_{x,z} v\|_{X^{r-\mez}}\les \Xi
\eq
which implies $(P_0)$. 
\medskip 

\underline{Case $\ell = 1$.} We use again \cref{lemm.interpol} which yields 
\begin{align*}
    \|\partial_z\na_x^k v\|_{H_{z,x}^{\mu}} &\lesssim \|\partial_z\na_x^k v\|_{L_z^2H_x^{\mu}} + \|\partial_z^2\na_x^{k} v\|_{L_z^2H_x^{\mu-1}}  \lesssim \|\partial_zv\|_{L_z^2H_x^{r}} + \|\partial_z^2 v\|_{L_z^2H_x^{k+\mu-1}}.
\end{align*}
The term $\|\partial_zv\|_{L_z^2H_x^{r}}$ is controlled by \eqref{nav:Xr-mez:1}. As for $ \|\partial_z^2 v\|_{L_z^2H_x^{k+\mu-1}}$, we  use \eqref{eq:v} to replace 
\begin{equation}
    \label{eq.2derivatives}
    \partial^2_z v = - \alpha \Delta_x v - \beta \cdot \nabla_x \partial_z v + \gamma \partial_z v + F_0.
\end{equation}
 $\| F_0\|_{L^2_zH^{k+\mu-1}_x}$ is controlled by \eqref{est:F0:L2x}.  We shall estimate the first three terms on  the right-hand side of \eqref{eq.2derivatives} by appealing to the product estimate
\bq\label{productfordz}
    \| u_1u_2\|_{L_z^2H_x^\sigma}\les \| u_1\|_{L_z^\infty L_x^\infty}\| u_2\|_{L_z^2H_x^{\sigma}}+\| u_2\|_{L_z^2 C^{-t}_*}\| u_1\|_{L_z^{\infty}H_x^{\sigma+t}},\quad  \sigma, t> 0,
\eq
which follows from Bony's decomposition, \eqref{pp:Linfty}, \eqref{pp:C-mu} and \eqref{Bony2}. 

For $\alpha\Delta_xv=(\alpha-m)\Delta_xv+m\Delta_xv$ we apply \eqref{productfordz} with $t=\mez$ to $u_1=\alpha-m$ and $u_2=\Delta_xv$ to have
\begin{align*}
    \|\alpha \Delta_x v\|_{L_z^2H_x^{k+\mu-1}}
    &\les \| \alpha-m\|_{L_z^\infty L_x^\infty}\| \Delta_xv\|_{L_z^2H_x^{k+\mu-1}}+ \|\Delta_xv\|_{L_z^2 C_*^{-\mez}}\| \alpha-m\|_{L_z^{\infty}H_x^{k+\mu -\mez}}+\|\Delta_xv\|_{L_z^2H_x^{k+\mu-1}}\\
    &\les \| \na_xv\|_{L_z^2 H_x^{k+\mu}}+\| \na_xv\|_{L_z^2H_x^{s-\mez}}(\| f\|_{H^{k+\mu+\mez}}+a)\\
    &\les   \| \na_xv\|_{X^{r-\mez}}+\| \na_xv\|_{X^{s-1}}(\| f\|_{H^{r+\mez}}+a),
\end{align*}
where we have used \cref{lemm.coefsEstimInfinite} and the embedding $H^{s-\mez}(\Rr^d)\hookrightarrow C^\mez_*(\Rr^d)$ for $s>1+\frac{d}{2}$. Here $a=0$ for infinite depth, and $a=\| b_0\|_{H^{r+\mez}}$ for finite depth.

  Since $\beta$ has the same regularity as $\alpha-m$, we also have
\[
    \| \beta \cdot \nabla_x \partial_z v\|_{L_z^2H_x^{k+\mu-1}}\les \| \na_xv\|_{X^{r-\mez}}+\| \na_xv\|_{X_x^{s-1}}(\| f\|_{H^{r+\mez}}+a).
\]
As for $\gamma\p_zv$, we first apply \eqref{productfordz} with $\sigma=k+\mu-1\ge 1+\mu>0$ and $t=1$ to $u_1=\p_zv$ and $u_2=\gamma$, and then apply \cref{lemm.coefsEstimInfinite} to estimate $\gamma$. We obtain
\begin{align}
    \|\gamma \partial_z v \|_{L_z^2 H_x^{k+\mu-1}}
    &\les \| \p_zv\|_{L_z^{\infty} L_x^\infty}\| \gamma\|_{L_z^{2}H_x^{k+\mu-1}}+\| \gamma\|_{L_z^\infty C_*^{-1}}\| \p_zv\|_{L_z^2H_x^{k+\mu}}\label{eq.PnotH}\\
    &\les\| \p_zv\|_{X^{s-1}}\| \gamma\|_{X^{r-\mez}}+ \| \gamma\|_{X^{s-2}}\| \p_zv\|_{X^{r-\mez}} \notag\\
    & \les\| \p_zv\|_{X^{s-1}}(\| f\|_{H^{r+\mez}}+a)+\| \p_zv\|_{X^{r-\mez}}.
\end{align}

The estimate \eqref{nav:Xr-mez:1} with $r=s-\mez$ gives 
\bq\label{nav:Xs-mez:1}
\| \na_{x, z}v\|_{X^{s-1}}\les \| \zeta\|_{H^s}+ \| k\|_{H^{s-\mez}}+\| \na k\|_{L^\infty}.
\eq
Therefore, putting the above estimates together and apply \eqref{nav:Xr-mez:1}  again, we obtain  $(P_1)$. 
\medskip 

\underline{Case $\ell \in \{2, \dots, k+1\}$.} 
In order to prove \eqref{eq.Pl} for $\ell\in\{2, \dots, k+1\}$, we claim that 
\bq\label{claim:Pl}
\forall \ell\in\{1, \dots, k\},\quad \|\partial_z^{\ell}\nabla_x^{k+1-\ell}v\|_{H^{\mu}_{x,z}}  \le \Xi + A_{\ell},\quad A_{\ell}:= \sum_{j=1}^{\ell} \| \p_z^j v\|_{L^2_{x, z}}
\eq
and 
\bq\label{claim:Pl-k1}
\|\p_z^{k+1}v\|_{H^{\mu}_{x,z}} \le \Xi + A_k + \|\nabla_{x,z}v\|_{H_{x,z}^{k+\mu -1}}.
\eq 
Assume temporarily that \eqref{claim:Pl} and \eqref{claim:Pl-k1} hold. Togther  with the case $\ell=0$, they imply 
\begin{multline*}
    \| \na_{x, z}^{k+1}v\|_{H^\mu_{x,z}}\les \Xi + A_k + \|\nabla_{x,z}v\|_{H^{k+\mu -1}_{x,z}}  \\
    \les \Xi  + \|\p_z v\|_{H^{k-1}_{x,z}} + \|\nabla_{x,z}v\|_{H^{k+\mu -1}_{x,z}}
     \les \Xi + \|\nabla_{x,z}v\|_{H^{k+\mu -1}_{x,z}},
\end{multline*}
whence
\[
    \| \na_{x, z}v\|_{H^{k+\mu}_{x,z}}\les \Xi+ \| \na_{x, z}v\|_{H^{k-1+\mu}_{x,z}}+\| \na_{x, z}v\|_{L^2_{x,z}}.
\]
By interpolating  $\| \na_{x, z}v\|_{H^{k+\mu-1}}$ between $\| \na_{x, z}v\|_{L^2}$ and $\| \na_{x, z}v\|_{H^{k+\mu}}$, we obtain 
\[
    \| \na_{x, z}v\|_{H^{k+\mu}_{x,z}}\les \Xi+\| \na_{x, z}v\|_{L^2_{x,z}}\les \Xi+\| \na_{x, z}v\|_{X^{-\mez}}.
\]
Invoking  \cref{lemm.F0est}, we conclude that $\| \na_{x, z}v\|_{H^{k+\mu}_{x,z}}\les \Xi$ which implies \eqref{eq.Pl} for $\ell\in\{0, \dots, k+1\}$. 

For the proof of \eqref{claim:Pl} and \eqref{claim:Pl-k1}, we will assume $\mu \in (0,1)$. The case $\mu=0$  can be treated analogously except that \cref{lemm.interpol} is not invoked.  
\begin{proof}[Proof of \eqref{claim:Pl}] 
We proceed by induction on $\ell$. The case $\ell=1$ has been obtained above. Assuming that $k\ge 2$ and that \eqref{claim:Pl} holds for all $\ell ' \le \ell$ for some $\ell \in \{1, \dots, k-1\}$, we need to prove that  
\bq\label{claimPl:l}
    \|\p_z^{\ell+1}\na_x^{k-\ell}v\|_{H^\mu_{x, z}}\les \Xi +A_{\ell+1}. 
\eq
An application of~\cref{lemm.interpol} yields
\begin{equation}
    \label{eq.firstSplit}
    \|\partial_z^{\ell+1}\nabla^{k-\ell} v\|_{H^{\mu}} \lesssim \|\partial_z^{\ell+1}\nabla_x^{k-\ell}v\|_{L_z^2H_x^{\mu}} + \|\partial_z^{\ell +2}\nabla_x^{k-\ell}v\|_{L^2H^{\mu -1}}.
\end{equation}
By writing $\p_z^{\ell+1}v=\p_z^{\ell-1}\p_z^{2} v$ and $\p_z^{\ell + 2}v = \p_z^{\ell}\p_z^2v$ we can use \eqref{eq:v} to replace 
\[ 
    \p_z^2 v= F_0 - (\alpha\Delta v + \beta \cdot \nabla_x \p_zv + \gamma\p_zv),
\]
which contains at most one $z$-derivative of $v$. Hence, $\p_z^{\ell + 1}\na_x^{k-\ell}$ contains at most $\ell$ derivatives in $z$ while $\p_z^{\ell +1}\na_x^{k-\ell}v$ can contain up to $\ell +1$ derivatives in $z$. Using the method for controlling the second term that we present below, one obtains 
\begin{equation}\label{claimPl:l0}
    \|\partial_z^{\ell+1}\nabla_x^{k-\ell}v\|_{L_z^2H_x^{\mu}} \les \Xi + A_{\ell}.
\end{equation}
Let us now move to controlling $\|\partial_z^{\ell +2}\nabla_x^{k-\ell}v\|_{L^2_zH^{\mu -1}_x}$:
\begin{align*}
    \|\partial_z^{\ell +2}\na_x^{k-\ell}v\|_{L^2_zH^{\mu -1}_x} &\le \|\partial_z^{\ell}\na_x^{k-\ell}F_0\|_{L_z^2H_x^{\mu -1}} + m^2\|\partial_z^{\ell}\na_x^{k-\ell} \Delta_x v\|_{L^2_zH_x^{\mu -1}} \\
    & \quad  + \|\partial_{z}^{\ell}\na_x^{k-\ell}((\alpha - m^2)\Delta_x v)\|_{L^2_zH_x^{\mu -1}} + \|\p_z^{\ell}\nabla_x^{k-\ell}(\beta\cdot \nabla_x\p_z v)\|_{L^2_zH^{\mu-1}}\\
    &\quad  + \|\partial_z^{\ell}\nabla_x^{k-\ell}(\gamma \partial_zv)\|_{L^2H^{\mu -1}}. 
\end{align*}
Since $k-\ell\ge 1$, an application of \cref{lemm.F0est} with $\sigma=r\ge s-\mez>1$ gives
\[
    \|\partial_z^{\ell}\na_x^{k-\ell}F_0\|_{L_z^2H_x^{\mu -1}} \les     \|\partial_z^{\ell}F_0\|_{L_z^2H_x^{k-\ell+\mu-1 }}\les \|F_0\|_{H^{r-1}}  \les \Xi.
\]
By the induction hypothesis,
\[
    \|\partial_z^{\ell}\na_x^{k-\ell} \Delta_x v\|_{L^2_zH_x^{\mu -1}} \les \|\partial_z^{\ell}\na_x^{k+1-\ell}v\|_{L^2_zH_x^{\mu}}\les \|\partial_z^{\ell}\na_x^{k+1-\ell}v\|_{H_{x, z}^{\mu}} \les \Xi + A_{\ell}.
\]
Combining Leibniz's rule with a product rule  \eqref{productfordz} yields 
\begin{align*}
    \|\partial_z^{\ell}\nabla_x^{k-\ell}((\alpha-m^2)\Delta_x v)\|_{L^2_zH^{\mu -1}} &\les \|\partial_z^{\ell}((\alpha-m^2)\Delta_x v)\|_{L^2_zH^{k+\mu -\ell -1}}\\
     & \lesssim\sum_{0\le j \le \ell}\|\partial_z^j(\alpha-m^2)\|_{L^{\infty}_zC_*^{-j}}\|\partial_z^{\ell-j}\Delta_xv\|_{L^{2}_zH^{k+\mu+j-\ell-1}} \\
    &\quad \quad + \sum_{0\le j \le \ell}\|\partial_z^j(\alpha-m^2)\|_{L_z^{\infty}H^{k+\mu-j-\mez}}\|\partial_z^{\ell-j}\Delta_xv\|_{L^2_zC_*^{j-\ell - \mez}}   
\end{align*}
The preceding application of  \eqref{productfordz} is justified since $k+\mu-\ell-1\ge \mu >0$ and $j -\ell - \mez <0$. Since $s>1+\frac{d}{2}$ and $j\le k-1\le \lfloor  r\rfloor -1\le s-1$,  Sobolev's embedding and the estimate \eqref{eq.pzEstim1} imply
\[
    \|\partial_z^j(\alpha-m^2)\|_{L^{\infty}_zC_*^{-j}} \les \|\p_z^j(\alpha-m^2)\|_{L^\infty H^{s-1-j}}\le  C(\|f\|_{H^s})  
\]
and 
\[
    \|\partial_z^j(\alpha-m^2)\|_{L^\infty_zH^{k+\mu-j -\mez}} \lesssim \|f\|_{H^{k+\mu+\mez}}+a. 
\]
Regarding  $B_j :=\|\partial_z^{\ell-j}\Delta_xv\|_{L_z^2H_x^{k+\mu+j-\ell -1}}$, when $j=\ell$ we have 
\[
    B_\ell= \|\Delta_x v\|_{L^2_zH^{k+\mu-1}_x}\les  \| \na_xv\|_{L^2_zH^{k+\mu}_x}\le \Xi
\]
in view of \eqref{nav:Xr-mez:1}. For $0\le j\le \ell-1$, we have 
\[
    B_j\les  \|\p_z^{\ell-j}  v\|_{L^2_zH^{k+\mu-\ell+1+j}_x}\les \|\p_z^{\ell-j}  v\|_{L^2_{x, z}}+ \|\p_z^{\ell-j} \na_x^{k-\ell+1+j} v\|_{L^2_zH^\mu_x}\les \Xi+A_{\ell}
\]
by the induction hypothesis. 

Since $s>1+\frac{d}{2}$ and $r\ge s-\mez$, it follows from Sobolev's embedding and \eqref{eq.GS-v:base} that   
\[
\begin{aligned}
    \|\partial_z^{\ell -j}\Delta_xv\|_{L^{2}_zC_*^{j-\ell-\mez}}\les  \|\partial_z^{\ell -j}\Delta_xv\|_{L^{2}_zH^{s-\tdm +j - \ell}}\les \| \na_xv\|_{H^{s-\mez}_{x, z}} \lesssim  \| \zeta\|_{H^s}+ \| k\|_{H^{s-\mez}}+\| \na k\|_{L^\infty}.
\end{aligned}
\]
Putting the above estimates together, we find
\[
    \|\partial_z^{\ell}\nabla_x^{k-\ell}((\alpha-m^2)\Delta_x v)\|_{L^2_zH_x^{\mu-1}}\les  \Xi+A_{\ell}. 
\]
The term $\|\partial_z^{\ell}\nabla_x^{k-\ell}(\beta\cdot\na_x\partial_x v)\|_{L_z^2H_x^{\mu-1}}$ can be treated similarly since $\beta$ and $\alpha-m^2$ have the same regularity. We only note that $B_j$ needs to be replaced with 
\[
    B_j':=\|\p_z^{\ell-j} \na_x \p_zv\|_{L^2_zH^{k+\mu-\ell-1+j}_x}\le \|\p_z^{\ell-j+1}  v\|_{L^2_zH^{k+\mu-\ell+j}_x}
\]
which has $\ell+1$ derivatives in $z$ when $j=0$. However, in that case we have
\[
    \|\p_z^{\ell-j+1}  v\|_{L^2_zH^{k+\mu-\ell}_x}=\|\p_z^{\ell+1}  v\|_{L^2_zH^{k+\mu-\ell}_x}\les\|\p_z^{\ell+1}  v\|_{L^2_{x, z}}+\|\p_z^{\ell+1}  \na_x^{k-\ell} v\|_{L^2_zH^\mu_x}\les A_{\ell+1}+ \Xi,
\]
where we have used \eqref{claimPl:l0}. 

As for  the  $\gamma$-term, we need to estimate  $ \|\p_z^j \gamma \p_z^{\ell -j+1}v\|_{L^2_zH^{k+\mu-\ell -1}}$ for all $j\in \{0, \dots, \ell\}$. 
We first consider the case $j\le \ell-1$. Then we have $\mez-\ell+j\le -\mez<0$, so that  the product rule  \eqref{productfordz} gives 
\begin{align*}
    \|\p_z^j\gamma \partial_z^{\ell-j+1}v\|_{L^2_zH^{k+\mu-\ell-1}} & \lesssim \|\p_z^j\gamma\|_{L_z^{\infty}C_*^{-1-j}} \|\partial_z^{\ell-j+1} v\|_{L_z^2H^{k+\mu-\ell+j}} + \|\p_z^j\gamma\|_{L_z^{\infty}H^{k+\mu-j-\frac{3}{2}}} \|\partial_z^{\ell-j+1} v\|_{L_z^{2}C_*^{\frac{1}{2}-\ell+j}} \\
    &\les \|\p_z^j \gamma\|_{L_z^{\infty}H^{s-2-j}} \|\partial_z^{\ell-j+1} v\|_{L_z^2H^{k+\mu-\ell+j}} + \|\p_z^j\gamma\|_{L_z^{\infty}H^{k+\mu-\frac{3}{2}}} \|\partial_z^{\ell-j+1} v\|_{L_z^{2}H^{s-\frac{1}{2}-\ell}}.
\end{align*}
We have  $k+\mu-\tdm=r-\tdm\ge s-2\ge -\mez$, so that the estimates \eqref{eq.coefsEstimInfinite-gamma} and \eqref{eq.coefsEstimFinite-gamma} for $\gamma$ imply 
\[
 \|\gamma\|_{L_z^{\infty}H^{k+\mu-\frac{3}{2}}}\les \|f\|_{H^{r+\mez}}+a.
 \]
Since 
\[
s-2-j\ge s-2-(\ell -1)\ge s-2-(k-2)=s-\lfloor r\rfloor\ge 0,
\]
the estimate \eqref{eq.pzEstim1c} for $\gamma$ is applicable, which yields
\[
\|\p_z^j \gamma\|_{L_z^{\infty}H^{s-2-j}}\le C(\| f\|_{H^s}).
\]
On the other hand, since $s-\mez-\ell \ge \mez >0$, we have that  $\|\partial_z^{\ell-j+1} v\|_{L_z^{2}H^{s-\frac{1}{2}-\ell}} \leq \|\p_zv\|_{H^{s-\mez}_{x,z}}$ is controlled by the \eqref{eq.GS-v:base}.  Lastly, we have
\[
    \|\partial_z^{\ell-j+1} v\|_{L^2_zH^{k+\mu-\ell+j}} \les \|\partial_z^{\ell-j+1}v\|_{L^2_{x, z}}+ \|\partial_z^{\ell-j+1}\na_x^{k-\ell+j}v\|_{L^2_zH^{\mu}} \les A_{\ell+1}+\Xi.
\]
The above estimates imply 
\[
  \|\p_z^j\gamma \partial_z^{\ell-j+1}v\|_{L^2_zH^{k+\mu-\ell-1}}\les \Xi+A_{\ell+1}
\]
for all $0\le j\le \ell-1$.  Regarding the  remaining case $j=\ell$, we use the following substitute for  product rule \eqref{productfordz}: 
\begin{align*}
\|\p_z^\ell \gamma \partial_zv\|_{L^2_zH^{k+\mu-\ell-1}}&\les \|\p_z^\ell \gamma\|_{L^2_z C_*^{-\ell-\mez}} \| \partial_zv\|_{L^\infty_zH^{k+\mu-\mez}}+ \|\p_z^\ell \gamma\| _{L^2_z H^{k+\mu-\ell-1}} \| \partial_zv\|_{L^\infty_{x, z}} \\
&\les \|\p_z^\ell \gamma\|_{L^2_z H^{s-\ell-\tdm}} \| \partial_zv\|_{L^\infty_zH^{k+\mu-\mez}}+ \|\p_z^\ell \gamma\| _{L^2_z H^{k+\mu-\ell-1}} \| \partial_zv\|_{X^{s-1}}.
\end{align*}
Using \cref{lemm:interpolationLM} yields 
\[
    \| \partial_zv\|_{L^\infty_zH^{k+\mu-\mez}} \les \|\p_zv\|_{L^{2}_zH^{k+\mu}} + \|\p_z^2v\|_{L^{2}_zH^{k+\mu-1}} \leq \Xi
\]
by virtue of the estimates obtained in the case $\ell=1$. Also, applying \eqref{dzgamma:L2} with $(j,\sigma)=(\ell, s-\ell-\tdm)$ yields $\|\p_z^\ell \gamma\|_{L^2_z H^{s-\ell-\tdm}} \leq C(\|f\|_{H^s})$. We note that the conditions in \eqref{dzgamma:L2-cond} are satisfied because 
\begin{align*}
& (s-\ell-\tdm)+1 \ge s-(\lfloor r\rfloor-1)-\mez\ge \mez>0,\\
& (s-\ell-\tdm)+(s-1) \ge -\mez +s-1>0,\quad \ell \le \lfloor r\rfloor -1\le s-1.
\end{align*}
 Since $k+\mu-\ell-1 \geq 0$, \eqref{eq.pzEstim1c} implies  
 \[
 \|\p_z^\ell \gamma\| _{L^2_z H^{k+\mu-\ell-1}} \leq C(\|f\|_{H^s})(\|f\|_{H^{k+\mu +\mez}}+a).
 \]
  Finally, $\| \partial_zv\|_{X^{s-1}}$ is controlled by means of \eqref{nav:Xs-mez:1}.

The proof of \eqref{claim:Pl} is now complete.
\end{proof}

\begin{proof}[Proof of \eqref{claim:Pl-k1}] 
We start by using \eqref{eq:v} to replace $\p_z^2v$, which gives
\begin{align*}
    \|\partial_z^{k+1}v\|_{H^{\mu}} &\le \|\partial_z^{k-1}F_0\|_{H^{\mu}} +m^2 \|\partial_z^{k-1}\Delta_xv\|_{H^{\mu}}  + \|\partial_z^{k-1}((\alpha-m^2)\Delta_x v)\|_{H^{\mu}} \\
    & \quad+ \|\partial_z^{k-1}(\beta\cdot\na_x\partial_x v)\|_{H^{\mu}} + \|\partial_z^{k-1}(\gamma\partial_z v)\|_{H^{\mu}},  
\end{align*}
where $\|\partial_z^{k-1}F_0\|_{H^{\mu}}\les \Xi$ in view of \cref{lemm.F0est}. 
An application of \eqref{claim:Pl} with $\ell=k-1$  yields  
\begin{equation*}
    \|\partial_z^{k-1}\Delta_xv\|_{H^{\mu}}\les   \Xi+A_{k-1}.
\end{equation*}
For the $\alpha$-term, we need to estimate 
\begin{multline*}
    \|\p_z^j(\alpha_1-m^2)\p_z^{k-1-j}\Delta_xv\|_{H^{\mu}} \lesssim \|\p_z^j(\alpha_1-m^2)\|_{L^{p_j}}\|\p_z^{k-1-j}\Delta_xv\|_{W^{\mu,p'_j}} \\
    + \|\p_z^j(\alpha_1-m^2)\|_{W^{\mu,q_j}}\|\p_z^{k-1-j}\Delta_xv\|_{L^{q'_j}} 
\end{multline*}
with $j\in\{0, \dots, k-1\}$, $\frac{1}{p_j}+\frac{1}{p'_j}=\frac{1}{q_j}+\frac{1}{q'_j}=\frac{1}{2}$, and $p'_j,q_j < \infty$. We will only explain how to choose $(p_j,p'_j)$, the choice of $(q_j,q'_j)$ being analogous.  We note that  $0\le j\le k-1\le \lfloor r\rfloor -1\le s-1$.
\medskip 

\underline{Case $j=0$.} We choose $p_j=\infty$, so that a combination of  Sobolev's embedding and  \eqref{eq.pzEstim1b} yields 
\[
    \|\alpha_1-m^2\|_{L^{\infty}} \lesssim \|\alpha_1-m^2\|_{H^{\frac{d+1}{2}+\varepsilon}} \les \|\alpha_1-m^2\|_{H^{s-\mez}} \le C(\|f\|_{H^s}).
\] 
Moreover, we have proven that  $\|\partial_z^{k-1}\Delta_xv\|_{H^{\mu}}\les \Xi+A_{k-1}$. 
\medskip 

\underline{Case $1\le j\le s-1$.} In this case, we have $s-\mez-j\ge \mez>0$, so we can choose 
\[
    \begin{cases}
        \frac{1}{p_j}=\frac{1}{2}-\frac{s-\mez -j}{d+1}\quad\text{if~}s-\mez-j< \frac{d+1}{2},\\
        p_j=\infty\quad\text{if~} s-\mez-j> \frac{d+1}{2},\\
        2\ll  p_j<\infty\quad\text{if~} s-\mez-j= \frac{d+1}{2},
    \end{cases}
\]
in such a way that  Sobolev's embedding together with \eqref{eq.pzEstim1b} imply
\[
    \|\partial_z^j(\alpha - m^2)\|_{L^{p_j}} \lesssim \|\partial_z^j (\alpha-m^2)\|_{H^{s-\frac{1}{2}-j}} \le C(\|f\|_{H^s}). 
\] 
Since $s>1+\frac{d}{2}$ and $j\ge 1$, we have  $H^{\mu+j} \hookrightarrow W^{\mu,p'_j}$, whence
\begin{equation*}
    \|\p_z^{k-1-j}\Delta_xv\|_{W^{\mu,p'_j}} \lesssim \|\p_z^{k-1-j}\Delta_xv\|_{H^{\mu +j}}\lesssim \|\p_z^{k-1-j}\Delta_xv\|_{H^{\mu}_{x, z}} + \|\na_{x,z}^{j+1}\p_z^{k-1-j}\na_xv\|_{H^{\mu}}.
\end{equation*}
Since  $\na_{x, z}^{j+1}\p_z^{k-1-j}\na_xv$ contains a total of $k+1$ derivatives with at most $k$ derivatives in $z$, \eqref{claim:Pl}  gives 
\[
     \|\na_{x, z}^{j+1}\p_z^{k-1-j}\na_xv\|_{H^{\mu}_{x, z}}\les \Xi+A_{k}.
\]
On the other hand, we have $\|\p_z^{k-1-j}\Delta_xv\|_{H^{\mu}_{x, z}}\les \| \na_xv\|_{H^{k+\mu-1}}$ since $j\ge 1$. Thus we obtain
\[
    \|\partial_z^{k-1}((\alpha-m^2)\Delta_x v)\|_{H^{\mu}}\les \Xi+A_{k}+\| \na_xv\|_{H^{k+\mu-1}}.
\]
\medskip

The term $ \|\partial_z^{k-1}(\beta\cdot\na_x\partial_x v)\|_{H^{\mu}} $ can be treated similarly except that we now have 
\[
\begin{aligned}
    \|\p_z^{k-1-j}\na_x\p_zv\|_{W^{\mu,p'_j}}& \les \|\p_z^{k-1-j}\na_x\p_zv\|_{H^{\mu +j}_{x, z}}\les  \|\p_z^{k-1-j}\p_zv\|_{H^{\mu+j+1}_{x, z}}\\
    &\les   \|\p_z^{k-1-j}\p_zv\|_{H^\mu_{x, z}}+ \|\na_{x, z}^{j+1}\p_z^{k-1-j}\p_zv\|_{H^{\mu}_{x, z}}\\
    &\les \| \p_zv\|_{H^{k+\mu-1}}+ \Xi+A_k.
\end{aligned}
\]
Hence, 
\[
    \|\partial_z^{k-1}(\beta\cdot\na_x\partial_x v)\|_{H^{\mu}} \les \Xi+A_{k}+\| \na_xv\|_{H^{k+\mu-1}}.
\]

For the $\gamma$ term, we  need to control for any $j\in\{0, \dots, k-1\}$ the quantities $\|\p_z^j\gamma_1\|_{L^{p_j}}$ and $\|\p_z^{k-1-j}\p_zv\|_{W^{\mu,p'_j}}$ where $\frac{1}{p_j}+\frac{1}{p'_j}=\mez$ and $p'_j\neq \infty$. 
\medskip 

\underline{Case $0\leq j\leq s-2$.} In this case, we have $s-\tdm-j\ge \mez>0$, so we can choose 
\[
    \begin{cases}
        \frac{1}{p_j}=\frac{1}{2}-\frac{s-\tdm -j}{d+1}\quad\text{if~}s-\tdm-j< \frac{d+1}{2},\\
        p_j=\infty\quad\text{if~} s-\tdm-j> \frac{d+1}{2},\\
        2\ll  p_j<\infty\quad\text{if~} s-\tdm-j= \frac{d+1}{2},
    \end{cases}
\]
in order to ensure that $H^{s-\frac{3}{2}-j} \hookrightarrow L^{p_j}$ and $H^{\mu +j+1-\varepsilon}\hookrightarrow W^{\mu,p'_j}$ for some $\eps>0$. It follows that $\|\p_z^j\gamma\|_{L^{p_j}} \le C(\|f\|_{H^s})$ thanks to \eqref{dzgamma:L2}, which is applicable because $s-\tdm - j \geq 0$. 
Hence, for all $\nu\in (0, 1)$, 
\begin{align*}
    \|\p_z^{k-1-j}\p_zv\|_{W^{\mu,p'_j}} &\lesssim \|\p_z^{k-1-j}\p_zv\|_{H^{\mu +j+1-\varepsilon}}\\
    &\leq C(\nu)\|\p_z^{k-j}v\|_{H^\mu}+ \nu \|\p_z^{k-j}v\|_{H^{\mu +j+1}} \\  
    & \leq C(\nu)\|\p_z^{k-j}v\|_{H^\mu} + \nu \sum_{m=0}^j\|\p_z^{k-j+m}\na_x^{j+1-m}v\|_{H^{\mu}} + \nu \|\p_z^{k+1} v\|_{H^{\mu}_{x,z}},
\end{align*}
where 
\[
\sum_{m=0}^j\|\p_z^{k-j+m}\na_x^{j+1-m}v\|_{H^{\mu}}\les \Xi+A_k
\]
by \eqref{claim:Pl}. Therefore, we arrive at 
\[
    \|\p_z^{k-1-j}\p_zv\|_{W^{\mu,p'_j}} \le C(\| f\|_{H^s}, \nu)\Big(\Xi+ A_k + \|\na_{x,z}v\|_{H^{k +\mu -1}} \Big) + \nu \|\p_z^{k+1}v\|_{H^{\mu}_{x,z}}\quad\forall \nu\in (0, 1).
\]
\medskip 

\underline{Case $j=k-1$.} This case is the only one possibly not covered by the previous case. Since $\mu \in (0, 1)$, we choose $\frac{1}{p_j}=\frac{1}{2}-\frac{\mu}{d+1}$, so that Sobolev's embedding and \eqref{eq.pzEstim1c} yield 
\[
    \|\p^{k-1}_z\gamma\|_{L^{p_j}}\les  \|\p^{k-1}_z\gamma\|_{H^\mu}\le  C(\|f\|_{H^s})(\|f\|_{H^{k+\mu+\mez}}+a).
\]
We also have $H^{\mu + \frac{d+1}{2}-\mu}\hookrightarrow W^{\mu,p'_j}$, so that 
\[
    \|\p_zv\|_{W^{\mu,p'_j}} \les \|\p_zv\|_{H^{s-\mez}} 
\]
which is controlled by \eqref{nav:Xs-mez:1}. 

From the above two cases, we deduce
\[
    \|\p_z^{k+1}v\|_{H^{\mu}_{x,z}} \le   C(\| f\|_{H^s}, \nu)\Big(\Xi +  A_k + \|\na_{x,z}v\|_{H^{k +\mu -1}}\Big) + \varepsilon \|\p_z^{k+1}v\|_{H^{\mu}_{x,z}}\quad\forall \nu\in (0, 1).
\]
This implies 
\[
    \|\p_z^{k+1}v\|_{H^{\mu}_{x,z}} \les \Xi +  A_k + \|\na_{x,z}v\|_{H^{k +\mu -1}},
\]
thereby finishing the proof of \eqref{claim:Pl-k1}. 
\end{proof}
 
\subsubsection{The finite-depth case.} \label{sec:finite-depth}
In the preceding subsection,  we have proven the estimate \eqref{eq.GS-v} for $\| \na v\|_{H^r}$  on $\Rr^d\times (z_0, 0)$ for any $z_0\in (-1, 0)$. It remains to establish \eqref{eq.GS-v} in an open neighborhood $\Omega_b$ be of $\{y=b(x)\}$. We recall that $b=-H+b_0$, where $b_0\in H^{r+\mez}(\Rr^d)$ with
\[
r\ge s-\mez>\frac{d+1}{2}.
\]
  We  make the following change variables near $\{y=b(x)\}$: 
\[
(x,y)=\ff_b(x,z)=(x,\varrho_b(x,z)):=(x,(z-1)H+e^{-\delta z\langle D_x\rangle}b_0(x)),\quad (x, z)\in \Rr^d\times [0, \tilde \delta]
\]
for small positive constants $\delta$ and $\tilde{\delta}$. Since 
\[
    |\p_z\varrho_b- H|=\delta |e^{-\delta z \langle D_x\rangle}\langle D_x\rangle b_0(x)|\le \delta K\| b_0\|_{H^{r+\mez}},\quad K=K(d, r),
\]
we choose 
\[
\delta \le   \frac{H}{2(1+K\| b_0\|_{H^{r+\mez}})},
\]
so that  $\frac{H}{2}< \p_z\varrho_b< \frac{3H}{2}$.  Hence, $\ff_b$ is injective and  
\begin{equation}
    \label{eq.uniform-separation-b}
    0<\varrho_b(x, \tilde{\delta})-\varrho_b(x, 0)\le \tilde{\delta}\frac{3H}{2}.
\end{equation}
Then we choose $\tilde{\delta}< \frac{2\fd}{3H}$ to ensure $\varrho_b(x, \tilde{\delta})<f(x)$, i.e. $\ff_b(\Rr^d\times (0, \tilde{\delta})) \subset \Omega_f$. 
The above change of variables $\ff_b$ is of the same form as $\ff_f$ defined by \eqref{def:diffeo}--\eqref{def:vr}. In particular, the function $w\in \{w^{(1)}, w^{(2)}\}$, $w^{(j)}=\phi^{(j)}\circ \ff_b$, satisfies 
\[
   \operatorname{div}(\mathcal{A}\na_{x,z}w)=F\quad\text{and}\quad  \p_z^2 w + \alpha \p_x^2 + \beta \cdot \na_x\p_z w + \gamma\p_zw= F_0 ,     
\]
where 
\[
    F= - (\p_z\varrho)\p_yk\circ \ff_b = - \p_z(k\circ \ff_{b}),\quad F_0=-\frac{(\p_z\varrho_b)^2}{1+|\na_x\varrho_b|^2}\p_yk\circ \ff_b
\]
and
\begin{equation}
    \label{def:Arho}
    \mathcal{A}=\begin{bmatrix} \p_z\varrho_b I_d & -\na_x \varrho_b \\ -(\na_x\varrho_b)^T & \frac{1+|\na_x \varrho_b|^2}{\p_z\varrho_b}\end{bmatrix}. 
\end{equation}
For $b_0\in H^{r+\mez}(\Rr^d)$ with $r\ge s-\mez>\frac{1+d}{2}$, we have 
\bq\label{cA-cA0}
\| \cA-\cA_0\|_{H^r_{x, z}} \le C(\| b_0\|_{H^{r+\mez}}),\quad \cA_0:=\begin{bmatrix}
    HI_d& 0 \\ 
    0 & \frac{1}{H}
    \end{bmatrix}.
\eq
The function $w$  satisfies the boundary condition 
\[
    (\mathcal{A}\na w)(x, 0) \cdot e_{d+1} =-\na_{x, y}\phi\vert_{y=b(x)}\cdot (\na_x b, -1)= k(x, b(x))=:g(x).
\]
In the following, we will use the bound $\|\mathcal{A}\|_{L^{\infty}(\mathbb{R}^d \times [0,1])}\le  C_1=C_1(H, \| b_0\|_{H^{s}})$, which follows from \eqref{cA-cA0}.  Let $\varepsilon_0$ be the constant in \cref{thm.elliptic-CS} associated with the domain $V_0:=B(0,1)\times [0,1] \subset \mathbb{R}^{d+1}$ and the positive constant matrix $A=\cA_0$. Let us also fix 
\[
0<\nu < \min\{1, s-\frac{d+1}{2}\}\quad\text{and}\quad 0<\eta <\min\{\frac{\tilde{\delta}}{2}, 1\}.
\]
Then we consider the domains 
\begin{equation}\label{def:Cnr}
    C_{n, \eta}:=B( \eta n,\eta)\times (0, \eta)\subset C'_{n, \eta}:=B(\eta n,2\eta) \times (0,2\eta),\quad n\in\mathbb{Z}^d, 
\end{equation}
the matrices $A_{n, \eta}:=\mathcal{A}(\eta n,\frac{\eta}{2})$ and $a_{n, \eta}(x,z):=\mathcal{A}(x,z)-A_{n, \eta}(x,z)$. We can check that 
\bq\label{verify:smalla}
  \forall (x, z)\in C'_{n, \eta},~  |a_{n, \eta}(x,z)| \le |(x,z)-(rn, \frac{\eta}{2})|^{\nu}[\mathcal{A}]_{C^{\nu}(\mathbb{R}^d)} \le \eta^{\nu}C_2,\quad C_2=C_2(H,  \| b_0\|_{H^{s}}).
    \eq
Now, we further  impose that $\eta < (\varepsilon_0/C_2)^{\frac{1}{\nu}}$, so that 
\bq\label{smallness:aneta}
\| a_{n, \eta}\|_{L^\infty(C'_{n, \eta})}< \eps_0.
\eq

Next, we fix a sequence of nested domains $C_{n,\eta}\equiv  \mathcal{U}_1 \subset \cdots \subset \mathcal{U}_p \subset C'_{n,\eta}$ as depicted on \cref{fig.3}. We choose the domains $\mathcal{U}_i$ in order that the non-flat boundary $\partial\mathcal{U}_{i+1}\setminus \{z=0\}$ is at positive distance from $\partial\mathcal{U}_i\setminus \{z=0\}$. This way, we can fix a smooth cuttof $\chi_i$ such that $\chi_i \equiv 1$ in $\cU_i$ and vanishes on $\partial\mathcal{U}_{i+1}\setminus \{z=0\}$. 
\begin{figure}
    \includegraphics[width=\textwidth]{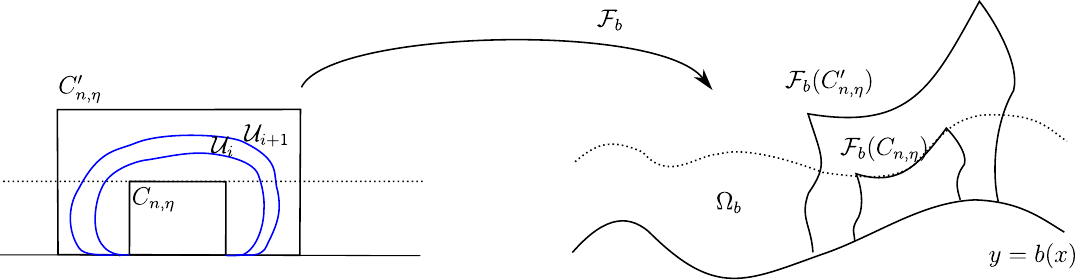}
    \caption{The domains $\mathcal{U}_i$ \label{fig.3}}
\end{figure}
We consider the function  $u=\chi_1w - \int_{C'_{n, \eta}} \chi_1w$ which has mean zero on $C'_{n, \eta}$ and satisfies the equation
\bq\label{eq:u:cU}
    \operatorname{div}((A_{n, \eta}+a_{n, \eta})\nabla u) = \chi_1 F -\na \chi_1 \cdot (\mathcal{A}\na w) - \operatorname{div}(w\mathcal{A}\na \chi_1)=:F_1 \quad \text{in~ } \mathcal{U}_2.
\eq
Moreover, $u$ satisfies the Neumann condition
\bq\label{bc:u:cU}
    (A_{n, \eta}+a_{n, \eta})\nabla u \cdot \nu \vert_{\p \mathcal{U}_2} = \chi_1\vert_{\p \cU_2\cap \{z=0\}} g -w\mathcal{A}\na \chi_1 \cdot e_{d+1}=:g_1, 
\eq
as indeed $\chi_1 \equiv 0$ in a neighborhood of $\p\mathcal{U}_2 \setminus \{z=0\}$. Since  $r>\frac{1+d}{2}$, we can apply the product rule in \cref{coro:productdomain} together with \eqref{cA-cA0} to have
\begin{align*}
 \|\operatorname{div}(w\mathcal{A}\na \chi_1)\|_{H^{r-1}(\cU_2)} &\le  \|w\mathcal{A}\na \chi_1\|_{H^r(\cU_2)} \le  C(\chi_1) \|w\mathcal{A}\na \chi_1\|_{H^r(\cU_2)} \\
 &\le C(\| \na \chi_1\|_{H^r(\cU_2)}, H) \|w\|_{H^r(\cU_2)}(\| \mathcal{A}-\cA_0\|_{H^r(\cU_2)}+1)\\
 &\le  C(\| \na \chi_1\|_{H^r(\cU_2)}, H, \| b_0\|_{H^{r+\mez}}) \|w\|_{H^r(\cU_2)}.
\end{align*}
An analogous argument yields
\[
 \|\na \chi_1 \cdot (\mathcal{A}\na w)\|_{H^{r-1}(\cU_2)}\le C(\| \na \chi_1\|_{H^r(\cU_2)}, H, \| b_0\|_{H^{r+\mez}}) \|w\|_{H^r(\cU_2)}.
 \]
Using the change of variables $(x, z)\mapsto (x-n\eta, z)$, we find that $\| \na \chi_1\|_{H^r(C'_{n, \eta})}\le C(\eta)$ independent of $n$. Thus we obtain 
\bq\label{est:F1g1}
\| F_1\|_{H^{r-1}(\cU_2)}+\| g_1\|_{H^{r-\mez}(\p\cU_2)}\le  C(H, \| b_0\|_{H^{r+\mez}})\Big(\|F\|_{H^{r-1}(\cU_2)}+\|g\|_{H^{r-\mez}(\p\cU_2\cap\{z=0 \})} + \|w\|_{H^r(\cU_2)}\Big),
\eq
where $C$ is independent of $n$.  We note that $\cU_2$ can be replaced by $C'_{n, \eta}$  in \eqref{eq:u:cU}, \eqref{bc:u:cU}, and \eqref{est:F1g1} due to the support of $\chi_1$.  In order to apply  \cref{thm.elliptic-CS} to $u$ on $C'_{n, \eta}$, it remains to verifies the smallness condition on $\| a\|_{L^\infty(C'_{n, \eta})}$.  However, since this smallness condition  depends on the domain $C'_{n, \eta}$, in order to obtain it from \eqref{verify:smalla} in a noncircular and uniform-in-$n$ manner, we perform the following rescaling.  The rescaled functions 
\begin{align*}
&\tilde{\mathcal{A}}(x,z)=\mathcal{A}(\eta(x+n), 2\eta z),\quad\tilde{u}(x,z)=u(\eta(x+n), 2\eta z),\\
&\tilde{F}_1(x, z) =\eta^2F_1(\eta(x+n), 2\eta z),\quad \tilde{g}_1(x)=\eta^2 g_1(\eta(x+n))
\end{align*}
satisfy
\begin{align*}
   & \operatorname{div}(\tilde{\mathcal{A}}\nabla \tilde{u})=\tilde{F}_1\quad \text{in~} V_0=B(0,1)\times (0,1),\\
   & \tilde{\mathcal{A}}\nabla\tilde{u} \cdot \nu = \tilde{g}_1\quad\text{on~}\p V_0. 
\end{align*}
Since  $\|\tilde{a}\|_{L^{\infty}(B(0,1)\times [0,1])} = \|a\|_{L^{\infty}(C'_{n,r})} < \varepsilon_0$ in view of \eqref{smallness:aneta}, we can apply \cref{thm.elliptic-CS} to have 
\[
    \|\tilde{u}\|_{H^{r+1}(V_0)} \le C_3\Big(\|\tilde{F}_1\|_{H^{r-1}(V_0)} + \|\tilde{g}_1\|_{H^{r-\mez}(\partial V_0)}\Big),
\]
where $C_3=C_3(H, \| b_0\|_{H^{r+\mez}})$ is independent of $n\in \Zz$. By undoing the rescaling and using \eqref{est:F1g1}, we obtain
\begin{align*}
     \|u\|_{H^{r+1}(C'_{n, \eta})} &\le  C_3\Big(\|F_1\|_{H^{r-1}(C'_{n,  \eta})} + \|g_1\|_{H^{r-\mez}(\partial C'_{n,  \eta})}\Big) \\
   & \le C_4 \Big(\|F\|_{H^{r-1}(\cU_2)}+\|g\|_{H^{r-\mez}(\p\cU_2\cap\{z=0 \})} + \|w\|_{H^r(\cU_2)}\Big),\quad C_4= C_4(H, \| b_0\|_{H^{r+\mez}}).
\end{align*}
We recall that $u= \chi_1w-\int_{C'_{n, \eta}}\chi_1 w$.  Holder's inequality gives
\[
\left|\int_{C'_{n, \eta}}\chi_1 w\right|=\left|\int_{\cU_2}\chi_1 w\right|\le  \| w\|_{L^2(\cU_2)}\eta^\frac{d+1}{2}\le \| w\|_{L^2(\cU_2)}.
\]
Since $\chi_1=1$ on $\cU_1$, we deduce that 
\bq\label{estw:cU1cU2}
  \|w\|_{H^{r+1}(\cU_1)}\le C_5 \Big(\|F\|_{H^{r-1}(\cU_2)}+\|g\|_{H^{r-\mez}(\p\cU_2\cap\{z=0 \})} + \|w\|_{H^r(\cU_2)}\Big),\quad C_5=C_5(H, \| b_0\|_{H^{r+\mez}}).
  \eq
Choosing $p=\lceil r \rceil+1$ and iterating the preceding inequality through the $\cU_j$, we obtain 
\bq\label{estw:cU1cUp}
    \|w\|_{H^{r+1}(\cU_1)}  \le C_6 \Big(\|F\|_{H^{r-1}(\cU_p)}+\|g\|_{H^{r-\mez}(\p\cU_p\cap \{z=0\})} + \|w\|_{H^1(\cU_p)}\Big),\quad C_6= C_6(H, \| b_0\|_{H^{r+\mez}}).
\eq
Since $\cU_1=C_{n, \eta}$, $\cU_p\subset C'_{n, \eta}$,  and 
\[
    \mathbb{R}^d \times (0,\ \eta) \subset \bigcup_{n\in \mathbb{Z}^d} C_{n,\eta}\quad \text{ and }\quad \forall n\in \Zz^d,~\#\{m\in\mathbb{Z}^d: C'_{m,\eta} \cap C'_{n,\eta} \neq \varnothing\} \le  7^d, 
\]
it follows from \eqref{estw:cU1cUp} that 
\bq\label{estw:Omega-b}
    \|w\|_{H^{r+1}(\mathbb{R}^d \times (0, \eta))} \le C_7 \Big(\|F\|_{H^{r-1}(\mathbb{R}^d \times (0, 2\eta))}+\|g\|_{H^{r-\mez}(\mathbb{R}^d)}+ \|w\|_{H^1(\mathbb{R}^d \times (0, 2\eta))}\Big),
\eq
where $C_7= C_7(H, \| b_0\|_{H^{r+\mez}})$. Combining the trace inequality, \cref{lemm.FaaDiBruno} and the fact that $r>\frac{d+1}{2}$, we find 
\bq\label{est:Fg:bot}
\begin{aligned}
    \|F\|_{H^{r-1}(\mathbb{R}^d \times (0,2\eta))}+\|g\|_{H^{r-\mez}(\mathbb{R}^d \times (0, 2\eta))} & \le   \| k\circ \ff_b\|_{H^r(\mathbb{R}^d \times (0,2\eta))}  \le  C(H, \| b_0\|_{H^{r+\mez}})\|k \|_{H^r(\Omega_f)}.
\end{aligned}
\eq
Next, we recall from \cref{lemm.variationalEstimate} that in the finite-depth case, we have 
\bq\label{variest:wj}
    \begin{aligned}
        &  \| \phi^{(1)}\|_{H^1(\Omega_f)}\le C(\|\na f\|_{L^\infty}, \| \na b\|_{L^\infty})\| f\|_{H^\mez},\\
        & \| \phi^{(2)}\|_{H^1(\Omega_f)}\le \left(1+\| f- b\|_{L^\infty}\right)\| k\|_{L^2(\Omega_f)}.
    \end{aligned}
\eq
Since $r\ge s-\mez >\frac{d+1}{2}$, we have  $\na_{x, z}\varrho_b\in L^\infty(\Rr^d\times (0, \tilde{\delta}))$, and hence \eqref{variest:wj} implies
\bq\label{est:wjbot}
    \begin{aligned}
        &  \|w^{(1)}\|_{H^1(\mathbb{R}^d \times (0, \tilde{\delta}))}\le C(\|f\|_{H^s},\|b_0\|_{H^s})\| f\|_{H^\mez},\\
        & \|w^{(2)}\|_{H^1(\mathbb{R}^d \times (0, \tilde{\delta}))}\le C(\|f\|_{H^s},\|b_0\|_{H^s})\| k\|_{L^2(\Omega_f)}.
    \end{aligned}
\eq
It follows from \eqref{estw:Omega-b}, \eqref{est:Fg:bot}, and \eqref{est:wjbot} that
\bq\label{est:wHr}
\|w\|_{H^{r+1}(\mathbb{R}^d \times (0, \eta))} \le  C(H, \| b_0\|_{H^{r+\mez}}, \|f\|_{H^s})\Big(\| \zeta\|_{H^\mez}+\|k\|_{H^r(\Omega_f)}\Big),
\eq 
Next, we transfer the regularity of $w$ to the regularity of $u=\phi\circ \ff_f$, where $\phi\in \{\phi^{(1)}, \phi^{(2)}\}$ and $\ff_f(x, z)=(x, \varrho(x, z))$, $z\in [-1, 0]$. For sufficiently small $\mu>0$ we have $\varrho(x, -1+\mu)<\varrho_b(x, \eta)$ for all $x\in \Rr^d$, so that 
\[
\cF(\Rr^d\times (-1, -1+\mu)) \subset \ff_b(\Rr^d\times (0, \eta))
\]
and $u=\phi\circ \ff_f=w\circ \ff_b^{-1}\circ \ff_f$ on $\Rr^d\times (-1, -1+\mu)$. Then, we can apply \cref{lemm.FaaDiBruno}, \cref{lemm.diffeoSobolev}, and \eqref{est:wHr} to deduce 
\bq
\begin{aligned}
\| u\|_{H^{r+1}(\Rr^d\times (-1, -1+\mu))}&\le C(\| b_0\|_{H^{r+\mez}}, \|f\|_{H^s})\Big(\| w\|_{H^{r+1}(\Rr^d\times (0, \eta))}   \\
&\qquad  +\| w\|_{H^{s+\mez}(\Rr^d\times (0, \eta))}(\|f\|_{H^{r+\mez}} +\| b_0\|_{H^{r+\mez}})\Big)\\
&\le C(\| b_0\|_{H^{r+\mez}}, \|f\|_{H^s})\Big(\|k\|_{H^r(\Omega_f)}+\|\zeta\|_{H^\mez}  \\
&\qquad +(\|k\|_{H^{s-\mez}(\Omega_f)}+\|\zeta\|_{H^\mez})(\|f\|_{H^{r+\mez}}+\| b_0\|_{H^{r+\mez}})\Big)
\end{aligned}
\eq
which is controlled by the right-hand side of \eqref{eq.GS-v}. This completes the proof of \cref{prop.GSest-v} in the finite-depth case.

\section{A priori estimates}\label{sec.apriori}

In this section, we suppose that $(f, g)$ is a solution of \eqref{eq:f}-\eqref{eq:g} such that 
\bq\label{apriori:reg:0}
\begin{aligned}
f \in L^{\infty}([0, T], H^s(\Rr^d)) \cap L^2([0, T], H^{s+\frac{1}{2}}(\Rr^d)),\quad  g\in L^{\infty}([0, T], H^s(\Omega_{f(\cdot)})
\end{aligned}
\eq
with $s>\tdm+\frac{d}{2}$. Our goal  is to establish \textit{a priori} estimates for $f$ and $\tilde{g}=g\circ \ff_f$, where $\ff_f(x, z)=(x, \varrho(x, z))$ as in \eqref{def:diffeo}-\eqref{def:vr}. As a consequence of \cref{lemm.FaaDiBruno} and \cref{lemm:varrho}, we have
\bq\label{gof:Hs}
\| h\circ \ff_{f}\|_{H^s(\Rr^d\times J)}\le C(\|f\|_{H^{s-\mez}}) \| h\|_{H^s(\Omega_f)}.
\eq
On the other hand, combining \cref{lemm.FaaDiBruno}, \cref{lemm:varrho}, and \cref{lemm.diffeoSobolev}, we obtain 
\bq\label{gofi:Hs}
\| h\|_{H^s(\Omega_f)}\le C(\|f\|_{H^{s-\mez}}) \| h\circ \ff_f\|_{H^s(\Rr^d\times J)}.
\eq
Therefore, $\| \tilde g\|_{H^s(\Rr^d\times J)}$ and $\| g\|_{H^s(\Omega_f)}$ are comparable up to multiplicative constants depending only on $\|f\|_{H^{s-\mez}}$, and we have 
\bq\label{apriori:reg}
 \tilde g\in  L^\infty([0, T]; H^s(\Rr^d\times J)).
\eq
\subsection{Estimates for $g$}\label{sec.apriori-g} 

Since $g(x, y, t)$ solves \eqref{eq:g} in $\Omega_f$, using the chain rule we find that $\tilde g(x, z, t)$ satisfies 
\bq\label{eq:tildeg}
\begin{cases}
\p_t \tilde g+\ol{u}\cdot \na_{x, z}\tilde{g}+\gamma'(\varrho)u_y\circ \ff_f=0, \quad (x, z)\in \Rr^d\times J,\\
\tilde{g}\vert_{t=0}=\tilde{g}_0:=g_0\circ \ff_{f_0},
\end{cases}
\eq
where $u=(u_x, u_y)$ and 
\bq\label{tildeu}
\ol{u}=\left(u_x \circ \ff_f,  \frac{1}{\partial_z \varrho} (u_y \circ \ff_f-\na_x\varrho \cdot u_x\circ \ff_f - \partial_t \varrho)\right)=:(\ol{u}_x, \ol{u}_z). 
\eq
Using \eqref{eq:u}, we can rewrite $\ol{u}_z$ in terms of $\cG$ and $\cS$ as
\bq\label{oluz:GS}
\ol{u}_z=\frac{1}{\p_z\varrho}\Big\{-(-\na_x\varrho, 1)\cdot \big(\cG[f]\Gamma(f)+\cS[f]g\big)\circ \ff_f -g\circ\ff_f- \p_t\rho\Big\}.
\eq
 We have 
\[
\ol{u}_z(\cdot, 0)= \frac{1}{\partial_z \varrho}\vert_{z=0}\left(u_y\vert_{\Sigma_f}-u_x\vert_{\Sigma_f}\cdot \na_x f-\p_tf\right)=0
\]
in view of \eqref{eq.kin}. Moreover, in the finite depth case, we have $J=(-1, 0)$ and 
\[
\ol{u}_z(\cdot, -1)= \frac{1}{\partial_z \varrho}\vert_{z=-1}\left(u_y\vert_{\Sigma_b}-u_x\vert_{\Sigma_b}\cdot \na_x b\right)=0
\]
by virtue of \eqref{bc:bottom} and because $\p_t f \vert_{z=-1}=0$. Therefore, $\ol{u}$ is tangent to the boundary of $\Rr^d\times J$:
\bq\label{tildeu:trace}
\ol{u}_z=0\quad\text{on } \p (\Rr^d\times J).
\eq
In order to apply the transport estimate \eqref{transportest} in \cref{theo:transport}, we need the following regularity for $\ol{u}$. 
\begin{lemm}
For $\ol{u}$ given terms of $f$ and $g$ as in \eqref{tildeu}, we have
\begin{align}\label{esttildeu:h}
& \| \ol{u}\|_{H^s}\le C(\| f\|_{H^s})\left(\| f\|_{H^{s+\mez}}+(1+\| f\|_{H^{s+\mez}})\| \tilde g\|_{H^s}+\| \p_t f\|_{H^{s-\mez}}\right),\\ \label{esttildeu:l}
 &\| \ol{u}\|_{H^{s-\mez}}\le C(\| f\|_{H^s})\left(\| f\|_{H^s}+\| \tilde g\|_{H^s} +\| \p_t f\|_{H^{s-1}}\right).
\end{align}
Consequently, if $f$ satisfies \eqref{eq:f} then
\begin{align}\label{tildeu:h}
& \| \ol{u}\|_{H^s}\le C(\| f\|_{H^s})\left(\| f\|_{H^{s+\mez}}+(1+\| f\|_{H^{s+\mez}})\| \tilde g\|_{H^s}\right),\\ \label{tildeu:l}
 &\| \ol{u}\|_{H^{s-\mez}}\le C(\| f\|_{H^s})\left(\| f\|_{H^s}+\| \tilde g\|_{H^s}\right)
\end{align}
for almost every $t\in [0, T]$. 
\end{lemm}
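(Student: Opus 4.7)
The plan is to avoid directly composing the velocity $u = -\na\phi - g e_y$ with the diffeomorphism $\ff_f$ via \cref{lemm.FaaDiBruno}: such an approach would force the appearance of $\|\na u\|_{L^\infty(\Omega_f)}$, which by Sobolev embedding $H^s(\Omega_f)\hookrightarrow W^{1,\infty}$ requires $\|u\|_{H^s(\Omega_f)}$ and thus, through \eqref{estu:Hs}, generates a factor of $\|f\|_{H^{s+\mez}}$ that is forbidden in the low-norm estimate \eqref{esttildeu:l}. I would instead reformulate $\ol u$ in terms of the straightened potential $w := \phi\circ\ff_f$ with $\phi=\phi^{(1)}+\phi^{(2)}$, for which \cref{prop.GSest-v} provides a tame Sobolev estimate directly on $\Rr^d\times J$.

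The first step is to derive the pointwise identities
\[
\ol u_x = -\p_x w + \frac{\p_x\varrho\,\p_z w}{\p_z\varrho},\qquad \ol u_z = \frac{\na_x\varrho\cdot\p_x w}{\p_z\varrho} - \frac{(1+|\na_x\varrho|^2)\,\p_z w}{(\p_z\varrho)^2} - \frac{\tilde g + \p_t\varrho}{\p_z\varrho},
\]
obtained from the chain-rule relations $(\p_y\phi)\circ\ff_f = \p_z w/\p_z\varrho$ and $(\p_x\phi)\circ\ff_f = \p_x w - \p_x\varrho\,\p_z w/\p_z\varrho$, combined with $g\circ\ff_f=\tilde g$. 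Every quantity now lives on the fixed strip $\Rr^d\times J$.

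Next I would bound the building blocks. Applying \cref{prop.GSest-v} with $(\zeta,k)=(\Gamma(f),g)$, together with the tame composition $\|\Gamma(f)\|_{H^\sigma}\le C(\|f\|_{H^s})\|f\|_{H^\sigma}$ (smooth $\Gamma$ with $\Gamma(0)=0$) and with \eqref{gof:Hs}--\eqref{gofi:Hs} to convert between $\|g\|_{H^\sigma(\Omega_f)}$ and $\|\tilde g\|_{H^\sigma}$, the cases $r=s-\mez$ and $r=s$ give
\[
\|\na w\|_{H^{s-\mez}} \le C(\|f\|_{H^s})(\|f\|_{H^s}+\|\tilde g\|_{H^s}),\qquad \|\na w\|_{H^s} \le C(\|f\|_{H^s})\bigl(\|f\|_{H^{s+\mez}}+(1+\|f\|_{H^{s+\mez}})\|\tilde g\|_{H^s}\bigr).
\]
The coefficient factors are controlled by \cref{lemm:varrho} (for $\na\varrho$ and $\p_z\varrho-m$) and by a Moser estimate combined with the lower bound \eqref{lowerbound:dzvr} (for $1/\p_z\varrho$ and $1/(\p_z\varrho)^2$). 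For $\p_t\varrho$, the Poisson-extension structure yields $\p_t\varrho=e^{\delta z\langle D_x\rangle}\p_tf$ in the infinite-depth case, with an analogous formula for finite depth, so \eqref{Poisson:Sobolev} produces $\|\p_t\varrho\|_{H^\sigma(\Rr^d\times J)}\lesssim\|\p_tf\|_{H^{\sigma-\mez}}$ for $\sigma\in\{s-\mez,s\}$, matching the $\p_tf$ terms on the right-hand sides of \eqref{esttildeu:h}--\eqref{esttildeu:l}.

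Since $s-\mez,s>\tfrac{d+1}{2}$, both $H^{s-\mez}$ and $H^s$ are algebras on $\Rr^d\times J$, and the estimates \eqref{esttildeu:h}--\eqref{esttildeu:l} follow by applying the tame product rule $\|uv\|_{H^\sigma}\lesssim \|u\|_{L^\infty}\|v\|_{H^\sigma}+\|u\|_{H^\sigma}\|v\|_{L^\infty}$ term by term to the formulas above; at the high level $\sigma=s$, the bound $\|\na w\|_{L^\infty}$ is controlled by the low-norm estimate on $\|\na w\|_{H^{s-\mez}}$, which keeps products such as $\|\na_x\varrho\|_{H^s}\|\na w\|_{L^\infty}$ in the correct tame form. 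The consequences \eqref{tildeu:h}--\eqref{tildeu:l} then follow immediately by inserting the bounds \eqref{est:traceuN:l}--\eqref{est:traceuN:h} of \cref{coro:traceuN} for $\p_tf=u\cdot N\vert_{\Sigma_f}$. The main obstacle, which dictates the whole strategy, is producing the low-norm bound \eqref{esttildeu:l} without any $\|f\|_{H^{s+\mez}}$ on the right: the reformulation via $w$ is precisely what unlocks this, since \cref{prop.GSest-v} at $r=s-\mez$ yields a Sobolev control of $\na w$ that bypasses the $\|\na u\|_{L^\infty(\Omega_f)}$ obstruction inherent to a direct application of \cref{lemm.FaaDiBruno}.
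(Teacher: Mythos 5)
Your proposal is correct, but it follows a noticeably different route from the paper's. The paper first estimates the velocity on the moving domain, invoking \eqref{estu:Hs}--\eqref{estu:Hs-mez} for $\|u\|_{H^s(\Omega_f)}$ and $\|u\|_{H^{s-\mez}(\Omega_f)}$, and then transfers to the strip by composing with $\ff_f$ via \eqref{gof:Hs}--\eqref{gofi:Hs}, before handling the extra factors $\na_x\varrho$, $1/\p_z\varrho$, $\p_t\varrho$ in $\ol{u}_z$ by the same tame products and the bound $\|\p_t\varrho\|_{H^\sigma}\lesssim\|\p_tf\|_{H^{\sigma-\mez}}$ that you use; the consequences \eqref{tildeu:h}--\eqref{tildeu:l} are then obtained from \cref{coro:traceuN} exactly as you do. You instead never leave the strip: you write $\ol u$ pointwise in terms of $\na_{x,z}w$ with $w=\phi\circ\ff_f$ (your chain-rule identities are correct) and apply \cref{prop.GSest-v} with $r=s-\mez$ and $r=s$ directly, together with \cref{lemm:varrho} and Moser estimates for the coefficients. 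What your route buys is transparency in the low-norm bound \eqref{esttildeu:l}: as you observe, composing $u$ with $\ff_f$ at the level $H^{s-\mez}$ through \cref{lemm.FaaDiBruno} produces the factor $\|\na u\|_{L^\infty(\Omega_f)}$, which for $s\le 2+\frac d2$ is only controlled through $\|u\|_{H^s(\Omega_f)}$ and hence threatens to reintroduce $\|f\|_{H^{s+\mez}}$; the paper's one-line derivation of \eqref{uof:l} from \eqref{estu:Hs-mez} and \eqref{gof:Hs} is terse precisely on this point, whereas your strip-level representation makes the estimate immediate from the $r=s-\mez$ case of \cref{prop.GSest-v} (which is in fact how the paper proves \eqref{estu:Hs-mez} itself, so you are shortcutting a domain-to-strip round trip rather than introducing new machinery). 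What the paper's route buys is reusability: the intermediate bounds \eqref{uof:h}--\eqref{uof:l} for $u\circ\ff_f$ are quoted again later (e.g.\ in \eqref{estforcing:tildeg} and in the contraction estimates), while your identities would have to be redone there. One shared imprecision, not specific to your argument: when estimating $\p_t\varrho/\p_z\varrho$ in $H^s$, the tame product also produces a cross term of the form $\|f\|_{H^{s+\mez}}\|\p_tf\|_{H^{s-1}}$ that does not literally appear on the right of \eqref{esttildeu:h}; both your write-up and the paper's gloss over this, and it is harmless because after substituting $\p_tf=u\cdot N\vert_{\Sigma_f}$ via \cref{coro:traceuN} the cross term is absorbed into the right-hand sides of \eqref{tildeu:h}--\eqref{tildeu:l}, which are the estimates actually used.
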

\begin{proof}
The estimates \eqref{estu:Hs}, \eqref{estu:Hs-mez}, \eqref{gof:Hs}, and \eqref{gofi:Hs} together imply 
\begin{align}\label{uof:h}
& \| u\circ \ff_f\|_{H^s}\le C(\| f\|_{H^s})\left(\| f\|_{H^{s+\mez}}+(1+\| f\|_{H^{s+\mez}})\| \tilde g\|_{H^s}\right),\\ \label{uof:l}
 &\| u\circ\ff_f\|_{H^{s-\mez}}\le C(\| f\|_{H^s})\left(\| f\|_{H^s}+\| \tilde g\|_{H^s}\right).
 \end{align}
 These gives the desired estimates for $\ol{u}_x$. 
 
Next, we recall from \eqref{varrho:H} and \eqref{varrho:H:fd} that $\| (\na_x\varrho, \p_z\varrho-m)\|_{H^\sigma}\les \| f\|_{H^{\sigma+\mez}}+a$ for suitable constants $m$ and $a$. Then, using the tame product rule \eqref{tamepr} and Sobolev emebddding yields 
\begin{align}
&\| \frac{1}{\partial_z \varrho} (u_y \circ \ff_f-\na_x\varrho \cdot u_x\circ \ff_f)\|_{H^s}\le \left(\| f\|_{H^{s+\mez}}+(1+\| f\|_{H^{s+\mez}})\| \tilde g\|_{H^s}\right),\\
& \| \frac{1}{\partial_z \varrho} (u_y \circ \ff_f-\na_x\varrho \cdot u_x\circ \ff_f)\|_{H^{s-\mez}}\le C(\| f\|_{H^s})\left(\| f\|_{H^s}+\| \tilde g\|_{H^s} \right).
\end{align}
From this and the estimate  $\| \p_t \varrho\|_{H^\sigma}\les \| \p_t f\|_{H^{\sigma-\mez}}$, we obtain \eqref{esttildeu:h} and \eqref{esttildeu:l}. 

Finally, if $f$ satisfies \eqref{eq:f} then $\p_tf =u\cdot N\vert_{\Sigma_f}$ and \eqref{tildeu:h} and \eqref{tildeu:l} follow from \eqref{esttildeu:h}, \eqref{esttildeu:l}, and \cref{coro:traceuN}. 
\end{proof}
Adopting  the notation $L^p_TX=L^p([0, T], X)$, we deduce from  \eqref{tildeu:h} and \eqref{tildeu:l} that 
\begin{align}\label{energyest:u}
&\|\ol{u}\|_{L^1_TH^s}\le C(\| f\|_{L^\infty_TH^s})\left(T^\mez\| f\|_{L^2_TH^{s+\mez}}+T^\mez\| f\|_{L^2_TH^{s+\mez}}\| \tilde g\|_{L^\infty_TH^s}+T\| \tilde g\|_{L^\infty_TH^s}\right),\\ \label{energyest:u:l}
&\|\ol{u}\|_{L^\infty_TH^{s-\mez}}\le C(\| f\|_{L^\infty_TH^s})\left(\| f\|_{L^\infty_TH^s}+\| \tilde g\|_{L^\infty_TH^s}\right).
\end{align}
Next, we estimate the forcing term $\gamma'(\varrho)u_y\circ \ff_f$ in \eqref{eq:tildeg}. At this point we use the condition \eqref{cont:gamma} for $\gamma$ with $h=\varrho-az$, where $a=1$ for infinite depth and $a=H$ for finite depth. In conjunction with \eqref{varrho:H} and \eqref{varrho:H:fd}, this yields 
\bq
\| \gamma'(\varrho)\|_{H^s}\le C_\gamma(\| f\|_{H^{s-\mez}}).
\eq
Combining this with  \eqref{uof:h} gives 
\bq\label{estforcing:tildeg}
\|\gamma'(\varrho)u_y\circ \ff_f\|_{H^s}\le C(\| f\|_{H^s})\left(\| f\|_{H^{s+\mez}}+(1+\| f\|_{H^{s+\mez}})\| \tilde g\|_{H^s}\right).
\eq
In view of \eqref{tildeu:trace}, \eqref{energyest:u},  \eqref{energyest:u:l}, \eqref{estforcing:tildeg}, and the assumption $s>1+\frac{d+1}{2}$, we can apply \cref{theo:transport} to deduce that $\tilde{g}$ is the unique solution of \eqref{eq:tildeg} and satisfies 
\bq\label{est:tildeg}
\begin{aligned}
\| \tilde g\|_{L^\infty_TH^s}&\le  \left\{\tM \| \tilde{g}_0\|_{H^s}+ C(\| f\|_{L^\infty_TH^s})\left(T^\mez\| f\|_{L^2_TH^{s+\mez}}+T^\mez\| f\|_{L^2_TH^{s+\mez}}\| \tilde g\|_{L^\infty_TH^s}+T\| \tilde g\|_{L^\infty_TH^s}\right)\right\}\\
&\qquad \cdot \exp\left(C(\| f\|_{L^\infty_TH^s})\left(T^\mez\| f\|_{L^2_TH^{s+\mez}}+T^\mez\| f\|_{L^2_TH^{s+\mez}}\| \tilde g\|_{L^\infty_TH^s}+T\| \tilde g\|_{L^\infty_TH^s}\right)\right),
\end{aligned}
\eq
where $\tM=\tM(d, s)$.  This concludes the {\it a priori} estimate for $\tilde{g}$. 
\subsection{Estimates for $f$}\label{sec.apriori-f}

We recall the equation  \eqref{eq:f} for $f$: 
\bq\label{eq:f:2}
\p_tf=-G[f]\Gamma(f)-(N\cdot\cS[f]g+g)\vert_{\Sigma_{f(t)}}.
\eq
 Using \cref{theo:DN1} (ii) to paralinearize $G[f]\Gamma(f)$, we find
\[
  G[f]\Gamma(f) = T_{\lambda[f]}(\Gamma(f)-T_Bf) - T_V\cdot \nabla f+ R[f]\Gamma(f).
  \]
Here, $\ld[f]$ is given by \eqref{def:ld},
 \bq\label{BV:2}
 V =\gamma(f) \na f - B\nabla f,\quad B=\frac{\gamma(f)|\nabla f|^2+ G[f]\Gamma(f)}{1+|\nabla f|^2},
\eq
and the remainder $R[f]\Gamma(f)$ satisfies
\bq\label{est:RfGf}
\begin{aligned}
 \| R[f]\Gamma(f)\|_{H^{s-\mez}}&\le C(\| f\|_{H^s})(1+\| f\|_{H^{s+\mez-\delta}})\| \Gamma(f)\|_{H^s}\\
 &\le  C(\| f\|_{H^s})(1+\| f\|_{H^{s+\mez-\delta}})\| f\|_{H^s},
 \end{aligned}
 \eq
 where 
 \bq\label{def:delta}
 \delta\in (0, \mez],\quad \delta< s-\frac{d}{2}-1.
 \eq
Applying \cref{paralin:nonl}, we paralinearize the nonlinear term $\Gamma(f)$ as  $\Gamma(f)=T_{\gamma(f)}f+R_1$, where 
\bq\label{est:R1}
\| R_1\|_{H^{s+1+\delta}}\le C(\| f\|_{H^s})\| f\|_{H^s}.
\eq
Thus we obtain
\bq\label{expand:DN:1}
\begin{aligned}
  G[f]\Gamma(f)& = T_{\lambda[f]}(T_{\gamma(f)}f-T_Bf) - T_V\cdot \nabla f+ R[f]\Gamma(f)+ T_{\lambda[f]}R_1\\
  & =T_{\lambda[f] (\gamma(f)-B)}f- T_V\cdot \nabla f+ R[f]\Gamma(f)+ T_{\lambda[f]}R_1\\
  &\qquad+[T_{\lambda[f]}T_{\gamma(f)-B}-T_{\ld[f](\gamma(f)-B)}]f.
  \end{aligned}
 \eq
 Applying \cref{theo:DN1} (i) with $\sigma=s$ gives  
\bq\label{est:BV}
\| (V, B)\|_{H^{s-1}}\le C(\| f\|_{H^s})\| f\|_{H^s}.
\eq
Therefore, $\ld[f]\in \Gamma^1_1$ and $\gamma(f)-B\in \Gamma^0_1$ with seminorms bounded by $C(\| f\|_{H^s})$.  Then, by virtue of \cref{theo:sc} (i) and  \eqref{est:R1}, we have 
 \bq\label{est:lamb}
 \| T_{\lambda[f]}R_1\|_{H^{s+\delta}}\le C(\| f\|_{H^s})\| f\|_{H^s}.
 \eq
 On the other hand, \cref{theo:sc} (ii) implies that $T_{\lambda[f]}T_{\gamma(f)-B}-T_{\ld[f](\gamma(f)-B)}$ is of order $0$ and 
\bq\label{est:commR}
\| [T_{\lambda[f]}T_{\gamma(f)-B}-T_{\ld[f](\gamma(f)-B)}]f\|_{H^s}\le C(\| f\|_{H^s})\| f\|_{H^s}.
\eq 
In view of \eqref{expand:DN:1}, \eqref{est:RfGf}, \eqref{est:R1}, \eqref{est:lamb}, and \eqref{est:commR}, we deduce
\bq\label{expandDBN:2}
G[f]\Gamma(f)=T_{\lambda[f] (\gamma(f)-B)}f-T_V\cdot \nabla f+R_2,
\eq
where
\begin{align}\label{est:R2}
\| R_2\|_{H^{s-\mez}}\le C(\| f\|_{H^s})(1+\| f\|_{H^{s+\mez-\delta}})\| f\|_{H^s}.
\end{align}
Thus \eqref{eq:f:2} rewrites 
\bq\label{eq:f:3}
\p_tf=-T_{\lambda[f] (\gamma(f)-B)}f+T_V\cdot \nabla f-R_2-(N\cdot\cS[f]g+g)\vert_{\Sigma_{f(t)}}.
\eq
In the preceding equation, $T_V\cdot \nabla$ is a transport operator and and $-R_2-(N\cdot\cS[f]g+g)\vert_{\Sigma_{f(t)}}$ will be treated as a forcing term in $L^2_t H^{s-\mez}_x$. Then, we can close the $L^\infty_t H^s_x\cap L^2_t H^{s+\mez}_x$ a priori estimate for $f$ provided $T_{\lambda[f] (\gamma(f)-B)}$ is an elliptic operator, i.e. $\gamma(f)-B>0$. Since $\ld[f]$ is an elliptic symbol  and 
\bq
  \gamma(f)-B=\frac{\gamma(f)-G[f]\Gamma(f)}{1+|\na f|^2},
\eq
$\lambda[f] (\gamma(f)-B)$ is elliptic if and only if
\bq\label{ellipticity}
\inf_{x\in \Rr^d}\cT(f)(x)\equiv \inf_{x\in \Rr^d}\big(\gamma(f)(x)-G[f]\Gamma(f)(x)\big)>0.
\eq
The following lemma provides sufficient conditions  for \eqref{ellipticity}.

\begin{lemm}\label{lemm.initial-taylor}
Assume that $\inf \gamma>0$ and $\gamma'\ge 0$. If $f\in H^\sigma(\Rr^d)$ with $\sigma>1+\frac{d}{2}$, then for all $x\in\Rr^d$ there holds:
\bq\label{upperbound:DN}
  \cT(f)(x)= \gamma(f(x))-G[f]\Gamma(f)(x)>0.
\eq
\end{lemm}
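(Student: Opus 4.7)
The plan is to establish $\mathcal{T}(f)>0$ by a comparison argument based on the auxiliary function
\[
\psi(x,y) := \phi^{(1)}(x,y) - \Gamma(y),
\]
where $\phi^{(1)}$ is the solution of \eqref{eq.phi1}--\eqref{bc:phi1} with Dirichlet data $h=\Gamma(f)$. A direct computation gives $\Delta_{x,y}\psi = -\Gamma''(y) = -\gamma'(y)\le 0$, so $\psi$ is superharmonic in $\Omega_f$, and matching data yields $\psi|_{\Sigma_f} = \Gamma(f)-\Gamma(f)=0$. The key identity on the free surface is
\[
N\cdot \nabla_{x,y}\psi|_{\Sigma_f} = N\cdot\nabla_{x,y}\phi^{(1)}|_{\Sigma_f} - N\cdot (0,\gamma(f)) = G[f]\Gamma(f)-\gamma(f)=-\mathcal{T}(f),
\]
so it suffices to show that the outward normal derivative of $\psi$ on $\Sigma_f$ is strictly negative, which is the standard output of Hopf's boundary-point lemma once $\psi\ge 0$ in $\Omega_f$ is known.

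First I would verify $\psi\ge 0$ on $\bar\Omega_f$ via the minimum principle. In the finite-depth case, $\Gamma'=\gamma$ and the Neumann condition for $\phi^{(1)}$ give
\[
\partial_\nu\psi|_{\Sigma_b} = -\nu_y\,\gamma(b) = \frac{\gamma(b)}{\sqrt{1+|\nabla b|^2}} > 0,
\]
using $\inf_{\mathbb{R}}\gamma>0$. If $\psi$ attained a negative infimum, the minimum principle would force it to be achieved on $\Sigma_b$ (since $\psi=0$ on $\Sigma_f$), and Hopf's lemma would then require $\partial_\nu\psi\le 0$ there, contradicting the display above. In the infinite-depth case, $\Gamma(f)\in H^s\hookrightarrow L^\infty$, so the maximum principle for the harmonic function $\phi^{(1)}$ yields $\phi^{(1)}\in L^\infty(\Omega_f)$; meanwhile $\inf_{\mathbb{R}}\gamma>0$ implies $\Gamma(y)\to -\infty$ as $y\to -\infty$, so $\psi(x,y)\to +\infty$ uniformly in $x$ as $y\to -\infty$. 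Truncating at $\{y=-M\}$ for $M$ large enough that $\psi>0$ on the cutoff face and applying the minimum principle on the truncated domain (together with the decay of $\Gamma(f)$ and $\phi^{(1)}$ as $|x|\to\infty$), then letting $M\to\infty$, yields $\psi\ge 0$ in $\Omega_f$.

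To pass from $\ge 0$ to the strict inequality $\mathcal{T}(f)>0$, I would invoke the strong minimum principle to get $\psi>0$ in the interior, noting that $\psi\not\equiv 0$: in the finite-depth case this follows from the nonzero Neumann data on $\Sigma_b$, and in the infinite-depth case from $\psi\to+\infty$ at the bottom. At each $x_0\in\mathbb{R}^d$ the surface point $(x_0,f(x_0))\in\Sigma_f$ is then a global minimum of $\psi$, and Hopf's lemma (applicable thanks to the smoothness of $\phi^{(1)}$ up to $\Sigma_f$, which follows from $f\in H^\sigma$ with $\sigma>1+\tfrac{d}{2}$) gives $N\cdot\nabla_{x,y}\psi(x_0,f(x_0))<0$, hence $\mathcal{T}(f)(x_0)>0$.

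The main obstacle is rigorously invoking the minimum principle and Hopf's lemma in the infinite-depth case, where $\Omega_f$ is unbounded and the ``boundary condition'' at $y=-\infty$ is only the weak assumption $\nabla_{x,y}\phi^{(1)}\to 0$ from \eqref{bc:phi1}. The crucial input is the $L^\infty$ bound on $\phi^{(1)}$ combined with $\inf_{\mathbb{R}}\gamma>0$, which locates the infimum of $\psi$ in a compact portion of $\bar\Omega_f$ and legitimizes the truncation argument; the finite-depth case is by comparison straightforward since the maximum/minimum principle on $\Omega_f$ with mixed Dirichlet--Neumann data is classical.
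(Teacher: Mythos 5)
Your proposal is correct and follows essentially the same route as the paper: the auxiliary function $\psi=\phi^{(1)}-\Gamma(y)$ is exactly the paper's $w=\phi-\Gamma(y)$, and the argument proceeds identically via superharmonicity from $\gamma'\ge 0$, the sign of the Neumann data $\gamma(b)/\sqrt{1+|\na b|^2}>0$ (finite depth) or the blow-up $-\Gamma(y)\to+\infty$ (infinite depth) to locate the minimum on $\Sigma_f$, and Hopf's lemma to conclude $\p_N\psi<0$, i.e. $\cT(f)>0$. The only difference is that you spell out the truncation/$L^\infty$-bound justification of the minimum principle in the unbounded domain, which the paper leaves implicit.
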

\begin{proof}
Let $\phi$ be the solution of the problem
\bq
\Delta_{x, y}\phi=0\quad\text{in } \Omega_f,\quad \phi\vert_{y=f(x)}=\Gamma(f)
\eq
supplemented with the condition $\lim_{(x, y)\to \infty}\na_{x, y}\phi= 0$ for infinite  depth and $\p_\nu \phi\vert_{y=b(x)}=0$ for finite depth. Then $w:=\phi-\Gamma(y)$ satisfies 
\bq
\Delta_{x, y}w=-\gamma'(y)\le 0\quad\text{in } \Omega_f,\quad w\vert_{y=f(x)}=0
\eq
and $\p_\nu w\vert_{y=b(x)}=(1+|\na b|^2)^{-\mez}\gamma(b(x))>0$ for finite depth, while for  infinite depth we have
\[
\lim_{(x, y)\to \infty}w(x, y)=-\lim_{(x, y)\to \infty}\Gamma(y)=\infty
\]
since $\Gamma(y)\le (\inf \gamma)y$ for  $y<0$. Since $w$ is supperharmonic, $\min w$ is thus attained on the top boundary $\{y=f(x)\}$ on which $w$ is identically zero. Using Hopf's lemma, we deduce that $\p_N w(x, f(x))<0$ for all $x\in \Rr^d$, whence for all $x\in \Rr^d$,
\[
G[f]\Gamma(f)(x)=\p_N \phi(x, f(x))<\gamma(f(x)).\qedhere
\] 
\end{proof}
Fore general $\gamma$, in order to obtain \eqref{ellipticity} for $f(t)$, $t\in [0, T]$, we assume that it holds initially, i.e.
\bq\label{Taylor:data}
\gamma(f_0)-G[f_0]\Gamma(f_0)\ge 2\mathfrak{a}>0,
\eq
and propagate it using  the following lemma.
\begin{lemm}\label{lemm.taylor}
Assume that $f\in C([0, T]; H^{s-\mez})$. There exists $C: \Rr^+\to \Rr^+$ depending only on $(s, d, \gamma, \fd)$ such that for all $t\in [0, T]$, there holds
\bq\label{propagation:Taylor}
\begin{aligned}
&\| \cT(f(t))-\cT(f_0)\|_{H^{s-\tdm}}\\
&\le t^\mez C\big(\|f\|_{L^\infty([0, t]; H^s)\cap C([0, t]; H^{s-\mez})}\big)\left\{\| f \|_{L^2([0, t],H^{s+\mez})}\left(1+\|\tilde g\|_{L^\infty ([0, t]; H^s)}\right)+t^\mez\|\tilde g\|_{L^\infty ([0, t]; H^s)} \right\}.
\end{aligned}
\eq
\end{lemm}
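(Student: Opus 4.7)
The plan is to decompose
\begin{equation*}
\cT(f(t)) - \cT(f_0) = \bigl[\gamma(f(t)) - \gamma(f_0)\bigr] - \bigl[G[f(t)]\Gamma(f(t)) - G[f_0]\Gamma(f_0)\bigr],
\end{equation*}
estimate each bracket in $H^{s-\tdm}$ by $\|\delta f\|_{H^{s-\mez}}$, where $\delta f := f(t) - f_0$, and then use the $f$-equation \eqref{eq:f:2} to control $\|\delta f\|_{H^{s-\mez}}$ via the right-hand side of \eqref{propagation:Taylor}. For the latter, since $\p_\tau f = u\cdot N\vert_{\Sigma_{f(\tau)}}$, the trace estimate \eqref{est:traceuN:h} yields
\begin{equation*}
\|\p_\tau f(\tau)\|_{H^{s-\mez}} \le C(\|f(\tau)\|_{H^s})\bigl(\|f(\tau)\|_{H^{s+\mez}}(1+\|\tilde g(\tau)\|_{H^s}) + \|\tilde g(\tau)\|_{H^s}\bigr).
\end{equation*}
Integrating on $[0,t]$ and applying Cauchy--Schwarz only to the $\|f\|_{H^{s+\mez}}$ contribution gives
\begin{equation*}
\|\delta f\|_{H^{s-\mez}} \le \int_0^t \|\p_\tau f\|_{H^{s-\mez}}\,d\tau \le t^\mez\,C(\|f\|_{L^\infty_\tau H^s})\bigl\{\|f\|_{L^2_\tau H^{s+\mez}}(1+\|\tilde g\|_{L^\infty_\tau H^s}) + t^\mez\|\tilde g\|_{L^\infty_\tau H^s}\bigr\},
\end{equation*}
which is exactly $t^\mez$ times the factor on the right of \eqref{propagation:Taylor}.

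Next, for the $\gamma$-bracket I write $\gamma(f(t)) - \gamma(f_0) = \delta f\cdot \int_0^1 \gamma'(f_0 + \theta\,\delta f)\,d\theta$. Since $s-\mez > 1+\frac{d}{2}$, $H^{s-\mez}(\Rr^d)$ is a Banach algebra and the standard composition estimate with the smooth function $\gamma'$ gives $\|\gamma'(f_0+\theta\,\delta f)\|_{H^{s-\mez}} \le C_\gamma(\|f\|_{L^\infty_t H^{s-\mez}})$; a tame product bound in the weaker space $H^{s-\tdm}$ then yields $\|\gamma(f(t)) - \gamma(f_0)\|_{H^{s-\tdm}} \le C_\gamma(\|f\|_{L^\infty_t H^{s-\mez}})\|\delta f\|_{H^{s-\mez}}$. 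For the $G$-bracket I use the natural splitting
\begin{equation*}
G[f(t)]\Gamma(f(t)) - G[f_0]\Gamma(f_0) = G[f(t)]\bigl(\Gamma(f(t))-\Gamma(f_0)\bigr) + \bigl(G[f(t)]-G[f_0]\bigr)\Gamma(f_0).
\end{equation*}
The first summand is controlled in $H^{s-\tdm}$ by the classical DN boundedness \eqref{est:DN:ABZ000} applied with $\sigma = s-\mez \in [\mez,s]$, together with the composition bound $\|\Gamma(f(t))-\Gamma(f_0)\|_{H^{s-\mez}} \le C(\|f\|_{L^\infty_t H^{s-\mez}})\|\delta f\|_{H^{s-\mez}}$ obtained as for $\gamma$. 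The second summand is precisely the setting of the contraction estimate \eqref{eq.diffGfi} with $\sigma = s-\mez$, which gives
\begin{equation*}
\bigl\|(G[f(t)]-G[f_0])\Gamma(f_0)\bigr\|_{H^{s-\tdm}} \le C(\|f\|_{L^\infty_t H^s})\|\delta f\|_{H^{s-\mez}}\|\Gamma(f_0)\|_{H^s}.
\end{equation*}
Assembling the three bracket estimates with the $H^{s-\mez}$ bound on $\delta f$ proves \eqref{propagation:Taylor}.

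The only delicate point is the calibration of Sobolev indices. The $f$-equation gives $\p_\tau f$ in $L^\infty_\tau H^{s-\mez}$ (the $L^2_\tau$ parabolic gain lets us extract a full half-derivative of $f$), so to benefit from this one must evaluate all DN-based contributions at the level $H^{s-\mez}$. The contraction estimate \eqref{eq.diffGfi} costs exactly one order in $\sigma$, forcing the final target space to be $H^{s-\tdm}$; no lower regularity for $f$ is needed, and no higher regularity for $\cT(f)$ can be reached by this route.
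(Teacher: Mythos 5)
Your proposal is correct and follows essentially the same route as the paper: the paper likewise bounds $\|\cT(f(t))-\cT(f_0)\|_{H^{s-\tdm}}$ by $C\,\|f(t)-f_0\|_{H^{s-\mez}}$ using the Dirichlet--Neumann boundedness \eqref{est:DN:ABZ000} together with the contraction estimate of \cref{thm.NPdiff} at $\sigma=s-\mez$ (your explicit $\gamma$- and $G$-bracket splitting is exactly what that invocation amounts to), and then controls $\|f(t)-f_0\|_{H^{s-\mez}}$ via $\p_tf=u\cdot N\vert_{\Sigma_f}$, the trace estimate \eqref{est:traceuN:h}, and Cauchy--Schwarz in time. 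The index calibration you describe matches the paper's \eqref{TR:proof:1}--\eqref{eq.fptBound}.
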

\begin{proof}
We first apply \cref{theo:DN1} (i) and \cref{thm.NPdiff} (ii) with $\sigma=s-\mez$ to have 
\bq\label{TR:proof:1}
\begin{aligned}
\| \cT(f(t))-\cT(f_0)\|_{H^{s-\tdm}} &\le C\big(\|(f(t), f_0) \|_{H^{s-\mez}}\big)\| f(t)-f(0)\|_{H^{s-\mez}}\\
&\le C\big(\|(f(t), f_0) \|_{H^{s-\mez}}\big)t^\mez \| \p_t f\|_{L^2([0, t]; H^{s-\mez})}.
\end{aligned}
\eq
Since  $\p_tf=u\cdot N\vert_{\Sigma_f}$,  the estimate \eqref{est:traceuN:h} implies
\bq\label{eq.fptBound}
\begin{aligned}
&\| \p_t f\|_{L^2([0, t]; H^{s-\mez})}\\
& \le  C(\|f\|_{L^\infty([0, t]; H^s)})\left\{ \| f \|_{L^2([0, t]; H^{s+\mez})}\left(1+\|\tilde g\|_{L^\infty([0, t];  H^s)}\right)+t^\mez\|\tilde g\|_{L^\infty ([0, t]; H^s)}\right\}.
\end{aligned}
\eq
Therefore,  \eqref{propagation:Taylor} follows from \eqref{TR:proof:1} and \eqref{eq.fptBound}. 
\end{proof}

Note that so far $s>1+\frac{d}{2}$ suffices to obtain the above estimates. The main result of this section is the following.

\begin{prop}\label{prop.uniform-f}
Let $s>\tdm+\frac{d}{2}$. Assume that $(f, g)$ is a  solution of \eqref{eq:f}-\eqref{eq:g} on $[0, T]$ such that 
\[
  (f,g) \in \left(L^{\infty}([0, T], H^s) \cap L^2([0, T], H^{s+\frac{1}{2}}) \right)\times L^{\infty}([0, T], H^s(\Omega_{f(\cdot)})
\]
and
\bq\label{RT:apriori}
\inf_{(x, t)\in \Rr^d\times [0, T]}\cT(f(t))(x)\ge \fa>0.
\eq
In the finite depth case we assume additionally that
\bq\label{separate:apriori}
\inf_{x\in \Rr^d}\left(f(x)-b(x)\right)\ge \fd>0.
\eq
Suppose that 
\bq
\label{cd:bound0}
\| f\|_{L^\infty_TH^s}\le B_f,\quad \|\tilde g\|_{L^\infty_TH^s}\le B_g.
\eq
There exist  functions $C_0$, $C_1: (\Rr_+)^2\to \Rr_+$  and $\cF_0$, $\cF:(\Rr_+)^3\to \Rr_+$, depending only on $(s, d, b, \gamma)$ and non-decreasing in each variable, such that if 
\bq\label{cd:MN:1}
C_0(B_f, \fd)B_g\le \frac{1}{4\cF_0(B_f, \fa, \fd)}
\eq
then
\bq\label{aprioriestimate}
\| f\|_{L^\infty_TH^s\cap L^2_TH^{s+\mez}}\le\cF\left(\| f_0\|_{H^s}+T^\mez\cF\left(\| f\|_{L^\infty_TH^s}+\| \tilde g\|_{L^\infty_TH^s} , \fa, \fd\right),  \fa, \fd\right)
\eq
and, for $T\le 1$,
\bq\label{aprioriestimate:g}
\begin{aligned}
\| \tilde g\|_{L^\infty_TH^s}&\le \tM\|  \tilde{g}_0\|_{H^s}\exp\left(C_1(\| f\|_{L^\infty_TH^s}, \fd)T^\mez \cA_T\right)\\
&\qquad+ C_1(\| f\|_{L^\infty_TH^s}, \fd)T^\mez \cA_T\exp\left(C_1(\| f\|_{L^\infty_TH^s}, \fd)T^\mez \cA_T\right),
\end{aligned}
\eq
where 
\bq\label{def:cAT}
 \cA_T:=\| f\|_{L^2_TH^{s+\mez}}+  \| \tilde g\|_{L^\infty_TH^s}+\| f\|_{L^2_TH^{s+\mez}} \| \tilde g\|_{L^\infty_TH^s}
 \eq
 and $\tM=\tM(d, s)$ is the constant in \eqref{transportest}.
\end{prop}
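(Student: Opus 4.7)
The strategy is to carry out two separate energy arguments, one for $f$ using the parabolic structure of \eqref{eq:f:3} via \cref{lemm:parabolicest}, and one for $\tilde g$ using \cref{theo:transport}, and to close them thanks to the smallness assumption \eqref{cd:MN:1} on $\tilde g$.

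\emph{Step 1 (ellipticity).} First I verify that under the Taylor sign condition \eqref{RT:apriori} the paradifferential symbol $a:=\lambda[f](\gamma(f)-B)$ is elliptic of order one. Combining \eqref{elliptic:ld} with the identity $\gamma(f)-B=\cT(f)/(1+|\nabla f|^2)$ gives
\[
\operatorname{Re} a(x,\xi)\ge \frac{\fa}{1+\|\nabla f\|_{L^\infty}^2}|\xi|\ge \mathfrak{a}'|\xi|,\qquad \mathfrak{a}'=\mathfrak{a}'(B_f,\fa)>0.
\]
Together with the $L^\infty_T\Gamma^1_1$-bound $M^1_1(a)\le C(B_f)$ and the bound $\|V\|_{L^\infty_T W^{\delta,\infty}}\le C(B_f)$ coming from \eqref{est:BV} and Sobolev embedding (for suitable $\delta\in(0,1)$ satisfying \eqref{def:delta}), this puts \eqref{eq:f:3} in the exact form required by \cref{lemm:parabolicest}.

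\emph{Step 2 (energy estimate for $f$).} Applying \cref{lemm:parabolicest} with forcing $F=-R_2-(N\cdot \cS[f]g+g)\vert_{\Sigma_f}$, I obtain
\begin{equation}\label{eq:planfplus}
\mez\frac{d}{dt}\|f\|_{H^s}^2\le -\frac{1}{\cF_0}\|f\|_{H^{s+\mez}}^2 +\cF_0\|f\|_{H^s}^2+\|F\|_{H^{s-\mez}}\|f\|_{H^{s+\mez}},
\end{equation}
for some $\cF_0=\cF_0(B_f,\fa,\fd)$. The remainder is estimated by \eqref{est:R2}, which is tame: $\|R_2\|_{H^{s-\mez}}\le C(B_f)(1+\|f\|_{H^{s+\mez-\delta}})\|f\|_{H^s}$. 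For the trace piece I use \cref{coro:traceuN} via \eqref{est:traceuN:h}, which yields a bound of the form
\[
\|(N\cdot \cS[f]g+g)\vert_{\Sigma_f}\|_{H^{s-\mez}}\le C(B_f)\bigl(\|\tilde g\|_{H^s}+\|f\|_{H^{s+\mez}}\|\tilde g\|_{H^s}\bigr).
\]
The crucial observation is that the worst contribution to $\|F\|_{H^{s-\mez}}\,\|f\|_{H^{s+\mez}}$ is $C(B_f)\,\|\tilde g\|_{H^s}\,\|f\|_{H^{s+\mez}}^2$, which is linear in the dissipation norm.

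\emph{Step 3 (absorption and Grönwall).} The smallness hypothesis \eqref{cd:MN:1} with $C_0:=C(B_f,\fd)$ chosen so that $C_0 B_g\le 1/(4\cF_0)$ lets me absorb the bad term into the dissipation on the right of \eqref{eq:planfplus}. All remaining contributions, including $\|f\|_{H^{s+\mez-\delta}}\|f\|_{H^s}\|f\|_{H^{s+\mez}}$, are handled by Young's inequality (using $\delta>0$) and interpolation to bring them into the form $\tfrac{1}{4\cF_0}\|f\|_{H^{s+\mez}}^2+C\|f\|_{H^s}^p$. Integrating in time and applying a standard nonlinear Grönwall argument produces the bound \eqref{aprioriestimate}.

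\emph{Step 4 (transport estimate for $\tilde g$).} For \eqref{eq:tildeg} I apply \cref{theo:transport} on the minimally smooth domain $\Rr^d\times J$ with velocity $\overline{u}$ (tangent to the boundary by \eqref{tildeu:trace}). The bounds \eqref{energyest:u}, \eqref{energyest:u:l} control $V_T$ by $\tM T^\mez\,\cA_T$ with $\cA_T$ from \eqref{def:cAT}, while \eqref{estforcing:tildeg} integrated in time gives $\|\gamma'(\varrho)u_y\circ\ff_f\|_{L^1_TH^s}\le C(B_f)\,T^\mez\,\cA_T$. Feeding these into \eqref{transportest} directly yields \eqref{aprioriestimate:g}.

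\emph{Main obstacle.} The delicate point is Step 3: after paralinearising, the forcing contains the term $\|f\|_{H^{s+\mez}}\|\tilde g\|_{H^s}$, which when multiplied by $\|f\|_{H^{s+\mez}}$ in \eqref{eq:planfplus} produces exactly a constant times $\|\tilde g\|_{H^s}\|f\|_{H^{s+\mez}}^2$, i.e. the same order as the parabolic dissipation. Closing the estimate therefore forces a \emph{quantitative} smallness condition on $\tilde g$ with the explicit threshold $1/(4\cF_0(B_f,\fa,\fd))$; this is the content of \eqref{cd:MN:1} and is why smallness of $g_0$ is built into \cref{thm.main} rather than being merely qualitative.
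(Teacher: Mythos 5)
Your proposal is correct and follows essentially the same route as the paper: paralinearize \eqref{eq:f} into \eqref{eq:f:3}, use the Taylor condition \eqref{RT:apriori} for ellipticity of $\lambda[f](\gamma(f)-B)$, apply \cref{lemm:parabolicest}, absorb the critical $\|\tilde g\|_{H^s}\|f\|^2_{H^{s+\mez}}$ term via the smallness condition \eqref{cd:MN:1}, handle the remaining terms by interpolation and Young, run Gr\"onwall twice (first for $L^\infty_TH^s$, then for $L^2_TH^{s+\mez}$), and obtain \eqref{aprioriestimate:g} from the transport estimate \eqref{est:tildeg}. Two cosmetic points: the forcing bound you state is obtained in the paper from \eqref{esttrace:cS} (with $\sigma=s-1$ and $\sigma=s-\mez$) together with the tame product rule and the trace inequality, not from \eqref{est:traceuN:h} (which also contains the $\cG[f]\Gamma(f)$ contribution and would add a non-absorbable $\|f\|_{H^{s+\mez}}$ term), and the symbol $a$ is only in $\Gamma^1_\delta$ with $\delta$ as in \eqref{def:delta}, which is all that \cref{lemm:parabolicest} requires.
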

\begin{proof}
We first note that \eqref{eq:f:3} is  of the form \eqref{eq.parabolic} with 
\begin{align*}
a=\lambda[f] (\gamma(f)-B),\quad U=-V, \quad F=-R_2-(N\cdot\cS[f]g+g)\vert_{\Sigma_{f(t)}},
\end{align*}
where $a=\ld[f]\cT(f)\ge C\fa |\xi|$ by \eqref{elliptic:ld} and the assumption \eqref{RT:apriori}. We recall that 
\bq\label{norm:Va}
\| V\|_{L^\infty_TW^{\delta, \infty}}\les \| V\|_{L^\infty_TH^{s-1}}\le C(\| f\|_{H^s})\| f\|_{H^s},\quad \| a\|_{L^\infty_T\Gamma^1_\delta}\le C(\| f\|_{L^\infty_TH^s}, \fd),
\eq
where  $\delta$ is as in \eqref{def:delta}. Clearly 
\[
\| g\vert_{\Sigma_f}\|_{H^{s-\mez}_x}=\| \tilde{g}\vert_{z=0}\|_{H^{s-\mez}_x}\les \| \tilde g\|_{H^s}.
\]
 by the trace inequality for $\Rr^d\times J$.  On the other hand,  using the tame product estimate and the trace inequality again, we find
  \[
  \begin{aligned}
  \| N\cdot (\cS[f]g)\vert_{\Sigma_{f}}\|_{H^{s-\mez}}&\les (\| N-1\|_{H^{s-\mez}}+1)\| (\cS[f]g)\vert_{\Sigma_f}\|_{H^{s-1}_x}+  (\| N-1\|_{H^{s-1}}+1)\| (\cS[f]g)\vert_{\Sigma_f}\|_{H^{s-\mez}_x}\\
  &\le C(\| f\|_{H^s})\left(\| f\|_{H^{s+\mez}}\| (\cS[f]g)\vert_{\Sigma_f}\|_{H^{s-1}_x}+  \| (\cS[f]g)\vert_{\Sigma_f}\|_{H^{s-\mez}_x}\right).
  \end{aligned}
  \]
 Then we apply the estimate   \eqref{esttrace:cS}  for $\cS[f]g$ with $\sigma=s-1$ and $\sigma=s-\mez$ to obtain 
 \[
 \| N\cdot (\cS[f]g)\vert_{\Sigma_{f}}\|_{H^{s-\mez}} \le C(\| f\|_{H^s})(1+\| f \|_{H^{s+\mez}})\|\tilde g\|_{H^s}.
 \]
 Combining this with  the estimate \eqref{est:R2} for $R_2$  yields
\bq\label{est:F:Hs-mez}
\| F\|_{H^{s-\mez}}\le C_0(\| f\|_{H^s}, \fd)\left\{ (1+\| f\|_{H^{s+\mez-\delta}})\| f\|_{H^s}+(1+\| f \|_{H^{s+\mez}})\|\tilde g\|_{H^s}\right\}.
\eq
Therefore, applying  \cref{lemm:parabolicest} gives
\bq\label{energyest:f:1}
\begin{aligned}
\mez\frac{d}{dt}\| f(t)\|_{H^s}^2&\le -\frac{1}{\cF_0(\|f\|_{L^\infty_TH^s}, \mathfrak{a}, \fd)}\| f(t)\|_{H^{s+\mez}}^2\\
&\qquad+\cF_0(\| f\|_{L^\infty_TH^s}, \mathfrak{a}, \fd)\|f(t)\|_{H^s}^2+\| F(t)\|_{H^{s-\mez}}\| f(t)\|_{H^{s+\mez}},
\end{aligned}
\eq
for some increasing function $\cF_0: (\Rr_+)^3\to \Rr_+$. It follows from \eqref{est:F:Hs-mez} that 
\bq\label{est:F:Hs-mez:2}
\begin{aligned}
\| f(t)\|_{H^{s+\mez}}\| F(t)\|_{H^{s-\mez}}&\le C_0(\| f(t)\|_{H^s}, \fd)\left\{ \| f(t)\|_{H^{s+\mez}}\| f(t)\|_{H^{s+\mez-\delta}}+\| f(t)\|_{H^{s+\mez}}\| f(t)\|_{H^s}\right.\\
&\qquad\left.+\| f(t) \|_{H^{s+\mez}}^2\|\tilde g(t)\|_{H^s}+\| f(t)\|_{H^{s+\mez}}\|\tilde g(t)\|_{H^s}\right\}.
\end{aligned}
\eq
Since 
\[
 \| f\|_{H^{s+\mez}}\| f\|_{H^{s+\mez-\delta}}\les  \| f\|^{1+\tt}_{H^{s+\mez}}\| f\|_{H^s}^{1-\tt}
 \]
 for some $\tt\in (0, 1)$, we can apply Young's inequality  to deduce from \eqref{est:F:Hs-mez:2} that
 \bq\label{est:F:Hs-mez:3}
\begin{aligned}
\| f(t)\|_{H^{s+\mez}}\| F(t)\|_{H^{s-\mez}}&\le \frac{1}{4\cF_0(\|f\|_{L^\infty_TH^s}, \mathfrak{a}, \fd)}\| f(t)\|_{H^{s+\mez}}^2 + C_0(\| f\|_{L^\infty_TH^s}, \fd)\| f(t)\|_{H^{s+\mez}}^2 \|\tilde g(t)\|_{H^s}\\\
&\quad+\cF_1(\|f\|_{L^\infty_TH^s}, \mathfrak{a}, \fd)\left(\| f(t)\|^2_{H^s}+\|\tilde g(t)\|_{H^s}^2\right).
\end{aligned}
\eq
Under the assumption \eqref{cd:MN:1}, we have 
\begin{equation}
    \label{es:F:Hs-mez:4}
    C_0(\| f\|_{L^\infty_TH^s}, \fd)\| f(t)\|_{H^{s+\mez}}^2 \|\tilde g(t)\|_{H^s}\le \frac{1}{4\cF_0(\|f\|_{L^\infty_TH^s}, \mathfrak{a}, \fd)}\| f(t)\|_{H^{s+\mez}}^2.
\end{equation}
Thus it follows from \eqref{energyest:f:1}, \eqref{est:F:Hs-mez:3} and \eqref{es:F:Hs-mez:4} that 
\bq\label{energyest:f:1b}
\begin{aligned}
\mez\frac{d}{dt}\| f(t)\|_{H^s}^2&\le -\frac{1}{2\cF_0(\|f\|_{L^\infty_TH^s}, \mathfrak{a}, \fd)}\| f(t)\|_{H^{s+\mez}}^2\\
&\qquad+\cF_2(\|f\|_{L^\infty_TH^s}, \mathfrak{a}, \fd)\left(\| f(t)\|^2_{H^s}+\|\tilde g(t)\|_{H^s}^2\right).
\end{aligned}
\eq
We first discard the first term on the right-hand side and apply Gr\"onwall's lemma to have
\bq\label{energyest:f:2}
\| f\|_{L^\infty_T H^s}^2\le \left(\| f_0\|_{H^s}^2+2\cF_2T\| \tilde g\|^2_{L^\infty_TH^s}\right)\exp(2T\cF_2).
\eq
Then, we return to \eqref{energyest:f:1b}, integrate both sides, and use \eqref{energyest:f:2} to obtain
\bq\label{energyest:f:3}
\begin{aligned}
\| f\|^2_{L^2_TH^{s+\mez}}&\le \cF_0(\|f\|_{L^\infty_TH^s}, \mathfrak{a}, \fd) \left(\| f_0\|_{H^s}^2+2\cF_2T\| \tilde g\|^2_{L^\infty_TH^s}\right)\left(1+2T\cF_2\exp(2T\cF_2)\right).
\end{aligned}
\eq
 We bound  $\| f\|_{L^\infty_TH^s}$ in  $\cF_0(\|f\|_{L^\infty_TH^s}, \mathfrak{a}, \fd)$ by \eqref{energyest:f:2}, so that for some $\cF: (\Rr_+)^3\to \Rr_+$, 
\bq\label{energyest:f:4}
\begin{aligned}
\| f\|_{L^2_TH^{s+\mez}}&\le\cF\left(\| f_0\|_{H^s}+T^\mez\cF\left(\| f\|_{L^\infty_TH^s}+\| \tilde g\|_{L^\infty_TH^s} , \fa, \fd\right),  \fa, \fd\right).
\end{aligned}
\eq
Noting that the right-hand side of \eqref{energyest:f:2} is also bounded by that of \eqref{energyest:f:4}, we conclude the proof of \eqref{aprioriestimate}.

Finally, for $T\le 1$, \eqref{aprioriestimate:g} follows from  \eqref{est:tildeg}.
\end{proof}

\begin{rema}\label{rema:apriori:variant} In \cref{sec.uniform-bounds} we will need the following variant of \eqref{aprioriestimate}. Suppose that 
\[
f\in L^\infty([0, T]; H^s)\cap L^2([0, T]; H^{s+\mez+\frac{\mu}{2}})
\]
for some $\mu>0$, $s>1+\frac{d}{2}$, and $f$ satisfies \eqref{RT:apriori} and \eqref{separate:apriori} on $[0, T]$ and for all $(x, t)\in \Rr^d\times (0, T)$,
\bq\label{eq:f:reg}
    \p_t f + G[f]\Gamma(f) = R -\nu |D|^{1+\mu} f,\quad\nu\ge 0.
\eq
Then there exists $\cF: (\Rr_+)^3\to \Rr_+$ depending only on $(s, d, b, \gamma)$ (but not $\nu$) and non-decreasing in each variable, such that for $T\leq 1$ we have
\begin{equation}
    \label{eq.apriori-fR}
    \begin{aligned}
  &  \|f\|_{L^{\infty}_TH^s} + \frac{1}{\cF(\|f\|_{L^{\infty}_TH^s},\mathfrak{a},\mathfrak{d})}\|f\|_{L^2_TH^{s+\mez}} \\
  &\hspace{1cm} \le \exp\left(T\cF(\| f\|_{L^\infty_TH^s}, \mathfrak{a}, \fd)\right)\left(\|f_0\|_{H^s} + \cF(\| f\|_{L^\infty_TH^s}, \mathfrak{a}, \fd)\|R\|_{L^2_TH^{s-\mez}}\right).
  \end{aligned}
\end{equation}
We note that \eqref{eq.apriori-fR} is uniform in $\nu$ because  $-\nu|D|^{1+\mu}$ produces the good term $-\nu \| |D|^{\mez+\frac{\mu}{2}}f\|^2_{H^s}\le 0$ in the $H^s$ energy estimate for $f$. 
\end{rema}

\section{Contraction estimates}\label{sec.contraction}

We begin with the following contraction estimates for the operators $\cG[f]$ and $\cS[f]$, which will be utilized to establish contraction estimates for  solutions of \eqref{eq:f}-\eqref{eq:g}.

\begin{prop}\label{prop.diffEstimateSobolev} Consider $f_1,~f_2\in H^s(\Rr^d)$ and $k_1,~k_2\in H^s(\Omega_{f_i})$, where $f_i$ satisfy 
\[
\inf_{x\in \Rr^d}(f_i(x)-b(x))\ge \fd>0
\]
 in the finite-depth case. Let  $v_i$ be either $v_i^{(1)}:=\phi_i^{(1)}\circ \ff_{f_i}$ or  $v_i^{(2)}:=\phi_i^{(2)}\circ \ff_{f_i}$, where $\phi_i^{(1)}$ solves \eqref{eq.phi1} with data $h_i\in H^s(\Rr^d)$ and $\phi_i^{(2)}$ solves \eqref{eq.phi2} in $\Omega_{f_i}$ with data $k_i\in H^s(\Omega_{f_i})$. Set $v^{(j)}:=v_1^{(j)}-v_2^{(j)}$. 

(i) If $s>1+\frac{d}{2}$, then 
\begin{equation}\label{eq.lowNormEst-v1}
    \|\nabla_{x,z}v^{(1)}\|_{H^{s-1}} \le C(\|(f_1, f_2)\|_{H^s})\|h_1-h_2\|_{H^{s-\mez}} + C(\|(f_1,f_2,h_1,h_2)\|_{H^s})\|f_1-f_2\|_{H^{s-\mez}}.
\end{equation} 
(ii) If $s>\tdm+\frac{d}{2}$, then
\begin{multline}\label{eq.lowNormEst-v2}
    \|\nabla_{x,z}v^{(2)}\|_{H^{s-1}}  \le C(\|(f_1, f_2)\|_{H^s})\|k_2 \circ \ff_{f_2} - k_1 \circ \ff_{f_1}\|_{H^{s-1}}\\
    +C(\|(f_1, f_2)\|_{H^s})\|f_1-f_2\|_{H^{s-\mez}}\|(k_1, k_2)\|_{H^{s}}.
\end{multline}
In both \eqref{eq.lowNormEst-v1} and \eqref{eq.lowNormEst-v2}, $C$ is a non-decreasing function which depends only on $(d, s)$, and also on $\mathfrak{d}$ and $\|b_0\|_{H^{s}}$ in the finite-depth case. 
\end{prop}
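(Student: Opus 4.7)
The plan is to prove both estimates by deriving the PDE satisfied by the difference $v^{(j)} = v_1^{(j)} - v_2^{(j)}$ on the common flat domain $\Rr^d \times J$ and then invoking the elliptic machinery of \cref{sec.DN}. Using the divergence-form equation $\di_{x,z}(\cA_i \na v_i^{(j)}) = F_i^{(j)}$ satisfied by each $v_i^{(j)}$ (where $\cA_i$ is the matrix \eqref{def:Arho} associated with $\varrho = \varrho_i$, and $F_i^{(1)}=0$, $F_i^{(2)} = -\p_z(k_i\circ\ff_{f_i})$), the difference satisfies
\[
\di(\cA_1 \na v^{(j)}) = -\di((\cA_1-\cA_2)\na v_2^{(j)}) + (F_1^{(j)} - F_2^{(j)}),
\]
with $v^{(1)}|_{z=0} = h_1-h_2$, $v^{(2)}|_{z=0}=0$, and the natural decay or Neumann condition at the bottom of $\Rr^d \times J$. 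Expanding as in \eqref{eq:v}, $v^{(j)}$ solves
\[
(\p_z^2 + \alpha_1\Delta_x + \beta_1\cdot\na\p_z - \gamma_1\p_z)v^{(j)} = \cR^{(j)},
\]
where $\cR^{(j)}$ is the sum of $-(\alpha_1-\alpha_2)\Delta_x v_2^{(j)}$, $-(\beta_1-\beta_2)\cdot\na\p_z v_2^{(j)}$, $(\gamma_1-\gamma_2)\p_z v_2^{(j)}$, and (for $j=2$) a renormalized version of $F_1^{(2)}-F_2^{(2)}$.

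Next, since $\varrho_1-\varrho_2$ depends linearly on $f_1-f_2$ via the Poisson-type extension in \eqref{def:vr}, the contractive analogues of \cref{lemm:varrho} and \cref{lemm.coefsEstimInfinite} yield
\[
\|\alpha_1-\alpha_2\|_{X^{s-\tdm}}+\|\beta_1-\beta_2\|_{X^{s-\tdm}}+\|\gamma_1-\gamma_2\|_{L^2_z H^{s-2}} \le C(\|(f_1,f_2)\|_{H^s})\|f_1-f_2\|_{H^{s-\mez}}.
\]
Meanwhile, $v_2^{(j)}$ is controlled at its natural (higher) regularity by \cref{prop.GSest-v} with $r=s-\mez$, giving $\|\na v_2^{(1)}\|_{H^{s-\mez}} \le C(\|f_2\|_{H^s})\|h_2\|_{H^s}$ and $\|\na v_2^{(2)}\|_{H^{s-\mez}} \le C(\|f_2\|_{H^s})(\|k_2\|_{H^{s-\mez}(\Omega_{f_2})} + \|\na k_2\|_{L^\infty})$. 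Combining these via Bony decomposition, with the top derivatives always placed on the more regular factor $v_2^{(j)}$, bounds $\|\cR^{(j)}\|_{Y^{s-\tdm}(J)}$ by the right-hand side of \eqref{eq.lowNormEst-v1} or \eqref{eq.lowNormEst-v2} (for part (ii), the term $\p_z(k_1\circ\ff_{f_1}-k_2\circ\ff_{f_2})$ contributes $\|k_1\circ\ff_{f_1}-k_2\circ\ff_{f_2}\|_{H^{s-1}}$).

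Applying \cref{prop.ABZvariant} with $\sigma=s-\tdm$ to the difference equation, we obtain
\[
\|\na v^{(j)}\|_{X^{s-\tdm}(J)} \le C(\|f_1\|_{H^s})\Big(\|\na\zeta^{(j)}\|_{H^{s-\tdm}} + \|\cR^{(j)}\|_{Y^{s-\tdm}(J)} + \|\na v^{(j)}\|_{X^{-\mez}(J)}\Big),
\]
where $\zeta^{(1)} = h_1-h_2$ and $\zeta^{(2)}=0$. The low-norm $X^{-\mez}$ piece is closed by a variational argument in the spirit of \cref{lemm.variational-v} applied to the difference equation, using the two integral identities for divergence-form equations. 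To promote the resulting $X^{s-\tdm}$ bound to $H^{s-1}_{x,z}$, I will run the inductive scheme of \cref{subsection:Soboleregvinf} on the difference equation at level $r=s-1$, invoking \cref{lemm.interpol} and product rules to exchange $z$-derivatives for tangential derivatives through repeated use of $\p_z^2 v^{(j)} = -\alpha_1 \Delta_x v^{(j)} - \beta_1\cdot\na\p_z v^{(j)} + \gamma_1\p_z v^{(j)} + \cR^{(j)}$; in the finite-depth case the near-bottom contribution is handled as in \cref{sec:finite-depth} using \cref{thm.elliptic-CS}.

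The principal obstacle is the careful bookkeeping of norms on $\cR^{(j)}$: the proposition asks for the \emph{low} norm $\|f_1-f_2\|_{H^{s-\mez}}$ only, so the high norm $\|f_1-f_2\|_{H^s}$ must not appear. This forces every paraproduct to place the top derivatives on the $v_2^{(j)}$ factor rather than on $\cA_1-\cA_2$, and, in part (ii), the threshold $s>\tdm+d/2$ is precisely what ensures the $L^\infty$-type embeddings (used on the low-regularity factors $\alpha_1-\alpha_2$, $\beta_1-\beta_2$, etc.) close cleanly without losing an extra half-derivative on $f_1-f_2$.
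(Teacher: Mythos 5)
Your proposal follows essentially the same route as the paper's proof: write the difference equation with frozen coefficients $(\alpha_1,\beta_1,\gamma_1)$ and a right-hand side collecting the coefficient differences times $\na_{x,z} v_2^{(j)}$ (plus $F_{0,1}-F_{0,2}$ for $j=2$), bound it using the contraction estimates for $(\alpha,\beta,\gamma)$ together with the higher regularity of $v_2^{(j)}$, obtain the $X^{s-\tdm}$ bound from the parabolic factorization estimate, bootstrap to $H^{s-1}$ by the induction-on-derivatives scheme of \cref{subsection:Soboleregvinf}, and treat the bottom strip via \cref{thm.elliptic-CS} exactly as in \cref{sec:finite-depth}. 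Two remarks. First, at the $X^{s-\tdm}$ step you should invoke \cref{prop.ABZ} (admissible since $-\mez\le s-\tdm\le s-1$) rather than \cref{prop.ABZvariant}: applied literally, the latter carries the extra term $\|f_1\|_{H^{s-\mez}}\|\na (h_1-h_2)\|_{H^{s-1}}$, which would smuggle the disallowed norm $\|h_1-h_2\|_{H^{s}}$ into \eqref{eq.lowNormEst-v1}; the inequality you actually displayed (without that term) is the one \cref{prop.ABZ} provides. Second, for the $X^{-\mez}$ closure the paper does not run a variational argument on the difference equation; it splits $v^{(j)}$ linearly in the data and invokes \cite[Lemma 3.27]{NP} for the shape-derivative part (\cref{lemm.FirstStepIteration}). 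Your direct variational route can be made to work, but one must place $\cA_1-\cA_2$ in $L^2$ (costing only $\|f_1-f_2\|_{H^{\mez}}$) and $\na_{x,z} v_2^{(j)}$ in $L^{\infty}$, which is exactly where the thresholds $s>1+\frac{d}{2}$ (for $j=1$) and $s>\tdm+\frac{d}{2}$ (for $j=2$, through $\|\na k_2\|_{L^{\infty}}\les\|k_2\|_{H^s}$) enter; spelling this out would complete that sub-step.
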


The proof of \cref{prop.diffEstimateSobolev} is postponed to \cref{subsection:contrav}. 

Suppose now that $(f_1, g_1)$ and $(f_2, g_2)$ are solutions of \eqref{eq:f}-\eqref{eq:g} with the regularity \eqref{apriori:reg:0}, i.e.
\bq\label{reg:fjgj}
f_i \in L^{\infty}([0, T], H^s(\Rr^d)) \cap L^2([0, T], H^{s+\frac{1}{2}}(\Rr^d)),\quad  g_i\in L^{\infty}([0, T], H^s(\Omega_{f_i(\cdot)}),
\eq
where $s>\tdm+\frac{d}{2}$ unless  stated otherwise. We denote $\tilde{g}_j:=g_j\circ \ff_{f_j}$. Our goal in the next two  sections  is to establish contraction estimates for 
\bq
f:=f_1-f_2\quad\text{and } \tilde{g}:=\tilde{g}_1-\tilde{g}_2
\eq
in $L^\infty_t H^{s-1}\cap L^2_t H^{s-\mez}$ and $\tilde{g}$ in $L^\infty_t H^{s-1}$ respectively.  
\subsection{Estimates for $f$}\label{sec.contraction-f}

We recall from~\eqref{eq:f} that for each $i\in\{1,2\}$ we have 
\[
    \partial_tf_i + G[f_i]\Gamma(f_i)+ (N_i\cdot \cS[f_i]g_i+g_i)\vert_{\Sigma_{f_i}}=0,    
\]
so that $f:=f_1-f_2$ satisfies the  equation
\begin{equation}
    \label{eq.difference-f}
    \partial_t f + \left(G[f_1]\Gamma(f_1) - G[f_2]\Gamma(f_2)\right) = \underbrace{(N_2\cdot \cS[f_2]g_2 +g_2)\vert_{\Sigma_{f_2}}-(N_1\cdot \cS[f_1]g_1 +g_1)\vert_{\Sigma_{f_1}}}_{=:R_0},
\end{equation}
with initial data $f(0)=f_0:=f_{0,1}-f_{0,2}$. We write $R_0=R_0(f_1,f_2,g_1,g_2)$ whenever there is a need to recall the dependence of $R_0$. 

As in \cref{sec.apriori}, we start with the paralinearization of 
\[
    G[f_1]\Gamma(f_1)-G[f_2]\Gamma(f_2) = \left(G[f_1]\Gamma(f_1)-G[f_2]\Gamma(f_1)\right) +  G[f_2](\Gamma(f_1)-\Gamma(f_2)).  
\]
By Theorem  \cref{thm.NPdiff}~(i), we have
\[
    G[f_1]\Gamma(f_1)-G[f_2]\Gamma(f_1)= -T_{\lambda[f_2]B_{2,1}}(f_1-f_2) - T_{V_{2,1}}\cdot \na (f_1-f_2) + R_{\sharp}(f_1,f_2)\Gamma(f_1),
\]
where 
\bq
B_{2,i} = \frac{\na f_2 \cdot \Gamma(f_i) + G[f_2]\Gamma(f_i)}{1+|\na f_2|^2},\quad V_{2,i}=\na(\Gamma(f_i)) - B_{2,i}\na f_2.
\eq
On the other hand,  \cref{theo:DN1}~(ii) gives
\[
    G[f_2](\Gamma(f_1)-\Gamma(f_2)) = T_{\lambda[f_2]}(\Gamma(f_1)-\Gamma(f_2)) - T_{\lambda[f_2]}T_{B_{2,1}-B_{2,2}}f_2 - T_{V_{2,1}-V_{2,2}}\cdot \na f_2 + R[f_2](\Gamma(f_1)-\Gamma(f_2)). 
\]
Then, introducing the remainders  
\bq
R_\gamma(f_i):=\Gamma(f_i)-T_{\gamma(f_i)}f_i,
\eq
 we obtain
\begin{align*}
    G[f_1]\Gamma(f_1)-G[f_2]\Gamma(f_2) &= -T_{\lambda[f_2]B_{2,1}}f + T_{\lambda[f_2]}(T_{\gamma(f_1)}f_1-T_{\gamma(f_2)}f_2) - T_{\lambda[f_2]}T_{B_{2,1}-B_{2,2}}f_2 \\
    & \quad- T_{V_{2,1}-V_{2,2}}\cdot \na f_2 - T_{V_{2,1}}\cdot \na f  \\
    & \quad + R_{\sharp}(f_1,f_2)\Gamma(f_1) + T_{\lambda[f_2]}(R_{\gamma}(f_1)-R_{\gamma}(f_2)) + R[f_2](\Gamma(f_1)-\Gamma(f_2)). 
\end{align*}
Rewriting with commutators, we have 
\bq\label{paralin:G:contra}
    G[f_1]\Gamma(f_1)-G[f_2]\Gamma(f_2) = T_{\lambda[f_2](\gamma(f_2)-B_2)}f - T_{V_2}\cdot \nabla f - R', 
\eq
where $V_2:=V_{2,2}$, $B_2:=B_{2,2}$ and 
\bq
    \label{eq.R:def}
    \begin{aligned}
    R' &= -R_{\sharp}(f_1,f_2)\Gamma(f_1) - R[f_2](\Gamma(f_1)-\Gamma(f_2))- T_{\lambda[f_2]}(R_\gamma(f_1)-R_\gamma(f_2)) \\
    &\quad- T_{\lambda[f_2]}T_{\gamma(f_1)-\gamma(f_2)}f_1-(T_{\lambda[f_2]}T_{\gamma(f_2)}-T_{\lambda[f_2]\gamma(f_2)})f  \\
    &\quad   +T_{\lambda[f_2]}T_{B_{2,1}-B_{2,2}}f_2 + T_{V_{2,1}-V_{2,2}}\cdot\na f_1  + T_{\lambda[f_2](B_{2,1}-B_{2,2})}f. 
    \end{aligned}
\eq
Therefore, $f$ satisfies 
\begin{equation}
    \label{eq.transport-diference-final}
    \partial_t f + T_{\lambda[f_2](\gamma(f_2)-B_2)}f - T_{V_2}\cdot \nabla f = R_\sharp,\quad R_\sharp:=R_0+R'.
\end{equation}
Our next task is to estimate $R_\sharp$ in $H^{s-\tdm}$. We first consider $R'$, for which $s>1+\frac{d}{2}$ suffices. 
\begin{lemm}\label{lemm:est:R'}
Let $s>1+\frac{d}{2}$ and fix  $\delta$ as in \eqref{def:delta}. We have 
\bq\label{est:R':contra}
\begin{aligned}
\| R'\|_{H^{s-\tdm}}\le C(\|(f_1,f_2)\|_{H^s})\left\{\|f_1-f_2\|_{H^{s-\mez -\delta}}+\big(1+\|(f_1, f_2)\|_{H^{s+\mez-\delta}}\big)\|f\|_{H^{s-1}}\right\},
\end{aligned}
\eq
where $C : \Rr_+ \to \Rr_+$ is a non-decreasing function which depends only on $(d,s)$, and also on $\mathfrak{d}$ and $\|b_0\|_{H^{s}}$ in the finite-depth case. 
\end{lemm}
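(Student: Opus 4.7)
The plan is to estimate the eight terms in the decomposition \eqref{eq.R:def} of $R'$ separately in $H^{s-\tdm}$. The core tools are: paradifferential calculus from \cref{sec.paraDiff} (boundedness, symbolic composition), the Dirichlet--Neumann estimates and paralinearizations from \cref{theo:DN1} and \cref{thm.NPdiff}, the paralinearization of nonlinear compositions from \cref{paralin:nonl}, and the composition estimates for $\gamma$ from \cref{defi:gamma} and \cref{prop:gamma}. Throughout, $s>1+\frac{d}{2}$ lets us use tame products in $H^{s-1}$ freely; in particular $\|\gamma(f_1)-\gamma(f_2)\|_{H^{s-1}}\le C(\|(f_1,f_2)\|_{H^s})\|f\|_{H^{s-1}}$ and $\|\Gamma(f_1)-\Gamma(f_2)\|_{H^{s-\mez}}\le C(\|(f_1,f_2)\|_{H^s})\|f\|_{H^{s-\mez}}$ (and the analogous bounds in $H^s$).

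For the first two terms, apply \eqref{eq.RsharpEst} and \eqref{est:RDN} with $\sigma=s-\mez$ (the condition $\sigma\ge \mez+\delta$ is satisfied since $s>1+\frac{d}{2}$ and $\delta\le \mez$). Together with $\|\Gamma(f_1)\|_{H^s}\le C(\|f_1\|_{H^s})$ and the composition bound for $\Gamma(f_1)-\Gamma(f_2)$, this yields
\[
\|R_\sharp(f_1,f_2)\Gamma(f_1)\|_{H^{s-\tdm}}\le C(\|(f_1,f_2)\|_{H^s})\|f\|_{H^{s-\mez-\delta}},
\]
\[
\|R[f_2](\Gamma(f_1)-\Gamma(f_2))\|_{H^{s-\tdm}}\le C(\|(f_1,f_2)\|_{H^s})(1+\|f_2\|_{H^{s+\mez-\delta}})\|f\|_{H^{s-\mez}}.
\]

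For terms 3--5, use symbolic calculus: $T_{\lambda[f_2]}\in \mathrm{Op}\,\Gamma^1_1$ with semi-norm $\le C(\|f_2\|_{H^s})$. The commutator $T_{\lambda[f_2]}T_{\gamma(f_2)}-T_{\lambda[f_2]\gamma(f_2)}$ is of order $0$ by \cref{theo:sc}(ii), so term 5 is bounded in $H^{s-1}\subset H^{s-\tdm}$ by $C(\|f_2\|_{H^s})\|f\|_{H^{s-1}}$. For term 4, bound $M^0_0(\gamma(f_1)-\gamma(f_2))\lesssim \|\gamma(f_1)-\gamma(f_2)\|_{L^\infty}\le C(\|(f_1,f_2)\|_{H^s})\|f\|_{H^{s-1}}$, so $T_{\gamma(f_1)-\gamma(f_2)}f_1\in H^{s-\mez}$ and term 4 lies in $H^{s-\tdm}$ with the required size. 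For term 3, split $R_\gamma(f_1)-R_\gamma(f_2) = \bigl(\Gamma(f_1)-\Gamma(f_2)\bigr)-T_{\gamma(f_1)}(f_1-f_2)-T_{\gamma(f_1)-\gamma(f_2)}f_2$ and apply the contraction form of \cref{paralin:nonl} to place this difference in $H^{s-\mez}$ (using Definition \ref{defi:gamma}); one $T_{\lambda[f_2]}$ then lands in $H^{s-\tdm}$.

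For terms 6--8 the difficulty concentrates on the contraction of the paradifferential symbol and the transport field; using
\[
B_{2,1}-B_{2,2}=\frac{\nabla f_2\cdot\nabla(\Gamma(f_1)-\Gamma(f_2))+G[f_2](\Gamma(f_1)-\Gamma(f_2))}{1+|\nabla f_2|^2},
\]
and the analogous identity for $V_{2,1}-V_{2,2}$, together with \eqref{est:DN:ABZ000} applied with $\sigma=s-\mez$, we get
\[
\|B_{2,1}-B_{2,2}\|_{H^{s-\tdm}}+\|V_{2,1}-V_{2,2}\|_{H^{s-\tdm}}\le C(\|(f_1,f_2)\|_{H^s})\|f\|_{H^{s-\mez}}.
\]
Tame paraproduct estimates against $f_2,\nabla f_1,f\in H^{s-1}$ then give the desired $H^{s-\tdm}$ control of terms 6--8. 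Summing over the eight terms and absorbing the $f$-factor into $\|f\|_{H^{s-1}}$ whenever possible yields \eqref{est:R':contra}. The main obstacle is the bookkeeping: one must ensure that the highest-order factor $\|(f_1,f_2)\|_{H^{s+\mez-\delta}}$ only multiplies the low-regularity difference $\|f\|_{H^{s-1}}$ (term 2 being the sole exception already built into the statement), which forces a careful choice of which factor in each paraproduct absorbs the derivative loss from $T_{\lambda[f_2]}$ and from the Dirichlet--Neumann remainder.
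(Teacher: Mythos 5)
Your overall plan (estimate the eight terms of \eqref{eq.R:def} separately using \eqref{eq.RsharpEst}, \eqref{est:RDN}, symbolic calculus, and the contraction estimate for the paralinearization remainder) is the same as the paper's, but the bookkeeping you yourself identify as the main obstacle is exactly where the bounds you wrote down fail to add up to \eqref{est:R':contra}. The right-hand side of the lemma only tolerates the difference in $H^{s-\mez-\delta}$ standing alone, or in $H^{s-1}$ multiplied by the high norm; your stated bounds for term 2 and for terms 6--7 instead produce $\|f\|_{H^{s-\mez}}$ (times $1+\|f_2\|_{H^{s+\mez-\delta}}$, resp.\ times fixed norms of $f_1,f_2$), and $\|f\|_{H^{s-\mez}}$ is controlled by neither admissible expression; since those terms are linear in the difference, no interpolation can redistribute the excess, so the estimates as written do not imply the lemma (and would ruin the absorption argument in the contraction proposition where the lemma is used). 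The cure is the paper's pairing: apply \eqref{est:RDN} with $\sigma=s-1$ (so the cost is $\|\Gamma(f_1)-\Gamma(f_2)\|_{H^{s-1}}\les\|f\|_{H^{s-1}}$), and measure $B_{2,1}-B_{2,2}$, $V_{2,1}-V_{2,2}$ only in $H^{s-2}$ (costing $\|f\|_{H^{s-1}}$), then recover the half derivative through \eqref{boundpara} by pairing with $\|f_2\|_{H^{s+\mez-\delta}}$ and $\|\nabla f_1\|_{H^{s-\mez-\delta}}$; note that under the weak hypothesis $s>1+\frac d2$ the pairing with $\|f_2\|_{H^s}$ alone is not allowed by \eqref{boundpara}, which is precisely why the high norm appears where it does.

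The more serious omission is the term $T_{\lambda[f_2](B_{2,1}-B_{2,2})}f$, which is not a paraproduct: the symbol is first order in $\xi$, and to view it as a $\Gamma^1_0$ symbol one needs $B_{2,1}-B_{2,2}\in L^\infty$, which under $s>1+\frac d2$ can only be paid at essentially the full cost $\|f_1-f_2\|_{H^s}$, outside the budget. Your phrase ``tame paraproduct estimates against $f_2,\nabla f_1,f\in H^{s-1}$'' does not cover this term. The paper's device is to split it into the composition $T_{\lambda[f_2]}T_{B_{2,1}-B_{2,2}}f$ (handled by \eqref{boundpara} as above) plus the commutator $\big(T_{\lambda[f_2](B_{2,1}-B_{2,2})}-T_{\lambda[f_2]}T_{B_{2,1}-B_{2,2}}\big)f$, which by \cref{theo:sc}(ii) is of order $1-\delta$ with seminorm $M^0_\delta(B_{2,1}-B_{2,2})\le C\|f_1-f_2\|_{H^s}$, giving a bound $\|f\|_{H^s}\|f\|_{H^{s-\mez-\delta}}$ that is quadratic in the difference; the interpolation $\|f\|_{H^s}\|f\|_{H^{s-\mez-\delta}}\les\|f\|_{H^{s-1}}\|f\|_{H^{s+\mez-\delta}}$ then converts it into the admissible $(1+\|(f_1,f_2)\|_{H^{s+\mez-\delta}})\|f\|_{H^{s-1}}$. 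This commutator-plus-interpolation step (or an equivalent redistribution argument) is a genuinely missing idea in your proposal; with it, and with the corrected norm pairings above, the rest of your term-by-term treatment (terms 1, 3, 4, 5, for which your arguments essentially coincide with the paper's, with term 3 handled by \eqref{Rparalin:contra} applied with $(s_0,s_1,s_2)=(s-\mez,s-1,s+\mez-\delta)$) goes through.
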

 \begin{proof}
First, applying \eqref{est:RDN}  and \eqref{eq.RsharpEst} with $\sigma=s-\mez$ gives 
\[
\begin{aligned}
    \|R[f_2](\Gamma(f_2)-\Gamma(f_1))\|_{H^{s-\frac{3}{2}}} &\le C(\|f_2\|_{H^s})(1+\|f_2\|_{H^{s+\mez-\delta}})\|\Gamma(f_2)-\Gamma(f_1)\|_{H^{s-1}}\notag \\
    & \le C(\|(f_1,f_2)\|_{H^s})(1+\|f_2\|_{H^{s+\mez-\delta}})\|f_1-f_2\|_{H^{s-1}}
\end{aligned}
\]
and
\[
\begin{aligned}
    \|R_{\sharp}(f_1,f_2)\Gamma(f_1)\|_{H^{s-\frac{3}{2}}} &\le C(\|(f_1,f_2)\|_{H^s})\|f_1-f_2\|_{H^{s-\mez-\delta}}\|\Gamma(f_1)\|_{H^s} \notag \\
    &\le C(\|(f_1,f_2)\|_{H^s})\|f_1-f_2\|_{H^{s-\mez -\delta}}.
\end{aligned}
\]
Next, we have the following estimates  
\begin{align*}
&M^1_0(\ld[f_i])+\| \gamma(f_i)\|_{L^\infty}\le C(\|f_i\|_{H^s}), \\
& \| \gamma(f_1)-\gamma(f_2)\|_{H^{s-1}}\les \| f_1-f_2\|_{H^{s-1}},\\
&\| V_{2, i}\|_{H^{s-1}}+\| B_{2, i}\|_{H^{s-1}}\le C(\| f_i\|_{H^s}),\\
& \| V_{2, 1}-V_{2, 2}\|_{H^{s-2}}+\| B_{2, 1}-B_{2, 2}\|_{H^{s-2}}\le C(\|(f_1, f_2)\|_{H^s})\| f_1-f_2\|_{H^{s-1}}.
\end{align*}
Note also that the embedding $H^{s-1-\delta}(\Rr^d)\subset L^\infty(\Rr^d)$ holds for $s>1+\frac{d}{2}+\delta$. Then, using \cref{theo:sc} (i) and (ii) yields 
\[
\begin{aligned}
\| T_{\lambda[f_2]}T_{\gamma(f_1)-\gamma(f_2)}f_1\|_{H^{s-\tdm}}&\le C(\| f_2\|_{H^s})\| \gamma(f_1)-\gamma(f_2)\|_{L^\infty} \| f_1\|_{H^{s-\mez}}\\
&\le C(\|(f_1,  f_2)\|_{H^s})\| f_1-f_2\|_{H^{s-1}}
\end{aligned}
\]
and $T_{\lambda[f_2]}T_{\gamma(f_2)}-T_{\lambda[f_2]\gamma(f_2)}$ is of order $0$ so that 
\[
    \|(T_{\lambda[f_2]}T_{\gamma(f_2)}-T_{\lambda[f_2]\gamma(f_2)})f\|_{H^{s-\frac{3}{2}}} \le C(\|f_2\|_{H^s}) \|f\|_{H^{s-\frac{3}{2}}}.
\]
Now. combining \cref{theo:sc} (i) and the paraproduct estimate \eqref{boundpara} yields 
\[
\begin{aligned}
\| T_{\lambda[f_2]}T_{B_{2,1}-B_{2,2}}f_2\|_{H^{s-\tdm}}&\le C(\| f_2\|_{H^s})\| T_{B_{2,1}-B_{2,2}}f_2\|_{H^{s-\mez}}\le C(\| f_2\|_{H^s}) \| B_{2,1}-B_{2,2}\|_{H^{s-2}}\| f_2\|_{H^{s+\mez-\delta}}\\
&\le  C(\|(f_1,  f_2)\|_{H^s})\| f_1-f_2\|_{H^{s-1}}\| f_2\|_{H^{s+\mez-\delta}}
\end{aligned}
\]
and
\[
\begin{aligned}
\| T_{V_{2,1}-V_{2,2}}\cdot\na f_1\|_{H^{s-\tdm}}&\les \| V_{2, 1}-V_{2, 2}\|_{H^{s-2}}\| \na f_1\|_{H^{s-\mez-\delta}}\\
& \le C(\|(f_1,  f_2)\|_{H^s})\| f_1-f_2\|_{H^{s-1}}\| f_2\|_{H^{s+\mez-\delta}}. 
\end{aligned}
\]
As for $T_{\lambda[f_2](B_{2,1}-B_{2,2})}f$ we first write 
\[
T_{\lambda[f_2](B_{2,1}-B_{2,2})}f=\left(T_{\lambda[f_2](B_{2,1}-B_{2,2})}f-T_{\lambda[f_2]}T_{B_{2,1}-B_{2,2}}f\right)+T_{\lambda[f_2]}T_{B_{2,1}-B_{2,2}}f=:I+II.
\]
Since $\| B_{2, 1}-B_{2, 2}\|_{M^0_\delta}\le C(\|(f_1,  f_2)\|_{H^s})\| f_1-f_2\|_{H^s}$, by \cref{theo:sc} (ii), the operator in $I$ is of order $1-\delta$ and 
\[
\| I\|_{H^{s-\tdm}}\le C(\|(f_1,  f_2)\|_{H^s})\| f_1-f_2\|_{H^s}\| f_1-f_2\|_{H^{s-\mez-\delta}}.
\]
Using interpolation of Sobolev norms, we infer 
\[
\| I\|_{H^{s-\tdm}}\le C(\|(f_1,  f_2)\|_{H^s})\| f_1-f_2\|_{H^{s-1}}\| f_1-f_2\|_{H^{s+\mez-\delta}}.
\]
On the other hand, combining \cref{theo:sc} (i) with \eqref{boundpara} gives 
\begin{multline*}
\| II\|_{H^{s-\tdm}}\le C(\| f_2\|_{H^s})\| T_{B_{2,1}-B_{2,2}}f\|_{H^{s-\mez}}\le C(\| f_2\|_{H^s})\|B_{2,1}-B_{2,2} \|_{H^{s-2}}\| f\|_{H^{s+\mez-\delta}}\\
\le  C(\| f_2\|_{H^s})\|f_1-f_2\|_{H^{s-1}}(\| f_1\|_{H^{s+\mez-\delta}}+\| f_2\|_{H^{s+\mez-\delta}}).
\end{multline*}
It follows that
\[
\| T_{\lambda[f_2](B_{2,1}-B_{2,2})}f\|_{H^{s-\tdm}}\le C(\|(f_1,  f_2)\|_{H^s})(1+\|(f_1, f_2)\|_{H^{s+\mez-\delta}})\|f_1-f_2\|_{H^{s-1}}.
\]
As for the last term $T_{\lambda[f_2]}(R_\gamma(f_1)-R_\gamma(f_2))$, we first use \cref{theo:sc} (i) to have 
\begin{align*}
    \|T_{\lambda[f_2]}(R_{\gamma}(f_1)-R_{\gamma}(f_2))\|_{H^{s-\frac{3}{2}}}
    & \le C(\|(f_1,f_2)\|_{H^s})\|R_\gamma(f_1)-R_\gamma(f_2)\|_{H^{s-\frac{1}{2}}}.
\end{align*}
Then, we apply \eqref{Rparalin:contra} with $s_0=s-\mez$, $s_1=s-1>\frac{d}{2}$, $s_2=s+\mez-\delta$, which satisfy \eqref{cd:sj:contra}. This yields 
\[
\|R_\gamma(f_1)-R_\gamma(f_2)\|_{H^{s-\frac{1}{2}}}\le C(\|(f_1,  f_2)\|_{H^s})(1+\|(f_1, f_2)\|_{H^{s+\mez-\delta}})\|f_1-f_2\|_{H^{s-1}}
\]
which completes the control of the last term. 

Putting together the above estimates, we conclude the proof of \eqref{est:R':contra}. 
\end{proof}
The $H^{s-\tdm}$ estimate for $R_0$ is given in the next lemma. We recall \eqref{eq.difference-f} for the definition of $R_0$. 
\begin{lemm}\label{lemm.R0est} For  $s>\frac{3}{2}+\frac{d}{2}$, there holds 
    \bq\label{est:R0:contra}
        \|R_0(f_1,f_2,g_1,g_2)\|_{H^{s-\frac{3}{2}}} \le C(\|(f_1, f_2)\|_{H^s})\left\{\|\tilde{g}\|_{H^{s-1}}+\|f \|_{H^{s-\mez}}(\|\tilde{g}_1\|_{H^{s-1}}+\|\tilde{g}_2 \|_{H^{s-1}})\right\},
    \eq
    where  $C : \Rr_+ \to \Rr_+$ is a non-decreasing function only depending on $(d,s)$, and also $\fd$ and $\|b_0\|_{H^{s}}$ in the finite-depth case.
\end{lemm}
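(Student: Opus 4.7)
My approach decomposes $R_0$ into three pieces, exploiting the identity $g_i\vert_{\Sigma_{f_i}}(x)=\tilde g_i(x,0)$:
\begin{equation*}
R_0 = \underbrace{(\tilde g_2-\tilde g_1)\vert_{z=0}}_{=:R_0^{(1)}} + \underbrace{(N_2-N_1)\cdot(\cS[f_2]g_2)\vert_{\Sigma_{f_2}}}_{=:R_0^{(2)}} + \underbrace{N_1\cdot\bigl[(\cS[f_2]g_2)\vert_{\Sigma_{f_2}}-(\cS[f_1]g_1)\vert_{\Sigma_{f_1}}\bigr]}_{=:R_0^{(3)}}.
\end{equation*}
The term $R_0^{(1)}=-\tilde g\vert_{z=0}$ is immediately controlled by the trace inequality on $\Rr^d\times J$, yielding $\|R_0^{(1)}\|_{H^{s-\tdm}}\lesssim\|\tilde g\|_{H^{s-1}}$. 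For $R_0^{(2)}$, since $N_1-N_2=(-\na f,0)$ and $H^{s-\tdm}(\Rr^d)$ is an algebra under the hypothesis $s>\tdm+\frac{d}{2}$, I would estimate $\|R_0^{(2)}\|_{H^{s-\tdm}}\lesssim\|f\|_{H^{s-\mez}}\|(\cS[f_2]g_2)\vert_{\Sigma_{f_2}}\|_{H^{s-1}}$ and bound the last factor via \eqref{esttrace:cS} with $\sigma=s-1$, absorbing the $H^s$ norm of $f_2$ into the prefactor $C(\|f_2\|_{H^s})$.

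The heart of the argument is $R_0^{(3)}$, where $\cS[f_1]g_1$ and $\cS[f_2]g_2$ live on distinct fluid domains. I would straighten via $v_i:=\phi_i^{(2)}\circ\ff_{f_i}$ so that, by the chain rule \eqref{chainrule:ff}, $(\cS[f_i]g_i)\vert_{\Sigma_{f_i}}=(\na_{x,z}v_i\,M_i)\vert_{z=0}$ with $M_i:=(\na\ff_{f_i})^{-1}$. Setting $v:=v_1-v_2$, I split
\begin{equation*}
(\cS[f_2]g_2)\vert_{\Sigma_{f_2}}-(\cS[f_1]g_1)\vert_{\Sigma_{f_1}} = -(\na_{x,z}v)\vert_{z=0}\,M_1\vert_{z=0}-(\na_{x,z}v_2)\vert_{z=0}\,(M_1-M_2)\vert_{z=0}.
\end{equation*}
For the first piece, combining the trace inequality with the contraction bound \cref{prop.diffEstimateSobolev}(ii) applied to $k_i=g_i$ (so that $k_1\circ\ff_{f_1}-k_2\circ\ff_{f_2}=\tilde g$) produces exactly the two types of terms appearing on the right-hand side of \eqref{est:R0:contra}. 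For the second, the explicit form \eqref{def:vr} of $\varrho$ combined with \cref{lemm.FaaDiBruno} yields $\|M_1-M_2\|_{H^{s-\tdm}}\leq C(\|(f_1,f_2)\|_{H^s})\|f\|_{H^{s-\mez}}$, while $\|(\na_{x,z}v_2)\vert_{z=0}\|_{L^\infty}$ is controlled through Sobolev embedding via the elliptic estimate of \cref{prop.GSest-v} with $r=s-\mez$, together with the trace theorem.

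The principal obstacle is to ensure that only the contraction-level quantities $\|\tilde g\|_{H^{s-1}}$, $\|\tilde g_i\|_{H^{s-1}}$, and $\|f\|_{H^{s-\mez}}$ remain outside the prefactor $C(\|(f_1,f_2)\|_{H^s})$. The delicate point is the tame product structure in the second piece: one must place the higher-order derivative on the Jacobian difference $M_1-M_2$ (which yields $\|f\|_{H^{s-\mez}}$) and an $L^\infty$-type norm on $\na_{x,z}v_2\vert_{z=0}$, the latter being bounded by $\|\tilde g_2\|_{H^{s-1}}$ through the embedding $H^{s-1}(\Rr^d\times J)\hookrightarrow L^\infty$ valid precisely under $s>\tdm+\frac{d}{2}$. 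This same hypothesis also furnishes the algebra property of $H^{s-\tdm}$ used for $R_0^{(2)}$, as well as the Sobolev embedding needed to tame compositions with $\ff_{f_i}$.
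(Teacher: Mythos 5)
Your decomposition and estimates coincide, up to swapping which index carries the difference, with the paper's own proof: the trace theorem for $\tilde g_2\vert_{z=0}-\tilde g_1\vert_{z=0}$, a (tame) product estimate pairing $N_1-N_2$ with the trace bound \eqref{esttrace:cS}, and, for the remaining difference of $\cS$-traces, the straightened splitting into the difference $\na_{x,z}v^{(2)}\vert_{z=0}$ (handled by \cref{prop.diffEstimateSobolev}(ii) combined with the trace theorem) and the Jacobian difference $(\na\ff_{f_1})^{-1}-(\na\ff_{f_2})^{-1}$ bounded in $H^{s-\tdm}$ by $\|f\|_{H^{s-\mez}}$. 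The only cosmetic deviation is that the paper keeps $\na_{x,z}v_2^{(2)}\vert_{z=0}$ in $H^{s-1}$ via \eqref{tracenav(2)} rather than merely in $L^\infty$ (a product in $H^{s-\tdm}$ genuinely needs a Sobolev-norm control of that factor, not just $L^\infty$), but the elliptic estimate with $r=s-\mez$ plus the trace theorem that you invoke supplies exactly this, so your argument is essentially the paper's.
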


\begin{proof} Since $g_i\vert_{\Sigma_{f_i}}(x)=\tilde{g}_i \vert_{z=0}$, the trace theorem for $\Rr^d\times J$ gives 
\[
    \|g_2\vert_{\Sigma_{f_2}}-g_1\vert_{\Sigma_{f_1}}\|_{H^{s-\frac{3}{2}}} = \|\tilde{g}_2 \vert_{z=0}-\tilde{g}_1 \vert_{z=0}\|_{H^{s-\frac{3}{2}}} \lesssim \|\tilde{g}\|_{H^{s-1}}.    
\]
By the tame product estimate \eqref{tamepr} and the embedding $H^{s-1}(\Rr^d)\subset L^\infty(\Rr^d)$, we have
\bq
\begin{aligned}
&\| N_2\cdot\left(\cS[f_2]g_2\right)\vert_{\Sigma_{f_2}} - N_1\cdot\left(\cS[f_1]g_1\right)\vert_{\Sigma_{f_1}}\|_{H^{s-\tdm}}\\
&\quad \les  \| N_2-N_1\|_{H^{s-\tdm}}\|\left(\cS[f_1]g_1\right)\vert_{\Sigma_{f_1}}\|_{H^{s-1}}+ (1+\| N_2 - e_y\|_{L^\infty}\|) \left(\cS[f_2]g_2\right)\vert_{\Sigma_{f_2}} - \left(\cS[f_1]g_1\right)\vert_{\Sigma_{f_1}}\|_{H^{s-\tdm}}\\
&\quad =:I+II,
\end{aligned}
\eq
where 
\[
\| N_2-N_1\|_{H^{s-\tdm}}\le C(\|(f_1, f_2)\|_{H^s})\|f\|_{H^{s-\mez}},\quad \| N_2-e_y\|_{L^\infty}\le C(\|f_2\|_{H^s}).
\]
Using the estimate \eqref{esttrace:cS} with $\sigma=s-1$, we obtain 
\[
I\le C(\|(f_1, f_2)\|_{H^s})\|f\|_{H^{s-\mez}}\| \tilde{g}_1\|_{H^s}.
\]
For $II$ we first recall that $\cS[f_i]g_i\vert_{\Sigma_{f_i}}=\na_{x, z}v_i^{(2)}\vert_{z=0}(\na \ff_{f_i})^{-1}\vert_{z=0}$, where $(\na \ff_{f_i})^{-1}$ is given in terms of $\na_{x, z}\varrho_i$ as in \eqref{chainrule:ff}. Then, with $v^{(2)}=v_1^{(2)}-v_2^{(2)}$ we have
\[
II\les \| \na_{x, z}v^{(2)}\vert_{z=0}\|_{H^{s-\tdm}}\| (\na \ff_{f_1})^{-1}\vert_{z=0}\|_{L^\infty}+  \| \na_{x, z}v_2^{(2)}\vert_{z=0}\|_{H^{s-1}}\| (\na \ff_{f_1})^{-1}\vert_{z=0}- (\na \ff_{f_2})^{-1}\vert_{z=0}\|_{H^{s-\tdm}}.
\]
It is readily seen that 
\begin{align*}
&\| (\na \ff_{f_i})^{-1}\vert_{z=0}\|_{L^\infty}\le C(\| f_i\|_{H^s}),\\
&\| (\na \ff_{f_1})^{-1}\vert_{z=0}- (\na \ff_{f_2})^{-1}\vert_{z=0}\|_{H^{s-\tdm}}\le C(\|(f_1, f_2)\|_{H^s})\| f\|_{H^{s-\mez}}.
\end{align*}
The estimate \eqref{tracenav(2)} implies
\[
    \| \na_{x, z}v_2^{(2)}\vert_{z=0}\|_{H^{s-1}}\le C(\| f_2\|_{H^s})\| \tilde{g}_2\|_{H^s}.
\]
We also have by the trace theorem for $\Rr^d\times J$,
\[
    \| (\na \ff_{f_1})^{-1}\vert_{z=0}- (\na \ff_{f_2})^{-1}\vert_{z=0}\|_{H^{s-\tdm}}\le C(\|(f_1, f_2)\|_{H^s})\|f \|_{H^{s-\mez}}.
\]
Combining the above estimate and the difference estimate \eqref{eq.lowNormEst-v2} as well, we obtain
\bq
    II\le C(\|(f_1, f_2)\|_{H^s})\left\{\|\tilde{g}\|_{H^{s-1}}+\|f \|_{H^{s-\mez}}(\|\tilde{g}_1\|_{H^{s-1}}+\|\tilde{g}_2 \|_{H^{s-1}})\right\},
\eq
thereby concluding the proof of \eqref{est:R0:contra}.
\end{proof}
We are now ready to state the following \textit{a priori} contraction estimate for $f$.
\begin{prop}\label{prop.contractionEstimates} Let $s>\frac{3}{2}+\frac{d}{2}$ and consider $f_1, f_2 \in L^{\infty}([0,T],H^s) \cap L^2([0,T],H^{s+\mez})$ solutions to \eqref{eq:f} on the time interval $[0,T]$, $T\le 1$, with initial conditions $f_{0,1}$ and $f_{0,2}$. Assume furthermore that \eqref{RT:apriori} holds for $f_1$ and $f_2$ and that in the finite-depth case, \eqref{separate:apriori} holds for $f_1$ and $f_2$. Assume also that 
\[
    \|(f_1,f_2)\|_{L^{\infty}_TH^s} \le B_f \text{ and } \|(\tilde{g}_1, \tilde{g}_2)\|_{L^{\infty}_TH^{s}} \le B_g.   
\] 
Then, there exist $C_0 : \Rr_+^2 \to \Rr_+$ and $\mathcal{F}_0, \mathcal{F} : \Rr_+^3 \to \Rr_+$ depending only on $(s,d,b,\gamma)$ and non-decreasing in each variable and such that if
\begin{equation}
    \label{eq.assump-smallness}
    C_0(B_f,\mathfrak{d})B_g \le \frac{1}{4\cF_0(B_f,\mathfrak{a},\mathfrak{d})}, 
\end{equation}
then 
\begin{equation}
    \label{eq.contractionEstimates}
    \|f\|_{L^{\infty}_TH^{s-1} \cap L^2_TH^{s-\mez}}\le \mathcal{F}\left(\|(f_1,f_2)\|_{L^{\infty}_TH^s \cap L^2_TH^{s+\mez}},\mathfrak{a},\mathfrak{d}\right) \left(\|f_0\|_{H^{s-1}} + T^{\frac{1}{2}}\|\tilde{g}\|_{L^{\infty}_TH^{s-1}}\right),
\end{equation}
where  $f_0:=f_{0,1}-f_{0,2}$. Here, $C_0$, $\cF_0$, and $\cF$ can be chosen to be the same functions as in \cref{prop.uniform-f}.
\end{prop}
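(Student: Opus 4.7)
The plan is to apply the parabolic energy estimate of Lemma 2.11 to equation \eqref{eq.transport-diference-final} at the regularity index $s-1$, and then control the remainder $R_\sharp = R_0 + R'$ using Lemmas 5.2 and 5.3. First I would verify that the symbol $a := \lambda[f_2](\gamma(f_2)-B_2)$ is elliptic of order $1$: indeed, since $\gamma(f_2)-B_2 = \cT(f_2)/(1+|\nabla f_2|^2) \ge \mathfrak{a}/(1+\|\nabla f_2\|_{L^\infty}^2)$ and $\lambda[f_2](x,\xi)\ge |\xi|$ by \eqref{elliptic:ld}, we obtain $\operatorname{Re} a(x,\xi)\ge c(B_f,\mathfrak{a})|\xi|$. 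Combined with the seminorm bounds $\|a\|_{L^\infty_T M^1_\delta}+\|V_2\|_{L^\infty_T W^{\delta,\infty}} \le C(B_f,\mathfrak{d})$ from \eqref{norm:Va}, Lemma 2.11 (applied with $s$ replaced by $s-1$) yields
\begin{equation*}
\tfrac12 \tfrac{d}{dt}\|f\|_{H^{s-1}}^2 \le -\tfrac{1}{C_\ast}\|f\|_{H^{s-\mez}}^2 + C_\ast \|f\|_{H^{s-1}}^2 + \|R_\sharp\|_{H^{s-\tdm}}\|f\|_{H^{s-\mez}},
\end{equation*}
with $C_\ast = C_\ast(B_f,\mathfrak{a},\mathfrak{d})$.

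Next I would insert the two remainder bounds. By Lemma 5.2,
$\|R'\|_{H^{s-\tdm}} \le C(B_f)\bigl(\|f\|_{H^{s-\mez-\delta}} + (1+\|(f_1,f_2)\|_{H^{s+\mez-\delta}})\|f\|_{H^{s-1}}\bigr)$,
and by Lemma 5.3,
$\|R_0\|_{H^{s-\tdm}} \le C(B_f)\bigl(\|\tilde g\|_{H^{s-1}} + \|f\|_{H^{s-\mez}}(\|\tilde g_1\|_{H^{s-1}}+\|\tilde g_2\|_{H^{s-1}})\bigr)$. Multiplying by $\|f\|_{H^{s-\mez}}$, I would handle each contribution as follows. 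The term $\|f\|_{H^{s-\mez-\delta}}\|f\|_{H^{s-\mez}}$ is absorbed after interpolating $\|f\|_{H^{s-\mez-\delta}} \lesssim \|f\|_{H^{s-1}}^\theta\|f\|_{H^{s-\mez}}^{1-\theta}$ for some $\theta \in (0,1)$ and applying Young's inequality. The term $(1+\|(f_1,f_2)\|_{H^{s+\mez-\delta}})\|f\|_{H^{s-1}}\|f\|_{H^{s-\mez}}$ splits via Young into $\eta\|f\|_{H^{s-\mez}}^2 + C_\eta(1+\|(f_1,f_2)\|_{H^{s+\mez-\delta}})^2\|f\|_{H^{s-1}}^2$, where the prefactor is integrable in $t$ thanks to $f_1,f_2 \in L^2_T H^{s+\mez}$. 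The term $\|\tilde g\|_{H^{s-1}}\|f\|_{H^{s-\mez}}$ splits into $\eta\|f\|_{H^{s-\mez}}^2 + C_\eta\|\tilde g\|_{H^{s-1}}^2$.

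The main obstacle is the last term $\|f\|_{H^{s-\mez}}^2(\|\tilde g_1\|_{H^{s-1}}+\|\tilde g_2\|_{H^{s-1}})$, which is quadratic in the dissipation norm of $f$. Since $\|\tilde g_i\|_{H^{s-1}} \le \|\tilde g_i\|_{H^s} \le B_g$, this contribution is bounded by $C(B_f,\mathfrak{d})\,B_g\,\|f\|_{H^{s-\mez}}^2$, and this is precisely where the smallness hypothesis \eqref{eq.assump-smallness} enters: by choosing $C_0$ and $\cF_0$ consistently with those of Proposition 4.1, condition \eqref{eq.assump-smallness} ensures that this term is bounded by $\tfrac{1}{4C_\ast}\|f\|_{H^{s-\mez}}^2$ and is therefore absorbed into the dissipation. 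Choosing $\eta$ small enough to similarly absorb the $\eta\|f\|_{H^{s-\mez}}^2$ contributions, I arrive at a differential inequality of the form
\begin{equation*}
\tfrac12 \tfrac{d}{dt}\|f\|_{H^{s-1}}^2 + \tfrac{1}{2C_\ast}\|f\|_{H^{s-\mez}}^2 \le \Phi(t)\|f\|_{H^{s-1}}^2 + C_\ast \|\tilde g\|_{H^{s-1}}^2,
\end{equation*}
with $\Phi \in L^1([0,T])$ satisfying $\|\Phi\|_{L^1_T}\le C(B_f,\mathfrak{a},\mathfrak{d})(T+\|(f_1,f_2)\|_{L^2_T H^{s+\mez}}^2)$.

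Finally, I would apply Gr\"onwall's lemma to the inequality (discarding the dissipation term) to control $\|f\|_{L^\infty_T H^{s-1}}$ by $\|f_0\|_{H^{s-1}} + T^{\mez}\|\tilde g\|_{L^\infty_T H^{s-1}}$, and then integrate the differential inequality over $[0,T]$ to deduce the $L^2_T H^{s-\mez}$ bound. Packaging the constants as a single non-decreasing function $\cF$ of $\|(f_1,f_2)\|_{L^\infty_T H^s \cap L^2_T H^{s+\mez}}$, $\mathfrak{a}$, and $\mathfrak{d}$ yields \eqref{eq.contractionEstimates}. The functions $C_0$, $\cF_0$ coincide with those of Proposition 4.1 because the problematic term $\|f\|_{H^{s-\mez}}^2 B_g$ has the same structure as the one that forced the smallness condition there.
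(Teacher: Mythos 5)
Your proposal is correct and follows the same route as the paper: parabolic energy estimate from Lemma \ref{lemm:parabolicest} applied to \eqref{eq.transport-diference-final} at level $s-1$, remainder bounds from Lemmas \ref{lemm:est:R'} and \ref{lemm.R0est}, interpolation plus Young to absorb the sub-critical terms, the smallness hypothesis \eqref{eq.assump-smallness} to absorb the $\|f\|_{H^{s-\mez}}^2\|(\tilde g_1,\tilde g_2)\|_{H^s}$ contribution, and Gr\"onwall followed by re-integration for the $L^2_TH^{s-\mez}$ bound. The only superficial discrepancy is a reference-numbering slip (Proposition \ref{prop.uniform-f} rather than ``Proposition 4.1''); the substance of the argument matches the paper's proof.
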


\begin{proof} We first note that  $f$ satisfies \eqref{eq.transport-diference-final} which is of the form  \eqref{eq.parabolic} with 
\[ 
a=\lambda[f_2] (\gamma(f_2)-B_2)\equiv \lambda[f_2] \cT(f_2),\quad U=-V_2,\quad F=R_\sharp:=R_0+R',
\]
where $a\ge C\fa|\xi|$ by \eqref{elliptic:ld} and the assumption \eqref{RT:apriori}.  Recalling the norm bounds in \eqref{norm:Va} for $V_2$ and $a$, we can  apply \cref{lemm:parabolicest} to have
\begin{align}\label{eq.gronwall-completecompletediff-initial}
    \mez\frac{d}{dt}\|f(t)\|_{H^{s-1}}^2&\le -\frac{1}{\cF_0(\|f_2\|_{L^{\infty}_TH^s}, \mathfrak{a}, \mathfrak{d})}\| f(t)\|_{H^{s-\mez}}^2 \notag\\
    &\qquad+\cF_0(\|f_2\|_{L^{\infty}_TH^s}, \mathfrak{a}, \mathfrak{d})\|f(t)\|_{H^{s-1}}^2+\| R_\sharp(t)\|_{H^{s-\frac{3}{2}}}\|f(t)\|_{H^{s-\mez}}.
\end{align}
By virtue of \eqref{est:R':contra} and \eqref{est:R0:contra}, there holds
\begin{align*}
    \|R_\sharp\|_{H^{s-\frac{3}{2}}} &\le C_0(\|(f_1,f_2)\|_{H^s},\mathfrak{d})\left\{\|f\|_{H^{s-\mez -\delta}}+(1+\|(f_1, f_2)\|_{H^{s+\mez-\delta}})\|f\|_{H^{s-1}}\right.\\
    &\qquad \left.+ \|\tilde{g}\|_{H^{s-1}} + \|f\|_{H^{s-\mez}}\|(\tilde{g}_1, \tilde{g}_2)\|_{H^s}\right\}.
\end{align*}
By interpolation, we have 
\bq\label{interpolate:contraf}
\|f\|_{H^{s-\mez -\delta}}\|f\|_{H^{s-\mez}}\le \|f\|_{H^{s-\mez}}^{1+\theta}\|f\|_{H^{s-1}}^{1-\tt} 
\eq
for some $\tt\in (0, 1)$. Thus, we obtain after applying Young's inequality that 
\begin{align*}
    \|R_{\sharp}(t)\|_{H^{s-\frac{3}{2}}}\|f(t)\|_{H^{s-\mez}} &\le \frac{1}{4\cF_0(\|(f_1,f_2)\|_{L^{\infty}_TH^s},\mathfrak{a},\mathfrak{d})}\|f\|_{H^{s-\mez}}^2 \\
    &\quad+ \cF_1(\|(f_1,f_2)\|_{L^{\infty}_TH^s},\mathfrak{a},\mathfrak{d})\left\{(1+\|(f_1, f_2)\|^2_{H^{s+\mez-\delta}})\|f\|_{H^{s-1}}^2+ \|\tilde{g}\|_{H^{s-1}}^2\right\} \\
    &\quad+C_0(\|(f_1,f_2)\|_{L^{\infty}_TH^s},\mathfrak{d})\|f\|_{H^{s-\mez}}^2\|(\tilde{g}_1, \tilde{g}_2)\|_{H^s},\quad t\le T.
\end{align*}

In view of the condition \eqref{eq.assump-smallness}, it follows that 
\begin{align}\label{eq.gronwall-diff-initial}
    \mez\frac{d}{dt}\|f(t)\|_{H^{s-1}}^2&\le -\frac{1}{2\cF_0(\|f_2\|_{L^{\infty}_TH^s}, \mathfrak{a}, \mathfrak{d})}\| f(t)\|_{H^{s-\mez}}^2 \notag\\
&\qquad+\cF_2(\|(f_1,f_2)\|_{L^{\infty}_TH^s},\mathfrak{a},\mathfrak{d})\left\{(1+\|(f_1, f_2)\|^2_{H^{s+\mez-\delta}})\|f\|_{H^{s-1}}^2+ \|\tilde{g}\|_{H^{s-1}}^2\right\}.
\end{align}
Discarding the first term on the right-hand side and applying Grönwall's inequality yields 
\bq\label{contra:f:Linfty}
\|f\|_{L^\infty_TH^{s-1}}^2\le \big(\|f_0\|_{H^{s-1}}^2+2\cF_2T\|\tilde{g}\|_{L^\infty_TH^{s-1}}^2\big)\exp\left(\big(T+\|(f_1, f_2)\|^2_{L^2_TH^{s+\mez-\delta}}\big)\cF_2\right),
\eq
where $\cF_2= \cF_2(\|(f_1,f_2)\|_{L^{\infty}_TH^s},\mathfrak{a},\mathfrak{d})$. Returning to \eqref{eq.gronwall-diff-initial} and integrating in time, we find
\bq\label{contra:f2}
\| f\|_{L^2_TH^{s-\mez}}^2\le \cF_0\| f_0\|_{H^{s-1}}^2+2\cF_0\cF_2\left\{\big(T+\|(f_1, f_2)\|^2_{L^2_TH^{s+\mez-\delta}}\big)\| f\|_{L^\infty_TH^{s-1}}^2+T\|\tilde{g}\|_{L^\infty_TH^{s-1}}^2\right\}
\eq
For $T\le 1$, combining \eqref{contra:f:Linfty} and \eqref{contra:f:Linfty} yields \eqref{eq.contractionEstimates}.
\end{proof}

\begin{rema}\label{rema:contraction:variant}In \cref{sec.uniform-bounds} we will need contraction estimates in the case where 
\[
    \p_t f + T_{\lambda[f_2](\gamma(f_2)-B_2)}f - T_{V_2}\cdot \nabla f =F,  
\]
 With similar estimates, we obtain  
\begin{align}
    \label{eq.contractionEstimates-R}
    \|f\|_{L^{\infty}_TH^{s-1}} &+ \frac{1}{\cF(\|(f_1,f_2)\|_{L^{\infty}_TH^s},\mathfrak{a},\mathfrak{d})} \|f\|_{L^2_TH^{s+\mez}} \le \|f_{0}\|_{H^{s-1}} \exp\left(T\cF(\|(f_1,f_2)\|_{L^{\infty}_TH^s},\mathfrak{a},\mathfrak{d})\right) \notag \\
    & + \cF(\|(f_1,f_2)\|_{L^{\infty}_TH^s},\mathfrak{a},\mathfrak{d})\|F\|_{L^2_TH^{s-\frac{3}{2}}}\exp\left(T\cF(\|(f_1,f_2)\|_{L^{\infty}_TH^s},\mathfrak{a},\mathfrak{d})\right)
\end{align}
\end{rema}

\subsection{Estimates for $g$}
\
We recall from \eqref{eq:g} that $g_i$ satisfies 
\[
    \partial_t g_i + u_i\cdot \nabla_{x,y}g_i + \gamma '(y)u_{i,y} = 0,\quad u_i=(u_{i, x}, u_{i, y}).     
\]
Then, as in \eqref{eq:tildeg} and \eqref{tildeu}, $\tilde{g}_i=g_i\circ \ff_{f_i}$ satisfies 
\begin{equation}
    \label{eq.newCoord_g_i}
    \partial_t \tilde{g}_i + \ol{u}_i(x, z, t)\cdot \nabla_{x,z}\tilde{g}_i+ \gamma '(\varrho_i)u_{i,y}\circ \ff_{f_i} = 0,\quad (x, z)\in \Rr^d\times J,
\end{equation}
where
\bq\label{tildeui}
\ol{u}_i= \begin{bmatrix} u_{i,x} \circ \ff_{f_i} \\ \frac{1}{\partial_z \varrho_i} (u_{i,y} \circ \ff_{f_i}-\na_x\varrho_i \cdot u_{i,x}\circ \ff_{f_i} - \partial_t \varrho_i)\end{bmatrix}.
\eq 
We also recall that $u_i$ can be expressed in terms of $\cG[f_i]$ and $\cS[f_i]$ as in \eqref{eq:u}. Setting $\ol{u} = \ol{u}_1 -\ol{u}_2$, we find that  $\tilde{g} = \tilde{g}_1-\tilde{g}_2$ satisfies 
\begin{equation}
    \label{eq.gEquationDifference}
    \begin{aligned}
  &  \partial_t \tilde{g} +\ol{u}_1 \cdot \nabla_{x,z} \tilde{g} = \tilde{F},\\
  &\tilde{g}\vert_{t=0}=\tilde{g}_0:=g_{0, 1}\circ\ff_{0, 1}-g_{0, 2}\circ\ff_{0, 2},
  \end{aligned}
\end{equation}
 where 
\begin{equation}
    \label{eq.defFtilde}
    \tilde{F}:= \gamma '(\varrho_2)u_{2,y} \circ \ff_{f_2} - \gamma '(\varrho_1)u_{1,y} \circ \ff_{f_1} - \ol{u} \cdot \nabla_{x,z}\tilde{g}_2.  
\end{equation}
We recall from \eqref{tildeu:trace} that the $\tilde{u}_i$ are tangential to $\p(\Rr^d\times J)$. Thus, applying \cref{theo:transport} with $\sigma=s-1\ge 1$ gives 
\[
    \| \tilde{g}\|_{L^\infty([0, T]; H^{s-1})}\le \tM\left(\| \tilde{g}_0\|_{H^{s-1}}+ \| \tilde{F}\|_{L^1([0, T]; H^{s-1})}\right)\exp(\tM V_T)),
\]
where $\tM=\tM(d, s)$ and 
\bq
    V_T=\begin{cases}
    \| \na_x \ol{u}_1\|_{L^1([0, T]; H^\frac{d+1}{2} \cap L^\infty)}\quad\text{if } s<2+\frac{d+1}{2}, \\
    \| \na_x  \ol{u}_1\|_{L^1([0, T]; H^{s-2})}\quad\text{if } s>2+\frac{d+1}{2}.
\end{cases}
\eq
Since $H^{s-1}(\Rr^d\times J)\subset H^\frac{d+1}{2} \cap L^\infty$ for $s>\tdm+\frac{d}{2}$, it follows that 
\bq\label{transport:g:contra}
\| \tilde{g}\|_{L^\infty([0, T]; H^{s-1})}\le \tM\left(\| \tilde{g}_0\|_{H^{s-1}}+ \| \tilde{F}\|_{L^1([0, T]; H^{s-1})}\right)\exp\big(\tM \| \na_x  \ol{u}_1\|_{L^1([0, T]; H^{s-1})}\big)
\eq
for all $s \in (\frac{d}{2}+\frac{3}{2},\infty) \setminus \{\frac{5}{2}+\frac{d}{2}\}$. For $s=\frac{5}{2}+\frac{d}{2}$ we fix $s'\in (\tdm+\frac{d}{2}, s)$ and obtain  \eqref{transport:g:contra} with $s$ replaced by $s'$. Thus we only consider $s\ne \frac{5}{2}+\frac{d}{2}$ in the following. 

We recall from \eqref{energyest:u} that 
\bq\label{energyest:u1}
\|\ol{u}_1\|_{L^1_TH^s}\le C(\| f_1\|_{L^\infty_TH^s})\left(T^\mez\| f_1\|_{L^2_TH^{s+\mez}}+T^\mez\| f_1\|_{L^2_TH^{s+\mez}}\| \tilde g_1\|_{L^\infty_TH^s}+T\| \tilde g_1\|_{L^\infty_TH^s}\right).
\eq
Our next task is to estimate $\| \tilde{F}\|_{L^1_TH^{s-1}}$.
\begin{lemm}
$\tilde{F}$, given by \eqref{eq.defFtilde}, satisfies 
\bq\label{est:tildeF}
\| \tilde{F}\|_{H^{s-1}}\le  C(\|(f_1, f_2)\|_{H^s})\left\{\|\tilde{g} \|_{H^{s-1}}+\|f\|_{H^{s-\mez}}\big(1+\|\tilde{g}_1\|_{H^{s}} + \|\tilde{g}_2\|_{H^s}\big)\right\}.
\eq
Consequently 
\bq\label{est:tildeF:L1}
\| \tilde{F}\|_{L^1_TH^{s-1}}\le  C(\|(f_1, f_2)\|_{L^\infty_TH^s})\left\{T\|\tilde{g} \|_{L^\infty_TH^{s-1}}+T^\mez\|f\|_{L^2_TH^{s-\mez}}\big(1+\|\tilde{g}_1\|_{L^\infty_TH^{s}} + \|\tilde{g}_2\|_{L^\infty_TH^s}\big)\right\}.
\eq
\end{lemm}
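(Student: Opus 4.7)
Following the definition \eqref{eq.defFtilde}, I split $\tilde F=\tilde F_1+\tilde F_2+\tilde F_3$ with
\[
\tilde F_1=(\gamma'(\varrho_2)-\gamma'(\varrho_1))(u_{2,y}\circ\ff_{f_2}),\quad \tilde F_2=\gamma'(\varrho_1)(u_{2,y}\circ\ff_{f_2}-u_{1,y}\circ\ff_{f_1}),\quad \tilde F_3=-\ol u\cdot\nabla_{x,z}\tilde g_2,
\]
and bound each in $H^{s-1}(\Rr^d\times J)$. Throughout I exploit that $s-1>\mez+\frac{d}{2}$, so $H^{s-1}(\Rr^d\times J)$ is an algebra embedded in $L^\infty$; the tame product estimate \eqref{tamepr} and the composition bounds \eqref{gof:Hs}--\eqref{gofi:Hs} (via \cref{lemm.FaaDiBruno}) will be used freely. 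Once \eqref{est:tildeF} is established, \eqref{est:tildeF:L1} follows by time integration: the $\|\tilde g\|_{H^{s-1}}$ contribution is bounded in $L^\infty_T$, producing a factor $T$, while Cauchy--Schwarz turns the $\|f\|_{L^1_TH^{s-\mez}}$ contribution into $T^{1/2}\|f\|_{L^2_TH^{s-\mez}}$.

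For $\tilde F_1$, I apply the difference hypothesis \eqref{cont:diff} with $h_i=\varrho_i-az$ (where $a=1$ for infinite depth and $a=H$ for finite depth), $w=u_{2,y}\circ\ff_{f_2}$ and $\sigma=s-1$. The Poisson-type bounds \eqref{varrho:H}--\eqref{varrho:H:fd} together with Sobolev embedding yield $\|h_1-h_2\|_{L^\infty\cap H^{s-1}}\le C(\|(f_1,f_2)\|_{H^s})\|f\|_{H^{s-\mez}}$, while \eqref{uof:h}--\eqref{uof:l} combined with the tame product control $\|w\|_{L^\infty\cap H^{s-1}}$ by $C(\|f_2\|_{H^s})(\|f_2\|_{H^s}+\|\tilde g_2\|_{H^s})$. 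For $\tilde F_2$, the bound $\|\gamma'(\varrho_1)\|_{L^\infty\cap H^{s-1}}\le C(\|f_1\|_{H^s})$ coming from \eqref{cont:gamma} reduces matters to estimating $\|u_2\circ\ff_{f_2}-u_1\circ\ff_{f_1}\|_{H^{s-1}}$. Rewriting
\[
u_i\circ\ff_{f_i}=-\nabla_{x,z}v_i^{(1)}(\nabla\ff_{f_i})^{-1}-\nabla_{x,z}v_i^{(2)}(\nabla\ff_{f_i})^{-1}-\tilde g_i e_y,
\]
the difference splits into: (a) $\nabla_{x,z}(v_1^{(j)}-v_2^{(j)})(\nabla\ff_{f_1})^{-1}$, controlled by \cref{prop.diffEstimateSobolev} applied with $h_i=\Gamma(f_i)$ (using $\|\Gamma(f_1)-\Gamma(f_2)\|_{H^{s-\mez}}\le C(\|(f_1,f_2)\|_{H^s})\|f\|_{H^{s-\mez}}$) and with $k_i=g_i$ (noting that $k_2\circ\ff_{f_2}-k_1\circ\ff_{f_1}=-\tilde g$); (b) $\nabla_{x,z}v_2^{(j)}\cdot((\nabla\ff_{f_1})^{-1}-(\nabla\ff_{f_2})^{-1})$, where the Jacobian difference is bounded in $H^{s-1}$ by $C(\|(f_1,f_2)\|_{H^s})\|f\|_{H^{s-\mez}}$ via \eqref{varrho:H}--\eqref{varrho:H:fd} and the lower bound \eqref{lowerbound:dzvr}, while \cref{prop.GSest-v} with $r=s-\mez$ yields $\|\nabla v_2^{(j)}\|_{H^{s-1}}\le\|\nabla v_2^{(j)}\|_{H^{s-\mez}}\le C(\|f_2\|_{H^s})(1+\|\tilde g_2\|_{H^s})$; and (c) the direct $\tilde g\,e_y$ contribution in $H^{s-1}$.

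For $\tilde F_3$, the tame product yields $\|\tilde F_3\|_{H^{s-1}}\le C(\|(f_1,f_2)\|_{H^s})\|\ol u\|_{H^{s-1}}\|\tilde g_2\|_{H^s}$ after invoking $H^{s-1}\hookrightarrow L^\infty$ on both factors. The $x$-component of $\ol u=\ol u_1-\ol u_2$ is handled exactly as in $\tilde F_2$. For the $z$-component $\ol u_{i,z}=\frac{1}{\p_z\varrho_i}(u_{i,y}\circ\ff_{f_i}-\nabla_x\varrho_i\cdot u_{i,x}\circ\ff_{f_i}-\p_t\varrho_i)$, the only new ingredient is an $H^{s-1}$ bound for $\p_t\varrho_1-\p_t\varrho_2$. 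Since by \eqref{def:vr} the quantity $\p_t\varrho_i$ is the Poisson-type extension of $\p_t f_i$, the estimate \eqref{Poisson:Sobolev} gives $\|\p_t\varrho_1-\p_t\varrho_2\|_{H^{s-1}}\lesssim\|\p_t f_1-\p_t f_2\|_{H^{s-\tdm}}$; substituting $\p_t f_i=u_i\cdot N_i|_{\Sigma_{f_i}}$ from \eqref{eq:f}, this trace difference is controlled by the argument of \cref{lemm.R0est} for the $\cS[f_i]g_i+g_i$ piece, together with \eqref{eq.diffGfi} (with $\sigma=s-\mez$) for the $G[f_i]\Gamma(f_i)$ piece. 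Combining the three bounds yields \eqref{est:tildeF}. The principal technical hurdle is precisely the treatment of $\ol u_z$ in $\tilde F_3$: one must eliminate $\p_t\varrho_i$ via the $f$-equation so as not to reintroduce time derivatives as independent unknowns, while maintaining the tame linear dependence on $\|f\|_{H^{s-\mez}}$ and the factor $1+\|\tilde g_1\|_{H^s}+\|\tilde g_2\|_{H^s}$ required by the right-hand side of \eqref{est:tildeF}.
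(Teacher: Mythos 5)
Your proposal is correct and follows essentially the same route as the paper: the same three-way splitting of $\tilde F$, the use of \eqref{cont:gamma}--\eqref{cont:diff} with $h=\varrho-az$, reduction of the velocity difference to \cref{prop.diffEstimateSobolev} (with $k_2\circ\ff_{f_2}-k_1\circ\ff_{f_1}=-\tilde g$) together with \cref{prop.GSest-v} for the individual $\nabla v_2^{(j)}$, and elimination of $\p_t\varrho_i$ via the $f$-equations using \cref{thm.NPdiff}~(ii) with $\sigma=s-\mez$ and \cref{lemm.R0est}, before integrating in time exactly as you describe. The only differences are cosmetic (which index carries the $\gamma'$-difference, working with the full Jacobian $(\na\ff_{f_i})^{-1}$ rather than the scalar $y$-component formula, and taking $r=s-\mez$ rather than $r=s-1$ in \cref{prop.GSest-v}, which if anything is cleaner with respect to that proposition's hypothesis $r\ge s-\mez$).
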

\begin{proof}
  We start by writing
\begin{multline*}
    \gamma '(\varrho_1)u_{1,y} \circ \ff_{f_1} - \gamma '(\varrho_2)u_{2,y} \circ \ff_{f_2} = (\gamma '(\varrho_1)-\gamma'(\varrho_2))u_{1,y} \circ \ff_{f_1}
     + \gamma'(\varrho_2)(u_{1,y}\circ \ff_{f_1}-u_{2,y}\circ \ff_{f_2}).
\end{multline*}  
Applying \cref{prop:gamma} on  multiplier properties of $\gamma'$, we obtain 
\bq\label{est:tildeF:1}
\begin{aligned}
    \|\tilde{F}\|_{H^{s-1}} & \lesssim C(\|\varrho_1-m,  \varrho_2-m)\|_{H^{s-1}})\|(\varrho_1-m)- (\varrho_2-m)\|_{H^{s-1}}\|u_{1,y} \circ \ff_{f_1}\|_{H^{s-1}} \\
    & \quad + C(\| \varrho_2-m\|_{H^{s-1}})\|u_{1,y} \circ \ff_{f_1} - u_{2,y}\circ \ff_{f_2}\|_{H^{s-1}} + \|\ol{u}\|_{H^{s-1}}\|\nabla \tilde{g}_2\|_{H^{s-1}}\\
    & \lesssim C(\|(f_1, f_2)\|_{H^{s-\tdm}})\|f_1-f_2\|_{H^{s-\tdm}}\|u_{1,y} \circ \ff_{f_1}\|_{H^{s-1}} \\
     & \quad + C(\| f_2\|_{H^{s-\tdm}})\|u_{1,y} \circ \ff_{f_1} - u_{2,y}\circ \ff_{f_2}\|_{H^{s-1}} + \|\ol{u}\|_{H^{s-1}}\|\nabla \tilde{g}_2\|_{H^{s-1}},
\end{aligned}
\eq
where we have used the embedding $H^{s-1}(\Rr^d\times J)\subset L^\infty(\Rr^d\times J)$. Recalling the definition \eqref{eq:u} and the notation in \cref{prop.diffEstimateSobolev}, we have $u_{i, y}=-\p_y\phi_i^{(1)}-\p_y\phi_i^{(2)}-g_i$ and 
\[
u_{i,y} \circ \ff_{f_i}=-\frac{1}{\p_z\varrho_i}\p_zv_i^{(1)}-\frac{1}{\p_z\varrho_i}\p_zv_i^{(2)}-\tilde {g}_i.
\]
Consequently 
\[
u_{1,y} \circ \ff_{f_1}-u_{2,y} \circ \ff_{f_2}=-\frac{1}{\p_z \varrho_1}(\partial_z v^{(1)} + \partial_z v^{(2)})  
     + \left(\frac{1}{\p_z \varrho_2}-\frac{1}{\p_z \varrho_1}\right)(\p_z v^{(1)}_2 +\p_z v^{(2)}_2) -\tilde{g}, 
\]
where $v^{(j)}=v_1^{(j)}-v_2^{(j)}$. From this we deduce 
\bq\label{est:tildeF:2}
\begin{aligned}
\| u_{1,y} \circ \ff_{f_1}-u_{2,y} \circ \ff_{f_2}\|_{H^{s-1}}&\le C(\| f_1\|_{H^{s-\mez}})(\| \partial_z v^{(1)}\|_{H^{s-1}}+\|\partial_z v^{(2)}\|_{H^{s-1}})+\| \tilde{g}\|_{H^{s-1}}\\
&\quad +C(\| (f_1, f_2)\|_{H^{s-\mez}})\| f\|_{H^{s-\mez}}(\|\p_z v^{(1)}_2\|_{H^{s-1}}+\|\p_z v^{(2)}_2\|_{H^{s-1}}).
\end{aligned}
\eq
By virtue of  the estimate \eqref{eq.GS-v} with $r=s-1$ and $(\zeta, k)=(\Gamma(f_2), 0)$ or $(\zeta, k)=(0, g_2)$, we obtain 
\bq\label{est:tildeF:3}
\begin{aligned}
&  \|\nabla v_2^{(1)}\|_{H^{s-1}} \le C(\|f_2\|_{H^s})\|f_2\|_{H^{s-\mez}},\quad \|\nabla v_2^{(2)}\|_{H^{s-1}} \le C(\|f_2\|_{H^s})\|\tilde{g}_2\|_{H^{s-1}},
\end{aligned}
\eq
On the other hand, the estimates \eqref{eq.lowNormEst-v1} and \eqref{eq.lowNormEst-v2} give 
\bq\label{est:tildeF:4}
\begin{aligned}
    &\|\nabla_{x,z}v^{(1)}\|_{H^{s-1}} \le C(\|(f_1, f_2)\|_{H^s})\|f\|_{H^{s-\mez}},\\
    &\|\nabla_{x,z}v^{(2)}\|_{H^{s-1}} \le C(\|(f_1, f_2)\|_{H^s})\left\{\|\tilde{g} \|_{H^{s-1}}+\|f\|_{H^{s-\mez}}\big(\|\tilde{g}_1\|_{H^{s}} + \|\tilde{g}_2\|_{H^s}\big)\right\}.
\end{aligned}
\eq
Combining \eqref{est:tildeF:2}, \eqref{est:tildeF:3}, and \eqref{est:tildeF:4} yields
\bq\label{uof:contra}
\begin{aligned}
\| u_{1,y} \circ \ff_{f_1}-u_{2,y} \circ \ff_{f_2}\|_{H^{s-1}}\le C(\|(f_1, f_2)\|_{H^s})&\left\{\|\tilde{g} \|_{H^{s-1}}+\|f\|_{H^{s-\mez}}\big(1+\|\tilde{g}_1\|_{H^{s}} + \|\tilde{g}_2\|_{H^s}\big)\right\}.
\end{aligned}
\eq
Therefore, it remains to show that $\| \ol{u}\|_{H^{s-1}}$ is bounded by the right-hand side of \eqref{est:tildeF}. 
In view of \eqref{tildeui} and estimate similar as above, we have 
\bq\label{est:olu:contra}
\begin{aligned}
\| \ol{u}\|_{H^{s-1}}\le C(\|(f_1, f_2)\|_{H^s})&\left\{\|f\|_{H^{s-\mez}}\Big(\| u_1\circ \ff_{f_1}\|_{H^{s-1}}+ \| u_2\circ \ff_{f_1}\|_{H^{s-1}}+\| \p_t\varrho_1\|_{H^{s-1}}+\| \p_t\varrho_2\|_{H^{s-1}}\Big)\right. \\
&\quad\left.+\| u_1\circ \ff_{f_1}- u_2\circ \ff_{f_2}\|_{H^{s-1}}+\| \p_t\varrho_1-\p_t\varrho_2\|_{H^{s-1}}\right\}.
\end{aligned}
\eq
The term $\| u_1\circ \ff_{f_1}- u_2\circ \ff_{f_2}\|_{H^{s-1}}$ can be estimated using \eqref{uof:contra}. We recall from  \eqref{uof:l} that 
\[
\| u_i\circ \ff_{f_i}\|_{H^{s-\mez}}\le C(\| f_i\|_{H^s})(\| f_i\|_{H^s}+\| \tilde{g}_i\|_{H^s}).
\]
Using the equation satisfied by $f_i$, \eqref{est:traceuN:l} and \cref{theo:DN1} (i) with $\sigma = s-\mez$ provides us with  
\[
    \| \p_t\varrho_i\|_{H^{s-1}}\les \| \p_t f_i\|_{H^{s-\tdm}}\le C(\| f_i\|_{H^s})(\| f_i\|_{H^s}+\| \tilde{g}_i\|_{H^s}).
\]
Finally, since $f=f_1-f_2$ solves \eqref{eq.difference-f} for $\p_t f$, we find
\[
    \| \p_t\varrho_1-\p_t\varrho_2\|_{H^{s-1}}\les \| \p_tf\|_{H^{s-\tdm}}\les \| G[f_1]\Gamma(f_1)-G[f_2]\Gamma(f_2)\|_{H^{s-\tdm}}+\| R_0\|_{H^{s-\tdm}}.
\]
Then, invoking  \cref{thm.NPdiff} (ii) with $\sigma = s-\mez$ and \cref{lemm.R0est}, we can bound $\| \p_t\varrho_1-\p_t\varrho_2\|_{H^{s-1}}$ by the right-hand side of \eqref{est:tildeF}. 
Plugging the above estimates in \eqref{est:olu:contra}, we conclude that $\| \ol{u}\|_{H^{s-1}}$ is also controlled by the right-hand side of \eqref{est:tildeF} as claimed and this completes the proof of \eqref{est:tildeF}.
\end{proof}

Plugging the estimates \eqref{energyest:u1} and \eqref{est:tildeF:L1} in \eqref{transport:g:contra}, we arrive at the following contraction estimate for $g$. 

\begin{prop}\label{prop.contraction-gn} 
Let $s>\frac{d}{2}+\frac{3}{2}$. Assume that $(f_1, g_1)$ and $(f_2, g_2)$ are solutions of \eqref{eq:f}-\eqref{eq:g} with the regularity given by \eqref{reg:fjgj} and such that \eqref{separate:apriori} holds for $f=f_1,~ f_2$ in the finite-depth case.  If $s\ne \frac52+\frac{d}{2}$, then there exists a function $C : \Rr_+^2\ \to \Rr^+$ only depending on $(s,d,b,\gamma)$, non-decreasing in each variable such that for $T\le 1$, there holds
\bq    \label{eq.apriori-difference-g}
\begin{aligned}
    \|\tilde{g}\|_{L^{\infty}_TH^{s-1}} &\le \tM\left(\|\tilde{g}_{0}\|_{H^{s-1}}+T^\mez C\left(\|(f_1, f_2)\|_{L^{\infty}_TH^s}+\|(\tilde{g}_1, \tilde{g}_2)\|_{L^{\infty}_TH^s}, \fd\right)\big(\|\tilde{g} \|_{L^\infty_TH^{s-1}}+\|f\|_{L^2_TH^{s-\mez}}\big) \right)\\
    &\quad\cdot \exp\left(T^\mez C(\|f_1\|_{L^\infty_TH^s}, \fd)\Big(\|f_1\|_{L^2_TH^{s+\mez}}+\| \tilde g_1\|_{L^\infty_TH^s}+\|f_1\|_{L^2_TH^{s+\mez}}\| \tilde g_1\|_{L^\infty_TH^s}\Big)\right),
    \end{aligned}
    \eq
where we recall that $f=f_1-f_2$, $\tilde{g}=\tilde{g}_1-\tilde{g}_2$, and $\tilde{g}_0=\tilde{g}_{0,1}-\tilde{g}_{0,2}$. If $s=\frac52+\frac{d}{2}$, then \eqref{eq.apriori-difference-g} holds with $s$ replaced by any $s'\in (\tdm+\frac{d}{2}, \frac52+\frac{d}{2})$.
\end{prop}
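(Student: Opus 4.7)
The plan is to view \eqref{eq.gEquationDifference} as a linear transport equation on the slab $\Rr^d\times J$ with transport velocity $\bar{u}_1$ and source $\tilde{F}$, and to feed the already-proved bounds \eqref{energyest:u1} and \eqref{est:tildeF:L1} into the transport estimate of \cref{theo:transport}. All the hard analytic work -- the contraction estimates \eqref{eq.lowNormEst-v1}--\eqref{eq.lowNormEst-v2} for $\cG$ and $\cS$, the paralinearization of $G[f]\Gamma(f)$, and the estimates on $\bar{u}$ and $\p_t\varrho$ -- has been absorbed into the bound on $\tilde{F}$; what remains is essentially a substitution.

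First I would verify the hypotheses of \cref{theo:transport} on $\Rr^d\times J$: the tangency $\bar{u}_1\cdot \nu = 0$ on $\p(\Rr^d\times J)$ is \eqref{tildeu:trace}, and the required Sobolev regularity of $\nabla_x\bar{u}_1$ is supplied by \eqref{energyest:u1}. Since $s>\tdm+\tfrac{d}{2}$, one has the continuous embedding $H^{s-1}(\Rr^d\times J)\hookrightarrow H^{(d+1)/2}(\Rr^d\times J)\cap L^\infty(\Rr^d\times J)$, so a single bound on $\|\nabla_x\bar{u}_1\|_{L^1_T H^{s-1}}$ controls the quantity $V_T$ from \eqref{def:VT} for the regularity index $\sigma=s-1$, except in the borderline case $s-1 = 1+\tfrac{d+1}{2}$, i.e. $s=\tfrac{5}{2}+\tfrac{d}{2}$. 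This yields \eqref{transport:g:contra}.

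Second, I would plug \eqref{energyest:u1} into the exponential factor of \eqref{transport:g:contra} and \eqref{est:tildeF:L1} into its forcing term, regrouping all resulting constants into one nondecreasing function $C$ of $\|(f_1,f_2)\|_{L^\infty_TH^s}+\|(\tilde{g}_1,\tilde{g}_2)\|_{L^\infty_TH^s}$ and $\fd$. Using $T\le T^{1/2}$ for $T\le 1$ and Cauchy--Schwarz on the $L^2_TH^{s-\mez}$ norm of $f$ (which produces the $T^{1/2}$ prefactor in front of $\|f\|_{L^2_TH^{s-\mez}}$), one arrives at \eqref{eq.apriori-difference-g}. For the excluded borderline $s=\tfrac{5}{2}+\tfrac{d}{2}$, I would pick any $s'\in (\tdm+\tfrac{d}{2},s)$, run the same argument at index $s'$, and use the inclusion $H^{s}\hookrightarrow H^{s'}$ on the right-hand side.

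I do not anticipate a genuine obstacle: the non-trivial estimates are already in hand, and the only delicate point is bookkeeping to preserve the exact power of $T$ in front of $\|\tilde{g}\|_{L^\infty_TH^{s-1}}$ and $\|f\|_{L^2_TH^{s-\mez}}$, which is needed so that \eqref{eq.apriori-difference-g} can be combined with the contraction estimate \eqref{eq.contractionEstimates} for $f$ in a Picard-type fixed point argument on a short time interval.
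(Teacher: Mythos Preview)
Your proposal is correct and follows exactly the paper's approach: the proof in the paper consists of nothing more than the sentence ``Plugging the estimates \eqref{energyest:u1} and \eqref{est:tildeF:L1} in \eqref{transport:g:contra}, we arrive at the following contraction estimate for $g$,'' and you have spelled out precisely that substitution, including the verification of the tangency hypothesis via \eqref{tildeu:trace}, the embedding $H^{s-1}\hookrightarrow H^{(d+1)/2}\cap L^\infty$ that controls $V_T$, and the handling of the borderline $s=\tfrac{5}{2}+\tfrac{d}{2}$. One minor remark: the $T^{1/2}$ prefactor in front of $\|f\|_{L^2_TH^{s-\mez}}$ is already present in \eqref{est:tildeF:L1}, so no additional Cauchy--Schwarz step is needed there.
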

\subsection{Proof of \cref{prop.diffEstimateSobolev}}\label{subsection:contrav} 
We will prove \eqref{eq.lowNormEst-v1} and \eqref{eq.lowNormEst-v2} at once. To this end, we will understand that $v_i=\phi\circ \ff_{f_i}$, $i\in\{1, 2\}$ where $\phi_i$ solves 
\begin{align*}
&\Delta_{x, y}  \phi_i=-\p_y k_i\quad\text{in } \Omega_{f_i},\quad \phi^{(2)}\vert_{\Sigma_f}=h_i,\\
&\begin{cases}
\lim_{(x, y)\to \infty} \na_{x, y}   \phi_i= 0&\text{for infinite depth,}\\
 \p_\nu \phi_i\vert_{y=b(x)}=-\nu_yk_i\vert_{y=b(x)}&\text{for finite depth}.
\end{cases}
\end{align*}
This means that $v_i^{(1)}$ corresponds to $k_i=0$ and $v_i^{(2)}$ corresponds to $h_i=0$. The condition $s>1+\frac{d}{2}$ is assumed throughout, while the stronger condition $s>\tdm+\frac{d}{2}$ is only needed when $k_i\ne 0$. 

We start by recalling from \eqref{eq:v}  that for $i \in \{1,2\}$, the function $v_i$ satisfies
\[
    (\partial_z^2 + \alpha_i\Delta_x + \beta_i\cdot \nabla_x \partial_z - \gamma_i\partial_z)v_i = F_{0,i},
\] 
where $F_{0, i}$ is given by \eqref{eq.datum}, and $v_i(x, 0)=\zeta_i(x)$. In this notation, we only need the stronger condition $s>\tdm +\frac{d}{2}$ when $F_{0, i}\ne 0$.  Then $v=v_1-v_2$ satisfies
\begin{align}
    \label{eq.v-diff}
    &(\partial_z^2 + \alpha_1\Delta_x + \beta_1\cdot \nabla_x \partial_z - \gamma_1\partial_z)v = F_0,\quad v(x, 0)=\zeta(x):=\zeta_1(x)-\zeta_2(x),
\end{align}
where  
\bq\label{def:F0:contra}
    F_0 := (F_{0,1}-F_{0,2}) -(\alpha_1-\alpha_2)\Delta_x v_2 - (\beta_1-\beta_2)\cdot \nabla_x\partial_z v_2 +(\gamma_1-\gamma_2)\partial_zv_2.    
\eq
We assume that $f_i\in H^s(\Rr^d)$ and $k_i\in H^s(\Omega_{f_i})$. We have $\na_{x, z} v_i\in H^{s-\mez}(\Rr^d\times J)$ by virtue of \cref{prop.GSest-v}. Our goal is to prove the contraction estimate for $ \na_{x, z}v_1-\na_{x, z}v_2$ in the lower norm $H^{s-\tdm}(\Rr^d\times J)$. 
\subsubsection{Estimates of $ \|\nabla_{x,z}v\|_{X^{s-\tdm}}$} We fix arbitrary numbers $z_1< z_0$ in $J$. Applying   \cref{prop.ABZ} to $v$ solving~\eqref{eq.v-diff} with data $(\zeta, F_0)$, we obtain 
\bq\label{estv:Xs-tdm}
    \|\nabla_{x,z}v\|_{X^{s-\tdm}([z_0, 0])} \le  C(\| f_1 \|_{H^s}) \left( \|\zeta\|_{H^{s-\mez}}+ \|F_0\|_{Y^{s-\tdm}([z_1, 0])}+\|\nabla_{x,z}v\|_{X^{-\mez}([z_1, 0])}\right),
\eq
 where $C$ on $(z_0, z_1)$ is only through $z_1-z_0$. In the infinite-depth case, we can fix $z_1-z_0=1$ and let $z_0$ tend to $-\infty$ to obtain the estimate for $\na_{x, z}v$ in $X^{s-\tdm}((-\infty, 0])$. On the other hand, in the finite-depth case, we will need to prove separately estimates for $v$ near $z=-1$. 
 
 The control of  $\| \na_{x, z}v\|_{X^{-\mez}(J)}$ in \eqref{estv:Xs-tdm} is given in the next lemma. 
\begin{lemm}\label{lemm.FirstStepIteration} For $s>1+\frac{d}{2}$, there holds 
    \begin{equation}
        \label{eq.variational-v1}
        \|\nabla_{x,z}v^{(1)}\|_{X^{-\mez}(J)} \le C(\|(f_1,f_2)\|_{H^s})\left(\|h_1-h_2\|_{H^{\mez}} + (\|h_1\|_{H^s}+\|h_2\|_{H^s})\|f_1-f_2\|_{H^{\mez}}\right),
    \end{equation}
    and 
    \begin{multline}
        \label{eq.variational-v2}
        \|\nabla_{x,z}v^{(2)}\|_{X^{-\mez}(J)} \le C(\|(f_1,f_2)\|_{H^s})\left\{(\|k_1\|_{H^s}+\|k_2\|_{H^s})\|f_1-f_2\|_{H^{\mez}}\right.\\
     \left. +\|\tilde{k}_1-\tilde{k}_2\|_{L^2}+\|\p_z\tilde{k}_1-\p_z\tilde{k}_2\|_{L^2_z H^{-\mez}_x}\right\}.
        \end{multline}
In both cases, $C : \Rr_+ \to \Rr_+$ is a non-decreasing function only depending on $(d,s)$, and also on $\fd$ and $\|b_0\|_{H^s}$ in the finite-depth case. In the finite-depth case, $\|v^{(i)}\|_{L^2(\Rr^d\times J)}$, $i\in\{1,2\}$ is also bounded by \eqref{eq.variational-v1} and \eqref{eq.variational-v2}. 
\end{lemm}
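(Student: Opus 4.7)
The proof splits into a variational $L^2$ bound for $\nabla_{x,z} v$ (with $v:=v_1-v_2$) followed by an upgrade to the $X^{-\mez}(J)$ norm. The first step is to derive from \eqref{eqphi:div} the equation satisfied by $v$ on the fixed strip $\Rr^d\times J$, namely
\begin{equation*}
\di_{x,z}(\cA_1\,\nabla v) = -\di_{x,z}\bigl((\cA_1-\cA_2)\nabla v_2\bigr) + (F_1-F_2)\quad\text{in }\Rr^d\times J,
\end{equation*}
with top datum $v|_{z=0}=\zeta_1-\zeta_2$, which equals $h_1-h_2$ in case (i) and $0$ in case (ii), and bottom condition inherited from \eqref{bc:phi1}--\eqref{bc:phi2}. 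Here $F_i\equiv 0$ in case (i) and $F_i=-\p_z\tilde k_i$ in case (ii) with $\tilde k_i:=k_i\circ \ff_{f_i}$.

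Next I plan to run the energy identity. In case (ii), since $v|_{z=0}=0$, testing with $v$ and using the coercivity of $\cA_1$ (guaranteed by $\p_z\varrho_1\ge\mez$ in infinite depth and $\p_z\varrho_1\ge\fd/2$ in finite depth, by \eqref{lowerbound:dzvr}) yields
\begin{equation*}
\|\nabla v\|_{L^2}^2\les \|(\cA_1-\cA_2)\nabla v_2\|_{L^2}\|\nabla v\|_{L^2} + \bigl|\langle F_1-F_2,v\rangle\bigr|.
\end{equation*}
In case (i), I lift the boundary data by $\tilde h(x,z):=e^{\delta z\lg D_x\rg}(h_1-h_2)(x)$, which satisfies $\tilde h|_{z=0}=h_1-h_2$ and $\|\nabla_{x,z}\tilde h\|_{L^2}\les \|h_1-h_2\|_{H^{\mez}}$ by \eqref{Poisson:Sobolev}, and apply the same argument to $w:=v-\tilde h$, which has zero trace at $z=0$. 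The key difference bound comes from writing $\cA_1-\cA_2=G(\nabla_{x,z}\varrho_1,\nabla_{x,z}\varrho_2)\cdot \nabla_{x,z}(\varrho_1-\varrho_2)$ for a smooth $G$ and then using \eqref{Poisson:Sobolev} once more:
\begin{equation*}
\|\cA_1-\cA_2\|_{L^2(\Rr^d\times J)}\le C(\|(f_1,f_2)\|_{H^s})\|\nabla(\varrho_1-\varrho_2)\|_{L^2}\le C(\|(f_1,f_2)\|_{H^s})\|f_1-f_2\|_{H^{\mez}},
\end{equation*}
combined with the uniform $L^\infty$ bound on $\nabla v_2$ (i.e.~$\les \|h_2\|_{H^s}$ in case (i), resp.~$\les \|k_2\|_{H^{s-\mez}}+\|\nabla k_2\|_{L^\infty}\les \|k_2\|_{H^s}$ in case (ii) since $s>\tdm+\tfrac{d}{2}$) already established via \eqref{nav:X:s-mez}. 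In case (ii), the forcing pairing is handled by integrating by parts in $z$, turning $\langle -\p_z(\tilde k_1-\tilde k_2),v\rangle$ into $\int(\tilde k_1-\tilde k_2)\p_z v$ (the $z=0$ boundary term vanishes because $v|_{z=0}=0$, and the $z=-1$ boundary term in finite depth is absorbed by the Neumann datum $(\cA_1\nabla v)\cdot e_{d+1}|_{z=-1}=-(\tilde k_1-\tilde k_2)|_{z=-1}+(\cA_2-\cA_1)\nabla v_2\cdot e_{d+1}|_{z=-1}$), so it is bounded by $\|\tilde k_1-\tilde k_2\|_{L^2}\|\nabla v\|_{L^2}$ plus controlled trace contributions. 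In the finite-depth case the Poincar\'e inequality $\|v\|_{L^2}\le |J|\,\|\p_z v\|_{L^2}$ (available because $v|_{z=0}=0$ once the lifting has been subtracted) delivers the auxiliary $L^2$ bound on $v^{(i)}$ claimed in the last sentence.

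Finally, to pass from the $L^2$ bound on $\nabla v$ to the full $X^{-\mez}(J)$ norm, I will invoke the same identity as in the proof of \cref{lemm.variational-v} (i.e.~\cite[Lemma 3.11]{NP}):
\begin{equation*}
\|\nabla_{x,z}v\|_{X^{-\mez}(J)}\le C(\|f_1\|_{H^s})\bigl(\|\nabla v\|_{L^2}+\|\di_{x,z}(\cA_1\nabla v)\|_{L^2_z H^{-1}_x}\bigr).
\end{equation*}
The divergence on the right is bounded directly from the equation by $\|(\cA_1-\cA_2)\nabla v_2\|_{L^2}+\|F_1-F_2\|_{L^2_z H^{-1}_x}$, the latter reducing in case (ii) to $\|\tilde k_1-\tilde k_2\|_{L^2}+\|\p_z(\tilde k_1-\tilde k_2)\|_{L^2_z H^{-\mez}_x}$ after using the trivial inclusion $\|\p_z u\|_{H^{-1}}\les\|u\|_{L^2}$ together with a one-derivative refinement. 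Putting the pieces together yields \eqref{eq.variational-v1} and \eqref{eq.variational-v2}. The main technical obstacle is the bookkeeping of boundary terms at $z=-1$ in the finite-depth case, where the Neumann datum involves $\tilde k_i|_{z=-1}$ and must be carefully split along the difference $\cA_1-\cA_2$ versus $\tilde k_1-\tilde k_2$ so that each contribution lands in the prescribed right-hand side of the lemma without producing spurious factors of $\|f_1-f_2\|_{H^s}$.
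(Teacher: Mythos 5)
Your proof takes a genuinely different route from the paper. The paper telescopes the difference, $v^{(1)}=v_1^{(1)}(h_1-h_2)+\bigl(v_1^{(1)}(h_2)-v_2^{(1)}(h_2)\bigr)$ (and analogously for $v^{(2)}$), handles the ``same geometry, different data'' piece with \cref{lemm.variational-v}, and delegates the ``different geometry, same data'' piece to \cite[Lemma 3.27]{NP}. You instead derive a single elliptic equation for $v=v_1-v_2$ on the fixed strip with right-hand side $-\di_{x,z}\bigl((\cA_1-\cA_2)\nabla v_2\bigr)+(F_1-F_2)$, run an energy estimate, and then try to upgrade to $X^{-\mez}(J)$ via the identity from \cite[Lemma 3.11]{NP}. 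The energy-estimate step is in order (the boundary bookkeeping at $z=-1$ and the lifting by $\tilde h=e^{\delta z\lg D_x\rg}(h_1-h_2)$ are both handled correctly, and the product $\|\cA_1-\cA_2\|_{L^2}\|\nabla v_2\|_{L^\infty}\le C(\|(f_1,f_2)\|_{H^s})\|f_1-f_2\|_{H^{\mez}}(\|h_2\|_{H^s}+\|k_2\|_{H^s})$ is of the right form), so the direct approach is attractive: it makes the argument self-contained rather than deferring to an external lemma.

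There is, however, a genuine gap in the last step. To invoke \cite[Lemma 3.11]{NP} you need $\|\di_{x,z}(\cA_1\nabla v)\|_{L^2_z H^{-1}_x}$, and you claim this is bounded by $\|(\cA_1-\cA_2)\nabla v_2\|_{L^2}+\|F_1-F_2\|_{L^2_zH^{-1}_x}$. But $\di_{x,z}\bigl((\cA_1-\cA_2)\nabla v_2\bigr)$ contains a $\p_z$ of the $z$-component, and a $z$-derivative does not gain $x$-regularity: from $(\cA_1-\cA_2)\nabla v_2\in L^2(\Rr^d\times J)$ one only gets $\p_z\bigl((\cA_1-\cA_2)\nabla v_2\bigr)_z\in H^{-1}_z L^2_x$, which is not contained in $L^2_z H^{-1}_x$. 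The ``trivial inclusion $\|\p_z u\|_{H^{-1}}\les\|u\|_{L^2}$'' you cite is an $H^{-1}_{x,z}$ statement and does not apply to the anisotropic norm $L^2_z H^{-1}_x$ that appears in \cite[Lemma 3.11]{NP}. To push it through by brute force you would need $(\cA_1-\cA_2)\nabla v_2\in H^1_z L^2_x$, which costs an extra derivative on $\cA_1-\cA_2$ and hence more than $\|f_1-f_2\|_{H^{\mez}}$, destroying the target estimate. To repair the argument one must either establish a divergence-form variant of \cite[Lemma 3.11]{NP} (one that accepts a right-hand side $\di M+G$ with $M\in L^2$, $G\in L^2_zH^{-1}_x$ and bounds by $\|M\|_{L^2}+\|G\|_{L^2_zH^{-1}_x}$), or rearrange the equation so that $\p_z\bigl((\cA_1-\cA_2)\nabla v_2\bigr)_z$ is moved to the left-hand side and absorbed into $\p_z\bigl((\cA_1\nabla v_1)_z-(\cA_2\nabla v_2)_z\bigr)$; neither step is in your write-up. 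The paper's telescoping sidesteps this precisely because for the ``same geometry'' piece the source is simply $-\p_z(\tilde k_1-\tilde k_2)$, which sits in $L^2_zH^{-\mez}_x\subset L^2_zH^{-1}_x$ with no divergence of a mere $L^2$ vector field, while the ``different geometry'' piece is handled by a dedicated argument in \cite[Lemma 3.27]{NP}.
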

\begin{proof} Let us start with~\eqref{eq.variational-v1}. Denoting $v_i^{(1)}=v_i^{(1)}(h_i)$  to stress the linear dependence of $v_i^{(1)}$ on $h_i$, we  write 
\begin{equation}
    \label{eq.decomp-v1}
    v^{(1)}=v_1^{(1)}-v_2^{(1)} = v_1^{(1)}(h_1-h_2) + v_1^{(1)}(h_2)-v_2^{(1)}(h_2).
\end{equation}
 By \cref{lemm.variational-v}, we have 
 \[
\| v_1^{(1)}(h_1-h_2)\|_{X^{-\mez}(J)}\le C(\| f_1\|_{H^s})\| h_1-h_2\|_{H^\mez}. 
 \]
On the other hand, \cite[Lemma 3.27]{NP} gives 
\[
\| v_1^{(1)}(h_2)-v_2^{(1)}(h_2)\|_{X^{-\mez}(J)}\le C(\|(f_1,f_2)\|_{H^s})\|h_2\|_{H^s})\|f_1-f_2\|_{H^{\mez}}.
\]
In order to prove \eqref{eq.variational-v2} we denote $\tilde{k}_i=k_i\circ \ff_{f_i}$ and  write $v_i^{(2)}=v_i^{(2)}(\tilde{k}_i)$ to emphasize the linear dependence on $k_i$ in the relation $\di_{x, z} (\cA_i \na_{x, z}v_i^{(2)})=-\frac{\p_z\varrho_i} {1+|\na_x\varrho_i|^2}\p_z\tilde{k}_i$. Then
\[
    v^{(2)}=v_1^{(2)}-v_2^{(2)} = v_1^{(2)}(\tilde{k}_1-\tilde{k}_2) + v_1^{(2)}(\tilde{k}_2)-v_2^{(2)}(\tilde{k}_2).     
\]
The estimate of $v_1^{(2)}(F_{0,1}-F_{0,2})$ follows from \cref{lemm.variational-v}: 
\[
\begin{aligned}
\| v_1^{(2)}(F_{0,1}-F_{0,2})\|_{X^{-\mez}(J)}&\le C(\| f_1\|_{H^s})\Big(\|\tilde{k}_1-\tilde{k}_2\|_{L^2}+\| \frac{\p_z\varrho_1} {1+|\na_x\varrho_1|^2}(\p_z\tilde{k}_1-\p_z\tilde{k}_2)\|_{L^2_z H^{-1}_x}\Big)\\
&\le C(\| f_1\|_{H^s})\Big(\|\tilde{k}_1-\tilde{k}_2\|_{L^2}+\|\p_z\tilde{k}_1-\p_z\tilde{k}_2\|_{L^2_z H^{-\mez}_x}\Big),
\end{aligned}
\]
where we have used \eqref{pr} with $(s_0, s_1, s_2)=(-1, s-1, -\mez)$. Finally, the estimate of $v_1^{(2)}(F_{0,2})-v_2^{(2)}(F_{0,2})$ can be obtained using a similar procedure as the one used in \cite[Lemma 3.27]{NP}, which we omit. In the finite depth case, $\| v\|_{L^2}$ is controlled by $\| \na_{x, z}v\|_{L^2}$ via Poincar\'e's inequality. 
\end{proof}
To prepare for the estimation of $F_0$, we recall the following. 
\begin{lemm}\cite[Lemma 3.28]{NP}\label{lemm.estABC} Let $s>1+\frac{d}{2}$ and $\sigma \in [-\mez,s-1]$, then there holds 
    \begin{equation}
        \label{eq.alphaBetaDiff}
        \|\alpha_1-\alpha_2\|_{X^{\sigma}(J)} + \|\beta_1 - \beta_2\|_{X^{\sigma}(J)} \le C(\|(f_1,f_2)\|_{H^s})\|f_1-f_2\|_{H^{\sigma +1}}
    \end{equation}
    and 
    \begin{equation}
        \label{eq.gammaDiff}
        \|\gamma_1-\gamma_2\|_{Y^{\sigma}(J)} \le C(\|(f_1, f_2)\|_{H^s})\|f_1-f_2\|_{H^{\sigma +1}},
    \end{equation}
where $C : \Rr_+ \to \Rr_+$ is a non-decreasing function depending only on $(d,s)$ and also on $\fd$ and $\|b_0\|_{H^{s}}$ in the finite-depth case. 
\end{lemm}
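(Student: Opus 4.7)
\emph{Proof proposal.} The idea is to replicate, at the level of differences, the strategy used to prove \cref{lemm.coefsEstimInfinite}. Since $\alpha_i$ and $\beta_i$ are smooth functions of $\tilde{\nabla}\varrho_i=(\nabla_x\varrho_i,\partial_z\varrho_i-m)$ (with $m=1$ or $H$ depending on the depth) vanishing at $\tilde{\nabla}\varrho_i=0$, the fundamental theorem of calculus yields
\[
    \alpha_1-\alpha_2 = G_\alpha(\tilde{\nabla}\varrho_1,\tilde{\nabla}\varrho_2)\cdot(\tilde{\nabla}\varrho_1-\tilde{\nabla}\varrho_2),
\]
and similarly for $\beta$, where $G_\alpha$ is smooth with $G_\alpha(0,0)=0$ being replaced by mere smoothness because we extracted a difference. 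The coefficient $G_\alpha(\tilde{\nabla}\varrho_1,\tilde{\nabla}\varrho_2)$ is bounded in $L^\infty_zH^{s-1}_x$ by $C(\|(f_1,f_2)\|_{H^s})$ through \cref{est:nonl} combined with \eqref{navarrho:X} and Sobolev embedding. The factor $\tilde{\nabla}(\varrho_1-\varrho_2)$ is linear in $f_1-f_2$ by construction of $\varrho_i$ in \eqref{def:vr}, so an exact analogue of \eqref{navarrho:X} applied to $f_1-f_2$ gives $\|\tilde{\nabla}(\varrho_1-\varrho_2)\|_{X^\sigma(J)}\lesssim\|f_1-f_2\|_{H^{\sigma+1}}$. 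A tame product estimate in $X^\sigma$, using the embedding $H^{s-1}(\mathbb{R}^d)\hookrightarrow L^\infty$ on $G_\alpha$, then closes the first estimate.

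For $\gamma$, I would start from the decomposition used in the proof of \cref{lemm.coefsEstimInfinite},
\[
    \gamma_i = \frac{\partial_z^2\varrho_i}{\partial_z\varrho_i} + \frac{\alpha_i}{\partial_z\varrho_i}\Delta_x\varrho_i + \frac{\beta_i}{\partial_z\varrho_i}\cdot\nabla_x\partial_z\varrho_i,
\]
and subtract the two expressions by applying $AB-A'B'=(A-A')B+A'(B-B')$ repeatedly. This routes each difference onto either a zeroth/first-order factor (like $\alpha_i/\partial_z\varrho_i$ or $\beta_i/\partial_z\varrho_i$), which is controlled by the first part of the lemma and by $\|\alpha_i\|_{L^\infty_zH^{s-1}}+\|\beta_i\|_{L^\infty_zH^{s-1}}\le C(\|f_i\|_{H^s})$, or onto a second-order factor ($\Delta_x(\varrho_1-\varrho_2)$ or $\nabla_x\partial_z(\varrho_1-\varrho_2)$), which lies in $L^2_zH^{\sigma-1/2}_x$ by the $X^\sigma$ bound on $\tilde{\nabla}(\varrho_1-\varrho_2)$. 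Each resulting product is then handled by Bony's decomposition together with the paraproduct bounds \eqref{boundpara} and \eqref{Bony1}--\eqref{Bony2}, producing a bound in the space $Y^\sigma$ whose $L^2_zH^{\sigma-1/2}_x$ component is precisely designed to absorb this half-derivative loss.

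The main obstacle is the $\gamma$ estimate: $\gamma$ genuinely loses one derivative with respect to $\varrho$, and hence $3/2$ with respect to $f$, so one must be careful to place every second-derivative factor into the $L^2_zH^{\sigma-1/2}_x$ slot rather than the $L^1_zH^\sigma_x$ slot of $Y^\sigma$. The only non-routine verification is to check that the paraproduct decompositions are admissible at the low regularity threshold $s>1+d/2$; but this is exactly the threshold already imposed on \cref{lemm.coefsEstimInfinite} for the same reason, and no additional assumption is required. Since this lemma is, as stated, \cite[Lemma 3.28]{NP}, one may alternatively invoke that reference directly—the strategy above is the one carried out there.
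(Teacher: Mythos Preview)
The paper does not give its own proof of this lemma: it is quoted directly from \cite[Lemma~3.28]{NP} and invoked as a black box. Your sketch is a correct outline of the standard argument (and indeed the one carried out in \cite{NP}): linearize the smooth dependence of $\alpha,\beta$ on $\tilde\nabla\varrho$ via the fundamental theorem of calculus, use the linearity of $\varrho$ in $f$ to control $\tilde\nabla(\varrho_1-\varrho_2)$ in $X^\sigma$, and for $\gamma$ telescope the product structure and feed each piece into the $X\times X\to Y$ product estimates of \cref{lemm.estXandY}. One small inaccuracy: you write that $G_\alpha(\tilde\nabla\varrho_1,\tilde\nabla\varrho_2)$ is bounded in $L^\infty_z H^{s-1}_x$, but since $G_\alpha$ does not vanish at the origin this is false as stated on $\mathbb{R}^d$; you must first split off the constant $G_\alpha(0,0)$ (which multiplies $\tilde\nabla(\varrho_1-\varrho_2)$ harmlessly) and only then place the remainder in $L^\infty_z H^{s-1}_x$ via \cref{est:nonl}. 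You clearly anticipate this, so the proposal stands.
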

Let us proceed with the estimation of $ \|F_0\|_{Y^{s-\tdm}([z_1, 0])}$, where $F_0$ is given by \eqref{def:F0:contra}. We first apply \cref{prop.ABZ} to $v_2$ to have 
\bq\label{est:nav2:s-1}
\|\na_{x, z} v_2\|_{X^{s-1}([z_1, 0])} \le C(\|f_2\|_{H^s})\left( \|\zeta_2\|_{H^{s}}+ \|F_{0,2}\|_{Y^{s-1}(J)}+\|\nabla_{x,z}v_2\|_{X^{-\mez}(J)}\right).
\eq
Using \eqref{est:nav2:s-1}, \cref{lemm.estABC}, and \eqref{eq.estXYY} with $s_0=s_1=s-\tdm$ and $s_2=s-1$, we obtain
\begin{multline}\label{subeq.boundF}
    \|(\gamma_1-\gamma_2)\partial_zv_2\|_{Y^{s-\tdm}([z_1, 0])} \lesssim \|\gamma_1-\gamma_2\|_{Y^{s-\tdm}([z_1, 0])}\|\partial_z v_2\|_{X^{s-1}([z_1, 0])} \\
    \le C(\|(f_1,f_2)\|_{H^s})\|f_1-f_2\|_{H^{s-\mez}}\left( \|\zeta_2\|_{H^{s}}+ \|F_{0,2}\|_{Y^{s-1}(J)}+\|\nabla_{x,z}v_2\|_{X^{-\mez}(J)}\right).
\end{multline}
 On the other hand, by  \eqref{est:nav2:s-1}, \cref{lemm.estABC}, and \eqref{eq.estXXY}  with $s_0= s_1=s-\tdm$ and $s_2= s-2$,  we have  
\begin{align*}
    \|(\alpha_1-\alpha_2)\Delta_xv_2\|_{Y^{s-\tdm}([z_1, 0])}&\les \| \alpha_1-\alpha_2\|_{X^{s-\tdm}([z_1, 0])}\| \Delta_x v_2\|_{X^{s-1}([z_1, 0])}\\
    &\le    C(\|(f_1, f_2)\|_{H^s})\|f_1-f_2\|_{H^{s-\mez}} \|\nabla_{x,z}v_2\|_{X^{s-1}}.
\end{align*}
The same argument yields 
\begin{align*}
    \|(\beta_1 - \beta_2)\cdot\nabla_x\partial_zv_2\|_{Y^{s-\tdm}([z_1, 0])}
    &\le  C(\|(f_1, f_2)\|_{H^s})\|f_1-f_2\|_{H^{s-\mez}} \|\nabla_{x,z}v_2\|_{X^{s-1}}. 
\end{align*}
Hence, $\|(\alpha_1-\alpha_2)\Delta_xv_2\|_{Y^{s-\tdm}}$ and  $\|(\beta_1 - \beta_2)\cdot\nabla_x\partial_zv_2\|_{Y^{s-\tdm}}$ are also bounded by~\eqref{subeq.boundF}. We note that $s>1+\frac{d}{2}$ suffices for the above estimates. 

Next, we assume $s>\tdm+\frac{d}{2}$ to estimate quantities involving $k_i$. Since  $\p_z\varrho_i\p_y k_i \circ \ff_{f_i} = \p_z(k_i\circ \ff_{f_i})$ with $m=1$ in infinite depth and $m=H$ in finite depth, we have  
\begin{align*}
    F_{0,1}-F_{0,2} &= \left(m + \left(\frac{\partial_z\varrho_2}{1+|\nabla_x\varrho_2|^2}-m\right)\right)\p_z(k_2\circ \ff_{f_2}-k_1\circ \ff_{f_1}) \\
    &  \quad + \left(\frac{\partial_z\varrho_2}{1+|\nabla_x\varrho_2|^2} - \frac{\partial_z\varrho_1}{1+|\nabla_x\varrho_1|^2}\right)\p_z(k_1\circ \ff_{f_1}) =: A + B.    
\end{align*}
Set $G_j:=\frac{\partial_z\varrho_j}{1+|\nabla_x\varrho_j|^2}-m$. We have 
\[
\| G_j\|_{X^{s-1}(J)}\le C(\| f_j\|_{H^s}),\quad \| G_1-G_2\|_{X^{s-\tdm}(J)}\le C(\| (f_1, f_2)\|_{H^s})\| f_1-f_2\|_{H^{s-\mez}}.
\]
Combining these with the product rule \eqref{pr}, we deduce
\[
\begin{aligned}
\| A\|_{L^2_z H^{s-2}_x}&\les (1+\| G_2\|_{L^\infty_z H^s_x})\| \p_z(k_2\circ \ff_{f_2}-k_1\circ \ff_{f_1})\|_{L^2_zH^{s-2}_x}\\
&\le C(\| f_2\|_{H^s})\| k_2\circ \ff_{f_2}-k_1\circ \ff_{f_1}\|_{H^{s-1}}
\end{aligned}
\]
and 
\[
\begin{aligned}
\| B\|_{L^2_z H^{s-2}_x}&\le C(\| (f_1, f_2)\|_{H^s})\| f_1-f_2\|_{H^{s-\mez}}\|\p_z(k_1\circ \ff_{f_1}) \|_{L^2_z H^{s-\tdm}_x}.
\end{aligned}
\]
Using  \cref{lemm.FaaDiBruno} and the condition  $s>1+\frac{1+d}{2}$ gives
\begin{multline*}
\|\p_z(k_1\circ \ff_{f_1}) \|_{L^2_z H^{s-\tdm}_x}\les \|k_1\circ \ff_{f_1} \|_{H^{s-\mez}} \\
\le C(\| f_1\|_{H^s})(\| k_1\|_{H^{s-\mez}}+ \| \na k_1\|_{L^\infty})\le C(\| f_1\|_{H^s})\| k_1\|_{H^s}.
\end{multline*}
We have proven that 
\bq\label{est:F01-F02}
\begin{aligned}
\|F_{0,1}-F_{0,2}\|_{L^2_zH^{s - 2}_x} 
    & \le C(\|(f_1,f_2)\|_{H^s})\left( \|k_2\circ \ff_{f_2}-k_1\circ \ff_{f_1}\|_{H^{s-1}} +\|f_1-f_2\|_{H^{s-\mez}}\|k_1\|_{H^s}\right). 
    \end{aligned}
    \eq
It remains to estimate $\|F_{0,2}\|_{Y^{s-1}(J)}+\|\nabla_{x,z}v_2\|_{X^{-\mez}(J)}$ in \eqref{subeq.boundF}. 

Another application of \cref{lemm.FaaDiBruno} and the condition $s>\tdm+\frac{d}{2}$ gives 
\begin{align*}
    \|F_{0,2}\|_{Y^{s-1}} \le \|F_{0,2}\|_{L^2_zH^{s-\frac{3}{2}}_x} \le C(\|f_2\|_{H^{s}})\| k_2\circ \ff_{f_2}\|_{H^{s-\mez}}
    \le C(\|f_2\|_{H^{s}})\|k_2\|_{H^{s}}. 
\end{align*}
It follows from \cref{lemm.FirstStepIteration} that
    \begin{multline}\label{estv:diff:low}
    \|\nabla_{x,z}v\|_{X^{-\mez}(J)}\le C(\|(f_1,f_2)\|_{H^s})\left\{ \|h_1-h_2\|_{H^{\mez}} + \|k_2\circ \ff_{f_2}-k_1\circ \ff_{f_1}\|_{H^{s-1}}  \right.\\
    \left. +\Big(\|(h_1, h_2)\|_{H^s}+\|(k_1, k_2) \|_{H^s}\Big)\|f_1-f_2\|_{H^{s-\mez}} \right\},
    \end{multline}
where the condition $s>\tdm+\frac{d}{2}\ge 2$ is only needed when $k_i\ne 0$.
    
  Gathering the above estimates yields
 \begin{multline}
 \label{est:F0:Y}
     \|F_0\|_{Y^{s-\tdm}}  \le C(\|(f_1,f_2)\|_{H^s})\left\{ \|k_2\circ \ff_{f_2}-k_1\circ \ff_{f_1}\|_{H^{s-1}} \right.\\
 \left.+\|f_1-f_2\|_{H^{s-\mez}}\Big(\|(h_1, h_2)\|_{H^s}+\|(k_1, k_2)\|_{H^s}\Big)\right\}. 
 \end{multline}
In view of   \eqref{estv:Xs-tdm},  \eqref{estv:diff:low}, and \eqref{est:F0:Y},  we obtain
 \begin{multline}   \label{eq.basicEstDiff}
    \|\nabla_{x,z}v\|_{X^{s-\frac{3}{2}}([z_0, 0])} \le C(\|(f_1, f_2)\|_{H^s})\left\{\|h_1-h_2\|_{H^{s-\mez}} + \|k_2 \circ \ff_{f_2} - k_1 \circ \ff_{f_1}\|_{H^{s-1}} \right.  \\
 \left. +\Big(\| (h_1, h_2)\|_{H^s}+\|(k_1, k_2)\|_{H^s}\Big)\|f_1-f_2\|_{H^{s-\mez}} \right\}=:\Xi. 
 \end{multline}
Our next task is to upgrade \eqref{eq.basicEstDiff} to an estimate for $\na_{x, z}v$ in $H^{s-1}(\Rr^d\times (z_0, 0))$. 
\subsubsection{Estimates of $\|\na_{x, z}v\|_{H^{s-1}(\Rr^d\times (z_0, 0))}$} We write $s-1=k+\mu$ with $k\ge 0$ and $\mu\in[0,1)$, and claim that \begin{align}
    \label{eq.iterateEstDiff}
 \forall \ell \in \{0, \dots, k+1\},\quad   \|\partial_z^{\ell}\nabla_x^{k+1-\ell}v\|_{H^{\mu}_{x,z}(\Rr^d\times (z_0, 0))} & \le \Xi.
\end{align}
\medskip 

\underline{Case $\ell =0$.} By virtue of the interpolation \cref{lemm.interpol}, 
\[
    \|\nabla_x^{k+1}v\|_{H^{\mu}_{x,z}} \lesssim \|\nabla_x^{k+1}v\|_{L^2_zH^{\mu}_x} + \|\partial_z\nabla_x^{k+1}v\|_{L^2_zH^{\mu-1}_x}.
\]
 Since  both terms on the right-hand side are controlled by $ \|\nabla_{x,z}v\|_{X^{s-\frac{3}{2}}([z_0, 0])}$, \eqref{eq.iterateEstDiff}  follows from \eqref{eq.basicEstDiff}.
\medskip 

\underline{Case $\ell = 1$.}  Another application of \cref{lemm.interpol} gives
\[
    \|\partial_z\nabla_x^{k}v\|_{H^{\mu}_{x,z}} \lesssim \|\partial_z\nabla_x^{k}v\|_{L^2_zH^{\mu}_x} + \|\partial^2_z\nabla_x^{k}v\|_{L^2_zH^{\mu-1}_x}.
\]
The term $\|\partial_z\nabla_x^{k}v\|_{L^2_zH^{\mu}}$ is controlled using \eqref{eq.basicEstDiff}. As for $\|\partial^2_z\nabla^{k}v\|_{L^2_zH^{\mu-1}}$, we use the equation \eqref{eq.v-diff} for $\p_z^2v$, the product rule \eqref{pr}, the condition $s>1+\frac{d}{2}$, and \cref{lemm.coefsEstimInfinite}  to have
\begin{align*}
    \|\partial^2_z\nabla_x^{k}v\|_{L^2_zH^{\mu-1}_x} &\le \|\partial^2_zv\|_{L^2_zH^{k+\mu-1}_x} =\|\partial^2_zv\|_{L^2_zH^{s-2}_x}\\
    &\lesssim \|\Delta_x v\|_{L^2_zH^{s-2}_x} + \|(\alpha_2-m^2)\Delta_x v\|_{L^2_zH^{s-2}_x} + \|\beta_2\cdot \nabla_x\partial_zv\|_{L^2_zH^{s-2}_x} \\
    & \quad+ \|\gamma_2\p_zv\|_{L^2_zH^{s-2}_x} + \|F_0\|_{L^2_zH^{s-2}_x}\\
    &\les C(\| f_2\|_{H^s})\| \na_{x, z}v\|_{L^2_zH^{s-1}_x}+ \|F_0\|_{L^2_zH^{s-2}_x}\\
      &\les C(\| f_2\|_{H^s})\| \na_{x, z}v\|_{X^{s-\tdm}_x}+ \|F_0\|_{Y^{s-\tdm}}.
\end{align*}
Invoking \eqref{est:F0:Y} and \eqref{eq.basicEstDiff}, we conclude the proof of \eqref{eq.iterateEstDiff} for $\ell=1$.
\medskip 

\underline{Case $\ell\in \{2, \dots, k+1\}$}. This is only the case when $k\ge 1$, i.e. $s\ge 2$. In order to prove \eqref{eq.iterateEstDiff} for $\ell\in\{2, \dots, k+1\}$, we claim that 
\bq\label{claim:nav:inter}
\forall \ell\in\{1, \dots, k\},\quad \|\partial_z^{\ell}\nabla_x^{k+1-\ell}v\|_{H^{\mu}_{x,z}}  \les \Xi +A_{\ell},\quad A_{\ell}:= \sum_{j=1}^{\ell} \| \p_z^j v\|_{L^2_{x, z}}
\eq
and 
\bq\label{claim:nav:inter:2}
\|\partial_z^{k+1}v\|_{H^{\mu}_{x,z}}  \les \Xi +A_{k}+\| \na_{x, z}v\|_{H^{s-2}_{x, z}}.
\eq
Assume temporarily that \eqref{claim:nav:inter} and \eqref{claim:nav:inter:2} hold. In conjunction with the case $\ell=0$, they imply
\[
\| \na_{x, z}^{k+1}v\|_{H^\mu_{x,z}}\les \Xi+ A_k+\| \na_{x, z}v\|_{H^{s-2}_{x, z}}\les \Xi+\| \p_zv\|_{H^{k-1}_{x, z}}+\| \na_{x, z}v\|_{H^{s-2}_{x, z}}\les  \Xi+\| \na_{x, z}v\|_{H^{k-1+\mu}_{x, z}}
\]
since $s-2=k-1+\mu$. It follows that 
\[
\| \na_{x, z}v\|_{H^{k+\mu}_{x,z}}\les \Xi+ \| \na_{x, z}v\|_{H^{k-1+\mu}_{x,z}}+\| \na_{x, z}v\|_{L^2_{x,z}}.
\]
By interpolating  $\| \na_{x, z}v\|_{H^{k-1+\mu}}$ between $\| \na_{x, z}v\|_{L^2}$ and $\| \na_{x, z}v\|_{H^{k+\mu}}$, we obtain 
\[
\| \na_{x, z}v\|_{H^{k+\mu}_{x,z}}\les \Xi+\| \na_{x, z}v\|_{L^2_{x,z}}\les \Xi+\| \na_{x, z}v\|_{X^{-\mez}}.
\]
Recalling \cref{lemm.FirstStepIteration}, we conclude that $\| \na_{x, z}v\|_{H^{k+\mu}_{x,z}}\les \Xi$ which implies \eqref{eq.iterateEstDiff} for $\ell\in\{0, \dots, k+1\}$. 

\begin{proof}[Proof of \eqref{claim:nav:inter}] 
We proceed by induction on $\ell$. We note that  the left-hand side of \eqref{claim:nav:inter} has a total of $k+1$ derivatives with at most $k$ $z$-derivatives.  The  case $\ell=1$ has been obtained above. Assuming that $k\ge 2$ and that \eqref{claim:nav:inter} holds for some $\ell\in\{1, \dots, k-1\}$, we need to prove 
\bq\label{induction:dzellv:1}
\|\p_z^{\ell+1}\na_x^{k-\ell}v\|_{H^\mu_{x, z}}\les \Xi +A_{\ell+1}. 
\eq
We also assume $\mu \in (0,1)$. The case $\mu=0$  can be treated analogously  except that \cref{lemm.interpol} is not invoked.

By \cref{lemm.interpol},
\bq\label{induction:dzellv}
    \|\partial_z^{\ell+1}\nabla_x^{k-\ell}v\|_{H^{\mu}} \lesssim \|\p_z^{\ell+1}\na_x^{k-\ell}v\|_{L^2_zH^\mu_x} + \|\p_z^{\ell+2}\na_x^{k-\ell}v\|_{L^2_zH^{\mu-1}_x}. 
\eq
We write $\p_z^{\ell+1}=\p_z^{\ell-1}\p_z^2v$, $\p_z^{\ell+2}\na_x^{k-\ell}v=\p_z^\ell\p_z^2v$ and use equation \eqref{eq.v-diff} to  substitute 
\[
\p_z^2v=F_0-(\alpha_1\Delta_x + \beta_1\cdot \nabla_x \partial_z - \gamma_1\partial_z)v,
\]
which contains at most one $z$-derivative of $v$. Hence, $\p_z^{\ell+1}\na_x^{k-\ell}v$ contains at most $\ell$ derivatives in $z$ while $\p_z^{\ell+2}\na_x^{k-\ell}v$ contains at most $\ell+1$ derivatives in $z$ making it the more difficult term. Using the method for controlling  the second term that we will present below, one obtains 
\bq\label{prop5.1:term1}
\|\p_z^{\ell+1}\na_x^{k-\ell}v\|_{L^2_zH^\mu_x} \les \Xi+A_{\ell}.
\eq
This  will be utilized in controlling  the second term  $\|\p_z^{\ell+2}\na_x^{k-\ell}v\|_{L^2_zH^{\mu-1}_x}$.  We have 
\begin{align*}
    \|\partial_z^{\ell+2}\nabla_x^{k-\ell}v\|_{H_{x,z}^{\mu - 1}} &\le  \|\partial_z^{\ell}\nabla_x^{k-\ell}F_0\|_{L^2_zH_x^{\mu-1}} + m^2\|\p_z^{\ell}\na_x^{k-\ell}\Delta_xv\|_{L^2_zH_x^{\mu-1}} \\ 
    & \quad + \|\partial_z^{\ell}\nabla_x^{k-\ell}((\alpha_1-m^2)\Delta_x v)\|_{L^2_zH_x^{\mu-1}} + \|\partial_z^{\ell}\nabla_x^{k-\ell}(\beta_1\cdot\na_x\p_z v)\|_{L_z^2H_x^{\mu-1}} \\ 
    & \quad \quad + \|\partial_z^{\ell}\nabla_x^{k-\ell}(\gamma_1\partial_z v)\|_{L_z^2H_x^{\mu-1}}.    
\end{align*}
The control of $F_0$ follows from the following, whose proof is postponed to the end of this section. 
\begin{lemm}\label{lemm.F0Estimates} 
If $s>1+\frac{d}{2}$ and $s\ge 2$, then 
\begin{equation}
    \label{eq.bound-F0b}
   \begin{aligned}
    \| F_0\|_{H^{s-2}} &\le C(\|(f_1, f_2)\|_{H^s})\left\{\|k_1\circ \ff_{f_1}-k_2\circ \ff_{f_2}\|_{H^{s-1}} \right.\\
    &\qquad\left.+ \|f_1-f_2\|_{H^{s-\mez}}(\|(k_1, k_2)\|_{H^s} + \|(h_1, h_2)\|_{H^s})\right\},
    \end{aligned}
\end{equation}
where we assume furthermore that $s>\tdm+\frac{d}{2}$ when $k_i\ne 0$. 
\end{lemm}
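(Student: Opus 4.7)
The plan is to decompose $F_0$ via \eqref{def:F0:contra} into four pieces and estimate each in $H^{s-2}(\Rr^d\times J)$. The three coefficient-difference pieces $(\alpha_1-\alpha_2)\Delta_x v_2$, $(\beta_1-\beta_2)\cdot\nabla_x\p_z v_2$, and $(\gamma_1-\gamma_2)\p_z v_2$ will be controlled using \cref{lemm.estABC}, while the piece $F_{0,1}-F_{0,2}$ (which only appears when $k_i\neq 0$) will be further split along the lines of the computation leading to \eqref{est:F01-F02}.

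For the three coefficient-difference terms, I would first apply \cref{prop.ABZ} to $v_2$ together with \cref{lemm.variational-v} and the estimates $\|F_{0,2}\|_{Y^{s-1}}\le C(\|f_2\|_{H^s})\|k_2\|_{H^s}$ (from \cref{lemm.F0est}) to obtain
\[
\|\nabla_{x,z}v_2\|_{X^{s-1}(J)}\le C(\|f_2\|_{H^s})\bigl(\|h_2\|_{H^s}+\|k_2\|_{H^s}\bigr).
\]
Combined with the bounds $\|\alpha_1-\alpha_2\|_{X^{s-\tdm}(J)}+\|\beta_1-\beta_2\|_{X^{s-\tdm}(J)}\le C(\|(f_1,f_2)\|_{H^s})\|f_1-f_2\|_{H^{s-\mez}}$ from \cref{lemm.estABC}, a Bony-type product estimate on $\Rr^d\times J$ (justified using the extension operator \cref{th.extension}, so that Bony's decomposition in $\Rr^{d+1}$ applies) gives
\[
\|(\alpha_1-\alpha_2)\Delta_x v_2\|_{H^{s-2}_{x,z}}+\|(\beta_1-\beta_2)\cdot\nabla_x\p_z v_2\|_{H^{s-2}_{x,z}}\le C(\|(f_1,f_2)\|_{H^s})\|f_1-f_2\|_{H^{s-\mez}}(\|h_2\|_{H^s}+\|k_2\|_{H^s}).
\]
The $\gamma$-term is the more delicate one, since $\gamma_1-\gamma_2\in Y^{s-\tdm}$ is weaker than the analogous $\alpha,\beta$-bounds; I would handle it using a paraproduct decomposition together with \eqref{eq.estXYY} applied with $s_0=s_1=s-2$ and $s_2=s-1$, using $\p_zv_2\in X^{s-1}$ and the embedding $L^\infty_zH^{s-1}_x\hookrightarrow L^\infty_{x,z}$ (valid for $s>1+\tfrac{d}{2}$) to absorb the extra derivative.

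For the piece $F_{0,1}-F_{0,2}$, setting $G_i:=\frac{\p_z\varrho_i}{1+|\nabla_x\varrho_i|^2}$ and $\tilde k_i:=k_i\circ\ff_{f_i}$, I would split
\[
F_{0,1}-F_{0,2}=-G_1\p_z(\tilde k_1-\tilde k_2)-(G_1-G_2)\p_z\tilde k_2.
\]
Using $\|G_1-m\|_{X^{s-1}}\le C(\|f_1\|_{H^s})$ and a product rule in $H^{s-2}_{x,z}$, the first term is bounded by $C(\|f_1\|_{H^s})\|\tilde k_1-\tilde k_2\|_{H^{s-1}_{x,z}}$. For the second term, arguments analogous to those of \cref{lemm.estABC} yield $\|G_1-G_2\|_{X^{s-\tdm}(J)}\le C(\|(f_1,f_2)\|_{H^s})\|f_1-f_2\|_{H^{s-\mez}}$, and \cref{lemm.FaaDiBruno} combined with \cref{lemm:varrho} gives $\|\tilde k_2\|_{H^s_{x,z}}\le C(\|f_2\|_{H^s})\|k_2\|_{H^s}$, so $\|\p_z\tilde k_2\|_{H^{s-1}_{x,z}}\le C(\|f_2\|_{H^s})\|k_2\|_{H^s}$. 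Another product estimate in $H^{s-2}_{x,z}$ then yields the contribution $C(\|(f_1,f_2)\|_{H^s})\|f_1-f_2\|_{H^{s-\mez}}\|k_2\|_{H^s}$. Collecting the four bounds produces \eqref{eq.bound-F0b}.

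The main obstacle will be the product estimates in $H^{s-2}(\Rr^d\times J)$ when $s-2$ is close to zero (which can occur for $d=1$, $s$ near $2$): the standard tame Moser rule is not directly available, and one must rely on Bony's paraproduct decomposition, using that $\alpha_i, \beta_i\in X^{s-1}$ and $\gamma_i\in Y^{s-1}$ with low-norm bounds $C(\|(f_1,f_2)\|_{H^s})$, together with the anisotropic $X^\sigma$, $Y^\sigma$ framework and the embedding $X^{s-1}\hookrightarrow L^\infty$. A secondary obstacle is verifying that the product of a $Y^{s-\tdm}$ function with $\p_z v_2\in X^{s-1}$ lands in $H^{s-2}_{x,z}$ rather than merely in $Y^{s-2}$; this is the reason the $\gamma$-term is treated separately using a variant of \eqref{eq.estXYY}.
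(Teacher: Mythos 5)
Your decomposition of $F_0$ and your splitting of $F_{0,1}-F_{0,2}$ match the paper, but there is a genuine gap in how you propose to estimate the products: the target norm in \eqref{eq.bound-F0b} is the \emph{isotropic} norm $H^{s-2}(\Rr^d\times J)$, which requires up to $s-2$ derivatives (or fractional regularity) in $z$ of each product, whereas all of your inputs are anisotropic. The bound $\|\nabla_{x,z}v_2\|_{X^{s-1}(J)}$ controls only one $z$-derivative of $v_2$ (with $H^{s-1}_x$ or $L^2_zH^{s-\mez}_x$ regularity in $x$), and $\|\alpha_1-\alpha_2\|_{X^{s-\tdm}}$, $\|\gamma_1-\gamma_2\|_{Y^{s-\tdm}}$ control no $z$-derivatives of the coefficient differences at all; no Bony decomposition on $\Rr^{d+1}$ can manufacture the missing $z$-regularity, so the claimed bound $\|(\alpha_1-\alpha_2)\Delta_x v_2\|_{H^{s-2}_{x,z}}\les\dots$ does not follow from these data once $s\ge 3$, and even for $s-2\in[0,1)$ one would need $\p_z$ of both factors. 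The obstacle you label as ``secondary'' (that $Y^{s-\tdm}\times X^{s-1}$ products land only in $Y^{s-2}$, not $H^{s-2}_{x,z}$) is in fact the main unresolved point, and \eqref{eq.estXYY} cannot close it since its output is a $Y$-norm. A further issue: in the finite-depth case \cref{prop.ABZ} only gives estimates on strips $[z_0,0]$ away from the bottom, so your bound on $\nabla_{x,z}v_2$ is not available on all of $J$ by that route.

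The paper resolves exactly this by first proving \emph{isotropic} Sobolev bounds and then using the domain product rule \cref{coro:productdomain}: \cref{lemm.alphaGammaEst} gives $\|\alpha_1-\alpha_2\|_{H^{s-1}_{x,z}}+\|\beta_1-\beta_2\|_{H^{s-1}_{x,z}}+\|\gamma_1-\gamma_2\|_{H^{s-2}_{x,z}}\le C(\|(f_1,f_2)\|_{H^s})\|f_1-f_2\|_{H^{s-\mez}}$ (via \cref{est:nonl}~(ii), \eqref{eq.pzEstim1b} and \cref{lemm:varrho}~(iii)), and the elliptic estimate \eqref{nav2:Hs-mez} from \cref{prop.GSest-v} with $r=s-\mez$ gives $\|\nabla_{x,z}v_2\|_{H^{s-\mez}(\Rr^d\times J)}\le C(\|f_2\|_{H^s})(\|h_2\|_{H^s}+\|k_2\|_{H^{s-\mez}}+\|\na k_2\|_{L^\infty})$, valid up to the bottom. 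Then \cref{coro:productdomain} with $(s_0,s_1,s_2)=(s-2,s-1,s-\tdm)$ and $(s-2,s-2,s-\mez)$ yields the coefficient-difference terms. Your treatment of $F_{0,1}-F_{0,2}$ is structurally the same as the paper's, but it too needs the isotropic bound $\|G(\nabla\varrho_1)-G(\nabla\varrho_2)\|_{H^{s-1}_{x,z}}\les\|f_1-f_2\|_{H^{s-\mez}}$ (the \cref{lemm.alphaGammaEst} argument), not the anisotropic $X^{s-\tdm}$ bound you invoke. To repair your plan, replace the $X$/$Y$ inputs by these isotropic estimates; the rest of your outline then goes through.
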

Since we are proving \eqref{induction:dzellv:1} under the assumption that $k\ge 2$, we have $s=1+k+\mu\ge 3$. Hence, \cref{lemm.F0Estimates}  is applicable, giving  
 \[
 \|\partial_z^{\ell}\nabla_x^{k-\ell}F_0\|_{L^2_zH^{\mu-1}_x}\les  \|\partial_z^{\ell}F_0\|_{L^2_zH^{k-\ell+\mu-1}_x} \les   \|F_0\|_{H^{s-2}}\les \Xi,
 \]   
 where we have used that $k-\ell\ge 1$.  On the other hand,   the induction hypothesis implies
\[
\|\p_z^{\ell}\na_x^{k-\ell}\Delta_xv\|_{L^2_zH_x^{\mu-1}}\les \|\p_z^{\ell}\na_x^{k+1-\ell}v\|_{L^2_zH_x^{\mu}}\les \|\p_z^{\ell}\na_x^{k+1-\ell}v\|_{H_{x, z}^{\mu}} \les \Xi +A_{\ell}.
\]
Combining Leibniz's rule with the product rule in \cref{thm.sobolevProducts} yields 
\begin{align*}
    \|\partial_z^{\ell}\nabla_x^{k-\ell}((\alpha_1-m^2)\Delta_x v)\|_{L^2_zH^{\mu -1}} &\les \|\partial_z^{\ell}((\alpha_1-m^2)\Delta_x v)\|_{L^2_zH^{k+\mu -\ell -1}_x}\\
     &\lesssim \sum_{0\le j \le \ell}\|\partial_z^j(\alpha_1-m^2)\|_{L^{\infty}_zL^{p_j}_x}\|\partial_z^{\ell-j}\Delta_xv\|_{L^{2}_zW^{k+\mu-\ell-1,p'_j}_x}  \\
    &\quad + \sum_{0\le j \le \ell}\|\partial_z^j(\alpha_1-m^2)\|_{L_z^{\infty}W^{k+\mu-\ell-1,q_j}_x}\|\partial_z^{\ell-j}\Delta_xv\|_{L^2_zL^{q'_j}_x}   
\end{align*}
with $\frac{1}{p_j}+\frac{1}{p'_j}=\frac{1}{q_j}+\frac{1}{q'_j}=\frac{1}{2}$ and $p'_j, q_j \neq \infty$. 

Note that $s-1-j\ge 1$ for $j\le \ell\le k-1$. We choose
\[
\begin{cases}
\frac{1}{p_j}=\mez - \frac{s-1-j}{d}\quad\text{if~}s-1-j< \frac{d}{2}, \\
p_j=\infty \quad\text{if~}s-1-j= \frac{d}{2}, \\
2\ll p_j<\infty \quad\text{if~}s-1-j= \frac{d}{2}, 
\end{cases}
\]
so that $H^{s-1-j}_x \hookrightarrow L^{p_j}_x$. Then, since $s>1+\frac{d}{2}$, we have $H^{k+\mu-\ell-1+j}_x  \hookrightarrow  W^{k+\mu-\ell-1,p'_j}_x$ in all cases. Consequently
\[
    \|\partial_z^j(\alpha_1-m^2)\|_{L^{\infty}_zL^{p_j}_x} \lesssim \|\p_z^j(\alpha_1-m^2)\|_{L^{\infty}_zH^{s-j-1}_x} \le C(\|f_1\|_{H^s})
\]
by \eqref{eq.pzEstim1}, and 
\[
    \|\partial_z^{\ell-j}\Delta_xv\|_{L^{2}_zW^{k+\mu-\ell-1,p'_j}_x}   \lesssim \|\p_z^{\ell-j} \Delta_x v\|_{L^2_zH^{k+\mu-\ell-1+j}_x}=:B_j.
    \]
    When $j=\ell$, 
    \[
    B_\ell= \|\Delta_x v\|_{L^2_zH^{k+\mu-1}_x}\les  \| \na_xv\|_{L^2_zH^{k+\mu}_x}= \| \na_xv\|_{L^2_zH^{s-1}_x}\le \| \na_xv\|_{X^{s-\tdm}}\le \Xi
    \]
  in view of \eqref{eq.basicEstDiff}. For $0\le j\le k-1$, we have 
  \[
  B_j\les  \|\p_z^{\ell-j}  v\|_{L^2_zH^{k+\mu-\ell+1+j}_x}\les \|\p_z^{\ell-j}  v\|_{L^2_{x, z}}+ \|\p_z^{\ell-j} \na_x^{k-\ell+1+j} v\|_{L^2_zH^\mu_x}\les \Xi+A_{\ell}
  \]
  by the induction hypothesis. The pair $(q_j,q'_j)$ can be chosen using an analogous argument. We omit further details. Thus 
  \[
  \|\partial_z^{\ell}\nabla_x^{k-\ell}((\alpha_1-m^2)\Delta_x v)\|_{L^2_zH_x^{\mu-1}}\les  \Xi+A_{\ell}. 
  \]
The term $\|\partial_z^{\ell}\nabla_x^{k-\ell}(\beta_1\cdot\na_x\partial_x v)\|_{L_z^2H_x^{\mu-1}}$ can be treated similarly since $\beta_1$ and $\alpha_1-m^2$ have the same regularity. We only note that $B_j$ needs to be replaced with 
\[
B_j':=\|\p_z^{\ell-j} \na_x \p_zv\|_{L^2_zH^{k+\mu-\ell-1+j}_x}\le \|\p_z^{\ell-j+1}  v\|_{L^2_zH^{k+\mu-\ell+j}_x}
\]
which has $\ell+1$ derivatives in $z$ when $j=0$. But when $j=0$, we have 
 \[
 \|\p_z^{\ell-j+1}  v\|_{L^2_zH^{k+\mu-\ell}_x}=\|\p_z^{\ell+1}  v\|_{L^2_zH^{k+\mu-\ell}_x}\les\|\p_z^{\ell+1}  v\|_{L^2_{x, z}}+\|\p_z^{\ell+1}  \na_x^{k-\ell} v\|_{L^2_zH^\mu_x}\les A_{\ell+1}+ \Xi
 \]
by invoking \eqref{prop5.1:term1}. 

Next, we consider  the $\gamma$ term. It suffices to estimate 
\[
   \|\partial_z^j\gamma_1\|_{L^{\infty}L^{p_j}} \text{ and } \|\partial_z^{\ell-j+1}v\|_{L^2_zW^{k+\mu-\ell-1,p'_j}}    
\]
for  $j\in\{0,\dots,\ell\}$, where $\frac{1}{p_j}+\frac{1}{p'_j}=\frac{1}{2}$ and $p'_j\neq \infty$. We recall that $s-2-j\ge \mu >0$. Note that if we had included $\ell=k+1$ in \eqref{claim:nav:inter}, then when $j=\ell=k$ in the induction hypothesis we would have had $s-2-j=\mu-1<0$. We choose 
\[
\begin{cases}
\frac{1}{p_j}=\mez - \frac{s-2-j}{d}\quad\text{if~}s-2-j< \frac{d}{2}, \\
p_j=\infty \quad\text{if~}s-2-j= \frac{d}{2}, \\
2\ll p_j<\infty \quad\text{if~}s-2-j= \frac{d}{2}, 
\end{cases}
\]
so that $H^{s-2-j}_x\hookrightarrow L^{p_j}_x$ and $H^{k+\mu-\ell+j}_x \hookrightarrow  W^{k+\mu-\ell-1,p'_j}_x$ for $s>1+\frac{d}{2}$. Then we have  
\[
    \|\partial_z^j\gamma_1\|_{L^{\infty}_zL^{p_j}_x} \lesssim \|\partial_z^j\gamma_1\|_{L^\infty_z H^{s-2-j}_x} \le  C(\|f_1\|_{H^s})
\]  
thanks to \eqref{dzgamma:L2}, and,  since $1\le \ell-j+1\le \ell+1$, 
\begin{align*}
    \|\partial_z^{\ell-j+1}v\|_{L^2_zW^{k+\mu-\ell-1,p'_j}_x}& \lesssim \|\partial_z^{\ell-j+1}v\|_{L^2_zH^{k+\mu-\ell+j}_x}.
\end{align*} 
When $j\ge 1$, the induction hypothesis gives 
\[
\|\partial_z^{\ell-j+1}v\|_{L^2_zH^{k+\mu-\ell+j}_x}\les \|\partial_z^{\ell-j+1}v\|_{L^2_{x,z}}+\|\partial_z^{\ell-j+1}\na_x^{k-\ell+j}v\|_{L^2_zH^\mu_x} \les \Xi+A_{\ell}. 
\]
When  $j=0$, we use  \eqref{prop5.1:term1} to obtain
\[
\|\partial_z^{\ell-j+1}v\|_{L^2_zH^{k+\mu-\ell+j}_x}=\|\partial_z^{\ell+1}v\|_{L^2_zH^{k+\mu-\ell}_x}\les \|\partial_z^{\ell+1}v\|_{L^2_{x, z}}+ \|\partial_z^{\ell+1}\na_x^{k-\ell}v\|_{L^2_zH^{\mu}_x} \les A_{\ell+1}+\Xi.
\]
Thus 
\[
 \|\partial_z^{\ell}\nabla_x^{k-\ell}(\gamma_1\partial_z v)\|_{L_z^2H_x^{\mu-1}}\les \Xi+A_{\ell+1}.
\]
The proof of \eqref{claim:nav:inter} is complete.
\end{proof}

\begin{proof}[Proof of \eqref{claim:nav:inter:2}]    We first use \eqref{eq.v-diff} for $\p_z^2v$ to have
\begin{align*}
    \|\partial_z^{k+1}v\|_{H^{\mu}} &\le \|\partial_z^{k-1}F_0\|_{H^{\mu}} +m^2 \|\partial_z^{k-1}\Delta_xv\|_{H^{\mu}}  + \|\partial_z^{k-1}((\alpha_2-m^2)\Delta_x v)\|_{H^{\mu}} \\
    & \quad+ \|\partial_z^{k-1}(\beta_2\cdot\na_x\partial_x v)\|_{H^{\mu}} + \|\partial_z^{k-1}(\gamma_2\partial_z v)\|_{H^{\mu}}, 
\end{align*}
where  $\|\partial_z^{k-1}F_0\|_{H^{\mu}}\les  \Xi$ in view of  \eqref{eq.bound-F0b}.  By  \eqref{claim:nav:inter} with $\ell=k-1$, 
\begin{align*}
 \|\partial_z^{k-1}\Delta_xv\|_{H^{\mu}}
  &\les   \Xi+A_{k-1}.
 \end{align*}
    As  for the $\alpha$-term, we need to estimate 
\begin{align*}
    \|\p_z^j(\alpha_1-m^2)\p_z^{k-1-j}\Delta_xv\|_{H^{\mu}} &\lesssim \|\p_z^j(\alpha_1-m^2)\|_{L^{p_j}}\|\p_z^{k-1-j}\Delta_xv\|_{W^{\mu,p'_j}} \\
    &+ \|\p_z^j(\alpha_1-m^2)\|_{W^{\mu,q_j}}\|\p_z^{k-1-j}\Delta_xv\|_{L^{q'_j}} 
\end{align*}
for $j\in\{0, \dots, k-1\}$, where $\frac{1}{p_j}+\frac{1}{p'_j}=\frac{1}{q_j}+\frac{1}{q'_j}=\frac{1}{2}$ and $p'_j,q_j \neq \infty$.  Note that $s-\mez-j \ge \tdm+\mu>0$. We choose 
\[
\begin{cases}
\frac{1}{p_j}=\mez - \frac{s-\mez-j}{d}\quad\text{if~}s-\mez-j< \frac{d+1}{2}, \\
p_j=\infty \quad\text{if~}s-\mez-j= \frac{d+1}{2}, \\
2\ll p_j<\infty \quad\text{if~}s-\mez-j= \frac{d+1}{2}, 
\end{cases}
\]
 so that  $H^{s-\mez-j}_{x, z}\hookrightarrow L^{p_j}_{x, z}$ and $H^{\mu+j}_{x, z}\hookrightarrow W^{\mu,p'_j}_{x, z}$ for $s>1+\frac{d}{2}$. Hence, we have 
 \[
    \|\p_z^j(\alpha_1-m^2)\|_{L^{p_j}_{x, z}} \lesssim \|\p_z^j(\alpha_1-m^2)\|_{H^{s-\mez-j}_{x, z}} \le C(\|f_1\|_{H^s})
\] 
by  \eqref{eq.pzEstim1b}, and 
\begin{align*}
    \|\p_z^{k-1-j}\Delta_xv\|_{W^{\mu,p'_j}_{x, z}} &\lesssim \|\p_z^{k-1-j}\Delta_xv\|_{H^{\mu +j}_{x, z}}\les \|\p_z^{k-1-j}\na_xv\|_{H^{\mu +j+1}_{x, z}} \\
    &\les  \|\p_z^{k-1-j}\na_xv\|_{H^{\mu}_{x, z}} + \|\na_{x, z}^{j+1}\p_z^{k-1-j}\na_xv\|_{H^{\mu}_{x, z}}. 
\end{align*}
Since  $\na_{x, z}^{j+1}\p_z^{k-1-j}\na_xv$ contains a total of $k+1$ derivatives with at most $k$ $z$-derivatives, using \eqref{claim:nav:inter}  with $\ell\le k-1$ gives 
    \[
     \|\na_{x, z}^{j+1}\p_z^{k-1-j}\na_xv\|_{H^{\mu}_{x, z}}\les \Xi+A_{k}.
     \]
     On the other hand, $ \|\p_z^{k-1-j}\na_xv\|_{H^{\mu}_{x, z}}\les \| \na_xv\|_{H^{s-2}}$ since $k-1-j+\mu\le s-2$. 
Thus we obtain
 \[
 \|\partial_z^{k-1}((\alpha_1-m^2)\Delta_x v)\|_{H^{\mu}}\les \Xi+A_{k}+\| \na_xv\|_{H^{s-2}}.
 \]
 The $\beta$-term can be treated similarly except that we now have 
\[
\begin{aligned}
 \|\p_z^{k-1-j}\na_x\p_zv\|_{W^{\mu,p'_j}}& \les \|\p_z^{k-1-j}\na_x\p_zv\|_{H^{\mu +j}_{x, z}}\les  \|\p_z^{k-1-j}\p_zv\|_{H^{\mu+j+1}_{x, z}}\\
 &\les   \|\p_z^{k-1-j}\p_zv\|_{H^\mu_{x, z}}+ \|\na_{x, z}^{j+1}\p_z^{k-1-j}\p_zv\|_{H^{\mu}_{x, z}}\\
 &\les \| \p_zv\|_{H^{s-2}}+ \Xi+A_k.
 \end{aligned}
   \]
Hence, 
 \[
\|\partial_z^{k-1}(\beta_1\cdot\na_x\partial_x v)\|_{H^{\mu}} \les \Xi+A_{k}+\| \na_xv\|_{H^{s-2}}.
 \]
For the $\gamma$ term, we  need to control for any $j\in\{0, \dots, k-1\}$ the quantities $\|\p_z^j\gamma_1\|_{L^{p_j}}$ and $\|\p_z^{k-1-j}\p_zv\|_{W^{\mu,p'_j}}$ where $\frac{1}{p_j}+\frac{1}{p'_j}=\mez$ and $p'_j\neq \infty$. We recall that $s-\tdm-j\ge \mez+\mu>0$. We choose 
\[
\begin{cases}
\frac{1}{p_j}=\mez - \frac{s-\tdm-j}{d+1}\quad\text{if~}s-\tdm-j< \frac{d+1}{2}, \\
p_j=\infty \quad\text{if~}s-\tdm-j= \frac{d+1}{2}, \\
2\ll p_j<\infty \quad\text{if~}s-\tdm-j= \frac{d+1}{2}, 
\end{cases}
\]
so that  $H^{s-\frac{3}{2}-j} \hookrightarrow L^{p_j}$ and $H^{\mu +1 +j -\varepsilon}\hookrightarrow W^{\mu,p'_j}$ for any $0<\eps<\min\{s -(1+\frac{d}{2}), \mez\}$. 
It follows that $\|\p_z^j\gamma_1\|_{L^{p_j}} \le C(\|f_1\|_{H^s})$ thanks to \eqref{dzgamma:L2}, and for any $\nu>0$, 
\begin{align*}
    \|\p_z^{k-1-j}\p_zv\|_{W^{\mu,p'_j}} &\les \|\p_z^{k-1-j}\p_zv\|_{H^{\mu +j+1-\varepsilon}} \leq C(\nu)\|\p_z^{k-1-j}\p_zv\|_{H^{\mu}} + \nu \|\p_z^{k-1-j}\p_zv\|_{H^{\mu +j+1}}\\
    &\leq  C(\nu)\|\p_z^{k-j-1}\p_zv\|_{H^\mu}+ \nu\|\na_{x, z}^{j+1}\p_z^{k-j}v\|_{H^{\mu}}\\ 
    & \les \| \p_zv\|_{H^{s-2}}+\Xi+A_k + \nu \|\p_z^kv\|_{H^{\mu}}. 
\end{align*} 
Gathering the above estimates leads to 
\[
    \|\p_z^kv\|_{H^{\mu}} \les  \| \na_{x,z}v\|_{H^{s-2}}+\Xi+A_k + \nu \|\p_z^kv\|_{H^{\mu}}\quad\forall \nu>0.
\]
This in turn implies the desired estimate \eqref{claim:nav:inter:2}. 
\end{proof}

\subsubsection{The finite-depth case}

In order to complete the proof of  \cref{prop.diffEstimateSobolev} in the finite-depth case, it remains to prove \eqref{eq.lowNormEst-v1} and \eqref{eq.lowNormEst-v2}  on a strip $\mathbb{R}^d \times (-1,-1+\eta)$ for some small $\eta>0$ that depends  only on $\|b_0\|_{H^{s+\mez}}$ and $\|f_i\|_{H^s}$, $i\in\{1,2\}$. We have
\begin{align*}
    \operatorname{div}(\mathcal{A}_i\na_{x,z}v_i^{(j)})&=-\p_z(k_i\circ \ff_{f_i}(x,z))=:F_i^{(j)}(x,z),\quad (x, z)\in \Rr^d\times (-1, 0),\\
    \mathcal{A}_j\na_{x,z}v_i^{(j)}(x,0)\cdot e_{d+1} &= k_i(x,b(x))=:g_i^{(j)}(x),\quad x \in \mathbb{R}^d,
\end{align*}
where $\mathcal{A}_i$ is given by \eqref{def:Arho} with $\varrho_i$ in place of $\varrho_b$, and $k_i=0$ if $j=1$. Hence, $v^{(j)}=v_1^{(j)}-v_2^{(j)}$ satisfies
\begin{align*}
    \operatorname{div}(\mathcal{A}_1\na_{x,z}v^{(j)}) &= F_1^{(j)}-F_2^{(j)} + \operatorname{div}((\mathcal{A}_2-\mathcal{A}_1)\na_{x,z}v_2^{(j)})=: F^{(j)}, \\
    \mathcal{A}_1\na_{x,z}v^{(j)}(x,0) \cdot e_{d+1} &= g_1^{(j)}(x) - g_2^{(j)}(x) + (\mathcal{A}_2-\mathcal{A}_1)\na_{x,z}v_2^{(j)}(x,0)\cdot e_{d+1} =:g^{(j)}(x). 
\end{align*}
We have  $\|\mathcal{A}_i-\mathcal{A}_0\|_{H^{s-\mez}(\Rr^d\times (-1, 0))}\le C(\| b_0\|_{H^{s}}, \|f_i\|_{H^{s}})$, where $\cA_0$ is given by \eqref{cA-cA0}.  For $s>1+\frac{d}{2}$ and $0<\nu<\min\{1, s-1-\frac{d}{2}\}$, since $H^{s-\mez}_{x, z} \subset C^\nu_{x, z}$, we can choose $\eta$ sufficiently small so that \eqref{verify:smalla} holds with $\cA=\cA_j$. 

The argument leading to  \eqref{estw:Omega-b} yields
\begin{align*}
    \|v^{(j)}\|_{H^{s}(\Rr^d\times (-1, -1+\eta))}&\le C(\|b_0\|_{H^{s}},\|f_1\|_{H^{s}})\Big(\|F^{(j)}\|_{H^{s-2}(\Rr^d\times (-1, -1+ 2\eta))} \\
    &\qquad + \|g^{(j)}\|_{H^{s-\tdm}(\Rr^d)} + \|w\|_{H^1(\Rr^d\times (-1, -1+2\eta))}\Big). 
\end{align*}

Since $s-\mez>\frac{d+1}{2}$, the product rule in \cref{coro:productdomain} implies  
\begin{align*}
    \|F^{(j)}\|_{H^{s-2}}&\le \|F_1^{(j)} - F_2^{(j)}\|_{H^{s-2}} + \|(\mathcal{A}_2-\mathcal{A}_1)\na_{x,z}v_2^{(j)}\|_{H^{s-1}}\\
   & \lesssim \|F_1^{(j)} - F_2^{(j)}\|_{H^{s-2}} + \|\mathcal{A}_2-\mathcal{A}_1\|_{H^{s-1}}\|\na_{x,z}v_2^{(i)}\|_{H^{s-\mez}},
\end{align*}
where 
\[
    \|F_1^{(2)} - F_2^{(2)}\|_{H^{s-2}}\le \|k_1 \circ \ff_{f_1} - k_2\circ \ff_{f_2}\|_{H^{s-1}}.
\]
On the other hand, it follows from \cref{lemm.alphaGammaEst} that
\[
    \|\mathcal{A}_2-\mathcal{A}_1\|_{H^{s-1}}\le C(\|(f_1,f_2)\|_{H^s},\|b_0\|_{H^{s+\mez}})\|f_1-f_2\|_{H^{s-\mez}}.
\]
Using \cref{prop.GSest-v} with $r=s-\mez$, we find 
\bq\label{nav2:Hs-mez}
     \|\na_{x,z}v_2^{(j)}\|_{H^{s-\mez}}\le C(\| f_2\|_{H^s}, \| b_0\|_{H^{s}})(\| h_2\|_{H^s}+\| k_2\|_{H^{s-\mez}}+\| \na k_2\|_{L^\infty}).
\eq
When $k_i\ne 0$, we use the assumption that $s>\tdm+\frac{d}{2}$ to bound $\| \na k_2\|_{L^\infty}\les \| k_2\|_{H^s}$. 

Thus $\|F^{(i)}\|_{H^{s-2}}$ is controlled  by the right-hand side of \eqref{eq.lowNormEst-v1}--\eqref{eq.lowNormEst-v2}. Since $g^{(2)}_1-g^{(2)}_2=(k_1\circ \ff_{f_1}-k_2\circ \ff_{f_2})\vert_{z=-1}$,   the trace inequality implies 
\[
\|g^{(j)}\|_{H^{s-\tdm}_x}\le C\| k_1\circ \ff_{f_1}-k_2\circ \ff_{f_2}\|_{H^{s-1}_{x, z}}.
\]
Finally, owing to \cref{lemm.FirstStepIteration}, $\|v^{(j)}\|_{H^1}$ is also controlled by the right-hand side of \eqref{eq.lowNormEst-v1}--\eqref{eq.lowNormEst-v2}. Therefore, the proof of \cref{prop.diffEstimateSobolev} in the finite-depth case is complete.

\subsubsection{Proof of \cref{lemm.F0Estimates}} 
We will use the following estimates. 

\begin{lemm}\label{lemm.alphaGammaEst} Consider $s>1+\frac{d}{2}$ satisfying $s\ge 2$. Then we have 
    \[
        \| \alpha_1-\alpha_2\|_{H^{s-1}_{x, z}} + \|\beta_1 - \beta_2\|_{H^{s-1}_{x, z}}+\|\gamma_1-\gamma_2\|_{H^{s-2}_{x, z}}  \le C(\|(f_1, f_2)\|_{H^s})\|f_1-f_2\|_{H^{s-\mez}},
        \]
       where $C : \Rr_+ \to \Rr_+$ is a non-decreasing function and depends only on $(d,s)$, and also on $\fd$ and $\|b_0\|_{H^{s}}$ in the finite-depth case. 
\end{lemm}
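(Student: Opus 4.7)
The proof plan is to exploit that $\varrho_1-\varrho_2$ is smoother than $f_1-f_2$ by half a derivative in the horizontal directions, then to reduce everything to composition and product estimates in Sobolev spaces.

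\textbf{Step 1: Regularization estimate for $\varrho_1-\varrho_2$.} From the explicit formulas \eqref{def:vr}, the function $\varrho_1-\varrho_2$ is the Poisson-type extension of $f_1-f_2$ (and the $b_0$ terms cancel in both depth cases). By the analogue of the estimates in \cref{lemm:varrho}(ii)--(iii) applied to the difference, we will obtain
\[
\|\nabla_{x,z}(\varrho_1-\varrho_2)\|_{H^{s-1}_{x,z}}+\|\nabla^2_{x,z}(\varrho_1-\varrho_2)\|_{H^{s-2}_{x,z}}\le C\|f_1-f_2\|_{H^{s-\mez}},
\]
where $C=C(d,s,\delta)$. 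This is the single ingredient that explains the $\tfrac12$-gain on the right-hand side, and it follows from the bound \eqref{Poisson:Sobolev} applied to $f_1-f_2$.

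\textbf{Step 2: Estimates for $\alpha_1-\alpha_2$ and $\beta_1-\beta_2$.} Writing $\alpha_i=G_1(\tilde\nabla\varrho_i)$ and $\beta_i=G_2(\tilde\nabla\varrho_i)$ for smooth functions $G_j$ with $\tilde\nabla\varrho_i=(\nabla_x\varrho_i,\partial_z\varrho_i-m)$, we apply the standard difference version of the composition estimate (obtained by writing $G_j(X_1)-G_j(X_2)=(X_1-X_2)\cdot\int_0^1\nabla G_j(tX_1+(1-t)X_2)\,dt$ and using the tame product estimate in $H^{s-1}_{x,z}$, which is an algebra since $s-1>d/2$). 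Combined with the $L^\infty$ bound $\|\tilde\nabla\varrho_i\|_{L^\infty_{x,z}}\le C(\|f_i\|_{H^s})$ from \cref{lemm:varrho} and the Sobolev embedding, Step~1 yields the desired control.

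\textbf{Step 3: Estimate for $\gamma_1-\gamma_2$.} Decompose
\[
\gamma_1-\gamma_2=\Big(\tfrac{1}{\partial_z\varrho_1}-\tfrac{1}{\partial_z\varrho_2}\Big)\mathcal{Q}_1+\tfrac{1}{\partial_z\varrho_2}(\mathcal{Q}_1-\mathcal{Q}_2),\quad \mathcal{Q}_i:=\partial_z^2\varrho_i+\alpha_i\Delta_x\varrho_i+\beta_i\cdot\nabla_x\partial_z\varrho_i,
\]
and then split each difference in $\mathcal{Q}_1-\mathcal{Q}_2$ into a ``coefficient'' difference (handled by Step~2, controlled in $H^{s-1}$) and a ``second derivative'' difference (handled by Step~1, controlled in $H^{s-2}$). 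The products will be bounded using the tame product rule
\[
\|uv\|_{H^{s-2}_{x,z}}\lesssim \|u\|_{H^{s-1}_{x,z}}\|v\|_{L^\infty_{x,z}}+\|u\|_{L^\infty_{x,z}}\|v\|_{H^{s-2}_{x,z}}\quad (s-1>\tfrac{d+1}{2}),
\]
together with Bony's decomposition for low-regularity products, using that $s-2+s-1=2s-3>0$. The factor $1/\partial_z\varrho_i$ is controlled using the uniform positivity \eqref{lowerbound:dzvr} and the composition estimate \cref{est:nonl}(i).

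\textbf{Main obstacle.} The main subtlety is the $\gamma$-estimate, because $\gamma$ involves two derivatives of $\varrho$, and consequently the difference $\gamma_1-\gamma_2$ couples a two-derivative gain from Step~1 with products that must be tame. The key point is to be careful to always place the factor carrying the most derivatives in the $H^{s-2}_{x,z}$ norm and the low-regularity factor in $L^\infty_{x,z}\cap H^{s-1}_{x,z}$, which is made possible by the Sobolev embedding $H^{s-1}_{x,z}\hookrightarrow L^\infty_{x,z}$ (valid since $s-1>\tfrac{d+1}{2}$ when $s>1+\tfrac{d}{2}$ and $s\ge 2$). The finite-depth case presents no additional difficulty since the $b_0$ contributions in $\varrho_i$ cancel in the difference.
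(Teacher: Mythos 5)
Your overall route is the same as the paper's: estimate $\varrho_1-\varrho_2$ from the explicit Poisson-type formula (the $b_0$ terms indeed cancel, and \cref{lemm:varrho}(iii) with $\sigma=s$ gives the half-derivative gain), then treat $\alpha,\beta$ by a composition-difference estimate (the paper uses \cref{est:nonl}(ii), which is exactly your mean-value argument), and treat $\gamma$ by splitting each term of $\mathcal{Q}_1-\mathcal{Q}_2$ into a coefficient difference times a second derivative of $\varrho$. So there is no difference of strategy to report.

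There is, however, a concrete gap in your justification of the product steps: a dimension slip. The functions live on $\Rr^d\times J\subset\Rr^{d+1}$, so the algebra threshold for $H^{s-1}_{x,z}$ is $s-1>\frac{d+1}{2}$, not $s-1>\frac d2$, and likewise the embedding $H^{s-1}_{x,z}\hookrightarrow L^\infty_{x,z}$ requires $s>\frac{d+3}{2}$. Your parenthetical claim that this embedding is ``valid since $s-1>\frac{d+1}{2}$ when $s>1+\frac d2$ and $s\ge 2$'' is false under the lemma's hypotheses: for $d=2$, $s=2.2$ one has $s-1=1.2<\frac{d+1}{2}=1.5$ (and for $d=1$, $s=2$ the embedding is borderline). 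Since you designate this embedding as ``the key point'' allowing you to place the low-regularity factor in $L^\infty_{x,z}\cap H^{s-1}_{x,z}$ in the $\gamma$-estimate, Step 3 as written does not close at the stated level of generality. The paper avoids this entirely: for the typical term $\frac{\alpha}{\p_z\varrho}\Delta_x\varrho$ it invokes the asymmetric product rule of \cref{coro:productdomain} with $(s_0,s_1,s_2)=(s-2,s-\mez,s-2)$ in dimension $N=d+1$, whose condition $s_0<s_1+s_2-\frac{N}{2}$ reduces exactly to $s>1+\frac d2$, so no $L^\infty$ control of the high-order factor is needed. Replacing your ``tame product $+$ Bony'' step by this product rule (or, alternatively, strengthening the hypothesis to $s>\tdm+\frac d2$, which is what holds where the lemma is actually applied and does give $s-1>\frac{d+1}{2}$) repairs the argument; the rest of your outline then matches the paper's proof.
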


\begin{proof} 
We only consider the infinite-depth case.  We recall that  $\alpha_i-1= \frac{(\partial_z\varrho)^2}{1+|\nabla_x\varrho|^2}-1 = G(\tilde{\nabla}\varrho_i)$ for some smooth function  $G$ of  $\tilde{\na}\varrho=(\na_x\varrho, \p_z\varrho - 1)$. Since $s-2\ge 0$, applying \cref{est:nonl} (ii) gives 
\begin{align*}
    \|\alpha_1 - \alpha_2\|_{H^{s-2}} 
    & \le C(\|(\nabla \varrho_1, \nabla \varrho_2)\|_{L^{\infty}})\|\na\varrho_1-\na\varrho_2\|_{H^{s-1}} \\
    & \le C(\|f_1\|_{H^s}, \|f_2\|_{H^s}) \|f_1-f_2\|_{H^{s-\mez}},
\end{align*}
where we have used \cref{lemm:varrho} (ii) -- (iii) together with the condition $s>1+\frac{d}{2}$.

The control of $ \|\beta_1 - \beta_2\|_{H^{s-1}_{x, z}}$ can be obtained along the same lines. Regarding $\gamma$, let us estimate the typical term $\frac{\alpha}{\p_z\varrho}\Delta_x\varrho$. For $s>1+\frac{d}{2}$ and $s\ge 2$, the product rule in \cref{coro:productdomain} is applicable with $(s_0, s_1, s_2)=(s-2, s-\mez, s-2)$, yielding
\[
\| \frac{\alpha}{\p_z\varrho}\Delta_x\varrho\|_{H^{s-2}}\les \big(\|\frac{\alpha}{\p_z\varrho}-1\|_{H^{s-\mez}}+1\big)\| \Delta_x\varrho\|_{H^{s-2}}.
\]
Then we conclude by using \eqref{eq.pzEstim1b} and \cref{lemm:varrho} (iii).
\end{proof}

For the proof of \cref{lemm.F0Estimates}, we assume that $s>1+\frac{d}{2}$ and $s\ge 2$. When $k_i\ne 0$, we assume furthermore that $s>\tdm+\frac{d}{2}$.  We recall from \eqref{def:F0:contra} that 
 \[
    F_0= (F_{0,1}-F_{0,2}) -(\alpha_1-\alpha_2)\Delta_x v_2 - (\beta_1-\beta_2)\cdot \nabla_x\partial_z v_2 +(\gamma_1-\gamma_2)\partial_zv_2.    
\]
To fix idea we shall only consider the infinite-depth case. 
For $s>1+\frac{d}{2}=\mez+\frac{d+1}{2}$ satisfying $s\ge 2$, the product rule in \cref{coro:productdomain} can be applied with $(s_0, s_1, s_2)=(s-2, s-1, s-\tdm)$ and $(s_0, s_1, s_2)=(s-2, s-2, s-\mez)$, yielding
\begin{align*}
&\| (\alpha_1-\alpha_2)\Delta_x v_2\|_{H^{s-2}}\les \| \alpha_1-\alpha_2\|_{H^{s-1}}\| \Delta_x v_2\|_{H^{s-\tdm }},\\
&\| (\beta_1-\beta_2)\cdot \nabla_x\partial_z v_2\|_{H^{s-2}}\les \| \beta_1-\beta_2\|_{H^{s-1}}\| \nabla_x\partial_z v_2\|_{H^{s-\tdm }},\\
&\| (\gamma_1-\gamma_2)\partial_zv_2\|_{H^{s-2}}\les \| \gamma_1-\gamma_2\|_{H^{s-2}}\| \partial_zv_2\|_{H^{s-\mez}}.
\end{align*}
In conjunction with the estimate \eqref{nav2:Hs-mez} and \cref{lemm.alphaGammaEst}, it follows that 
\bq
\begin{aligned} 
&\| (\alpha_1-\alpha_2)\Delta_x v_2 - (\beta_1-\beta_2)\cdot \nabla_x\partial_z v_2 +(\gamma_1-\gamma_2)\partial_zv_2\|_{H^{s-2}}\\
&\le C(\| (f_1, f_2)\|_{H^s})\| f_1-f_2\|_{H^{s-\mez}}(\| h_2\|_{H^s}+\| k_2\|_{H^{s-\mez}}+\| \na k_2\|_{L^\infty}).
\end{aligned}
\eq
When $k_i\ne 0$,  we write 
\[
    F_{0,1}-F_{0,2} = (1+G(\nabla \varrho_2))\p_z(k_2\circ\ff_{f_2} - k_1\circ\ff_{f_1}) - (G(\nabla \varrho_2)-G(\nabla \varrho_1))\p_z(k_1\circ \ff_{f_1}),
\]
where $G(\nabla \varrho) = \frac{\partial_z \varrho}{1+|\nabla_x\varrho|^2}-1$ is a smooth function of $\tilde{\nabla} \varrho = (\na_x\varrho, \p_z\varrho - 1)$. For $s>1+\frac{d}{2}$ satisfying $s\ge 2$, we can apply the product rule in \cref{coro:productdomain}  with $(s_0, s_1, s_2)=(s-2, s-\mez, s-2)$ and $(s_0, s_1, s_2)=(s-2, s-1, s-\tdm)$ to obtain 
\begin{align*}
\| F_{0,1}-F_{0,2} \|_{H^{s-2}}&\les  \| 1+G(\nabla \varrho_2)\|_{H^{s-\mez}}\| \p_z(k_2\circ\ff_{f_2} - k_1\circ\ff_{f_1})\|_{H^{s-2}}\\
&\qquad +\| G(\nabla \varrho_2)-G(\nabla \varrho_1)\|_{H^{s-1}}\| \p_z(k_1\circ \ff_{f_1})\|_{H^{s-\tdm}}\\
&\le  C(\| (f_1, f_2)\|_{H^s})\Big(\| k_2\circ\ff_{f_2} - k_1\circ\ff_{f_1}\|_{H^{s-1}}+\| f_1-f_2\|_{H^{s-\mez}}\| k_1\circ \ff_{f_1}\|_{H^{s-\mez}}\Big).
\end{align*}
By virtue of \cref{lemm.FaaDiBruno}, 
\[
\| k_1\circ \ff_{f_1}\|_{H^{s-\mez}}\le C(\| f_1\|_{H^s})(\| k_1\|_{H^{s-\mez}}+\|\na  k_1\|_{L^\infty}).
\]
When $k_i\ne 0$, we use the stronger  assumption that $s>\tdm+\frac{d}{2}$ to bound $\| \na k_i\|_{L^\infty}\les \| k_i\|_{H^s}$. Gathering the above estimates yields the desired estimate \eqref{eq.bound-F0b}.

\section{Local Well-Posedness: proof of the main theorem}\label{sec.lwp}

In this section it is assumed that $d \geq 1$ and $s>\tdm+\frac{d}{2}$. We recall the estimates \eqref{gof:Hs} and \eqref{gofi:Hs} for compositions with $\ff_f$ and $\ff_f^{-1}$, as they will be used often. 

Heuristically, our main \cref{thm.main} follows from the a priori and contraction estimates proven in \cref{sec.apriori} and \cref{sec.contraction}. This is however not straightforward since $g$ satisfies a transport equation in the moving domain $\Omega_f$. The vanishing viscosity method used for the Muskat problem \cite{NP} is thus not applicable.  We shall devise an iterative scheme to construct approximate solutions $(f_n,g_n)$ whose existence is easier to establish. In fact, we will construct  $\tilde{g}(t): \Rr^d\times J \to \Rr$ and define $g(t):= \tilde{g}(t) \circ \ff^{-1}_{f(t)}$. We recall from \eqref{eq:tildeg} that $\tilde{g}$ satisfies 
\[
\p_t \tilde g+\ol{u}\cdot \na_{x, z}\tilde{g}+\gamma'(\varrho)u_y\circ \ff_f=0, \quad \text{in~} \Rr^d\times J,
\]
where $\ol{u}=\ol{u}(f, g)$ is given by \eqref{tildeu} in terms of $u\circ \ff_f$, $\na_{x, z}\varrho$ and $\p_t \varrho$.  The usual strategy is to linearize the preceding equation at step $n$ using the approximate solution at step $n-1$, namely to construct $\tilde{g}_n$ solving  the linear transport equation
\bq\label{eq.gn-fake}
    \partial_t\tilde{g}_n + \ol{u}_{n-1}\cdot \nabla \tilde{g}_n + \gamma'(\varrho_{n-1})u_{n-1, y}\circ \ff_{f_{n-1}}=0\quad\text{in } \Rr^d\times J.
\eq
If we simply choose $\ol{u}_n=\ol{u}(f_{n-1}, g_{n-1})$, then $\ol{u}_n$ is not tangent to $\Rr^d\times \p J$ unless $f_{n-1}$ solves the exact nonlinear equation \eqref{eq:f}, i.e., 
\bq\label{eq:f:n-1}
\p_t f_{n-1}=-G[f_{n-1}]\Gamma(f_{n-1})-\Big(N_{n-1}\cdot\cS[f_{n-1}]g_{n-1}+g_{n-1}\Big)\vert_{\Sigma_{f_{n-1}(t)}}.
\eq
However, this would produce a {\it coupled} system for   $f_{n-1}$ and $\tilde{g}_{n-1}$, whose solvability is not simple.   We will make a different choice of $\ol{u}_n$  in \eqref{eq.gn-fake} such that on one hand it is tangent to $\Rr^d\times \p J$, and on the other hand  it {\it decouples} the equations for $f_n$ and $\tilde{g}_n$ so as to facilitate their  solvability. The $\ol{u}_n$ is given by \eqref{def:olum} in consistence with the nonlinear  $f_n$-equation given by \eqref{eq.fn} and \eqref{def:Rn-1}. $f_n$ will then be solved using a fractional regularization $\nu |D|^{1+\nu}$ for small $\nu>0$.  

\subsection{Approximate solutions and uniform estimates}\label{sec.uniform-bounds}

Let $R>0$ and $(f_0,g_0) \in H^s(\mathbb{R}^d) \times H^s(\Omega_{f_0})$ such that  
\begin{align}
    \label{eq.assump-f0g0}
    \|f_0\|_{H^s} \le R, \quad \|g_0 \circ \ff_{f_0}\|_{H^s}\le \eps=\varepsilon(R) \ll 1,\\ \label{RT:f0}
    \inf_{x\in \Rr^d}\left(\gamma(f_0) - G[f_0]\Gamma(f_0)\right)(x)\geqslant 2\mathfrak{a}>0,\\ \label{separation:f0} 
    \inf_{x \in \mathbb{R}^d} \left(f_0(x) - b(x)\right) \geqslant  2\mathfrak{d} >0  \text{ in the finite-depth case,}
\end{align}
where $\eps=\varepsilon(R)$ will be chosen later. By virtue of \eqref{gof:Hs}, $\|g_0 \circ \ff_{f_0}\|_{H^s}$ can be arbitrarily small provided $\|g_0\|_{H^s(\Omega_{f_0})}$ is sufficiently small. We impose firther that $\varepsilon (R)$ is chosen such that 
\begin{equation}
    \label{eq.assump-smallness2}
    C_0(2R,\mathfrak{d})2\varepsilon(R) \le \frac{1}{4\cF_0(2R,\mathfrak{a},\mathfrak{d})}, 
\end{equation}
where $C_0, \cF_0$ are the functions appearing in \eqref{eq.assump-smallness}.

We shall construct a local-in-time solution $(f,g)\in (L^{\infty}_{T}H^s\cap L^2_{T}H^{s+\mez}) \times L^{\infty}_{T}H^s$ to \eqref{eq:f} and \eqref{eq:g} with initial data $(f_0, g_0)$ via the following iterative scheme. We choose
\bq\label{f1g1}
(f_1, \tilde{g}_1)\in H^{s+1}(\Rr^d)\times H^{s+1}(\Rr^{d+1})
\eq
such that 
\bq\label{cd:f1}
\| f_1-f_0\|_{H^s(\Rr^d)}+\| \tilde{g}_1-g_0\circ \ff_0\|_{H^s(\Rr^d\times J)}< \delta,
\eq
where $\delta$ is small enough so that 
\bq\label{cd:f1g1}
\begin{aligned}
    \|f_1\|_{H^s} \le \tdm R, \quad \|\tilde{g}_1\|_{H^s}\le \tdm\tM\eps,\\
    \inf_{x\in \Rr^d}\left(\gamma(f_1) - G[f_1]\Gamma(f_1)\right)(x)\geqslant \tdm\mathfrak{a}>0,\\
    \inf_{x \in \mathbb{R}^d} \left(f_1(x) - b(x)\right) \geqslant  \tdm\mathfrak{d} >0 \text{ in the finite-depth case.}
\end{aligned}
\eq

For $n\ge 2$, we let $(f_n, \tilde{g}_n)$ solve 

\bq  \label{eq.fn}
\begin{cases}
\partial_tf_n + G[f_n]\Gamma(f_n)=R_{n-1}\quad\text{in } \Rr^d\times (0, \infty), \\
f_n\vert_{t=0}=f_0,
\end{cases}
\eq
and
\bq\label{eq.gn}
\begin{cases} 
    \partial_t\tilde{g}_n + \ol{u}_{n-1}\cdot \nabla \tilde{g}_n + \gamma'(\varrho_{n-1})u_{n-1, y}\circ \ff_{f_{n-1}}=0\quad\text{in } (\Rr^d\times J)\times (0, \infty),\\
    \tilde{g}_n\vert_{t=0}=\tilde{g}_0:=g_0\circ  \ff_{f_0},
\end{cases}
\eq
where $N_n:=(-\na_xf_n, 1)$, 
\bq\label{def:Rn-1}
R_{n-1}:=
-(N_{n-1}\cdot\cS[f_{n-1}]g_{n-1}+g_{n-1})\vert_{\Sigma_{f_{n-1}}},
\eq
and $g_m:=\tilde{g}_m\circ \ff_{f_m}^{-1}$. In \eqref{eq.gn} we have introduced $u_m=(u_{m, x}, u_{m, y})$ and $\ol{u}_m=(\ol{u}_{m, x}, \ol{u}_{m, z})$ which are defined by 
\bq\label{def:um}
u_m:=-\cG[f_m]\Gamma(f_m)-\cS[f_m]g_m-g_me_y,
\eq
\bq\label{def:olum}
\begin{aligned}
\ol{u}_{m, x}&=u_{m, x} \circ \ff_{f_m},\\ 
 \ol{u}_{m, z}&=\frac{1}{\p_z\varrho_m}\Big\{-(-\na_x\varrho_m, 1)\cdot (\cG[f_m]\Gamma(f_m))\circ \ff_{f_m}\\
 &\quad -(-\na_x\varrho_{m-1}, 1)\cdot (\cS[f_{m-1}]g_{m-1})\circ \ff_{f_{m-1}}-g_{m-1}\circ \ff_{m-1} - \partial_t \varrho_m\Big\}.
\end{aligned}
\eq
Observe that \eqref{eq.fn} is a nonlinear equation for $f_n$, while \eqref{eq.gn} is a linear equation for $\tilde{g}_n$. 
For the actual problem \eqref{eq:f}-\eqref{eq:g}, we have defined  $\ol{u}$ in terms of $u$ via \eqref{tildeu}. On the other hand, for the approximate problems, $\ol{u}_{m, y}$ and $u_m$ are independently defined  as in \eqref{def:um} and \eqref{def:olum}, where \eqref{def:um} is precisely the original relation \eqref{eq:u}. The formula of $\ol{u}_{m, z}$ was designed so as to satisfy 
\bq\label{bc:olum}
\ol{u}_{m, z}=0\quad\text{on~}\Rr^d\times \p J.
\eq
In order to see \eqref{bc:olum}, we recall that $\varrho_m(x, 0)=f_m(x)$, $\cG[f_m]\Gamma(f_m)=\na_{x, y}\phi^{(1)}$, and $\cS[f_{m-1}]g_{m-1}=\na_{x, y}\phi^{(2)}$, where $\phi^{(1)}$ solves \eqref{eq.phi1} with $(f,h)=(f_m,\Gamma(f_m))$, and $\phi^{(2)}$ solves \eqref{eq.phi2} with $(f,k)=(f_{m-1},g_{m-1})$. At $z=0$ we have
\[
\ol{u}_{m, z}\vert_{z=0}=\frac{1}{\p_z\varrho_m}\vert_{z=0}\Big\{-G[f_m]\Gamma(f_m)-(N_{m-1}\cdot\cS[f_{m-1}]g_{m-1}+g_{m-1})\vert_{\Sigma_{f_{m-1}}}-\p_t f_m\Big\}=0
\]
in view of \eqref{def:Rn-1} and \eqref{eq.fn}. In the finite depth case, we have $\rho_m(x, -1)=b(x)$, $\p_t \rho_m(x, -1)=0$, and the Neumann conditions in \eqref{eq.phi1} and \eqref{eq.phi2} imply that $\ol{u}_{m, z}\vert_{z=-1}=0$. This completes the proof of \eqref{bc:olum}. 

Approximate solutions exist on a uniform interval and obey uniform bounds, as given in the following proposition. 
\begin{prop}\label{prop.iterative-bounds} Let $s>\frac{d}{2}+\frac{3}{2}$ and $R>0$. There exist $\varepsilon = \varepsilon(R) \in (0, 1]$, $T_*=T_*(R) \in (0, 1]$ both non-increasing in $R$, and $L=L(R)>0$ non-decreasing in $R$ such that if $(f_0,g_0)$ satisfy \eqref{eq.assump-f0g0}, \eqref{RT:f0} and \eqref{separation:f0} in the finite-depth case, then for $n\ge 2$ there exists a solution $(f_n,\tilde{g}_n)$ to \eqref{eq.fn}-\eqref{eq.gn} in the space
\bq\label{space:fngn}
Z_{T_*}^s:= \left(C([0, T_*], H^s)\cap L^2([0, T_*], H^{s+\mez})\right)\times C([0, T_*], H^s),
\eq
and we have for all $n\ge 1$ that 
\begin{align*}
    &\|f_n\|_{L^{\infty}_{T_*}H^s} \le 2R \label{eq.uniform-fn1}\tag{I\({}_n\)} \\
   & \|f_n\|_{L^2_{T_*}H^{s+\mez}} \le L \label{eq.uniform-fn2}\tag{II\({}_n\)}\\
    &\|\tilde{g}_n\|_{L^{\infty}_{T_*}H^s} \le 2\tM\varepsilon \label{eq.uniform-gn}\tag{III\({}_n\)}\\
    & \inf_{(x, t) \in \mathbb{R}^d\times [0, T_*]} \left(f_n(x, t) - b(x)\right) \ge  \mathfrak{d} \tag{IV\({}_n\)} \text{ in the finite-depth case,}\label{eq.uniform-bottom}\\
& \inf_{(x, t)\in \Rr^d\times [0, T_*]}\cT(f_n(t))(x) \ge \mathfrak{a}\tag{V\({}_n\)}\label{eq.uniform-taylor} \\
  &  \|f_n-f_{n-1}\|_{L^{\infty}_{T_*}H^{s-1}\cap L^2_{T_*}H^{s-\mez}} \le 2^{-n} \label{eq.uniform-contraction-fn}\tag{VI\({}_n\)}\\
  &  \|\tilde{g}_n-\tilde{g}_{n-1}\|_{L^{\infty}_{T_*}H^{s-1}} \le 2^{-n} \label{eq.uniform-contraction-gn}\tag{VII\({}_n\)},
\end{align*}
where  $C_0, \cF_0$ are the functions appearing in \cref{prop.uniform-f},  $\tM=\tM(s, d)$, and with the exception that \eqref{eq.uniform-fn2} is valid for $n\ge 2$.
\end{prop}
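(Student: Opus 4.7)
The plan is to proceed by strong induction on $n\ge 2$, the base case $n=1$ being ensured by the choice \eqref{f1g1}--\eqref{cd:f1g1}. Assuming $(\text{I}_k)$--$(\text{VII}_k)$ for all $k\le n-1$, I would first construct $(f_n,\tilde g_n)$ in the space \eqref{space:fngn} and then verify that all seven estimates persist at level $n$, for a common $T_*=T_*(R)>0$ and $\varepsilon=\varepsilon(R)>0$ to be chosen at the end.

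For the existence of $f_n$, observe that \eqref{eq.fn} is a nonlinear first-order parabolic equation of the form \eqref{eq:f:reg} with $\nu=0$ and forcing $F=R_{n-1}$, for which the trace estimates of Corollary \ref{coro:traceuN} combined with $(\text{I}_{n-1})$--$(\text{III}_{n-1})$ yield $R_{n-1}\in L^2_{T_*}H^{s-\mez}$. I would regularize by adding $\nu|D|^{1+\mu}f_n$ for small $\mu>0$, solve the regularized equation via a fixed point in a closed ball of $C([0,T_*];H^s)\cap L^2([0,T_*];H^{s+\mez})$ using \cref{prop.pseudoParabolic} for the paralinearized leading part (with ellipticity supplied by $(\text{V}_{n-1})$ through a Taylor-type estimate similar to \cref{lemm.taylor}), apply the $\nu$-independent bound \eqref{eq.apriori-fR} from \cref{rema:apriori:variant}, and pass to the limit $\nu\to 0$. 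For $\tilde g_n$, the key observation is that $\bar u_{n-1}$ has been designed precisely so that $\bar u_{n-1,z}\vert_{z=0}=0$ (by virtue of \eqref{eq:f:n-1} at step $n-1$) and $\bar u_{n-1,z}\vert_{z=-1}=0$ in the finite-depth case; hence \cref{theo:transport} applies and produces the unique solution $\tilde g_n\in C([0,T_*];H^s)$.

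The uniform bounds $(\text{I}_n)$--$(\text{III}_n)$ then follow by applying the variant a priori estimate \eqref{eq.apriori-fR} to $f_n$ and the transport estimate \eqref{transportest} to $\tilde g_n$. In both estimates the forcing is bounded in terms of $(f_{n-1},\tilde g_{n-1})$ with an extra factor $T_*^{\mez}$ coming from Cauchy--Schwarz in time, so after fixing the functions $\cF,C_0,\cF_0$ of \cref{prop.uniform-f} and enforcing the smallness \eqref{eq.assump-smallness2}, I can choose $T_*(R)$ small enough to close the bookkeeping $\|f_n\|_{L^\infty_{T_*}H^s}\le 2R$, $\|\tilde g_n\|_{L^\infty_{T_*}H^s}\le 2\tM\varepsilon$, and $\|f_n\|_{L^2_{T_*}H^{s+\mez}}\le L(R)$. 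The stability conditions $(\text{IV}_n)$--$(\text{V}_n)$ are propagated by \cref{lemm.taylor} applied to $f_n$ (using the parabolic gain $f_n\in L^2_{T_*}H^{s+\mez}$ to dominate $\p_tf_n$ in $L^2_{T_*}H^{s-\mez}$) and by the crude bound $\|f_n-f_0\|_{L^\infty_{T_*}L^\infty}\le T_*^\mez\|\p_tf_n\|_{L^2_{T_*}H^{s-\mez}}$ respectively, shrinking $T_*$ further if necessary.

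Finally, for the contraction bounds $(\text{VI}_n)$--$(\text{VII}_n)$: $f_n-f_{n-1}$ solves a paradifferential parabolic equation obtained by subtracting \eqref{eq.fn} at steps $n$ and $n-1$ and paralinearizing $G[f_n]\Gamma(f_n)-G[f_{n-1}]\Gamma(f_{n-1})$ as in \eqref{paralin:G:contra}; the forcing equals $R_{n-1}-R_{n-2}+R'$ with $R'$ estimated by \cref{lemm:est:R'} and $R_{n-1}-R_{n-2}$ by an analog of \cref{lemm.R0est} involving $\tilde g_{n-1}-\tilde g_{n-2}$ and $f_{n-1}-f_{n-2}$. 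Applying \eqref{eq.contractionEstimates-R} of \cref{rema:contraction:variant} produces a bound in $L^\infty_{T_*}H^{s-1}\cap L^2_{T_*}H^{s-\mez}$ with a factor $T_*^\mez C(R,\varepsilon,\mathfrak a,\mathfrak d)$ multiplying the previous differences. Similarly, $\tilde g_n-\tilde g_{n-1}$ satisfies a transport equation of the form \eqref{eq.gEquationDifference} with $\bar u_{n-1}$ in place of $\bar u_1$, and \cref{prop.contraction-gn} gives an $L^\infty_{T_*}H^{s-1}$-contraction bound with the same smallness mechanism. Choosing $T_*=T_*(R)$ sufficiently small so that both contraction factors are at most $1/2$ closes the induction. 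The main obstacle will be the existence step for $f_n$: one must carefully check that the vanishing-viscosity scheme is well posed on the full interval $[0,T_*]$ and that the smallness of $\varepsilon$, rather than of $T_*$ alone, is indispensable for absorbing the terms linear in $\|f\|_{H^{s+\mez}}\|\tilde g\|_{H^s}$ that arise in \eqref{est:F:Hs-mez} and again in the contraction estimate.
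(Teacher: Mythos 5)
Your proposal follows the same overall architecture as the paper: induction on $n$, solving the linear transport problem \eqref{eq.gn} via \cref{theo:transport}, solving \eqref{eq.fn} through the fractional regularization $\nu|D|^{1+\mu}$ with $\nu$-independent bounds from \cref{rema:apriori:variant}, and closing \eqref{eq.uniform-contraction-fn}--\eqref{eq.uniform-contraction-gn} with the paralinearized difference equation and the transport contraction, using smallness of both $T_*$ and $\varepsilon$ (you correctly identify at the end that $\varepsilon$-smallness, not $T_*$ alone, absorbs the term with no time factor). However, two points in your sketch are off in a way that matters.

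First, you justify the tangency $\ol{u}_{n-1,z}\vert_{z=0}=0$ ``by virtue of \eqref{eq:f:n-1} at step $n-1$''. But $f_{n-1}$ does not solve the exact equation \eqref{eq:f:n-1} in the scheme; it solves \eqref{eq.fn} at step $n-1$ with forcing $R_{n-2}$, which involves $(f_{n-2},g_{n-2})$. The tangency holds only because of the staggered definition \eqref{def:olum}: $\ol{u}_{n-1,z}$ is built from $\cG[f_{n-1}]\Gamma(f_{n-1})$ together with $\cS[f_{n-2}]g_{n-2}$ and $g_{n-2}\circ\ff_{f_{n-2}}$, exactly matching $R_{n-2}$ in \eqref{def:Rn-1}, so that at $z=0$ one gets $-G[f_{n-1}]\Gamma(f_{n-1})-R_{n-2}-\p_t f_{n-1}=0$. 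With the ``natural'' choice (all indices $n-1$), which your citation implicitly assumes, the normal trace does not vanish and \cref{theo:transport} is not applicable; this decoupling is precisely the design point of the iteration, so the justification needs to be corrected.

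Second, the existence step for $f_n$ is genuinely incomplete as proposed. A fixed point on the full interval $[0,T_*]$ using \cref{prop.pseudoParabolic} with ellipticity ``supplied by'' the Taylor bound at step $n-1$ does not work: the paralinearized symbol of \eqref{eq.fn} is $\lambda[f_n]\,\cT(f_n)$, so the required ellipticity concerns the unknown $f_n$ (or the fixed-point iterates), not $f_{n-1}$. The paper instead solves the $\nu$-regularized problem by Duhamel with $e^{-\nu t|D|^{1+\mu}}$, which only produces a time of existence depending on $\nu$; the extension to the uniform interval $[0,T_*]$ is then carried out via the continuation criterion of \cref{lemm:fn} (alternatives \eqref{bc:fn:1}--\eqref{bc:fn:2}) combined with a bootstrap that simultaneously propagates the $H^s$ bound, the Taylor condition \eqref{eq.uniform-taylor} via an estimate like \eqref{lowerRT:fn} (which must be re-derived for the regularized, approximate equation, since \cref{lemm.taylor} is stated for solutions of the exact system), and the bottom separation, and shows that whichever constraint saturates first forces a time at least $T_{*,j}(R)$. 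This bootstrap is what breaks the apparent circularity that \eqref{eq.apriori-fR} assumes the Taylor condition while the Taylor condition is propagated using the bounds that \eqref{eq.apriori-fR} delivers; you flag this as ``the main obstacle'' but do not supply the argument, and it is the core technical content of the proposition. Finally, the limit $\nu\to0$ requires the Cauchy-sequence argument in $C_{T_*}H^{s-1}\cap L^2_{T_*}H^{s-\mez}$ (differences of two viscosities, treated with the same paralinearized contraction machinery) so as to obtain a strong limit and re-verify \eqref{eq.uniform-bottom}--\eqref{eq.uniform-taylor}; weak-$*$ compactness alone does not suffice to pass to the limit in $G[f_{n,\nu}]\Gamma(f_{n,\nu})$.
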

It follows from \eqref{eq.uniform-gn}, \eqref{eq.uniform-fn1}, and \eqref{gof:Hs} that 
\begin{equation}
    \label{eq.transfer-gn-gTilde}
    \|g_n\|_{L^{\infty}_{T_*}H^s} \le C(\|f_n\|_{L^{\infty}_{T_*}H^s})\|\tilde{g}_n\|_{L^{\infty}_{T_*}H^s} \le 2C(2R)\tM\varepsilon(R).
\end{equation}
The proof of \cref{prop.iterative-bounds} proceeds by induction, and we shall construct $\varepsilon(R)$, $T(R)$ and $L(R)$ in the proof. We may diminish $T(R)$ during the induction step, but this will be done independently of $n$.  
We note also that the constants $C(\cdot)$ appearing in the remaining of this section may change from line to line, but are always non-decreasing functions. Moreover, there are only finitely many $C$'s, Thus, all of the following estimates hold with the largest $C$. In what follows we assume $s\in (\tdm+\frac{d}{2}, \infty)\setminus\{\frac52+\frac{d}{2}\}$, so that we can apply  the transport estimate \eqref{transport:g:contra} to $\| \tilde{g}_n-\tilde{g}_{n-1}\|_{H^{s-1}}$. For  $s=\frac{5}{2}+\frac{d}{2}$, one only need to replace $s$  by any $s'\in (\tdm+\frac{d}{2}, s)$ to obtain the contractions estimates for $\| f_n-f_{n-1}\|_{L^\infty_{T_*}H^{s'-1}\cap L^2_{T_*}H^{s'-\mez}}$ and $\| \tilde{g}_n-\tilde{g}_{n-1}\|_{L^\infty_{T_*}H^{s'-1}}$, from which \eqref{eq.uniform-contraction-fn} and \eqref{eq.uniform-contraction-gn} follow by interpolation with  \eqref{eq.uniform-fn1} and \eqref{eq.uniform-gn}. 

From \eqref{cd:f1} we obtain \eqref{eq.uniform-fn1}, \eqref{eq.uniform-gn}, \eqref{eq.uniform-bottom}, and \eqref{eq.uniform-taylor}  for $n=1$.  Moreover, we can choose $\delta$ smaller in \eqref{cd:f1} if necessary to have \eqref{eq.uniform-contraction-fn} and \eqref{eq.uniform-contraction-gn} for $n=1$.

Assume that for some $n\geqslant 2$ and all $1\le i\le n-1$, the couple $(f_i,\tilde{g}_i)$ has been constructed on $[0, T_*]$ independent of $n$ such that (I${}_{i}$) -- (VII${}_{i}$) hold. 
We proceed to  prove that $(f_n, \tilde{g}_n)$ exists  and satisfies  (I${}_{n}$) -- (VII${}_{n}$) . We always consider $T_*\le 1$ and $\eps\in (0, 1]$ in what follows.   

{\it Step 1.} We first prove the existence and estimate  for $\tilde{g}_n$. 
\begin{lemm}\label{lemm:gn}
Let  $n\ge 2$. Assume that $(f_{n-1}, \tilde{g}_{n-1})\in Z^s_T$, $T\in (0, 1]$, and $f_{n-1}$ satisfies (IV${}_{n-1}$) in the finite-depth case. Then \eqref{eq.gn} has a unique solution $\tilde{g}_n\in C([0, T]; H^s(\Rr^d\times J))$. Moreover, $\tilde{g}_n$ satisfies 
\begin{multline}
    \label{eq.control-gTilde-n}
    \|\tilde{g}_n\|_{L^{\infty}_{T}H^s} \le \left(\tM\| \tilde{g}_0\|_{H^s}+C(\|f_{n-1}\|_{L^{\infty}_{T}H^s})T^{\mez}\cA_{T, n-1}\right) \exp\left(T^{\mez}C(\|f_{n-1}\|_{L^{\infty}_{T}H^s})\cA_{T, n-1}\right),
\end{multline}
where $\tM=\tM(s, d)$, $C: \Rr_+\to \Rr_+$ depends only on $(s, d, b, \gamma, \fd)$, and 
\bq
\cA_{T, n-1}:=\| f_{n-1}\|_{L^2_TH^{s+\mez}}+\|\tilde{g}_{n-1}\|_{L^\infty_TH^s}+\| f_{n-1}\|_{L^2_TH^{s+\mez}}\| \tilde{g}_{n-1}\|_{L^\infty_TH^s}.
\eq
\end{lemm}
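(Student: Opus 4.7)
Equation~\eqref{eq.gn} is a linear transport equation on $\cU=\Rr^d\times J$ of spatial dimension $N=d+1$, with velocity $v=\ol u_{n-1}$, source $F:=-\gamma'(\varrho_{n-1})(u_{n-1,y}\circ\ff_{f_{n-1}})$, and datum $\tilde g_0=g_0\circ\ff_{f_0}$. The proof will apply \cref{theo:transport} with $\sigma=s$; since $s>\tdm+\tfrac d2=1+\tfrac N2$, we fall into the second regime of that theorem, so the exponential weight is controlled by $V_T=\|\na_x\ol u_{n-1}\|_{L^1_TH^{s-1}}\le\|\ol u_{n-1}\|_{L^1_TH^s}$. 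The tangency $\ol u_{n-1}\cdot\nu=0$ on $\Rr^d\times\p J$ is precisely \eqref{bc:olum}, already checked in the discussion following \eqref{def:olum}: at $z=0$ the definition of $\ol u_{n-1,z}$ combines into $-\p_tf_{n-1}$ minus the right-hand side of the equation \eqref{eq.fn} at step $n-1$, which vanishes, while at $z=-1$ (finite-depth case) it vanishes thanks to $\p_t\varrho_{n-1}|_{z=-1}=0$ together with the Neumann conditions \eqref{bc:phi1}--\eqref{bc:phi2}.

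\textbf{Source estimate.} To control $\gamma'(\varrho_{n-1})$, we write $\varrho_{n-1}(x,z)=az+(\varrho_{n-1}-az)$ with $a\in\{1,H\}$ and observe from \eqref{varrho:H}--\eqref{varrho:H:fd} that the shift $\varrho_{n-1}-az$ lies in $L^\infty\cap H^s$ with norm at most $C(\|f_{n-1}\|_{H^s})$. The multiplier property \eqref{cont:gamma} then transfers $H^s$ regularity of $u_{n-1,y}\circ\ff_{f_{n-1}}$ to $H^s$ regularity of $F$, and the latter is bounded via the velocity estimate \eqref{uof:h} by
\[
\|F\|_{H^s}\le C(\|f_{n-1}\|_{H^s})\bigl(\|f_{n-1}\|_{H^{s+\mez}}+(1+\|f_{n-1}\|_{H^{s+\mez}})\|\tilde g_{n-1}\|_{H^s}\bigr).
\]
Integrating in $t$, applying Cauchy--Schwarz on the $L^2_TH^{s+\mez}$ factor and using $T\le T^{\mez}$ for $T\le 1$, we obtain
\[
\|F\|_{L^1_TH^s}\le C(\|f_{n-1}\|_{L^\infty_TH^s})\,T^{\mez}\cA_{T,n-1}.
\]

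\textbf{Velocity estimate and conclusion.} We bound $\|\ol u_{n-1}\|_{L^1_TH^s}$ piece by piece from \eqref{def:olum}. The scalars $(\p_z\varrho_{n-1})^{-1}$ and $\na_x\varrho_{n-1}$ lie in $L^\infty_TH^{s-1}$ by \cref{lemm:varrho}; the Dirichlet contribution $(\cG[f_{n-1}]\Gamma(f_{n-1}))\circ\ff_{f_{n-1}}$ produces a factor $\|f_{n-1}\|_{L^2_TH^{s+\mez}}$ upon applying \cref{prop.GSest} with $r=s$, \cref{lemm.FaaDiBruno}, and the tame product rule \eqref{tamepr}; the Neumann pieces $(\cS[f_{n-2}]g_{n-2})\circ\ff_{f_{n-2}}$ and $g_{n-2}\circ\ff_{f_{n-2}}$ are bounded in $L^\infty_TH^s$ by the same tools, the bounds on $(f_{n-2},\tilde g_{n-2})$ from the induction hypotheses (I${}_{n-2}$)--(III${}_{n-2}$) being absorbed into the constant; finally $\p_t\varrho_{n-1}$ is handled by substituting \eqref{eq.fn} at step $n-1$, using \cref{theo:DN1}(i) for $G[f_{n-1}]\Gamma(f_{n-1})$ and the trace estimate \eqref{esttrace:cS} for the term $R_{n-2}$. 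A further Cauchy--Schwarz in $t$ yields
\[
\|\ol u_{n-1}\|_{L^1_TH^s}\le C(\|f_{n-1}\|_{L^\infty_TH^s})\,T^{\mez}\cA_{T,n-1}.
\]
Inserting these two bounds into \eqref{transportest} with the universal constant $\tM=\tM(s,d)$ produces exactly \eqref{eq.control-gTilde-n}, while existence and uniqueness of $\tilde g_n$ in $C([0,T];H^s(\Rr^d\times J))$ come directly from \cref{theo:transport}.

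\textbf{Main obstacle.} The delicate step is the $L^1_TH^s$ bound on $\ol u_{n-1}$: contrary to the exact velocity \eqref{tildeu}, the iterative formula \eqref{def:olum} forces compositions with two distinct flattening maps $\ff_{f_{n-1}}$ and $\ff_{f_{n-2}}$ and requires us to reinterpret $\p_t\varrho_{n-1}$ through the nonlinear parabolic equation \eqref{eq.fn} at step $n-1$. The bookkeeping is routine but has to be kept \emph{tame}, i.e.\ linear in the highest-order norm $\|f_{n-1}\|_{H^{s+\mez}}$, so that the $\cA_{T,n-1}$-structure on the right-hand side of \eqref{eq.control-gTilde-n} survives; this is exactly the content of the sharp elliptic estimates \cref{prop.GSest}.
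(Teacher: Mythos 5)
Your proof follows the paper's own (one-line) approach, which simply defers to \cref{theo:transport}, the tangency \eqref{bc:olum}, and the a priori estimate \eqref{aprioriestimate:g} from \cref{sec.apriori-g}; you spell out the details essentially correctly. One small slip in the velocity estimate: $(\cS[f_{n-2}]g_{n-2})\circ\ff_{f_{n-2}}$ is bounded in $L^2_T H^s$ (and in $L^\infty_T H^{s-\mez}$ via $r=s-\mez$), \emph{not} in $L^\infty_T H^s$, because the $r=s$ case of \eqref{eq.estimateS1} requires $\|f_{n-2}\|_{H^{s+\mez}}$, which is only an $L^2_T$ quantity. This does not affect the outcome, since the time integral still produces the needed $T^{\mez}$ factor after Cauchy--Schwarz. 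You are also right that the $(f_{n-2},\tilde g_{n-2})$-dependence of $\ol u_{n-1}$ (through $\cS[f_{n-2}]g_{n-2}$, $g_{n-2}\circ\ff_{f_{n-2}}$, and $R_{n-2}$ inside $\p_t\varrho_{n-1}$) must be absorbed into the constant via the induction hypotheses (I${}_{n-2}$)--(III${}_{n-2}$); the lemma's formal statement, which names only $(f_{n-1},\tilde g_{n-1})$ and a constant depending only on $(s,d,b,\gamma,\fd)$, leaves this implicit, and the paper's own one-line proof glosses over it as well.
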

\begin{proof}
We recall from \eqref{bc:olum} that $\ol{u}_n$ is tangent to $\Rr^d\times \p J$. Therefore, by appealing to \cref{theo:transport} with $\cU=\Rr^d\times J$,  Lemma \ref{lemm:gn} can be proven similarly to the a priori estimate \eqref{aprioriestimate:g}.  We note that \cref{theo:transport} yields both the existence of $\tilde{g}_n$ and the desired estimate. 
\end{proof}

We recall that  $f_1=f_1(x)\in H^{s+1}\subset Z^s_1$. Then we can use the induction hypothesis and apply \cref{lemm:gn} $n-1$ times with $T=T_*$ to  deduce that $\tilde{g}_n$ exists and satisfies  \eqref{eq.control-gTilde-n}. In other words,
\[
    \|\tilde{g}_n\|_{L^{\infty}_{T_*}H^s} \le \left(\tM \eps +C(2R)T_*^\mez \cB\right)\exp\left(C(2R)T_*^\mez \cB\right),
\]
where 
\bq
\cA_{T_*, {n-1}}\le L+2\tM\eps+L2\tM\eps\le \cB:=L+2\tM+2\tM L.
\eq 
Therefore, choosing  
\bq\label{chooseT:1}
    T_*^\mez \le  T_{*, 1}^\mez :=\min\left\{\frac{\ln\tdm}{C(2R)\cB}, \frac{\tM\eps }{3C(2R)\cB}\right\},
\eq
we obtain $\|\tilde{g}_n\|_{L^\infty_{T_*}H^s} \le 2\tM \eps$, thereby proving  \eqref{eq.uniform-gn}.

In Steps 2, 3, and 4 below, we will solve  \eqref{eq.fn} using the  fractional regularization 
\bq\label{eq.fn-nu}
\begin{cases}
    \p_t f_{n,\nu} + G[f_{n,\nu}]\Gamma(f_{n,\nu}) + \nu |D|^{1+\mu}f_{n,\nu} = R_{n-1}\quad\text{in } (\Rr^d\times J) \times (0,T_{n,\nu}), \\
f_{n,\nu}\vert_{t=0}=f_0,
\end{cases}
\eq 
where $\mu \in (0, \mez)$ is a small fixed number.  We impose that
 \bq\label{epsR:1}
\tM\eps \le R.
\eq

{\it Step 2.} We first construct $f_{n,\nu}$ on a time interval depending on $n$ and $\nu$:

\begin{lemm}\label{lemm:fn}
There exists $T\in (0,  T_*]$ such that \eqref{eq.fn-nu} admits a unique solution $f_{n,\nu}$ in the space 
\bq
\begin{aligned}
    V^s_{T}&:=\left\{f\in C([0, T], H^s)\cap L^2([0, T],H^{s+\mez + \frac{\mu}{2}}):~\inf_{(x, t)\in \Rr^d\times [0, T]}(f(x, t)-b(x))\ge \fd \right\}.
\end{aligned}
\eq
Let $T_{n,\nu}^*$ be the maximal existence time of $f_{n,\nu}$ in $V^s_T$. If $T_{n,\nu}^*<T^*$, then either 
\bq\label{bc:fn:1}
    \limsup_{t \to T_{n,\nu}^*}\|f_{n,\nu}(t)\|_{H^s}=\infty\quad\text{or} 
\eq
\bq\label{bc:fn:2}
    \inf_{(x, t)\in \Rr^d\times [0, T_{n,\nu}^*)}(f_{n,\nu}(x, t)-b(x))=\fd.
\eq
\end{lemm}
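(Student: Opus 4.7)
The plan is to build $f_{n,\nu}$ by a Banach fixed point argument on the mild formulation of \eqref{eq.fn-nu}. Since $R_{n-1}$ is already determined by $(f_{n-1}, g_{n-1})$ and lies in $L^2([0,T_*]; H^{s-\mez})$ (by \eqref{est:DN:h}, \eqref{est:traceuN:h}, and the induction hypothesis), equation \eqref{eq.fn-nu} is semilinear for fixed $\nu>0$, with a strictly super-first-order linear dissipation $\nu|D|^{1+\mu}$ dominating the first-order quasilinear perturbation $G[f]\Gamma(f)$. Introducing the semigroup $S_\nu(t)=e^{-t\nu|D|^{1+\mu}}$, I would write $f_{n,\nu}$ as the fixed point of
\begin{equation*}
\Phi(f)(t) = S_\nu(t)f_0+\int_0^t S_\nu(t-\tau)\bigl(R_{n-1}(\tau)-G[f(\tau)]\Gamma(f(\tau))\bigr)\,d\tau.
\end{equation*}
The key smoothing estimates are $\|S_\nu(t)\|_{H^\sigma\to H^\sigma}\le 1$ and, for any $r\ge 0$, $\|S_\nu(t)\|_{H^{\sigma}\to H^{\sigma+r}}\lesssim (\nu t)^{-r/(1+\mu)}$, combined with the parabolic bound $\|\Phi(f)\|_{L^2_TH^{s+(1+\mu)/2}}\lesssim \nu^{-\mez}(\|f_0\|_{H^s}+\|F\|_{L^2_TH^{s-\mez}})$ obtained from Plancherel in the Fourier side.

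Next, I would set $M=2\|f_0\|_{H^s}+1$ and consider the complete metric space
\begin{equation*}
B_{T,M}=\bigl\{f\in C([0,T];H^s)\cap L^2([0,T];H^{s+\mez+\frac{\mu}{2}}): \|f\|_{L^\infty_TH^s}+\|f\|_{L^2_TH^{s+\mez+\frac{\mu}{2}}}\le M\bigr\},
\end{equation*}
equipped with the $L^\infty_TH^s$ norm. Using \eqref{est:DN:h} for $\sigma=s-1$ together with the above semigroup estimates, one gets $\|\Phi(f)\|_{L^\infty_TH^s}\le \|f_0\|_{H^s}+C(\nu,M)T^{\alpha}$ and $\|\Phi(f)-\Phi(g)\|_{L^\infty_TH^s}\le C(\nu,M)T^{\alpha}\|f-g\|_{L^\infty_TH^s}$ for some $\alpha=\alpha(\mu)>0$; here \cref{thm.NPdiff}~(ii) provides the Lipschitz estimate for $G[\cdot]\Gamma(\cdot)$. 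Choosing $T=T(n,\nu)$ small enough makes $\Phi$ a contraction on $B_{T,M}$, yielding a unique solution $f_{n,\nu}$ to \eqref{eq.fn-nu} in $C([0,T];H^s)\cap L^2([0,T];H^{s+\mez+\frac{\mu}{2}})$.

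For the separation condition, since $s>1+\frac{d}{2}$ we have $H^s\hookrightarrow C^0_b$, so $t\mapsto f_{n,\nu}(t,\cdot)-b(\cdot)$ is continuous into $L^\infty(\mathbb{R}^d)$; starting from $f_0-b\ge 2\mathfrak{d}$, we may shrink $T$ once more so that $f_{n,\nu}-b\ge \mathfrak{d}$ on $[0,T]$, giving $f_{n,\nu}\in V^s_T$. Finally, the blow-up alternative is the standard continuation argument: let $T^\ast_{n,\nu}$ be the supremum of times on which $f_{n,\nu}$ exists in $V^s_{\cdot}$. If $T^\ast_{n,\nu}<T_\ast$ and both $\limsup_{t\to T^\ast_{n,\nu}}\|f_{n,\nu}(t)\|_{H^s}<\infty$ and $\inf_{(x,t)}(f_{n,\nu}-b)>\mathfrak{d}$ held, then $\{f_{n,\nu}(t)\}$ would be Cauchy in $H^s$ as $t\to T^\ast_{n,\nu}$ (via the same mild estimates applied to differences $f_{n,\nu}(t)-f_{n,\nu}(t')$), providing a valid new initial datum from which the local existence argument could restart, contradicting maximality.

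The main obstacle is that the fixed point time $T(n,\nu)$ degenerates as $\nu\to 0$, so this lemma alone does not yield uniform existence. This is the reason for Steps 3--4 to come: the uniform $\nu$-independent a priori bound from \cref{rema:apriori:variant} (which critically does not see $\nu$) combined with the blow-up alternative above will force $T^\ast_{n,\nu}\ge T_\ast$, after which one passes $\nu\to 0$ to recover $f_n$.
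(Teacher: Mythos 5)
Your proposal follows essentially the same route as the paper: rewrite \eqref{eq.fn-nu} in Duhamel form with the fractional heat semigroup $e^{-\nu t|D|^{1+\mu}}$, run a Banach fixed point with a contraction constant that is small only because $T$ is small (degenerating as $\nu\to 0$), preserve the bottom separation by continuity in time, and obtain the blow-up alternative by the standard restart argument. The technical bookkeeping differs slightly but harmlessly: the paper works with the norm $\|f\|_{L^\infty_TH^s}+\nu^{\mez}\|f\|_{L^2_TH^{s+\mez+\frac{\mu}{2}}}$ and the energy-type bound $\|\Phi(f)\|\le\|f_0\|_{H^s}+\nu^{-\mez}\|F\|_{L^2_TH^{s-\mez-\frac{\mu}{2}}}$, extracting the small factor $T^{\frac{\mu}{2}}$ by interpolating $H^{s+\mez-\frac{\mu}{2}}$ between $H^s$ and $H^{s+\mez}$, whereas you use the semigroup smoothing $(\nu t)^{-r/(1+\mu)}$ at the level $H^{s-1}$; both yield the same conclusion.

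One point needs repair in the finite-depth case: you set up the contraction on the ball $B_{T,M}$ \emph{without} the separation constraint and only impose $f_{n,\nu}-b\ge\fd$ after the fixed point. But for a generic $f\in B_{T,M}$ the graph of $f(t)$ may touch or cross the bottom, so the fluid domain $\Omega_{f(t)}$ degenerates and $G[f]\Gamma(f)$ (hence $\Phi(f)$) is not defined; moreover the constants in \eqref{est:DN:h} depend on the separation constant and are not uniform on your ball. The fix is exactly what the paper does: build the constraint $\inf_{x,t}(f-b)\ge\fd$ into the fixed-point space (a ball of $V^s_T$) and verify, as part of the self-map property, that $\Phi$ preserves it for $T\le T_{\nu,2}$ — the same continuity-in-time estimate you invoke, just performed before rather than after the contraction. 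With that adjustment your argument is complete and matches the paper's.
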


\begin{proof}
We first  solve \eqref{eq.fn-nu} for $f_{n,\nu}$ by a fixed-point argument in the complete metric space $V_T^s$ endowed with the norm $\|f\|_{V_T^s}:= \|f\|_{L^{\infty}_TH^s} + \nu^\mez\|f\|_{L^2_TH^{s+\frac{1}{2}+\frac{\mu}{2}}}$, where $T\le T_*$. To that end, we rewrite \eqref{eq.fn-nu} in the integral form
\[
    f_{n,\nu}(t)=e^{-\nu t|D|^{1+\mu}}f_0 + \int_0^te^{-\nu(t-t')|D|^{1+\mu}}F_{n-1}(f_{n, \nu})(t')\,\mathrm{d}t' =: \Phi_{n-1}(f_{n,\nu})(t),   
\]
where $F_{n-1}(f)=-G[f]\Gamma(f)+R_{n-1}$ and $\|f_0\|_{H^s}\le R$. One can verify that for all $f\in V_T^s$, we have $\Phi_{n-1}(f)\in C([0, T]; H^s)$.  We will prove that $\Phi_{n-1}$ maps the ball $B(0, 2R)$ of  $Z_T^{s+\mez+\frac{\mu}{2}}$  into itself for some $T = T_{n,\nu}$ sufficiently small. The contraction of $\Phi_{n-1}$  on $B(0, 2R)$ can be proven analogously.

Standard energy estimates on the fractional heat kernel $e^{-\nu t|D|^{1+\mu}}$ give
\bq\label{est:Phi:0}
    \|\Phi_{n-1}(f)\|_{V^s_T} \le \|f(0)\|_{H^s} + \nu^{-\mez}\|F_{n-1}\|_{L^2_TH^{s-\mez-\frac{\mu}{2}}}.      
\eq
We now estimate $F_{n-1}$ for $n\ge 2$.  First, since $s-\mez-\frac{\mu}{2}\ge s-1$, we can use \eqref{est:DN:h} to have 
\begin{equation}\label{eq.estG-mu}
    \|G[f]\Gamma(f)\|_{H^{s-\mez - \frac{\mu}{2}}}\le C(\| f\|_{H^s})\| f\|_{H^{s+\mez-\frac{\mu}{2}}},
\end{equation}
where $C$ depends on $\fd$. On the other hand, the trace theorem for $\Rr^d\times J$ implies
\bq\label{trace:gn-1:mu}
    \|g_{n-1}\vert_{\Sigma_{f_{n-1}}}\|_{H^{s-\mez - \frac{\mu}{2}}}=\| \tilde{g}_{n-1}\vert_{z=0}\|_{H^{s-\mez - \frac{\mu}{2}}}\le C(\|f_{n-1}\|_{H^s}) \| \tilde g_{n-1}\|_{H^{s}}.
\eq
Next, using the tame product estimate and the Sobolev embedding $H^{s-1}(\Rr^d) \hookrightarrow L^{\infty}$, we obtain 
\begin{align*}
    & \|N_{n-1}\cdot \cS[f_{n-1}]g_{n-1}\vert_{\Sigma_{f_{n-1}}}\|_{H^{s-\mez - \frac{\mu}{2}}}\\
    & \les \left(\|N_{n-1}-e_y\|_{H^{s-1}}+1\right)\|\cS[f_{n-1}]g_{n-1}\vert_{\Sigma_{f_{n-1}}}\|_{H^{s-\mez - \frac{\mu}{2}}}  + \|N_{n-1}-e_y\|_{H^{s-\mez - \frac{\mu}{2}}}\|\cS[f_{n-1}]g_{n-1}\vert_{\Sigma_{f_{n-1}}}\|_{H^{s-1}}  \\
    & \le C(\|f_{n-1}\|_{H^{s}})\left\{\|\cS[f_{n-1}]g_{n-1}\vert_{\Sigma_{f_{n-1}}}\|_{H^{s-\mez - \frac{\mu}{2}}} + \|f_{n-1}\|_{H^{s+\mez - \frac{\mu}{2}}}\|\cS[f_{n-1}]g_{n-1}\vert_{\Sigma_{f_{n-1}}}\|_{H^{s-1}}\right\}.
\end{align*}
It follows from \eqref{esttrace:cS} with $\sigma=s-1$ that 
\begin{equation}
    \label{eq.low-S-estim}
    \|\cS[f_{n-1}]g_{n-1}\vert_{\Sigma_{f_{n-1}}}\|_{H^{s-1}} \le  C(\|f_{n-1}\|_{H^{s}})\|\tilde{g}_{n-1}\|_{H^s},
\end{equation}
and from \eqref{esttrace:cS} with $\sigma = s-\mez - \frac{\mu}{2}$ that 
\[
    \|\cS[f_{n-1}]g_{n-1}\vert_{\Sigma_{f_{n-1}}}\|_{H^{s-\mez - \frac{\mu}{2}}} \le C(\|f_{n-1}\|_{H^{s}})\|\tilde{g}_{n-1}\|_{H^s}(1+\|f_{n-1}\|_{H^{s+\mez - \frac{\mu}{2}}}).
\]
Consequently, we obtain 
\bq\label{NS:n-1:mu}
    \|N_{n-1}\cdot \cS[f_{n-1}]g_{n-1}\vert_{\Sigma_{f_{n-1}}}\|_{H^{s-\mez - \frac{\mu}{2}}}\le C(\|f_{n-1}\|_{H^{s}})\|\tilde{g}_{n-1}\|_{H^s}(1+\|f_{n-1}\|_{H^{s+\mez - \frac{\mu}{2}}}).
\eq
Combining \eqref{trace:gn-1:mu} and \eqref{NS:n-1:mu}, we find 
 \bq\label{est:Rn-1:fixedpoint}
 \|R_{n-1}\|_{H^{s-\mez - \frac{\mu}{2}}}\le C(\|f_{n-1}\|_{H^{s}})\|\tilde{g}_{n-1}\|_{H^s}(1+\|f_{n-1}\|_{H^{s+\mez - \frac{\mu}{2}}}).
\eq
By interpolation of $H^{s+\mez - \frac{\mu}{2}}$ between $H^s$ and $H^{s+\mez}$, there holds
\[
    \|h\|_{L^2_TH^{s+\mez - \frac{\mu}{2}}}\le \| h\|_{L^\infty_T H^s}^\mu T^{\frac{\mu}{2}} \| h\|^{1-\mu}_{L^2_TH^{s+\mez}}.
\]
Consequently, in view of \eqref{eq.estG-mu} and \eqref{est:Rn-1:fixedpoint}, we deduce 
\begin{equation}\label{est:Fn-1:mu}
\begin{aligned}
    \|F_{n-1}\|_{L^2_TH^{s-\mez - \frac{\mu}{2}}} &\le C(\|f\|_{L^{\infty}_TH^{s}})T^{\frac{\mu}{2}}\|f\|^{1-\mu}_{L^2_TH^{s+\mez}}\\
    &\qquad+C(\|f_{n-1}\|_{L^{\infty}_TH^{s}})\|\tilde{g}_{n-1}\|_{L^{\infty}_TH^s}(T^{\mez}+T^{\frac{\mu}{2}}\|f_{n-1}\|^{1-\mu}_{L^2_TH^{s+\mez}}). 
    \end{aligned}
\end{equation}
Using this, the induction hypothesis, and \eqref{epsR:1},  it follows from \eqref{est:Phi:0} that 
\bq
    \| \Phi_{n-1}(f)\|_{V_T^s}\le R+ \nu^{-(1-\frac{\mu}{2})}\left\{C(2R)T^\mu (2R)^{1-\mu}+C(2R)2R(T^\mez+ T^\mu L)\right\}
\eq 
for $f\in B(0, 2R)\subset V^s_T$. Thus, for some $T_{\nu, 1}=T_{\nu, 1}(\nu, R, L, \fd)>0$,  $\| \Phi_{n-1}(f)\|_{V_T^s}\le 2R$ whenever $f\in B(0, 2R)\subset V^s_T$ and $T\le T_{\nu, 1}$.

Next, for $\mu$ sufficiently small and $t\le T$, we have 
\begin{align*}
    \|(\Phi_{n-1}(f)(t) - b) - (f_0-b)\|_{L^{\infty}}& \le C\|\Phi_{n-1}(f)(t) - f_0\|_{H^{s-1-\mu}} \\
    &\le C\|e^{-\nu t|D|^{1+\mu}}f_0-f_0\|_{H^{s-1-\mu}} + tC\|F_{n-1}\|_{L^{\infty}_TH^{s-1}}\\
    &\le C\nu t \| f_0\|_{H^s} + tC\|F_{n-1}\|_{L^{\infty}_TH^{s-1}}
\end{align*}
  The estimate \eqref{eq.low-S-estim}, the trace inequality,  and \cref{theo:DN1} (i) imply 
\begin{align*}
    \|F_{n-1}(f)\|_{L^{\infty}_TH^{s-1}}& \le C(\|f\|_{L^{\infty}_TH^s})\|f\|_{L^{\infty}_TH^s} +   C(\|f_{n-1}\|_{L^{\infty}_TH^{s}})\|\tilde{g}_{n-1}\|_{L^{\infty}_TH^s}\\
    &\le C(\|f\|_{L^{\infty}_TH^s})\|f\|_{L^{\infty}_TH^s}+C(2R)2R,
     \end{align*}
     where we have used the induction hypothesis and \eqref{epsR:1}.  Therefore, since $f_0-b\ge 2\fd$, for  $f\in B(0, 2R)\subset V^s_T$ we have 
     \[
     \Phi_{n-1}(f)(x, t)-b(x)\ge 2\fd- C\nu T R -TC(2R)4R\ge \fd
     \]
provided $t\le T\le T_{\nu, 2}=T_{\nu, 2}(\nu, R, \fd)>0$. 

We have proven that $\Phi_{n-1}$ maps the ball $B(0, 2R)\subset V^s_T$ into itself if $T\le \min\{T_{\nu, 1}, T_{\nu, 2}\}$. Moreover, $\Phi_{n-1}$ is a contraction  if $T$ is small enough. The Banach fixed point theorem implies that \eqref{eq.fn-nu} has a unique solution $f_{n, \nu}\in V^s_T$, where $T$ depends on $(\nu,  R, L, \| f_0\|_{H^s})$. Therefore, if $T_{n, \nu}^*\le T_*$ is the maximal existence time, then $T_{n, \nu}^*<T_*$ if and only if either \eqref{bc:fn:1} or \eqref{bc:fn:2} holds.
\end{proof}

{\it Step 3.} Our next task is to prove that  $T_{n,\nu}^*= T_*$.  By virtue of the continuation criteria \eqref{bc:fn:1} and \eqref{bc:fn:2} in \cref{lemm:fn}, it suffices to prove that 
\bq\label{bc1:proof}
\sup_{t\in [0, T^*_{n,\nu})} \| f_{n,\nu}(t)\|_{H^s}<\infty\quad\text{and }
\eq
\bq\label{bc2:proof}
 \inf_{(x, t)\in \Rr^d\times [0, T^*_{n,\nu})}(f_{n,\nu}(x, t)-b(x))>\fd.
\eq
Note that in the following we use that $f_{n,\nu} \in C^0([0,T_{n,\nu}^*),H^s)$.  From the definition of $T_{n,\nu}^*$ we have

\bq\label{separation:fn}
    \inf_{(x, t)\in \Rr^d\times [0, T_{n}^*)}(f_{n,\nu}(x, t)-b(x))\ge \fd.
\eq
\begin{lemm} For all $t<T^*_{n,\nu}$, we have
    \bq\label{lowerRT:fn}
    \begin{aligned}
    &\inf_{x\in \Rr^d} \cT(f_{n,\nu}(t))(x)\ge 2\fa\\
    &\qquad -t^\frac13C(\|f_{n,\nu}\|_{C([0, t]; H^s)})\left(\| f_{n,\nu}\|_{L^\infty([0, t]; H^s)} + C(\|f_{n-1}\|_{L^\infty([0, t]; H^s)})\| \tilde{g}_{n-1}\|_{L^\infty([0, t]; H^s)}\right)^\frac{1}{3}
\end{aligned}
\eq
and 
\bq\label{lowersep:fn}
\begin{aligned}
&\inf_{x\in \Rr^d}(f_{n,\nu}(x, t)-b(x))\ge 2\fd\\
&\qquad -tC_s\left(C(\|f_{n,\nu}\|_{L^\infty([0, t]; H^s)})\| f_{n,\nu}\|_{L^\infty([0, t]; H^s)} + C(\|f_{n-1}\|_{L^\infty([0, t]; H^s)})\| \tilde{g}_{n-1}\|_{L^\infty([0, t]; H^s)}\right),
\end{aligned}
\eq
where $C_s$ depends only on $(s, d)$. 
\end{lemm}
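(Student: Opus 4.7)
The plan is to propagate the two pointwise lower bounds by the fundamental theorem of calculus in time, combined with Sobolev embedding and the equation \eqref{eq.fn-nu}. Throughout, the key point is that all constants must remain independent of $\nu \in (0,1]$, so that these estimates survive the vanishing-regularization limit $\nu \to 0$ carried out later in \cref{sec.lwp}.

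For the separation bound \eqref{lowersep:fn}, I would simply write
\[
f_{n,\nu}(x,t) - b(x) = (f_0(x)-b(x)) + \int_0^t \partial_\tau f_{n,\nu}(x,\tau)\,d\tau \ge 2\mathfrak{d} - t\,\|\partial_t f_{n,\nu}\|_{L^\infty([0,t]\times\Rr^d)},
\]
and use $s-1 > \tfrac12+\tfrac{d}{2}$ to bound the $L^\infty$ norm by the $H^{s-1}$ norm. From the equation, $\|\partial_t f_{n,\nu}\|_{H^{s-1}}$ is controlled by three terms: the DN term via \eqref{est:DN:ABZ000} with $\sigma=s$, giving $C(\|f_{n,\nu}\|_{H^s})\|f_{n,\nu}\|_{H^s}$; the regularization by $\nu\|f_{n,\nu}\|_{H^s}$ (benign since $s > \tfrac32+\tfrac{d}{2} > 1+\mu+\tfrac{d}{2}$); and $R_{n-1}$ via the trace theorem and \eqref{esttrace:cS} with $\sigma = s-1$, giving $C(\|f_{n-1}\|_{H^s})\|\tilde g_{n-1}\|_{H^s}$. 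This produces \eqref{lowersep:fn}.

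For the Taylor-sign bound \eqref{lowerRT:fn}, the plan is to reduce the pointwise estimate on $\cT$ to a Sobolev-norm estimate on the difference $f_{n,\nu}(t)-f_0$. Decomposing
\[
\cT(f_{n,\nu}(t)) - \cT(f_0) = \bigl[\gamma(f_{n,\nu}(t))-\gamma(f_0)\bigr] - \bigl[(G[f_{n,\nu}(t)]-G[f_0])\Gamma(f_{n,\nu}(t)) + G[f_0]\bigl(\Gamma(f_{n,\nu}(t))-\Gamma(f_0)\bigr)\bigr],
\]
I would apply \cref{thm.NPdiff}~(ii) and \cref{theo:DN1}~(i) with $\sigma=s-\tfrac12$, together with the composition/multiplier estimates for $\gamma$ from \cref{prop:gamma} and the Sobolev embedding $H^{s-3/2}\hookrightarrow L^\infty$, to obtain
\[
\|\cT(f_{n,\nu}(t))-\cT(f_0)\|_{L^\infty} \le C\bigl(\|f_{n,\nu}\|_{C([0,t];H^s)}\bigr)\,\|f_{n,\nu}(t)-f_0\|_{H^{s-1/2}}.
\]
The right-hand side is then controlled by interpolating $\|f_{n,\nu}(t)-f_0\|_{H^{s-1/2}}$ between $L^2$ and $H^s$: the $H^s$ factor is absorbed into the prefactor by the uniform bound on $\|f_{n,\nu}\|_{L^\infty_tH^s}$, while $\|f_{n,\nu}(t)-f_0\|_{L^2} \le t\,\|\partial_t f_{n,\nu}\|_{L^\infty_t L^2}$, with the right-hand side bounded exactly as in the separation argument in terms of $\|f_{n,\nu}\|_{H^s}$, $\|f_{n-1}\|_{H^s}$ and $\|\tilde g_{n-1}\|_{H^s}$. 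A suitable choice of interpolation exponent yields the power $1/3$ on $t$ and on the combined norm factor stated in \eqref{lowerRT:fn}.

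The main subtlety is the $\nu$-uniformity. The viscous term $\nu |D|^{1+\mu}f_{n,\nu}$ cannot be controlled by quantities that blow up as $\nu \to 0$, but since it always appears multiplied by $\nu$, and since $s>\tfrac32+\tfrac{d}{2}$ provides an $H^s$-to-$L^\infty$ embedding for $|D|^{1+\mu}$ (for $\mu<\tfrac12$), its contribution is $O(\nu)\,\|f_{n,\nu}\|_{H^s}$ and can be absorbed into the generic constant. All remaining constants depend only on the inductive hypothesis norms $\|f_{n,\nu}\|_{H^s}$, $\|f_{n-1}\|_{H^s}$, $\|\tilde g_{n-1}\|_{H^s}$, and on $(\mathfrak{d},\gamma,b)$, as required.
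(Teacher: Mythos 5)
Your overall strategy is the same as the paper's (fundamental theorem of calculus in time, bound $\p_t f_{n,\nu}$ through the equation, Sobolev embedding, and an interpolation to produce a fractional power of $t$), but as written it has two concrete problems, both centered on the choice of Sobolev index. First, you propose to control $\|\p_t f_{n,\nu}\|_{H^{s-1}}$ and assert that the regularization contributes $\nu\|f_{n,\nu}\|_{H^s}$. This is false at that regularity: $\| |D|^{1+\mu} f_{n,\nu}\|_{H^{s-1}}\simeq \|f_{n,\nu}\|_{H^{s+\mu}}$, and $f_{n,\nu}$ is only in $L^\infty_t H^s\cap L^2_t H^{s+\mez+\frac{\mu}{2}}$, so no $\nu$-uniform (indeed no) $L^\infty_t$ bound is available there. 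Your closing remark contains the correct condition ($s>1+\mu+\frac d2$), but to use it you must measure $\p_t f_{n,\nu}$ at a lower index, e.g.\ $H^{s-\tdm}$ (which still embeds in $L^\infty$ since $s-\tdm>\frac d2$), where $\| |D|^{1+\mu}f\|_{H^{s-\tdm}}\les\|f\|_{H^{s-\mez+\mu}}\le \|f\|_{H^s}$ for $\mu\le\mez$; this is exactly the index used in the paper's bound \eqref{dtfn:l}, and it is needed for both \eqref{lowersep:fn} and \eqref{lowerRT:fn}.

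Second, for \eqref{lowerRT:fn} your interpolation does not produce the stated exponents. Having reduced to $\|\cT(f_{n,\nu}(t))-\cT(f_0)\|_{L^\infty}\les \|f_{n,\nu}(t)-f_0\|_{H^{s-\mez}}$, you interpolate between $L^2$ and $H^s$; but with those endpoints the exponent is forced to be $\theta=\frac{1}{2s}$, not a free parameter, so you obtain $t^{1/(2s)}(\cdots)^{1/(2s)}$. Since $t\le 1$ and $\frac{1}{2s}<\frac13$, this bound does not imply the claimed $t^{\frac13}(\cdots)^{\frac13}$ (the deficit $t^{\frac{1}{2s}-\frac13}$ blows up as $t\to0$ and cannot be absorbed into a constant depending only on the norms). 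The fix is to interpolate with lower endpoint $H^{s-\tdm}$: writing $s-\mez=\frac13(s-\tdm)+\frac23 s$ gives $\|f_{n,\nu}(t)-f_0\|_{H^{s-\mez}}\le \|f_{n,\nu}(t)-f_0\|_{H^{s-\tdm}}^{1/3}\|f_{n,\nu}(t)-f_0\|_{H^{s}}^{2/3}$, with $\|f_{n,\nu}(t)-f_0\|_{H^{s-\tdm}}\le t\,\|\p_t f_{n,\nu}\|_{L^\infty([0,t];H^{s-\tdm})}$ and the $H^s$ factor absorbed into $C(\|f_{n,\nu}\|_{C([0,t];H^s)})$; this recovers exactly the $t^{1/3}$ and the bracket to the power $\frac13$, and dovetails with the first correction (the paper achieves the same thing by applying \cref{thm.NPdiff}~(ii) with $\sigma=s-\tdm$ and interpolating the $\cT$-difference in $H^{s-\tdm}$ between $H^{s-\frac52}$ and $H^{s-1}$). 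With these two repairs your argument coincides with the paper's; the separation estimate \eqref{lowersep:fn} is otherwise handled as you describe.
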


\begin{proof} We fix $t<T^*_{n,\nu}$. Since $s>\tdm +\frac{d}{2}\ge 2$, we can apply \cref{thm.NPdiff} (ii) with $\sigma=s-\tdm>\mez$ to have 
  \begin{align*}
  \| \cT(f_{n,\nu}(t))-\cT(f_0)\|_{H^{s-\frac52}}&\le C(\|f_{n,\nu}(t)\|_{H^s}, \| f_0\|_{H^s})\| f_{n,\nu}(t)-f_0\|_{H^{s-\tdm}}\\
  & \le C(\| f_{n,\nu}\|_{C([0, t]; H^s)})t\| \p_t f_{n,\nu}\|_{L^\infty([0, t]; H^{s-\tdm})}.
  \end{align*}
  On the other hand, \cref{theo:DN1} (i) again implies
  \[
   \| \cT(f_{n,\nu}(t))-\cT(f_0)\|_{H^{s-1}}\le  \| \cT(f_{n,\nu}(t))\|_{H^{s-1}}+\|\cT(f_0)\|_{H^{s-1}}\le C(\|f_{n,\nu}(t)\|_{H^s}, \| f_0\|_{H^s}).
  \]
  By interpolation, the two preceding estimates  yield
  \[
    \| \cT(f_{n,\nu}(t))-\cT(f_0)\|_{H^{s-\tdm}}\le C(\| f_{n,\nu}\|_{C([0, t]; H^s)})t^\frac13\| \p_t f_{n,\nu}\|_{L^\infty([0, t]; H^{s-\tdm})}^\frac13.
  \]
  Since $H^{s-\tdm}(\Rr^d)\subset L^\infty(\Rr^d)$, it follows that 
\bq\label{cTfn:1}
\forall x\in \Rr^d,~\cT(f_{n,\nu}(t))(x)\ge 2\fa-t^\frac13C(\| f_{n,\nu}\|_{C([0, t]; H^s)})\| \p_t f_{n,\nu}\|_{L^\infty([0, t]; H^{s-\tdm})}^\frac13.
\eq
We recall from \eqref{eq.fn} that $\p_tf_{n,\nu}=-G[f_{n,\nu}]\Gamma(f_{n,\nu})+R_{n-1}-\nu|D|^{1+\mu}f_{n,\nu}$. Using \cref{theo:DN1} (i), the estimate \eqref{esttrace:cS}, and the trace inequality for $\Rr^d\times J$, we obtain
\bq\label{dtfn:l}
\begin{aligned}
\| \p_t f_{n,\nu}\|_{L^\infty([0, t];  H^{s-\tdm})}&\le \| G[f_{n,\nu}]\Gamma(f_{n,\nu})\|_{L^\infty([0, t]; H^{s-1})}+\| R_{n-1}\|_{L^\infty([0, t];  H^{s-1})} \\
& \qquad + \nu \||D|^{1+\mu} f_{n,\nu}\|_{L^\infty([0, t];  H^{s-\tdm})} \\
&\mkern-54mu \le C(\|f_{n,\nu}\|_{L^\infty([0, t]; H^s)})\| f_{n,\nu}\|_{L^\infty([0, t]; H^s)} + C(\|f_{n-1}\|_{L^\infty([0, t]; H^s)})\| \tilde{g}_{n-1}\|_{L^\infty([0, t]; H^s)}.
\end{aligned}
\eq
Combining \eqref{cTfn:1} and \eqref{dtfn:l} yields \eqref{lowerRT:fn}. 

Since 
\[
\inf_{x\in \Rr^d}(f_{n,\nu}(x, t)-b(x))\ge \inf_{x\in \Rr^d}(f_0(x)-b(x))-tC_s\| \p_tf_{n,\nu}\|_{L^\infty([0, t]; H^{s-\tdm})}
\]
for some constant $C_s=C_s(s, d)>0$, \eqref{lowersep:fn} follows from \eqref{dtfn:l}.
\end{proof}
Now we consider any $T<T_{n,\nu}^*$ such that 
\bq\label{lowerRT:fn:0}
\inf_{(x, t)\in \Rr^d\times [0, T]}\cT(f_{n,\nu}(t))(x)\ge \fa.
\eq
The estimate \eqref{lowerRT:fn} implies that \eqref{lowerRT:fn:0} holds at least for small  $T$. In view of \eqref{separation:fn} and \eqref{lowerRT:fn:0}, we can use \cref{rema:apriori:variant} and deduce from the estimate \eqref{eq.apriori-fR} that
\bq\label{est:fn:energy}
 \begin{aligned}
    &\|f_{n,\nu}\|_{C_TH^{s}}+ \frac{1}{C(\|f_{n,\nu}\|_{C_TH^s})}\|f_{n,\nu}\|_{L^2_TH^{s+\mez}} \\
    & \le \exp(TC(\|f_{n,\nu}\|_{C_TH^s)})\left(\|f_0\|_{H^s} +C(\|f_{n,\nu}\|_{C_TH^s}) \|R_{n-1}\|_{L^2_{T}H^{s-\mez}}\right),
    \end{aligned}
    \eq
    where $C_TH^s\equiv C([0, T]; H^s)$. Using  \eqref{esttrace:cS}, the induction hypothesis, and the fact that $T^*\le 1$ , we have 
\bq    \label{eq.Rn-1-bound}
\begin{aligned}
    \|R_{n-1}\|_{L^2_{T}H^{s-\mez}}& \le C(\|f_{n-1}\|_{L^\infty_{T}H^s})(1+\| f_{n-1}\|_{L^2_{T}H^{s+\mez}})\| \tilde{g}_{n-1}\|_{L^\infty_{T}H^s}\\
    & \le 2C(2R)(1+L(R))\tM\varepsilon.
\end{aligned}
\eq  
 Consequently, by increasing $C$ and imposing that 
\bq\label{cd:eps:2}
\tM\eps \le \frac{R}{4C^2(2R)(1+L(R))},
\eq
 we obtain from \eqref{est:fn:energy}, \eqref{eq.Rn-1-bound}, and  \eqref{lowerRT:fn}  that for any $T<T_{n,\nu}^*$ satisfying \eqref{lowerRT:fn:0} we have
\begin{align}\label{apriori:fn:1}
  &  \|f_{n,\nu}\|_{C_TH^{s}}\le R\exp(TC(\|f_{n,\nu}\|_{C_TH^s}))\left(1+ \frac{C(\|f_{n,\nu}\|_{C_TH^s})}{2C(2R)}\right), \\ \label{apriori:fn:2}
    & \|f_{n,\nu}\|_{L^2_TH^{s+\mez}}\le R C(\|f_{n,\nu}\|_{C_TH^s})\exp(TC(\|f_{n,\nu}\|_{C_TH^s}))\left(1+ \frac{C(\|f_{n,\nu}\|_{C_TH^s})}{2C(2R)}\right),
\end{align}
\bq\label{apriori:RT:fn}
\begin{aligned}
&\inf_{(x, t)\in \Rr^d\times [0, T]} \cT(f_{n,\nu}(t))(x)\ge 2\fa\\
&\qquad -T^\frac13C(\|f_{n,\nu}\|_{C_TH^s})\left(R\exp(TC(\|f_{n,\nu}\|_{C_TH^s}))\left(1+ \frac{C(\|f_{n,\nu}\|_{C_TH^s})}{2C(2R)}\right)+2C(2R)\tM\eps\right)^\frac13,
\end{aligned}
\eq
and 
\bq\label{lowersep:fn:2}
\begin{aligned}
&\inf_{(x, t)\in \in \Rr^d\times [0, T]}(f_{n,\nu}(x, t)-b(x))\ge 2\fd-T\left(C(\|f_{n,\nu}\|_{L^\infty([0, T]; H^s)})\| f_{n,\nu}\|_{L^\infty([0, T]; H^s)} + C(2R)2\tM\eps\right).
\end{aligned}
\eq
Set 
\begin{align*}
T_{n,\nu}'&=\sup\left\{T \in (0, T_{n,\nu}^*):\quad \| f_{n,\nu}\|_{C_TH^s}\le 2R,~\inf_{(x, t)\in \Rr^d\times [0, T]} \cT(f_{n,\nu}(t))(x)\ge \fa,\right.\\
&\qquad\qquad \left. \inf_{(x, t)\in \Rr^d\times [0, T]}(f_{n,\nu}(x, t)-b(x))\ge \tdm \fd \right\}>0.
\end{align*}
There are two cases: 

Case 1:  $T_{n,\nu}'=T_{n,\nu}^*$. Then the blowup criteria \eqref{bc:fn:1} and \eqref{bc:fn:2} imply that $T_{n,\nu}^*=T_*$. 

Case 2:  $T_{n,\nu}'<T^*_{n,\nu}$. Then  \eqref{apriori:fn:1}, \eqref{apriori:fn:2}, and \eqref{apriori:RT:fn} hold with $T=T_{n,\nu}'$ and one of the following possibilities must occur. 

\quad Possibility 1: $\| f_{n,\nu}\|_{C_{T_{n,\nu}'}H^s}=2R$ . Then \eqref{apriori:fn:1} with $T=T_{n,\nu}'$  implies   
\[
2R=\| f_{n,\nu}\|_{C_{T_{n,\nu}'}H^s}\le R\exp(T_{n,\nu}'C(2R))\tdm.
\]
Consequently 
\bq
T_{n,\nu}'\ge T_{*, 2}:=\frac{1}{C(2R)}\ln \frac43.
\eq  
\quad Possibility 2:  $\inf_{(x, t)\in \Rr^d\times [0, T]} \cT(f_{n,\nu}(t))(x)= \fa$. Then \eqref{apriori:RT:fn} with $T=T'_{n,\nu}<T_{n,\nu}^*< 1$  implies  
\begin{align*}
\fa&\ge 2\fa- (T_{n,\nu}')^\frac13C(2R)R\left(\exp(C(2R))\tdm+2C(2R)\right)^\frac13,
\end{align*}
and hence 
\bq
T_{n,\nu}'\ge T_{*, 3}:= \frac{\fa^3}{C^3(2R)R^3\left(\tdm \exp(C(2R))+2C(2R)\right)}.
\eq
\quad Possibility 3: $ \inf_{(x, t)\in \Rr^d\times [0, T]}(f_{n,\nu}(x, t)-b(x))= \tdm \fd$. Then using \eqref{lowersep:fn:2} with $T=T_{n,\nu}'$, we deduce 
\bq
T_{n,\nu}'\ge T_{*, 4}:=\frac{\fd}{8RC(2R)}.
\eq
Therefore, by choosing 
\bq\label{chooseT:2}
T_*< \min\{T_{*, 2}, T_{*, 3}, T_{*, 4}\},
\eq
we always have $T_{n,\nu}'= T_*$. In particular, the definition of $T_{n,\nu}'$ implies that 
 \begin{align*}
    &\|f_{n,\nu}\|_{L^{\infty}_{T_*}H^s} \le 2R \label{eq.uniform-fn1-nu}\tag{I\({}_{n,\nu}\)}, \\
    & \inf_{(x, t) \in \mathbb{R}^d\times [0, T_*]} \left(f_{n,\nu}(x, t) - b(x)\right) \ge  \mathfrak{d} \tag{IV\({}_{n,\nu}\)} \label{eq.uniform-bottom-nu},\\
& \inf_{(x, t)\in \Rr^d\times [0, T_*]}\cT(f_{n,\nu}(t))(x) \ge \mathfrak{a}\tag{V\({}_{n,\nu}\)}\label{eq.uniform-taylor-nu}
\end{align*} 
hold. 
Then, inserting \eqref{eq.uniform-fn1-nu} in \eqref{apriori:fn:2}, we obtain 
\begin{equation}\label{eq.uniform-fn2-nu}
    \|f_{n,\nu}\|_{L^2_{T_*}H^{s+\mez}} \le L=L(R) \tag{II\({}_{n,\nu}\)}
\end{equation}
for 
\bq\label{cd:L}
 L(R)\ge \tdm RC(2R)\exp(C(2R)).
 \eq

{\it Step 4.} Fix $n\ge 2$. Our goal in this step we pass to the limit $\nu \to 0$ in $f_{n, \nu}$ to obtain $f_n$. Set $\ol{f}_{n,m}=f_{n,\nu_m}-f_{n,\nu_{m-1}}$,  where $\nu_m=\frac{1}{m}$ for $m\ge 2$.   From \eqref{eq.fn-nu} we have
\[
    \p_t \ol{f}_{n,m}+G[f_{n,\nu_m}]\Gamma(f_{n,\nu_m})-G[f_{n,\nu_{m-1}}]\Gamma(f_{n,\nu_{m-1}})=-\nu_m|D|^{1+\mu} \ol{f}_{n,m} + (\nu_m-\nu_{m-1})|D|^{1+\mu}f_{n,\nu_{m-1}}
\]
on $(0, T_*)$. Using the paralinearization \eqref{paralin:G:contra} for $G[f_{n,\nu_m}]\Gamma(f_{n,\nu_m})-G[f_{n,\nu_{m-1}}]\Gamma(f_{n,\nu_{m-1}})$, we find 
\bq\label{eq:olfn-nu}
    \p_t\bar{f}_{n,m} + T_{\lambda[f_{n,\nu_{m-1}}](\gamma(f_{n,\nu_{m-1}})-B_{n,\nu_{m-1}})}\bar{f}_{n,m}-T_{V_{n,\nu_{m-1}}}\cdot \nabla\bar{f}_{n,m} + \nu_m|D|^{1+\mu} \ol{f}_{n,m}= R'_{n,m}+F_{n,m},
\eq
where
\bq
F_{n,m}=(\nu_m-\nu_{m-1})|D|^{1+\mu}f_{n,\nu_{m-1}}
\eq
and $R'_{n,m}$ is given by \eqref{eq.R:def} with $(f_1,f_2)$  replaced by $(f_{n,\nu_m},f_{n,\nu_{m-1}})$. Thus combining \cref{lemm:est:R'} with \eqref{eq.uniform-fn1-nu} and \eqref{eq.uniform-fn2-nu} we find
\begin{multline}\label{R'n-nu}
    \| R'_{n,m}\|_{H^{s-\tdm}}\le C(\| (f_{n,\nu_m}, f_{n,\nu_{m-1}})\|_{H^s})\left\{\|\ol{f}_{n,m}\|_{H^{s-\mez -\delta}} \right. \\ 
    + \left.\big(1+\|(f_{n,\nu_m}, f_{n,\nu_{m-1}})\|_{H^{s+\mez-\delta}}\big)\|\ol{f}_{n,m}\|_{H^{s-1}}\right\}, 
\end{multline}
and also 
\bq \label{est:Fn-nu}
    \| F_{n,m}\|_{H^{s-\tdm}} \le |\nu_m-\nu_{m-1}|\|f_{n,\nu_{m-1}}\|_{H^{s-\frac{1}{2}+\mu}} \le  |\nu_m-\nu_{m-1}|\|f_{n,\nu_{m-1}}\|_{H^{s+\frac{1}{2}}}.
\eq
By virtue of (V${}_{n,\nu_{m-1}}$), we have $\gamma(f_{n,\nu_{m-1}})-B_{n,\nu_{m-1}}\ge \fa$, so that $\lambda[f_{n,\nu_{m-1}}](\gamma(f_{n,\nu_{m-1}})-B_{n,\nu_{m-1}})$ is a first-order elliptic symbol (uniformly in $m$). Therefore, in view of \eqref{R'n-nu} and \eqref{est:Fn-nu}, equation \eqref{eq:olfn-nu} is of the same form as \eqref{eq.transport-diference-final} except for the additional dissipation $-\nu_m|D|^{1+\mu} \ol{f}_{n,m}$. Since 
\[
    (-\nu_m|D|^{1+\mu} \ol{f}_{n,m}, \ol{f}_{n,m})_{H^{s-1}}\le 0, 
\] 
the $H^{s-1}$ energy estimate as in \cref{lemm:parabolicest} gives
\begin{align}\label{energy:olfn:1-nu}
    \mez\frac{d}{dt}\|\ol{f}_{n,m}(t)\|_{H^{s-1}}^2&\le -\frac{1}{\cF_0(\|f_{n,\nu_{m-1}}\|_{L^{\infty}_TH^s}, \mathfrak{a}, \mathfrak{d})}\| \ol{f}_{n,m}(t)\|_{H^{s-\mez}}^2+\cF_0(\|f_{n,\nu_{m-1}}\|_{L^{\infty}_TH^s}, \mathfrak{a}, \mathfrak{d})\|\ol{f}_{n,m}(t)\|_{H^{s-1}}^2\notag\\
&\qquad+\big(\| R_{n,m}'(t)\|_{H^{s-\frac{3}{2}}}+\| F_{n,m}(t)\|_{H^{s-\tdm}}\big)\|\ol{f}_{n,m}(t)\|_{H^{s-\mez}},\quad t\le T\le T_*.
\end{align}
By Young's inequality, we have
\begin{multline*}
    \| F_{n,m}(t)\|_{H^{s-\tdm}}\|\ol{f}_{n,m}(t)\|_{H^{s-\mez}}\le \frac{1}{4\cF_0(\|f_{n,\nu_{m-1}}\|_{L^{\infty}_TH^s}, \mathfrak{a}, \mathfrak{d})}\| \ol{f}_{n,m}(t)\|_{H^{s-\mez}}^2 \\ 
    +\cF_0(\|f_{n,\nu_{m-1}}\|_{L^{\infty}_TH^s}, \fa, \fd)\|F_{n,m}(t)\|_{H^{s-\tdm}}^2.
\end{multline*}
On the other hand, using the interpolation inequality \eqref{interpolate:contraf}, Young's inequality, and \eqref{R'n-nu}, we find 
\begin{multline*}
\| R'_{n,m}(t)\|_{H^{s-\tdm}}\|\ol{f}_{n,m}(t)\|_{H^{s-\mez}}\le\frac{1}{4\cF_0(\|f_{n,\nu_{m-1}}\|_{L^{\infty}_TH^s}, \mathfrak{a}, \mathfrak{d})}\| \ol{f}_{n,m}(t)\|_{H^{s-\mez}}^2\\
+\cF_1(\|(f_{n,\nu_m}, f_{n,\nu_{m-1}})\|_{L^{\infty}_TH^s}, \mathfrak{a}, \mathfrak{d})\big(1+\|(f_{n,\nu_m}, f_{n,\nu_{m-1}})\|^2_{H^{s+\mez-\delta}}\big)\| \ol{f}_{n,m}\|^2_{H^{s-1}}.
\end{multline*}

Plugging the two preceding estimates in \eqref{energy:olfn:1-nu} yields
\begin{align}\label{energy:olfn-nu:1}
    \mez\frac{d}{dt}\|\ol{f}_{n,m}(t)\|_{H^{s-1}}^2&\le -\frac{1}{2\cF_0(\|f_{n,\nu_{m-1}}\|_{L^{\infty}_TH^s}, \mathfrak{a}, \mathfrak{d})}\| \ol{f}_{n,m}(t)\|_{H^{s-\mez}}^2\notag\\
    &\qquad+\cF_2(\|(f_{n,\nu_m}, f_{n,\nu_{m-1}})\|_{L^{\infty}_TH^s}, \mathfrak{a}, \mathfrak{d})\big(1+\|(f_{n,\nu_m}, f_{n,\nu_{m-1}})\|^2_{H^{s+\mez-\delta}}\big)\| \ol{f}_{n,m}\|^2_{H^{s-1}}\notag\\
    &\qquad +\cF_0(\|f_{n,\nu_{m-1}}\|_{L^{\infty}_TH^s}), \fa, \fd\| F_{n,m}(t)\|_{H^{s-\tdm}}^2,\quad t\le T\le T_*.
\end{align}
Since $\ol{f}_{n,m}(0)=0$, Gr\"onwall's lemma implies 
\bq
    \|\ol{f}_{n,m}\|_{L^\infty_TH^{s-1}\cap L^2_TH^{s-\mez}}\le \| F_{n,m}\|_{L^2_TH^{s-\tdm}}\cF_3\left(\|(f_{n,\nu_m}, f_{n,\nu_{m-1}})\|_{L^{\infty}_TH^s\cap L^2_TH^{s+\mez}}, \mathfrak{a}, \mathfrak{d}\right),\quad T\le T_*.
\eq
Invoking the estimate \eqref{est:Fn-nu} and the bounds \eqref{eq.uniform-fn1-nu}, \eqref{eq.uniform-fn2-nu} we find
\[
    \|\ol{f}_{n,m}\|_{L^\infty_TH^{s-1}\cap L^2_TH^{s-\mez}} \le   L |\nu_m-\nu_{m-1}|\cF_3(4R+2L, \mathfrak{a}, \mathfrak{d}),\quad T\le T_*. 
\]
Therefore,  $(f_{n,\nu_m})_{m\geq 0}$ is a Cauchy sequence in $C_{T_*}H^{s-1}\cap L^2_{T_*}H^{s-\mez}$, thereby converging to some $f_n$ in $C_{T_*}H^{s-1}\cap L^2_{T_*}H^{s-\mez}$ as $m\to \infty$.  

In view of the uniform bounds \eqref{eq.uniform-fn1-nu} and \eqref{eq.uniform-fn2-nu} and up to extracting a subsequence $\nu_m \to 0$, we may also assume that $f_{n,\nu_m} \underset{m \to \infty}{\wsc} f_n$ weakly in $L^{\infty}_{T_*}H^s \cap L^2_{T_*}H^{s+\mez}$. In particular, $f_n$ satisfies \eqref{eq.uniform-fn1} and \eqref{eq.uniform-fn2} by lower semi-continuity of the weak limits. We claim that $f_n$ solves \eqref{eq.fn}. 
In order to see this, it suffices to explain how to pass to the limit in the nonlinear term $G[f_{n,\nu}]\Gamma(f_{n,\nu_m})$. This follows from the  convergence $f_{n,\nu_m} \underset{m\to\infty}{\longrightarrow} f_n$ in $L^\infty_{T^*}H^{s-1}$ and the estimate 
\begin{align*}
    \|G[f_{n,\nu_m}]\Gamma(f_{n,\nu_m})-G[f_n]\Gamma(f_n)\|_{L^{\infty}_{T_*}H^{s-1}} &\le \|G[f_n](\Gamma(f_{n,\nu_m})-\Gamma(f_n))\|_{L^{\infty}_{T_*}H^{s-1}} \\
    & \qquad\qquad + \|(G[f_{n,\nu_m}]-G[f_n])\Gamma(f_{n,\nu_m})\|_{L^{\infty}_{T_*}H^{s-1}} \\
    & \le C(\|f_{n,\nu_m},f_n\|_{L^{\infty}_{T_*}H^s})\|f_{n,\nu_m}-f_n\|_{L^{\infty}_{T_*}H^{s}}.
\end{align*}
The preceding estimate also allows one to pass to the limit $m \to \infty$ in  \eqref{eq.uniform-taylor-nu}, so that $f_n$ satisfies \eqref{eq.uniform-taylor}. Moreover, the above strong convergence of $f_{n, \nu_m}$ to $f_n$ implies pointwise convergence, and hence \eqref{eq.uniform-bottom} follows from \eqref{eq.uniform-bottom-nu}. 

{\it Step 5:} Proof of \eqref{eq.uniform-contraction-fn}. 

Set $\ol{f_{n}}=f_{n}-f_{n-1}$ for $n\ge 2$. From \eqref{eq.fn} we have 
\[
    \p_t \ol{f_n}+G[f_n]\Gamma(f_n)-G[f_{n-1}]\Gamma(f_{n-1})=R_{n-1}-R_{n-2}
\]
Using the paralinearization \eqref{paralin:G:contra} for $G[f_n]\Gamma(f_n)-G[f_{n-1}]\Gamma(f_{n-1})$, we find 
\bq\label{eq:olfn}
    \p_t\bar{f}
    _n + T_{\lambda[f_{n-1}](\gamma(f_{n-1})-B_{n-1})}\bar{f}_{n}-T_{V_{n-1}}\cdot \nabla\bar{f}_{n} = R'_n+F_n,
\eq
where
\bq
F_n=R_{n-1}-R_{n-2}
\eq
and $R'_n$ is given by \eqref{eq.R:def} with $(f_1,f_2)$  replaced by $(f_{n},f_{n-1})$. Thus combining \cref{lemm:est:R'} with \eqref{eq.uniform-fn1}-\eqref{eq.uniform-gn} obtained in the previous steps, we find
\bq\label{R'n}
\| R'_n\|_{H^{s-\tdm}}\le C(\| (f_n, f_{n-1})\|_{H^s})\left\{\|\ol{f}_n\|_{H^{s-\mez -\delta}}+\big(1+\|(f_n, f_{n-1})\|_{H^{s+\mez-\delta}}\big)\|\ol{f}_n\|_{H^{s-1}}\right\}.
\eq
When $n=2$, we have $F_2=R_1$  and \eqref{esttrace:cS} (with $\sigma=s-1$) gives
\bq\label{est:F2}
\|F_2\|_{H^{s-1}}\le C(\| f_1\|_{H^s})(\| f_1\|_{H^{s+1}}+\| \tilde g_1\|_{H^s}).
\eq
 On other other hand, for $n\ge 3$, we have
 \[
 R_{n-1}-R_{n-2}=R_0(f_{n-1},f_{n-2},g_{n-1},g_{n-2})
 \]
as given in \eqref{eq.difference-f}. Thus, applying \cref{lemm.R0est} yields
\bq\label{est:Fn}
    \|F_n\|_{H^{s-\frac{3}{2}}} \le C(\|(f_{n-1}, f_{n-2})\|_{H^s})\left\{\|\ol{g}_{n-1}\|_{H^{s-1}}+\|\ol{f}_{n-1} \|_{H^{s-\mez}}(\|\tilde{g}_{n-1}\|_{H^{s-1}}+\|\tilde{g}_{n-2} \|_{H^{s-1}})\right\},
\eq
where $\ol{g}_{n-1}:=\tilde{g}_{n-1}-\tilde{g}_{n-2}$. By virtue of (V${}_{n-1}$), we have $\gamma(f_{n-1})-B_{n-1}\ge \fa$, so that $\lambda[f_{n-1}](\gamma(f_{n-1})-B_{n-1})$ is a first-order elliptic symbol. Therefore, in view of \eqref{R'n} and \eqref{est:Fn}, equation \eqref{eq:olfn} is of the same form as \eqref{eq.transport-diference-final}.
The $H^{s-1}$ energy estimate as in \cref{lemm:parabolicest}  gives
\begin{align}\label{energy:olfn:1}
    \mez\frac{d}{dt}\|\ol{f}_n(t)\|_{H^{s-1}}^2&\le -\frac{1}{\cF_0(\|f_{n-1}\|_{L^{\infty}_TH^s}, \mathfrak{a}, \mathfrak{d})}\| \ol{f}_n(t)\|_{H^{s-\mez}}^2+\cF_0(\|f_{n-1}\|_{L^{\infty}_TH^s}, \mathfrak{a}, \mathfrak{d})\|\ol{f}_n(t)\|_{H^{s-1}}^2+ \notag\\
&\qquad+\big(\| R_n'(t)\|_{H^{s-\frac{3}{2}}}+\| F_n(t)\|_{H^{s-\tdm}}\big)\|\ol{f}_n(t)\|_{H^{s-\mez}},\quad t\le T\le T_*.
\end{align}
By Young's inequality, we have
\[
\| F_n(t)\|_{H^{s-\tdm}}\|\ol{f}_n(t)\|_{H^{s-\mez}}\le \frac{1}{4\cF_0(\|f_{n-1}\|_{L^{\infty}_TH^s}, \mathfrak{a}, \mathfrak{d})}\| \ol{f}_n(t)\|_{H^{s-\mez}}^2+\cF_0(\|f_{n-1}\|_{L^{\infty}_TH^s}), \fa, \fd\| F_n(t)\|_{H^{s-\tdm}}^2.
\]
On the other hand, using the interpolation inequality \eqref{interpolate:contraf}, Young's inequality, and \eqref{R'n}, we find 
\[
\begin{aligned}
\| R'_n(t)\|_{H^{s-\tdm}}\|\ol{f}_n(t)\|_{H^{s-\mez}}&\le\frac{1}{4\cF_0(\|f_{n-1}\|_{L^{\infty}_TH^s}, \mathfrak{a}, \mathfrak{d})}\| \ol{f}_n(t)\|_{H^{s-\mez}}^2\\
&\quad+\cF_1(\|(f_{n}, f_{n-1})\|_{L^{\infty}_TH^s}, \mathfrak{a}, \mathfrak{d})\big(1+\|(f_n, f_{n-1})\|^2_{H^{s+\mez-\delta}}\big)\| \ol{f}_n\|^2_{H^{s-1}}.
\end{aligned}
\]
Plugging the two preceding estimates in \eqref{energy:olfn:1} yields 
\begin{align}\label{energy:olfn:1b}
    \mez\frac{d}{dt}\|\ol{f}_n(t)\|_{H^{s-1}}^2&\le -\frac{1}{2\cF_0(\|f_{n-1}\|_{L^{\infty}_TH^s}, \mathfrak{a}, \mathfrak{d})}\| \ol{f}_n(t)\|_{H^{s-\mez}}^2\notag\\
&\qquad+\cF_2(\|(f_{n}, f_{n-1})\|_{L^{\infty}_TH^s}, \mathfrak{a}, \mathfrak{d})\big(1+\|(f_n, f_{n-1})\|^2_{H^{s+\mez-\delta}}\big)\| \ol{f}_n\|^2_{H^{s-1}}\notag\\
&\qquad +\cF_0(\|f_{n-1}\|_{L^{\infty}_TH^s}), \fa, \fd\| F_n(t)\|_{H^{s-\tdm}}^2,\quad t\le T\le T_*.
\end{align}
Since $\ol{f}_n(0)=0$, Gr\"onwall's lemma implies 
\bq
\|\ol{f}_n\|_{L^\infty_TH^{s-1}\cap L^2_TH^{s-\mez}}\le \| F_n\|_{L^2_TH^{s-\tdm}}\cF_3\left(\|(f_{n}, f_{n-1})\|_{L^{\infty}_TH^s\cap L^2_TH^{s+\mez}}, \mathfrak{a}, \mathfrak{d}\right),\quad T\le T_*.
\eq
Invoking the estimate \eqref{est:Fn} and the induction hypothesis, we deduce
\bq
\begin{aligned}
&\|\ol{f}_n\|_{L^\infty_TH^{s-1}\cap L^2_TH^{s-\mez}}\\
& \le \left\{T^\mez\|\ol{g}_{n-1}\|_{H^{s-1}}+\|\ol{f}_{n-1} \|_{L^2_TH^{s-\mez}}\big(\|\tilde{g}_{n-1}\|_{L^\infty_TH^{s-1}}+\|\tilde{g}_{n-2} \|_{L^\infty_TH^{s-1}}\big)\right\}\\
&\qquad \times \cF_4\left(\|(f_n, f_{n-1})\|_{L^{\infty}_TH^s\cap L^2_TH^{s+\mez}}+\| f_{n-2}\|_{L^\infty_TH^s}, \mathfrak{a}, \mathfrak{d}\right)\\
&\le  \left\{T^\mez2^{-(n-1)}+2^{-(n-1)}4\tM\eps\right\}\cF_4(6R+2L, \fa, \fd),\quad T\le T_*.
\end{aligned}
\eq
Therefore, choosing 
\begin{align}\label{chooseT:3}
&T_*^\mez\le \frac{1}{8\cF_4(6R+2L, \fa, \fd)}, \\ 
&4\tM\eps\le  \frac{1}{8\cF_4(6R+2L, \fa, \fd)}, \label{cd:eps:3}\\
&L\le  \frac{1}{4\cF_4(6R+2L, \fa, \fd)},
\end{align}
we obtain $\|\ol{f}_n\|_{L^\infty_{T_*}H^{s-1}\cap L^2_{T_*}H^{s-\mez}}\le 2^{-n}$, thereby proving \eqref{eq.uniform-contraction-fn}.

{\it Step 6:} Proof of \eqref{eq.uniform-contraction-fn}.

It follows from  \eqref{eq.gn} that $\bar{g}_n:=\tilde{g}_n-\tilde{g}_{n-1}$ satisfies
\[
    \partial_t\bar{g}_n+ \ol{u}_{n-1}\cdot\nabla\bar{g}_n = \tilde{F}_n,
\]
where  $\tilde{F}_n$ is defined by \eqref{eq.defFtilde} with subscripts $i$ replaced with $n-i$, and $\ol{u}_m$ is given by \eqref{def:olum}.  Since $s>\tdm+\frac{d}{2}$, in the same spirit as  \eqref{esttildeu:h} we obtain
\begin{align*}
 \| \ol{u}_{n-1}\|_{H^{s}}&\le C(\| (f_{n-1}, f_{n-2})\|_{H^{s}})\left\{\| (f_{n-1}, f_{n-2})\|_{H^{s+\mez}}\right.\\
 &\qquad\left.+(1+\| (f_{n-1}, f_{n-2})\|_{H^{s+\mez}})\| (\tilde g_{n-1}, \tilde{g}_{n-2})\|_{H^{s}}+\| \p_t f_{n-1}\|_{H^{s-\mez}}\right\}.
\end{align*}
It follows from the  $f_n$-equation \eqref{eq.fn}, \eqref{eq.uniform-fn1} and \eqref{eq.Rn-1-bound} that
\[
    \| \p_t f_{n-1}\|_{H^{s-\mez}}\le \| G[f_{n-1}]\Gamma(f_{n-1})\|_{H^{s-\mez}}+\| R_{n-2}\|_{H^{s-\mez}}\le C(2R)L+ 2C(2R)(1+L)\tM\varepsilon,
\]
where we have bounded $\| G[f_{n-1}]\Gamma(f_{n-1})\|_{H^{s-\mez}}$ with the estimate \eqref{est:DN:h}.   
Combining the  two preceding estimates with the induction hypothesis yields 
\bq\label{eq.ol-un}
 \| \ol{u}_{n-1}\|_{L^1_TH^{s}}\le C(4R)(3L+(1+2L)6\tM),\quad T\le T_*\le 1,
\eq
for a larger $C$. Next, applying \eqref{est:tildeF:L1} and the induction hypothesis on $\|\ol{g}_{n-1} \|_{L^\infty_TH^{s-1}}$, we find 
\bq\label{eq.ol-fn}
\begin{aligned}
\| \tilde{F}_n\|_{L^1_TH^{s-1}}&\le  C(\|(f_{n-1}, f_{n-2})\|_{L^\infty_TH^{s}})\left\{T\|\ol{g}_{n-1} \|_{L^\infty_TH^{s-1}}\right.\\
&\qquad\left.+T^\mez\|\ol{f}_{n-1}\|_{L^2_TH^{s-\mez}}\big(1+\|\tilde{g}_{n-1}\|_{L^\infty_TH^{s}} + \|\tilde{g}_{n-2}\|_{L^\infty_TH^{s}}\big)\right\}\\
&\le C(4R)\left\{T^\mez 2^{-(n-1)}+T^\mez 2^{-(n-1)}(1+4\tM)\right\},\quad T\le T_*\le 1.
\end{aligned}
\eq
Combining \eqref{eq.ol-un}, \eqref{eq.ol-fn} and $\ol{g}_n \vert_{t=0}=0$ in \eqref{transport:g:contra} yields 
\[
    \|\ol{g}_n\|_{L^\infty_TH^{s-1}} \leq \tM C(4R)\left\{T^\mez 2^{-(n-1)}+T^\mez 2^{-(n-1)}(1+4\tM)\right\}\exp\Big(C(4R)\big(3L+(1+2L)6\tM\big)\Big).    
\]
We can conclude that \eqref{eq.uniform-contraction-gn} holds provided 
\[
    T_* \le \left(\frac{1}{8\tM C(4R)(1+2\tM)\exp\Big(C(4R)\big(3L+(1+2L)6\tM\big)\Big)}\right)^2.    
\]

\subsection{Passing to the limit}\label{sec.limit}

From \eqref{eq.uniform-fn1}, \eqref{eq.uniform-fn2} and \eqref{eq.uniform-gn} we infer that $(f_n,\tilde{g}_n)_{n\geqslant 1}$ is bounded in $(L^{\infty}_TH^s\cap L^2_TH^{s+\mez} )\times L^{\infty}_TH^s$. Therefore up to extracting along a subsequence, we can assume that 
\bq\label{wc:fngn}
    (f_n,\tilde{g}_n) \wsc (f,\tilde{g}) \in (L^{\infty}_TH^s\cap L^2_TH^{s+\mez} )\times L^{\infty}_TH^s.
\eq
In particular, $f$ and $\tilde{g}$ inherit the bounds \eqref{eq.uniform-fn1}, \eqref{eq.uniform-fn2}, and \eqref{eq.uniform-gn}. By virtue of  \eqref{eq.uniform-contraction-fn} and \eqref{eq.uniform-contraction-gn},  we also have  
\bq\label{sc:fngn}
    (f_n,\tilde{g}_n) \to (f,\tilde{g}) \text{ in } \Big(C([0, T]; H^{s-1})\cap L^2_TH^{s-\mez}\Big)\times C([0, T]; H^{s-1}).
\eq
Interpolation with the uniform bounds \eqref{eq.uniform-fn1}, \eqref{eq.uniform-fn2} and \eqref{eq.uniform-gn} yield the convergence 
\bq\label{sc:fngn:2}
    (f_n,\tilde{g}_n) \to (f,\tilde{g}) \text{ in } \Big(C([0, T]; H^{s-\varepsilon})\cap L^2_TH^{s+\mez-\varepsilon}\Big)\times C([0, T]; H^{s-\varepsilon})\quad\forall \eps \in (0, 1].
\eq
In particular, $f_n\to f$ uniformly on $\Rr^d\times [0, T]$, and hence $f$ satisfies \eqref{eq.uniform-bottom}.

Next, we pass to the limit in \eqref{eq.fn}-\eqref{eq.gn} to show that $(f, g)$ solves \eqref{eq:f}-\eqref{eq:g}, where $g:=\ff_f^{-1}\tilde g$.  Since \eqref{wc:fngn} implies that  $(\partial_tf_n, \p_t\tilde{g}_n) \to  (\p_t f, \p_t\tilde{g})$ in the sense of distributions, it remains to explain how one can pass to the limit in the nonlinear terms.  We begin with \eqref{eq.fn}. Combining  \cref{theo:DN1} (i) and \cref{thm.NPdiff} (ii), we find
\begin{align*}
    \|G[f_n]\Gamma(f_n)-G[f]\Gamma(f)\|_{L^{\infty}_TH^{s-1-\varepsilon}} 
    & \le C(\|f,f_n\|_{L^{\infty}_TH^s})\|f_n-f\|_{L^{\infty}_TH^{s-\varepsilon}},
\end{align*}
which tends to zero thanks to \eqref{eq.uniform-fn1} and \eqref{sc:fngn:2}. In particular, $G[f_n]\Gamma(f_n)\to G[f]\Gamma(f)$ uniformly on $\Rr^d\times [0, T]$, and hence $f$ satisfies \eqref{eq.uniform-taylor}. Next, we pass to the limit in $R_{n-1}$. Clearly $g_{n-1}\vert_{\Sigma_{f_{n-1}}}=\tilde{g}_{n-1}\vert_{z=0}\to \tilde{g}\vert_{z=0}$ in $L^\infty_t H^{s-\mez-\eps}_x$ by the trace theorem. We use the estimate \eqref{eq.lowNormEst-v2} in \cref{prop.diffEstimateSobolev} for the difference  $v^{(2)}_n- v^{(2)}$, where $v^{(2)}_n$ is associated to $(g_n, f_n)$ and $v^{(2)}$ to $(g, f)$, and where $k_{1} \circ \ff_{f_n}=\tilde{g}_n$, $k_{2} \circ \ff_{f}=\tilde{g}$.  We obtain
\[
    \|\nabla_{x, z} (v^{(2)}_n- v^{(2)})\|_{H^{s-1}} \le C(\|(f_n,f,g_n,g)\|_{H^s})\left(\|\tilde{g}_n-\tilde{g}\|_{H^{s-1}} + \|f_n-f\|_{H^{s-\mez}}\right),
\]
Combining this with \eqref{sc:fngn:2} gives $ \|\nabla_{x, z} (v^{(2)}_n- v^{(2)})\|_{L^\infty_T H^{s-1}} \to 0$. Note that
\[
 (\cS[f_n]g_n )\vert_{\Sigma_{f_n}}=\nabla_{x, z}v_n^{(2)}\vert_{z=0}(\na \ff_{f_n})^{-1}\vert_{z=0}.
\]
Since $(\na \ff_{f_n})^{-1}\vert_{z=0} - (\na \ff_{f})^{-1}\vert_{z=0} \to 0$ and $N_n - N\to 0$ in $L^{\infty}_TH^{s-1-\varepsilon}$,  $\nabla_{x, z} v^{(2)}_n\vert_{z=0}\to  \nabla_{x, z} v^{(2)}\vert_{z=0}$ in  $L^\infty_T H^{s-\tdm}$, and that $H^{s-\tdm}(\Rr^d)$ is an algebra for $s>\tdm+\frac{d}{2}$, it follows that $N_n\cdot (\cS[f_n]g_n )\vert_{\Sigma_{f_n}}\to N\cdot (\cS[f]g )\vert_{\Sigma_{f}}$ in $L^\infty_TH^{s-\tdm}$. Thus $f$ satsfies \eqref{eq:f}. Moreover, we have shown the strong convergence 
\bq\label{cv:dtfn}
\p_t f_n\to \p_t f\quad\text{in } L^\infty_TH^{s-\tdm}(\Rr^d).
\eq
Next, we pass to the limit in \eqref{eq.gn}. We have
\begin{align*}
u_{n-1}\circ \ff_{f_{n-1}} &=-\left(\cG[f_{n-1}]\Gamma(f_{n-1})-\cS[f_{n-1}]g_{n-1}\right)\circ \ff_{f_{n-1}} - \tilde{g}_{n-1}e_y\\
&=- \na v^{(1)}_{n-1}(\na \ff_{f_{n-1}})^{-1} - \na v^{(2)}_{n-1}(\na \ff_{f_{n-1}})^{-1}- \tilde{g}_{n-1}e_y,
\end{align*}
where $v^{(j)}_n$ is associated to  $(g_n, f_n)$ and $v^{(j)}$ to $(g, f)$ as in \cref{prop.diffEstimateSobolev}. Applying \cref{prop.diffEstimateSobolev} and the fact that $H^{s-1}(\Rr^d\times J)$ is an algebra, we deduce that  $u_{n-1}\circ \ff_{f_{n-1}}\to u\circ \ff_{f}$ in $L^\infty_TH^{s-1}$. Then invoking  \eqref{cont:diff} yields 
\[
    \gamma'(\varrho_{n-1})u_{n-1,y}\circ \ff_{f_{n-1}} \to  \gamma'(\varrho)u_y \circ \ff_f \quad \text{in } L^{\infty}_TH^{s-1}.    
\]
We recall \eqref{tildeu} and \eqref{def:olum} for the definitions  of $\ol{u}$ and  $\ol{u}_m$. By \cref{prop.diffEstimateSobolev} and the convergence $\p_t \varrho_{n-1}\to \p_t \varrho$ in $L^\infty_TH^{s-1}$, which follows from \eqref{cv:dtfn}, we find 
\[
\ol{u}_{n-1, x}= u_{n-1,x} \circ \ff_{f_{n-1}}\to u_{x} \circ \ff_{f}=\ol{u}_x\quad\text{in } L^\infty_TH^{s-1}
\]
and 
\begin{multline*}
  \ol{u}_{n-1, z} =\frac{1}{\p_z \varrho_{n-1}}\Big\{-(-\na_x\varrho_{n-1}, 1)\cdot\cG[f]\Gamma(f_{n-1}) \circ \ff_{f_{n-1}}- (-\na_x\varrho_{n-2}, 1)\cdot \cS[f_{n-2}]g_{n-2} \circ \ff_{n-2}\\
  -g_{n-2} \circ \ff_{f_{n-2}} - \p_t\varrho_{n-1})\Big\} \\
\to \frac{1}{\p_z\varrho}\Big\{-(-\na_x\varrho, 1)\cdot \big(\cG[f]\Gamma(f)+\cS[f]g\big)\circ \ff_f -g\circ\ff_f- \p_t\rho\Big\} =\ol{u}_z\quad\text{in } L^\infty_TH^{s-1}.
\end{multline*}
Here $\ol{u}=(\ol{u}_x, \ol{u}_z)$ is given by \eqref{tildeu} and \eqref{oluz:GS}. Thus $\ol{u}_{n-1}\to \ol{u}$ in $L^\infty_TH^{s-1}$. But \eqref{sc:fngn:2} implies that $\na \tilde{g}_n\to \na \tilde{g}$ in $L^\infty_TH^{s-1-\eps}$. Choosing $0<\eps<s-\tdm-\frac{d}{2}$ so that $H^{s-1-\eps}(\Rr^d\times J)$ is an algebra, we obtain that $\ol{u}_{n-1}\cdot \nabla \tilde{g}_n\to \ol{u}\cdot \nabla \tilde{g}$ in $L^\infty_TH^{s-1-\eps}$. Therefore, $\tilde{g}$ satisfies \eqref{eq:tildeg}. Finally, by undoing the change of variables $\cF_f$, we conclude that $g$ satisfies \eqref{eq:g}.

\subsection{Continuity in time, uniqueness, and continuity of the flow} \label{sec.uniqueness}

Let us first explain the continuity in time statement $(f, \tilde{g})\in C([0,T],H^s(\Rr^d))\times C([0, T]; H^s(\Rr^d\times J))$. The continuity of $f$ follows from \cref{lemm:interpolationLM}: since we have already proven that $f$ belongs to $L^{\infty}([0,T],H^s) \cap L^2([0,T],H^{s+\mez})$, it remains to check that 
\[
    \p_t f = - G[f]f - (N\cdot \cS[f]g + g)\vert_{\Sigma_[f(t)]} \in L^{2}_TH^{s-\mez}.     
\]
That $G[f]f \in L^2_TH^{s-\mez}$ is a direct consequence of \eqref{est:DN:h}. Then, as $g \in L^{\infty}_TH^s$, \eqref{esttrace:cS} and product estimates in Sobolev spaces imply that $N\cdot \cS[f]g\vert_{\Sigma_f} \in L^{2}_TH^{s-\mez}$. As for the continuity of $\tilde{g}$, we recall from \eqref{sc:fngn:2} that $\tilde{g}$ is a $L^\infty([0, T]; H^s)\cap C([0, T]; H^1)$ solution to the transport problem \eqref{eq:tildeg}. We also recall from \eqref{energyest:u} and \eqref{energyest:u:l} that $\ol{u}\in L^1([0, T]; H^s)\cap L^\infty([0, T]; H^{s-\mez})$, and from \eqref{estforcing:tildeg} that $\gamma'(\varrho)u_y\circ \ff_f\in L^2([0, T]; H^s)$. Therefore, the existence and uniqueness statements in \cref{theo:transport} together imply that $\tilde{g}\in C([0, T]; H^s)$. 

Uniqueness and stability in lower   norms follow from estimates provided in \cref{sec.contraction-f}: let $f_{0,1}$ and $f_{0,2}$ be $H^s(\mathbb{R}^d)$ functions satisfying \eqref{eq.RT-initial} and \eqref{eq.boundary-separation-initial}, and let $R=\max\{\|f_{0,1}\|_{H^s},\|f_{0,2}\|_{H^s}\}$. Let $\eps(R)$ be the constant given in \cref{prop.iterative-bounds} and consider $g_{0,i} \in H^{s}(\Rr^d \times J)$ with $\|\tilde{g}_{0,1}\|_{H^s}, \|\tilde{g}_{0,2}\|_{H^s} \le \varepsilon(R)$. Let  $(f_1,\tilde{g}_1)$ and $(f_2,\tilde{g}_2)$ be  the corresponding  solutions to \eqref{eq:f}-\eqref{eq:g} constructed in the previous subsection. Because $\varepsilon(R)$ has been chosen such that \eqref{eq.assump-smallness2} holds, it follows that $(f,\tilde{g})$ satisfy \eqref{eq.assump-smallness} and therefore \cref{prop.contractionEstimates} yields  
\begin{align*}
    \|f_1-f_2\|_{X_T^{s-1}} &\le C(\|(f_1,f_2)\|_{X^s_T})\left(\|f_{0,1}-f_{0,2}\|_{H^{s-1}} + T^{\mez}\|\tilde{g}_1-\tilde{g}_2\|_{L^{\infty}_TH^{s-1}}\right),
\end{align*}
where $X_T^{s-1}=L^{\infty}_TH^s\cap L^2_TH^{s-\mez}$. Simiarly \cref{prop.contraction-gn} yields 
\begin{align*}
    \|\tilde{g}_1-\tilde{g}_2\|_{L^{\infty}_TH^{s-1}} & \le \tM\Big\{\|\tilde{g}_{0,1}-\tilde{g}_{0,2}\|_{H^{s-1}} \\
    &\quad + T^{\mez}C\Big(\|f_1,f_2\|_{X^s_T},\|(\tilde{g}_1,\tilde{g}_2)\|_{L^{\infty}_TH^s}\Big)\Big(\|\tilde{g}_1-\tilde{g}_2\|_{L^\infty_TH^{s-1}} + \|f_1-f_2\|_{L^2_TH^{s-\mez}}\Big)\Big\}\\ 
    &\quad \cdot \exp\Big(T^\mez C(\|f_1,f_2\|_{X^s_T},\|(\tilde{g}_1,\tilde{g}_2)\|_{L^{\infty}_TH^s})\Big),
\end{align*}
provided $s\ne \frac{5}{2}+\frac{d}{2}$.  

Adding the two preceding estimates and choosing of $T=T(R)$ small enough such that 
\[
(1+ \tM)T^{\mez}C\Big(\|f_1,f_2\|_{X^s_T},\|(\tilde{g}_1,\tilde{g}_2)\|_{L^{\infty}_TH^s}\Big)\exp\Big(T^\mez C(\|f_1,f_2\|_{X^s_T},\|(\tilde{g}_1,\tilde{g}_2)\|_{L^{\infty}_TH^s})\Big)\le \mez,
\]
 we obtain
\begin{align*}
    \|f_1-f_2\|_{X^{s-1}_T} +  \|\tilde{g}_1-\tilde{g}_2\|_{L^{\infty}_TH^{s-1}} &\le \frac{1}{2} \|f_1-f_2\|_{X^{s-1}_T} + \frac{1}{2}\|\tilde{g}_1-\tilde{g}_2\|_{L^\infty_TH^{s-1}} \\
    &\quad +  \mez (\|f_{0,1}-f_{0,2}\|_{H^{s-1}} + \|\tilde{g}_{0,1}-\tilde{g}_{0,2}\|_{H^{s-1}}).
\end{align*}
It follows that 
\[
    \|f_1-f_2\|_{X^{s-1}_T} +  \|\tilde{g}_1-\tilde{g}_2\|_{L^{\infty}_TH^{s-1}} \le  \|f_{0,1}-f_{0,2}\|_{H^{s-1}} + \|\tilde{g}_{0,1}-\tilde{g}_{0,2}\|_{H^{s-1}}
\]
which proves both the uniquness and the Lipschitz coninuity of the flow map $H^s \times H^s \to H^{s-1} \times H^{s-1}$ on $B(0, R) \times B(0, \varepsilon(R))$. As the flow is also bounded on $B(0, R) \times B(0, \varepsilon(R))$ as a map $H^s \times H^s \to H^{s} \times H^{s}$, interpolation implies that it is also continuous as a map $H^s \times H^s \to H^{s'} \times H^{s'}$ for any $s'<s$.

\appendix

\section{Tools in paradifferential calculus}\label{sec.paraDiff}
We first  recall the Bony decomposition  
\[
    au=T_au+T_ua+R(a,u),
\]
where $T_au$ denotes the usual paraproduct. See  \cite[Chapter 2]{BCD}. 

 Useful product and paraproduct rules are gathered in the following theorem.
\begin{theo}\cite[Section 2.8.1]{BCD}\label{theo.productEstimates}
Let $s_0$, $s_1$, $s_2\in \Rr$ and consider functions $a, u, u_1, u_2$ on $\Rr^d$. 
\begin{enumerate}
\item For any $s\in \Rr$, 
\bq\label{pp:Linfty}
\| T_a u\|_{H^s}\le C\| a\|_{L^\infty}\| u\|_{H^s}.
\eq
\item If $s\in \Rr$ and $\mu>0$ then 
\bq\label{pp:C-mu}
\| T_a u\|_{H^{s-\mu}}\le C\| a\|_{C_*^{-\mu}}\| u\|_{H^s}.
\eq
\item If
$s_0\le s_2$ and~$s_0 < s_1 +s_2 -\frac{d}{2}$, 
then
\begin{equation}\label{boundpara}
\Vert T_a u\Vert_{H^{s_0}}\le C \Vert a\Vert_{H^{s_1}}\Vert u\Vert_{H^{s_2}}.
\end{equation}
\item  If~$s_1+s_2>0$ then
\begin{align}
&\Vert R(a,u) \Vert _{H^{s_1 + s_2-\frac{d}{2}}} \leq C \Vert a \Vert _{H^{s_1}}\Vert u\Vert _{H^{s_2}},\label{Bony1}\\
&\Vert R(a,u) \Vert _{H^{s_1+s_2}} \leq C \Vert a \Vert _{C^{s_1}_*}\Vert u\Vert _{H^{s_2}}.\label{Bony2}
\end{align}
\item If $s_1+s_2> 0$, $s_0\le s_1$ and $s_0< s_1+s_2-\frac{d}{2}$ then  
\bq\label{paralin:product}
\Vert au - T_a u\Vert_{H^{s_0}}\le C \Vert a\Vert_{H^{s_1}}\Vert u\Vert_{H^{s_2}}.
\eq
\item If $s>0$ then 
\bq\label{tamepr}
\| u_1u_2\|_{H^s}\le C\| u_1\|_{L^\infty}\| u_2\|_{H^s}+C\| u_1\|_{H^s}\| u_2\|_{L^\infty}.
\eq
\item \label{it.1} If $s_1+s_2> 0$, $s_0\le s_1$, $s_0\le s_2$ and  $s_0< s_1+s_2-\frac d2
$ then 
\begin{equation}\label{pr}
\Vert u_1 u_2 \Vert_{H^{s_0}}\le C \Vert u_1\Vert_{H^{s_1}}\Vert u_2\Vert_{H^{s_2}}.
\end{equation}
\end{enumerate}
\end{theo}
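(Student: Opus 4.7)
The statement is a collection of classical paraproduct and remainder estimates from Bony's calculus, and the proof strategy follows the standard Littlewood--Paley approach outlined in~\cite[Ch.~2]{BCD}. The plan is to fix a Littlewood--Paley decomposition $\mathrm{Id} = S_0 + \sum_{j \geq 0} \Delta_j$, recall the definitions $T_a u = \sum_j S_{j-N_0} a\, \Delta_j u$ and $R(a,u) = \sum_{|j-k| \le N_0} \Delta_j a\, \Delta_k u$ (so that $au = T_a u + T_u a + R(a,u)$), and then deduce each of (1)--(7) by controlling every dyadic block and summing. The two structural facts I will use throughout are: the support property $\supp \widehat{S_{j-N_0} a\, \Delta_j u} \subset \{|\xi| \sim 2^j\}$, which makes $T_a u$ already almost orthogonal in frequency; and Bernstein's inequalities, which exchange derivatives against Lebesgue exponents for functions with compactly supported Fourier transform.

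For (1) and (2), I would bound $\|S_{j-N_0}a\|_{L^\infty} \le C\|a\|_{L^\infty}$ (respectively $\le C 2^{j\mu}\|a\|_{C^{-\mu}_*}$), multiply by $\|\Delta_j u\|_{L^2}$, and use the almost-orthogonality of the $\widehat{T_a u}$-blocks together with the definition of $H^s$ via $\ell^2$-summation of $2^{js}\|\Delta_j\cdot\|_{L^2}$. Estimate (3) follows from the same scheme after using Bernstein to bound $\|S_{j-N_0}a\|_{L^\infty} \lesssim 2^{j(\frac{d}{2}-s_1)}\|a\|_{H^{s_1}}$ when $s_1 < d/2$, and directly by Sobolev embedding when $s_1 > d/2$; the hypotheses $s_0 \le s_2$ and $s_0 < s_1 + s_2 - d/2$ are precisely what make the $\ell^2$-sum in $j$ converge. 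For (4), the blocks of $R(a,u)$ have Fourier support in a ball of radius $C 2^{\max(j,k)}$ (not an annulus), so summability requires the positive regularity condition $s_1 + s_2 > 0$; standard Cauchy--Schwarz on the diagonal sum, combined with Bernstein (for the $H^{s_1+s_2-d/2}$ version) or the $L^\infty$ estimate $\|\Delta_j a\|_{L^\infty} \lesssim 2^{-js_1}\|a\|_{C^{s_1}_*}$ (for the $H^{s_1+s_2}$ version), closes the argument.

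For (5), (6), (7), I would recombine via Bony: $au - T_a u = T_u a + R(a,u)$, so (5) follows from (3) applied to $T_u a$ (with the roles of $a$ and $u$ swapped, requiring $s_0 \le s_1$) together with (4); for (6), the tame estimate is the sum of (1) applied to $T_a u$ and $T_u a$ plus (4) with $s_1 = s$, $s_2 = 0$ after a Sobolev embedding; and (7) is exactly the content of (3) applied to both paraproducts plus (4). The main technical point, which is the real content of the theorem, is checking in each case that the hypotheses on $(s_0, s_1, s_2)$ match exactly the two summability thresholds appearing in the Littlewood--Paley sums: the $s_0 \le \min(s_1, s_2)$ constraint coming from the low-frequency side of the paraproduct, and the $s_0 < s_1 + s_2 - d/2$ constraint coming from the Bernstein loss on the high-high interactions of the remainder. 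Since the whole theorem is essentially quoted verbatim from~\cite[Ch.~2]{BCD}, the expected obstacle is purely bookkeeping --- matching the exact index conditions to the correct dyadic summation --- rather than any genuinely new analytic difficulty.
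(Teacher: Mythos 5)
Your sketch follows the standard Littlewood--Paley/Bony decomposition argument from the cited reference \cite[Section 2.8.1]{BCD}, which the paper simply quotes without giving its own proof, so the overall approach matches. The one genuine slip is in item (6): you propose controlling $R(u_1,u_2)$ via ``(4) with $s_1=s$, $s_2=0$ after a Sobolev embedding,'' but \eqref{Bony1} with those indices only yields $H^{s-d/2}$, and there is no embedding raising $H^{s-d/2}$ into $H^s$. The correct route is \eqref{Bony2} with $s_1=0$, $s_2=s$, combined with the embedding $L^\infty\hookrightarrow C^0_*$ (so $\|R(u_1,u_2)\|_{H^s}\lesssim\|u_1\|_{C^0_*}\|u_2\|_{H^s}\lesssim\|u_1\|_{L^\infty}\|u_2\|_{H^s}$, and symmetrically for the other term). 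The rest of the index bookkeeping in (3), (5), and (7), including the implicit handling of the borderline $s_1=d/2$ case through the strict inequality $s_0<s_1+s_2-d/2$, is consistent with the dyadic summability thresholds.
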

In \cref{sec.DN} we use the following product estimates in the spaces $X^s$ and $Y^s$  defined in \eqref{def:XY}. 
\begin{lemm}\cite[Lemma 3.10]{NP}\label{lemm.estXandY} Let $s_0, s_1, s_2 \in \mathbb{R}$ and $J\subset \mathbb{R}$. 
\begin{enumerate}
    \item[(i)] If $s_0 \le \min\{s_1 +1, s_2+1\}$ and $s_1+s_2+1>\max\{s_0+\frac{d}{2},0\}$,  then 
    \begin{equation}
        \label{eq.estXXY}
        \|u_1u_2\|_{Y^{s_0}(J)} \lesssim \|u_1\|_{X^{s_1}(J)}\|u_2\|_{X^{s_2}(J)}.
    \end{equation}
    \item[(ii)] If $s_0 \le \min\{s_1, s_2\}$ and $s_1+s_2>\max\{s_0+\frac{d}{2},0\}$,  then 
    \begin{equation}
        \label{eq.estXYY}
        \|u_1u_2\|_{Y^{s_0}(J)} \lesssim \|u_1\|_{Y^{s_1}(J)}\|u_2\|_{X^{s_2}(J)}.
    \end{equation}
\end{enumerate}    
\end{lemm}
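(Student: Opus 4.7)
The plan is to combine Bony's paraproduct decomposition $uv = T_uv + T_vu + R(u,v)$, applied pointwise in the $z$-variable, with Hölder's inequality in $z$, using the embeddings $X^\sigma \hookrightarrow L^\infty_zH^\sigma \cap L^2_zH^{\sigma+\mez}$ and the sum-space structure $Y^\sigma = L^1_zH^\sigma + L^2_zH^{\sigma-\mez}$. Since $Y^{s_0}$ is a sum space, it suffices to split $u_1u_2$ into finitely many pieces, each of which is placed either in $L^1_zH^{s_0}$ or in $L^2_zH^{s_0-\mez}$.

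For (i), I would split $u_1u_2 = T_{u_1}u_2 + T_{u_2}u_1 + R(u_1,u_2)$ and estimate the three pieces separately. The remainder is handled by applying \eqref{Bony1} pointwise in $z$ with both factors in $L^2_zH^{s_i+\mez}$, so that Cauchy--Schwarz in $z$ gives $R(u_1,u_2) \in L^1_zH^{s_1+s_2+1-d/2}$, which embeds into $L^1_zH^{s_0}$ by the hypothesis $s_1+s_2+1 > s_0+d/2$; the positivity $s_1+s_2+1 > 0$ required for \eqref{Bony1} is part of the same hypothesis. For $T_{u_1}u_2$, I apply \eqref{boundpara} with indices $(s_0-\mez,\,s_1,\,s_2+\mez)$ and the Hölder pairing $L^\infty_z\cdot L^2_z$ to land in $L^2_zH^{s_0-\mez}$; the order conditions $s_0-\mez \le s_2+\mez$ and $s_0-\mez < s_1+s_2+\mez-\tfrac{d}{2}$ are exactly $s_0 \le s_2+1$ and $s_0 < s_1+s_2+1-\tfrac{d}{2}$. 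The term $T_{u_2}u_1$ is treated symmetrically and saturates the condition $s_0 \le s_1+1$.

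For (ii), I would first use the sum-space definition to write $u_1 = u_1^{(1)} + u_1^{(2)}$ with $u_1^{(1)} \in L^1_zH^{s_1}$ and $u_1^{(2)} \in L^2_zH^{s_1-\mez}$. The piece $u_1^{(1)}u_2$ goes directly to $L^1_zH^{s_0}$ via the pointwise product rule \eqref{pr} with indices $(s_0,s_1,s_2)$ and Hölder $L^1_z\cdot L^\infty_z$. For $u_1^{(2)}u_2$ I apply Bony's decomposition once more: $T_{u_1^{(2)}}u_2$ is estimated in $L^1_zH^{s_0}$ via \eqref{boundpara} with $(s_0,s_1-\mez,s_2+\mez)$ and Cauchy--Schwarz in $z$; $T_{u_2}u_1^{(2)}$ is estimated in $L^2_zH^{s_0-\mez}$ via \eqref{boundpara} with $(s_0-\mez,\,s_2,\,s_1-\mez)$ and Hölder $L^\infty_z\cdot L^2_z$, where the order condition becomes exactly $s_0 \le s_1$; finally $R(u_1^{(2)},u_2)$ lands in $L^1_zH^{s_1+s_2-d/2} \hookrightarrow L^1_zH^{s_0}$ via \eqref{Bony1}, the embedding being enabled by the hypothesis $s_0 < s_1+s_2-\tfrac{d}{2}$.

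The only real difficulty is bookkeeping: for each of the six or seven pieces one must verify that the pointwise order conditions ($s_0' \le \alpha_2$ for the paraproduct, $s_0' \le \alpha_1+\alpha_2-\tfrac{d}{2}$ for the remainder) together with the positivity needed for $R$ follow precisely from the stated hypothesis, and that the chosen pair of $z$-Lebesgue exponents satisfies Hölder. The tight case in (i) is the paraproduct $T_{u_2}u_1$, which saturates $s_0 \le s_1+1$; the tight case in (ii) is the Bony remainder of $u_1^{(2)}u_2$, which saturates the strict inequality $s_0 < s_1+s_2-\tfrac{d}{2}$. No new tool beyond Theorem \ref{theo.productEstimates} and the Hölder inequality in $z$ is needed.
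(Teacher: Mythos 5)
Your proof is correct: each of the pieces you produce does land in $L^1_zH^{s_0}$ or $L^2_zH^{s_0-\mez}$ under exactly the stated index conditions, and the non-strict inequalities $s_0\le s_1+1$ (resp.\ $s_0\le s_1$) are compatible with the non-strict order condition in \eqref{boundpara}, so the bookkeeping closes. The paper does not reprove this lemma but quotes it from \cite[Lemma 3.10]{NP}, and your argument --- Bony's decomposition applied pointwise in $z$ combined with H\"older in $z$ and the embeddings $X^\sigma\hookrightarrow L^\infty_zH^\sigma\cap L^2_zH^{\sigma+\mez}$ --- is the same standard route used there.
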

We have the following estimates for the left composition of a Sobolev function by a smooth function. 
\begin{theo}\cite[Theorem 2.89]{BCD}\label{est:nonl}
Let  $\cU\subset \Rr^N$ be a minimally smooth domain, and let $F\in C^\infty(\Rr^K; \Rr)$. For any $s\ge 0$, there exists a non-decreasing function~$\mathcal{F}\colon\Rr_+\rightarrow\Rr_+$ such that the following assertions hold.
\begin{enumerate}
    \item[(i)]  For any $u\in H^s(\cU; \Rr^K) \cap L^{\infty}(\cU; \Rr^K)$, there holds
            \bq\label{est:Fu}
                \| F(u)-F(0)\|_{H^s(\cU)}\le \cF(\| u\|_{L^\infty(\cU)})\| u\|_{H^s(\cU)}.
            \eq
    \item[(ii)]   For any  $u, v \in L^{\infty}(\cU; \Rr^K)\cap H^s(\cU; \Rr^K)$, there holds 
            \bq
            \begin{aligned}
            \label{eq.diff-sobol-reg}
                \|F(u)-F(v)\|_{H^s(\cU)} &\le  \cF(\|(u, v)\|_{L^\infty(\cU)})\left(\| (u, v)\|_{H^s(\cU)}\|u-v\|_{L^\infty(\cU)}+\|(u, v)\|_{L^\infty(\cU)}\|u-v\|_{H^s(\cU)}\right)\\
                &\qquad+|F'(0)|\| u-v\|_{H^s(\cU)}.
            \end{aligned}
            \eq
\end{enumerate}
\end{theo}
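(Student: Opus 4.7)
The plan is to reduce both statements to their whole-space counterparts, which are classical (e.g.\ \cite[Theorem~2.89]{BCD}), by means of a Sobolev extension operator that is simultaneously bounded on $L^\infty$ and $H^s$.

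\textbf{Part (i).} First, I would invoke Stein's total extension operator (or the operator $\mathfrak{C}$ from \cref{th.extension}), which for a minimally smooth domain $\cU$ is continuous $W^{m,p}(\cU)\to W^{m,p}(\Rr^N)$ for all $m\in\Nn$ and $p\in[1,\infty]$; interpolation then gives a single linear extension $\mathfrak{C}$ with
\[
\|\mathfrak{C} u\|_{H^s(\Rr^N)}\le C\|u\|_{H^s(\cU)},\qquad \|\mathfrak{C} u\|_{L^\infty(\Rr^N)}\le C\|u\|_{L^\infty(\cU)}
\]
for every $s\ge 0$. Setting $\tilde u=\mathfrak{C} u$, the whole-space version of \cite[Theorem~2.89]{BCD} yields
\[
\|F(\tilde u)-F(0)\|_{H^s(\Rr^N)}\le \widetilde{\cF}(\|\tilde u\|_{L^\infty})\|\tilde u\|_{H^s}.
\]
Restricting to $\cU$ and using $\tilde u\vert_{\cU}=u$ (so $F(\tilde u)\vert_{\cU}=F(u)$) together with the bounds on $\mathfrak{C}$ gives \eqref{est:Fu} after renaming $\cF$ to absorb the extension constant.

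\textbf{Part (ii).} The natural starting point is the integral representation
\[
F(u)-F(v)=\Big(\int_0^1 F'\bigl(v+t(u-v)\bigr)\,dt\Big)\cdot (u-v)=:G(u,v)\cdot (u-v),
\]
from which I split off the linear-in-$(u-v)$ term with constant coefficient:
\[
F(u)-F(v)=F'(0)\cdot(u-v)+\bigl(G(u,v)-F'(0)\bigr)\cdot(u-v).
\]
The first summand contributes exactly $|F'(0)|\|u-v\|_{H^s(\cU)}$. For the second, after extending $(u,v)$ to $(\tilde u,\tilde v)=(\mathfrak{C}u,\mathfrak{C}v)$ on $\Rr^N$, I would apply the tame product estimate \eqref{tamepr} in $H^s(\Rr^N)$:
\[
\|(G(\tilde u,\tilde v)-F'(0))(\tilde u-\tilde v)\|_{H^s}\lesssim \|G(\tilde u,\tilde v)-F'(0)\|_{L^\infty}\|\tilde u-\tilde v\|_{H^s}+\|G(\tilde u,\tilde v)-F'(0)\|_{H^s}\|\tilde u-\tilde v\|_{L^\infty}.
\]
The $L^\infty$ bound $\|G(\tilde u,\tilde v)-F'(0)\|_{L^\infty}\le \widetilde{\cF}(\|(\tilde u,\tilde v)\|_{L^\infty})\|(\tilde u,\tilde v)\|_{L^\infty}$ follows from the mean value theorem since $G(0,0)=F'(0)$. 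For the $H^s$ norm, I would apply part (i) to the smooth function $\widetilde F:\Rr^{2K}\to\Rr^K$, $\widetilde F(w_1,w_2)=G(w_1,w_2)-F'(0)$, which vanishes at the origin, acting on $(\tilde u,\tilde v)$:
\[
\|G(\tilde u,\tilde v)-F'(0)\|_{H^s}\le \widetilde{\cF}(\|(\tilde u,\tilde v)\|_{L^\infty})\|(\tilde u,\tilde v)\|_{H^s}.
\]
Restricting back to $\cU$ and using the simultaneous continuity of $\mathfrak{C}$ in $L^\infty$ and $H^s$ assembles the claimed inequality \eqref{eq.diff-sobol-reg}.

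\textbf{Main obstacle.} The only nontrivial point is securing a single extension operator that is simultaneously continuous on $H^s(\cU)$ and on $L^\infty(\cU)$; the part (ii) estimate genuinely requires $L^\infty$-bounded extension because the constant $\cF$ depends on $\|(u,v)\|_{L^\infty(\cU)}$ and one cannot afford to upgrade it to an $H^s$ bound via Sobolev embedding (no condition on $s$ versus $N/2$ is assumed). This is supplied by Stein's total extension (already used in this paper through \cref{th.extension}). Once this is in hand the rest is bookkeeping: integral representation, linearization around $(0,0)$, whole-space paraproduct estimates, and restriction.
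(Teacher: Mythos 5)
Your proof is correct and follows essentially the same route as the paper's. The paper handles part (i) by exactly the reduction you describe (Stein's extension operator from \cref{th.extension}, which is indeed continuous on $W^{\sigma,p}$ for all $(\sigma,p)\in[0,\infty)\times[1,\infty]$, hence simultaneously on $H^s$ and $L^\infty$, combined with the whole-space version of \cite[Theorem~2.89]{BCD}), and derives (ii) from (i) via the same integral representation $F(u)-F(v)=\int_0^1\bigl(F'(v+t(u-v))-F'(0)\bigr)dt\,(u-v)+F'(0)(u-v)$, splitting off the $F'(0)(u-v)$ term and applying (i) to the vanishing-at-origin integrand together with the tame product estimate \eqref{tamepr}; your write-up simply spells out the details of that step more explicitly.
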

\begin{rema}\label{rem:diff-sobol-bound} 
The estimate \eqref{est:Fu} for $\cU=\Rr^N$ is classical and can be found in Theorem 2.89,  \cite{BCD}. For a minimally smooth domain $\cU$, \eqref{est:Fu} follows  from the case $\cU=\Rr^N$ and the Sobolev extension operator in \cref{th.extension}. On the other hand, the contraction estimate \eqref{eq.diff-sobol-reg} is a consequence of \eqref{est:Fu} since
\[
F(u)-F(v)=\int_0^1\big(F'(v+t(u-v))-F'(0)\big)\d t(u-v)+F'(0)(u-v).
\]
\end{rema}

Nonlinear functions can be paralinearized using the following theorem.
\begin{theo}\cite[Theorem 5.2.4]{MePise}\label{paralin:nonl}
Let $F\in C^\infty(\Rr)$ with $F(0)=0$ If $u\in H^s(\Rr^d)$ with $\mu=s-\frac{d}{2}>0$, then 
\bq\label{paralin:Fu}
\| F(u)-T_{F'(u)}u\|_{H^{s+\mu}}\le C(\| u\|_{L^\infty})\| u\|_{C^\mu_*}\| u\|_{H^s}.
\eq
\end{theo}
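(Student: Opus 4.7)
The plan is to follow the classical Littlewood--Paley telescoping argument used for Bony's linearization theorem, reducing the problem to an $L^\infty$ estimate on a symbolic difference that decays at rate $2^{-j\mu}$.

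First I would introduce a standard dyadic partition of unity: let $\Delta_j$ denote the Littlewood--Paley projector onto frequencies $|\xi|\sim 2^j$, let $S_j=\sum_{k<j}\Delta_k$, and write $u_j:=S_j u$. Since $F(0)=0$ and $F\in C^\infty$, the telescoping identity
\begin{equation*}
F(u)=F(u_0)+\sum_{j\geq 0}\bigl(F(u_{j+1})-F(u_j)\bigr)=F(u_0)+\sum_{j\geq 0}m_j\,\Delta_j u,\qquad m_j:=\int_0^1 F'(u_j+t\Delta_j u)\,\d t,
\end{equation*}
holds almost everywhere. On the other hand, with a paraproduct threshold $N\in\Nn$ fixed, $T_{F'(u)}u=\sum_{j}S_{j-N}F'(u)\,\Delta_j u$, so the difference is
\begin{equation*}
F(u)-T_{F'(u)}u = F(u_0)+\sum_{j\geq 0}\bigl(m_j-S_{j-N}F'(u)\bigr)\Delta_j u.
\end{equation*}

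The key step is the pointwise bound
\begin{equation*}
\|m_j-S_{j-N}F'(u)\|_{L^\infty}\le C(\|u\|_{L^\infty})\,\|u\|_{C^\mu_*}\,2^{-j\mu}.
\end{equation*}
To prove this I would split it as $(m_j-F'(u))+(F'(u)-S_{j-N}F'(u))$. For the first piece, write
\begin{equation*}
m_j(x)-F'(u(x))=\int_0^1\bigl[F'(u_j+t\Delta_j u)-F'(u)\bigr]\d t,
\end{equation*}
and apply the mean value theorem together with $\|u-u_j-t\Delta_j u\|_{L^\infty}\lesssim\|u-u_{j+1}\|_{L^\infty}\lesssim 2^{-j\mu}\|u\|_{C^\mu_*}$ (a consequence of the characterization of Zygmund spaces via Littlewood--Paley), using that $|F''|$ is bounded on the range of $u$ (which is controlled by $\|u\|_{L^\infty}$). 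For the second piece, the same Zygmund characterization gives $\|F'(u)-S_{j-N}F'(u)\|_{L^\infty}\lesssim 2^{-j\mu}\|F'(u)\|_{C^\mu_*}$, and a composition estimate in $C^\mu_*$ yields $\|F'(u)\|_{C^\mu_*}\le C(\|u\|_{L^\infty})\|u\|_{C^\mu_*}$.

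Finally, each summand $(m_j-S_{j-N}F'(u))\Delta_ju$ has Fourier support in a ball of radius $\lesssim 2^j$ (the factor $m_j-S_{j-N}F'(u)$ can be inserted into a frequency-truncated version of $\Delta_ju$ at negligible cost, by one further paraproduct decomposition). Using an almost-orthogonal $\ell^2$ reassembly, I bound
\begin{equation*}
\bigl\|\textstyle\sum_{j\geq 0}(m_j-S_{j-N}F'(u))\Delta_j u\bigr\|_{H^{s+\mu}}^2\lesssim \sum_{j\geq 0}2^{2j(s+\mu)}\bigl(2^{-j\mu}\|u\|_{C^\mu_*}\bigr)^2\|\Delta_ju\|_{L^2}^2\lesssim\|u\|_{C^\mu_*}^2\|u\|_{H^s}^2,
\end{equation*}
with the multiplicative constant depending on $\|u\|_{L^\infty}$. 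The low-frequency remainder $F(u_0)$ is controlled by $C(\|u\|_{L^\infty})\|u\|_{L^2}$ via $F(0)=0$ and a mean-value bound, which is absorbed into the right-hand side since its Fourier support is compact. The main technical obstacle is organizing the spectral localization cleanly enough that one really gains the $2^{-j\mu}$ factor in an $\ell^2(H^{s+\mu})$-norm; this is where the choice of threshold $N$ and the auxiliary Bernstein inequality are used.
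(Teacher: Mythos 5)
This statement is not proved in the paper at all: it is imported verbatim from M\'etivier \cite[Theorem 5.2.4]{MePise}, so there is no internal proof to compare with. Your telescoping argument is exactly the classical Meyer--Bony proof of that cited result: the identity $F(u)=F(S_0u)+\sum_j m_j\,\Delta_j u$ with $m_j=\int_0^1F'(S_ju+t\Delta_ju)\,dt$, the pointwise bound $\|m_j-S_{j-N}F'(u)\|_{L^\infty}\le C(\|u\|_{L^\infty})\|u\|_{C^\mu_*}2^{-j\mu}$ (mean value theorem plus the Littlewood--Paley characterization of $C^\mu_*$ and the composition estimate for $F'(u)$), and an $\ell^2$ reassembly in $H^{s+\mu}$. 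The quantitative heart of your write-up is correct and is the same mechanism as in the cited source.

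Two of your localization claims are false as stated, and the first is precisely where the remaining work lies. The summand $(m_j-S_{j-N}F'(u))\Delta_ju$ does \emph{not} have Fourier support in a ball of radius $\les 2^j$, since $m_j$ is a nonlinear function of $S_ju+t\Delta_ju$ and has unbounded spectrum; you acknowledge this and defer it to ``one further paraproduct decomposition'', but that decomposition is the actual content of the final step, not a negligible cost. Concretely, one splits $m_j=S_{j-N}m_j+(\operatorname{Id}-S_{j-N})m_j$: the contribution $S_{j-N}(m_j-F'(u))\,\Delta_ju$ is supported in a ball of radius $C2^j$ and inherits your $2^{-j\mu}$ smallness, so it sums in $H^{s+\mu}$ by the standard lemma for ball-supported series with positive regularity index ($s+\mu>0$); for $\bigl((\operatorname{Id}-S_{j-N})m_j\bigr)\Delta_ju$ one needs the uniform-in-$j$ (and in $t$) bound $\|m_j\|_{C^\mu_*}\le C(\|u\|_{L^\infty})\|u\|_{C^\mu_*}$, valid because $S_ju+t\Delta_ju$ is bounded in $L^\infty\cap C^\mu_*$ by the corresponding norms of $u$, followed by a dyadic decomposition of $(\operatorname{Id}-S_{j-N})m_j$ and a resummation in the outer frequency, again using $s+\mu>0$ and $s>0$. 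Second, $F(S_0u)$ does \emph{not} have compact Fourier support; what rescues this term is that $S_0u$ is band-limited, so all of its derivatives are controlled by $\|u\|_{L^2}$ and $\|u\|_{L^\infty}$ via Bernstein, and a chain-rule computation gives $\|F(S_0u)\|_{H^{s+\mu}}\le C(\|u\|_{L^\infty})\|u\|_{L^2}$. To recover the precise trilinear right-hand side for this block you should additionally factor $F(S_0u)=S_0u\int_0^1F'(tS_0u)\,dt$ and use $\|S_0u\|_{L^\infty}\les\|u\|_{C^\mu_*}$; the $F'(0)$ part of this low-frequency/linear contribution only yields $\|u\|_{L^2}$, which is the same low-frequency bookkeeping issue the paper itself treats separately (via the $(1-\Psi(D))$ and $|F'(0)|$ terms) in its own contraction estimate \eqref{Rparalin:contra}, and it is harmless for every application in the paper, where only the weaker bound with $\|u\|_{C^\mu_*}\les\|u\|_{H^s}$ is used.
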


The next proposition provides a contraction estimate for the remainder in the paralinearization \eqref{paralin:Fu}. 
\begin{prop}
Let $F\in C^\infty(\Rr)$ and consider $s_0, s_1, s_2 \in \mathbb{R}$ satisfying 
\bq\label{cd:sj:contra}
s_0\le s_2,\quad 0\le s_1\le s_2,\quad s_2>\frac{d}{2},\quad \text{and } s_0<s_1+s_2-\frac{d}{2}.
\eq
Then there exist $C: \Rr_+\to \Rr_+$ and $C'>0$  such that the inequality 
\begin{multline}\label{Rparalin:contra}
 \| (F(u)-T_{F'(u)}u)-(F(v)-T_{F'(v)}v)\|_{H^{s_0}}\le  C'|F'(0)| \| u-v\|_{H^{s_1}} + C'|F''(0)|\| u-v\|_{H^{s_1}}\| u-v\|_{H^{s_2}} \\ 
 + \left(C(\| (u, v)\|_{L^\infty\cap H^{s_1}}) + |F''(0)|\right)\| u-v\|_{H^{s_1}\cap L^{\infty}}(\| u\|_{H^{s_2}}+\| v\|_{H^{s_2}})
\end{multline}
holds for all $u$, $v\in H^{s_2}$. 
\end{prop}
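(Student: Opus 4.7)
The strategy is to view the difference as an integral along the segment joining $v$ to $u$ and differentiate the paralinearization remainder under that integral; this reduces the estimate to three standard paradifferential quantities that can each be bounded by the tools already recalled in \cref{theo.productEstimates} and \cref{est:nonl}. Set $h:=u-v$, $w_t:=v+th$ for $t\in[0,1]$, and $\mathcal R(w):=F(w)-T_{F'(w)}w$. Since the paraproduct $(a,u)\mapsto T_a u$ is bilinear, a direct differentiation gives
\[
\tfrac{d}{dt}\mathcal R(w_t)=F'(w_t)h-T_{F'(w_t)}h-T_{F''(w_t)h}w_t,
\]
and Bony's identity $F'(w_t)h=T_{F'(w_t)}h+T_hF'(w_t)+R(F'(w_t),h)$ rewrites the first two terms as $T_hF'(w_t)+R(F'(w_t),h)$. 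Integrating in $t$ yields
\[
\mathcal R(u)-\mathcal R(v)=\int_0^1\Bigl(T_hF'(w_t)+R(F'(w_t),h)-T_{F''(w_t)h}w_t\Bigr)\,dt.
\]

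The next step is to peel off the constants $F'(0)$ and $F''(0)$. Writing $F'(w_t)=F'(0)+\widetilde F(w_t)$ and $F''(w_t)=F''(0)+\widetilde G(w_t)$ with $\widetilde F(0)=\widetilde G(0)=0$, the constant-symbol contributions $T_hF'(0)$ and $R(F'(0),h)$ (which are controlled by $|F'(0)|\|h\|_{H^{s_1}}$, yielding the first term on the right-hand side of \eqref{Rparalin:contra}) split off, and $T_{F''(w_t)h}w_t=F''(0)T_hw_t+T_{\widetilde G(w_t)h}w_t$. For the remaining three genuine paradifferential terms I would apply the standing hypotheses $s_0\le s_2$, $s_0<s_1+s_2-\tfrac{d}{2}$, and $s_2>\tfrac{d}{2}$ (so that $H^{s_2}\hookrightarrow L^\infty$) to estimate
\begin{align*}
\|T_h\widetilde F(w_t)\|_{H^{s_0}}&\le C\|h\|_{H^{s_1}}\|\widetilde F(w_t)\|_{H^{s_2}}\le C(\|w_t\|_{L^\infty})\|h\|_{H^{s_1}}\|w_t\|_{H^{s_2}},\\
\|R(\widetilde F(w_t),h)\|_{H^{s_0}}&\le C\|\widetilde F(w_t)\|_{H^{s_2}}\|h\|_{H^{s_1}}\le C(\|w_t\|_{L^\infty})\|w_t\|_{H^{s_2}}\|h\|_{H^{s_1}},\\
\|F''(0)T_hw_t\|_{H^{s_0}}&\le C\,|F''(0)|\,\|h\|_{H^{s_1}}\|w_t\|_{H^{s_2}},
\end{align*}
via \eqref{boundpara}, \eqref{Bony1}, the embedding $H^{s_1+s_2-d/2}\hookrightarrow H^{s_0}$ (which uses $s_0<s_1+s_2-\tfrac{d}{2}$), and \cref{est:nonl} applied to $\widetilde F,\widetilde G$.

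The last piece $T_{\widetilde G(w_t)h}w_t$ is where an $L^\infty$ factor on $h$ enters. I would use \eqref{pp:Linfty} and $s_0\le s_2$ to get
\[
\|T_{\widetilde G(w_t)h}w_t\|_{H^{s_0}}\le C\|\widetilde G(w_t)h\|_{L^\infty}\|w_t\|_{H^{s_2}}\le C(\|w_t\|_{L^\infty})\|h\|_{L^\infty}\|w_t\|_{H^{s_2}},
\]
which is exactly absorbed by the $C(\|(u,v)\|_{L^\infty\cap H^{s_1}})\,\|u-v\|_{H^{s_1}\cap L^\infty}(\|u\|_{H^{s_2}}+\|v\|_{H^{s_2}})$ summand on the right-hand side. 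Integrating over $t\in[0,1]$ and using $\|w_t\|_{L^\infty}\le\|(u,v)\|_{L^\infty\cap H^{s_1}}$ and $\|w_t\|_{H^{s_2}}\le\|u\|_{H^{s_2}}+\|v\|_{H^{s_2}}$ closes the argument.

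The main obstacle is controlling the third term $T_{F''(w_t)h}w_t$ correctly: the ``natural'' bound via \eqref{boundpara} applied to the outer paraproduct would require $\|F''(w_t)h\|_{H^{s_1}}$, and the only product rule producing such an estimate is \eqref{tamepr}, which demands an $L^\infty$ factor on one of the two factors because $s_1$ may equal $0$. This forces the appearance of the mixed norm $\|u-v\|_{H^{s_1}\cap L^\infty}$ on the right-hand side and explains why a pure $H^{s_1}$ Lipschitz-type bound is not possible when $s_1\le d/2$. A secondary technical point is the treatment of the constant symbol $F'(0)$: in the paraproduct convention used here, $T_h c$ vanishes for constants $c$, but in any case the corresponding contribution is absorbed into the harmless first summand $C'|F'(0)|\|u-v\|_{H^{s_1}}$, which also absorbs lower-order symmetric terms arising from bilinearity.
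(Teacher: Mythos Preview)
Your argument is correct and gives the stated bound (in fact a slightly sharper one). The route, however, differs from the paper's. The paper rewrites the difference $A_F$ algebraically: it applies the mean value theorem once to $F(u)-F(v)=w(u-v)$ with $w=\int_0^1 F'(v+\tau(u-v))\,d\tau$, paralinearizes the product $w(u-v)$, and rearranges to obtain
\[
A_F=T_{u-v}w+R(u-v,w)-T_{F'(u)-F'(v)}u+T_{(\tilde w+\tfrac12 F''(0))(u-v)}(u-v),
\]
then estimates each piece. You instead differentiate the paralinearization remainder $\mathcal R(w_t)$ along the segment $w_t=v+th$ and integrate, obtaining
\[
\mathcal R(u)-\mathcal R(v)=\int_0^1\bigl(T_hF'(w_t)+R(F'(w_t),h)-T_{F''(w_t)h}w_t\bigr)\,dt.
\]
The first two integrands are exactly the paper's $T_{u-v}w$ and $R(u-v,w)$ after pulling the integral through the bilinear forms; the third replaces the paper's last two terms. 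After splitting off $F'(0)$ and $F''(0)$, both approaches reduce to the same toolbox \eqref{boundpara}, \eqref{Bony1}, \eqref{pp:Linfty}, and \cref{est:nonl}. Two small dividends of your route: the $F''(0)$ contribution is bounded directly by $|F''(0)|\,\|h\|_{H^{s_1}}(\|u\|_{H^{s_2}}+\|v\|_{H^{s_2}})$ and so is absorbed by the third summand of \eqref{Rparalin:contra} without ever producing the second summand; and your nonlinear constant depends only on $\|(u,v)\|_{L^\infty}$ rather than on $\|(u,v)\|_{L^\infty\cap H^{s_1}}$, since you avoid estimating $\|F'(u)-F'(v)\|_{H^{s_1}}$ via \eqref{eq.diff-sobol-reg}. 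Your remark that $T_hF'(0)=0$ in this convention is correct, and $R(F'(0),h)=F'(0)(1-\Psi(D))h$ is indeed a low-frequency term bounded by $C|F'(0)|\|h\|_{H^{s_1}}$ since $s_1\ge 0$.
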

\begin{proof}
We denote 
\[
A_F=(F(u)-T_{F'(u)}u)-(F(v)-T_{F'(v)}v).
\]
If we set $\tilde{F}(x)=F(x)-F'(0)x$ then 
\[
A_F=A_{\tilde{F}}+F'(0)(u-v)-T_{F'(0)}(u-v)=A_{\tilde{F}}+F'(0)(1-\Psi(D))(u-v),
\]
where $\Psi$ is given by \eqref{cond.psi}. We have 
\[
\| F'(0)(1-\Psi(D))(u-v)\|_{H^{s_0}}\le C'|F'(0)|\| u-v\|_{H^{s_1}}. 
\]
 Note that $\tilde{F}'(0)=0$ and $\tilde{F}''=F''$. Thus, to prove \eqref{Rparalin:contra} we can assume $F'(0)=0$ in what follows. 

Using the mean value theorem, we have 
\[
\begin{aligned}
A_F=(F(u)-T_{F'(u)}u)-(F(v)-T_{F'(v)}v)&=(F(u)-F(v))-T_{F'(u)-F'(v)}u-T_{F'(v)}(u-v)\\
&=w(u-v)-T_{F'(u)-F'(v)}u-T_{F'(v)}(u-v),
\end{aligned}
\]
where $w=\int_0^1F'(v+\tau(u-v))d\tau$. By paralinearizing the product $w(u-v)$ and using the fact that
\[
w-F'(v)=\big(\tilde w+\mez F''(0)\big)(u-v),\quad \tilde{w}=\int_0^1\int_0^1\Big(F''(u+\tau'\tau(u-v))-F''(0)\Big)\tau d\tau' d\tau,
\]
we obtain 
\bq
A_F=T_{u-v}w+R(u-v, w)-T_{F'(u)-F'(v)}u+T_{(\tilde{w}+\mez F''(0))(u-v)}(u-v).
\eq
We now show that each term in $A_F$ is controlled by the right-hand side of \eqref{Rparalin:contra}.

Since $s_2\ge 0$ and $F'(0)=0$, it follows from \cref{est:nonl} that
\[
\| w\|_{H^{s_2}}\le C(\| (u, v)\|_{L^\infty})(\| u\|_{H^{s_2}}+\| v\|_{H^{s_2}}).
\]
Under the conditions in \eqref{cd:sj:contra}, we have $s_0\le s_2$, $s_0<s_1+s_2-\frac{d}{2}$, $s_1+s_2>0$. Thus we can apply  \eqref{boundpara}, \eqref{Bony1}, and \eqref{est:Fu} to have 
\[
\begin{aligned}
\| T_{u-v}w\|_{H^{s_0}}+\| R(u-v, w)\|_{H^{s_0}}&\les \| u-v\|_{H^{s_1}}\| w\|_{H^{s_2}}\\
&\le C(\| (u, v)\|_{L^\infty})\| u-v\|_{H^{s_1}}(\| u\|_{H^{s_2}}+\| v\|_{H^{s_2}}).
\end{aligned}
\]
On the other hand, combining \eqref{boundpara} and \eqref{eq.diff-sobol-reg} (for $s_1\ge 0$) yields 
\[
\begin{aligned}
\|T_{F'(u)-F'(v)}u\|_{H^{s_0}}&\les \|F'(u)-F'(v)\|_{H^{s_1}}\| u\|_{H^{s_2}}\\
& \le C(\| (u, v)\|_{L^{\infty}\cap H^{s_1}})\| u-v\|_{L^{\infty}\cap H^{s_1}}\| u\|_{H^{s_2}}+|F''(0)|\|u-v\|_{H^{s_1}}\|u\|_{H^{s_2}}.
\end{aligned}
\]
As for the last term, we first apply \eqref{boundpara} to have 
\[
\| T_{(\tilde{w}+\mez F''(0))(u-v)}(u-v)\|_{H^{s_0}}\les \|(\tilde{w}+\mez F''(0))(u-v)\|_{H^{s_1}}\| u-v\|_{H^{s_2}}.
\]
Since  $s_1+s_2>0$, $s_1\le s_2$, and $s_2>\frac{d}{2}$, \eqref{pr} implies 
\[
\| \tilde{w}(u-v)\|_{H^{s_1}}\les \| \tilde{w}\|_{H^{s_2}}\| u-v\|_{H^{s_1}}.
\]
Invoking \eqref{est:Fu} again, we have
\[
 \| \tilde{w}\|_{H^{s_2}}\le C(\| (u, v)\|_{L^\infty})(\| u\|_{H^{s_2}}+\| v\|_{H^{s_2}}),
 \]
 thereby deducing 
 \begin{align*}
 \| T_{(\tilde{w}+\mez F''(0))(u-v)}(u-v)\|_{H^{s_0}}&\le C(\| (u, v)\|_{L^\infty})\| u-v\|_{H^{s_1}}(\| u\|_{H^{s_2}}+\| v\|_{H^{s_2}})\\
 &\qquad+C'|F''(0)|\| u-v\|_{H^{s_1}}\| u-v\|_{H^{s_2}}.
\end{align*}
Combining the above estimates, we arrive at \eqref{Rparalin:contra}. 
\end{proof}

Next, we review basic facts about Bony's paradifferential calculus (see e.g. \cite{Bony, Hormander, MePise, ABZ}). 
\begin{defi}\label{defi:para}
\begin{enumerate}[(1)]
\item (Symbols) Given~$\varrho\in [0, \infty)$ and~$m\in\Rr$,~$\Gamma_{\varrho}^{m}(\Rr^d)$ denotes the space of
locally bounded functions~$a(x,\xi)$
on~$\Rr^d\times(\Rr^d\setminus 0)$,
which are~$C^\infty$ with respect to~$\xi$ for~$\xi\neq 0$ and
such that, for all~$\alpha\in\Nn^d$ and all~$\xi\neq 0$, the function
$x\mapsto \partial_\xi^\alpha a(x,\xi)$ belongs to~$W^{\varrho,\infty}(\Rr^d)$ and there exists a constant
$C_\alpha$ such that for all $|\xi|\ge \mez$ there holds
\begin{equation*}
\Vert \partial_\xi^\alpha a(\cdot,\xi)\Vert_{W^{\varrho,\infty}(\Rr^d)}\le C_\alpha
(1+|\xi|)^{m-|\alpha|}.
\end{equation*}
Let $a\in \Gamma_{\varrho}^{m}(\Rr^d)$, we define the semi-norm
\begin{equation}\label{defi:norms}
M_{\varrho}^{m}(a)= 
\sup_{|\alpha|\le 2(d+2) +\varrho ~}\sup_{|\xi| \ge \mez~}
\Vert (1+|\xi|)^{|\alpha|-m}\partial_\xi^\alpha a(\cdot,\xi)\Vert_{W^{\varrho,\infty}(\Rr^d)}.
\end{equation}
\item (Paradifferential operators) Given a symbol~$a$, we define
the paradifferential operator~$T_a$ by
\begin{equation}\label{eq.para}
\widehat{T_a u}(\xi)=(2\pi)^{-d}\int \chi(\xi-\eta,\eta)\widehat{a}(\xi-\eta,\eta)\Psi(\eta)\widehat{u}(\eta)
\, d\eta,
\end{equation}
where
$\widehat{a}(\theta,\xi)=\int e^{-ix\cdot\theta}a(x,\xi)\, dx$
is the Fourier transform of~$a$ with respect to the first variable; 
$\chi$ and~$\Psi$ are two fixed~$C^\infty$ functions such that:
\begin{equation}\label{cond.psi}
    \Psi(\eta)=0\quad \text{for } |\eta|\le \frac{1}{5},\qquad
    \Psi(\eta)=1\quad \text{for }|\eta|\geq \frac{1}{4},
\end{equation}
and~$\chi(\theta,\eta)$ 
satisfies, for~$0<\eps_1<\eps_2$ small enough,
\[
    \chi(\theta,\eta)=1 \quad \text{if}\quad |\theta|\le \eps_1| \eta|,\qquad
    \chi(\theta,\eta)=0 \quad \text{if}\quad |\theta|\geq \eps_2|\eta|,
\]
and such that
\[
    \forall (\theta,\eta), \qquad | \partial_\theta^\alpha \partial_\eta^\beta \chi(\theta,\eta)|\le 
C_{\alpha,\beta}(1+| \eta|)^{-|\alpha|-|\beta|}.
\]
\end{enumerate}
\end{defi}
\begin{rema}
The cut-off $\chi$ can be appropriately chosen so that when $a=a(x)$, the paradifferential operator $T_au$ becomes the usual paraproduct.
\end{rema}
\begin{defi}\label{defi:order}
Let~$m\in\Rr$.
An operator~$T$ is said to be of  order~$m$ if, for all~$\mu\in\Rr$,
it is bounded from~$H^{\mu}$ to~$H^{\mu-m}$.
\end{defi}
Symbolic calculus for paradifferential operators is summarized in the following theorem.
\begin{theo}[Symbolic calculus]\label{theo:sc}
Let~$m\in\Rr$ and~$\varrho\in [0,1]$.
\begin{enumerate}[(i)]
    \item If~$a \in \Gamma^m_0(\Rr^d)$, then~$T_a$ is of order~$m$. 
    Moreover, for all~$\mu\in\Rr$ there exists a constant~$K$ such that
    \begin{equation}\label{esti:quant1}
    \Vert T_a \Vert_{H^{\mu}\rightarrow H^{\mu-m}}\le K M_{0}^{m}(a).
    \end{equation}
    \item If~$a\in \Gamma^{m}_{\varrho}(\Rr^d), b\in \Gamma^{m'}_{\varrho}(\Rr^d)$ then 
    $T_a T_b -T_{ab}$ is of order~$m+m'-\varrho$. 
    Moreover, for all~$\mu\in\Rr$ there exists a constant~$K$ such that
    \begin{equation}\label{esti:quant2}
    \begin{aligned}
    \Vert T_a T_b  - T_{a  b} \Vert_{H^{\mu}\rightarrow H^{\mu-m-m'+\varrho}}
    &\le 
    K (M_{\varrho}^{m}(a)M_{0}^{m'}(b)+M_{0}^{m}(a)M_{\varrho}^{m'}(b)).
    \end{aligned}
    \end{equation}
    \item Let~$a\in \Gamma^{m}_{\varrho}(\Rr^d)$. Denote by 
    $(T_a)^*$ the adjoint operator of~$T_a$ and by~$\overline{a}$ the complex conjugate of~$a$. Then $(T_a)^* -T_{\overline{a}}$ is of order~$m-\varrho$.
    Moreover, for all~$\mu$ there exists a constant~$K$ such that
    \begin{equation}\label{esti:quant3}
    \Vert (T_a)^*   - T_{\overline{a}}   \Vert_{H^{\mu}\rightarrow H^{\mu-m+\varrho}}
    \le 
    K M_{\varrho}^{m}(a).
    \end{equation}
\end{enumerate}
\end{theo}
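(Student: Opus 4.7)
\textbf{Proof plan for Theorem \ref{theo:sc}.} All three statements are classical and can be found in Bony \cite{Bony}, Hörmander, and Métivier \cite{MePise}; the plan is therefore to reproduce the standard symbolic-calculus argument, which I would carry out via Littlewood--Paley decomposition and oscillatory-integral manipulations.

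\emph{Part (i) (boundedness).} The key observation is that $T_a$ is \emph{spectrally localized}: because of the cut-off $\chi(\xi-\eta,\eta)$ in \eqref{eq.para}, if one sets $T^q_a u := T_a(\Delta_q u)$ where $(\Delta_q)_{q\ge -1}$ is the Littlewood--Paley decomposition, then the Fourier support of $T^q_a u$ lies in an annulus of size $\sim 2^q$ (up to a fixed shift in $q$). Hence
$\|T_a u\|_{H^{\mu-m}}^2 \simeq \sum_q 2^{2q(\mu-m)} \|T^q_a u\|_{L^2}^2$.
For each block I would write the kernel of $T^q_a$ in the frequency variables and use the bound
$\|T^q_a u\|_{L^2} \le C\, M_0^m(a)\, 2^{qm} \|\Delta_q u\|_{L^2}$,
which follows from Plancherel and the pointwise symbol bound $|a(\cdot,\xi)|\le M_0^m(a)\langle\xi\rangle^m$ together with integrability of $\hat\chi$ in the first variable (here the integer $2(d+2)$ appearing in \eqref{defi:norms} is what makes $\hat\chi(\cdot,\eta)$ integrable after enough $\xi$-derivatives). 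Summing yields \eqref{esti:quant1}.

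\emph{Parts (ii)--(iii) (composition and adjoint).} The core step is a symbolic expansion. For composition one considers
\[
T_aT_b - T_{ab} = T_{a\sharp b - ab} + (T_aT_b - T_{a\sharp b}),
\]
where $a\sharp b$ is the formal composition symbol obtained from the oscillatory integral expressing $T_aT_b$. I would perform a Taylor expansion of $b(x,\xi-\theta)$ around $\xi$ to order $\lfloor\rho\rfloor$ and bound the remainder; the regularity $W^{\varrho,\infty}$ of $a$ in $x$ translates into $W^{\varrho,\infty}$ regularity of $\hat{a}(\cdot,\xi)$ against derivatives in $\theta$ of $\chi$, giving the $\varrho$-gain. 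Combined with part (i) applied to the residual symbol, this yields \eqref{esti:quant2}. For (iii), the same mechanism is applied to $(T_a)^* - T_{\bar a}$, whose symbol is $\bar a$ plus a remainder of order $m-\varrho$ obtained by Taylor-expanding $\bar a(y,\xi)$ around $y=x$.

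\emph{Main obstacle.} The genuinely technical part is neither boundedness nor the abstract expansion, but the careful bookkeeping of the cut-offs $\chi$ and $\Psi$ in the oscillatory-integral representation: one must verify that integration by parts in $\theta$ (to exploit $x$-regularity of $a$) and in $\eta$ (to exploit $\xi$-smoothness of $b$) converges and produces symbols still lying in the correct classes $\Gamma^{m+m'-\varrho}_0$. This reduces to estimating $L^1$-norms of iterated derivatives of $\chi$ and of $\partial_\xi^\alpha a$, $\partial_\xi^\alpha b$, which is where the specific seminorm exponents $M^m_\varrho$ and $M^{m'}_\varrho$ enter on the right-hand sides of \eqref{esti:quant2}--\eqref{esti:quant3}. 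Since all of this is entirely independent of the free-boundary problem, and is written out in detail in \cite{MePise} and \cite{ABZ}, I would in the paper simply invoke those references rather than reproduce the calculation.
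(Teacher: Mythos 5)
Your proposal is correct and matches the paper's treatment: the paper states Theorem \ref{theo:sc} without proof, simply as a recall of Bony's symbolic calculus with references to \cite{Bony, Hormander, MePise, ABZ}, which is exactly what you say you would do. Your sketch of the underlying argument (spectral localization via $\chi$, block estimates by Plancherel, Taylor expansion of the symbol for the composition/adjoint remainders) is the standard one and is accurate.
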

\begin{rema}\label{rema:low}
In the definition \eqref{eq.para} of paradifferential operators, the cut-off $\Psi$ removes the low frequency part of $u$. In particular, when $a\in \Gamma^m_0$  we have
\[
\Vert T_a u\Vert_{H^\sigma}\le CM_0^m(a)\Vert \nabla u\Vert_{H^{\sigma+m-1}}. 
\]
\end{rema}
To handle symbols of negative Zygmund regularity, we use  
\begin{prop}[\protect{\cite[Proposition 2.12]{ABZ}}]\label{prop:negOp}
Let  $m\in \Rr$ and $\varrho<0$. We denote by $\dot \Gamma^m_\varrho(\Rr^d)$ the class of symbols $a(x, \xi)$ that are homogeneous of order $m$ in $\xi$, smooth in $\xi\in \Rr^d\setminus \{0\}$ and such that  
\[
M_{\varrho}^{m}(a)= 
\sup_{|\alpha|\le 2(d+2) +\varrho ~}\sup_{|\xi| \ge \mez~}
\Vert |\xi|^{|\alpha|-m}\partial_\xi^\alpha a(\cdot,\xi)\Vert_{C^\varrho_*(\Rr^d)}<\infty.
\]
 If $a\in \dot \Gamma^m_\varrho$ then the operator $T_a$ defined by  \eqref{eq.para} is of order~$m-\varrho$.
\end{prop}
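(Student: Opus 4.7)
The plan is to prove this by dyadic decomposition in the frequency variable $\xi$, exploiting the Littlewood--Paley characterization of negative Zygmund spaces to absorb the lack of regularity of $a$ in $x$. The cutoff $\chi(\xi-\eta,\eta)$ in \eqref{eq.para}, supported in $|\xi-\eta|\le \varepsilon_2|\eta|$, is what rescues the construction when $a(\cdot,\xi)$ is only a distribution: it truncates the $x$-frequencies of the symbol to scale $\lesssim |\eta|$, and for a function in $C^\varrho_*$ with $\varrho<0$ such a low-pass filter at scale $2^j$ has $L^\infty$ norm bounded by $\lesssim 2^{-j\varrho}\|f\|_{C^\varrho_*}$, producing the advertised loss of $-\varrho$ orders of derivatives.

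Concretely, write $\Psi(\eta)=\sum_{j\ge 0}\phi_j(\eta)$ for a dyadic partition of unity with $\phi_j$ supported in $|\eta|\sim 2^j$, and decompose $T_a u=\sum_j T_a^{(j)}u$ by inserting $\phi_j(\eta)$ in \eqref{eq.para}. The support of $\chi$ forces $T_a^{(j)}u$ to be frequency-localised near $|\xi|\sim 2^j$ and allows the replacement $T_a^{(j)}u = T_a^{(j)}(\widetilde\Delta_j u)$ for an enlarged Littlewood--Paley projector $\widetilde\Delta_j$, while the effective symbol on this block is essentially $(S^x_{j+N_0}a)(x,\xi)\phi_j(\xi)$ for a fixed $N_0$. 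Combining the standard estimate $\|S^x_j f\|_{L^\infty}\lesssim 2^{-j\varrho}\|f\|_{C^\varrho_*}$ (valid because for $\varrho<0$ the geometric sum $\sum_{k\le j}2^{-k\varrho}$ is dominated by its largest term), the homogeneity of $a$ of order $m$ in $\xi$, and the uniform control $\||\xi|^{|\alpha|-m}\partial_\xi^\alpha a(\cdot,\xi)\|_{C^\varrho_*}\lesssim M^m_\varrho(a)$ from the class $\dot\Gamma^m_\varrho$, one obtains symbol bounds of the form $|\partial_\xi^\alpha b_j(x,\xi)|\lesssim 2^{j(m-\varrho)}2^{-j|\alpha|}M^m_\varrho(a)$ on $|\xi|\sim 2^j$. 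A standard $L^2$-boundedness argument (Calder\'on--Vaillancourt, or Schur's test on the kernel) then gives
\[
\|T_a^{(j)}u\|_{L^2}\lesssim 2^{j(m-\varrho)}M^m_\varrho(a)\|\widetilde\Delta_j u\|_{L^2},
\]
and almost-orthogonality in $\xi$ closes the argument: summing the squared estimate against the weight $2^{2j(\mu-m+\varrho)}$ yields $\|T_a u\|^2_{H^{\mu-m+\varrho}}\lesssim M^m_\varrho(a)^2\|u\|^2_{H^\mu}$.

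The main conceptual difficulty is making rigorous sense of the formula \eqref{eq.para} when $a(\cdot,\xi)$ is only a distribution: one has to interpret $\widehat a(\cdot,\xi)$ as a tempered distribution in the first variable and verify that integration against the compactly supported cutoff $\chi(\cdot,\eta)$ produces a bona fide function of $\eta$, whose Littlewood--Paley decomposition in $x$ matches the heuristic low-pass filter $S^x_{j+N_0}a$ appearing above. One must also verify that the restricted number $2(d+2)+\varrho$ of $\xi$-derivatives controlled in $M^m_\varrho(a)$ still suffices for the $L^2$-boundedness on each dyadic block (which it does, since even $\lfloor d/2\rfloor+1$ derivatives suffice for Calder\'on--Vaillancourt in dimension $d$). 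Once these technical points are handled, the rest is straightforward bookkeeping of the dyadic estimates, and since this is precisely the content of \cite[Proposition~2.12]{ABZ}, we simply invoke that reference.
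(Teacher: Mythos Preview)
The paper does not prove this proposition at all; it is stated in the appendix as a direct citation of \cite[Proposition~2.12]{ABZ} with no accompanying argument. Your proposal supplies a correct sketch of the standard dyadic proof (frequency localisation in $\eta$, the key bound $\|S_j f\|_{L^\infty}\lesssim 2^{-j\varrho}\|f\|_{C^\varrho_*}$ for $\varrho<0$, and $L^2$-boundedness on each block via Calder\'on--Vaillancourt) and then, like the paper, defers to \cite{ABZ} for the details. There is therefore nothing to compare: your sketch is accurate and strictly more informative than what the paper provides.
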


\section{Extension operator on Lipschitz domains} \label{sec.extension}
We first recall from  \cite{Stein} the definition of {\it minimally smooth} domains.
\begin{defi}\label{defi:mmdomain}
(i) An open set $\cU\subset \Rr^N$ is called a special Lipschitz domain with constant $M>0$ there exists a Lipschitz function $\varphi: \Rr^{N-1}\to \Rr$ with Lipschitz constant bounded by  $M$ such that, upon relabeling and reorienting the coordinate axes, 
\[
\cU=\{x=(x', x_N)\in \Rr^{N-1}\times \Rr: x_N<\varphi_i(x')\}.
\]
(ii) An open set $\cU \subset \Rr^N$ is called a minimally smooth with constants $(\iota, K, M)$ if there exists an $\iota>0$, an integer $K$, a real number $M>0$, and a family $\{V_i\}_{i=1}^\infty$ of open subsets of $\Rr^N$ such that:
\begin{itemize}
\item $\p \cU\subset \cup_{i=1}^\infty V_i$.
\item If $x\in \p \cU$, then $B(x, \iota)\subset V_i$ for some $i$.
\item No point of $\Rr^N$ is contained in more than $K$ of the $V_i$'s.
\item For each $i$ there exists a special Lipschitz domain $W_i$ with constant $M$ such that $\cU\cap V_i=W_i\cap V_i$.
\end{itemize}
\end{defi}
The preceding definition includes all  bounded Lipschitz domains and certain unbounded Lipschitz domains.
The unbounded fluid domain considered in this paper is either the half-space $\{(x,y)\in\mathbb{R}^d\times \mathbb{R}: y<f(x)\}$ or the  strip $\{(x,y)\in\mathbb{R}^d\times \mathbb{R}: b(x)<y<f(x)\}$, where  $b$ and $f$ are Lipschitz  functions such that $h:=\inf_{x\in \Rr^d}(\eta(x')-b(x'))>0$. In the former case, we can choose $(\iota, K, M)=(1, 1, \| \na f\|_{L^\infty(\Rr^d)})$. In the latter case, one can take $(\iota, K, M)=\left(\frac{h}{4} , 2,  \max\{\|\nabla b\|_{L^{\infty}}, \|\nabla f\|_{L^{\infty}}\}\right)$. 

Each minimally smooth domain admits a universal extension operator for Sobolev spaces:
\begin{theo}[Extension of Sobolev functions, \cite{Stein}, Theorem 5, p.181]\label{th.extension} Let $\cU \subset \mathbb{R}^N$ be a minimally smooth domain with contants $(\iota, K, M)$. There exists an operator $\mathfrak{C}$ 
mapping functions on $\cU$ to functions on $\mathbb{R}^N$, such that $\mathfrak{C}(u)\vert_{\cU}=u$ and $\mathfrak{C}$ is continuous from $W^{\sigma,p}(\cU)$ to  $W^{\sigma,p}(\mathbb{R}^N)$ for all $(\sigma, p)\in [0, \infty)\times [1, \infty]$.  
\end{theo}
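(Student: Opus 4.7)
The stated theorem is Stein's extension theorem (cited from Stein, \emph{Singular Integrals and Differentiability Properties of Functions}, Ch.~VI, Thm.~5), so the plan is to reproduce his two-stage construction. The strategy is to first build an extension for a single special Lipschitz domain $W = \{x_N < \varphi(x')\}$, then pass to a minimally smooth $\cU$ by patching with the decomposition $\{V_i, W_i\}$ from \cref{defi:mmdomain}.

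For the special Lipschitz case, the first step is to construct a regularized distance function $\Delta^* \colon \Rr^N \setminus W \to \Rr_+$ satisfying $c_1 \dist(x,W) \le \Delta^*(x) \le c_2 \dist(x,W)$ and $|\p^\alpha \Delta^*(x)| \le C_\alpha \Delta^*(x)^{1-|\alpha|}$ for all multi-indices $\alpha$; this is done via Whitney-type decomposition of $\Rr^N \setminus W$ into dyadic cubes and convolution of $\dist(\cdot,W)$ against a mollifier scaled to each cube. The second step is to choose $\psi \in C^\infty([1,\infty))$ with rapid decay satisfying the moment conditions
\begin{equation*}
\int_1^\infty \psi(\lambda)\,d\lambda = 1, \qquad \int_1^\infty \lambda^k \psi(\lambda)\,d\lambda = 0 \quad (k \ge 1),
\end{equation*}
and then to define
\begin{equation*}
\mathfrak{C}u(x) = \begin{cases} u(x), & x \in W, \\[2pt] \displaystyle\int_1^\infty u\bigl(x',\, x_N - \lambda \Delta^*(x)\bigr)\, \psi(\lambda)\,d\lambda, & x \notin W. \end{cases}
\end{equation*}
Because $\lambda \ge 1$ and $\Delta^*(x) \sim \dist(x,W)$, the reflected point $(x', x_N - \lambda \Delta^*(x))$ lies inside $W$, so the integral is well-defined for $u \in W^{\sigma,p}(W)$.

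The third step is the $W^{k,p}$ bound for integer $k$. Differentiating under the integral and applying the chain rule produces, for each $\p^\alpha \mathfrak{C}u$ with $|\alpha| \le k$, a finite sum of terms of the form $\Delta^*(x)^{-j}\int_1^\infty (\p^\beta u)(\cdots)\, \lambda^j \psi(\lambda)\,d\lambda$ with $|\beta| \le k$ and $0 \le j \le |\alpha|$. The moment conditions kill the formally singular $j \ge 1$ terms after subtracting Taylor expansions of $u$ at the base point, leaving a well-behaved integral whose $L^p$ norm is controlled by $\|u\|_{W^{k,p}(W)}$ via the change of variables $y = (x', x_N - \lambda \Delta^*(x))$ (whose Jacobian is bounded above and below uniformly in $\lambda$ on compact $\lambda$-ranges, with rapid $\psi$-decay handling large $\lambda$). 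Continuity into $L^\infty$-based spaces is immediate, and fractional $\sigma$ follows by real interpolation between consecutive integer levels, giving continuity on $W^{\sigma,p}(W) \to W^{\sigma,p}(\Rr^N)$ for all $(\sigma,p)\in [0,\infty)\times[1,\infty]$.

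For a minimally smooth $\cU$, the fourth step is to fix a $C^\infty$ partition of unity $\{\chi_i\}$ with $\chi_i$ supported in $V_i$, together with a globally defined $\chi_0$ supported at positive distance from $\p\cU$. Applying the special-Lipschitz extension $\mathfrak{C}_i$ constructed for $W_i$ to $\chi_i u$ (extended by zero outside $V_i \cap \cU = V_i \cap W_i$) and setting $\mathfrak{C}u = \chi_0 u + \sum_i \mathfrak{C}_i(\chi_i u)$ yields the desired operator: the finite-overlap property (at most $K$ of the $V_i$'s meet at any point) ensures the sum converges in $W^{\sigma,p}$ with a constant depending only on $(\iota,K,M,\sigma,p,N)$. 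The main technical obstacle is the construction and moment analysis of $\psi$: one must verify that the infinitely many cancellations from the moment conditions, combined with the Whitney estimates on $\Delta^*$, yield bounds uniform in $k$ (so that the single operator $\mathfrak{C}$ works for all Sobolev indices simultaneously). This is exactly the content of Stein's Chapter VI, \S3.
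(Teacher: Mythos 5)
Your proposal is correct and takes essentially the same route as the paper, whose entire proof consists of citing Stein for integer $\sigma$ and then interpolating via the definition \eqref{def:Wsp} of $W^{\sigma,p}(\cU)$ as a real interpolation space — exactly your final step — while your integer-case construction is a faithful reproduction of the cited argument of Stein. The only small imprecisions: comparability $\Delta^*(x)\sim \dist(x,W)$ together with $\lambda\ge 1$ does not by itself force $(x',x_N-\lambda\Delta^*(x))\in W$, so one must rescale the regularized distance by a constant depending on the Lipschitz bound $M$ (as Stein does); and it is cleaner to patch at integer levels first and interpolate the operator for $\cU$ once at the end, which avoids the fractional-norm localization and summation over the pieces $V_i$ that your ordering implicitly requires.
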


In~\cite{Stein}, \cref{th.extension} is proven for $\sigma\in \Nn$. The case $\sigma\notin \Nn$ follows by linear interpolation  and the definition \eqref{def:Wsp}. 
By virtue of \cref{th.extension}, many of the known product rules in the whole space  remain valid for minimally smooth domains. We state some of them below. 
\begin{theo}(Product estimates in Sobolev spaces)\label{thm.sobolevProducts} Let $\cU \subset \mathbb{R}^N$ be a minimally smooth domain. Let $s\geqslant 0$, $p_1,q_1 \in (1,\infty]$, and $p, p_2,q_2 \in (1,\infty)$ such that $\frac{1}{p} = \frac{1}{p_1}+\frac{1}{p_2} = \frac{1}{q_1} + \frac{1}{q_2}$. If
\[
s\in\Nn, \quad\text{or}~ p=2,\quad\text{or}~q_1=q_2=\infty,
\]
then  there exists $C: \Rr_+\to \Rr_+$ such that for any $u \in W^{s,p_2}(\mathcal{U})$ and $v \in W^{s,q_2}(\mathcal{U})$, there holds 
\begin{equation}
    \label{eq.productSobol}
    \|uv\|_{W^{s, p}(\cU)} \le C(s, p_1, p_2, q_1, q_2, N, \iota, K, M)\left(\|u\|_{L^{p_1}(\cU)}\|v\|_{W^{s,p_2}(\cU)} + \|v\|_{L^{q_1}(\cU)}\|u\|_{W^{s,q_2}(\cU)}\right).
\end{equation}
\end{theo}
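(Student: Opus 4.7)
\textbf{Proof plan for \cref{thm.sobolevProducts}.} The plan is to reduce the estimate on the minimally smooth domain $\cU$ to the corresponding estimate on the whole space $\Rr^N$ via the Stein extension operator $\mathfrak{C}$ of \cref{th.extension}, and then to verify the whole-space product rule separately in each of the three listed regimes using either Leibniz's rule, Bony's paraproduct decomposition, or a standard tame estimate. The precise reduction step is: setting $\widetilde u = \mathfrak{C} u$ and $\widetilde v = \mathfrak{C} v$, one has $\|\widetilde u\|_{W^{\sigma,q}(\Rr^N)} \le C \|u\|_{W^{\sigma,q}(\cU)}$ for every $\sigma \ge 0$ and $q \in [1,\infty]$ with a constant depending only on $(N,\iota,K,M,\sigma,q)$, and therefore it suffices to establish
\begin{equation}\label{eq:goal-whole}
\|\widetilde u \widetilde v\|_{W^{s,p}(\Rr^N)} \le C\left(\|\widetilde u\|_{L^{p_1}}\|\widetilde v\|_{W^{s,p_2}} + \|\widetilde v\|_{L^{q_1}}\|\widetilde u\|_{W^{s,q_2}}\right),
\end{equation}
since the restriction to $\cU$ is continuous and $(\widetilde u \widetilde v)|_{\cU} = uv$.

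First I would handle the integer case $s \in \Nn$ by Leibniz's rule: writing $\partial^{\alpha}(\widetilde u\widetilde v) = \sum_{\beta \le \alpha} \binom{\alpha}{\beta}\partial^\beta \widetilde u\,\partial^{\alpha-\beta}\widetilde v$ for $|\alpha|\le s$ and applying Hölder's inequality with the split $\tfrac{1}{p} = \tfrac{1}{r_1}+\tfrac{1}{r_2}$ chosen by Gagliardo--Nirenberg interpolation so that, for each $\beta$, one factor is controlled by $\|\widetilde u\|_{L^{p_1}}^{1-\theta}\|\widetilde u\|_{W^{s,q_2}}^{\theta}$ and the other by $\|\widetilde v\|_{L^{q_1}}^{1-\sigma}\|\widetilde v\|_{W^{s,p_2}}^{\sigma}$; the usual endpoint identities for the Gagliardo--Nirenberg exponents reduce this to the right-hand side of \eqref{eq:goal-whole} after applying Young's inequality $ab \lesssim a^{1/\theta} + b^{1/(1-\theta)}$ with matching weights. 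This is the classical Moser estimate, and the only subtlety is checking that the Hölder/interpolation exponents are compatible with the constraints $\tfrac1p = \tfrac{1}{p_1}+\tfrac{1}{p_2} = \tfrac{1}{q_1}+\tfrac{1}{q_2}$, which is automatic.

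Next, for the case $p = 2$ and general real $s\ge 0$ (the Kato--Ponce regime) I would invoke Bony's decomposition $\widetilde u\widetilde v = T_{\widetilde u}\widetilde v + T_{\widetilde v}\widetilde u + R(\widetilde u,\widetilde v)$ and estimate each piece in $H^s(\Rr^N)$ using the $L^{p_1}\to L^2$ paraproduct bound
\[
\|T_{\widetilde u}\widetilde v\|_{H^s} \lesssim \|\widetilde u\|_{L^{p_1}}\|\widetilde v\|_{W^{s,p_2}}, \qquad \tfrac{1}{2}=\tfrac{1}{p_1}+\tfrac{1}{p_2},
\]
which follows by Littlewood--Paley projection, Bernstein's inequality, and Hölder, exactly as in the proof of \eqref{pp:Linfty}--\eqref{boundpara}; the symmetric term is handled the same way, and the remainder $R$ is handled by a Hölder-in-frequency argument yielding $\|R(\widetilde u,\widetilde v)\|_{H^s}\lesssim \|\widetilde u\|_{L^{p_1}}\|\widetilde v\|_{W^{s,p_2}}+\|\widetilde v\|_{L^{q_1}}\|\widetilde u\|_{W^{s,q_2}}$. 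Finally, in the case $p_1=q_1=\infty$ (which under $\frac{1}{p}=\frac{1}{p_1}+\frac{1}{p_2}$ forces $p_2=q_2=p$) the desired inequality is the tame estimate $\|\widetilde u\widetilde v\|_{W^{s,p}}\lesssim \|\widetilde u\|_{L^\infty}\|\widetilde v\|_{W^{s,p}}+\|\widetilde v\|_{L^\infty}\|\widetilde u\|_{W^{s,p}}$, which is standard via Bony's decomposition in $B^{s}_{p,p}$ (equivalent to $W^{s,p}$ when $s\notin\Nn$) together with the already-treated integer case for patching.

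The main obstacle I expect is purely bookkeeping: verifying in the $p=2$ regime that the Hölder pairings for the three Bony pieces are each admissible under the constraint $\tfrac12=\tfrac1{p_1}+\tfrac1{p_2}=\tfrac1{q_1}+\tfrac1{q_2}$ with $p_1,q_1 \in (1,\infty]$ and $p_2,q_2 \in (1,\infty)$, and that the $\ell^2$ summation in the Littlewood--Paley square function estimate produces the $H^s$ norm on the correct factor rather than only a Besov $B^{s}_{p_2,\infty}$ norm. Once the whole-space inequality \eqref{eq:goal-whole} is established in each regime, composing with the bound $\|\mathfrak{C}\|_{W^{\sigma,q}(\cU)\to W^{\sigma,q}(\Rr^N)}\le C(\iota,K,M,\sigma,q,N)$ from \cref{th.extension} and the trivial restriction bound closes the argument.
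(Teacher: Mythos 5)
Your reduction to $\Rr^N$ via the Stein extension of \cref{th.extension} is exactly the paper's first step. From there the paper simply quotes the Kato--Ponce estimate for the Bessel-potential spaces $H^{s,p}$ from \cite[Chapter 2, Proposition 1.1]{TaylorToolsPDE} and then passes to $W^{s,p}$ spaces via (a) $H^{s,p}=W^{s,p}$ for $s\in\Nn$, (b) a Besov--Triebel--Lizorkin embedding for $p=2$, $s\notin\Nn$, and (c) a citation of \cite[Corollary 2.86]{BCD} for $p_1=q_1=\infty$; you instead propose to re-derive the whole-space bound from scratch (Leibniz/Moser, Bony paraproducts, tame estimate). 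Your treatment of the integer case and of the $p_1=q_1=\infty$ case is correct and, if anything, more self-contained than the citation route.

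The $p=2$, $s\notin\Nn$ case, however, has a genuine gap, and it is not ``purely bookkeeping.'' The constraint $\tfrac12=\tfrac1{p_1}+\tfrac1{p_2}$ with $p_1\in(1,\infty]$ forces $p_2>2$ (and similarly $q_2>2$). The paraproduct bound you write, $\|T_{u}v\|_{H^s}\lesssim\|u\|_{L^{p_1}}\|v\|_{W^{s,p_2}}$, is \emph{not} what the square-function/maximal-function argument actually yields: that argument gives $\|T_{u}v\|_{H^s}\lesssim\|u\|_{L^{p_1}}\|v\|_{H^{s,p_2}}$ with the Bessel-potential (Triebel--Lizorkin, $\ell^2$ third index) norm on the right, and for $p_2>2$, $s\notin\Nn$ the standard embedding runs $H^{s,p_2}\hookrightarrow B^s_{p_2,p_2}=W^{s,p_2}$, i.e.\ $\|v\|_{W^{s,p_2}}\lesssim\|v\|_{H^{s,p_2}}$ and \emph{not} conversely, so $H^{s,p_2}$ cannot be traded for $W^{s,p_2}$ on the right-hand side. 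You correctly sense that the third index is the danger spot, but misidentify it as ``$B^s_{p_2,\infty}$'' (which has the \emph{smaller} norm and would be harmless on the right); the actual obstruction is $\ell^2$ versus $\ell^{p_2}$. You should also be aware that the paper's own proof at this point asserts $\|u\|_{H^{s,r}}\lesssim\|u\|_{B^s_{r,r}}$ for $r\ge 2$, $s\notin\Nn$ — the reverse of the standard embedding — so the bridge from \eqref{product:Fs} to \eqref{eq.productSobol} in this sub-case is not justified as written in the paper either; closing the gap needs an argument (e.g.\ a direct Slobodeckij double-integral estimate avoiding the square function, or a separate verification that the $W$-space statement is actually implied by Kato--Ponce rather than strictly stronger than it) that neither proof currently supplies.
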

\begin{proof} 
By virtue of \cref{th.extension}, it suffices to consider the case $\cU=\mathbb{R}^N$. Let $s\geqslant 0$, $p_1,q_1 \in (1,\infty]$, and $p, p_2,q_2 \in (1,\infty)$ such that $\frac{1}{p} = \frac{1}{p_1}+\frac{1}{p_2} = \frac{1}{q_1} + \frac{1}{q_2}$.  We recall from \cite[Chapter 2, Proposition 1.1]{TaylorToolsPDE} that 
\bq\label{product:Fs}
\|uv\|_{H^{s, p}(\Rr^N)} \le C\left(\|u\|_{L^{p_1}(\Rr^N))}\|v\|_{H^{s,p_2}(\Rr^N)} + \|v\|_{L^{q_1}(\Rr^N))}\|u\|_{H^{s,q_2}(\Rr^N)}\right),
\eq
where 
\[
\| u\|_{H^{s, p}(\Rr^N)}:=\| \| \{2^{sj}\Delta_j u\}\|_{\ell^2}\|_{L^p(\Rr^N)},
\]
$u=\sum_{j=-1}^\infty \Delta_ju$ being a Littlewood–Paley decomposition of $u$. 

If $s\in \Nn$, then $H^{s, p}(\Rr^N)=W^{s, p}(\Rr^N)$ for all $p\in (1, \infty)$, and hence \eqref{eq.productSobol} follows from \eqref{product:Fs}.  See \cite[Section 2.5.6]{MR781540}.

Next, we consider  $s\notin \Nn$, so that $ \| u\|_{W^{s, r}(\Rr^N)}$  is equivalent to 
\[
 \| u\|_{B^s_{r, r}(\Rr^N)}:=\| \| \{2^{sj}\Delta_j u\}\|_{\ell^r}\|_{L^r(\Rr^N)}.
 \]
 Consequently, if $r\ge 2$, then 
\[
\| u\|_{H^{s, r}(\Rr^N)}\les \| u\|_{B^s_{r, r}(\Rr^N)} \les  \| u\|_{W^{s, r}(\Rr^N)}.
\]
Thus, when $p=2$,  \eqref{eq.productSobol} again follows from \eqref{product:Fs}. Finally, the case  $p_1=q_1=\infty$ is proven in \cite[Corollary 2.86]{BCD}. 
\end{proof}
By combining the product rule \eqref{pr} with \cref{th.extension}, we obtain the following. 
\begin{coro}\label{coro:productdomain}
Let $\cU \subset \mathbb{R}^N$ be a minimally smooth  domain. Consider $s_0$, $s_1$, $s_2$ nonnegative numbers satisfying $s_1+s_2>0$, $s_0\le \min\{s_1, s_2\}$, and $s_0<s_1+s_2- \frac{N}{2}$.  There exists $C: \Rr_+\to \Rr_+$ such that 
\bq\label{pr:domnain}
\forall (u, v)\in H^{s_1}(\cU)\times H^{s_2}(\cU),\quad \| uv\|_{H^{s_0}(\cU)}\le C(s_0, s_1, s_2, N, \iota, K, M)\| u\|_{H^{s_1}(\cU)}\| v\|_{H^{s_2}(\cU)}.
\eq
\end{coro}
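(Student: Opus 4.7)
The plan is to reduce the estimate on $\cU$ to the corresponding whole-space product rule \eqref{pr} by means of the Stein extension operator $\mathfrak{C}$ provided by \cref{th.extension}. The hypotheses $s_1+s_2 > 0$, $s_0 \le \min\{s_1, s_2\}$ and $s_0 < s_1 + s_2 - N/2$ are precisely those required in part (vii) of \cref{theo.productEstimates}, so once the problem is transferred to $\Rr^N$ the product estimate is available off the shelf.

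Concretely, given $u \in H^{s_1}(\cU)$ and $v \in H^{s_2}(\cU)$, I would set $U := \mathfrak{C}u$ and $V := \mathfrak{C}v$, which by \cref{th.extension} satisfy $U|_{\cU} = u$, $V|_{\cU} = v$, together with the continuity bounds
\[
\|U\|_{H^{s_1}(\Rr^N)} \le C_1 \|u\|_{H^{s_1}(\cU)}, \qquad \|V\|_{H^{s_2}(\Rr^N)} \le C_2 \|v\|_{H^{s_2}(\cU)},
\]
where the constants depend only on $(s_1, s_2, N, \iota, K, M)$. Applying \eqref{pr} to $U$ and $V$ on $\Rr^N$ yields
\[
\|UV\|_{H^{s_0}(\Rr^N)} \le C_3 \|U\|_{H^{s_1}(\Rr^N)} \|V\|_{H^{s_2}(\Rr^N)}.
\]
Since $(UV)|_{\cU} = uv$, the desired estimate follows provided restriction from $H^{s_0}(\Rr^N)$ to $H^{s_0}(\cU)$ is continuous with norm at most $1$ (or at least a universal constant).

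The only point that needs a comment is this restriction property at fractional $s_0$. For integer $s_0 = m \in \Nn$ it is immediate from the definition $\|w\|_{W^{m,2}(\cU)} = \sum_{|\alpha|\le m}\|\p^\alpha w\|_{L^2(\cU)}$ and the pointwise identity $\p^\alpha(w|_{\cU}) = (\p^\alpha w)|_{\cU}$. For non-integer $s_0 \ge 0$, write $s_0 = (1-\theta)m_1 + \theta m_2$ with $m_1, m_2 \in \Nn$ and $\theta \in (0,1)$; the restriction map is bounded between the integer endpoint spaces, so by the real interpolation definition \eqref{def:Wsp} of $W^{s_0,2}(\cU) = H^{s_0}(\cU)$ it is also bounded $H^{s_0}(\Rr^N) \to H^{s_0}(\cU)$ with constant depending only on $(s_0, N, \iota, K, M)$. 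Chaining the three inequalities gives \eqref{pr:domnain} with $C = C(s_0, s_1, s_2, N, \iota, K, M)$. No step presents a genuine obstacle; the entire argument is a direct transport of the whole-space estimate through $\mathfrak{C}$.
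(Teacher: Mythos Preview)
Your proposal is correct and follows precisely the approach indicated in the paper, which simply states that the corollary is obtained ``by combining the product rule \eqref{pr} with \cref{th.extension}''. Your added discussion of why restriction $H^{s_0}(\Rr^N)\to H^{s_0}(\cU)$ is continuous for non-integer $s_0$ (via interpolation between integer endpoints) is a welcome clarification of a step the paper leaves implicit.
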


Finally, we will also need the following Gagliardo-Nirenberg inequalities  for minimally smooth domains. 

\begin{theo}[Gagliardo--Nirenberg]\label{thm.gagliardoNirenberg}Let $\cU \subset \mathbb{R}^N$ be a minimally smooth domain with constants $(K, M)$. Let $\sigma > s >0$ and $p, r \in (1,\infty)$ such that there exists $\theta \in (0,1)$ satisfying
\begin{equation}
    \frac{1}{p}=\frac{\theta}{r}\quad \text{and}\quad s=\theta \sigma.
\end{equation}    
Then, for any $u \in L^{\infty}(\cU) \cap W^{\sigma,r}(\cU)$ there holds 
\begin{equation}
    \label{eq.variantGN}
    \|u\|_{W^{s,p}(\cU)} \le C(s, \sigma, p, r, N, \iota, K, M)\|u\|_{L^{\infty}(\cU)}^{1-\theta}\|u\|_{W^{\sigma,r}(\cU)}^{\theta}.
\end{equation}
\end{theo}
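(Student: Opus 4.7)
The plan is a standard reduction to the Euclidean case $\mathbb{R}^N$ via the universal extension operator $\mathfrak{C}$ provided by \cref{th.extension}, which is simultaneously bounded on all scales $W^{\tau,q}$ with $(\tau,q)\in[0,\infty)\times[1,\infty]$. The key observation is that this simultaneous boundedness handles both the high-regularity norm $W^{\sigma,r}$ and the $L^\infty$ endpoint that appear in \eqref{eq.variantGN}.

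First I would set $\tilde u := \mathfrak{C}u$ and apply \cref{th.extension} twice, once with $(\tau,q)=(\sigma,r)$ and once with $(\tau,q)=(0,\infty)$, to obtain
\[
\|\tilde u\|_{W^{\sigma,r}(\mathbb{R}^N)}\le C\|u\|_{W^{\sigma,r}(\cU)}, \qquad \|\tilde u\|_{L^\infty(\mathbb{R}^N)}\le C\|u\|_{L^\infty(\cU)},
\]
where $C$ depends only on $(\sigma,r,N,\iota,K,M)$. Since $\tilde u|_\cU = u$ and the restriction $W^{s,p}(\mathbb{R}^N)\to W^{s,p}(\cU)$ is trivially bounded by the interpolation definition \eqref{def:Wsp} (real interpolation commutes with continuous restriction), it suffices to prove the inequality on the whole space:
\[
\|\tilde u\|_{W^{s,p}(\mathbb{R}^N)}\les \|\tilde u\|_{L^\infty(\mathbb{R}^N)}^{1-\theta}\|\tilde u\|_{W^{\sigma,r}(\mathbb{R}^N)}^{\theta}.
\]

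For this step I would use a Littlewood--Paley argument. Writing $\tilde u=\sum_{j\ge -1}\Delta_j\tilde u$, Bernstein's lemma and the assumption $s=\theta\sigma$, $\tfrac{1}{p}=\tfrac{\theta}{r}$ give, for each $j$,
\[
2^{sj}\|\Delta_j\tilde u\|_{L^p}\les \bigl(2^{\sigma j}\|\Delta_j\tilde u\|_{L^r}\bigr)^{\theta}\|\Delta_j\tilde u\|_{L^\infty}^{1-\theta}.
\]
For non-integer $s$ we use the equivalence $W^{s,p}(\mathbb{R}^N)=B^s_{p,p}(\mathbb{R}^N)$ recalled after \eqref{def:Hs}, raise the previous inequality to the $p$-th power, apply H\"older with exponents $\tfrac{r}{p\theta}$ and its conjugate, and sum in $j$ to recover $\|\tilde u\|_{B^s_{p,p}}^{p}\les \|\tilde u\|_{L^\infty}^{(1-\theta)p}\|\tilde u\|_{B^\sigma_{r,r}}^{p\theta}$; the embedding $W^{\sigma,r}\hookrightarrow B^\sigma_{r,r}$ (an equality for non-integer $\sigma$, and a standard Sobolev--Besov comparison otherwise) closes the bound. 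For integer $s$, one invokes the classical Gagliardo--Nirenberg inequality on $\mathbb{R}^N$ directly; alternatively, one quotes the statement from \cite[Theorem 2.44]{BCD} or \cite[Corollary 2.86]{BCD}.

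The only step requiring care is the Littlewood--Paley summation when $r$ and $p$ are both finite but not equal, which forces the use of H\"older and the $B^s_{p,p}$ characterization; the endpoint $L^\infty$ appears only through the trivial bound $\|\Delta_j\tilde u\|_{L^\infty}\les \|\tilde u\|_{L^\infty}$, so no delicate endpoint theory is needed. Since the statement is classical and included here only for completeness of the self-contained treatment of minimally smooth domains, no genuine obstacle arises; the entire content lies in invoking \cref{th.extension} to pass from $\cU$ to $\mathbb{R}^N$.
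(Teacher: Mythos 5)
Your reduction to $\mathbb{R}^N$ is exactly the paper's proof: the paper extends by $\mathfrak{C}$ from \cref{th.extension}, restricts back, and then simply cites \cite{BM2018} for the whole-space inequality. Where you differ is that you attempt to reprove the Euclidean case by Littlewood--Paley, and that part has gaps at the integer-smoothness corner cases that the statement allows. The dyadic inequality $2^{sj}\|\Delta_j\tilde u\|_{L^p}\le \bigl(2^{\sigma j}\|\Delta_j\tilde u\|_{L^r}\bigr)^{\theta}\|\Delta_j\tilde u\|_{L^{\infty}}^{1-\theta}$ is fine (it is just log-convexity of Lebesgue norms, $\tfrac1p=\tfrac\theta r+\tfrac{1-\theta}{\infty}$; Bernstein is not needed), and since $\theta p=r$ the summation requires no H\"older at all and yields $\|\tilde u\|_{B^s_{p,p}}\lesssim\|\tilde u\|_{L^\infty}^{1-\theta}\|\tilde u\|_{B^\sigma_{r,r}}^{\theta}$ directly. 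The trouble is the conversion between the $W$ and $B$ scales on both sides: $W^{s,p}(\mathbb{R}^N)=B^s_{p,p}(\mathbb{R}^N)$ only for non-integer $s$ (or $p=2$), and the ``standard Sobolev--Besov comparison'' $W^{\sigma,r}\hookrightarrow B^{\sigma}_{r,r}$ is false for integer $\sigma$ when $r<2$: one only has $W^{\sigma,r}=F^{\sigma}_{r,2}\hookrightarrow B^{\sigma}_{r,\max(r,2)}$, and for $r<2$ the space $B^{\sigma}_{r,2}$ is strictly larger than $B^{\sigma}_{r,r}$, so the sum $\sum_j\bigl(2^{\sigma j}\|\Delta_j\tilde u\|_{L^r}\bigr)^{r}$ need not be controlled by $\|\tilde u\|_{W^{\sigma,r}}^{r}$. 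Likewise, for integer $s$ with non-integer $\sigma$ the ``classical'' Gagliardo--Nirenberg inequality does not apply, and the results of \cite{BCD} you point to are not Gagliardo--Nirenberg statements, so that fallback is not a proof. As written, your argument therefore covers non-integer $s,\sigma$ with $r\ge 2$ (and in particular every instance in which the paper actually invokes the theorem, since there $r=2$), but not the full range $\sigma>s>0$, $r\in(1,\infty)$ claimed. The cheapest repair is exactly what the paper does: after the extension step, quote \cite{BM2018}, whose content is precisely these fractional/endpoint cases (including $p_1=\infty$) where the naive Besov bookkeeping breaks down.
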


\begin{proof} By virtue of the extension operator in \cref{th.extension}, it suffices to consider the case $\cU = \mathbb{R}^d$, which is the content of \cite{BM2018}. 
\end{proof}

Another consequence of the existence of a continuous extension operator on $W^{s,p}(\Omega)$ for minimally smooth domains $\Omega$ is the equivalent norm characterization given by \eqref{eq.equivalent-norm-sobolev}. More precisely, we have 
\begin{prop}\label{lem.minimally-smooth-norm} Let $\Omega \subset \mathbb{R}^N$ be a minimally smooth domain with constants $(\iota, K,M)$. Let $s=m+\mu$ with $m\in \mathbb{N}$ and $\mu \in (0,1)$, and let $p\in [1,\infty]$. Then the norm $\|\cdot \|_{[W^{s,p}(\Omega)]}$ on $W^{s,p}(\Omega) = \left(W^{m,p}(\Omega),W^{m+1,p}(\Omega)\right)_{\mu,p}$ is equivalent to the norm given by 
\bq\label{equivalentnorm}
    \|u\|_{W^{s,p}(\Omega)} = \|u\|_{W^{m,p}(\Omega)} + \sum_{|\alpha |=m} |\partial^{\alpha}u|_{W^{\mu,p}(\Omega)},
\eq
where 
\[
|v|_{W^{\mu, p}(\Omega)}:=
    \begin{cases} 
        &\left(\iint_{\Omega \times \Omega} \frac{|v(x)-v(y)|^p}{|x-y|^{p\mu+N}} \,\mathrm{d}x\mathrm{d}y\right)^\frac{1}{p}\quad\text{if~} p<\infty,\\
        &\operatorname{essup}_{x, y\in \Omega, x\ne y}\frac{|v(x)-v(y)}{|x-y|^\mu}\quad\text{if~} p=\infty.
    \end{cases}
\]
\end{prop}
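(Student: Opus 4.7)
The strategy is to transfer the known equivalence on $\mathbb{R}^N$ to $\Omega$ using the universal extension operator $\mathfrak{C}$ from Theorem~\ref{th.extension}. Write $\|\cdot\|_{(\mathrm{i})}$ for the interpolation norm on $(W^{m,p}(\Omega),W^{m+1,p}(\Omega))_{\mu,p}$ and $\|\cdot\|_{(\mathrm{ii})}$ for the intrinsic norm on the right-hand side of \eqref{equivalentnorm}. The first input I would record is the whole-space case: for $\Omega=\mathbb{R}^N$ the norms $\|\cdot\|_{(\mathrm{i})}$ and $\|\cdot\|_{(\mathrm{ii})}$ are equivalent. This is a classical consequence of Fourier-side Littlewood--Paley characterizations together with the trace description of the real interpolation method, as found in Bergh--L\"ofstr\"om or Triebel's monographs.

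Next I would prove the bound $\|u\|_{(\mathrm{ii})}\lesssim \|u\|_{(\mathrm{i})}$ on $\Omega$. By Theorem~\ref{th.extension}, $\mathfrak{C}$ is continuous from $W^{m,p}(\Omega)$ to $W^{m,p}(\mathbb{R}^N)$ and from $W^{m+1,p}(\Omega)$ to $W^{m+1,p}(\mathbb{R}^N)$; the Lions--Peetre interpolation theorem then yields continuity
\[
(W^{m,p}(\Omega),W^{m+1,p}(\Omega))_{\mu,p}\longrightarrow (W^{m,p}(\mathbb{R}^N),W^{m+1,p}(\mathbb{R}^N))_{\mu,p}.
\]
By the whole-space step, the image space carries the equivalent intrinsic norm. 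Since $\mathfrak{C}u$ restricts to $u$ on $\Omega$ and the Gagliardo double integral is trivially nonincreasing under restriction of the domain of integration, we have $\|u\|_{(\mathrm{ii}),\Omega}\le\|\mathfrak{C}u\|_{(\mathrm{ii}),\mathbb{R}^N}$, and chaining the estimates gives the desired inequality.

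For the reverse bound $\|u\|_{(\mathrm{i})}\lesssim \|u\|_{(\mathrm{ii})}$ I need an extension $\widetilde{\mathfrak{C}}$ that is continuous in the intrinsic sense, i.e.\ $W^{s,p}(\Omega)_{(\mathrm{ii})}\to W^{s,p}(\mathbb{R}^N)_{(\mathrm{ii})}$. One option is to verify directly that Stein's operator satisfies this bound by inspecting the explicit reflection-and-smoothing formula in local coordinates straightening the boundary, estimating both the $m$-th derivative in $L^p$ and the Gagliardo seminorm of order $\mu$ on the image. A cleaner alternative is to invoke Rychkov's universal extension operator, which is bounded on all Besov spaces $B^s_{p,q}$ with $s>0$; since for non-integer exponents the Gagliardo-type contribution to $\|\cdot\|_{(\mathrm{ii})}$ is a Besov seminorm $B^\mu_{p,p}$ applied to the $m$-th derivatives, this supplies the needed extension. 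With such an operator in hand, $u=\widetilde{\mathfrak{C}}u\restriction_{\Omega}$ and the trivial restriction bound for the interpolation norm give
\[
\|u\|_{(\mathrm{i}),\Omega}\le \|\widetilde{\mathfrak{C}}u\|_{(\mathrm{i}),\mathbb{R}^N}\lesssim \|\widetilde{\mathfrak{C}}u\|_{(\mathrm{ii}),\mathbb{R}^N}\lesssim \|u\|_{(\mathrm{ii}),\Omega}.
\]

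The main obstacle is exactly this reverse direction: Theorem~\ref{th.extension} as stated only guarantees continuity of $\mathfrak{C}$ in the interpolation norm, while the required estimate needs continuity in the Gagliardo-type norm. Either a direct seminorm estimate on Stein's construction or the invocation of a Besov-bounded universal extension is the key technical input; the remaining assembly is routine, relying only on interpolation, the whole-space equivalence, and contraction under restriction.
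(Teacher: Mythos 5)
Your easy direction ($\|u\|_{(\mathrm{ii})}\lesssim\|u\|_{(\mathrm{i})}$) is exactly the paper's argument: extend by $\mathfrak{C}$, use its continuity in the interpolation sense, invoke the whole-space equivalence, and observe that restriction decreases the Gagliardo norm. For the reverse direction, however, you have only reduced the problem to the existence of an extension operator bounded in the intrinsic norm $W^{s,p}(\Omega)\to W^{s,p}(\mathbb{R}^N)$ and then declared this "the key technical input"; but that input is precisely the substance of the paper's proof, and you do not supply it. The paper carries out your first option in detail: it writes Stein's operator explicitly through the partition of unity $\{\lambda_i\}$, $\Lambda_\pm$ subordinate to the special Lipschitz pieces $W_i$, with per-piece reflection extensions $\mathfrak{C}_i$; it proves the multiplier bound $\|\phi v\|_{W^{\mu,p}}\lesssim\|\phi\|_{W^{1,\infty}}\|v\|_{W^{\mu,p}}$ (with the refinement that when $\operatorname{supp}\phi\subset\Omega$ the right-hand side only needs $\|v\|_{W^{\mu,p}(\Omega)}$); it uses the bounded-overlap property (no point lies in more than $K$ of the $V_i$) to sum the fractional seminorms of the pieces, pointwise in $(x,y)$ before integrating; and it cites the special-Lipschitz intrinsic bound $\|\mathfrak{C}_i(\lambda_i u)\|_{W^{\mu,p}(\mathbb{R}^N)}\lesssim\|\lambda_i u\|_{W^{\mu,p}(\Omega)}$ from Tartar. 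None of these estimates appear in your proposal, so as written it is a strategy outline rather than a proof of the hard inequality.

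Moreover, the alternative you call cleaner does not close the gap off the shelf. Rychkov's universal extension theorem is stated for bounded Lipschitz domains and for special Lipschitz domains (epigraphs of a single Lipschitz function), whereas the minimally smooth domains relevant here are unbounded with countably many boundary charts and only uniform constants: the infinite-depth region $\{y<f(x)\}$ happens to be special Lipschitz, but the finite-depth strip $\{b(x)<y<f(x)\}$ is neither bounded Lipschitz nor special Lipschitz. To use Rychkov one would have to glue local operators with a uniformly locally finite partition of unity and control the fractional seminorm of the sum, which is exactly the Stein-type patching argument you were trying to avoid; so the "routine assembly" you defer is in fact the whole proof. (A minor further omission: the statement includes $p=\infty$, which you do not address, though the paper itself only indicates the modifications.)
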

When $\Omega$ is $\Rr^N$ or a special Lipschitz domain or  a bounded Lipschitz domain, the norm equivalence in  \cref{lem.minimally-smooth-norm}  is proven in \cite[Chapters 35,  36]{tartar07}.  In order to prove it for minimally smooth domains,  we will make use of another  equivalent norm given in the following proposition.  
\begin{prop}\label{equivalence-norms-infimum} 
Let $\Omega \subset \mathbb{R}^N$ be a minimally smooth domain, and let  $s\ge 0$ and $p\in[1,\infty]$. Then the norm $\|\cdot \|_{[W^{s,p}(\Omega)]}$ is equivalent to the norm given by
\bq\label{normWsp:inf}
    \inf\left\{\|U\|_{W^{s,p}(\mathbb{R}^N)}: U\in W^{s, p}(\Rr^N),~U\vert_{\Omega} = u\right\}.
\eq
\end{prop}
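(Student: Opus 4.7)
The plan is to establish both inequalities by interpolating linear operators between the integer-order endpoints $W^{m_1,p}$ and $W^{m_2,p}$ that define $W^{s,p}$ in \eqref{def:Wsp}, together with Stein's universal extension operator from \cref{th.extension} as the key ingredient. Write $s = (1-\theta)m_1 + \theta m_2$ with $m_1, m_2 \in \mathbb{N}$ and $\theta \in (0,1)$, and apply the real interpolation functor $(\cdot, \cdot)_{\theta, p}$ in two parallel ways.

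For the upper bound $\|u\|_{[W^{s,p}(\Omega)]} \le \inf_{U\vert_\Omega = u} \|U\|_{W^{s,p}(\mathbb{R}^N)}$, I will observe that the restriction operator $R : U \mapsto U\vert_\Omega$ maps $W^{m,p}(\mathbb{R}^N)$ into $W^{m,p}(\Omega)$ with norm at most $1$ for every nonnegative integer $m$. The standard interpolation inequality for operator norms then implies that $R$ is bounded from $(W^{m_1,p}(\mathbb{R}^N), W^{m_2,p}(\mathbb{R}^N))_{\theta,p}$ into $W^{s,p}(\Omega)$ with norm at most $1$. Since $W^{s,p}(\mathbb{R}^N)$ coincides with the interpolation space $(W^{m_1,p}(\mathbb{R}^N), W^{m_2,p}(\mathbb{R}^N))_{\theta,p}$ in the paper's conventions (this follows from the identification with the Besov space $B^s_{p,p}(\mathbb{R}^N)$ recalled in the preliminaries), we deduce $\|u\|_{[W^{s,p}(\Omega)]} \le \|U\|_{W^{s,p}(\mathbb{R}^N)}$ for every admissible extension $U$, and taking the infimum yields one direction.

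For the reverse bound $\inf_{U\vert_\Omega = u}\|U\|_{W^{s,p}(\mathbb{R}^N)} \lesssim \|u\|_{[W^{s,p}(\Omega)]}$, I will invoke the Stein extension operator $\mathfrak{C}$ provided by \cref{th.extension}, which is bounded from $W^{m,p}(\Omega)$ to $W^{m,p}(\mathbb{R}^N)$ for every $m \in \mathbb{N}$ and $p \in [1,\infty]$ simultaneously (with a single operator independent of $m$). Applying the same real interpolation functor, $\mathfrak{C}$ extends to a bounded operator $W^{s,p}(\Omega) \to W^{s,p}(\mathbb{R}^N)$ whose operator norm is controlled by the product of the two endpoint norms. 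Because $\mathfrak{C}u$ is an admissible extension of $u$, the infimum is at most $\|\mathfrak{C}u\|_{W^{s,p}(\mathbb{R}^N)} \le C\|u\|_{[W^{s,p}(\Omega)]}$, which completes the equivalence.

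There is no serious obstacle: the entire argument is a one-line application of interpolation of linear operators, once one has Stein's extension operator at one's disposal. The only minor check is the identification of $W^{s,p}(\mathbb{R}^N)$ with the real interpolation space between the two integer endpoints, which is classical (via the Besov-space characterization) and is precisely the same convention the paper uses on $\Omega$ through \eqref{def:Wsp}. The integer case $s \in \mathbb{N}$ requires no interpolation at all: one direction is trivial and the other is the direct boundedness of $\mathfrak{C}$ on $W^{s,p}(\Omega)$.
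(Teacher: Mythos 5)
Your proposal is correct and follows essentially the same route as the paper: one direction interpolates the restriction operator between the integer-order endpoints (using that $W^{s,p}(\mathbb{R}^N)$ agrees with the corresponding real interpolation space), and the other direction uses the interpolated Stein extension operator of \cref{th.extension} to produce an admissible extension with controlled norm. The only cosmetic differences are your explicit treatment of the integer case and the (harmless) phrasing of the interpolated operator bound, which should be $C_0^{1-\theta}C_1^{\theta}$ as in \cref{lemm.interpolation}.
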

\begin{proof} Write $s=m+\mu$ for some integer $m$ and $\mu \in [0,1)$. We follow the proof in \cite[Chapter 34]{tartar07}. Since the restriction operator $U \mapsto U\vert_{\Omega}$ is continuous as a map $W^{m,p}(\mathbb{R}^N) \to W^{m,p}(\Omega)$ and $W^{m+1,p}(\mathbb{R}^N) \to W^{m+1,p}(\Omega)$, it is then continuous 
\[
    W^{s,p}(\mathbb{R}^N) = (W^{m,p}(\mathbb{R}^N),W^{m+1,p}(\mathbb{R}^N))_{\mu,p} \longrightarrow (W^{m,p}(\Omega),W^{m+1,p}(\Omega))_{\mu,p} =W^{s,p}(\Omega).
\]
Consequently the norm $[W^{s,p}(\Omega)]$ is controlled by the norm \eqref{normWsp:inf}.   The reverse inequality can be proven similarly with the use of the continuous  extension operator $\mathfrak{C} : W^{s,p}(\Omega) \to W^{s,p}(\mathbb{R}^N)$ since $\Omega$ is minimally smooth. 
\end{proof}

We can now proceed to the proof of the equivalence norm. 

\begin{proof}[Proof of \cref{lem.minimally-smooth-norm}] We only provide the proof for the case $p< \infty$; the case $p=\infty$ can be obtained by obvious modifications. In the following we write $C$ for any constant which depends only on the parameters $(N,\iota, K, M,p,\mu)$. We recall that in the case $\Omega = \mathbb{R}^N$ or special Lipschitz domains, such a norm equivalence is well-known and we refer to \cite[Chapters 35, 36]{tartar07} for the details. Here we consider a minimally smooth domain $\Omega$ with constant $(\iota, K, M)$, open sets $\{V_i\}$ and special Lipschitz domains $\{W_i\}$ as in \cref{defi:mmdomain}. For simplicity we shall consider the case $s=\mu\in (0, 1)$, i.e.
\[
    \|u\|_{[W^{\mu,p}(\Omega)]} \sim \|u\|_{L^p(\Omega)} + |u|_{W^{\mu,p}(\Omega)} =: \|u\|_{W^{\mu,p}(\Omega)}.  
\]
We have
\[
    \|u\|_{W^{\mu,p}(\Omega)}= \|\mathfrak{C}f\|_{W^{\mu,p}(\Omega)} \leq \|\mathfrak{C}f\|_{W^{\mu,p}(\mathbb{R}^N)} \leq C \|\mathfrak{C}f\|_{[W^{\mu,p}(\mathbb{R}^N)]} \leq C \|f\|_{[W^{\mu,p}(\Omega)]}, 
\]
where we have used the norm equivalence on $W^{\mu,p}(\mathbb{R}^N)$ and the continuity of the extension operator $\mathfrak{C}$.  For the converse we will utilize the precise definition of the extension operator $\mathfrak{C}$  in  \cite[Chapter VI.3.3.1]{Stein}: there exist smooth functions $\{\lambda_j\}_{j\in \mathbb{N}}$ and  $\Lambda_{\pm}$ taking values in $[0, 1]$ such that
\begin{itemize}
    \item $\forall j\in \Nn,~\operatorname{supp}(\lambda_j)\subset V_j$;
    \item $\operatorname{supp}(\Lambda_-)\subset\Omega?$,  $\operatorname{supp}(\Lambda_+)\subset \{x \in \Rr^N: \operatorname{dist}(x,\p\Omega)<\iota\}$,
     $\Lambda_++\Lambda_-=1$ on $\overline{\Omega}$;
       \item $\forall x\in \operatorname{supp}(\Lambda_+),~\exists j\in \Nn,~\ld_j(x)=1$;    
    \item $\forall m\in \Nn,~\exists C_m>0, \sup_{j\in \Nn}\|\lambda_j\|_{W^{m,\infty}}\leq C_m$;
    \item No point in $\Rr^N$ is contained in more than $K$ of the $\supp \ld_i$'s.
\end{itemize}
Then the extension  $U \in W^{\mu,p}(\Rr^N)$ of  $u\in W^{\mu,p}(\Omega)$ is  given by 
\begin{equation}\label{stein.extension}
    U(x):= \Lambda_+(x) \frac{\sum_{i\in \Nn}\lambda_i(x)\mathfrak{C}_i(\lambda_iu)(x)}{\sum_{j\in \Nn} \lambda_j^2(x)} + \Lambda_-(x)u(x), 
\end{equation}
where $\mathfrak{C}_i : W^{\mu, p}(W_i) \to W^{\mu,p}(\mathbb{R})$ are the continuous extension operators from \cref{th.extension} in the case of special Lipschitz domains $W_i$; the norms of $\mathfrak{C_i}$ are uniformly bounded.  Here the product $\ld_i u$ is defined on $\Omega\cap V_i=W_i\cap V_i\subset W_i$. 

Let us make an important observation: if $\phi\in W^{1, \infty}(\Rr^N)$ and $v\in W^{\mu,p}(\mathbb{R}^N)$, then 
\[
    \|\phi v\|_{W^{\mu,p}(\mathbb{R}^N)} \leq C\|\phi\|_{W^{1,{\infty}}}\|v\|_{W^{\mu,p}(\mathbb{R}^N)}. 
\]
To see this, fix $(x,y)\in \mathbb{R}^N \times \mathbb{R}^N$ and observe 
\begin{multline*}
    |\phi(x)v(x)-\phi(y)v(y)|^p \leq C |\phi(x)-\phi(y)|^p|v(y)|^p+C |\phi(x)|^p|(v(x)-v(y))|^p \\
    \leq C\|\phi\|^p_{W^{1,\infty}(\Rr^N)} \left(\min\{1,|x-y|^p\}|v(y)|^p + |v(x)-v(y)|^p\right). 
\end{multline*}
The claim then follows from  the fact that the function $z \mapsto \frac{\min\{1,|z|^{p}\}}{|z|^{N+p\mu}}$ is integrable on $\mathbb{R}^N$. Moreover, if $\operatorname{supp}(\phi) \subset \Omega$, then $|\phi v|_{W^{\mu,p}(\mathbb{R}^N)} \leq C\|\phi\|_{W^{1,{\infty}}}\|v\|_{W^{\mu,p}(\Omega)}$. To see this it suffices to observe that if  $x\notin \operatorname{supp}(\phi)$ and $y\in \operatorname{supp}(\phi)$ then 
\[
    |\phi(x)v(x) - \phi(y)v(y)| = |\phi(y)v(y)| = |\phi(x)-\phi(y)||v(y)| \leq 2\| \phi\|_{W^{1, \infty}(\Rr^N)} \min\{1,|x-y|\}|v(y)|.
\]
We are now ready for the proof.  Applying  \cref{equivalence-norms-infimum} with the extension $U$ given by  \eqref{stein.extension}, we have
\[
    \|u\|_{[W^{\mu,p}(\Omega)]}^p \leq   \|U\|_{[W^{\mu,p}(\Omega)]}^p\le  C\left\|\Lambda_+(x) \frac{\sum_{i\in \Nn}\lambda_i(x)\mathfrak{C}_i(\lambda_iu)(x)}{\sum_{j\in \Nn } \Lambda_j^2(x)} + \Lambda_-(x)u(x)\right\|^p_{W^{\mu,p}(\mathbb{R}^N)},
\]
 Since  and $\Lambda_{-}\in W^{1, \infty}(\Rr^N)$ and $\operatorname{supp}(\Lambda_-) \subset \Omega$, we infer 
\bq\label{enormsum:1}
    \|u\|_{[W^{\mu,p}(\Omega)]}^p \leq C\left\|\sum_{i\in \Nn}\theta_i\mathfrak{C}_i(\lambda_iu)\right\|^p_{W^{\mu,p}(\mathbb{R}^N)} + C\|u\|_{W^{\mu,p}(\Omega)}, 
\eq
where $\theta_i(x)= \frac{\Lambda_+(x)\lambda_i(x)}{\sum_{j\in \Nn} \lambda_j^2(x)}$. We claim that
\bq\label{claim:enormsum}
    \left\|\sum_{i\in \Nn}\theta_i\mathfrak{C}_i(\lambda_iu)\right\|^p_{W^{\mu,p}(\mathbb{R}^N)} \leq C \sum_{i\in \Nn}\|\mathfrak{C}_i(\lambda_iu)\|_{W^{\mu,p}(\mathbb{R}^N)}^p + C\|u\|_{L^p(\Omega)}^p. 
\eq
The control of the  $L^p$ norm is a consequence of the fact that no point in $\Rr^N$ is contained in more than $K$ of the $\supp \tt_i$'s. To estimate the seminorm we write 
\begin{multline*}
    \left\vert\sum_{i\in \Nn}\theta_i(x)\mathfrak{C}_i(\lambda_iu)(x) - \theta_i(y)\mathfrak{C}_i(\lambda_iu)(y)\right\vert^p \leq C\left\vert\sum_{i\in \Nn}(\theta_i(x)-\theta_i(y))\mathfrak{C}_i(\lambda_iu)(y)\right\vert^p \\
    + C\left\vert\sum_{i\in \Nn}\theta_i(x)(\mathfrak{C}_i(\lambda_iu)(x) -\mathfrak{C}_i(\lambda_iu)(y))\right\vert^p.
\end{multline*}
On the preceding right-hand side, the first summation contains no more than $2K$ nonzero terms, while the second summation contains no more than $K$ nonzero terms. Using this and the uniform bound for $\| \tt_i\|_{W^{1, \infty}(\Rr^N)}$, we obtain
\bq\label{enormsum:3}
\begin{aligned}
    \left\vert\sum_{i\in \Nn}\theta_i(x)\mathfrak{C}_i(\lambda_iu)(x) - \theta_i(y)\mathfrak{C}_i(\lambda_iu)(y)\right\vert^p \leq C\sum_{i\in \Nn}\min\{1,|x-y|^p\}|\mathfrak{C}_i(\lambda_if)(y)|^p + \\
    +C\sum_{i\in \Nn} |\mathfrak{C}_i(\lambda_iu)(x) -\mathfrak{C}_i(\lambda_iu)(y)|^p.
\end{aligned}
\eq
After integration we conclude the proof of \eqref{claim:enormsum}.

For the special Lipschitz domain $W_i$, $\mathfrak{C}_i$ is defined by reflection and it was  proven  \cite[Lemma 36.1]{tartar07} that 
\bq\label{enormsum:2}
\|\mathfrak{C}_i(\lambda_iu)\|_{W^{\mu,p}(\mathbb{R}^N)} \leq C \|\lambda_iu\|_{W^{\mu,p}(\Omega)}.
\eq
It follows from \eqref{enormsum:1}, \eqref{claim:enormsum}, and \eqref{enormsum:2} that
\begin{align*}
    \|u\|_{[W^{\mu,p}(\Omega)]}^p & \leq C\|u\|_{L^p(\Omega)}^p + C \sum_{i\in \Nn} \|\lambda_iu\|_{W^{\mu,p}(\Omega)}^p \\
   & \leq C\|u\|^p_{L^p(\Omega)} + C\iint_{\Omega \times \Omega}\sum_{i\in \Nn} |\lambda_i(x)u(x)-\lambda_i(y)u(y)|^p\frac{\mathrm{d}x\mathrm{d}y}{|x-y|^{N+p\mu}}.  
\end{align*}
Then arguing as in the proof of \eqref{enormsum:3} we conclude that $ \|u\|_{[W^{\mu,p}(\Omega)]}^p \le  C \|u\|_{W^{\mu,p}(\Omega)}^p$.
\end{proof}

\section{Interpolation results}\label{sec.interpolation}

\subsection{Interpolation inequalities} 

First, we recall the following lemma. 

\begin{lemm}[Théorème 3.1, \cite{LM}]\label{lemm:interpolationLM} Let $I$ be an open interval and $s\in \Rr$. If $u(x, z): \Rr^d\times I\to \Rr$ satisfies $u\in L^2_z(I; H^{s+\mez}(\Rr^d))$ and $\p_zu\in L^2_z(I; H^{s-\mez}(\Rr^d))$, then $u\in C(\overline{I}; H^s(\Rr^d))$ and there exists an absolute constant $C>0$  such that 
\bq\label{interpolation:LM}
\| u\|_{C(\overline{I}; H^s)}\le C\left(\| u\|_{L^2(I; H^{s+\mez})}+ \| \p_zu\|_{L^2(I; H^{s-\mez})}\right).
\eq
\end{lemm}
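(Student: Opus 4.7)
The plan is to establish this classical Lions--Magenes trace inequality by a direct Fourier analysis argument in the $x$-variable, mirroring the approach in \cite{LM}. First I would reduce to the case $I=\Rr$ by extending $u$ from $I$ to all of $\Rr$ via reflection across the endpoints of $I$ (or equivalently, any universal Sobolev extension in the $z$-variable acting on the Bochner spaces $L^2_z$). Such an extension preserves both $\| u\|_{L^2(I;H^{s+\mez})}$ and $\| \p_z u\|_{L^2(I;H^{s-\mez})}$ up to absolute constants, so a bound over $\Rr$ implies the one stated over $\overline{I}$.

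With $I=\Rr$, I would take the Fourier transform in $x$ and, for each $\xi\in\Rr^d$, introduce $\phi_\xi(z):=\lg\xi\rg^s\widehat{u}(\xi,z)$. The hypotheses translate to $\int_\Rr\int_{\Rr^d}\lg\xi\rg\,|\phi_\xi(z)|^2\,d\xi\,dz<\infty$ and $\int_\Rr\int_{\Rr^d}\lg\xi\rg^{-1}|\phi_\xi'(z)|^2\,d\xi\,dz<\infty$. By Fubini, for almost every $\xi$ the scalar function $\phi_\xi$ lies in $H^1(\Rr_z)$, so it admits a continuous representative belonging to $C_0(\Rr)$; in particular $\phi_\xi(z)\to 0$ as $|z|\to\infty$, which legitimizes the use of the fundamental theorem of calculus from $-\infty$.

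The key pointwise estimate then follows from writing $|\phi_\xi(z_0)|^2=2\RE\int_{-\infty}^{z_0}\overline{\phi_\xi}\,\phi_\xi'\,dz$ and applying the weighted AM--GM bound $2|\phi_\xi||\phi_\xi'|\le \lg\xi\rg|\phi_\xi|^2+\lg\xi\rg^{-1}|\phi_\xi'|^2$:
\[
|\phi_\xi(z_0)|^2\le \int_\Rr \lg\xi\rg|\phi_\xi(z)|^2\,dz+\int_\Rr \lg\xi\rg^{-1}|\phi_\xi'(z)|^2\,dz.
\]
Integrating in $\xi$ and invoking Plancherel converts this into
\[
\|u(\cdot,z_0)\|_{H^s}^2\le \|u\|_{L^2(\Rr;H^{s+\mez})}^2+\|\p_z u\|_{L^2(\Rr;H^{s-\mez})}^2,
\]
which yields the desired inequality after taking the supremum in $z_0\in\overline{I}$. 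The continuity $u\in C(\overline{I};H^s)$ then follows from dominated convergence applied to $\|u(\cdot,z_0)-u(\cdot,z_0')\|_{H^s}^2=\int|\phi_\xi(z_0)-\phi_\xi(z_0')|^2\,d\xi$: for a.e.\ $\xi$ the integrand vanishes as $z_0'\to z_0$ by continuity of $\phi_\xi$, and it is dominated by $4\sup_{z}|\phi_\xi(z)|^2$, which is integrable in $\xi$ by the pointwise bound.

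I do not anticipate any serious obstacle: the argument is standard. The only two points requiring minor care are (i) the extension step, where one must verify the reflection extension controls $\p_z$ with an absolute constant, and (ii) the justification that $\phi_\xi\to 0$ at $\pm\infty$ so that one may integrate from $-\infty$ in the fundamental theorem of calculus --- but this is automatic from $\phi_\xi\in H^1(\Rr_z)\hookrightarrow C_0(\Rr)$.
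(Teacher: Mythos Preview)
Your proof is correct and is essentially the standard Lions--Magenes argument. Note, however, that the paper does not actually prove this lemma: it is merely recalled from \cite{LM} (Th\'eor\`eme~3.1) without proof, so there is nothing to compare against. Your argument via Fourier transform in $x$, the pointwise identity $|\phi_\xi(z_0)|^2=2\RE\int_{-\infty}^{z_0}\overline{\phi_\xi}\,\phi_\xi'\,dz$, and the weighted AM--GM inequality is exactly the classical proof; the extension step and the dominated convergence argument for continuity are both handled correctly.
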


We will also need to use the following interpolation inequality used in~\cite[Proposition 4.1]{WZZZ}, which we prove. 

\begin{lemm}\label{lemm.interpol} Consider either $J=(-\infty, 0)$ or $J=(-1, 0)$. Let $\mu \in [0,1)$ and $\psi(x, z)\in H^\mu(\Rr^d\times J)$. Then there exists $C(\mu, d)>0$ such that
 \begin{equation}
    \label{eq.interpolationIneq}
    \|\psi\|_{H^{\mu}(\Rr^d\times J)} \le C(d,\mu)\left(\|\psi\|_{L^2(J; H^{\mu}(\Rr^d))} + \|\partial_z \psi\|_{L^2(J; H^{\mu-1}(\Rr^d))}\right).
\end{equation}
\end{lemm}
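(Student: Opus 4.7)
The case $\mu=0$ is immediate from Fubini, so assume $\mu\in(0,1)$. The plan is to extend $\psi$ to $\Rr^{d+1}$ in the $z$-variable and then reduce to the analogous estimate on the whole space, which in turn follows from Plancherel and an elementary symbol bound.

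\emph{Step 1 (extension).} In the infinite-depth case $J=(-\infty,0)$, define $\Psi(x,z):=\psi(x,z)$ for $z\le 0$ and $\Psi(x,z):=\psi(x,-z)$ for $z>0$. In the finite-depth case $J=(-1,0)$, extend $\psi$ successively by even reflection across $z=0$ and $z=-1$ to obtain a function on $\Rr^d\times(-3,1)$, then multiply by a fixed cutoff $\chi\in C^\infty_c((-3,1))$ equal to $1$ on $[-1,0]$. In both cases $\Psi|_{\Rr^d\times J}=\psi$, and a direct computation using the $L^2_z$-isometry of reflections together with $L^\infty$-bounds on $\chi,\chi'$ yields
\[
\|\Psi\|_{L^2(\Rr;H^\mu(\Rr^d))}\le C\|\psi\|_{L^2(J;H^\mu(\Rr^d))},
\]
\[
\|\partial_z\Psi\|_{L^2(\Rr;H^{\mu-1}(\Rr^d))}\le C\big(\|\psi\|_{L^2(J;H^\mu(\Rr^d))}+\|\partial_z\psi\|_{L^2(J;H^{\mu-1}(\Rr^d))}\big),
\]
where the cutoff-derivative term $\chi'\Psi$ in $\partial_z\Psi$ is absorbed using the continuous embedding $L^2(\Rr^d)\hookrightarrow H^{\mu-1}(\Rr^d)$, trivial since $\mu-1<0$. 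Since $\Psi$ is an admissible extension of $\psi$, \cref{equivalence-norms-infimum} gives $\|\psi\|_{H^\mu(\Rr^d\times J)}\le C\|\Psi\|_{H^\mu(\Rr^{d+1})}$.

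\emph{Step 2 (whole-space estimate).} It now suffices to prove
\[
\|\Psi\|_{H^\mu(\Rr^{d+1})}^2\le C\big(\|\Psi\|_{L^2(\Rr;H^\mu(\Rr^d))}^2+\|\partial_z\Psi\|_{L^2(\Rr;H^{\mu-1}(\Rr^d))}^2\big).
\]
Let $\widehat\Psi(\xi,\zeta)$ denote the full Fourier transform of $\Psi$. The pointwise symbol inequality
\[
(1+|\xi|^2+|\zeta|^2)^\mu \le C\big((1+|\xi|^2)^\mu + (1+|\xi|^2)^{\mu-1}|\zeta|^2\big)
\]
follows by splitting $(\xi,\zeta)$-space into $\{|\zeta|^2\le 1+|\xi|^2\}$, where the left-hand side is $\le 2^\mu(1+|\xi|^2)^\mu$, and its complement, where $|\zeta|^{2\mu}\le 2^{1-\mu}(1+|\xi|^2)^{\mu-1}|\zeta|^2$ using $\mu-1\le 0$. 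Multiplying by $|\widehat\Psi(\xi,\zeta)|^2$, integrating in $(\xi,\zeta)$, and applying Plancherel in $\zeta$ identifies the two right-hand terms with $C\|\Psi\|_{L^2_zH^\mu_x}^2$ and $C\|\partial_z\Psi\|_{L^2_zH^{\mu-1}_x}^2$, respectively. Combining with Step~1 yields the claim.

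The only (minor) obstacle is verifying the extension bound on $\partial_z\Psi$ in the finite-depth case, where the cutoff produces an extra $\chi'\Psi$ contribution whose $L^2_zH^{\mu-1}_x$ norm must be controlled; this is immediate from the inclusion $L^2\hookrightarrow H^{\mu-1}$. No delta at $z=0$ (or $z=-1$) arises in $\partial_z\Psi$ because the \emph{even} reflection makes $\Psi$ continuous across each reflection interface whenever $\psi$ has a trace; otherwise (i.e., for $\mu<1/2$) the identification is a standard density argument.
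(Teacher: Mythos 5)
Your proof is correct and follows essentially the same route as the paper: establish the whole-space estimate via Plancherel and the pointwise symbol inequality $(1+|\xi|^2+|\zeta|^2)^\mu \lesssim (1+|\xi|^2)^\mu + |\zeta|^2(1+|\xi|^2)^{\mu-1}$, then reduce the half-space and strip cases by even reflection (with a cutoff in the finite-depth case), controlling the extension's $L^2_zH^\mu_x$ and $L^2_zH^{\mu-1}_x$ norms. The only cosmetic difference is that you reflect the strip across both endpoints and cut off directly, whereas the paper reflects once across $z=-1$ and then invokes the half-space case; your remark that no interface delta arises (since $\partial_z\psi\in L^2_zH^{\mu-1}_x$ forces continuity of $\psi$ in $z$ with values in $H^{\mu-1}_x$) correctly handles the one point needing care.
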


\begin{proof} If $\psi\in H^\mu(\mathbb{R}^d\times \Rr)$, by using the Fourier transform $\hat\psi(\xi,\eta)$, \eqref{eq.interpolationIneq} follows from the following inequality: that for any $\eta \in \Rr$ and $\xi \in \Rr^d$, 
\bq\label{ineq:etaxi}
    (1+\eta^2 + |\xi|^2)^{\mu} \lesssim (1+|\xi|^2)^{\mu} + |\eta|^2(1+|\xi|^2)^{\mu-1},\quad \mu\in (0, 1].
\eq
The left-hand side of \eqref{ineq:etaxi} is indeed controlled by the first term on the right-hand side when $\eta^2\le 1+|\xi|^2$ and by the second term otherwise. 

For the half-space $\Rr^d\times \Rr_-$, we extend $\psi$ to $\tilde{\psi}: \Rr^d\times \Rr\to \Rr$ by even reflection. By using the double-integral characterization of $H^\mu$ given by \eqref{eq.equivalent-norm-sobolev}, it can be checked that 
\[
\| \tilde{w}\|_{H^\mu(\Rr^d\times \Rr)}\le 3\| w\|_{H^\mu(\Rr^d\times \Rr_-)}.
\]
Moreover, we have $\|\p_z\tilde{w}\|_{L^2(\Rr, \Rr^d)}\le 2\| w\|_{L^2(\Rr_-; H^{\mu-1})}$. The inequality \eqref{eq.interpolationIneq} for the half-space then follows from that for the whole space. 

Next, we consider the strip $\Rr^d\times (-1, 0)$. We extend $w$  to $\tilde{w}: \Rr\times (-2, 0)\to \Rr$ defined  by 
\[
\tilde{w}(x, z)=\begin{cases}
w(x, z)\quad\text{if~} z\in (-1, 0),\\
w(x, -z-2)\quad\text{if~} z\in (-2, 0).
\end{cases}
\]
It can be checked that 
\[
\| \tilde{w}\|_{H^\mu(\Rr^d\times (-2, 0))}\les \| w\|_{H^\mu(\Rr^d\times (-1, 0))},\quad \|\p_z\tilde{w}\|_{L^2((-2, 0), \Rr^d)}\les \| w\|_{L^2((-1, 0); H^{\mu-1})}.
\]
Let $\chi(z)$ be a cut-off function satisfying $\chi \equiv 1$ on  $(-\tdm, \mez)$. The preceding inequalities hold for $\tilde{w}\chi(z)$ in place of $\tilde{w}$ with $\Rr^d\times (-2, 0)$ replaced by $\Rr^d\times \Rr_-$. Thus we can conclude by using the half-space case and the fact that $w\equiv \tilde{w}\chi(z)$ on $\Rr^d\times (-1, 0)$. 
\end{proof}

\subsection{Interpolation of linear operators}\label{sec.interpol-def}  

We recall the following interpolation result.

\begin{lemm}[\cite{BL}, Theorem 4.1.2, and \cite{BS}, Theorem 1.12]\label{lemm.interpolation} Let $X_0, X_1, Y_0, Y_1$ be Banach spaces and $T : X_0 + X_1 \mapsto Y_0 + Y_1$ be a linear operator bounded as a map $X_i \to Y_i$ with norm $C_i$. Let $\theta \in (0,1)$ and let $X_{\theta}=(X_0,X_1)_{\theta,q}$, the interpolation  space by the real method, or $X_{\theta}=[X_0,X_1]_{\theta}$, the interpolation space by the complex method, and similarly for $Y_{\theta}$. Then $T : X_{\theta} \to Y_{\theta}$ is continuous and $\|T\|_{X_{\theta} \to Y_{\theta}} \lesssim C_0^{1-\theta}C_1^{\theta}$. 
\end{lemm}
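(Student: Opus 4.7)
The plan is to treat the real and complex methods separately and reduce each to a one-line rescaling argument, exactly as in the cited references \cite{BL} and \cite{BS}; since this is a classical statement, the main "work" is just to get the optimal constant $C_0^{1-\theta}C_1^\theta$ correctly.

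For the real method, I would work with the Peetre $K$-functional $K(t,x;X_0,X_1) = \inf\{\|x_0\|_{X_0}+t\|x_1\|_{X_1} : x = x_0+x_1\}$, which gives the equivalent norm
\[
\|x\|_{(X_0,X_1)_{\theta,q}} \simeq \Big(\int_0^\infty (t^{-\theta}K(t,x;X_0,X_1))^q\,\tfrac{dt}{t}\Big)^{1/q}.
\]
For any decomposition $x=x_0+x_1$, applying $T$ and estimating on each piece gives
\[
\|Tx_0\|_{Y_0}+t\|Tx_1\|_{Y_1} \le C_0\|x_0\|_{X_0} + tC_1\|x_1\|_{X_1} = C_0\big(\|x_0\|_{X_0} + \tfrac{tC_1}{C_0}\|x_1\|_{X_1}\big),
\]
so $K(t,Tx;Y_0,Y_1) \le C_0\, K(tC_1/C_0,x;X_0,X_1)$. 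Substituting this into the $K$-norm of $Tx$ and applying the change of variable $s = tC_1/C_0$ produces exactly the factor $C_0^{1-\theta}C_1^\theta$. This is the entire argument; it works uniformly in $q\in[1,\infty]$.

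For the complex method, I would reduce to the three-lines lemma. Given $x\in [X_0,X_1]_\theta$, take an admissible holomorphic extension $F: \overline{S}\to X_0+X_1$ on the strip $S=\{0<\Re z<1\}$ with $F(\theta)=x$ and $\sup_{t\in\mathbb{R}}\|F(j+it)\|_{X_j}$ approximating $\|x\|_{[X_0,X_1]_\theta}$. Then
\[
G(z) := C_0^{z-1}\,C_1^{-z}\, T F(z)
\]
is a bounded holomorphic family in $Y_0+Y_1$ (after the standard rescaling/mollification to ensure admissibility), and on $\Re z = j$ one has $\|G(j+it)\|_{Y_j} \le \|F(j+it)\|_{X_j}$. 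The three-lines lemma applied in $Y_0+Y_1$ then yields $\|G(\theta)\|_{[Y_0,Y_1]_\theta}\le \sup_{j,t}\|F(j+it)\|_{X_j}$, which unwinds to $\|Tx\|_{Y_\theta}\le C_0^{1-\theta}C_1^\theta\|x\|_{X_\theta}$ after taking an infimum over admissible $F$.

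The only subtlety I expect — and the step one must not skim — is the three-lines lemma for vector-valued holomorphic functions in the complex case: one needs $G$ to be bounded on $\overline{S}$ (not merely continuous with bounded boundary values) for the maximum principle to apply, which is handled by the standard approximation $G_\varepsilon(z)=e^{\varepsilon z(z-1)}G(z)$ and then sending $\varepsilon\to 0$. Everything else is bookkeeping, and since the statement is quoted verbatim from \cite{BL,BS} one can simply reference those sources; the above is only the outline of what is behind the citation.
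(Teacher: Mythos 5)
Your proposal is correct and is essentially the classical argument behind the citation: the paper gives no proof of this lemma, quoting it directly from \cite{BL} and \cite{BS}, and both your $K$-functional rescaling for the real method and the three-lines/rescaling argument $C_0^{z-1}C_1^{-z}TF(z)$ for the complex method are exactly the proofs in those references, with the constant $C_0^{1-\theta}C_1^{\theta}$ coming out correctly. The only point worth stating explicitly is that boundedness of $T$ on $X_0\to Y_0$ and $X_1\to Y_1$ already gives boundedness $X_0+X_1\to Y_0+Y_1$, which is what guarantees that $TF$ (hence your $G$) lies in the admissible class for $[Y_0,Y_1]_{\theta}$; with that remark your outline is complete.
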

We will also need to interpolate Banach spaces intersected with subspaces. In general it is a difficult task to characterize these interpolated spaces. See \cite[Section 1.17.1]{Triebel}.  In this paper, we only need the following special case.
\begin{lemm}\label{lemm.interpol-def} Let $(X_0, X_1)$ be an interpolation couple of Banach spaces. Let $\theta \in (0,1)$ and let $X_{\theta}$ denote the real interpolation space $(X_0,X_1)_{\theta,2}$. 
Let us also assume that there exists a linear map $T : X_0 + X_1 \to X_0 + X_1$ such that $T^2=T$ on $X_0 + X_1$ and $T$ acts continuously as maps $X_0 \to X_0$ and $X_1\to X_1$. Let $F=\ker(T-\operatorname{id})$. Then $(X_0 \cap F, X_1 \cap F)_{\theta, 2} = X_{\theta} \cap F$.   
\end{lemm}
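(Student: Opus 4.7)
The plan is to use the $K$-functional characterization of the real interpolation method, together with the observation that the hypothesis $T^2 = T$ makes $T$ a bounded projection on $X_0 + X_1$ whose range restricted to each $X_i$ is $X_i \cap F$. The operator $T$ being continuous $X_i \to X_i$ for $i=0,1$ extends, by the abstract interpolation of linear operators (Lemma~\ref{lemm.interpolation}), to a bounded projection on $X_\theta$, which will allow us to symmetrize decompositions at the level of the $K$-functional. Set $M := \max(\|T\|_{X_0 \to X_0}, \|T\|_{X_1 \to X_1})$.

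For the inclusion $(X_0 \cap F, X_1 \cap F)_{\theta,2} \hookrightarrow X_\theta \cap F$, I would first observe that for any decomposition $x = x_0 + x_1$ with $x_i \in X_i \cap F$, one has trivially
\[
K(t,x;X_0,X_1) \;\le\; \|x_0\|_{X_0} + t\|x_1\|_{X_1} \;=\; \|x_0\|_{X_0 \cap F} + t\|x_1\|_{X_1 \cap F},
\]
so $K(t,x;X_0,X_1) \le K(t,x;X_0 \cap F, X_1 \cap F)$. After integrating against $t^{-2\theta - 1}\,\mathrm{d}t$ this already gives the norm estimate, and any $x$ in the left-hand space lies by definition in $(X_0 \cap F) + (X_1 \cap F) \subset F$, so membership in $F$ is immediate.

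For the reverse inclusion $X_\theta \cap F \hookrightarrow (X_0 \cap F, X_1 \cap F)_{\theta,2}$, fix $x \in X_\theta \cap F$ and $t>0$. Given any near-optimal decomposition $x = x_0 + x_1$ with $x_i \in X_i$ realizing $K(t,x;X_0,X_1)$ up to $\varepsilon$, I would define the symmetrized decomposition $\tilde x_i := T x_i$. Then $\tilde x_i \in X_i$ with $\|\tilde x_i\|_{X_i} \le M \|x_i\|_{X_i}$, and $T \tilde x_i = T^2 x_i = T x_i = \tilde x_i$, so $\tilde x_i \in X_i \cap F$. Crucially, $\tilde x_0 + \tilde x_1 = T(x_0 + x_1) = Tx = x$ because $x \in F$. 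Hence
\[
K(t, x; X_0 \cap F, X_1 \cap F) \;\le\; \|\tilde x_0\|_{X_0} + t\|\tilde x_1\|_{X_1} \;\le\; M\bigl(K(t,x;X_0,X_1) + \varepsilon\bigr),
\]
and letting $\varepsilon \to 0$ and integrating yields the required norm estimate.

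There is no real obstacle here; the argument is essentially book-keeping with the $K$-functional and its ``only'' substantive input is that $T$ being a projection allows us to replace an arbitrary decomposition by one inside $F$ at the cost of the constant $M$. The mild subtlety to be careful about is that in the forward inclusion one must notice that elements of $(X_0 \cap F, X_1 \cap F)_{\theta,2}$ automatically lie in the algebraic sum $(X_0 \cap F) + (X_1 \cap F)$, and hence in $F$, without invoking any density statement for $X_0 \cap X_1 \cap F$ in the interpolation space.
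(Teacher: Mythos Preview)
Your proof is correct and is essentially identical to the paper's own argument: both directions are handled via the $K$-functional, with the reverse inclusion obtained by applying the projection $T$ to a near-optimal decomposition of $x \in X_\theta \cap F$ and using $T^2 = T$ together with $Tx = x$ to land the pieces in $X_i \cap F$ at the cost of the operator norm of $T$.
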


\begin{proof}
Let us show the equivalence of the interpolation norms $\|\cdot \|_{\theta, F}$ on $(X_0 \cap F, X_1 \cap F)_{\theta, 2}$ and $\|\cdot \|_{\theta}$ on $X_{\theta} \cap F$. 
Let $u \in (X_0 \cap F, X_1 \cap F)_{\theta, 2}$, and recall that by definition of the interpolation norm we have 
\[
    \|u\|_{\theta}^2 = \int_0^{\infty}t^{-(2\theta+1)}K(t,u)^2\,\mathrm{d}t,    
\]
where 
\begin{align*}
    K(t,u)&:=\inf\{\|u_0\|_{X_0} + t\|u_1\|_{X_1}, u=u_0+u_1, (u_0,u_1)\in X_0 \times X_1\}\\
    &\le \inf\{\|u_0\|_{X_0} + t\|u_1\|_{X_1}, u=u_0+u_1, (u_0,u_1)\in (X_0\cap F) \times (X_1\cap F)\} \\
    & =: K_F(t,u)
\end{align*}
so that $(X_0 \cap F, X_1 \cap F)_{\theta, 2} \hookrightarrow X_{\theta} \cap F$.

Conversely, let  $u \in X_{\theta}\cap F$ and for all $t>0$ choose some $u_0(t) \in X_0$ and $u_1(t)\in X_1$ such that $u=u_0(t) + u_1(t)$ and $\|u_0(t)\|_{X_0}+t\|u_1(t)\|_{X_1} \le 2 K(t,u)$, which is possible by definition of $K$.  
Since $u\in F$ there holds $u=T(u)=T(u_0(t)) + T(u_1(t))$ with $\|T(u_i(t))\|_{X_i} \le C \|u_i(t)\|_{X_i}$. Since $T^2=T$ on $X_0\cup X_1$, $Tv\in F$ for all $v\in X_0\cup X_1$.  It follows that $u=T(u_0(t))+T(u_1(t)) \in (X_0 \cap F) + (X_1\cap F)$ and 
\[
    K_F(t,u) \le \|T(u_0(t))\|_{X_0} + t\|T(u_1(t))\|_{X_1} \le C(\|u_0(t)\|_{X_0} + t\|u_1(t)\|_{X_1}) \le 2C K(t,u),
\]
from which the converse embedding follows.
\end{proof}

\section{Proof of \cref{lemm.FaaDiBruno} and \cref{lemm.diffeoSobolev}}\label{sec.compositionSobolev} 

The proof of \cref{lemm.FaaDiBruno} and \cref{lemm.diffeoSobolev} relies on the following version of the Fa\`a di Bruno formula. 

Let $\alpha \in \mathbb{N}^{N}$. We say that $(k,\beta,\gamma) \in \mathcal{D}(\alpha)$ if there exists $\gamma_1, \dots, \gamma_k \in \mathbb{N}^{N}$ and $\beta_1, \dots \beta_k \in \mathbb{N}^{N}$ such that $\alpha = |\beta_1|\gamma_1 + \cdots |\beta_k|\gamma_k$ and that all the $\gamma_i$ are distincs. We write $\beta =\beta_1 + \dots +\beta_k$. 

The Fa\`a di Bruno formula that we shall use reads as follows.  

\begin{lemm}[\cite{Ma09}] Let $N\geqslant 1$, and let also $\varphi : \mathbb{R}^N \to \mathbb{R}^N$ and $F : \mathbb{R}^N \to \mathbb{R}$ be two functions being $n$ times differentiable. Then for any multi-index $\alpha =(\alpha_1, \dots, \alpha_N)$ such that $|\alpha|\leq n$ there holds 
\begin{equation}
    \label{eq.faaDiBruno}
    \partial^{\alpha}(F \circ \varphi)= \alpha ! \sum_{(k,\beta,\gamma)\in\mathcal{D}(\alpha)}\partial^{\beta}F \circ \varphi \prod_{j=1}^k \frac{1}{\beta_j!}\left(\frac{1}{\gamma_j!}\partial^{\gamma_j} \varphi \right)^{\beta_j}\,,
\end{equation}
where we recall that $\prod_{j=1}^k \frac{1}{\beta_j!}\left(\frac{1}{\gamma_j!}\partial^{\gamma_j} \varphi \right)^{\beta_j}$ is a shorthand for 
\[
    \prod_{j=1}^k\prod_{i=1}^N \frac{1}{\beta_{j,i}!}\left(\frac{1}{\gamma_{j,1}! \cdots \gamma_{j,N}!}\frac{\p^{\gamma_{j,1} + \cdots + \gamma_{j,N}}\varphi_i}{\p x_{\gamma_{j,1}} \cdots \p x_{\gamma{j,N}}}  \right)^{\beta_{j,i}}.    
\]
\end{lemm}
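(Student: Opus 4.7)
The strategy I would adopt is induction on $n=|\alpha|$. The case $|\alpha|=0$ is tautological, while for $|\alpha|=1$, say $\alpha=e_i$, the only way to write $e_i=|\beta_1|\gamma_1+\cdots+|\beta_k|\gamma_k$ with distinct $\gamma_j\in\mathbb{N}^N$ is $k=1$, $|\beta_1|=1$, $\gamma_1=e_i$; the choice $\beta_1=e_j$ for $j\in\{1,\dots,N\}$ then reduces \eqref{eq.faaDiBruno} to the standard chain rule $\partial_i(F\circ\varphi)=\sum_j(\partial_jF\circ\varphi)\,\partial_i\varphi_j$.

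For the inductive step, fix $\alpha$ with $|\alpha|=n$ and apply $\partial_i$ to \eqref{eq.faaDiBruno}. Two kinds of contributions arise from the product rule: either $\partial_i$ hits the composite $\partial^{\beta}F\circ\varphi$, in which case the chain rule produces $\sum_l(\partial^{\beta+e_l}F\circ\varphi)\,\partial_i\varphi_l$ and the extra factor $\partial_i\varphi_l=\partial^{e_i}\varphi_l$ must be absorbed into the product either as a new index $\gamma_0=e_i$ with $\beta_0=e_l$ or merged with an existing $\gamma_j=e_i$; or else $\partial_i$ hits one of the $|\beta_j|$ identical copies of $(1/\gamma_j!)\partial^{\gamma_j}\varphi$, replacing that copy by $(1/\gamma_j!)\partial^{\gamma_j+e_i}\varphi$, which after relabeling corresponds to a triple in $\mathcal{D}(\alpha+e_i)$. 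Collecting all such contributions and reading off the weight of each element of $\mathcal{D}(\alpha+e_i)$ would then yield the formula at level $n+1$.

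The main obstacle is the purely combinatorial identity that the Leibniz/chain multiplicities reassemble exactly into the factorial prefactor $(\alpha+e_i)!$ together with the weights $\prod_j(\beta_j!)^{-1}(\gamma_j!)^{-|\beta_j|}$ on the right-hand side. The delicate cases are those in which the new multi-index produced by differentiation coincides with an existing $\gamma_l$: the resulting merger $\beta_l\to\beta_l+(\text{component})$ arises with several multiplicities from the induction step, and all of them must telescope to the right coefficient. To sidestep this bookkeeping, I would prove the identity first for real-analytic $F$ and $\varphi$ by substituting $\varphi(x+h)=\varphi(x)+\sum_{\gamma\neq 0}(h^\gamma/\gamma!)\partial^\gamma\varphi(x)$ into the Taylor expansion $F(\varphi(x)+\psi)=\sum_\beta(\psi^\beta/\beta!)\partial^\beta F(\varphi(x))$, expanding each $\psi_i^{\beta_i}$ by the multinomial theorem, and extracting the coefficient of $h^\alpha$; the set $\mathcal{D}(\alpha)$ then appears automatically as the index set of the multinomial expansion. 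The extension from the analytic to the $C^n$ case is immediate since both sides of \eqref{eq.faaDiBruno} are jointly continuous polynomials in the partial derivatives of $F$ and $\varphi$ of order $\le |\alpha|$, so density of analytic functions in the $C^n$ topology on compacts suffices.
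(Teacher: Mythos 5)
The paper does not supply a proof of this lemma; it is cited verbatim from~\cite{Ma09}, where the identity is derived by a direct combinatorial counting argument. You therefore cannot be "comparing against the paper's proof," only against the cited source. Your second strategy---substituting the Taylor expansion of $\varphi$ around $x$ into the Taylor expansion of $F$ around $\varphi(x)$, expanding each $\psi_i^{\beta_i}$ by the multinomial theorem, and extracting the coefficient of $h^\alpha$---is a valid and genuinely different route. Working it out: writing $\psi_i = \sum_{\gamma\neq 0}\frac{h^\gamma}{\gamma!}\partial^\gamma\varphi_i$, one finds
\[
\frac{\psi^\beta}{\beta!}=\sum_{\substack{(m^{(i)}_\gamma)\\ \sum_\gamma m^{(i)}_\gamma=\beta_i}}
\prod_{i}\prod_{\gamma}\frac{1}{m^{(i)}_\gamma!}\Bigl(\frac{\partial^\gamma\varphi_i}{\gamma!}\Bigr)^{m^{(i)}_\gamma}h^{\sum_{i,\gamma}m^{(i)}_\gamma\gamma},
\]
and collecting the finitely many $\gamma$'s that receive a nonzero weight into a list $\gamma_1,\dots,\gamma_k$ with $\beta_{j,i}:=m^{(i)}_{\gamma_j}$ identifies the index set with $\mathcal{D}(\alpha)$ and the coefficient with the displayed shorthand. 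So the combinatorics in your sketch does reassemble correctly, and this is cleaner than the Leibniz induction whose bookkeeping you rightly flagged as delicate.

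There is one small gap worth noting: the lemma is stated for functions that are merely $n$ times differentiable, whereas your density argument produces the identity only for $C^n$ functions (you need $C^n$-convergence on compacts to pass to the limit on the left-hand side $\partial^\alpha(F\circ\varphi)$, and mollification/polynomial approximation does not control pointwise values of discontinuous top-order derivatives). To cover the literal hypothesis you would need to fall back on the inductive chain--Leibniz argument, where only pointwise existence of the derivatives is used. In practice this makes no difference to the paper, since every application of the lemma there is to functions with Sobolev (hence at least continuous) derivatives, but strictly speaking the density step does not prove the lemma in the stated generality.
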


When we use~\eqref{eq.faaDiBruno}, we will only do product estimates in Sobolev or Lebesgue spaces, therefore we will only need to keep track of the number of derivatives taken on $F$ and $\varphi$. Also, we can discard the constant appearing in the formula so that we will abusively write~\eqref{eq.faaDiBruno} as
\begin{equation}
    \label{eq.faaDiBruno2}
    \partial^{n}(F \circ \varphi) = \sum_{\beta_1 + 2\beta_2 + \dots + n \beta_n= n}c_{n,\beta}\left(\partial^{\beta_1 + \cdots + \beta_n}F \circ \varphi \right)\prod_{j=1}^n (\partial^{j} \varphi)^{\beta_j}\,.
\end{equation}
Note that in order to obtain~\eqref{eq.faaDiBruno2} we regrouped derivatives on $\varphi$ by their number. The notation $(\partial^j \varphi)^{\beta_j}$ therefore stands for \textit{any} possible product of $\beta_j$ derivatives of order $j$ of the vector-valued function $\varphi$, for example $\partial_1 \varphi_2 \partial_2 \varphi_1$ is a $(\partial^1 \varphi)^2$ term, therefore the $\beta_{j}$ (which are multi-indicies) can be taken as integers as we regroup all $(\partial^j \varphi)^{\beta_j}$ terms (this changes the constants $c_{n,\beta}$, which are not of importance in our analysis). The summation condition on the $\beta_j \in \mathbb{N}$ is therefore $\sum_{j=1}^n j\beta_j = n$.
\subsection{Proof of \cref{lemm.FaaDiBruno}}We split the proof between integer and non-integer cases. 
\medskip 

\textit{Step 1: Integer $\sigma = n \geqslant 0$}. For $n=0$,  we make the change of variables $y=\varphi (x)$ and use the fact that $|\det(\na \varphi^{-1})|=|\det \na \varphi|^{-1}\le c_0^{-1}$ to have
\bq\label{composition:L2}
\begin{aligned}
  \forall p\in [1, \infty),~  \|F \circ \varphi \|_{L^p(\mathcal{U})}^p &= \int_{\mathcal{U}} |F(\varphi(x))|^p \,\mathrm{d}x \\
    & = \int_{\Omega} |F(y)|^p |\det (\nabla \varphi^{-1})(y)| \,\mathrm{d}y \le c_0^{-1}\|F\|^p_{L^p(\Omega)}.
\end{aligned}
\eq

For $n\ge 1$, we shall estimate the  derivative $\partial^n(F \circ \varphi)$ in $L^2(\mathcal{U})$.  We use the Fa\`a-di Bruno formula \eqref{eq.faaDiBruno2}, H\"older's inequality, and \eqref{composition:L2} to have
\begin{align*}
    \|\partial ^n (F \circ \varphi)\|_{L^2(\mathcal{U})} &\lesssim \sum_{\substack{\beta_1 + 2\beta_2 + \cdots +n\beta_n=n \\\beta_ 1 + \cdots + \beta_n = \beta}} \left\|(\partial^{\beta}F \circ \varphi) \prod_{j=1}^n (\partial^j \varphi)^{\beta_j}\right\|_{L^{2}(\mathcal{U})} \\ 
    & \lesssim \sum_{\substack{\beta_1 + 2\beta_2 + \cdots +n\beta_n = n \\\beta_ 1 + \cdots + \beta_n = \beta}} \|\partial^{\beta}F \circ \varphi\|_{L^p(\mathcal{U})} \left\|\prod_{j=1}^n (\partial^j \varphi)^{\beta_j}\right\|_{L^q(\mathcal{U})} \\ 
    & \le C\sum_{\substack{\beta_1 + 2\beta_2 + \cdots +n\beta_n =n\\\beta_ 1 + \cdots + \beta_n = \beta}} \|\partial^{\beta}F\|_{L^p(\Omega)} \prod_{j=1}^n \|\partial^j \varphi\|^{\beta_j}_{L^{q_j\beta_j}(\mathcal{U})},
\end{align*}
where the to-be-chosen exponents $p$, $q$, and $q_j$ satisfy 
\[
    \frac{1}{p} + \frac{1}{q} = \frac{1}{p} + \sum_{j=1}^n \frac{1}{q_j} = \frac{1}{2},
\]
and $C$ depends only on $(c_0, m, N)$.     We will estimate the right-hand side using the Gagliardo-Nirenberg inequality in bounded domains, which is recalled in \cref{thm.gagliardoNirenberg}. 
We choose $q_1=\infty$. For  $j\ge 2$, we choose $\frac{1}{q_j\beta_j}=\frac{\tt_j}{2}=\frac{j-1}{2(n-1)}$ and note that $\p^j\varphi=\p^j\psi$, so that 
\[
    \|\partial ^j \varphi\|_{L^{q_j\beta_j}(\mathcal{U})}= \|\partial ^j \psi\|_{L^{q_j\beta_j}(\mathcal{U})}\lesssim \|\p\psi\|_{H^{n-1}(\mathcal{U})}^{\theta_j} \|\na \psi\|_{L^{\infty}(\mathcal{U})}^{1-\theta_j}\lesssim  \|\psi\|_{H^{n}(\mathcal{U})}^{\theta_j} (1+\|\na \varphi\|_{L^{\infty}(\mathcal{U})})^{1-\theta_j}.
\]
This implies 
\[
\prod_{j=2}^n \|\partial^j \varphi\|^{\beta_j}_{L^{q_j\beta_j}(\mathcal{U})}\le C(\|\nabla \varphi\|_{L^{\infty}(\mathcal{U})})\| \psi\|_{H^n(\Omega)}^\nu,
\]
where
\[
\nu=\sum_{j=2}^n \tt_j\beta_j=\sum_{j=2}^n\frac{\tt_j(j-1)}{(n-1)}=\frac{(n-\beta_1)-(\beta-\beta_1)}{n-1}=\frac{n-\beta}{n-1}\in [0, 1].
\]
On the other hand, we have
\[
\frac{1}{p}=\mez-\sum_{j=1}^n \frac{1}{q_j} =\mez -\sum_{j=2}^n \frac{\tt_j\beta_j}{2}=\frac{1-\nu}{2}=\frac{\beta-1}{2(n-1)}.
\]
Using Gagliardo-Nirenberg's inequality  again gives
\[
    \|\partial^{\beta}F\|_{L^p(\Omega)} \lesssim \|\nabla F\|_{L^{\infty}(\Omega)}^{1-\theta_0}\|F\|_{H^n(\Omega)}^{\theta_0}\,,    
\]
where  $\displaystyle\frac{1}{p} = \frac{\tt_0}{2}=\frac{\beta-1}{2(n-1)}$, so $\tt_0=\frac{2}{p}=1-\nu$. 

Putting the above estimates together, we find 
\begin{align*}
    \|\partial^{\beta}F\|_{L^p(\Omega)} \prod_{j=1}^n \|\partial^j \varphi\|^{\beta_j}_{L^{q_j\beta_j}(\mathcal{U})}&= \|\partial^{\beta}F\|_{L^p(\Omega)}\|\p \varphi\|_{L^\infty}^{\beta_1} \prod_{j=2}^n \|\partial^j \varphi\|^{\beta_j}_{L^{q_j\beta_j}(\mathcal{U})} \\
    &\le C(\|\nabla \varphi\|_{L^{\infty}(\mathcal{U})})\|F\|_{H^n(\Omega)}^{1-\nu}(\|\nabla F\|_{L^{\infty}(\Omega)}\|\psi\|_{H^n(\mathcal{U})})^{\nu}.
\end{align*}
The desired estimate \eqref{eq.compositionEstimate} then follows by applying Young's inequality.
\medskip 

\textit{Step 2: Non-integer $\sigma$.} We write  $\sigma = n + \mu$ with  $n= \lfloor \sigma \rfloor \geqslant 0$ and $\mu\in (0,1)$.
    
We first consider the case $n=0$. Using the change of variables $\tilde{x}=\varphi(x)$, $\tilde{y}=\varphi(y)$ as in \eqref{composition:L2}, we find 
\begin{align*}
  |F\circ \varphi|_{H^{\mu}}^2&=  \int_{\mathcal{U}}\int_\mathcal{U} \frac{|F(\varphi(x))-F(\varphi(y))|^2}{|x-y|^{N+2\mu}}dxdy\\
  &\le c_0^{-1}\int_{\Omega}\int_\Omega \frac{|F(\tilde{x})-F(\tilde{y})|^2}{|\varphi^{-1}(\tilde{x})-\varphi^{-1}(\tilde{y})|^{N+2\mu}}\,\mathrm{d}\tilde{x}\mathrm{d}\tilde{y} \\
    &=c_0^{-1} \int_{\Omega}\int_\Omega \frac{|F(\tilde{x})-F(\tilde{y})|^2}{|\tilde{x}-\tilde{y}|^{N+2\mu}}\frac{|\tilde x-\tilde y|^{N+2\mu}}{|\varphi^{-1}(\tilde{x})-\varphi^{-1}(\tilde{y})|^{N+2\mu}}\,\mathrm{d}\tilde{x}\mathrm{d}\tilde{y}\\
    &\le c_0^{-1}\| \na \varphi\|_{L^\infty(\cU)}^{N+2\mu} \int_{\Omega}\int_\Omega \frac{|F(\tilde{x})-F(\tilde{y})|^2}{|\tilde{x}-\tilde{y}|^{N+2\mu}}\,\mathrm{d}\tilde{x}\mathrm{d}\tilde{y}. 
\end{align*}
This and \eqref{composition:L2} imply
\bq
      \|F \circ \varphi\|_{H^{\sigma}(\cU)} \les (1+\|\nabla \varphi\|_{L^{\infty}}^{\frac{N}{2}+\mu})\|F\|_{H^\sigma(\Omega)}.
\eq
Next, we consider $n\geqslant 1$. Since 
\[
    \|F \circ \varphi\|_{H^{\sigma}(\cU)} \les \|F \circ \varphi\|_{L^2(\cU)} + \|\partial^n(F \circ \varphi)\|_{H^{\mu}(\cU)},
\]
it remains to estimate $ \|\partial^n(F \circ \varphi)\|_{H^{\mu}(\cU)}$. In order to do so we use again~\eqref{eq.faaDiBruno2}.  Note however that since $\phi=\operatorname{id} +\psi$ and $\psi\in L^2$, $\p \varphi$ does not belong to any $L^p(\cU)$ with $p\in [1, \infty)$ if $\cU$ is unbounded. To address this, assuming $N=1$ for notational simplicity, we substitute $\p \varphi=1+\p\psi$ in \eqref{eq.faaDiBruno2} to have
\[
\partial ^n (F \circ \varphi)=\sum_{\substack{\beta_1 + 2\beta_2 + \cdots +n\beta_n=n \\\beta_ 1 + \cdots + \beta_n  = \beta\\ 0\le  k\le  \beta_1}} c_{j, k}(\partial^{\beta}F \circ \varphi)(\p\psi)^k\prod_{j=2}^n (\partial^j \varphi)^{\beta_j}
\]

and \cref{thm.sobolevProducts} for product estimates in Sobolev spaces:
\begin{align*}
        \|\partial ^n (F \circ \varphi)\|_{H^{\mu}(\cU)} &\lesssim \sum_{\substack{\beta_1 + 2\beta_2 + \cdots +n\beta_n=n \\\beta_ 1 + \cdots + \beta_n = \beta \\ 0\le  k\le  \beta_1}} \left\|(\partial^{\beta}F \circ \varphi)(\p\psi)^k\prod_{j=2}^n (\partial^j \varphi)^{\beta_j}\right\|_{H^{\mu}(\cU)} \\ 
        & \lesssim \sum_{\substack{\beta_1 + 2\beta_2 + \cdots +n\beta_n=n \\\beta_ 1 + \cdots + \beta_n = \beta\\ 0\le  k\le  \beta_1}} \|\partial^{\beta}F \|_{L^{p}(\Omega)} \left\|(\p\psi)^k\prod_{j=2}^n (\partial^j \varphi)^{\beta_j}\right\|_{W^{\mu,q}(\cU)} \\ 
        & \qquad+ \sum_{\substack{\beta_1 + 2\beta_2 + \cdots +n\beta_n=n \\\beta_ 1 + \cdots + \beta_n = \beta\\ 0\le  k\le  \beta_1}} \|\partial^{\beta}F \circ \varphi\|_{W^{\mu,\tilde{p}}(\cU)} \left\|(\p\psi)^k\prod_{j=2}^n(\partial^j \varphi)^{\beta_j}\right\|_{L^{\tilde{q}}(\cU)}=: I+II,
\end{align*}
    where $\frac{1}{p} + \frac{1}{q} = \frac{1}{\tilde{p}} + \frac{1}{\tilde{q}} =\frac{1}{2}$ and $q$, $\tilde{p} \in (1, \infty)$, all to be chosen.  Note that we have used \eqref{composition:L2} to have  $\|\partial^{\beta}F\circ \varphi \|_{L^{p}(\Omega)}\les  \|\partial^{\beta}F \|_{L^{p}(\Omega)}$.
    
    Next, for $\tilde{p} \in (1,\infty)$ we claim that for any $\mu \in (0,1)$ and $G\in W^{\mu, \tilde{p}}(\Omega)$ there holds  
    \begin{equation}
        \label{eq.interpolationComp}
        \|G \circ \varphi\|_{W^{\mu,\tilde{p}}(\cU)} \le C(\|\nabla \varphi\|_{L^{\infty}(\cU)}) \|G\|_{W^{\mu, \tilde{p}}(\Omega)}\,.
    \end{equation}
    Remark that indeed~\eqref{eq.interpolationComp} holds for $\mu \in \{0,1\}$ by direct computations, therefore the linear operator $\Phi : G \mapsto G \circ \varphi$ is continuous $W^{0,\tilde{p}}(\mathcal{U}) \to W^{0,\tilde{p}}(\Omega)$ and $W^{1,\tilde{p}}(\Omega) \to W^{1,\tilde{p}}(\mathcal{U})$ with constant $C(\|\nabla \varphi\|_{L^{\infty}(\cU)})$. Therefore using \cref{lemm.interpolation}, $\Phi$ is continuous on $[W^{0,{\tilde{p}}},W^{1,\tilde{p}}]_{\mu}=W^{\mu,\tilde{p}}$, by complex interpolation (see~\cite{Lunardi,BL}). 
    
Using \eqref{eq.interpolationComp} we bound
    \begin{align*}
        \|\partial^{\beta}F \circ \varphi\|_{W^{\mu,\tilde{p}}(\cU)} \left\|(\p\psi)^k\prod_{j=2}^n(\partial^j \varphi)^{\beta_j}\right\|_{L^{\tilde{q}}(\cU)} 
        & \le C(\|\nabla \varphi\|_{L^{\infty}(\cU)})  \|\partial ^{\beta}F\|_{W^{\mu,\tilde{p}}(\Omega)}\|\p\psi\|^k_{L^\infty}\prod_{j=2}^n \|\partial^j \varphi\|^{\beta_j}_{L^{\tilde{q}_j\beta_j}(\cU)},
    \end{align*}
    where $\displaystyle  \frac{1}{\tilde{p}}+\frac{1}{\tilde{q}}= \frac{1}{\tilde{p}} + \sum_{j=1}^n\frac{1}{\tilde{q}_j}= \frac{1}{2}$.  Clearly $\| \p\psi\|_{L^\infty}\les 1+\|\na \varphi\|_{L^\infty}$. For $j\ge 2$ and  $ \frac{1}{\tilde{q}_j\beta_j} =\frac{\theta_j}{2}=\frac{j-1}{2(\sigma -1)}$,  Gagliardo-Nirenberg's inequality yields
   \[
        \|\partial ^j \varphi\|_{L^{\tilde{q}_j\beta_j}(\cU)}= \|\partial ^j \psi\|_{L^{\tilde{q}_j\beta_j}(\cU)}= \|\partial ^{j-1}\p \psi\|_{L^{\tilde{q}_j\beta_j}(\cU)}  \lesssim \|\psi\|_{H^{\sigma}(\cU)}^{\theta_j}(1+\|\nabla \varphi\|_{L^{\infty}(\cU)})^{1-\theta_j}\,.
    \]
  Since 
      \[
    \frac{1}{\tilde{p}}=\mez -\sum_{j=2}^n\frac{1}{\tilde{q}_j}= \frac{\beta + \mu -1}{2(\sigma -1)},
    \]
 Gagliardo-Nirenberg's inequality again yields
   \[
           \|\partial^{\beta}F\|_{W^{\mu, \tilde{p}}(\Omega)} \lesssim \|\nabla F\|_{L^{\infty}(\Omega)}^{1-\theta_0}\|F\|_{H^{\sigma}(\Omega)}^{\theta_0},
    \]
    where $\theta_0 = \frac{\beta + \mu -1}{\sigma -1}=\frac{2}{\tilde{p}}$. It follows that 
    \[
      II \le C(\|\nabla \varphi\|_{L^{\infty}(\cU)}) \|F\|_{H^{\sigma}(\Omega)}^{\tt_0}(\|\nabla F\|_{L^{\infty}(\Omega)}\|\psi\|_{H^{\sigma}(\cU)})^{1-\tt_0},
    \]
    since $\sum_{j=2}^n \theta_j\beta_j=1-\tt_0$.  Therefore, $II$ is bounded by the right-hand side of \eqref{eq.compositionEstimate}   after an application of Young's inequality. 
   
Next, we estimate $I$. Since $\p^j\varphi=\p^j\psi$ for $j\ge 2$, we have
\[
I= \|\partial^{\beta}F \|_{L^{p}(\Omega)} \left\| \prod_{j=1}^n (\partial^j \psi)^{\gamma_j}\right\|_{W^{\mu,q}(\cU)},
\]
where $\gamma_j=\beta_j$ for $j\ge 2$, and $\gamma_1=k\in\{0, \dots, \beta_1\}$. We first use Sobolev product estimates and H\"older's inequality to obtain 
  \begin{align*}
       I  &\lesssim  \|\partial^{\beta}F \|_{L^{p}(\Omega)}\sum_{i\in \{1, \dots, n\}: \gamma_i\ge 1} \|\partial^i \psi \|_{W^{\mu, \tilde{q}_i}}a_i \prod_{j\neq i} \|\partial^j \psi\|^{\gamma_j}_{L^{q_j\gamma_j}},
    \end{align*}
    where  
    \[
    a_i=\begin{cases} \|\partial^i\psi \|_{L^{\bar{q}_i (\gamma_i-1)}}^{\gamma_i-1}\quad\text{if~}\gamma_i\ge 2,\\
    1\quad\text{if~}\gamma_i=1,
    \end{cases}
    \]
$\tilde{q_i}\in (1, \infty)$, and  
    \[
    \begin{cases}
        \frac{1}{p}  +\frac{1}{\bar{q}_i} + \frac{1}{\tilde{q}_i} + \sum_{j\neq i} \frac{1}{q_j} = \frac{1}{2} \quad \text{if~} \gamma_i\ge 2,\\
         \frac{1}{p} + \frac{1}{\tilde{q}_i} + \sum_{j\neq i} \frac{1}{q_j} = \frac{1}{2} \quad \text{if~} \gamma_i=1.
        \end{cases}
    \]
    By \cref{thm.gagliardoNirenberg}, 
    \begin{align*}
  &\|\partial^i \psi \|_{W^{\mu, \tilde{q}_i}}\les \| \na \psi\|_{L^\infty}^{1-\tilde{\tt}_i}\| \psi\|_{H^\sigma}^{\tilde{\tt}_i},\quad \frac{1}{\tilde{q}_i}=\frac{\tilde{\tt}_i}{2}=\frac{i+\mu-1}{2(\sigma-1)},\\
      &\|\partial^j \psi\|_{L^{q_j\gamma_j}}\les  \| \na \psi\|_{L^\infty}^{1-\tt_j}\| \psi\|_{H^\sigma}^{\tt_j},\quad\frac{1}{q_j\gamma_j}=\frac{\tt_j}{2}=\frac{j-1}{2(\sigma-1)},\\
  &\|\partial^i\psi \|_{L^{\bar{q}_i (\gamma_i-1)}}\les \| \na \psi\|_{L^\infty}^{1-\bar{\tt}_i}\| \psi\|_{H^\sigma}^{\bar{\tt}_i},\quad \frac{1}{\bar{q}_i(\gamma_i-1)}=\frac{\bar{\tt}_i}{2}=\frac{i-1}{2(\sigma-1)}
    \end{align*}
\medskip 

\underline{Case $\gamma_1=k=\beta_1$.}  Then we have $\gamma_j=\beta_j$ for all $j$. Fix any $i$ such that $\beta_i\ge 1$. We have
\bq\label{def:nui}
\nu_i:=\tilde{\tt}_i+\sum_{j\ne i} \tt_j\gamma_j=\frac{(n-i\beta_i)-(\beta-\beta_i)+i+\mu-1}{\sigma-1}=\frac{\sigma-1+i(1-\beta_i)-(\beta-\beta_i)}{\sigma-1}\le 1
\eq
since $\beta\ge \beta_i\ge 1$.

We first consider the case $\beta_i=1$, i.e. $\gamma_i=1$. Then $a_i=1$ and 
\[
\frac1p=\mez-\left(\frac{1}{\tilde{q}_i} +\sum_{j\neq i} \frac{1}{q_j}\right)=\mez- \mez\left(\tilde{\tt}_i+\sum_{j\ne i} \tt_j\gamma_j\right)=\frac{1-\nu_i}{2}.
\]
Using Gagliardo-Nirenberg's inequality  again gives
\bq\label{GN:dbetaF:Hr}
    \|\partial^{\beta}F\|_{L^p(\Omega)} \lesssim \|\nabla F\|_{L^{\infty}(\Omega)}^{1-\theta_0}\|F\|_{H^r(\Omega)}^{\theta_0}\,,    
\eq
where  $\displaystyle\frac{1}{p} = \frac{\tt_0}{2}=\frac{\beta-1}{2(r-1)}$, so $\tt_0=\frac{2}{p}=1-\nu_i$. It follows that 
 \[
 \|\partial^{\beta}F \|_{L^{p}(\Omega)} \|\partial^i \psi \|_{W^{\mu, \tilde{q}_i}}a_i \prod_{j\neq i} \|\partial^j \psi\|^{\gamma_j}_{L^{q_j\gamma_j}}\les \|F\|_{H^r(\Omega)}^{1-\nu_i}\left( \|\nabla F\|_{L^{\infty}(\Omega)}\| \psi\|_{H^\sigma}\right)^{\nu_i}
 \]
which is controlled by the right-hand side of  \eqref{eq.compositionEstimate} via Young's inequality provided $r\le \sigma$. Indeed, since $\beta_i=1$, we have  $\nu_i=\frac{\sigma-1-(\beta-1)}{\sigma-1}$ and 
\[
\frac{1}{r-1}-\frac{1}{\sigma-1}=\frac{2}{p(\beta-1)}-\frac{1}{\sigma-1}=\frac{1-\nu_i}{\beta-1}-\frac{1}{\sigma-1}=\frac{(1-\nu_i)(\sigma-1)-(\beta-1)}{(\beta-1)(\sigma-1)}=0,
\]
whence $r=\sigma$. 

Next, we consider the case $\beta_i\ge 2$. Then the power of $\| \psi\|_{H^\sigma}$ will be 
\[
\tilde{\nu}_i:=\nu_i+\bar{\tt}_i(\gamma_i-1)=\nu_i+\tt_i(\bar{\beta}_i-1)=\frac{\sigma-1-(\beta-1)}{\sigma-1}\le 1,
\]
where $\nu_i$ is given by \eqref{def:nui}. Hence, 
\[
 \frac{1}{p} =\mez-\left( \frac{1}{\bar{q}_i} +\frac{1}{\tilde{q}_i} + \sum_{j\neq i} \frac{1}{q_j}\right) = \frac{1-\nu_i}{2}-\frac{1}{\bar{q}_i}=\frac{1-\nu_i}{2}-\frac{\bar{\tt}_i(\gamma_i-1)}{2}=\frac{1-\tilde{\nu}_i}{2}
 \]
and \eqref{GN:dbetaF:Hr} holds with $r$ replaced by $\tilde{r}$, where $\displaystyle\frac{1}{p} = \frac{\tt_0}{2}=\frac{\beta-1}{2(\tilde{r}-1)}$, so $\tt_0=\frac{2}{p}=1-\tilde{\nu}_i$.  Consequently, as in the case $\beta_i=1$ above, it suffices to check if $\tilde{r}\le \sigma$. Indeed,  a direct calculation reveals that $\tilde{r}=\sigma$.

We have proven that $I$ is controllable when $\gamma_1=k=\beta_1$.
\medskip 

\underline{Case $\gamma_1=k<\beta_1$.} In this case the exponent $\nu_i$ in \eqref{def:nui} decreases, and  so are $\tilde{\nu}_i$ and $p$ in view of  the relations between these exponents, which were established in Case 1. As the exponent $p$ in $\| \p^\beta F\|_{L^p}$ decreases, the exponent $r$ in the interpolation inequality \eqref{GN:dbetaF:Hr} also decreases, and hence we can conclude as in Case 1.

\subsection{Proof of \cref{lemm.diffeoSobolev}} We divide the proof into the integer case and the non-integer case. 

\textit{Step 1: Integer $\sigma = n \geqslant 0$.}     By the change of variables $x= \varphi(y)$ we obtain 
\[
\| \varphi^{-1}-\operatorname{id}\|_{L^2(\Omega)}^2=\int_{\cU} |y-\varphi(y)|^2|\det \na \varphi(y)|dy\le C(\| \na \varphi \|_{L^\infty(\cU)})\| \psi\|^2_{L^2(\cU)}.
\]
Differentiating $\varphi \circ \varphi^{-1} = \operatorname{id}$ gives 
$\na \varphi^{-1}=(\na \varphi \circ \varphi^{-1})^{-1}$ which combined with the cofactor formula yields
\[
    \na \varphi^{-1} = \frac{1}{\det ((\na \varphi) \circ \varphi^{-1})} \operatorname{Cof}((\na \varphi) \circ \varphi^{-1})^T. 
\]
Since  $|\det \na \varphi| \geq c_0$ and $\na \varphi \in L^{\infty}$, it follows that  
\begin{equation}
    \label{eq.Linfty-bound}
    \|\na \varphi^{-1} \|_{L^{\infty}(\Omega)}\le \frac{C_N}{c_0} \|\na \varphi\|_{L^{\infty}(\mathcal{U})}^{N}\le C(\|\na \psi\|_{L^{\infty}(\mathcal{U})}).
\end{equation} 
This basic estimate will be used through the proof. 

Differentiating the identity   $\varphi^{-1}+\psi\circ \varphi^{-1}=\operatorname{id}$ gives
\bq\label{grad:varphiinv}
    \nabla\varphi^{-1} - I_N = -(\nabla \psi \circ \varphi^{-1}) \nabla \varphi^{-1}. 
\eq
Then invoking \eqref{eq.Linfty-bound} and the change of variables $x\mapsto \varphi(x)$  using  $|\det \na \varphi| \geq c_0$, we obtain
\[
    \|\nabla \varphi^{-1} - I_N\|_{L^2(\Omega)} \le C(\|\nabla \psi\|_{L^{\infty}(\mathcal{U})})\|\nabla \psi\|_{L^2(\mathcal{U})}.
\]
This yields the desired $H^1$ estimate. Next, we claim that for all $n\ge 1$,

\begin{equation}
    \label{eq.iterativeFormula}
    \begin{aligned}
    \partial^n (\varphi^{-1} - \operatorname{id}) &= \frac{1}{(\partial \varphi \circ \varphi^{-1})^{2n-1}}\sum_{ \substack{\beta_1 + 2\beta_2 + \cdots + n\beta_n = 2n-1 \\ \beta_1 + \cdots + \beta_n = n \\ \beta_1'+\beta_1''=\beta_1}}c_{\beta,n} (\partial \varphi \circ \varphi^{-1})^{\beta_1'}(\partial \psi \circ \varphi^{-1})^{\beta_1''} \prod_{j=2}^n (\partial^j \psi \circ \varphi^{-1})^{\beta_j},
\end{aligned} 
\end{equation}
where $\beta=(\beta_1', \beta_1'', \beta_2, \dots, \beta_n)$. \eqref{eq.iterativeFormula} is true for $n=1$ in view of \eqref{grad:varphiinv} and the formula $\p\varphi^{-1}=\frac{1}{\p \varphi \circ \varphi^{-1}}$. Assume that \eqref{eq.iterativeFormula} is true for some $n\ge 1$. We use the product rule to differentiate \eqref{eq.iterativeFormula}. Since  
\[
 \p\frac{1}{(\partial \varphi \circ \varphi^{-1})^m}=-m \frac{\p (\partial \varphi \circ \varphi^{-1})}{(\partial \varphi \circ \varphi^{-1})^{m+1}}=-m\frac{\p^2\psi \circ \varphi^{-1}}{(\partial \varphi \circ \varphi^{-1})^{m+2}},
 \]
 differentiating $\frac{1}{(\partial \varphi \circ \varphi^{-1})^{2n-1}}$  increases the power of its denominator by $2$, the sum of $\beta_j$ by $1$ and the sum of $j\beta_j$ by $2$ due to the new term $\p^2\psi \circ \varphi^{-1}$.
On the other hand, if $m \ge 1$ then for $u\in\{\varphi, \psi\}$, 
 \[
 \p (\partial^j u \circ \varphi^{-1})^m=m \frac{(\partial^j u \circ \varphi^{-1})^{m-1}  (\partial^{j+1} \psi \circ \varphi^{-1})}{\p \varphi \circ \varphi^{-1}}=m \frac{(\p \varphi \circ \varphi^{-1})(\partial^j u \circ \varphi^{-1})^{m-1}  (\partial^{j+1} \psi \circ \varphi^{-1})}{(\p \varphi \circ \varphi^{-1})^2}.
 \]
 This  increases the power of the denominator in $\frac{1}{(\partial \varphi \circ \varphi^{-1})^{2n-1}}$ by $2$, the sum of $\beta_j$ by $1$ (due to  $\p \varphi \circ \varphi^{-1}$ in the numerator) and the sum $j\beta_j$ by $1.1+j.(-1)+(j+1).1=2$.
 
 Next, we generalize \eqref{eq.iterativeFormula} to dimension $N\geq 2$. To this end we observe that one can prove by the same induction argument that it holds: 
\begin{equation}
    \label{eq.iterativeFormula-bis}
    \partial^n (\varphi^{-1} - \operatorname{id}) =\sum_{ \substack{\beta_1 + 2\beta_2 + \cdots + n\beta_n = 2n-1 \\ \beta_1 + \cdots + \beta_n = n \\ \beta_1'+\beta_1''=\beta_1}}c_{\beta,n} (\nabla \varphi \circ \varphi^{-1})^{-(2n-1)}(\partial \varphi \circ \varphi^{-1})^{\beta_1'}(\partial \psi \circ \varphi^{-1})^{\beta_1''} \prod_{j=2}^n (\partial^j \psi \circ \varphi^{-1})^{\beta_j},
\end{equation}
where:
\begin{itemize}
    \item $\partial^n (\varphi^{-1} - \operatorname{id})$ should be understood as any possible $\partial_1^{n_1} \cdots \partial_N^{n_N}(\varphi^{-1}-\operatorname{id})_{k}$ for some $1 \leq k \leq N$ and $n_1 + \dots n_N = n$;
    \item $(\p^j \psi \circ \varphi^{-1})^k$ should be understood as the product of $k$ terms of the form  $\p_1^{j_1} \cdots \p_N^{j_N} \psi_\ell \circ \varphi^{-1}$ for some $\ell \le N$ and $j_1+\dots+j_N=j$;
    \item $(\p \varphi \circ \varphi^{-1})$ should be understood as $(\na \varphi \circ \varphi^{-1})_{ik}$, i.e. a matrix element of the gradient. 
    \item $(\nabla \varphi \circ \varphi^{-1})^{-(2n-1)}$ should be understood as $\prod_{p=1}^{2n-1} [(\na \varphi\circ\varphi^{-1})^{-1}]_{i_pk_p}$
\end{itemize}
Next, we expand 
\[
(\na\varphi\circ\varphi^{-1})_{ij}^{\beta'_1}=(\na\varphi\circ\varphi^{-1}_{ij}- \delta_{ij} +\delta_{ij})^{\beta'_1} = \sum_{0\leq k\leq \beta'_1}c'_{k,\beta'_1}((\na\varphi\circ \varphi^{-1})_{ij}-\delta_{ij})^k
\]
 and relabel $k$ as $\beta'_1$. Similarly we can write 
\[
    (\na \varphi\circ\varphi^{-1})^{-(2n+1)} = ((\na \varphi\circ\varphi^{-1})^{-1} - I_N + I_N)^{2n+1} = \sum_{0\leq k \leq 2n+1} ((\na \varphi\circ\varphi^{-1})^{-1}-I_N)^k.
\]
Combining these remarks we arrive at our final formula: 
\begin{multline}
    \label{eq.iterativeFormula-ter}
    \partial^n (\varphi^{-1} - \operatorname{id}) =\sum_{ \substack{\beta_1 + 2\beta_2 + \cdots + n\beta_n = 2n-1 \\ \beta_1 + \cdots + \beta_n = n \\ \beta_1'+\beta_1''\leq \beta_1 \\ 0\leq k \leq 2n+1}}c_{\beta,n,k} ((\nabla \varphi \circ \varphi^{-1})^{-1}-I_N)^{k}(\na \varphi \circ \varphi^{-1} - I_N)^{\beta_1'}(\partial \psi \circ \varphi^{-1})^{\beta_1''} \\ 
    \times \prod_{j=2}^n (\partial^j \psi \circ \varphi^{-1})^{\beta_j}.
\end{multline} 

Consider $n\ge 2$. Applying  Hölder's inequality to \eqref{eq.iterativeFormula-ter} yields
\[
\begin{aligned}
    \|\partial ^n (\varphi^{-1} - \operatorname{id})\|_{L^2(\Omega)} &\le C(\|\nabla \psi\|_{L^{\infty}(\mathcal{U})}) \sum_{\substack{\beta_1 + 2\beta_2 + \cdots + n\beta_n = 2n-1\\\beta_1 + \cdots + \beta_n = n}} \left\|\prod_{j=2}^n (\partial^j \psi \circ \varphi^{-1})^{\beta_j}\right\|_{L^2(\Omega)}\\
    &\le C(\|\nabla \psi\|_{L^{\infty}(\mathcal{U})}) \sum_{\substack{\beta_1 + 2\beta_2 + \cdots + n\beta_n = 2n-1\\\beta_1 + \cdots + \beta_n = n}} \left\|\prod_{j=2}^n (\partial^j \psi \circ \varphi^{-1})^{\beta_j}\right\|_{L^{\beta_jq_j}(\Omega)},
\end{aligned}
\]
where $\sum_{j=2}^n\frac{1}{q_j}=\mez$. 

For  $j\geq 2$ we write $\p^j \psi = \p^{j-1}(\p \psi)$ and apply \cref{thm.gagliardoNirenberg} to $\p \psi$ to have 
        \[
            \|\partial^j \psi\|_{L^{q_j\beta_j}(\mathcal{U})} \lesssim \|\nabla \psi\|^{1-\theta_j}_{L^{\infty}(\mathcal{U})}\|\psi\|_{H^n(\mathcal{U})}^{\theta_j},   
        \]
        where   $\frac{1}{q_j\beta_j}=\frac{\tt_j}{2}= \frac{j-1}{2(n-1)}$, which sets the choice of $q_j$. We check that  
        \[
            \sum_{j=2}^n \frac{1}{q_j} = \sum_{j=2}^n \frac{(j-1)\beta_j}{2(n-1)} = \frac{1}{2(n-1)}\left(\sum_{j=1}^n j\beta_j- \sum_{j=1}^n \beta_j\right) =  \frac{(2n-1)-n}{2(n-1)}= \frac{1}{2}.    
        \]
        Combining the above estimates, we deduce
        \[
            \|\partial ^n (\varphi^{-1}-\operatorname{id})\|_{L^2(\Omega)} \le C(\|\nabla \psi\|_{L^{\infty}(\mathcal{U})})\sum_{\substack{\beta_1 + 2\beta_2 + \cdots + n\beta_n = 2n-1\\\beta_1 + \cdots + \beta_n = n}} \|\psi\|_{H^n(\mathcal{U})}^{\theta(\beta)},
        \]
        where
        \[
            \theta (\beta) = \sum_{j=2}^n \beta_j\theta_j = 2\sum_{j=2}^n \frac{1}{q_j} = 1, 
        \]
        which yields the desired $H^n$ estimate. 
        \medskip 
        
\textit{Step 2: Non-integer  $\sigma> 0$.} Let $n = \lfloor \sigma \rfloor \geqslant 0$ and  $\mu =\sigma -n \in (0,1)$. 
We use the following estimates. 

\begin{lemm} Let $\psi$ and $\varphi$ as in \cref{lemm.diffeoSobolev}. Let $\mu \in (0,1)$ and $q\in(1,\infty)$ and $i\geq 0$ be an integer. Then the following estimates hold: 
    \begin{equation}
        \label{eq.Wo-finite-assump}
        \left\|(\na \varphi \circ \varphi^{-1})^{-1}-I_N\right\|_{W^{\mu,q}(\Omega)} \le C(\|\na \psi\|_{L^{\infty}(\mathcal{U})})\|\na \psi\|_{W^{\mu,q}(\cU)} 
    \end{equation}
    and
     \begin{equation}
        \label{eq.psi-sob}
        \|G \circ \varphi^{-1}\|_{W^{\mu,q}(\Omega)} \le C(\|\nabla \psi\|_{L^{\infty}(\mathcal{U})})\|G\|_{W^{\mu,q}(\mathcal{U})}
    \end{equation}
    for $G\in W^{\mu,q}(\mathcal{U})$. 
\end{lemm}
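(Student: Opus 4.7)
My plan is to prove the two estimates in the order they are stated, with the composition bound \eqref{eq.psi-sob} serving as the main ingredient for \eqref{eq.Wo-finite-assump}.

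For \eqref{eq.psi-sob} I would argue by linear interpolation. The endpoint $\mu=0$ follows from a change of variables $x=\varphi(y)$ together with $|\det\nabla\varphi|\ge c_0$, which gives the Jacobian bound $|\det\nabla\varphi^{-1}|\le c_0^{-1}$ and hence $\|G\circ\varphi^{-1}\|_{L^q(\Omega)}\le c_0^{-1/q}\|G\|_{L^q(\mathcal U)}$. For $\mu=1$, the chain rule and estimate \eqref{eq.Linfty-bound} yield $\nabla(G\circ\varphi^{-1})=(\nabla G\circ\varphi^{-1})\nabla\varphi^{-1}$ with $\|\nabla\varphi^{-1}\|_{L^\infty(\Omega)}\le C(\|\nabla\psi\|_{L^\infty(\mathcal U)})$, so another change of variables provides the desired bound. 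The linear operator $G\mapsto G\circ\varphi^{-1}$ is therefore continuous $W^{k,q}(\mathcal U)\to W^{k,q}(\Omega)$ for $k\in\{0,1\}$, and \cref{lemm.interpolation} applied with the complex interpolation identity $[W^{0,q},W^{1,q}]_\mu=W^{\mu,q}$ concludes.

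For \eqref{eq.Wo-finite-assump} the idea is to factor the composition out and reduce to a Moser-type estimate. Writing $\nabla\varphi=I_N+\nabla\psi$ and noting $(\nabla\varphi\circ\varphi^{-1})^{-1}=(\nabla\varphi)^{-1}\circ\varphi^{-1}$, set $F(X):=(I_N+X)^{-1}-I_N$. On the compact set $\{|X|\le \|\nabla\psi\|_{L^\infty},\ |\det(I_N+X)|\ge c_0\}$ the map $F$ is smooth with $F(0)=0$, and we may extend it to a smooth function on $\mathbb R^{N\times N}$. Then
\[
(\nabla\varphi\circ\varphi^{-1})^{-1}-I_N=F(\nabla\psi)\circ\varphi^{-1},
\]
and applying \eqref{eq.psi-sob} reduces matters to controlling $\|F(\nabla\psi)\|_{W^{\mu,q}(\mathcal U)}$ by $C(\|\nabla\psi\|_{L^\infty})\|\nabla\psi\|_{W^{\mu,q}(\mathcal U)}$.

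That reduction is carried out via the Gagliardo-seminorm description \eqref{eq.equivalent-norm-sobolev} which is valid on the minimally smooth domain $\mathcal U$ by \cref{lem.minimally-smooth-norm}. Using the mean-value bound
\[
|F(\nabla\psi(x))-F(\nabla\psi(y))|\le \|F'\|_{L^\infty(B(0,\|\nabla\psi\|_{L^\infty}))}|\nabla\psi(x)-\nabla\psi(y)|
\]
one obtains pointwise $|F(\nabla\psi)|\le C(\|\nabla\psi\|_{L^\infty})|\nabla\psi|$, so that integration of the $L^q$-norm and of the Gagliardo seminorm gives the required tame bound. The main (minor) obstacle is this Moser estimate in $W^{\mu,q}$ with $q\ne 2$: the statement \cref{est:nonl} quoted in the paper is for Hilbertian Sobolev spaces, but the Gagliardo formula makes the $W^{\mu,q}$ version immediate, so no substantial new work is required. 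Combining the two steps yields \eqref{eq.Wo-finite-assump} with a constant of the form $C(\|\nabla\psi\|_{L^\infty(\mathcal U)})$, as claimed.
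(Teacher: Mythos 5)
Your proof is correct and follows essentially the same route as the paper: \eqref{eq.psi-sob} is obtained exactly as in the paper, by checking the endpoints $\mu=0$ (change of variables with $|\det\na\varphi|\ge c_0$) and $\mu=1$ (chain rule plus \eqref{eq.Linfty-bound}) and interpolating, and \eqref{eq.Wo-finite-assump} rests on the same two ingredients, namely the composition bound \eqref{eq.psi-sob} and the algebraic control of differences of matrix inverses, measured through the Gagliardo seminorm of \eqref{eq.equivalent-norm-sobolev}. The only structural difference is the order: you factor $(\na\varphi\circ\varphi^{-1})^{-1}-I_N=F(\na\psi)\circ\varphi^{-1}$ with $F(X)=(I_N+X)^{-1}-I_N$, apply \eqref{eq.psi-sob} first, and then prove the Moser-type bound for $F(\na\psi)$ on $\mathcal{U}$, whereas the paper works directly on $\Omega$, uses $w=-\na\psi\circ\varphi^{-1}\,\na\varphi^{-1}$ for the $L^q$ part and the identity $A^{-1}-B^{-1}=A^{-1}(B-A)B^{-1}$ for the seminorm, and applies \eqref{eq.psi-sob} last with $G=\na\psi$; the two computations are interchangeable.

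One point needs a more careful formulation: the mean-value bound with $\|F'\|_{L^{\infty}(B(0,\|\na\psi\|_{L^{\infty}}))}$ is not literally available, since $F$ is singular wherever $\det(I_N+X)=0$ and such matrices can lie in that ball (and on segments joining $0$ to values of $\na\psi$); the hypothesis only guarantees $|\det(I_N+\na\psi(x))|\ge c_0$. The fix is either the resolvent identity $F(X)-F(Y)=(I_N+X)^{-1}(Y-X)(I_N+Y)^{-1}$ together with the cofactor bound \eqref{eq.Linfty-bound} (which is exactly the paper's step), or the smooth bounded extension of $F$ off the compact set $\{|X|\le\|\na\psi\|_{L^{\infty}},\,|\det(I_N+X)|\ge c_0\}$ that you mention; in either case the Lipschitz constant, and hence the final constant, depends also on $c_0$ and $N$, which is consistent with the constants allowed in \cref{lemm.diffeoSobolev}. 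With this adjustment your argument is complete.
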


\begin{proof} To prove  \eqref{eq.psi-sob}, we note that it holds with $\mu=0$ and $\mu=1$ with the aid of \eqref{eq.Linfty-bound}. Then  \eqref{eq.psi-sob} follows by  a similar interpolation argument as for \eqref{eq.interpolationComp}. 
    
In order to prove \eqref{eq.Wo-finite-assump}, we denote $w = (\na \varphi \circ \varphi^{-1})^{-1}-I_N\equiv \na \varphi^{-1}-I_N$ and use the following two expressions for $w$:  
\begin{align}
    w &= (\na\psi \circ \varphi^{-1}+I_N)^{-1}-I_N\label{eq.Wmu-version}\\
    &= -\na \psi \circ \varphi^{-1} \na \varphi^{-1}. \label{eq.Lq-version}
\end{align}
Let us recall the equivalent norm  \eqref{eq.equivalent-norm-sobolev} of  $W^{\mu,q}(\Omega)$: 
\begin{equation}
    \label{Fmuq2-norm}
    \|u\|_{W^{\mu,q}(\Omega)} = \|u\|_{L^q(\Omega)} +|u|_{W^{\mu, q}(\Omega)},\quad |u|_{W^{\mu, q}(\Omega)}= \left(\iint_{x\in \Omega \times \Omega} \frac{|u(x')-u(x)|^q}{|x-x'|^{N+q\mu}}\,\mathrm{d}x'\mathrm{d}x\right)^{\frac{1}{q}}. 
\end{equation}
Let $w_{ij}$ denote the $(i, j)$-entry of $w$. Since   \eqref{eq.Lq-version} and \eqref{eq.Linfty-bound} imply 
\[ 
    \|w_{ij}\|_{L^q} \le C(\|\na \psi\|_{L^{\infty}(\cU)})\|\na \psi\|_{L^q}\leq C(\|\na \psi\|_{L^{\infty}(\cU)})\|\na \psi\|_{W^{\mu,q}},
\]
it remains to bound the integral term in  \eqref{Fmuq2-norm}. To this end we observe that for any invertible matrices $A$ and $B$ we have $A^{-1}-B^{-1} = A^{-1}(B-A)B^{-1}$, and therefore using \eqref{eq.Wmu-version} we can write 
\[
    w_{ij}(x)-w_{ij}(x')=\sum_{1\leq k, \ell \leq N}(\na\psi \circ \varphi^{-1}(x)+I_N)^{-1}_{ik}(\na\psi \circ \varphi^{-1}(x')-\na\psi \circ \varphi^{-1}(x))_{k\ell} (\na\psi \circ \varphi^{-1}(x')+I_N)^{-1}_{\ell j}.
\]
Since  $\| (\na\psi \circ \varphi^{-1}+I_N)^{-1} \|_{L^\infty(\Omega)}\le C(\|\na \psi\|_{L^{\infty}(\cU)})$, it follows that 
\begin{equation}
    \label{eq.rewriting}
    |w_{ij}(x)-w_{ij}(x')|^q \le C(\|\na \psi\|_{L^{\infty}(\cU)}) \sum_{1\leq k,\ell \leq N}|(\na \psi \circ \varphi^{-1}(x)-\na\psi \circ \varphi^{-1}(x'))_{k\ell}|^q, 
\end{equation}
whence 
\[
   |w_{ij}(x')|_{W^{\mu, q}(\Omega)} \le C(\|\na \psi\|_{L^{\infty}(\cU)}) \sum_{1\leq k,\ell \leq N} |(\na \psi \circ \varphi^{-1})_{k\ell}|_{W^{\mu,q}(\Omega)}.
\] 
Finally, applying \eqref{eq.psi-sob} with $G=\na \psi$  yields  \eqref{eq.Wo-finite-assump}. 
\end{proof}
Now we return to the proof of \eqref{eq.diffoSobolev} for $\sigma=n+\mu$. The case $n=0$ follows from \eqref{eq.psi-sob}:
\[
    \|\varphi^{-1}-\operatorname{id}\|_{H^{\mu}} = \|-\psi \circ \varphi^{-1}\|_{H^{\mu}} \le  C(\|\nabla \psi\|_{L^{\infty}(\mathcal{U})})\|\psi\|_{H^{\mu}(\mathcal{U})}.  
\]
The case $n=1$ is a consequence of \eqref{eq.Wo-finite-assump} with $q=2$. 

Now we can assume $n\geq 2$. 
Since the $L^{\infty}$ norms of $(\na \varphi \circ \varphi^{-1})^{-1}-I_N$, $\na \varphi \circ \varphi^{-1}$, and $\p \psi \circ \varphi^{-1}$ are bounded by $C(\|\nabla \psi\|_{L^{\infty}})$, applying the product rule in Sobolev spaces given by \cref{thm.sobolevProducts} to \eqref{eq.iterativeFormula-ter}, we obtain
\begin{multline*}
    \|\partial^n(\varphi^{-1} - \operatorname{id})\|_{H^{\mu}(\Omega)}\le C(\|\nabla \psi\|_{L^{\infty}})\sum_{\substack{\beta_1 + 2\beta_2 + \cdots + n\beta_n = 2n-1 \\ \beta_1 + \cdots + \beta_n = n}} A(\beta) \\ 
    +  C(\|\nabla \psi\|_{L^{\infty}})\sum_{\substack{\beta_1 + 2\beta_2 + \cdots + n\beta_n = 2n-1 \\ \beta_1 + \cdots + \beta_n = n \\ \beta_1' \geq 1}} B'(\beta) +  C(\|\nabla \psi\|_{L^{\infty}})\sum_{\substack{\beta_1 + 2\beta_2 + \cdots + n\beta_n = 2n-1 \\ \beta_1 + \cdots + \beta_n = n \\ \beta_1'' \geq 1}}B''(\beta) \\
    + C(\|\nabla \psi\|_{L^{\infty}})\sum_{i=2}^n\sum_{\substack{\beta_1 + 2\beta_2 + \cdots + n\beta_n = 2n-1 \\ \beta_1 + \cdots + \beta_n = n \\ \beta_i \geq 1}} B_i(\beta),   
\end{multline*}
where the terms $A(\beta)$, $C'(\beta)$, $C''(\beta)$ and $C_i(\beta)$ are defined by 
\begin{align*}
    A(\beta) = \left\|(\na \varphi \circ \varphi^{-1})^{-1}-I_N\right\|_{W^{\mu,q_1}(\Omega)}  
    \prod_{j=2}^n\|\partial^j \psi \circ \varphi^{-1}\|_{L^{q_j\beta_j}(\Omega)}^{\beta_j}.
\end{align*}
\[
    B'(\beta) = \left\|(\na \varphi \circ \varphi^{-1} - I_N)^{\beta_1'}\right\|_{W^{\mu,q_1}(\Omega)}  
    \prod_{j=2}^n\|\partial^j \psi \circ \varphi^{-1}\|_{L^{q_j\beta_j}(\Omega)}^{\beta_j}, 
\]
\[
        B''(\beta) = \left\|(\p \psi \circ \varphi^{-1})^{\beta_1''}\right\|_{W^{\mu,q_1}(\Omega)}  
        \prod_{j=2}^n\|\partial^j \psi \circ \varphi^{-1}\|_{L^{q_j\beta_j}(\Omega)}^{\beta_j}, 
\]
\[
    C_i(\beta) = \|(\partial^i \psi \circ \varphi^{-1})^{\beta_i}\|_{W^{\mu,q_i'}}
    \prod_{\substack{2\leq j \leq n \\ j\neq i}}\|\partial^j \psi \circ \varphi^{-1}\|_{L^{\tilde{q}_j\beta_j}(\Omega)}^{\beta_j}, 
\]
with $\displaystyle\sum_{j=1}^n \frac{1}{q_j}=\frac{1}{\tilde{q}_i}+\displaystyle\sum_{\substack{1\leq j \leq n\\j\neq i}} \frac{1}{\tilde{q}_j}=\frac{1}{2}$, and $q_1, \tilde{q}_i \in [2,\infty)$ to be chosen later.
We interpolate using \cref{thm.gagliardoNirenberg} on $\partial \psi$, which yields for any $j\geq 2$, 
\bq\label{interpolate:djpsi}
    \|\partial^j \psi \circ \varphi^{-1}\|_{L^{q_j\beta_j}(\Omega)}\le C(\|\nabla \psi\|_{L^{\infty}}) \|\partial^j \psi\|_{L^{q_j\beta_j}(\cU)}\le C(\|\nabla \psi\|_{L^{\infty}})\|\psi\|^{\theta_j}_{H^{\sigma}(\Omega)}
\eq
with $\frac{1}{\beta_jq_j}=\frac{\theta_j}{2}=\frac{j-1}{2(\sigma - 1)}$. It follows that 
\[
    \sum_{j=2}^n \frac{1}{q_j} = \sum_{j=2}^n \frac{(j-1)\beta_j}{2(\sigma - 1)} = \frac{n-1}{2(\sigma - 1)}, \quad \sum_{j=2}^n\beta_j\theta_j = 2 \sum_{j=2}^n \frac{1}{q_j} =2\left(\mez-\frac{1}{q_1}\right)= 1-\frac{2}{q_1},
\] 
and hence $\frac{1}{q_1}=\frac{\mu}{2(\sigma-1)}$. Applying \cref{thm.gagliardoNirenberg}  again yields 
\bq\label{interpolate:napsWq1}
\| \na \psi\|_{W^{\mu, q_1}} \le C(\|\na \psi\|_{L^{\infty}(\cU)})\| \na \psi\|_{H^{\sigma-1}}^\tt,\quad \frac{1}{q_1}=\frac{\tt}{2}=\frac{\mu}{2(\sigma-1)}.
\eq
Combining \eqref{interpolate:djpsi}, \eqref{eq.Wo-finite-assump}, and \eqref{interpolate:napsWq1}, we deduce 
\begin{equation}
    \label{eq.estAbeta}
    A(\beta) \le C(\|\na \psi\|_{L^{\infty}(\cU)})\| \na \psi\|_{W^{\mu, q_1}} \|\psi\|^{1-\frac{2}{q_1}}_{H^{\sigma}(\cU)}\le C(\|\na \psi\|_{L^{\infty}(\cU)})\|\psi\|_{H^{\sigma}(\cU)}.
\end{equation}
We now move to estimating $B''(\beta)$. In view of \eqref{eq.psi-sob} and by interpolation using \cref{thm.gagliardoNirenberg} on $\p \psi$, it holds 
\begin{equation}
    \label{eq.estimate-Bpp}
    \|\p \psi \circ \varphi^{-1}\|_{W^{\mu,q_1}(\Omega)} \le C(\|\nabla \psi\|_{L^{\infty}})\|\p\psi\|_{W^{\mu,q_1}(\cU)} \le C(\|\nabla \psi\|_{L^{\infty}})\|\psi\|_{H^{\sigma}(\cU)}^{\theta_1}, 
\end{equation}
with $\frac{1}{q_1}=\frac{\theta_1}{2}=\frac{\mu}{2(\sigma - 1)}$ so that in case $\beta''_1=1$ we obtain 
\begin{equation}
    \label{eq.estBprime}
    B''(\beta) \le C(\|\na \psi\|_{L^{\infty}(\cU)})\|\psi\|_{H^{\sigma}(\cU)}.
\end{equation}
Let us now assume that $\beta''_1 \geq 2$. The product estimate \eqref{eq.productSobol} with $p_1=q_1=\infty$ yields 
\[
    \|(\p \psi \circ \varphi^{-1})^{\beta_1}\|_{W^{\mu,q_1}(\Omega)} \les \|\p\psi \circ \varphi^{-1}\|_{W^{\mu,q_1}(\Omega)} \|\p \psi \circ \varphi^{-1}\|_{L^{\infty}(\Omega)}^{\beta_1-1}
\] 
Using change of variables, estimate \eqref{eq.psi-sob} and \cref{thm.gagliardoNirenberg} on $\p\psi$, we get
\begin{equation*}
    \|(\p \psi \circ \varphi^{-1})^{\beta_1}\|_{W^{\mu,q_1}(\Omega)} \le C(\|\nabla \psi\|_{L^{\infty}}(\cU)) \|\p\psi\|_{W^{\mu,q'_1}(\cU)} \le C(\|\nabla \psi\|_{L^{\infty}(\cU)}) \|\psi\|_{H^{\sigma}(\cU)}^{\theta_1},
\end{equation*}
where $\frac{1}{q_1}=\frac{\theta_1}{2}=\frac{\mu}{2(\sigma - 1)}$. It follows that $B''(\beta)$ is controlled by the right-hand side of \eqref{eq.estBprime}. 

The term $B'(\beta)$ is estimated similarly, the only difference being that instead of \eqref{eq.estimate-Bpp} we use 
\begin{align*}
    \left\|\na \varphi \circ \varphi^{-1} - I_N\right\|_{W^{\mu,q_1}(\Omega)} &\le C(\|\na \psi\|_{L^{\infty}(\cU)})\|\na \varphi - I_N\|_{W^{\mu,q_1}(\cU)} \\
    &\le C(\|\na \psi\|_{L^{\infty}(\cU)})\|\na \psi\|_{W^{\mu,q_1}(\cU)}\\ 
    &\le C(\|\na \psi\|_{L^{\infty}(\cU)})\|\psi\|_{H^{\sigma}}^{\theta_1},
\end{align*}
which follows from \eqref{eq.psi-sob}, $\na \varphi = \na \psi + I_N$ and interpolation. Therefore, we arrive at 
\begin{equation}
    \label{eq.estBprimeprime}
    B'(\beta) \le C(\|\nabla \psi\|_{L^{\infty}(\cU)})\|\psi\|_{H^{\sigma}(\cU)}.
\end{equation}

We turn to estimating $C_i(\beta)$. First, we choose $\frac{1}{\tilde{q}_j}=\frac{1}{q_j}=\frac{\theta_j}{2}=\frac{j-1}{2(\sigma-1)}$, so that interpolating using \cref{thm.gagliardoNirenberg} we have  
\[
    \prod_{\substack{2\leq j \leq n \\ j\neq i}}\|\partial^j \psi \circ \varphi^{-1}\|_{L^{\tilde{q}_j\beta_j}(\Omega)}^{\beta_j} \le \|\psi\|_{H^{\sigma}(\cU)}^{\theta}, 
\]
where 
\[
    \theta= \sum_{\substack{2\leq j \leq n \\j \neq i}} \beta_j\theta_j=\sum_{\substack{2\leq j \leq n \\j \neq i}} \beta_j\frac{j-1}{\sigma - 1} = \frac{n-1}{\sigma - 1} - \frac{\beta_i(i - 1)}{\sigma-1}. 
\]
In case $\beta_i=1$ an application of \eqref{eq.psi-sob} and interpolation on $\p \psi$ using \cref{thm.gagliardoNirenberg} gives 
\[
    \|\partial^i \psi \circ \varphi^{-1}\|_{W^{\mu,\tilde{q}_i}(\Omega)} \le C(\|\na \psi\|_{L^{\infty}(\cU)}) \|\p^i\psi\|_{W^{\mu,\tilde{q}_i}(\cU)} \le C(\|\na \psi\|_{L^{\infty}(\cU)}) \|\psi\|_{H^{\sigma}}^{\tilde{\theta}_i}, 
\]
where $\frac{1}{\tilde{q}_i}=\frac{\tilde{\theta}_i}{2}=\frac{\mu + i - 1}{2(\sigma - 1)}$. Observe that $\tilde{\theta}_i = 1 - \theta$ and that $\frac{1}{\tilde{q}_i} + \sum_{j\neq i} \frac{1}{\tilde{q}_j} = \frac{1}{2}$. Finally we have obtained that 
\begin{equation}
    \label{eq.estCi}
    C_i(\beta) \le C(\|\na \psi\|_{L^{\infty}(\cU)})\|\psi\|_{H^{\sigma}(\cU)}. 
\end{equation}
In the case $\beta'_i \geq 2$, observe that we have $\frac{1}{\tilde{q}_i}=\frac{\mu + \beta_i(i-1)}{2(\sigma-1)}$. 
We use the product estimate of \cref{thm.sobolevProducts}, \eqref{eq.psi-sob} and interpolation inequalities of \cref{thm.gagliardoNirenberg}: 
\begin{align*}
    \|(\partial^i \psi \circ \varphi^{-1})^{\beta_i}\|_{W^{\mu,\tilde{q}_i}(\Omega)} & \le \|\partial^i \psi \circ \varphi^{-1}\|^{\beta_i-1}_{L^{(\beta_i-1)\hat{q}_i}(\cU)} \|\partial^i \psi \circ \varphi^{-1}\|_{W^{\mu,\bar{q}_i}(\Omega)}\\
    & \le C(\|\na \psi\|_{L^{\infty}(\cU)}) \|\partial^i \psi \|^{\beta_i-1}_{L^{(\beta_i-1)\hat{q}_i}(\cU)} \|\partial^i \psi\|_{W^{\mu,\bar{q}_i}(\cU)} \\
    & \le C(\|\na \psi\|_{L^{\infty}(\cU)}) \|\psi\|_{H^{\sigma}(\cU)}^{(\beta_i-1)\hat{\theta}_i}\|\psi\|_{H^{\sigma}(\cU)}^{\bar{\theta_i}}, 
\end{align*}
with $\frac{1}{(\beta_i-1)\hat{q}_i}=\frac{\hat{\theta}_i}{2}=\frac{i-1}{2(\sigma -1)}$ and $\frac{1}{\bar{q}_i}=\frac{\bar{\theta}_i}{2}=\frac{\mu + i-1 }{2(\sigma - 1)}$. Observe that such a choice of $(\hat{q}_i,\bar{q}_i)$ is justified since $\frac{1}{\hat{q}_i}+\frac{1}{\bar{q}_i}=\frac{\mu + \beta_i(i-1)}{2(\sigma-1)}=\frac{1}{\tilde{q}_i}$. This is enough to control $C_i(\beta)$ by the right-hand side of \eqref{eq.estCi} as $\theta + \bar{\theta}_i + (\beta_i-1)\hat{\theta}_i=1$. 

Gathering \eqref{eq.estAbeta}, \eqref{eq.estBprime}, \eqref{eq.estBprimeprime} and \eqref{eq.estCi} we obtain the claimed estimate, which ends the proof. 
        
\section{Sufficient conditions for $\mathfrak{G}$}\label{appendixE}

\begin{prop}\label{prop:gamma} Let $F : \mathbb{R} \to \mathbb{R}$, and let $J=(-1,0)$ in the finite depth case or $J=(-\infty,0)$ in the infinite depth case. Then for all $a >0$ and $\sigma\ge 0$, there exists a function $C_{F, a, \sigma}(\cdot):\Rr_+\to \Rr_+$ such that the following assertions hold.
\begin{enumerate}[(i)]
    \item If $F\in C^{\lceil \sigma \rceil}_b(\Rr)$, then for all $h, v \in H^{\sigma}(\Rr^d \times J)$, there holds 
    \begin{equation}
        \label{eq:est-unbounded}
        \|F(h(x,z)+az)v\|_{H_{x,z}^{\sigma}(\Rr^d \times J)} \le C_{F, a, \sigma}(\|h\|_{L^{\infty} \cap H^{\sigma}})\|v\|_{L^{\infty} \cap H^{\sigma}(\Rr^d \times J)}.
    \end{equation}
    \item If $F'\in C^{\lceil \sigma \rceil}_b(\Rr)$, then for all $h_1, h_2, w \in H^{\sigma}(\Rr^d \times J)$, there holds 
    \begin{equation}
        \label{eq:diff-est-unbounded}
        \|(F(h_1+az)-F(h_2+az))w\|_{H_{x,z}^{\sigma}(\Rr^d \times J)} \le C_{F, a, \sigma}(\|(h_1, h_2)\|_{L^{\infty} \cap H^{\sigma}})\|(h_1-h_2)w\|_{L^{\infty}\cap H^{\sigma}}.
    \end{equation}
\end{enumerate}
\end{prop}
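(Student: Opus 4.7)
The first step reduces (ii) to (i). Writing
\[
F(h_1+az)-F(h_2+az) = (h_1-h_2)\int_0^1 F'(h_s+az)\,\mathrm{d}s,\qquad h_s := h_2+s(h_1-h_2),
\]
we have $(F(h_1+az)-F(h_2+az))w = \int_0^1 F'(h_s+az)[(h_1-h_2)w]\,\mathrm{d}s$. Applying (i) with $F'$, $h_s$, and $(h_1-h_2)w$ in place of $F$, $h$, and $v$, noting that $\|h_s\|_{L^\infty\cap H^\sigma}\le \|h_1\|_{L^\infty\cap H^\sigma}+\|h_2\|_{L^\infty\cap H^\sigma}$, and integrating in $s$ yields (ii).

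The main work is (i). The central difficulty is that in the infinite-depth case $J=(-\infty,0)$, $F(h+az)$ is bounded but does not itself belong to $H^\sigma(\mathbb{R}^d\times J)$, since already $F(az)$ does not decay in $x$. Consequently the standard tame product estimate $\|fg\|_{H^\sigma}\lesssim \|f\|_{L^\infty}\|g\|_{H^\sigma}+\|g\|_{L^\infty}\|f\|_{H^\sigma}$ cannot be invoked with $f=F(h+az)$. Instead I use that the product $F(h+az)\cdot v$ inherits integrability from $v$.

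For integer $\sigma=n\in\mathbb{N}$, I expand $\partial^\alpha[F(h+az)v]$ for $|\alpha|\le n$ by Leibniz's rule and apply the Faà di Bruno formula to each factor $\partial^\beta[F(h+az)]$, which reduces the task to bounding finitely many expressions of the form $F^{(m)}(h+az)\prod_j(\partial^{\mu_j}(h+az))^{\gamma_j}\,\partial^{\alpha-\beta}v$ with $\sum_j|\mu_j|\gamma_j=|\beta|\le n$. The identity $\partial^{\mu_j}(h+az)=\partial^{\mu_j}h+a\mathbf{1}_{\mu_j=e_{d+1}}$ lets me further expand in powers of $a$ so that each resulting summand contains only derivatives of $h$ times $\partial^{\alpha-\beta}v$, multiplied by $F^{(m)}(h+az)\in L^\infty$. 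Hölder's inequality combined with the Gagliardo-Nirenberg interpolation of \cref{thm.gagliardoNirenberg}, placing each $\partial^{k_i}h$ in $L^{p_i}$ and $\partial^{\alpha-\beta}v$ in $L^{p_v}$ with the natural scaling $\sum 1/p_i+1/p_v=1/2$, then bounds every term by $C(\|h\|_{L^\infty\cap H^n})\|v\|_{L^\infty\cap H^n}$, with the constant also depending on $\|F\|_{C^n_b}$ and $a$.

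For non-integer $\sigma=n+\mu$ with $n=\lfloor\sigma\rfloor$ and $\mu\in(0,1)$, I use the equivalent norm of \cref{lem.minimally-smooth-norm}, namely $\|u\|_{H^\sigma}^2\sim \|u\|_{H^n}^2+\sum_{|\alpha|=n}|\partial^\alpha u|_{H^\mu}^2$, reduce the integer part to the previous step, and control the Gagliardo seminorm by the same Leibniz/Faà di Bruno expansion together with the pointwise Lipschitz bound $|F^{(m)}(h_1+az_1)-F^{(m)}(h_2+az_2)|\le \|F^{(m+1)}\|_{L^\infty}(|h_1-h_2|+a|z_1-z_2|)$. The $a|z_1-z_2|$ contribution is always paired with a factor of $v$ or $\partial^\gamma v$, which provides the integrability of the difference quotient on $\mathbb{R}^d\times J$. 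The main obstacle throughout is the combinatorial bookkeeping required to ensure that every bounded-but-non-decaying contribution stemming from the constant $a$ (which appears whenever $\partial_z$ acts on $h+az$) is paired with an $L^2$ factor coming from $v$ or one of its derivatives, so that every Faà di Bruno term indeed lies in $L^2(\mathbb{R}^d\times J)$; in the finite-depth case $J=(-1,0)$ this issue disappears since $az$ is then bounded and one can alternatively apply \cref{est:nonl} directly.
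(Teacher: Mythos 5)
Your reduction of (ii) to (i) and your integer-order argument (Leibniz plus Fa\`a di Bruno, isolating the constant contribution $a$ from $\partial_z(h+az)$, then H\"older and the Gagliardo--Nirenberg inequalities of \cref{thm.gagliardoNirenberg}) coincide with the paper's proof, up to the bookkeeping of exponents which the paper carries out explicitly (the terms where $\partial_zh+a$ is replaced by the constant $a$ force the $v$-factor to absorb the leftover H\"older exponent, via an interpolation index strictly below $\sigma$). Up to that point the proposal is essentially the paper's argument.

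There is, however, a concrete gap in your treatment of the fractional seminorm in the infinite-depth case $J=(-\infty,0)$. In the telescoping of the Gagliardo seminorm, the term in which the increment falls on the $z$-argument of $F^{(m)}$ is of the form $|\partial^{n-m}v(x,z)|\,|F^{(m)}(h(x',z')+az)-F^{(m)}(h(x',z')+az')|$, so the $v$-factor is evaluated at the \emph{outer} point $(x,z)$ and cannot help the inner integral in $(x',z')$ converge. With your pure Lipschitz bound, which gives $a|z-z'|$, that inner kernel integral is $\int_{\Rr^d\times J}|z-z'|^{q}\big(|x-x'|^{2}+|z-z'|^{2}\big)^{-N/2-q\mu/2}\,\mathrm{d}x'\mathrm{d}z'$, whose integrand behaves like $|z-z'|^{q(1-\mu)-1}$ for large $|z-z'|$ and therefore diverges when $z'$ ranges over $(-\infty,0)$; so the claim that the pairing with $v$ ``provides the integrability of the difference quotient'' fails exactly where the statement is delicate. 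The paper's proof repairs this by using the Lipschitz bound \emph{and} the boundedness of $F^{(m)}$ simultaneously, obtaining the increment bound $\lesssim(\|F^{(m)}\|_{L^\infty}+\|F^{(m+1)}\|_{L^\infty})\,|z-z'|/(1+|z-z'|)$, which makes the kernel integrand decay like $|z-z'|^{-1-q\mu}$ at infinity and renders the integral finite uniformly in $(x,z)$. Your final remark that the difficulty disappears in finite depth is correct precisely because there $|z-z'|\le1$; but as written your infinite-depth estimate does not close, and the missing ingredient is this interpolation between the Lipschitz and $L^\infty$ bounds on $F^{(m)}$.
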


\begin{proof} We first note that (ii) is a consequence of (i) since we can write 
\[
    (F(h_1+az)-F(h_2+az))w=\int_0^1 F'(az+(1-\tau)h_2+\tau h_1)\,\mathrm{d}\tau (h_1-h_2)w. 
\]
Then \eqref{eq:diff-est-unbounded} follows by applying \eqref{eq:est-unbounded} with $F$ replaced by  $F'$, $h=(1-\tau)h_2+\tau h_1$, and $v=(h_1-h_2)$. Without loss of generality, we  shall prove (i) for $a=1$.

We start by assuming that $\sigma = n$ is an integer, so that it reduces  to estimating the terms 
\[
    A_m=\|\p^{m}(F(h+z))\p^{n-m}v\|_{L^2},\quad 0 \le m \le n,
\] 
where we recall that $\p^\ell$ denotes any partial derivative of order $\ell$ in $(x, z)$. Clearly $A_0\le \| F\|_{L^\infty}\| v\|_{H^\sigma}$. Now consider $1\le m\le n$. Using the Fa\`a di Bruno formula \eqref{eq.faaDiBruno2}, we further reduce the task to estimating the terms
\bq\label{def:Ambeta}
    A_{m,\beta} = \left\|F^{(\beta)} (h+z) \p^{n-m}v\prod_{j=1}^m (\p^j(h+z))^{\beta_j}\right\|_{L^2},    
\eq
where $\beta_1 + \cdots + \beta_m = \beta$ and $\beta_1 + 2\beta_2 + \cdots + m\beta_m = m$. Note that $\p^j(h+z)=\p_zh + 1$ if $j=1$ and $\p=\p_z$, and  $\p^j(h+z)=\p^jh$ otherwise. Hence, 
\[
     A_{m,\beta}  = \begin{cases}
   \left\|F^{(\beta)} (h+z) \p^{n-m}v(\p_zh+1)^{\beta_1}\prod_{j=2}^m  (\p^jh)^{\beta_j}\right\|_{L^2}    \quad\text{if~} \p^1(h+z)\equiv \p_z(h+z),\\
      \left\| F^{(\beta)} (h+z) \p^{n-m}v\prod_{j=1}^m (\p^jh)^{\beta_j}\right\|_{L^2}    \quad\text{otherwise}.
    \end{cases}
\]
As $|\p_z h +1|^{\beta_1} \lesssim |\p_z h|^{\beta_1} +1$ and $F^{(\beta)}$ is bounded for $\beta\le n$, we use Hölder's inequality with 
\[
    \sum_{j=1}^m\frac{1}{q_j} + \frac{1}{q} = \sum_{j=2}^m \frac{1}{q_j} + \frac{1}{\tilde{q}} = \frac{1}{2},
\] 
where $q$, $\tilde{q}$, $q_j$ will be chosen later,  to have
\[
    A_{m,\beta} \le \|F^{(\beta)}\|_{L^{\infty}}\prod_{j=1}^m \|\p^jh\|_{L^{q_j\beta_j}}^{\beta_j}\|\p^{n-m}v\|_{L^{q}} + \|F^{(\beta)}\|_{L^{\infty}}\prod_{j=2}^m \|\p^jh\|_{L^{q_j\beta_j}}^{\beta_j}\|\p^{n-m}v\|_{L^{\tilde{q}}}.
\]
Then, we use the Gagliardo-Nirenberg inequality from \cref{thm.gagliardoNirenberg} (with $(r,\sigma)=(2,n)$) to bound 
\bq\label{est:Ambeta:2} 
\begin{aligned}
    A_{m,\beta} & \le \|F^{(\beta)}\|_{L^{\infty}}\prod_{j=1}^m \|h\|_{L^{\infty}}^{\beta_j(1-\theta_j)}\|h\|_{H^n}^{\beta_j\theta_j} \|v\|_{L^{\infty}}^{1-\theta}\|v\|_{H^n}^{\theta}\\
    &\qquad +\|F^{(\beta)}\|_{L^{\infty}}\prod_{j=2}^m \|h\|_{L^{\infty}}^{\beta_j(1-\theta_j)}\|h\|_{H^n}^{\beta_j\theta_j} \|v\|_{L^{\infty}}^{1-\tilde\theta}\|v\|_{H^{n\frac{n-m}{n-m+\beta_1}}}^{\tilde\theta},
\end{aligned}
\eq
where we have chosen $\frac{1}{q}=\frac{\theta}{2}= \frac{n-m}{2n}$, $\frac{1}{q_j\beta_j}=\frac{\theta_j}{2}=\frac{j}{2n}$, and $\frac{1}{\tilde{q}}=\frac{\tilde{\theta}}{2}= \frac{n-m+\beta_1}{2n}$. We check that
\[
    \sum_{j=1}^m \frac{1}{q_j} + \frac{1}{q} = \sum_{j=1}^m \frac{j\beta_j}{2n} + \frac{1}{2} - \frac{m}{2n} = \mez = \sum_{j=2}^m \frac{1}{q_j} + \frac{1}{\tilde{q}}.   
\] 
The right-hand side of \eqref{est:Ambeta:2} is now controlled by that of \eqref{eq:est-unbounded} since $n\frac{n-m}{n-m+\beta_1}\leq n$.

For the non-integer case  $\sigma = n+ \mu$, $\mu \in (0,1)$, it suffices to bounded $\|\p^n(F(h+z)v)\|_{H^{\mu}}$. This in turn  reduces to bounding the terms 
\[
    A_{m,\beta} = \left\|F^{(\beta)} (h+z) \p^{n-m}v\prod_{j=1}^m (\p^jh)^{\beta_j}\right\|_{H^{\mu}},
\]
where $j$, $\beta$, and the $\beta_j$ are as in \eqref{def:Ambeta}. Note that the terms $F^{(\beta)} (h+z) \p^{n-m}v (\p_zh+1)\prod_{j=2}^m (\p^jh)^{\beta_j}$, appearing when $\p(h+z)\equiv \p_z(h+z)$, can be controlled be similarly, therefore omitted. Denoting $w=F^{(\beta)} (h+z) \p^{n-m}v$, the product estimate \eqref{eq.productSobol} yields 
\bq\label{est:Ambeta:4}
\begin{aligned}
    A_{m,\beta}&\lesssim  \left\|\prod_{j=1}^m (\p^jh)^{\beta_j}\right\|_{L^{q'}}\|w\|_{W^{\mu,q}} + \left\|\prod_{j=1}^m (\p^jh)^{\beta_j}\right\|_{W^{\mu, p'}}\|w\|_{L^{p}} \\ 
    & \lesssim \prod_{j=1}^m \|\p^j h\|_{L^{q_j\beta_j}}^{\beta_j}\|w\|_{W^{\mu,q}}  \\ 
    &\qquad+ \|w\|_{L^p} \sum_{i\in\{1,\dots, m\}:~\beta_i\ge 1} \|\p^i h\|_{W^{\mu,\bar{p}_i}} a_i\prod_{j\neq i} \|\p^jh\|_{L^{p_j\beta_j}}^{\beta_j}=:I+II,
\end{aligned}
\eq
where 
\[
a_i=\begin{cases}
1\quad\text{if~}\beta_i=1,\\
 \|\p^i h\|_{L^{\tilde{p}_i(\beta_i-1)}}^{\beta_i-1}\quad\text{if~}\beta_i\ge 2,
\end{cases}
\]
\[
\frac{1}{q}+\frac{1}{q'}=\frac{1}{q}+\sum_{j=1}^m\frac{1}{q_j}=\mez,\quad q \in (1,  \infty),
\]
and
\[
\begin{cases}
\frac{1}{\bar{p}_i} + \frac{1}{\tilde{p}_i} + \sum_{j\neq i} \frac{1}{p_j} + \frac{1}{p}=\mez,\quad \bar{p}_i\in (1, \infty)\quad\text{if~}\beta_i\ge 2,\\
\frac{1}{\bar{p}_i}  + \sum_{j\neq i} \frac{1}{p_j} + \frac{1}{p}=\mez,\quad \bar{p}_i\in (1, \infty)\quad\text{if~}\beta_i=1.
\end{cases}
\]
Regarding $II$, we appeal to \cref{thm.gagliardoNirenberg}  and bound 
\[
    \|\p^i h\|_{W^{\mu,\bar{p}_i}} \le \|h\|_{L^{\infty}}^{1-\bar{\eta}_i}\|h\|_{H^{\sigma}}^{\bar{\eta}_i} \text{ with } \frac{1}{\bar{p}_i} = \frac{\bar{\eta}_i}{2} = \frac{i+\mu}{2\sigma},
\] 
\[
    \|\p^i h\|_{L^{\tilde{p}_i(\beta_i-1)}} \le \|h\|_{L^{\infty}}^{1-\tilde{\eta}_i}\|h\|_{H^{\sigma}}^{\tilde{\eta}_i} \text{ with } \frac{1}{\tilde{p}_i(\beta_i -1)} = \frac{\tilde{\eta}_i}{2} = \frac{i}{2\sigma}\quad\text{if~}\beta_i\ge 2,
\]
\[
    \|\p^j h\|_{L^{p_j\beta_j}} \le \|h\|_{L^{\infty}}^{1-\eta_j}\|h\|_{H^{\sigma}}^{\eta_j} \text{ with } \frac{1}{p_j\beta_j} = \frac{\eta_j}{2} = \frac{j}{2\sigma}.
\]
We have  $\frac{1}{\bar{p}_i} + \frac{1}{\tilde{p}_i} + \sum_{j\neq i} \frac{1}{p_j} = \frac{m+\mu}{2\sigma}$ if $\beta_i\ge 2$, and $\frac{1}{\bar{p}_i} + \sum_{j\neq i} \frac{1}{p_j} = \frac{m+\mu}{2\sigma}$ if $\beta_i=1$. Consequently,  $\frac{1}{p}=\frac{n-m}{2\sigma}$ for $\beta_i\ge 1$, and hence
\[
    \|w\|_{L^p}\le \|F^{(\beta)}\|_{L^{\infty}}\|\p^{n-m}v\|_{L^p} \le     \|F^{(\beta)}\|_{L^{\infty}} \|v\|_{L^{\infty}}^{1-\eta}\|v\|_{H^{\sigma}}^{\eta}, 
\]
where $\frac{1}{p}=\frac{n-m}{2\sigma} = \frac{\eta}{2}$. Thus $II$ is bounded by the right-hand side of \eqref{eq:est-unbounded}.

As for $I$, we apply  \cref{thm.gagliardoNirenberg} again to have
\[
    \|\p^j h\|_{L^{q_j\beta_j}} \le \|h\|_{L^{\infty}}^{1-\theta_j}\|h\|_{H^{\sigma}}^{\theta_j},
\]
where $\frac{1}{q_j\beta_j}=\frac{\theta_j}{2}=\frac{j}{2\sigma}$. Then we have $\sum_{j=1}^m \frac{1}{q_j} = \frac{m}{2\sigma}$ and $\frac{1}{q}=\frac{\sigma - m}{2\sigma}$. Hence, 
\bq\label{AppendixE:I:0}
I\le  C(\|h\|_{L^{\infty}\cap H^\sigma})\|w\|_{W^{\mu,q}}.
\eq
We recall from \eqref{eq.equivalent-norm-sobolev} that 
\bq\label{intchar}
    \|w\|_{W^{\mu, q}}= \|w\|_{L^q} + \| E[w]\|_{L^q},    
\eq
where $N=d+1$ and 
\[
E[w](x,z):=\displaystyle \int_{z'\in J}\int_{x'\in\Rr^d} \frac{|w(x,z)-w(x',z')|^q}{(|x-x'|^2+|z-z'|^2)^{N/2+q\mu/2}} \,\mathrm{d}x'\mathrm{d}z'.
\]
Let $(x,z)\in\Rr^d \times J$ and $(x',z')\in\Rr^d \times J$. We have 
\begin{multline}
    \label{eq:split-w}
    |w(x', z') - w(x,z)| \lesssim |F^{(\beta)}(h(x',z')+z')||\p^{n-m}v(x,z)-\p^{n-m}v(x',z')| \\
    + |\p^{n-m}v(x,z)||F^{(\beta)}(h(x,z)+z)-F^{(\beta)}(h(x',z')+z)| \\
    + |\p^{n-m}v(x,z)||F^{(\beta)}(h(x',z')+z)-F^{(\beta)}(h(x',z')+z')|. 
\end{multline}
In the first line of \eqref{eq:split-w}, we bound $|F^{(\beta)}(h(x',z')+z')| \le \|F^{(\beta)}\|_{L^{\infty}}$. In the second line, since $F^{(\beta+1)}$ is bounded for $\beta+1\le n+1\le \lceil \sigma\rceil$, we can use the Lipschitz continuity of $F^{(\beta)}$ to have
\[
    |F^{(\beta)}(h(x,z)+z)-F^{(\beta)}(h(x',z')+z)| \le \|F^{(\beta +1)}\|_{L^{\infty}}|h(x,z)-h(x,z')|.
\]
In the last line, we combine  the Lipschitz continuity and the boundedness of $F^{(\beta)}$ to obtain
\[
    |F^{(\beta)}(h(x',z')+z)-F^{(\beta)}(h(x',z')+z')| \lesssim (\|F^{(\beta)}\|_{L^\infty}+\|F^{(\beta +1)}\|_{L^\infty}) \frac{|z-z'|}{1+|z-z'|}. 
\]
It follows that
\begin{multline*}
    E[w](x,z) \lesssim E[\p^{n-m}v](x,z) +  |\p^{n-m}v(x,z)|^qE[h](x,z) \\ 
     + |\p^{n-m}v(x,z)|^q \underbrace{\int_{(x',z') \in \Rr^d\times J} \frac{|z-z'|^q}{(1+|z-z'|^q)(|x-x'|^2+|z-z'|^2)^{N/2+q\mu/2}} \mathrm{d}z' \mathrm{d}x'}_{I}. 
\end{multline*}
To bound $I$ we make the changes of variables $(x', z')\mapsto (x'-x, z'-z)$ and $x'\mapsto |z'|y$, so that
\bq\label{AppE:100}
\begin{aligned}
    I &= \int_{(x',z') \in \Rr^d\times (J-z)} \frac{|z'|^q}{(1+|z'|^q)(|x'|^2+|z'|^2)^{N/2+q\mu/2}} \mathrm{d}z' \mathrm{d}x' \\
    &= \int_{z'\in J-z} \frac{|z'|^{q-N-q\mu}}{(1+|z'|)^q} \int_{x' \in \Rr^d}\frac{\mathrm{d}x'}{(1+\frac{|x'|^2}{|z'|^2})^{N/2+q\mu/2}}\,\mathrm{d}z' \\
   & \le \int_{z'\in \Rr} \frac{|z'|^{q-N-q\mu+d}}{(1+|z'|)^q} \int_{y\in \Rr^d}\frac{\mathrm{d}y}{(1+|y|^2)^{N/2+q\mu/2}}\,\mathrm{d}z'. 
\end{aligned}
\eq
The $y$-integral converges   because $N+2\mu>d=N-1$. The integral in $z'$ converges since $\frac{|z'|^{q-N-q\mu+d}}{(1+|z'|)^q} \underset{|z'| \to 0}{\sim} |z'|^{q(1-\mu)-1}$ and $\frac{|z'|^{q-N-q\mu+d}}{(1+|z'|)^q} \underset{|z'| \to \infty}{\sim} |z'|^{-1-q\mu}$. Therefore, $I$ is bounded by a constant depending only on $(d, \mu)$. 
It follows that 
\[
    E[w](x,z) \lesssim E[\p^{n-m}v](x,z) +  |\p^{n-m}v(x,z)|^qE[h](x,z) +  |\p^{n-m}v(x,z)|^q. 
\]  
Taking $1/q$-th power and then taking the $L^q$ norm in $(x,z)$ and applying H\"older inequality to the second term, we obtain 
\begin{multline*}
    \|w\|_{W^{\mu,q}} \lesssim   \|\p^{n-m}v\|_{W^{\mu, q}}+\| \p^{n-m}v\|_{L^{r_1}}\|(E[h])^{\frac{1}{q}}\|_{L^{r_2}} + \| \p^{n-m}v\|_{L^{q}} \\
     \lesssim  \|\p^{n-m}v\|_{W^{\mu, q}}+\|\p^{n-m}v\|_{L^{r_1}}\|h\|_{W^{\mu,r_2}} + \|\p^{n-m}v\|_{L^{q}},
\end{multline*}
where $r_1=\frac{2\sigma}{n-m}$ and $r_2= \frac{2\sigma}{\mu}$, so that $\frac{1}{q}=\frac{1}{r_1}+\frac{1}{r_2}$.  Applying \cref{thm.gagliardoNirenberg} gives
\begin{align*}
&\|\p^{n-m}v\|_{L^{r_1}}\les \| v\|_{L^\infty}^{1-s_0}\| v\|_{H^\sigma}^{s_0},\quad \frac{1}{r_1}=\frac{s_0}{2}=\frac{n-m}{2\sigma},\\
&\|h\|_{W^{\mu,r_2}}\les \| h\|_{L^\infty}^{1-s_1}\| h\|_{H^\sigma}^{s_1},\quad \frac{1}{r_2}=\frac{s_1}{2}=\frac{\mu}{2\sigma},\\
&  \|\p^{n-m}v\|_{W^{\mu, q}}\les \| v\|_{L^\infty}^{1-s_2}\| v\|_{H^\sigma}^{s_2},\quad \frac{1}{q}=\frac{s_2}{2}=\frac{n-m+\mu}{2\sigma}\equiv \frac{\sigma-m}{2\sigma},\\
&\|\p^{n-m}v\|_{L^{q}}\les \| v\|_{L^\infty}^{1-s_3}\| v\|_{H^r}^{s_3},\quad \quad\frac{1}{q}=\frac{s_3}{2}=\frac{n-m}{2r},~r=\frac{\sigma(n-m)}{\sigma-m}< \sigma.
\end{align*}
The above estimates yield 
\[
  \|w\|_{W^{\mu,q}} \les (1+\| h\|_{L^\infty\cap H^\sigma})\| v\|_{L^\infty\cap H^\sigma}
  \]
which in conjunction with \eqref{AppendixE:I:0} shows that $I$ is controlled by the right-hand side of \eqref{eq:est-unbounded}. 
\end{proof}

\newcommand{\etalchar}[1]{$^{#1}$}

\end{document}